\documentclass[aos]{imsart}

\RequirePackage{amsthm,amsmath,amsfonts,amssymb}
\RequirePackage[colorlinks,citecolor=blue,urlcolor=blue]{hyperref}
\RequirePackage{graphicx}

\usepackage{bm,stackengine} 
\usepackage{epsfig}
\usepackage{psfrag}
\usepackage{enumerate} 
\usepackage{float} 
\usepackage[dvipsnames]{xcolor}
\usepackage{array}
\usepackage{hyperref}
\usepackage{subcaption}
\usepackage[normalem]{ulem}

\def\namedlabel#1#2{\begingroup
    #2%
    \def\@currentlabel{#2}%
    \phantomsection\label{#1}\endgroup
}
\usepackage{enumitem}

\usepackage{mathtools}
\mathtoolsset{showonlyrefs}


\newtheorem{theorem}{Theorem}[section]
\newtheorem{lemma}[theorem]{Lemma}
\theoremstyle{remark}



\theoremstyle{plain}

\newtheorem{theo}{Theorem}[section] 

\newtheorem{lem}[theo]{Lemma}

\theoremstyle{definition}

\newtheorem{remark}{Remark}[section]



\theoremstyle{plain}

\newtheorem{theos}{Theorem}

\newtheorem{lems}[theos]{Lemma}
\newtheorem{cors}[theos]{Corollary}

\newtheorem{proposition}[theos]{Proposition}
\newtheorem{corollary}[theos]{Corollary}

\theoremstyle{definition}

\newcommand{\Risk}{\mathcal{R}}
\newcommand{\Loss}{\mathcal{L}}

\newcommand{\lone}[1]{\|#1\|_1}
\newcommand{\ltwo}[1]{\|#1\|_2}

\newcommand{\zetamax}{\zeta_{\max}}

\newcommand{\CLoss}{\mathcal{C}}

\newcommand{\zetastar}{\zeta^*}
\newcommand{\taustar}{\tau^*}
\newcommand{\betastar}{\beta^*}

\newcommand{\normzero}[1]{\|#1\|_0}

\def\bu{\boldsymbol{u}}
\def\bh{\boldsymbol{h}}

\def\bX{\boldsymbol{X}}

\def\bthetahat{\widehat{\boldsymbol{\theta}}}

\def\bz{\boldsymbol{z}}

\def\by{\boldsymbol{y}}
\def\thetahatloo{ {\widehat {\boldsymbol \theta}_{\mathrm{loo}}} }
\def\bX{\boldsymbol{X}}
\def\be{\boldsymbol{e}}
\def\bw{\boldsymbol{w}}

\def\bwhat{\widehat{\boldsymbol{w}}}

\def\Var{\mathrm{Var}}

\def\bK{\boldsymbol{K}}
\def\hmu{\hat{\mu}}

\newcommand{\Aevent}{\mathcal{A}}

\newcommand{\lambdamin}{\lambda_{\min}}
\newcommand{\lambdamax}{\lambda_{\max}}

\newcommand{\betamax}{\beta_{\max}}
\def\betamin{\beta_{\mathrm{min}}}

\newcommand{\xper}{\breve{\boldsymbol{x}}^\perp}

\newcommand{\kappamin}{\kappa_{\min}}
\newcommand{\kappamax}{\kappa_{\max}}

\newcommand{\thetastarloo}{\bm{\theta}^*_{\mathrm{loo}}}

\newcommand{\mytheta}{\bm{\theta}}
\newcommand{\mySigma}{\bm{\Sigma}}

\newcommand{\numobs}{\ensuremath{n}}
\newcommand{\usedim}{\ensuremath{p}}

\newcommand{\thetastar}{\ensuremath{\bm{\theta}^*}}

\newcommand{\thetahat}{{\widehat{\boldsymbol{\theta}}}}

\newcommand{\mprob}{\ensuremath{\mathbb{P}}}

\newcommand{\real}{\ensuremath{\mathbb{R}}}
\newcommand{\defn}{\ensuremath{: \, =}}
\newcommand{\argmin}{\arg\!\min}
\newcommand{\argmax}{\arg\!\max}

\newcommand{\inprod}[2]{\ensuremath{\langle #1 , \, #2 \rangle}}

\newcommand{\Exs}{\ensuremath{\mathbb{E}}}



\newcommand{\NORMAL}{\mathcal{N}}

\newcommand{\sign}{\ensuremath{\mbox{\rm sign}}}

\long\def\comment#1{}






\newcommand{\HACKPROOF}{\begin{proof}}
\newcommand{\HACKENDPROOF}{\end{proof}}



\newcommand{\Ball}{\ensuremath{\mathbb{B}}}

\newcommand{\Ind}{\ensuremath{\textbf{I}}}

\newcommand{\diag}{\ensuremath{\mbox{diag}}}

\newcommand{\trace}{\ensuremath{\mbox{Tr}}}


\newlength{\widebarargwidth}
\newlength{\widebarargheight}
\newlength{\widebarargdepth}

\makeatletter
\long\def\@makecaption#1#2{
        \vskip 0.8ex
        \setbox\@tempboxa\hbox{\small {\bf #1:} #2}
        \parindent 1.5em  
        \dimen0=\hsize
        \advance\dimen0 by -3em
        \ifdim \wd\@tempboxa >\dimen0
                \hbox to \hsize{
                        \parindent 0em
                        \hfil 
                        \parbox{\dimen0}{\def\baselinestretch{0.96}\small
                                {\bf #1.} #2
                                } 
                        \hfil}
        \else \hbox to \hsize{\hfil \box\@tempboxa \hfil}
        \fi
        }
\makeatother

\def\bg{\bm{g}}
\def\bv{\bm{v}}
\def\bx{\bm{x}}

\def\cB{{\mathcal{B}}}
\newcommand{\bvhat}{\widehat{\bv}}

\def\taumax{\tau_{\mathrm{max}}}

\def\reals{{\mathbb R}}
\def\f{\bm{f}}
\def\normal{{\sf N}}
\def\<{\langle}
\def\>{\rangle}
\def\E{{\mathbb E}}
\def\de{{\rm d}}

\def\bzero{\bm{0}}

\def\eps{\varepsilon}
\def\supp{{\rm supp}}
\newcommand{\indic}[1]{\mathbf{1}_{#1}}

\usepackage[mathscr]{euscript}
\DeclareSymbolFont{rsfs}{U}{rsfs}{m}{n}
\DeclareSymbolFontAlphabet{\mathscrsfs}{rsfs}

\def\cuE{\mathscrsfs{E}}
\def\tcuE{\tilde{\mathscrsfs{E}}}
\def\cuF{\mathscrsfs{F}}
\def\cuG{\mathcal{G}}
\def\softthreshold{\eta_{\mathrm{soft}}}

\def\xistar{\xi^*}
\def\kappastar{\kappa^*}

\def\cuL{\mathcal{L}}
\def\cuD{\mathcal{D}}
\def\kappacond{\kappa_{\mathrm{cond}}}
\def\barthetastar{{\bar\mytheta^*}}
\def\omegastar{{\omega^*}}
\def\bX{\boldsymbol{X}}

\def\bthetahat{\widehat{\boldsymbol{\theta}}}

\def\div{\mathrm{div}\,}
\def\df{\mathsf{df}}
\def\risk{\mathsf{R}}

\def\bt{\boldsymbol{t}}
\def\sMM{\mbox{\tiny\rm MM}}
\def\sBP{\mbox{\tiny\rm BP}}
\def\sRE{\mbox{\tiny\rm RE}}

\def\sigmamin{\sigma_{\mathrm{min}}}
\def\sigmamax{\sigma_{\mathrm{max}}}
\def\numin{\nu_{\mathrm{min}}}

\def\zetamin{\zeta_{\mathrm{min}}}
\def\zetamax{\zeta_{\mathrm{max}}}

\def\bz{\boldsymbol{z}}
\def\yhat{\widehat{\boldsymbol{y}}}

\newcommand{\util}{\widetilde{\bm{u}}}
\newcommand{\subg}{\widehat{\bm{t}}}
\newcommand{\bmx}{\bm{X}}

\def\uhat{\widehat{\boldsymbol{u}}}
\def\thetahatd{\widehat{\theta}^{\mathrm{d}}}
\def\bthetahatd{\widehat{\boldsymbol{\theta}}^{\mathrm{d}}}
\def\bthat{\widehat{\boldsymbol{t}}}

\def\hlambda{\hat{\lambda}}

\def\RE{{\rm RE}}

\def\brevebx{\breve{\boldsymbol{x}}}
\def\cuT{\mathcal{T}}
\def\bA{\boldsymbol{A}}
\def\bB{\boldsymbol{B}}

\def\cuC{\mathcal{C}}
\def\cuA{\mathcal{A}}
\def\cuV{\mathcal{V}}
\def\bM{\boldsymbol{M}}

\def\sfM{\mathsf{M}}
\def\bb{\boldsymbol{b}}

\def\hbw{\widehat{\boldsymbol{w}}}
\def\hbv{\widehat{\boldsymbol{v}}}

\def\Lstar{L^*}
\def\cuZ{\mathcal{Z}}
\def\alphamax{\alpha_{\mathrm{max}}}
\def\Deltamin{\Delta_{\mathrm{min}}}
\def\Deltamax{\Delta_{\mathrm{max}}}
\def\breveby{\breve{\boldsymbol{y}}}
\def\brevey{\breve{y}}

\def\cuPmodel{\mathcal{P}_{\mathrm{model}}}

\def\bsigma{\boldsymbol{\sigma}}
\def\FCP{\mathsf{FCP}}
\def\bG{\boldsymbol{G}}

\def\htau{\hat{\tau}}
\def\htauloo{\widehat{\tau}_{\mathrm{loo}}}
\def\cuK{{\mathcal{K}}}
\newcommand{\indep}{\perp \!\!\! \perp}

\def\bC{{\boldsymbol C}}
\def\cT{{\mathcal{T}}}
\def\S{{\mathbb S}}
\def\Ball{{\sf B}}
\def\bdelta{\boldsymbol{\delta}}

\endlocaldefs

\begin{document}

\begin{frontmatter}
\title{The Lasso with general Gaussian designs with applications to hypothesis testing}
\runtitle{Lasso with general Gaussian designs}

\begin{aug}
\author[A]{\fnms{Michael} \snm{Celentano}\thanksref{mc,am}},
\author[A,B]{\fnms{Andrea} \snm{Montanari}\thanksref{am}}
\and
\author[C]{\fnms{Yuting} \snm{Wei}\thanksref{yw}}

\thankstext{am}{Partially supported by the NSF grants CCF-1714305, IIS-1741162, and by the ONR grant N00014-18-1-2729. We thank the anonymous reviewers for their valuable reviews.}
\thankstext{mc}{Partially supported by the National Science Foundation Graduate Research Fellowship under grant DGE-1656518.}
\thankstext{yw}{Partially supported  by the NSF grants DMS-2015447/2147546, CAREER award DMS-2143215 and the Google Research Scholar Award. }

\address[A]{Department of Statistics, University of California at Berkeley}

\address[B]{Department of Electrical Engineering,
Stanford University}

\address[C]{Department of Statistics and Data Science,
University of Pennsylvania}
\end{aug}

\begin{abstract}
	The Lasso  is  a method for  high-dimensional regression,
	which is now  commonly used when the number of covariates $p$ is of the same order or larger
	than the number of observations $n$. Classical asymptotic normality
	theory does not apply to this model due to two fundamental reasons: $(1)$ The regularized risk is non-smooth;
	$(2)$ The distance between the estimator $ \widehat{\bm\theta}$ and the true parameters vector $\bm \theta^*$ cannot be
	neglected. As a consequence, standard perturbative arguments that are the traditional basis for asymptotic
	normality fail. 

	On the other hand, the Lasso estimator can be precisely characterized in the regime in which both $n$ and $p$
	are large and $n/p$ is of order one. This characterization was first obtained in the case of Gaussian
	designs with i.i.d. covariates: here we generalize it to Gaussian correlated designs with non-singular covariance structure.
	This is expressed in terms of a simpler ``fixed-design'' model.
	We establish non-asymptotic bounds on the distance between the distribution of various quantities in the two models,
	which hold uniformly over signals $\bm \theta^*$ in a suitable sparsity class and 
	over values of the regularization
	parameter.

	As an application, we study the distribution of the debiased Lasso and show that a degrees-of-freedom correction
	is necessary for computing valid confidence intervals.
\end{abstract}

\begin{keyword}[class=MSC2020]
\kwd[Primary ]{62J07}
\kwd{62E17}
\kwd[; secondary ]{62F05}
\kwd{62F12}
\end{keyword}

\begin{keyword}
\kwd{Lasso}
\kwd{debiased Lasso}
\kwd{exact asymptotics}
\kwd{Convex Gaussian Min-Max Theorem}
\kwd{Gaussian designs}
\kwd{Gaussian width}
\end{keyword}

\end{frontmatter}


\section{Introduction}
\label{sec:intro}

Questions of statistical inference and decision theory are often addressed by characterizing
 the distribution of the estimator of interest
under a variety of assumptions on the data distribution. 
A central role is played by normal theory which guarantees
that broad classes of estimators are asymptotically normal with prescribed covariance structure \cite{fisher1922mathematical,le2012asymptotic}.
Normality theory can serve as the basis for inference, facilitate the comparison of estimators, and justify claims of efficiency.

In high dimensions, 
the distributional theory available for many estimators of interest is more limited. 
Frequently we have access to upper and lower bounds on important quantites like the estimation or prediction error or the size of a selected model.
These may have the correct dependence on sample size, dimensionality, and certain structural parameters, but are usually loose in their leading constants.  
Asymptotic normality often breaks down in high dimensions, even when considering
low-dimensional projections of the coefficients vector \cite{bayati2011lasso,javanmard2014hypothesis,zhang2014confidence,sur2019modern}.
There has been substantial progress in recovering normality in special cases by resorting to careful constructions designed to remove bias and target normality \cite{bayati2011lasso,javanmard2014hypothesis,zhang2014confidence,bellec2019biasing,chen2019inference}.
It is of substantial interest to identify precisely the conditions under which such constructions succeed and fail.
This challenge is compounded by the fact that resampling methods also
fail in this context \cite{el2018can}.

The Lasso is arguably the prototypical method in high-dimensional statistics.
Given data $\{(y_i,\bx_i)\}_{i\le n}$, with $y_i\in\reals$, $\bx_i\in\reals^p$, it performs linear regression of the
$y_i$'s on the $\bx_i$'s by solving the optimization problem
\begin{align}
\label{EqnOrgRisk}
	\thetahat 
		&\defn 
		\argmin_{\mytheta \in \reals^{\usedim}} \Risk(\mytheta) 
		:= 
		\argmin_{\mytheta \in \reals^{\usedim}} \left\{\frac{1}{2\numobs} \ltwo{\by - \bX \mytheta }^2 + \frac{\lambda}{\sqrt{\numobs}} \lone{\mytheta}\right\}\,.
\end{align}
Here $\by\in\reals^n$ is the vector with $i$-th entry equal to $y_i$, and $\bX\in\reals^{n\times p}$ is the matrix with
$i$-th row given by $\bx_i^{\top}$.
Throughout the paper we will assume the model to be well-specified. Namely, there exist $\thetastar\in\reals^p$
such that
\begin{align}
\label{EqnLM}
	\by = \bX \thetastar + \sigma \bz\,,
\end{align}
where $\bz \sim \normal(\bzero, \Ind_{\numobs})$ is a Gaussian 
noise vector.\footnote{The assumption of Gaussian noise is not necessary for our results, but is made throughout to simplify our exposition and proofs. See Remark \ref{rmk:gaussian-error}.}
In the informal discussion below, we will assume $\thetastar$ to be $s$-sparse (i.e. to have at most
$s$ non-zero entries), although our theorems apply more generally to coefficient vectors that are only 
approximately sparse.



\paragraph*{Distribution theory for the Lasso}

A substantial body of theoretical work studies the Lasso with fixed (non-random) designs $\bX$ 
in the regime
$s\log (p/s) / n = O(1)$ \cite{bickel2009simultaneous,buhlmann2011statistics,negahban2012,bellec2018} 
by providing estimation error bounds that are rate optimal.
These results have two types of limitations.
First, they usually require that $\lambda$ be chosen larger than the approximate minimax choice
$\lambda_{\sMM} = c\sigma \sqrt{\log(p/s)}$  (with $c$ a constant which cannot be 
taken arbitrarily small). 
In practice, however, $\lambda$ is chosen by cross-validation and is often 
significantly smaller than $\lambda_{\sMM}$ because the coefficient $\thetastar$ is
not the  least favorable one \cite{Chetverikov2016,miolane2018distribution}.
Second, these require restricted eigenvalue or similar compatibility
conditions on the design matrix $\bX$. These conditions only hold for sample sizes that 
are strictly larger than what is necessary for accurate estimation when $\bX$ is random.

A more recent line of research attempts to address these limitations by
characterizing the distribution of $\thetahat$ with Gaussian design matrices
\cite{bayati2011lasso,javanmard2018debiasing,thrampoulidis2015regularized,miolane2018distribution}. 
For example, \cite{bayati2011lasso} proved in the case of iid Gaussian designs
an exact characterization
of the distribution of $\thetahat$ which is simple enough to be described in words. 
Imagine, instead of observing
$\by$ according to the linear model \eqref{EqnLM}, we are given $\by^f = \thetastar + \tau \bg$
where $\bg\sim  \normal(\bzero, \Ind_{p})$, and $\tau>\sigma$ is the original noise level inflated
by the effect of undersampling. Then $\thetahat$ is approximately distributed as $\eta(\by^f;\zeta)$
where $\eta(x;\zeta) := (|x|-\lambda/\zeta)_+\sign(x)$ is the soft thresholding function (applied to vectors entrywise)
and $\zeta$ controls the threshold value. 
The values of $\tau,\zeta$ are determined by a system
of two nonlinear equations (see below). 
This analysis, as well as that in \cite{thrampoulidis2015regularized,miolane2018distribution}, 
assumes $n,p$ and the number of non-zero coefficients $s$ to be
large and of the same order. It further applies to any $\lambda$ scaling as  $c \sigma \sqrt{\log(p/s)}$.
In particular, unlike the Lasso results in \cite{bickel2009simultaneous,buhlmann2011statistics,negahban2012,bellec2018}, 
the constant $c$ here can be taken arbitrarily small, though non-vanishing asymptotically, 
which covers the typical values of the regularization selected by standard procedures 
such as cross-validation \cite{Chetverikov2016,miolane2018distribution}. 

Of course the case of i.i.d. Gaussian covariates is highly idealized and one can think of
two directions in which the results of 
\cite{bayati2011lasso,thrampoulidis2015regularized,miolane2018distribution} 
could be brought closer to reality:
\begin{enumerate}
\item Non-Gaussian but still independent and ---say--- sub-Gaussian covariates.
Both numerical simulations and universality arguments suggest that the same characterization 
that was proven for Gaussian covariates also applies to this case.
Rigorous universality results were proven in 
\cite{bayati2015universality,oymak2018universality,montanari2017universality} in closely 
related settings. 
Hence, while mathematically interesting, this generalization yields limited 
new statistical insight.
\item Gaussian but correlated designs. As we will see, in this case the asymptotic 
characterization is different and depends on the covariance $\mySigma=\E\{\bx_i\bx_i^\top\}$.
The covariance $\mySigma$ (or an estimate of $\mySigma$) plays a key role in statistically 
important tasks such as debiasing and hypothesis testing. This will be the focus of the present paper.
\end{enumerate}
By analogy with the uncorrelated designs, we expect our results for correlated
Gaussian designs to apply also to correlated non-Gaussian designs. A set of results proved
after a first appearance of this manuscript work supports this expectation \cite{hu2020universality,montanari2022universality,han2022universality}.

Throughout the paper,
 we assume that the covariates (each row of $\bX$) have distribution 
\begin{align}
 	\bx_i \sim \normal(0,\mySigma)
 \end{align} 
for some well-conditioned and known covariance matrix $\mySigma.$
As in the i.i.d. case, our results present two advantages with respect 
to fixed-design theory. First,
 they allow for any $\lambda$ of the order $c\sigma \sqrt{\log(p/s)}$, 
with $c$ an arbitrarily small (non-zero) constant. 
Second, they provide guarantees for sample sizes $n$ at which the restricted 
eigenvalue condition does not hold.

In fact, we provide guarantees for all sample sizes above the Gaussian dimension 
of the relevant descent cone. This critical sample size marks a sharp transition in the 
ability of $\ell_1$-based methods to achieve noiseless and stable sparse recovery 
in compressed sensing \cite{chandrasekaran2010,tropp2015convex}.
We will refer to this as the \emph{Donoho-Tanner} phase transition (although the original
work of \cite{donoho2005neighborliness,donoho2009counting} was limited to i.i.d. designs).
More details can be found in our Section~\ref{Sec:preliminary}.

In the case of correlated designs, \cite{javanmard2018debiasing} proved a similar
characterization in the regime $s \log(p)/n = o(1)$ assuming a bound on
$\|\mySigma^{-1}\be_j\|_1$.
The regime studied \cite{javanmard2018debiasing} is substantially simpler than the one studied
here. In particular, the characterization proved here simplifies in that regime, in that 
one can take $\tau = \sigma$ and $\zeta = 1$.

An important consequence of our theory is the asymptotic optimality of a hyperparameter 
tuning method based on the following
 degrees-of-freedom adjusted residuals
 \begin{align}
 \label{eq:tau-hat}
   \widehat{\tau}(\lambda)^2 := \frac{\|\by-\bX\thetahat\|_2^2}{n(1-\|\thetahat\|_0/n)^2}\, .
 \end{align}
 It was already observed in \cite{miolane2018distribution} that minimizing $\widehat{\tau}(\lambda)$ 
 over $\lambda$ provides
 a good selection procedure for the regularization parameter. Our results provide theoretical 
 support for this approach under general Gaussian designs.
 Recently (and after this paper was originally posted),
 this criterion has been generalized to a wider class of losses and penalties \cite{bellecShen2022}.
 
\paragraph*{Distribution theory for the debiased Lasso}

The debiased Lasso is a recently popularized approach for performing hypothesis testing 
and computing confidence
regions for low-dimensional projections of $\thetastar$. Most constructions take the form:
\begin{equation*}
	\bthetahatd = \thetahat + \frac1n\bM \bX^\top(\by - \bX \thetahat)\,,
\end{equation*}
for an appropriate and possibly data-dependent choice of the matrix $\bM$. Under appropriate
choices of $\bM$, low-dimensional projections of $\bthetahatd$ are approximately normal 
with mean $\thetastar$.

The first constructions for the debiased Lasso took $\bM$ to be suitable estimators 
of the precision matrix
$\mySigma^{-1}$ and proved approximate normality when $\|\thetastar\|_0 =:s = o(\sqrt{n} / \log p)$ 
\cite{zhang2014confidence,van2014asymptotically,javanmard2014hypothesis,javanmard2014confidence,javanmard2018debiasing}.
Later work considered the case of Gaussian covariates with known covariance, and set 
$\bM=\mySigma^{-1}$.
In this idealized setting, the sparsity condition  was relaxed  to 
$s = o(n/(\log p)^2)$ under an $\ell_1$-constraint on $\mySigma^{-1}\be_j$  
\cite{javanmard2018debiasing}, and to $s = o( n^{2/3} / \log(p/s)^{1/3})$ for general 
$\mySigma$ \cite{bellec2019biasing}.
\begin{figure}[h!]
\centerline{\includegraphics[width=.78\textwidth]{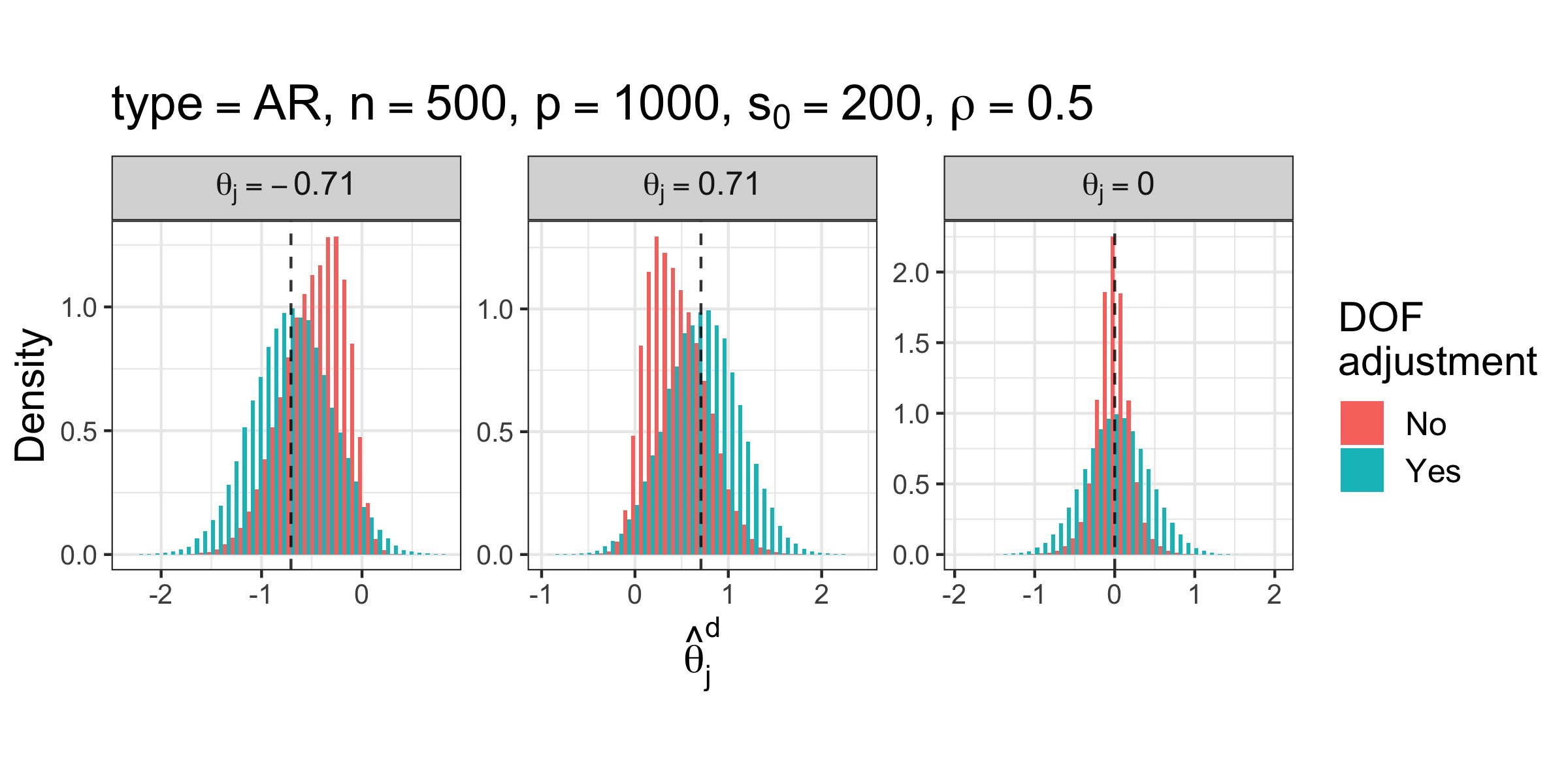}}
\centerline{\includegraphics[width=.8\textwidth]{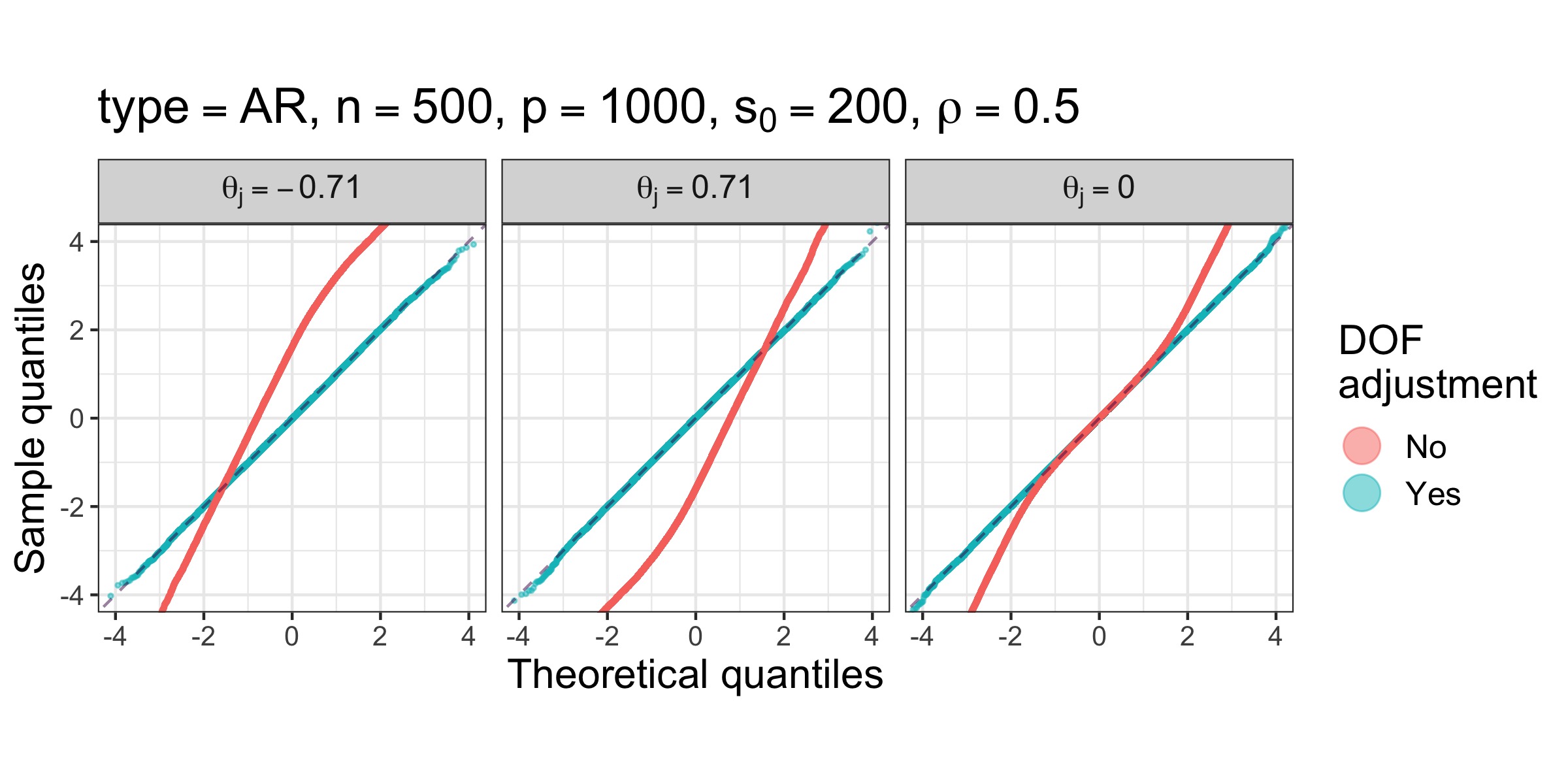}}
\caption{The debiased Lasso with and without degrees-of-freedom (DOF) adjustment.
  Here $p = 1000$, $n = 500$, $s = 200$, $\Sigma_{ij} = \rho^{|i-j|} = 0.5^{|i-j|}$, $\lambda = 4/\sqrt{n} = .18$, $\sigma = 1$. The coefficients vector $\thetastar$ contains $100$ entries $\theta^*_i=+.707$, and $100$ entries $\theta^*_i=-.707$. 
  The histogram plots the raw values of $\widehat \theta_j^{\mathrm{d}}$ without standardization, with the true value of $\theta_j^*$ drawn as the vertical dashed line.
  The qqplot is made with theoretical quantiles from the standard normal distribution.}
\label{FigQQplots_and_histograms}
\end{figure}
The latter conditions turn out to be tight for $\bM= \mySigma^{-1}$.

For larger values of $s$, it is necessary to adjust the previous construction for the 
degrees of freedom by setting\footnote{More precisely,
\cite{javanmard2014hypothesis,miolane2018distribution} showed that the degrees-of-freedom 
correction is needed for uncorrelated designs 
with $s = \Theta(n)$,
\cite{bellec2019biasing} showed that it is needed for correlated designs with 
$n \gg s \gg n^{2/3}/\log(p/s)^{1/3}$,
and \cite{Bellec2019SecondOP} studied it for correlated designs with $s = \Theta(n)$, but under stronger conditions on the sample size and regularization parameter than considered here.}\label{ftnt:dof}
 $\bM=\mySigma^{-1}/(1-\|\thetahat\|_0/n)$:
\begin{equation}\label{EqnDBlasso}
	\bthetahatd = \thetahat +\frac{1}{n-\|\thetahat\|_0} \mySigma^{-1} \bX^\top(\by - \bX \thetahat)\,.
\end{equation}
Figure \ref{FigQQplots_and_histograms} illustrates the difference
between the
debiased estimator with and without degrees-of-freedom correction. It is clear that debiasing
without degrees-of-freedom correction can lead to invalid inference.


Recently, Bellec and Zhang \cite{Bellec2019SecondOP,bellec2019biasing} 
established asymptotic normality and unbiasedness of the coordinates 
$\widehat\theta^{\mathrm{d}}_j$ of the debiased estimator of Eq.~\eqref{EqnDBlasso}.
As in the present work, they assumed  correlated 
Gaussian designs in the proportional regime $s\asymp n\asymp p$.
Our results on debiasing are not directly comparable with the ones of \cite{Bellec2019SecondOP}:
on the one hand, we assume weaker condition on the regularizations and the sample size;
on the other hand, we establish normality in a weaker sense. See Section \ref{sec:DB-lasso} for
further discussion.

Our results on the debiased Lasso do not imply that a fixed 
coordinate of $\bthetahatd$ is approximately unbiased and normally distributed.
Indeed, without additional assumptions, there can be a small subset of coordinates 
for which normality does not hold \cite{Bellec2019SecondOP}.
Instead, we present an alternative \emph{leave-one-out} method to construct confidence 
intervals for which we  prove asymptotic validity via a direct argument. 
An advantage of the leave-one out
method is that it produces p-values for single coordinates that are exact (not just 
asymptotically valid for large $n$, $p$).
Empirically, the leave-one-out intervals almost exactly agree with the  debiased intervals
in several settings. On the other hand, we demonstrate that ---for certain carefully designed 
$(\thetastar,\mySigma)$---
 the leave-one-out intervals can  be smaller than the debiased intervals.

\paragraph*{Notation} We generally use lowercase for scalars (e.g. $x,y,z,\dots$), boldface 
lowercase for vectors (e.g. $\bu,\bv,\bw,\dots$)
and boldface uppercase for matrices (e.g. $\bA, \bB, \bC,\dots$).
We denote the support of vector $\bx$ as $\supp(\bx) := \{i \mid \bx_i \neq 0\}$.
In addition, the $\ell_q$ norm of a vector $\bx\in\reals^n$ is $\|\bx\|_q^q\equiv\sum_{i=1}^n|x_i|^q$. 
For $r \geq 0$ and $q\in (0,\infty)$, we use $\Ball_q(\bv;r)$ to represent the corresponding 
$\ell_q$-ball of radius $r$ and center $\bv$, namely, 
\begin{equation}
	\Ball_q( \bv;r) := \left\{\bx \in \reals^p \bigm| \|\bx-\bv\|_q \leq r\right\}\;\; \text{for}\;\; q > 0,\;\; \text{and}\;\; \Ball_0( s ) := \left\{\mytheta \in \reals^p \bigm| \|\mytheta\|_0 \leq s \right\}\,.
      \end{equation}
If the center is omitted, it should be understood that the ball is centered at $\bm{0}$.
A function $\phi:\real^p \times \real^p \rightarrow \reals$ is $L$-Lipschitz if for every 
$\bm{x}, \bm{y} \in \real^p \times \real^p$, it satisfies 
$|\phi(\bm{x}) - \phi(\bm{y})| \leq L\ltwo{\bm{x}-\bm{y}}$.
The notation $\S_{\ge 0}^n$ is used to denote the set of $n\times n$ positive semidefinite matrices.
We reserve $n$ for the sample size, $p$ for the dimension of the unknown parameter 
$\mytheta^*$, and always define $\delta := n/p$.

\section{A glimpse of our results}
\label{Sec:preview}

Our main result establishes an approximate
equivalence between the undersampled linear model of Eq.~\eqref{EqnLM} and a related statistical model:  
\begin{equation}
	\by^f = \mySigma^{1/2} \thetastar + \frac{\tau}{\sqrt{n}} \bg\,. \label{eq:FixedDef}
\end{equation}
%
Here $\bg \sim \normal(\bzero,\Ind_p)$ and $\tau \geq 0$.
We may take any square-root of the matrix $\mySigma$.
For simplicity, we always assume that we take a symmetric square-root.
The reader should have in mind a setting in which the singular values of 
$\mySigma$ and the noise parameter $\tau$ are of order 1.

We call Eq.~\eqref{eq:FixedDef} the \emph{fixed-design model} (hence the superscript $f$) 
and call model \eqref{EqnLM} the \emph{random-design model}.
The Lasso estimator in the fixed-design model can be written as 
\begin{gather}
\label{EqnSTH}
	\thetahat^f:=
	\eta(\by^f,\zeta) \defn \argmin_{\mytheta \in \reals^{\usedim}}
	\left\{\frac{\zeta}{2}\ltwo{\by^f - \mySigma^{1/2}\mytheta}^2 + \frac{\lambda}{\sqrt{n}} \lone{\mytheta} \right\}
	\,,
 \end{gather}
 with predictions given by $\yhat(\by^f,\zeta) \defn \mySigma^{1/2} \eta(\by^f,\zeta)$.
 We define the debiased Lasso in the fixed-design model as
 \begin{gather}
 \label{EqnDBf}
 	\thetahat^{f,\de}	
 		:= 
 		\thetahat^f + \mySigma^{-1/2}(\by^f - \mySigma^{1/2}\thetahat^f) 
 		=
 		\mySigma^{-1/2}\by^f
 		=
 		\thetastar + \frac{\tau}{\sqrt{n}} \mySigma^{-1/2} \bg.
 \end{gather}
 %
 The approximate equivalence between the random design and fixed design models 
 holds for particular choices of $\tau$ and $\zeta$,
 which we denote $\tau^*$ and $\zeta^*$. 
 Such an equivalence is relatively straightforward in the low dimensional regime:
 in that case, it is sufficient to take $\tilde\by^f = n^{-1}\mySigma^{-1/2}\bX^{\top}\by$,
 and check that for $n\gg p$,  this is approximately distributed as $\by^f$ of 
 Eq.~\eqref{eq:FixedDef} with $\tau=\sigma$. 
 This equivalence was extended by \cite[Theorem 5.1]{javanmard2018debiasing} 
 to $n\gg s\log(p)/n$, assuming $\max_j \| \mySigma^{-1}\be_j \|_1 = O(1)$.
 As long as these conditions are met, we can keep $\tau= \sigma$ and $\zeta = 1$.
 
 Here we consider the more interesting case $s \log(p/s) / n = \Theta(1)$ without an
  $\ell_1$-restriction on the rows of $\mySigma^{-1}$.
 In this regime, the equivalence only holds if we properly select
  $\tau^* > \sigma$ and $\zeta^* < 1$.

To specify these choices of $\tau$ and $\zeta$, let the in-sample prediction risk 
and degrees-of-freedom of the Lasso estimator in the fixed-design model be 
\begin{subequations}
	\begin{align}
	\label{EqnE1}
		\risk(\tau^2,\zeta) & \defn 
		\E\left[\Big\|\yhat\Big(\mySigma^{1/2} \thetastar + \frac{\tau}{\sqrt{n}} \bg, \zeta\Big) - \mySigma^{1/2}\thetastar\Big\|_2^2\right]\,,\\
		\label{EqnE2}	
		\df(\tau^2,\zeta) &\defn 
		\frac{\sqrt{n}}{\tau} \E\left[\Big\<\yhat\Big(\mySigma^{1/2}\thetastar + \frac{\tau}{\sqrt{n}} \bg,\zeta\Big), \bg\Big\> \right] \\
               \nonumber &~= \E\left[ \Big\|\eta\Big(\mySigma^{1/2}\thetastar+\frac{\tau}{\sqrt{n}}\bg , \zeta\Big)\Big\|_0\right]\,,
	\end{align}	
\end{subequations}
where the expectation is taken over $\bg\sim\normal(0,\Ind_p)$.
Here, for notational simplicity, we leave the dependence of $\risk(\tau^2,\zeta)$ and $\df(\tau^2,\zeta)$ on $\thetastar$, $\mySigma$, $n$, $p$ and $\lambda$ implicit.
The notion of ``degrees-of-freedom'' is standard to quantify the model complexity of statistical procedures 
(see, e.g.~\cite{hastie2017generalized,efron2004least,efron1997improvements} and references therein),
and its equivalence to the expected sparsity of the Lasso estimate  holds, for example, by 
\cite[Theorem 1]{zou2007}.
The parameters $\taustar, \zetastar$ are chosen as solutions
to the system of equations
\begin{subequations}
	\begin{align}
	\label{EqnEqn1}
		\tau^2 &= \sigma^2 + \risk(\tau^2, \zeta)\,,\\
	\label{EqnEqn2}
		\zeta &= 1 - \frac{\df(\tau^2, \zeta)}{n}\,.
	\end{align}	
\end{subequations}
We refer to these equations as the \emph{fixed-point equations}.
As asserted in Section~\ref{sec:solution-property}, there exists a unique pair of solution to the above fixed-point equations under weak conditions.

\paragraph*{Role of fixed-point equations}
Before presenting our assumptions and results formally, 
it is useful to discuss the interpretation of $\tau^* $ and $\zeta^*$.
In what follows, we take $\thetahat^f$ and $\thetahat^{f,\de}$ to be computed according to 
Eq.~\eqref{EqnDBf} in the fixed-design model with parameters $\tau=\tau^*$,
$\zeta=\zeta^*$ which solve the fixed-point equations \eqref{EqnEqn1} and \eqref{EqnEqn2}.

\begin{itemize}

	\item \textbf{Prediction and estimation error of the Lasso.} 
	We can interpret $\tau^{*2}$ as a theoretical prediction for the test error 
	$\E[(y_{\mathrm{test}} - \bx_{\mathrm{test}}^\top\thetahat )^2]$ on an independent test sample 
	$(\bx_{\mathrm{test}},y_{\mathrm{test}})$.
	Indeed, we obviously have 
	$\E[(y_{\mathrm{test}} - \bx_{\mathrm{test}}^\top\thetahat )^2] = 
	\sigma^2 + \| \thetahat - \thetastar \|_{\mySigma}^2 $.
	We will prove that the prediction risk $\| \thetahat - \thetastar \|_{\mySigma}^2$ concentrates on 
	the prediction risk of the fixed design model 
	$\risk(\tau^{*2},\zeta^*) = \E[\| \thetahat^f - \thetastar \|_{\mySigma}^2]$, cf. Eq.~\eqref{EqnE1}.
	Similarly, we will prove that $\| \thetahat - \thetastar \|_2^2$ concentrates on 
	$\E[\| \thetahat^f - \thetastar \|_2^2]$. 
	We conclude that
	$\E[(y_{\mathrm{test}} - \bx_{\mathrm{test}}^\top\thetahat )^2]$ concentrates on 
	$\tau^{*2}$ by Eq.~\eqref{EqnEqn1}.
	
	\item \textbf{Model size of the Lasso.}
	$\zeta^*$ is interpreted as (a theoretical prediction for) the
	 fraction of coordinates \emph{not} selected by the Lasso.
	Indeed, we will prove that the model size in the random design model $\| \thetahat \|_0$ concentrates around 
	$\df(\tau^{*2},\zeta^*)$, that is the expected model size in the fixed-design model,
	cf. Eq.~\eqref{EqnE2}.
	The interpretation follows by the second fixed point equation \eqref{EqnEqn2}.
	By Eq.~\eqref{EqnSTH},
	we can also interpret $\zeta^*$ as an inverse effective regularization parameter. 
	Thus, the larger the size of the selected model, the smaller the effective regularization.

	\item \textbf{False discovery proportion (FDP) of the debiased Lasso.}
	Consider the task of constructing confidence intervals for coordinates of $\thetastar$.
	For each $j \in [p]$, define the interval
	\begin{align}
	\label{EqnDebiasedCI}
		\mathsf{CI}^\mathrm{d}_j 
		&:= 
           \left[ \thetahatd_j - \Sigma_{j|-j}^{-1/2} \widehat \tau  z_{1-q/2} / \sqrt{n} ,~~ \thetahatd_j + \Sigma_{j|-j}^{-1/2}\widehat\tau \,
           z_{1-q/2} / \sqrt{n} \right]\,,
	\end{align}
	where $z_{1-q/2}$ is the $(1-q/2)$-quantile of the standard normal distribution, 
	$\widehat\tau$ is an empirical estimate of $\tau^*$ (defined formally in \eqref{eq:tau-hat}), and 
	\begin{equation*}
		\Sigma_{j|-j} \defn \Sigma_{j,j} - \mySigma_{j,-j}(\mySigma_{-j,-j})^{-1} \mySigma_{-j,j}.
	\end{equation*} 
	We prove that the false-coverage proportion (FCP) concentrates around $q$, where 
	\begin{equation}
		\FCP := \frac1p \sum_{j = 1}^p \indic{\theta^*_j \not \in \mathsf{CI}^\mathrm{d}_j}
		=
		\frac1p \sum_{j=1}^p \mathbf{1}\{|\widehat\theta_j^{\mathrm{d}} - \theta^*_j| > \Sigma_{j|-j}^{-1/2} \widehat \tau z_{1-\alpha/2} / \sqrt{n} \}.
	\end{equation}
	In other words, confidence intervals based on the debiased Lasso achieve nominal false coverage. 
	Combining this with the fact that 
	$q = \E\Big[\frac1p \sum_{j=1}^p \mathbf{1}\{|\widehat\theta_j^{f,\mathrm{d}} - \theta^*_j| 
	\geq \Sigma_{j|-j}^{-1/2} \tau z_{1-\alpha/2} / \sqrt{n} \}\Big]$,
	we conclude the $\FCP$ in the random-design model concentrates on the expectation of 
	the analogous quantity in the fixed-design model.

	The above result provides an additional interpretation of the fixed point parameter $\tau^{*2}$ as the effective noise-level for the debiased Lasso estimates. 
	Note that in the low-dimensional limit which takes $p$ fixed, $n \rightarrow \infty$, the asymptotic standard error of the OLS estimate for $\theta_j^*$ is given by $\Sigma_{j|-j}^{-1/2} \sigma / \sqrt{n}$. 
	The first fixed-point equation states that we should inflate this standard error by replacing $\sigma^2$ with $\sigma^2 + \| \thetahat - \thetastar \|^2_{\mySigma}$, which concentrates around $\tau^{*2}$. 
	Of course, under a low-dimensional asymptotics, we expect $\| \thetahat - \thetastar \|_{\mySigma}^2 \stackrel{\mathrm{p}}\rightarrow 0$, recovering the low-dimensional theory.

\end{itemize}

Versions of these results and the corresponding interpretations of $\tau^*,\zeta^*$
have appeared elsewhere \cite{bayati2011lasso,bayati2013estimating,donoho2016high,thrampoulidis2018,miolane2018distribution,bellec2021outofsample}. 
The present paper is the first one establishing these results under correlated Gaussian designs
and optimal sample size requirements.

\section{Preliminaries} 
\label{Sec:preliminary}
This section summarizes several important concepts that shall be used throughout the paper
 and discusses the assumptions under which our main results are derived.

\paragraph*{Gaussian width and the Donoho-Tanner phase transition}

The success probability of $\ell_1$-norm based methods
changes abruptly at a critical sampling rate $\delta_{\mathrm{DT}}$ which depends on the sparsity 
of the signal and the geometry of the covariates. 
We will refer to this phenomenon as the Donoho-Tanner phase transition
\cite{donoho2005neighborliness,donoho2009counting}.
Below the transition (roughly speaking, for $n/p < \delta_{\mathrm{DT}}$),
$\ell_1$-penalized methods fail to achieve exact noiseless recovery, stable noisy recovery, 
bounded minimax noisy recovery over sparse balls, and full power for variable selection 
\cite{donohoTanner2009,donohoMalekiMontanari2011,chandrasekaran2010,tropp2015convex,suCandes2017,wang2020}.
Above the transition (for $n/p > \delta_{\mathrm{DT}}$), $\ell_1$-penalized methods are
 able to succeed according to these metrics.

This paper uses Gaussian comparison techniques \cite{chandrasekaran2010,miolane2018distribution},
and our results hold for all sampling rates  $n/p$ exceeding $\delta_{\mathrm{DT}}$,
where $\delta_{\mathrm{DT}}$ is defined below in terms of a certain Gaussian width. 
We anticipate that our definition of this threshold is (for general $\mySigma$)
slightly different from the standard one in the literature.
Importantly, the restricted eigenvalue conditions which are often used to 
derive estimation error bounds  on the Lasso need not occur near the Donoho-Tanner phase transition.
Hence, our results could not be established using those conditions.

Given a vector $\bx\in\{+1,-1,0\}^p$, define the closed convex cone $\cuK(\bx,\mySigma)$
and the homogeneous convex function $F(\,\cdot\,;\bx,\mySigma):\reals^p\to\reals$ as follows:
\begin{align}
\cuK(\bx,\mySigma) &:=\big\{ \bv\in\reals^p:\; F(\bv;\bx,\mySigma)\le 0\big\}\, ,\\
  F(\bv;\bx,\mySigma) &:=\<\bx,\mySigma^{-1/2}\bv\>+\big\|(\mySigma^{-1/2}\bv)_{S^c}\big\|_1\, 
  \qquad 
  \text{for } S \defn \supp(\bx).
\label{EqnConstraintDef}
\end{align}
(The reader should think of
$\bv$ as $\mySigma^{-1/2}(\mytheta - \thetastar)$, where $\mytheta$ is the argument appearing 
in the Lasso optimization.)

Consider $\thetastar\in\reals^p$ with $\bx = \sign(\thetastar)$, 
i.e., $x_j = 1$ for $\theta^*_j > 0$, $x_j = -1$ for $\theta^*_j < 0$, and $x_j = 0$ 
for $\theta^*_j = 0$.
Then $\cuK(\bx,\mySigma)$ is the descent 
cone of the function $\bv\mapsto \|\thetastar+\mySigma^{-1/2}\bv\|_{1}$ at $\bv=\bzero$.
Namely (denoting by ${\rm cl}(A)$ the closure of set $A$)
\begin{align}
\label{eqn:def-cuk}
\cuK(\bx,\mySigma) &:={\rm cl}\Big(\big\{ \bv\in\reals^p:\; 
\exists\eps>0\;\mbox{s.t.}\; \|\thetastar+\eps \Sigma^{-1/2}\bv\|_{1}\le \|\thetastar\|_1\big\}\Big)\,.
\end{align}
The connection between this cone and the Lasso is most easily seen in the case of 
minimum $\ell_1$-norm interpolation (basis pursuit), corresponding to the $\lambda\to 0$ limit of 
the Lasso \eqref{EqnOrgRisk}:
\begin{align}
	\thetahat_{\sBP} 
		&\defn 
		\argmin_{\mytheta \in \reals^{\usedim}}  \Big\{ \|\mytheta\|_1\;\;\;\;\mbox{s.t.}\;\;\;
		\bX\mytheta=\by\Big\}\, .
\end{align}
In the noiseless case $\sigma=0$ (i.e. $\by=\bX\thetastar$), 
$\thetahat_{\sBP}  = \thetastar$ if and only if ${\rm null}(\bG)\cap \cuK(\bx,\mySigma) = \{\bzero\}$
where $\bG=\bX\mySigma^{-1/2}$ is a Gaussian matrix with i.i.d. entries \cite{amelunxen2014living}.
As proven in \cite{amelunxen2014living}, the 
probability of the event ${\rm null}(\bG)\cap \cuK(\bx,\mySigma) = \{\bzero\}$ 
transitions rapidly from $0$ to $1$ when the sampling 
ratio $n/p$ crosses $\cuG_{d}(\bx,\mySigma)^2$. Specifically, \cite[Theorem II]{amelunxen2014living} ensures that 
\begin{align}
	\text{if }\frac{n}{p} \leq \cuG_{d}(\bx,\mySigma)^2 - \Delta, \qquad
	&\mathbb{P}(\thetahat_{\sBP}  = \thetastar) \leq 4 \exp(-p\Delta^2/8);\\
	\text{if }\frac{n-1}{p} \geq \cuG_{d}(\bx,\mySigma)^2 + \Delta, \qquad
	&\mathbb{P}(\thetahat_{\sBP}  = \thetastar) \geq 1 - 4 \exp(-p\Delta^2/8).
\end{align}
Here $\cuG_{d}(\bx,\mySigma)$ is the Gaussian width of $\cuK(\bx,\mySigma)$
defined as follows \cite{geer2000empirical,chandrasekaran2010,tropp2015convex}:
\begin{align}
\label{Eqn:std-GW}
	\cuG_{d}(\bx,\mySigma)
	= 
	\frac1{\sqrt{p}} \E\Big[ 
		\max_{ 
			\substack{	\bv \in \cuK(\bx,\mySigma) \\
						\|\bv\|_2^2 \leq 1
						} 
			} 
			\langle \bv , \bg \rangle 
		\Big]\,.
\end{align}

We next introduce the modified width that is relevant for our results.
Consider the probability space $(\reals^{p},\cB,\gamma_p)$ with $\cB$ being the Borel $\sigma$-algebra 
and $\gamma_p$ the standard Gaussian measure in $p$ dimensions.
We denote by $L^2 := L^2(\reals^p;\reals^p)$ the space of functions $\f:\reals^p\to \reals^p$
that are square integrable in $(\reals^{p},\cB,\gamma_p)$. 
This space is equipped with the scalar product
\begin{align*}
  \<\f_1,\f_2\>_{L^2} = \E [\<\f_1(\bg),\f_2(\bg)\>]  = \int \<\f_1(\bg),\f_2(\bg)\>\, \gamma_p(\de\bg)\, ,
  \end{align*}
The standard notion of Gaussian width defined in
Eq.~\eqref{Eqn:std-GW} can be rewritten as
\begin{align}\label{Eqn:std-GW-0}
	\cuG_{d}(\bx,\mySigma) 
	:= 
	\sup_{ \bv\in L^2}
        \Big\{\frac1{\sqrt{p}} \langle \bv,\bg \rangle_{L^2}:\;\;\; \mprob(\|\bv\|_2\le 1) = 1\, ,\;\;\; \mprob \big( F(\bv;\bx,\mySigma)\le 0\big)=1\Big\}\,,
\end{align}
where $\bg$ denotes the identity function on $L^2$. Let us emphasize that the supremum is 
taken over functions $\bv:\reals^p\to\reals^p$, $\bg\mapsto\bv(\bg)$.

Instead of  \eqref{Eqn:std-GW-0}, we will make use of the following relaxed version of 
Gaussian width:
\begin{align}\label{EqnGaussianWidth}
	\cuG(\bx,\mySigma) 
	:= 
	\sup_{ \bv\in L^2}
        \Big\{\frac1{\sqrt{p}} \langle \bv,\bg \rangle_{L^2}:\;\;\; \|\bv\|_{L^2}\le 1\, ,\;\;\; \E [F(\bv;\bx,\mySigma)] \le 0\Big\}\,,
\end{align}
In words, $\cuG(\bx,\mySigma)$ is the maximal correlation of a random direction with a 
standard Gaussian vector $\bg$ subject to $F(\bw;\bx,\mySigma)$ being non-positive 
\emph{on average}. 

\paragraph*{Properties of the Gaussian width}

In the case  $\mySigma = \Ind_p$, 
 $\cuG(\bx,\Ind_p)$ depends on $\bx$ only through $\varepsilon := \| \bx \|_0 / p$.
Denote
\begin{align}
\label{eqn:omegastar}
	\text{$\omegastar(\varepsilon) := \cuG_{d}(\bx,\Ind_p)$ for any $\bx$ with $\|\bx\|_0/p = \varepsilon$.}\,
\end{align}
Indeed $\omegastar(\varepsilon)$ can be computed explicitly, and is given in parametric form by
\begin{align}
 \label{eqn:def-omegastar}\omegastar(\varepsilon)^2 & = \eps+2(1-\eps)\Phi(-\alpha)\, ,\\
  \text{where $\alpha$ satisfies} \qquad
 \notag \eps & = \frac{2[\varphi(\alpha)-\alpha\Phi(-\alpha)]}{\alpha+2[\varphi(\alpha)-\alpha\Phi(-\alpha)]}\,.
\end{align}
Here $\varphi(x) = e^{-x^2/2}/\sqrt{2\pi}$ is the standard Gaussian density, and $\Phi(x) =\int_{-\infty}^x\varphi(t) \de t$ is the
Gaussian cumulative distribution function.
One can show that $\omegastar(\varepsilon)$ is increasing and continuous in $\varepsilon$,
goes to 1 as $\varepsilon \rightarrow 1$,
and satisfies
\begin{equation}
	\omega^*(\varepsilon)
		=
		(1 + o(\eps))\sqrt{2 \varepsilon \log(1 / \varepsilon)} .
\end{equation} 
Thus, $n/p \geq \cuG(\sign(\thetastar),\Ind_p)^2$ is equivalent to $2(1+o(s/p))s\log(p/s)/n \leq 1$.



For general Gaussian designs $\mySigma$, 
the critical sampling rate depends not only on the sparsity of $\thetastar$ but also on the 
location and sign of its active coordinates. However, the value of $\cuG(\bx,\mySigma)$
changes at most by a factor equal to the condition number of $\mySigma$, as stated in the next lemma.
\begin{lems}
\label{lem:width-under-corr}
	Assume that $\mySigma$ has condition number upper bounded by $\kappacond$.
	Then for any $\bx \in \{-1,0,1\}^p$,
	\begin{equation}
		\label{EqWidthSigmaToIBound}
		\kappacond^{-1/2} \cdot \omegastar( \|\bx\|_0 / p )\le 
		\cuG(\bx,\mySigma) \le \kappacond^{1/2} \cdot \omegastar( \|\bx\|_0 / p )\,.
	\end{equation}
	In particular,
	if $ 2(1 + o(s/p))s\log(p/s) /n \leq \kappacond^{-1}$, then $n/p \geq \cuG(\bx,\mySigma)^2$.
\end{lems}
\noindent We prove Lemma \ref{lem:width-under-corr} in Appendix \ref{sec:GW-prop}.


The definitions \eqref{EqnGaussianWidth} and \eqref{Eqn:std-GW-0} immediately 
imply $\cuG_{d}(\bx,\mySigma)\le\cuG(\bx,\mySigma)$.
The next lemma establishes that the two definitions of
Gaussian width differ by a factor that is often negligible.
\begin{proposition}
\label{prop:std-width}
	For $c'$ depending only on $\kappacond$, we have 
	\begin{align}
	\cuG(\bx,\mySigma) - c' \min\Big(\frac{\sqrt{p}}{s} ; \, \sqrt{\frac{s}{p} \log(p/s) }
	\Big)\le 
	\cuG_{d}(\bx,\mySigma) \le \cuG(\bx,\mySigma) \, ,
	\end{align}
	 where $s = \| \bx \|_0$.
\end{proposition}
\noindent 
We prove Proposition \ref{prop:std-width} in Section \ref{SecPfandDetails}.

For designs with bounded condition number, $\cuG(\bx,\mySigma)^2\asymp 
(s/p)\log(p/s)$, cf. Lemma \ref{lem:width-under-corr}.
Comparing with the lower bound in Proposition \ref{prop:std-width}, we obtain that 
the difference between  $\cuG_{d}(\bx,\mySigma)$  and $\cuG(\bx,\mySigma)$ is negligible
provided $s\gg p^{2/3}/(\log p)^{1/3}$.

For sub-linear sparsity $s=o(p)$, we do not expect the bound of Proposition \ref{prop:std-width} to be tight. 
Because the results in this paper provide non-trivial control of the Lasso and debiased Lasso 
estimates for sampling rates $n/p$ of order 1 (see parameter
 $\Deltamin$ in Assumption \ref{assump:1}(d) below),
we do not pursue a more careful comparison of the standard and functional Gaussian widths 
for sublinear sparsities here. 
Indeed, under sub-linear sparsity, any sampling rate of order 1 is well above the Donoho-Tanner
 phase transition.

\paragraph*{Assumptions}

We are ready to formally state the assumptions which will hold throughout the paper.
The distribution of the random design $\bX$,  response vector $\by$, and Lasso estimate $\thetahat$ is determined
by the tuple $(\thetastar,\mySigma,\sigma,\lambda)$, the number of samples $n$, and the dimensionality $p$.
Our results hold uniformly over choices of $(\thetastar,\mySigma,\sigma,\lambda)$ and sampling rates $n/p$ that satisfy the following conditions: 
\begin{enumerate}[label={\rm(A\arabic*)}]
	\item \label{assump:1} There exist $0 < \lambdamin \leq \lambdamax < \infty$, $0 < \kappamin \leq \kappamax < \infty$, and $0 < \sigmamin \leq \sigmamax < \infty$, $M < \infty$, $\Deltamin \in (0,1)$
	such that
	\begin{enumerate}[label=(\alph*)]
		\item 
		The Lasso regularization parameter $\lambda$ is bounded $\lambdamin \leq \lambda \leq \lambdamax$.
		\item 
		The singular values $\kappa_j(\mySigma)$ of the population covariance $\mySigma$ are bounded $\kappamin \leq \kappa_j(\mySigma) \leq \kappamax$ for all $j$.
		We define $\kappacond := \kappamax/\kappamin \geq 1.$
		\item 
		The noise variance $\sigma^2$ is bounded $\sigmamin^2 \leq \sigma^2 \leq \sigmamax^2$.
		\item 
		There exists $\barthetastar \in \reals^p$ such that $\| \thetastar - \barthetastar \|_1/p \leq M / \sqrt{n} $ and 
		\begin{align*}
		\frac{n}{p}  \geq \cuG(\sign(\barthetastar),\mySigma)^2 + \Deltamin\, .
		\end{align*}
	\end{enumerate}
\end{enumerate}
We denote the collections of constants appearing in assumptions  \ref{assump:1} by 
\begin{align}
\label{eqn:Pmodel-parameters}
	\cuPmodel 
		\defn 
		(\lambdamin,\lambdamax,\kappamin,\kappamax,\sigmamin,\sigmamax,\Deltamin,M)\,.
\end{align}
The choice of the constants $\cuPmodel$ determines via Assumption \ref{assump:1} the 
space of parameters $(\thetastar,\mySigma,\sigma,\lambda)$ and sampling rates $n/p$
(the uniformity class) 
within which the results stated below apply. With a slight abuse of language, we will 
occasionally use $\cuPmodel$ to refer to the uniformity class as well.

Assumption \ref{assump:1}(d) can be viewed as an approximate sparsity condition: 
$\thetastar$ is approximated in $\ell_1$-norm by a vector $\barthetastar$ whose sparsity places 
it above the Donoho-Tanner phase transition.
As established in the next proposition,
Assumption \ref{assump:1}(d) is implied by existing popular notions of approximate sparsity 
which appear elsewhere in the Lasso literature. 
\begin{proposition}
	\label{ClaimBallToApproxSparse}
	Assumption \ref{assump:1}(d) (with the specified choice of $M$) is implied by any of the following.
	\begin{enumerate}[label=(\alph*)]

		\item 
		If $\|\thetastar\|_0 \leq s$, 
		then Assumption \ref{assump:1}(d) is satisfied with $M = 0$ if 
		\begin{equation}
		\label{eq:cond-s-bound}
			\kappacond^{1/2}\omega^*(s/p) \leq 1 - \Deltamin.
		\end{equation}
		In particular, it suffices that
		\begin{equation}
		\label{eq:cond-approx-s-bound}
			2 \kappacond (1 + o(s/p))s \log(p/s) / n \leq (1 + \Deltamin)^{-1}.
		\end{equation}

		\item 
		If $\thetastar \in \Ball_q(\nu)$ for $q,\nu > 0$, 
		then Assumption \ref{assump:1}(d) is satisfied by taking $M = \sqrt{n}\nu(1 - s/p)/p^{1/q}$ for any $s$ satisfying Eq.~\eqref{eq:cond-s-bound} or Eq.~\eqref{eq:cond-approx-s-bound}.

		\item 
		If $ \sum_{j=1}^p \min(1, \sqrt{n} |\theta^*_j|/a_0) \leq s$ for a certain $a_0$, 
		then Assumption \ref{assump:1}(d) is satisfied with $M = a_0 s / p$ provided 
		Eq.~\eqref{eq:cond-s-bound} or Eq.~\eqref{eq:cond-approx-s-bound} is satisfied.
	\end{enumerate}	
\end{proposition}
\noindent Proposition \ref{ClaimBallToApproxSparse} follows from Lemma \ref{lem:width-under-corr}. 
Its proof is given in Appendix~\ref{Sec:PFclaimBall}.

In words, 
Assumption \ref{assump:1}(d) allows $\thetastar$ to be unbounded on a certain signed support, 
and requires that it be small in $\ell_1$-norm on its remaining coordinates. 
Here ``small'' means $O(1/\sqrt{n})$ per coordinate on average, with leading constant given by $M$.
The location and sign of the coordinates on which $\thetastar$ can be unbounded is determined 
by the Gaussian width $\cuG(\mySigma,\bx)$ of the corresponding vector $\bx$.
Assumption \ref{assump:1}(d) permits that the number of coordinates in which $\thetastar$ is 
unbounded is proportional to $p$, 
but does not allow for arbitrarily large proportionality constant.
For example, as is clear from Proposition \ref{ClaimBallToApproxSparse},
we require at least that $s \leq n$, and in fact will require something stronger than this. 

Proposition \ref{ClaimBallToApproxSparse} uses Lemma \ref{lem:width-under-corr} to
bound $\cuG(\mySigma,\bx)$ with a suitable $\bx=\sign(\barthetastar)$.
Since Lemma \ref{lem:width-under-corr} is loose in general, the sufficient notions of 
approximate sparsity in Proposition \ref{ClaimBallToApproxSparse} are not sharp
and do not identify
the whole domain of validity of our results.
In contrast, Assumption \ref{assump:1}(d) will imply that our results hold down to 
the Donoho-Tanner phase transition for a good $\ell_1$-approximation of $\thetastar$.


\section{Main results}
\label{sec:main}

We now turn to the statement of our main results and a discussion of some of their consequences. 
The proof details are deferred to the appendix. 

\subsection{Control of the fixed-point parameters}
\label{sec:solution-property}

Each of our results involves a comparison of the Lasso or debiased Lasso estimators in the random- and fixed-design models.
The comparison will be valid provided we choose $\tau,\zeta$ to be the solution to the fixed-point equations \eqref{EqnEqn1} and \eqref{EqnEqn2}.
This solution we call $\tau^*,\zeta^*$.
The next lemma establishes that the solution is unique, and satisfies uniform
bounds under Assumption \ref{assump:1}.
\begin{lems}
\label{LemTauZetaBounds}
	If $\mySigma$ is invertible and $\sigma^2 > 0$,
	then Eqs.~\eqref{EqnEqn1} and \eqref{EqnEqn2} have a unique solution $\tau^*,\zeta^*$. 
	Under Assumption \ref{assump:1}, 
	there exists $\taumax < \infty$ and $\zetamin > 0$ depending only on 
	$\cuPmodel$ and $\delta$ such that 
	$\sigma^2 \leq \tau^{*2} \leq \taumax^2$ and $\zetamin \leq \zeta^* \leq 1$.
\end{lems}
\noindent We prove Lemma \ref{LemTauZetaBounds} in Appendix \ref{SecPfandDetails}.
An important consequence of Lemma \ref{LemTauZetaBounds} is that, due to the fixed-point equations \eqref{EqnEqn1} and \eqref{EqnEqn2},
the quantity $\risk(\tau^{*2},\zeta^*)$ is bounded above by $\taumax^2 - \sigma^2$ and the quantity $\df(\tau^{*2},\zeta^*)/n$ is bounded away from 1 by $1 - \zetamin$. 
As we will see (and as described in Section \ref{Sec:preliminary}), $\risk(\tau^{*2},\zeta^*)$ and $\df(\tau^{*2},\zeta^*)$ are good approximations of the prediction risk $\| \thetahat - \thetastar \|^2_{\mySigma}$ and the degrees-of-freedom $\| \thetahat \|_0$ of the Lasso estimator in the random-design model \eqref{EqnOrgRisk}.
Thus, Lemma \ref{LemTauZetaBounds}, in addition to being a technical tool which shall be used repeatedly in our proofs,
has substantive consequences on the behavior of the Lasso: 
under an arbitrarily small separation from the Donoho-Tanner phase transition, 
it gives non-trivial upper bounds on the Lasso prediction error and model size.

\begin{remark}
\label{rmk:fixed-pt-bounds}
	The challenge in proving Lemma \ref{LemTauZetaBounds} lies in the fact that 
	$\tau^*,\zeta^*$ are implicitly defined as the solutions to the fixed-point equations
	 \eqref{EqnEqn1} and \eqref{EqnEqn2}. 
While in the case of iid Gaussian designs, one can exploit the explicit analytic formulas for
 $\risk(\tau^2,\zeta)$ and $\df(\tau^2,\zeta)$ as in \cite{miolane2018distribution}, 
	no such formulas are available under correlated designs. 
	Thus, we resort to a novel argument based on viewing Eqs.~\eqref{EqnEqn1} and \eqref{EqnEqn2} 
	as KKT conditions for an infinite-dimesional optimization problem defined in Section
	 \ref{sec:proof-ingredients} (see also Section \ref{SecPfandDetails}).
	The Gaussian width plays a central and natural role in the analysis of this optimization problem.
	Restricted eigenvalues or similar ideas do not yield a tight analysis 
	of this optimization problem.
\end{remark}

\noindent For the remainder of the document, we always assume $\thetahat^f$ and 
$\thetahat^{f,\mathrm{d}}$ are computed with parameters $\tau^*,\zeta^*$.

\subsection{Control of the Lasso estimate}
\label{sec:LassoEstimate}

Our first result states that the random-design Lasso behaves like the fixed-design Lasso from the point of view of Lipschitz test functions.
 The proof of this result is deferred to Section \ref{SecPfThmLassoL2}. 
\begin{theos}
\label{ThmControlLassoEst}
	Assume \ref{assump:1} holds.
	Then there exist constants $C,c,c' > 0$ depending only on $\cuPmodel$ and $\delta$ such that the following holds: if $n \geq \sqrt{2}/\Deltamin$, 
	then for any $1$-Lipschitz function $\phi:\real^p \rightarrow \reals$
	we have for all $\epsilon < c'$
	\begin{align}
		\mprob \left(\exists \lambda \in [\lambdamin, \lambdamax],~
		\Big| 
		\phi\big(\thetahat \big) 
		- \E\Big[ \phi\big(\thetahat^f\big)\Big]\Big|
		 > \epsilon\right)
		\leq 
		\frac{C}{\epsilon^4} e^{-cp\epsilon^4}\,.
	\end{align}
	Here $\thetahat^f$ is the fixed-design Lasso with $\tau^*,\zeta^*$ solving Eqs.~\eqref{EqnEqn1} and \eqref{EqnEqn2}.
\end{theos}
%
\noindent The proof of this theorem is presented in Section~\ref{SecUniform}.

Theorem \ref{ThmControlLassoEst} has an obvious corollary which we spell out for future reference.
For any 
fixed $\lambda \in [\lambdamin, \lambdamax]$:
	\begin{align}
		\mprob \left(
		\Big| 
		\phi\big(\thetahat \big) 
		- \E\Big[ \phi\big(\thetahat^f\big)\Big]\Big|
		 > \epsilon\right)
		\leq 
		\frac{C}{\epsilon^4} e^{-cp\epsilon^4}\,.\label{eq:FixedLambda}
	\end{align}
Namely, any Lipschitz function of the Lasso estimate concentrates around its expectation in the fixed-design model with high probability --- provided that the sampling rate exceeds the Donoho-Tanner phase transition for a good $\ell_1$ approximation of $\thetastar$ and $p$ is large. 
In particular, 
this concentration holds true even in the case where the sparsity $s$ and dimension $p$ are proportional to $n$, although the proportionality constants cannot be arbitrary. 

We make note that since $\thetastar$ is deterministic, $\phi$ may depend implicitly on $\thetastar$. 
In particular, Theorem \ref{ThmControlLassoEst} applies, for example, to the estimation error and prediction error by taking $\phi(\mytheta) = \| \mytheta - \thetastar \|_2$ and $\phi(\mytheta) = \| \mytheta - \thetastar \|_{\mySigma}$, respectively.
(In the latter case, the constants must be adjusted to account for the fact that $\mytheta \mapsto \| \mytheta - \thetastar \|_{\mySigma}$ does not have Lipschitz constant equal to 1. The adjustment is by at most constant factors because the Lipschitz constant is bounded under  \ref{assump:1}.)
Thus, the $\ell_2$-estimation error and the prediction error concentrate on their expectations in the fixed-design models.
By Eq.~\eqref{EqnEqn1},
this implies that the prediction error $\| \thetahat - \thetastar \|_{\mySigma}^2$ concentrates on $\risk(\tau^*,\zeta^*) = \tau^{*2} - \sigma^2$.

\paragraph*{Comparison with earlier results} 
It is worth comparing this result to the existing fixed-design results for the Lasso 
(e.g.~\cite{bickel2009simultaneous,buhlmann2011statistics,negahban2012,bellec2018}).
To be definite, we consider $\ell_q$-estimation error for $1 \leq q \leq 2$. 
The optimal fixed-design results establish the existence of
constants $c,C>0$ such that 
\begin{align}
\label{eq:ellq-standard}
\lambda \geq c \sqrt{\log(2ep/s)}
\;\;\Rightarrow\;\;
\| \thetahat - \thetastar \|_q \leq C \frac{s^{1/q} \lambda}{\RE^2\sqrt{n}}\, ,
\end{align}
where $\RE$ is an appropriate restricted eigenvalue of $\bX$ (see \cite{bellec2018} 
for precise statements), and $C$ may depend on $q$.

Consider the proportional sparsity regime $s =\Omega(p)$, which is our focus in the present paper.
 We make the following comparisons:

 \textbf{Regularization parameter.} When $s$ is proportional to $p$,  
$c \sqrt{\log(2ep/s)}$ is of order one, 
	so that $\lambda \geq c \sqrt{\log(2ep/s)}$ implies Assumption \ref{assump:1}(d).
	On the other hand, Assumption \ref{assump:1}(d) permits smaller regularization parameters than are 
	permitted by \cite{bellec2018}, since $\lambdamin$ in Assumption \ref{assump:1}(d) can be 
	arbitrarily small (but nonvanishing as $n,p,s\rightarrow \infty$), while  $c$ in 
	Eq.~\eqref{eq:ellq-standard} and  \cite{bellec2018} is a fixed numerical constant
	 bounded away from 0. 
	The case when $\lambda$ is taken to be exactly zero is considered in recent works 
	(see e.g.~\cite{li2021minimum}). 

\textbf{Estimation error.} Because $\mytheta \mapsto 
	\| \mytheta - \thetastar \|_q / p^{1/q - 1/2}$ is $1$-Lipschitz, we can apply
	Theorem \ref{ThmControlLassoEst}. Further using the bound on $\tau^*$ from Lemma \ref{LemTauZetaBounds},
	one can show  that $\E[\| \thetahat^f - \thetastar \|_q] = O(p^{1/q}/n^{1/2})$ under 
	Assumption \ref{assump:1}, where $O$ hides constants depending on $\cuPmodel$
	 (see \eqref{eqn:Pmodel-parameters}). Summarizing, we obtain, with probability at least $1-p^{-A}$
	 for any constant $A$,
	\begin{align}
	\| \thetahat - \thetastar \|_q &= \E[\| \thetahat^f - \thetastar \|_q]+
	 O(p^{1/q-3/4}\log(p))\, ,\label{eq:ellq-concentration}\\
	 \E[\| \thetahat^f - \thetastar \|_q] &= O(p^{1/q}/n^{1/2})\, .
	\end{align}
	In the present setting
	$p^{1/q}/n^{1/2}$ is of the same order as $C s^{1/q}\lambda/(\RE^2\sqrt{n})$, 
	so that the estimate is consistent with the results of \cite{bellec2018}.
	If in addition $n = \tilde o(p^{3/2})$, then the error term in Eq.~\eqref{eq:ellq-concentration}
	is much  smaller than $\E[\| \thetahat^f - \thetastar \|_q]$.
	In other words, we obtain a more precise concentration around a deterministic 
	theoretical prediction, which we characterize.
	
	 \textbf{Restricted eigenvalues and sampling rates.} 
	The previous bullet point describes a scenario in which the restricted eigenvalue 
	$\mathrm{RE}$ is of order 1 (and, in particular, is bounded away from 0).
	In the random-design setting, 
	this implicitly corresponds to an assumption on the number of samples.
	In Section \ref{SecCompare},
	we show that restricted eigenvalues can be 0 for $n/p\ge (1+\eps)\cuG(\mySigma,\bx)$
	with $\eps$ a positive constant.
	Our results provide precise control in an interval of sampling rates that is excluded by
	\cite{bellec2018} and related work \cite{bickel2009simultaneous,buhlmann2011statistics,negahban2012}.
	
	 \textbf{Exact characterization.} 
	By establishing that $\| \thetahat - \thetastar \|_q $ concentrates on 
	$\E[\| \thetahat^f - \thetastar \|_q]$,
	Theorem \ref{ThmControlLassoEst} establishes upper and lower bounds on the risk that
	hold pointwise with respect to $\thetastar$ and match up to negligible errors.
	It is a promising research direction to analyze $\E[\| \thetahat^f - \thetastar \|_q]$
	for	specific correlation structures $\mySigma$ (e.g., 
	block diagonal or low-rank plus identity).
	
	Theorem \ref{ThmControlLassoEst} and the later results in this paper
	can be used to design estimators for $\taustar, \zetastar$, derive
	the distribution of the debiased Lasso, and construct confidence intervals for single coordinates.
	A recent example of this strategy was given in \cite{celentano2021cad} in a different setting.
	These exact concentration results are inaccessible from existing results 
	like those in \cite{bickel2009simultaneous,buhlmann2011statistics,negahban2012,bellec2018} 
	which are loose in their leading constants.

\begin{remark}
\label{rmk:gaussian-error}
	Although we assume that the error $\bz$ in the linear model is Gaussian with independent components,
	this assumption is not necessary,
	and Theorem \ref{ThmControlLassoEst} holds provided that $\| \bz \|_2 / \sqrt{n}$ concentrates on $\sigma$
	(the rate of this concentration may affect the right-hand side of Eq.~\eqref{eq:FixedLambda}). 
	This results from the rotational invariance of the $\ell_2$-norm.
	In settings similar to ours, the extension to non-Gaussian noise is common in the literature 
	(see, for example, \cite{celentano2021}).
	We choose to develop theory with Gaussian noise to simplify the exposition and proofs.
\end{remark}

\begin{remark}
\label{rmk:rate}
	Up to logarithmic factors, Theorem \ref{ThmControlLassoEst} demonstrates a concentration at the rate $p^{-1/4}$.
	Such a rate is typical of results proved using Gordon's comparison inequality, which we use to derive all the results in this paper (see Section \ref{sec:proof-ingredients}). We suspect this rate is an artifact of our proof technique, and the correct rate should be $p^{-1/2}$.
	Recently, \cite{li2022non,li2023approximate} developed a non-asymptotic theory to analyze the approximate message passing algorithm, which offers another possible path to improve upon the current rate. 
	
	At a high level, the source of the rate appearing in Theorem \ref{ThmControlLassoEst} is as follows. 
	Gordon's proof technique allows us to localize $\thetahat$ within a region across which the growth of the objective value exceeds the size of its typical fluctuations.
	The size of the typical fluctuations are $O_{\mathrm{p}}(n^{-1/2})$, 
	and, as a function of distance $r$ from the minimizer, we expect to growth to be $O_{\mathrm{p}}(r^2)$.
	Thus, we get the rate $n^{-1/4}$.
	This rate appears again in Theorem \ref{ThmControlLassoEst} and \ref{ThmLassoResidual}.
	Theorem \ref{SecThmLassoSparsity}, Theorem \ref{ThmDBLasso}, and Corollary \ref{CorDBLassoCI} require approximations which degrade the rate further. We expect that here, too, the rate appearing in the theorem is not optimal.
\end{remark}

\paragraph*{Simultaneous control over $\lambda$} 
So far, we only discussed the consequences of Theorem~\ref{ThmControlLassoEst}
for a fixed value of $\lambda$, namely Eq.~\eqref{eq:FixedLambda}.
However, Theorem~\ref{ThmControlLassoEst} establishes a characterization
which holds simultaneously over all  $\lambda$ in a bounded interval $[\lambda_{\min},\lambda_{\max}]$.
This is particularly useful to analyze adaptive procedures to select $\lambda$.

In particular, it implies that with high probability the minimum estimation error over choices 
of $\lambda\in[\lambda_{\min},\lambda_{\max}]$,
is nearly-achieved at a deterministic value $\lambda_*$. 
Namely, writing $\thetahat_{\lambda}$ and $\thetahat^f_{\lambda}$ for
the Lasso estimator and fixed-design estimator at regularization $\lambda$, we have
\begin{gather}
	\mprob \left(
  \Big| \frac{1}{\sqrt{p}}\|\thetahat_{\lambda_*}-\thetastar\|_2 -\min_{\lambda\in[\lambda_{\min},\lambda_{\max}]}
  \frac{1}{\sqrt{p}}\|\thetahat_{\lambda}-\thetastar\|_2\Big| > \epsilon\right)
		\leq 
  \frac{C}{\epsilon^4} e^{-cp\epsilon^4}\, ,\\
  \text{for }~~ \lambda_* := \argmin_{\lambda\in[\lambda_{\min},\lambda_{\max}]} \frac{1}{\sqrt{p}}\E[\|\thetahat^f_{\lambda}-\thetastar\|_2]\, .
\end{gather}
Recall that it is standard to choose $\lambda$ on the order of $\sqrt{\log(p/s)}$ 
(see, e.g., \cite{bellec2018}). 
As we have already described, 
applying existing fixed-design analysis to the current setting where $s$ is proportional to $p$ 
requires taking $\lambdamin \geq c > 0$ for an explicit constant $c$ that is bounded away from 0.   
As shown in \cite{miolane2018distribution},
choosing $\lambda$ based on such conservative lower bounds can be suboptimal by a large factor.
By allowing $\lambdamin$ to be arbitrarily close to 0,
our results can capture the full range of regularization parameters on which the Lasso behaves well.

\paragraph*{Control of the empirical distribution}

Previous work on iid covariates has mainly focused on establishing the convergence of the
 joint empirical distribution of the coordinates of the Lasso estimator and the true parameter vector: 
 \begin{align}
\hmu_{n,p}:=  \frac{1}{p} \sum_{i=1}^p \delta_{\sqrt{n}\theta_i^*,\sqrt{n}\widehat \theta_i}\, ,
 \end{align}
 to a limiting distribution either weakly or in Wasserstein distance \cite{bayati2011lasso,miolane2018distribution}.
When covariates are iid, the behavior of $\hmu_{n,p}$ captures all non-trivial behavior of the distribution of $\thetahat$:
indeed, the exchangeability of the model implies that conditional on $\hmu_{n,p}$, the distribution of 
$\thetahat$ is uniform over permutations of the coordinates which map each coordinate of $\thetastar$ to a
 coordinate with the same value.
 This is no longer the case for correlated covariates, and
Theorems \ref{ThmControlLassoEst} capture this  this additional structure.

Nevertheless, the empirical distribution $\hmu_{n,p}$ may be of interest, in part because it is easily interpretable.
By applying Theorem \ref{ThmControlLassoEst} to several test functions at once,
we can establish concentration of the empirical distribution simultaneously in $\lambda$.
We use a particular metrization of the weak-topology\footnote{The metric $d_{w^*}$ metrizes weak convergence in the sense that $\mu_i \stackrel{\mathrm{d}}\rightarrow \mu$ if and only if 
$d_{w^*}(\mu_i,\mu) \rightarrow 0$.}
 on the space of probability measures on $\reals^2$, namely 
\begin{equation*}
	d_{w^*}(\mu,\nu) = \sum_{k=1}^\infty 2^{-k} | \E_{\bA \sim \mu}[\phi_k(\bA)] - \E_{\bB\sim \nu}[\phi_k(\bB)] |.
\end{equation*}
Here $\{\phi_k\}$ denotes a countable subset of the $1$-Lipschitz functions $\reals^2 \to \reals$ such that for any compact set $K \subset \reals^2$, $\{\phi_k|_K\}$ is dense with respect to the $\ell_\infty$-norm.
\begin{corollary}\label{CorEmpDist}
	Assume Assumption \ref{assump:1} and additionally that $n/p \leq \Deltamax$.
	There exists $\mu_*$ --- a probability distribution on $\reals^2$ --- and constants $C,C',c > 0$ depending only on $\cuPmodel$ and $\Deltamax$ such that 
	\begin{equation*}
		\mprob\left(\exists \lambda \in [\lambdamin,\lambdamax], 
				\;\;
				d_{w^*}\left(\frac1p \sum_{i=1}^p \delta_{\sqrt{n}\theta_i^*,\sqrt{n}\widehat \theta_i},\mu_*\right) \geq \frac{C'}{\sqrt{p}} + \epsilon
			\right)
			\leq 
			\frac{C}{\epsilon^4}e^{-cn\epsilon^4},
	\end{equation*}
	and 
	\begin{equation*}
		\mprob\left(\exists \lambda \in [\lambdamin,\lambdamax], 
				\;\;
				d_{w^*}\left(\frac1p \sum_{i=1}^p \delta_{\sqrt{n}\theta_i^*,\sqrt{n}\widehat \theta_i^f},\mu_*\right) \geq \frac{C'}{\sqrt{p}} + \epsilon
			\right)
			\leq 
			2e^{-cn\epsilon^2}.
	\end{equation*}
\end{corollary}
\noindent Corollary \ref{CorEmpDist} states that in both the random-design model and the fixed-design model, 
the joint empirical distribution of the estimate and the true parameter concentrates with respect to weak-$*$ distance, and that moreover, they concentrate on the same value. 
Using Theorem \ref{ThmControlLassoEst},
one can also control properties of $\mu_*$ such as its second moments in terms of $\cuPmodel$. We prove Corollary \ref{CorEmpDist} in Appendix \ref{sec:proofOfCorEmpDist}.

\begin{remark}
\label{rmk:miolane-lasso-est}
	The proof of Theorem \ref{ThmControlLassoEst} is similar to the proof of Theorem 3.1 of \cite{miolane2018distribution} 
	in the iid design case. 
	The proof of simultaneous control over $\lambda$ (Theorem \ref{ThmControlLassoEst}) and the control 
	of the Lasso residual (Theorem \ref{ThmLassoResidual}), stated below, are similar to the proofs of 
	analogous results in \cite{miolane2018distribution}.
	We emphasize, however, that these proofs rely heavily on the boundedness and uniqueness of the fixed-point parameters 
	$\tau^*$ and $\zeta^*$ (see Lemma \ref{LemTauZetaBounds}). 
	Regarding the Lasso estimate, establishing these properties of the fixed-design characterization
	is the main technical innovation of the present paper (see Remark \ref{rmk:fixed-pt-bounds}).
	Below we will see that further technical innovations are required for analyzing the Lasso sparsity and the debiased Lasso.
	
 Note that the $\epsilon^4$ appearing in the exponent in Theorem \ref{ThmControlLassoEst} is faster than the rate appearing in Theorem 3.1 of \cite{miolane2018distribution}.
	This is because \cite{miolane2018distribution} provide a good approximation of the empirical distribution of the coordinates of $\thetahat$ in Wasserstein metric, 
	which is more complex object to control than a single Lipschitz function (see \cite[Proposition F.2]{miolane2018distribution}). 
	Corollary \ref{CorEmpDist} controls the empirical distribution of the coordinates of $\thetahat$, but in a metric which is weaker than the Wasserstein metric.
\end{remark}


\subsection{Control of the Lasso residual}

In this section, we establish control for the residual of the Lasso estimator. 
The behavior of this residual is of interest because it can be used in estimators of important quantities. 
For example, we shall use it to construct an empirical estimate $\widehat \tau$ of $\tau^*$.
Informally, the Lasso residual behaves like a normally distributed random vector with zero mean and covariance $(\taustar\zetastar)^2 \Ind_n$. 
\begin{theos}
\label{ThmLassoResidual}
	Under Assumption \ref{assump:1},
	there exist constants $C,c,c' > 0$ depending only on $\cuPmodel$ and $\delta$ such that for any $1$-Lipschitz function $\phi:\reals^p \rightarrow \reals$, 
	we have for all $\epsilon < c'$
	\begin{equation}
	\label{EqnGradResidual}
		\mprob \left(
				\Big|\phi\Big(\frac{\by - \bX\thetahat}{\sqrt n}\Big) - \E\Big[\phi\Big(\frac{\taustar\zetastar \bh}{\sqrt n}\Big)\Big]\Big| > 
				\epsilon
			\right) 
			\leq 	
			\frac{C}{\epsilon^2} e^{-c\numobs \epsilon^4}\,,
	\end{equation}
	where $\bh \sim \normal(\bzero, \Ind_n)$.
	Consequently,
	\begin{equation}
		\mprob \left(
				\Big|\frac{\|\by - \bX \thetahat\|_2}{\sqrt n} - \taustar\zetastar\Big| > 
				\epsilon
			\right) 
			\leq 	
			\frac{C}{\epsilon^2} e^{-c\numobs \epsilon^4}\,.
	\end{equation}
\end{theos}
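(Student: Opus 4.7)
My plan is to split the proof into two parts: (A) concentration of the residual $\ell_2$-norm $\|\by-\bX\thetahat\|_2/\sqrt{n}$ around $\taustar\zetastar$, and (B) a spherical symmetry argument that upgrades (A) to the full Lipschitz statement.

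For (B), observe that the joint distribution of $(\bX,\bz)$ is invariant under left-multiplication by any orthogonal $\bm{Q}\in\reals^{n\times n}$: the rows of $\bX$ are i.i.d.\ $\normal(\bzero,\mySigma/n)$, so $\bm{Q}\bX\stackrel{d}{=}\bX$, and $\bm{Q}\bz\stackrel{d}{=}\bz$. The Lasso objective $\frac{1}{2n}\|\by-\bX\mytheta\|_2^2+\frac{\lambda}{n}\|\mytheta\|_1$ depends on the data only through $\|\by-\bX\mytheta\|_2$, so it is invariant under $(\bX,\by)\mapsto(\bm{Q}\bX,\bm{Q}\by)$. Consequently $\thetahat$ is unchanged while the residual transforms as $\by-\bX\thetahat\mapsto\bm{Q}(\by-\bX\thetahat)$, so the residual is spherically symmetric: conditionally on $\|\by-\bX\thetahat\|_2$, we may write $\by-\bX\thetahat=\|\by-\bX\thetahat\|_2\cdot\bu$ with $\bu\sim\mathrm{Unif}(S^{n-1})$ independent of $\|\by-\bX\thetahat\|_2$.

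For (A), use the KKT stationarity $\bX^\top(\by-\bX\thetahat)=\lambda\bthat$, $\bthat\in\partial\|\thetahat\|_1$, together with $\by-\bX\thetahat=\sigma\bz-\bX(\thetahat-\thetastar)$, and expand
\begin{equation*}
\frac{\|\by-\bX\thetahat\|_2^2}{n}=\sigma^2\frac{\|\bz\|_2^2}{n}-\frac{2\sigma}{n}\bz^\top\bX(\thetahat-\thetastar)+\frac{1}{n}\|\bX(\thetahat-\thetastar)\|_2^2.
\end{equation*}
The first term concentrates at $\sigma^2$. For the cross term, Gaussian integration by parts in $\bz$ yields $\E[\bz^\top\bX(\thetahat-\thetastar)]=\sigma\,\E\|\thetahat\|_0$, since on the active set $S$ the Lasso Jacobian is $\partial\thetahat/\partial\bz=\sigma(\bX_S^\top\bX_S)^{-1}\bX_S^\top$ with trace $\sigma\|\thetahat\|_0$; concentration follows from Gaussian Poincar\'e applied to the Lipschitz map $\bz\mapsto\bz^\top\bX(\thetahat-\thetastar)$, and by Corollary~\ref{CorControlLassoEst} we have $\|\thetahat\|_0/n\to\df((\taustar)^2,\zetastar)=1-\zetastar$. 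For the prediction-error term, combine Corollary~\ref{CorControlLassoEst} applied to a suitable Lipschitz statistic of $\mySigma^{1/2}(\thetahat-\thetastar)$ with a Gaussian conditioning argument on $\bX$ given $\thetahat-\thetastar$: the leading contribution is $\|\mySigma^{1/2}(\thetahat-\thetastar)\|_2^2/n$, with explicit corrections coming from the dependence of $\thetahat$ on $\bX$. Substituting the fixed-point equations $(\taustar)^2=\sigma^2+\risk((\taustar)^2,\zetastar)$ and $\zetastar=1-\df((\taustar)^2,\zetastar)$ simplifies the limit to $(\taustar\zetastar)^2$.

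Finally, on the high-probability event $\big|\|\by-\bX\thetahat\|_2/\sqrt{n}-\taustar\zetastar\big|\le\epsilon$, the $1$-Lipschitzness of $\phi$ yields $|\phi((\by-\bX\thetahat)/\sqrt{n})-\phi(\taustar\zetastar\,\bu)|\le\epsilon$. L\'evy's concentration on the sphere then shows $\phi(\taustar\zetastar\,\bu)$ concentrates around its mean, and the standard comparison between $\mathrm{Unif}(S^{n-1})$ and $\normal(\bzero,\Ind_n/n)$ identifies this mean with $\E[\phi(\taustar\zetastar\bh/\sqrt{n})]$ up to an $O(1/\sqrt{n})$ correction; combining the resulting tails yields the claimed $(C/\epsilon^2)\exp(-cn\epsilon^4)$ bound. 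The consequent $\ell_2$-norm statement is the special case $\phi(\bx)=\|\bx\|_2$, which is $1$-Lipschitz. The main obstacle is the prediction-error control in (A), because $\|\bX(\thetahat-\thetastar)\|_2^2$ involves $\bX$ beyond the Lipschitz reach of Corollary~\ref{CorControlLassoEst}; closing this gap requires a careful conditioning argument on $\bX$ given $\thetahat$, in the spirit of the CGMT analysis employed elsewhere in the paper.
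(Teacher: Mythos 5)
Your spherical-symmetry observation in step (B) is correct and pleasingly elementary: the law of $(\bX,\bz)$ is invariant under $(\bX,\bz)\mapsto(\bQ\bX,\bQ\bz)$ for orthogonal $\bQ$, the Lasso objective is unchanged while the residual rotates, so $\by-\bX\thetahat$ is exactly rotationally invariant and hence its direction is uniform on $S^{n-1}$ and independent of its norm. Combined with L\'evy concentration on the sphere and the sphere-to-Gaussian comparison, this cleanly reduces the Lipschitz statement \eqref{EqnGradResidual} to concentration of the radius $\|\by-\bX\thetahat\|_2/\sqrt{n}$ around $\taustar\zetastar$. That reduction is not how the paper proceeds: the paper applies Gordon's lemma directly to the dual variable $\uhat=\bX\thetahat-\by$, treating it as the maximizer of a max--min form of $\cuC(\bv)$ and comparing to the Gordon maximizer, which is shown to be close to $\taustar\zetastar\bh$ (following \cite{miolane2018distribution}, Theorem D.1 and Lemma D.1). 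Your reduction would be an acceptable alternative \emph{if} you could prove the radial concentration independently.

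But step (A) is where the proposal breaks, and the two gaps you half-acknowledge are fatal, not cosmetic. First, you invoke Corollary~\ref{CorControlLassoEst} to claim $\|\thetahat\|_0/n$ concentrates around $1-\zetastar$. That corollary applies only to $1$-Lipschitz functions of $\thetahat/\sqrt{p}$, and $\mytheta\mapsto\|\mytheta\|_0$ is not even continuous; this is precisely why the paper needs a separate and substantially more delicate result (Theorem~\ref{ThmLassoSparsity}), whose proof relies on controlling the subgradient (Lemma~\ref{LemSubgradient}) and an anti-concentration bound for $\breveby^f$ (Lemma~\ref{LemMargDensBound}), and which comes with the weaker rate $(C/\epsilon^3)e^{-cn\epsilon^6}$. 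You cannot simply cite the Lipschitz corollary for it. Second, and more seriously, the prediction-error term $\tfrac1n\|\bX(\thetahat-\thetastar)\|_2^2$ is genuinely outside the reach of Corollary~\ref{CorControlLassoEst}: what that corollary gives is concentration of $\tfrac1p\|\mySigma^{1/2}(\thetahat-\thetastar)\|_2^2$, which is the $\bX$-averaged version of the quantity you need, and the dependence of $\thetahat$ on $\bX$ makes these two differ by an order-one amount. Indeed, working out the arithmetic in your decomposition, one would need $\tfrac1n\|\bX(\thetahat-\thetastar)\|_2^2\to\sigma^2(1-\zetastar)^2+{\zetastar}^2\risk(\taustar^2,\zetastar)$, which is not equal to $\risk(\taustar^2,\zetastar)$; the ``explicit corrections coming from the dependence of $\thetahat$ on $\bX$'' that you mention \emph{are} the whole difficulty, and you offer no mechanism for them. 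The paper's approach via Gordon's lemma on the dual variable avoids this entirely, which is exactly why the decomposition is not used there.
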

\noindent The proof of Theorem \ref{ThmLassoResidual} is provided in Section~\ref{SecPfThmLassoResidual}. 


\subsection{Control of the Lasso sparsity}

This section characterizes the sparsity of the Lasso estimator. In particular, we show that the number of selected parameters per observation $\|\thetahat\|_0/n$ concentrates on $\E[\| \thetahat^f \|_0]/n = 1-\zetastar$.
\begin{theos}
\label{ThmLassoSparsity}
	Under Assumption \ref{assump:1},
	there exist constants $C,c,c' > 0$ depending only on $\cuPmodel$ and $\delta$ such that for all $\epsilon < c'$,
	\begin{equation}
		\mprob\left(\Big|\frac{\|\thetahat\|_0}{n} - (1-\zetastar)\Big| > \epsilon\right)
		\leq 
		\frac{C}{\epsilon^3}e^{-cn\epsilon^6}\,.
	\end{equation}
\end{theos}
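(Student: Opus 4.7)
The target identity is $1-\zetastar = \df\big((\taustar)^2,\zetastar\big) = \E\big[\|\thetahat^f\|_0\big]/n$, which follows by combining the second fixed-point equation~\eqref{EqnEqn2} with the two expressions for $\df$ in~\eqref{EqnE2}. Hence the theorem reduces to controlling $\|\thetahat\|_0/n - \|\thetahat^f\|_0/n$ plus establishing concentration of $\|\thetahat^f\|_0/n$ around its mean. The chief obstruction is that $\bm u\mapsto\|\bm u\|_0$ is not Lipschitz, so Corollary~\ref{CorControlLassoEst} does not apply directly.

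My approach is to approximate $\|\cdot\|_0$ by a Lipschitz surrogate and then apply Theorem~\ref{ThmLassoL2}. For a smoothing parameter $\alpha>0$, let $\psi_\alpha(x):=\min(|x|/\alpha,1)$, which is $(1/\alpha)$-Lipschitz and satisfies $0\le\indic{x\ne 0}-\psi_\alpha(x)\le\indic{0<|x|<\alpha}$. This yields the pointwise sandwich
\begin{equation*}
0 \;\le\; \|\thetahat\|_0 - \sum_{i=1}^p\psi_\alpha(\widehat\theta_i) \;\le\; \big|\{i:0<|\widehat\theta_i|<\alpha\}\big|,
\end{equation*}
and the analog for $\thetahat^f$. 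Since $\bm u \mapsto \tfrac{1}{n}\sum_i \psi_\alpha(\sqrt p\,u_i)$ is $\big(1/(\delta\alpha)\big)$-Lipschitz on $\reals^p$, applying Theorem~\ref{ThmLassoL2} to its 1-Lipschitz rescaling provides concentration of $\tfrac{1}{n}\sum_i\psi_\alpha(\widehat\theta_i)$ around $\tfrac{1}{n}\E[\sum_i\psi_\alpha(\widehat\theta^f_i)]$.

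Anti-concentration in the fixed-design model is the simpler of the two tasks: $\thetahat^f=\eta(\mySigma^{1/2}\thetastar+\taustar\bg,\zetastar)$ is a piecewise-linear function of $\bg$, with pieces labeled by active set and sign pattern. On each piece, every non-zero coordinate $\widehat\theta^f_i$ is an affine functional of $\bg$ with non-degenerate Gaussian marginal whose density is bounded uniformly in terms of $\cuPmodel$ and $\cuPfixpt$, so Gaussian anti-concentration yields $\mprob(\widehat\theta^f_i\in(0,\alpha))\le C\alpha$. Summing then gives $\E\big[|\{i:0<|\widehat\theta^f_i|<\alpha\}|\big]/n\le C\alpha$, and both this count and $\|\thetahat^f\|_0/n$ concentrate around their means by Gaussian Lipschitz concentration after a further layer of smoothing.

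The hardest step is transferring the anti-concentration bound to the random-design Lasso, where the joint distribution of the $(\widehat\theta_i)$ is not Gaussian. I plan to apply Theorem~\ref{ThmLassoL2} once more to the ``tent'' surrogate $\psi_{2\alpha}-\psi_\alpha$: this function is $O(1/\alpha)$-Lipschitz, vanishes at $0$, and its sum over $i$ upper-bounds $|\{i:\alpha\le|\widehat\theta_i|<2\alpha\}|$ up to an absolute constant, while its fixed-design expectation is controlled by the previous paragraph. Combined with a dyadic summation over smoothing scales, this bounds $|\{i:0<|\widehat\theta_i|<\alpha\}|/n$ by its fixed-design counterpart plus a Lipschitz concentration error. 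Finally, choosing $\alpha$ to balance the smoothing error (of order $\alpha$) against the Lipschitz concentration rate (which degrades as $(\delta\alpha)^{-1}$ upon rescaling to a 1-Lipschitz function) yields the claimed probability bound $|\|\thetahat\|_0/n-(1-\zetastar)|\le\epsilon$ with probability at least $1-(C/\epsilon^3)\exp(-cn\epsilon^6)$.
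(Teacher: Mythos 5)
Your reduction to controlling $\|\thetahat\|_0/n - \|\thetahat^f\|_0/n$ and the sandwich $\sum_i\psi_\alpha(\widehat\theta_i)\le\|\thetahat\|_0\le\sum_i\psi_\alpha(\widehat\theta_i)+|\{i:0<|\widehat\theta_i|<\alpha\}|$ are fine for the \emph{lower} bound, and anti-concentration of the fixed-design estimator is indeed the right ingredient there; the paper proves it as Lemma~\ref{LemMargDensBound}, by bounding the marginal density of the pre-threshold observation $\breveby^f$ through a lower bound on its Lipschitz modulus, which is cleaner and more careful than the mixture-over-affine-pieces heuristic you sketch (that argument would still need a uniform lower bound on the affine slopes, which is not obvious). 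However, the \emph{upper} bound cannot be obtained from Lipschitz surrogates of $\|\cdot\|_0$ applied to $\thetahat$, and the tent-function and dyadic-summation ideas do not repair this.

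The obstruction is structural. Any Lipschitz $f:\reals\to[0,\infty)$ with $f(x)\ge\indic{0<|x|<\alpha}$ for all $x\ne 0$ must, by continuity, have $f(0)\ge 1$, so $\sum_i f(\widehat\theta_i)\ge p$ and the bound is vacuous; a Lipschitz function that vanishes at $0$ simply cannot dominate $\indic{0<|\cdot|<\alpha}$ near the origin. The tent $\psi_\alpha-\psi_{2\alpha}$ vanishes at $0$ and dominates only (a constant multiple of) the indicator of an annulus bounded away from the origin, and the dyadic decomposition over scales $\beta = 2^{-k}\alpha$ has divergent cost: the Lipschitz constant at scale $\beta$ is $\Theta(1/\beta)$, so after rescaling to apply Theorem~\ref{ThmLassoL2} the failure probability at level $k$ is $\sim\epsilon_k^{-2}\exp\big(-cn(\delta\alpha 2^{-k}\epsilon_k)^4\big)$, which does not sum; and for any finite truncation $k_{\max}$ the tail set $\{i: 0<|\widehat\theta_i|<\alpha 2^{-k_{\max}}\}$ remains uncontrolled, which is exactly the problem you started with.

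The paper resolves this by routing the upper bound through the Lasso subgradient $\bthat := \frac1\lambda\bX^\top(\by-\bX\thetahat)$ of Eq.~\eqref{EqnDefSG}. By KKT, $\|\thetahat\|_0\le|\{j:|\widehat t_j|=1\}|$, and this count \emph{is} upper-boundable by a Lipschitz functional: since $\|\bthat\|_\infty\le 1$ always, the condition $|t_j|=1$ lives on the boundary of the feasible hypercube, so $\indic{|t_j|\ge 1}$ is dominated by, say, $\min\big(1,(|t_j|-1+2\Delta)/\Delta\big)_+$, a Lipschitz function that equals $1$ at $|t_j|=1$ without any trouble at an interior point. The random-design transfer is carried out through Lemma~\ref{LemSubgradient}, the subgradient analogue of Theorem~\ref{ThmLassoL2}, combined with anti-concentration of $\bthat^f$ from Lemma~\ref{LemMargDensBound} via Eq.~\eqref{EqnSGInBreveY}. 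This is the idea missing from your proposal; without it only the lower bound half of the theorem is within reach. (As a secondary point, balancing the smoothing error $O(\alpha)$ against a $1/(\delta\alpha)$-Lipschitz constant with $\alpha\asymp\epsilon$ gives a failure probability on the order of $\epsilon^{-4}\exp(-cn\epsilon^8)$, not the stated $\epsilon^{-3}\exp(-cn\epsilon^6)$; the paper attains the better rate by working with $\ell_2$-separation of order $\Delta^{3/2}$ between sparse vectors and vectors with many $\Delta$-active coordinates inside the Gordon-comparison machinery rather than through a global Lipschitz rescaling.)
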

\noindent The proof of this result is presented in Section~\ref{SecThmLassoSparsity}. 

Note that the $\epsilon^6$ in the exponent in Theorem \ref{ThmLassoSparsity} is worse than the $\epsilon^4$ appearing in the exponent in Theorem \ref{ThmControlLassoEst}, Theorem \ref{ThmControlLassoEst}, Corollary \ref{CorEmpDist}, and Theorem \ref{ThmLassoResidual}. 
This is because the function $\| \thetahat \|_0 / n$ is not a Lipschitz function.
The proof involves instead analyzing the subgradient of the $\ell_1$ penalty at the Lasso solution
 and applying certain Lipschitz approximations for indicator functions. 
Because the Lipschitz constants diverge as $\epsilon \rightarrow 0$, 
this results in a weaker probability bound (see Section~\ref{SecThmLassoSparsity} for details).
We suspect this rate is not tight, and a dependence of $\epsilon^2$ may be possible, but proving such a tighter dependence may require new tools.
The estimators in the coming sections which involve $\| \thetahat \|_0/n$ will also suffer this degraded rate.

We make a note that recently Bellec and Zhang \cite[Section 3.4]{Bellec2018SecondOS} establish that $\frac{1}{n}\|\thetahat\|_0 \mid \bX$ concentrates around its expectation with deviations of order $O(n^{-1/2})$ using the second-order Stein's formula.
Our result is different and complementary, in that it shows that $\frac{1}{n}\|\thetahat\|_0$ has large-deviation probabilities (w.r.t. randomness of both the noise and the design) which decay exponentially,
and characterizes the value around which it concentrates.
Moreover, our result also implies that under Assumption \ref{assump:1} (and, in particular, above the Donoho-Tanner phase transition), 
the value on which $\frac{1}{n}\|\thetahat\|_0$ concentrates is uniformly bounded away from 1.

\begin{remark}
\label{rmk:miolane-sparsity}
	The proof of Theorem~\ref{ThmLassoSparsity} is fundamentally different from the proof of
	the analogous result for iid designs \cite[Theorem F.1]{miolane2018distribution}. 
	Indeed, the proof of \cite[Theorem F.1]{miolane2018distribution} draws heavily on
	simple expression for the empirical distribution of the coordinates of $\thetahat$ and of the subgraident 
	of the $\ell_1$-norm at the Lasso solution. 
	For general covariances, such simple expressions are unavailable due to the non-exchangeability of the model.
	See Section \ref{SecThmLassoSparsity} for details.
\end{remark}

\paragraph*{Prediction error and hyperparameter tuning}
Using Theorem \ref{ThmLassoResidual} and \ref{ThmLassoSparsity},
we can construct an estimator $\widehat \tau$ of $\tau^*$.
This gives rise to a provably optimal method for parameter tuning and a consistent estimate of the standard error of the debiased Lasso, which can be used to construct confidence intervals.
In particular, Theorem \ref{ThmLassoResidual} shows that, the residuals $\by - \bX \thetahat$ are 
approximately $\normal(0,(\tau^*\zeta^*)^2\Ind_n)$,
and moroever, that $\|\thetastar\|_0/n$ concentrates on $1 - \zeta^*$.
Thus, the parameters $\tau^*$ is consistently estimated by 
 \begin{align}
 \label{eq:tau-hat}
   \widehat{\tau}(\lambda) := \frac{\|\by-\bX\thetahat\|_2}{\sqrt{n}(1-\|\thetahat\|_0/n)}\, .
 \end{align}

 Since $\tau^*$ controls the noise in the fixed design model, its
 estimation is of particular interest.
 Indeed, because $\tau^{*2} = \sigma^2 + \E[\| \thetahat^f - \thetastar\|_{\mySigma}^2]$ and
  $\| \thetahat - \thetastar \|_{\mySigma}^2$ concentrates on $\E[\| \thetahat^f - \thetastar\|_{\mySigma}^2]$,
 $\widehat{\tau}(\lambda)^2$ concentrates, up to an additive constant which does not depend on $\lambda$, on the prediction error. 
 Because of their importance, we collect these facts in the next theorem. 
\begin{theos}
\label{ThmLassoTau}
	Under Assumption \ref{assump:1}, let $\tau^*=\tau^*(\lambda)$ be the unique solution 
	of the system of equations \eqref{EqnEqn1}, \eqref{EqnEqn2}.
	Then there exist constants $C,c,c' > 0$ depending only on $\cuPmodel$ and $\delta$ such that for all $\epsilon < c'$,
	\begin{equation}
		\mprob\left(\exists \lambda \in [\lambdamin, \lambdamax], \big|\widehat{\tau}(\lambda) -\tau^*(\lambda)\big|\ge \epsilon\right)
		\leq 
		\frac{C}{\epsilon^6}e^{-cn\epsilon^6}\,.
	\end{equation}
	Further defining $\hlambda:= \arg\min\big\{\widehat{\tau}(\lambda) :\; \lambda\in [\lambdamin,\lambdamax]\big\}$,
	we have 
	\begin{equation}
		\mprob\left(\| \thetahat_{\hlambda} - \thetastar \|_{\mySigma}^2
		\ge \min_{\lambda\in[\lambdamin,\lambdamax]}\| \thetahat_{\lambda} - \thetastar \|_{\mySigma}^2+ \epsilon\right)
		\leq 
		\frac{C}{\epsilon^6}e^{-cn\epsilon^6}\,.
	\end{equation}
\end{theos}
 Thus, minimizing $\widehat{\tau}(\lambda)^2$ over $\lambda$ gives a provably optimal 
 parameter tuning method. 
 Importantly, $\widehat{\tau}(\lambda)$ does not depend on any unknown model parameters, namely, 
 $\sigma,\mySigma$, or $\thetastar$.
It was already observed in \cite{miolane2018distribution} that minimizing $\widehat{\tau}(\lambda)$ over $\lambda$ provides
 a good selection procedure for the regularization parameter.
 Our results provide theoretical support for this approach under general Gaussian designs.
 After the current paper was posted, similar results were recently obtained for a wide class of losses and penalties in \cite{bellecShen2022}.
 

\subsection{Control of the debiased Lasso}
\label{sec:DB-lasso}

Recall that the debiased Lasso with degrees-of-freedom adjustment is defined according to expression~\eqref{EqnDBlasso}
\begin{align}
	%
	\bthetahatd \defn \thetahat +  \frac{1}{n - \normzero{\thetahat}}
	\mySigma^{-1} \bX^\top(\by - \bX\thetahat)\,.
\end{align}
The next theorem establishes that the debiased Lasso behaves like the debiased Lasso in the fixed-design model 
 $\thetahat^{f,\de}$ (defined in Eq.~\eqref{EqnDBf}), which follows a Gaussian distribution with mean $\thetastar$ and covariance ${\taustar}^2 \mySigma^{-1}/n$.
The proof of this result is provided in Section~\ref{SecPfControlDBLasso}
\begin{theos}
\label{ThmDBLasso}
	Under Assumption \ref{assump:1}, 
	there exist constants $C,c,c' > 0$ 
	depending only on $\cuPmodel$ and $\delta$ such that for any $1$-Lipschitz $\phi: \real^p \rightarrow \reals$,
	we have for all $\epsilon < c'$
	\begin{align}
	\label{EqnMainDBLasso}
		\mprob \left(
			\big|\phi\big(\bthetahatd\big) -  \E\Big[\phi\big(  \thetahat^{f,\de}\big)\Big]\big| > \epsilon \right)
		\leq 
		\frac{C}{\epsilon^3} e^{-cp\epsilon^6}\,,
	\end{align}
	where $\bg \sim \normal(\bzero,\Ind_p)$.
\end{theos}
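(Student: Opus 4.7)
The plan is to reduce the theorem to a clean algebraic identity in the fixed-design model and then transfer via the concentration results already established. Starting from the KKT characterization \eqref{EqnDefSGFixed}, rearrangement gives
\[
\frac{\lambda}{\zetastar}\mySigma^{-1}\bthat^f \;=\; \mySigma^{-1/2}\by^f - \thetahat^f \;=\; \thetastar + \taustar\mySigma^{-1/2}\bg - \thetahat^f,
\]
so the \emph{fixed-design debiased Lasso} $\bthetahatd^f := \thetahat^f + (\lambda/\zetastar)\mySigma^{-1}\bthat^f$ satisfies $\bthetahatd^f = \thetastar + \taustar\mySigma^{-1/2}\bg$ \emph{exactly}. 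The right-hand side of \eqref{EqnMainDBLasso} is therefore $\E[\phi(\bthetahatd^f/\sqrt p,\thetastar/\sqrt p)]$. On the random-design side, the Lasso KKT identity \eqref{EqnDefSG} rewrites \eqref{EqnDBlasso} as $\bthetahatd = \thetahat + (\lambda\mySigma^{-1}\subg)/(1-\|\thetahat\|_0/n)$, so it suffices to prove that the $\phi$-statistic of $\bthetahatd$ concentrates around that of $\bthetahatd^f$.

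The first step replaces the random normalization $1-\|\thetahat\|_0/n$ by its deterministic limit $\zetastar$. With $\hat\bthetahatd' := \thetahat + (\lambda/\zetastar)\mySigma^{-1}\subg$,
\[
\|\bthetahatd - \hat\bthetahatd'\|_2/\sqrt p \;\le\; \frac{\lambda\,\bigl|(1-\|\thetahat\|_0/n)-\zetastar\bigr|}{\zetastar\,(1-\|\thetahat\|_0/n)}\cdot\frac{\|\mySigma^{-1}\subg\|_2}{\sqrt p}.
\]
Theorem \ref{ThmLassoSparsity} bounds the first factor by $O(\epsilon)$ outside an event of probability $C\epsilon^{-3}e^{-cn\epsilon^6}$ (restricting $\epsilon<\zetamin/2$ keeps $1-\|\thetahat\|_0/n$ away from $0$), while Lemma \ref{LemSubgradient} applied to the $1$-Lipschitz functional $\bs\mapsto\kappamin\|\mySigma^{-1}\bs\|_2$ controls the second factor, using that the fixed-design counterpart $\|\mySigma^{-1}\bthat^f\|_2/\sqrt p$ is bounded (since $\bthat^f\in\partial\|\thetahat^f\|_1$ satisfies $\|\bthat^f\|_\infty\le 1$). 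Lipschitzness of $\phi$ then gives $|\phi(\bthetahatd/\sqrt p,\cdot)-\phi(\hat\bthetahatd'/\sqrt p,\cdot)|\le\epsilon/2$ on this event.

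The second step transfers $\phi$ from $\hat\bthetahatd'$ to $\bthetahatd^f$. Define
\[
\Psi(\bmu,\bs;\bv) \;:=\; \phi\!\left(\frac{\bmu + (\lambda/\zetastar)\mySigma^{-1}\bs}{\sqrt p},\frac{\bv}{\sqrt p}\right),
\]
which is Lipschitz in $(\bmu/\sqrt p,\bs/\sqrt p)$ with constant depending only on $\lambdamax,\zetamin,\kappamin$. Since $\Psi(\thetahat,\subg;\thetastar)=\phi(\hat\bthetahatd'/\sqrt p,\thetastar/\sqrt p)$ and $\Psi(\thetahat^f,\bthat^f;\thetastar)=\phi(\bthetahatd^f/\sqrt p,\thetastar/\sqrt p)$, it suffices to show $\Psi(\thetahat,\subg;\thetastar)\approx\E\,\Psi(\thetahat^f,\bthat^f;\thetastar)$ at rate $C\epsilon^{-2}e^{-cn\epsilon^4}$.

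This last requirement is the main obstacle: Corollary \ref{CorControlLassoEst} controls Lipschitz functions of $\thetahat$ alone and Lemma \ref{LemSubgradient} of $\subg$ alone, whereas $\Psi$ depends on both jointly. I plan to derive the joint control by revisiting the CGMT/Gordon-type argument underlying those two results---reformulating the Lasso as a convex--concave min-max problem whose primal and dual optima are precisely $(\thetahat,\subg)$ and whose Gordon auxiliary problem has primal--dual optima $(\thetahat^f,\bthat^f)$, and invoking strong convex--concavity of the Gordon objective (ensured by \textsf{A1}) to upgrade closeness of the Gordon value to $\ell_2$-closeness of the entire primal--dual pair. This yields concentration of any joint Lipschitz functional of $(\thetahat,\subg)$ at rate $\epsilon^{-2}e^{-cn\epsilon^4}$, which is dominated by the first-step rate $\epsilon^{-3}e^{-cn\epsilon^6}$, giving the bound in \eqref{EqnMainDBLasso}.
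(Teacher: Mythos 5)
Your steps one and two are sound and essentially identical to the paper's own reduction: the exact identity $\thetahat^f + (\lambda/\zetastar)\mySigma^{-1}\bthat^f = \thetastar + \taustar\mySigma^{-1/2}\bg$ from the fixed-design KKT conditions is precisely what the paper exploits, and the replacement of the random normalization $1-\|\thetahat\|_0/n$ by $\zetastar$ via Theorem \ref{ThmLassoSparsity} (plus a high-probability bound on $\|\mySigma^{-1}\subg\|_2/\sqrt{p}$) is the dominant source of the final $\epsilon^{-3}e^{-cn\epsilon^6}$ rate.

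The gap is in your third step. You correctly identify that Corollary \ref{CorControlLassoEst} and Lemma \ref{LemSubgradient} are \emph{marginal} statements about $\thetahat$ and $\subg$, not joint ones, and that the jointly Lipschitz functional $\Psi$ requires joint control. But the proposed remedy --- ``strong convex--concavity of the Gordon objective'' upgrading value closeness to joint $\ell_2$-closeness of the primal--dual pair --- is not available in the form you need. The Gordon objective $T(\bv,\bt)$ is convex in $\bv$ but only \emph{linear} in $\bt$; the strong concavity of $\cuT(\bt) = \min_\bv T(\bv,\bt)$ established in Lemma \ref{LemGordonSBObjStronglyConcave} is a property of the marginal value function (obtained by Fenchel duality from strong smoothness of the conjugate), not of the joint saddle function. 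Similarly, the high-probability strong convexity of $\cuL(\bv) = \max_\bt T(\bv,\bt)$ is marginal. These two facts give you, separately, that the Gordon primal optimizer concentrates near $\hbv^f$ and the Gordon dual optimizer concentrates near $\bthat^f$, but they do not combine into a single perturbation bound for approximate saddle points, and --- more importantly --- Gordon's comparison lemma only relates \emph{values} of deterministically-constrained problems on two distinct probability spaces, not points. To use the constrained-minimization trick you would need a Gordon comparison for the event $(\hbv,\bthat)\in D_\epsilon$ where $D_\epsilon$ is a non-product subset of $\reals^p\times\reals^p$, and the comparison structure fails there: conditioning on $\bv$ being in a bad set pushes the value up, while conditioning on $\bt$ being in a bad set pushes it down, so a joint constraint yields no usable inequality in either direction. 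The paper explicitly acknowledges this obstruction (``Theorem \ref{ThmLassoL2} and Lemma \ref{LemSubgradient} compare the distributions ... individually but not jointly'') and resolves it not by a joint CGMT argument but by \emph{smoothing}: replacing the $\ell_1$ penalty by its Moreau envelope $\sfM_\alpha$ makes $\nabla\sfM_\alpha(\thetahat_\alpha)$ a Lipschitz ($1/\alpha$) function of $\thetahat_\alpha$, so the debiased estimate $\bthetahatd_\alpha$ becomes a Lipschitz functional of $\thetahat_\alpha$ alone, controllable via Theorem \ref{ThmControlAlphaSmoothEstimate}; one then takes $\alpha\to 0$ with Lemma \ref{LemFixedPtContinuity} and Lemma \ref{lem:lasso-near-alpha-smoothed-lasso}. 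You would need either this smoothing step or a genuinely new joint CGMT lemma to close the argument.
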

%
 Note that the rate of convergence obtained here is faster than the one appearing in Theorem 3.3 of \cite{miolane2018distribution} in the case of iid Gaussian designs.
The results, however, are not directly comparable, since \cite[Theorem 3.3]{miolane2018distribution} controls the empirical distribution of the coordinates of $\thetahat$ in Wasserstein distance, whereas we control a single Lipschitz function (see Remark \ref{rmk:rate} for a similar discussion).
Further, our proof techniques differ substantially from that of \cite{miolane2018distribution}.
While their results rely on a gluing argument (see Section F.2 of the Supplementary Material to \cite{miolane2018distribution}), we connect the debiased Lasso to a ``smoothed Lasso'' estimator (see Section \ref{SecPfControlDBLasso}).
In neither this paper nor in \cite{miolane2018distribution} do we expect the rates of concentration to be tight.

\paragraph*{Confidence intervals using the debiased Lasso}
Equipped with Theorem \ref{ThmDBLasso}, one may construct confidence intervals for any individual coordinate of $\thetastar$ with guaranteed coverage-on-average.
Because $\taustar$ is unknown, we use the estimator $\widehat\tau(\lambda)$ given by Eq.~\eqref{eq:tau-hat}.
We refer to the resulting intervals as the \emph{debiased confidence intervals}.
\begin{cors}
\label{CorDBLassoCI}
	Fix $q \in (0,1)$.
	For each $j \in [p]$,
	define the interval
	\begin{align}
	\label{EqnDebiasedCI}
		\mathsf{CI}^\mathrm{d}_j 
		&:= 
           \left[ \thetahatd_j - \Sigma_{j|-j}^{-1/2} \widehat\tau(\lambda)  z_{1-q/2} /\sqrt{n},~~ \thetahatd_j + \Sigma_{j|-j}^{-1/2}\widehat\tau(\lambda)\,
           z_{1-q/2} / \sqrt{n}\right]\,,
	\end{align}
	where $z_{1-q/2}$ is the $(1-q/2)$-quantile of the standard normal distribution, $\widehat\tau(\lambda)$ is given by Eq.~\eqref{eq:tau-hat}, and 
	\begin{equation*}
		\Sigma_{j|-j} = \Sigma_{j,j} - \mySigma_{j,-j}(\mySigma_{-j,-j})^{-1} \mySigma_{-j,j}.
	\end{equation*} 
	Define the false-coverage proportion 
	\begin{equation}
		\FCP := \frac1p \sum_{j = 1}^p \indic{\theta^*_j \not \in \mathsf{CI}^\mathrm{d}_j}\,.
	\end{equation}
	Under assumptions  \ref{assump:1} and if $n/p \leq \Deltamax < \infty$, 
	there exist constants $C,c,c' > 0$ depending only on $\cuPmodel$ and $\Deltamax$ such that for all $\epsilon < c'$
	\begin{equation}
		\mprob\left( | \FCP - q | > \epsilon \right) \leq \frac{C}{\epsilon^6} e^{-cn\epsilon^{12}}\,.
	\end{equation}
\end{cors}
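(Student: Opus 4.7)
The plan is a two-stage reduction: first replace the plug-in estimator $\widehat\tau(\lambda)$ by the deterministic value $\taustar$, then sandwich the non-Lipschitz indicators defining $\FCP$ between Lipschitz functions to which Theorem~\ref{ThmDBLasso} applies.

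\textbf{Step 1 (consistency of $\widehat\tau$).} From $\widehat\tau(\lambda)^2 = \|\by-\bX\thetahat\|_2^2/[n(1-\|\thetahat\|_0/n)^2]$, Theorems~\ref{ThmLassoResidual} and \ref{ThmLassoSparsity} respectively control the numerator around $(\taustar\zetastar)^2$ and the denominator around $\zetastar^2\geq\zetamin^2>0$. A standard ratio-of-concentrated-quantities argument then yields
\[
\mprob\bigl(|\widehat\tau(\lambda)-\taustar|>\epsilon\bigr) \;\leq\; \frac{C}{\epsilon^3}\,e^{-cn\epsilon^6},
\]
with constants depending only on $\cuPmodel,\cuPfixpt,\delta$.

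\textbf{Step 2 (Lipschitz sandwich around an oracle FCP).} Choose $\chi^\pm_\epsilon:\reals\to[0,1]$ to be $(2/\epsilon)$-Lipschitz with $\chi^-_\epsilon(t)\le\indic{|t|>1}\le\chi^+_\epsilon(t)$, agreeing with $\indic{|t|>1}$ outside $|t|\in[1-\epsilon,1+\epsilon]$, and set
\[
\phi^\pm_\epsilon(\bu,\bv) \;:=\; \frac{1}{p}\sum_{j=1}^p \chi^\pm_\epsilon\!\left(\frac{\sqrt{p\,\Sigma_{j|-j}}\,(u_j-v_j)}{\taustar\,z_{1-q/2}\,(1\mp\epsilon/z_{1-q/2})}\right).
\]
Because $\Sigma_{j|-j}=1/(\mySigma^{-1})_{jj}\in[\kappamin,\kappamax]$ under Assumption~\textsf{A1}, a direct gradient computation shows $\phi^\pm_\epsilon$ is $L$-Lipschitz on $\reals^p\times\reals^p$ with $L=C_0/(\epsilon\,\taustar\,z_{1-q/2})$. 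On the Step~1 event $\{|\widehat\tau/\taustar-1|\le\epsilon/z_{1-q/2}\}$, the half-width $\Sigma_{j|-j}^{-1/2}\widehat\tau z_{1-q/2}$ lies in $\Sigma_{j|-j}^{-1/2}\taustar z_{1-q/2}(1\mp\epsilon/z_{1-q/2})$, so
\[
\phi^-_\epsilon\bigl(\bthetahatd/\sqrt p,\thetastar/\sqrt p\bigr) \;\leq\; \FCP \;\leq\; \phi^+_\epsilon\bigl(\bthetahatd/\sqrt p,\thetastar/\sqrt p\bigr).
\]

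\textbf{Step 3 (apply Theorem~\ref{ThmDBLasso} and evaluate the fixed-design expectation).} Apply Theorem~\ref{ThmDBLasso} to the $1$-Lipschitz functions $\phi^\pm_\epsilon/L$ at deviation level $\epsilon/L\asymp\epsilon^2$, obtaining
\[
\mprob\!\Bigl(\bigl|\phi^\pm_\epsilon(\bthetahatd/\sqrt p,\thetastar/\sqrt p) - \E[\phi^\pm_\epsilon((\thetastar+\taustar\mySigma^{-1/2}\bg)/\sqrt p,\thetastar/\sqrt p)]\bigr| > \epsilon\Bigr) \leq \frac{C}{\epsilon^6}\,e^{-cn\epsilon^{12}}.
\]
Under the fixed-design substitution, $\sqrt{\Sigma_{j|-j}}\,(\mySigma^{-1/2}\bg)_j$ is marginally $\normal(0,1)$, so by the Lipschitz property of the standard normal CDF, each summand contributes $\mprob(|N(0,1)|>z_{1-q/2}(1\mp\epsilon/z_{1-q/2}))+O(\epsilon)=q+O(\epsilon)$. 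Chaining this with the Step~1 and Step~2 sandwich gives $|\FCP-q|\le C'\epsilon$ with failure probability at most $C/\epsilon^6\,e^{-cn\epsilon^{12}}$; reparameterizing $\epsilon\to\epsilon/C'$ yields the claim.

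\textbf{Main obstacle.} The central difficulty is the three-way balance among (i) the plug-in error $|\widehat\tau-\taustar|=O(\epsilon)$, (ii) the smoothing scale $\epsilon$ used to replace the indicator, which forces the Lipschitz constant $L=O(1/\epsilon)$, and (iii) the deviation rate $\delta^{-3}e^{-cn\delta^6}$ of Theorem~\ref{ThmDBLasso} applied at scale $\delta=\epsilon/L\asymp\epsilon^2$. These three effects conspire to produce exactly the $\epsilon^{-6}e^{-cn\epsilon^{12}}$ bound in the corollary. A secondary but essential point is the uniform two-sided control $\Sigma_{j|-j}\in[\kappamin,\kappamax]$, needed to prevent the Lipschitz constant of $\phi^\pm_\epsilon$ from blowing up; this follows from Assumption~\textsf{A1} via the identity $\Sigma_{j|-j}=1/(\mySigma^{-1})_{jj}$.
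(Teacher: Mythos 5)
Your proof is correct and follows essentially the same strategy as the paper's: replace $\widehat\tau(\lambda)$ by $\taustar$ via Theorems~\ref{ThmLassoResidual} and~\ref{ThmLassoSparsity}, smooth the indicators defining $\FCP$ at scale $\epsilon$ so that the resulting empirical average is an $O(1/\epsilon)$-Lipschitz function, invoke Theorem~\ref{ThmDBLasso} on it, and evaluate the resulting Gaussian expectation using the fact that $\sqrt{\Sigma_{j|-j}}(\mySigma^{-1/2}\bg)_j\sim\normal(0,1)$; tuning the smoothing scale against the deviation rate produces the $\epsilon^{-6}e^{-cn\epsilon^{12}}$ bound. The only substantive difference is bookkeeping for the plug-in error in $\widehat\tau$: the paper keeps the threshold at $\taustar z_{1-q/2}$ and absorbs the error into an additive correction term $\Delta^{-1}\,|\widehat\tau^{-1}-\taustar^{-1}|\cdot\bigl(p^{-1}\sum_j\Sigma_{j|-j}^{1/2}|\thetahatd_j-\theta^*_j|\bigr)$, which then requires a second application of Theorem~\ref{ThmDBLasso} to control the trailing average; you instead inflate/deflate the threshold by a factor $(1\mp\epsilon/z_{1-q/2})$ on the good event, which is a slightly cleaner sandwich and avoids that second application. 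Both routes close and yield the same rate.
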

\noindent We prove Corollary \ref{CorDBLassoCI} in Section \ref{SecPfControlDBLasso}.
Importantly, we are able to show that the debiased Lasso is successful, at least in the sense of Corollary \ref{CorDBLassoCI}, down to the Donoho-Tanner phase transition and allow $\lambda$ to be arbitrarily close to zero (though not vanishing asymptotically).

As we have already described in Section \ref{Sec:preliminary}, in the low-dimensional limit which takes $p$ fixed, $n \rightarrow \infty$, the asymptotic standard error of the OLS estimate for $\theta_j^*$ is given by $\Sigma_{j|-j}^{-1/2} \sigma / \sqrt{n}$. 
The first fixed-point equation \eqref{EqnEqn1} states that we should inflate this standard error by replacing $\sigma^2$ with $\sigma^2 + \| \thetahat - \thetastar \|_{\mySigma}$. 
By Lemma \ref{LemTauZetaBounds},
we have that $\tau^*$ is $O(1)$.
Thus, Theorem \ref{ThmDBLasso} shows above the Donoho-Tanner phase transition the debiased Lasso achieves the parametric $n^{-1/2}$ rate in most coordinates, with standard error inflated at most by a constant.

It is worth emphasizing that the debiasing construction of Eq.~\eqref{EqnDBlasso} assumes that the population covariance
$\mySigma$ is known. 
In practice, $\mySigma$ often needs to be estimated from data. 
Replacing $\mySigma$ with $\widehat{\mySigma}$ in Eq.~\eqref{EqnDBlasso} introduces an error
$(\mySigma^{-1} - \widehat{\mySigma}{}^{-1})\bX^\top(\by - \bX \thetahat) / (n - \| \thetahat \|_0 )$, 
which we can crudely bound as $O_p(\| \mySigma^{-1} - \widehat{\mySigma}{}^{-1} \|_{\mathrm{op}})$ (because, under Assumption \ref{assump:1}, $\|\bX^\top(\by - \bX \thetahat)\|_2 / (n - \| \thetahat \|_0 ) = O_p(1)$).
Operator norm consistency of $\widehat{\mySigma}$ can be achieved under
two scenarios: $(i)$~When sufficiently strong information is known about the structure of $\mySigma$ (for instance $\mySigma$ or $\mySigma^{-1}$ are band diagonal or very sparse), see, for example, \cite{caiZhang2010,elKaroui2008,bickelLevina2008}; $(ii)$~ When additional `unlabeled' data $(\bx'_i)_{i\ge 1}$ is available.
Alternatively, 
if one is interested in a particular coordinate $j$ of $\bthetahatd$,
one needs only to control the corresponding row of $\widehat{\mySigma}{}^{-1}$,
which can be achieved using, for example, the node-wise Lasso and sufficient sparsity conditions \cite[Section 3.3.2]{javanmard2018debiasing}.
Finally, we remark that the recent paper \cite{celentano2021cad} studies the problem of debiasing in a regime where the inverse covariance matrix $\mySigma^{-1}$ cannot be estimated well, although much about this difficult regime remains open.

\begin{remark}
\label{rmk:dof}
  It is instructive to compare the degrees-of-freedom adjusted debiased Lasso of Eq.~\eqref{EqnDBlasso} with the more standard construction without
adjustment \cite{zhang2014confidence,van2014asymptotically,javanmard2014hypothesis,javanmard2014confidence,javanmard2018debiasing}:
\begin{align}
\label{EqnNaiveDebiasing}
    \bthetahatd_0 = \thetahat + \frac{1}{n}\mySigma^{-1}\bX^\top (\by - \bX \thetahat)\,.
\end{align}
The degrees-of-freedom adjustment adjusts the second term by a factor $1/(1 - \| \thetahat \|_0/n)$.
Intuitively, when the sparsity $s$ is much smaller than $n$, this factor should be close to 1, and the two constructions $\bthetahatd_0, \bthetahatd$ should behave comparably.
The paper \cite{bellec2019biasing} made this precise by showing that the impact of the adjustment on a single coordinate $\widehat \theta_{0j}^{\mathrm{d}}$ is $o_{\mathrm{p}}(n^{-1/2})$ provided $s = o(n^{2/3}/\log(p/s)^{1/3})$.
For larger values of $s$, 
the impact of the adjustment on a single coordinate can be non-negligble on the $n^{-1/2}$ scale, so becomes relevant for inference on a single coordinate (see next section).
In the proportional regime $s = \Theta(n)$, we can have $\| \thetahat^\de - \thetahat^\de_0 \|_2 = \Theta(1)$, 
whence we expect the degrees of freedom adjustment to have a non-negligible impact on all or almost all coordinates simultaneously.
The degrees-of-freedom adjustment in Eq.~\eqref{EqnDBlasso} is crucial for Theorem \ref{ThmDBLasso} and Corollary \ref{CorDBLassoCI}.
\end{remark}




\subsection{Inference on a single coordinate}
\label{SecSingleCoordCI}

While Theorem \ref{ThmDBLasso} and Corollary \ref{CorDBLassoCI} establish coverage of the debiased confidence intervals $\mathsf{CI}^\mathrm{d}_j$ on average across coordinates,
they do not guarantee the coverage of $\mathsf{CI}^\mathrm{d}_j$ for a fixed $j$.
To illustrate the problem, recall that
Theorem \ref{ThmDBLasso} implies that for any $1$-Lipschitz $\phi:\reals^p \rightarrow \reals$, we have, with high probability,
 $\phi\big(\bthetahatd\big) - \E\big[\phi\big(\bthetahatd\big)\big]= \tilde O(p^{-1/6})$, 
where $\tilde O$ hides factors which only depend on $\cuPmodel$ and $\delta$ or are poly-logarithmic in $p$.
Applied to $\phi(\bthetahatd) = \thetahatd_j - \theta^*_j$, 
this implies that the difference $\sqrt{n}(\thetahatd_j - \theta^*_j)$ lies with high-probability in an interval of length 
 $\tilde O(\sqrt{n}/p^{1/6})$.
In contrast, Theorem \ref{ThmDBLasso} and Corollary \ref{CorDBLassoCI} suggest that the typical fluctuations of 
$\sqrt{n}(\thetahatd_j - \theta^*_j)$ are of order $O(1)$.
Thus, the control of a single coordinate provided by Theorem \ref{ThmDBLasso} is at a larger scale than the scale 
of its typical fluctuations.

In fact, the na\"ive guess based on Theorem \ref{ThmDBLasso} that $\sqrt{n}\Sigma_{j|-j}^{1/2}(\widehat \theta_j^\de - \theta_j^*) \sim \normal(0,\tau^{*2})$ can be incorrect.
For example, the recent paper \cite{Bellec2019SecondOP} studies the distribution of a single coordinate of the debiased 
Lasso (and other penalized estimators),
and establishes that $\sqrt{n}\Sigma_{j|-j}^{1/2}(\thetahatd_j - \theta^*_j)/\tau^* \stackrel{\mathrm{d}}\rightarrow \normal(0,1)$ for most, but not all, coordinates of the debiased Lasso.
They show that the variance of $\sqrt{n}\Sigma_{j|-j}^{1/2}(\thetahatd_j - \theta_j^*)$ is approximately given by (see Eq.~(3.19) of \cite{Bellec2019SecondOP})
\begin{equation}
\label{eq:corrected-se}
	\E\Big[
		\frac{\| \by - \bX \bthetahat \|_2^2}{n(1-\| \bthetahat \|_0/n)^2} + \frac{(\widehat \theta_j - \theta^*_j)^2}{1 - \| \bthetahat \|_0/n}
	\Big].
\end{equation}
In particular, the standard error estimate $\widehat \tau$ will be too small by a non-negligible amount when
$(\widehat \theta_j - \theta^*_j)^2/(1 - \| \bthetahat \|_0/n)$ does not vanish relative to 
$\htau(\lambda)^2 = \| \by - \bX \bthetahat \|_2^2/n(1-\| \bthetahat \|_0/n)^2$.
Under a proportional asymptotics,
we have shown that both $\|\bthetahat - \thetastar\|_2^2/(1 - \| \bthetahat \|_0/n)$ 
 and 
$\htau(\lambda)^2$ are of order 1,
which implies that for most coordinates, $(\widehat \theta_j - \theta^*_j)^2/(1 - \| \bthetahat \|_0/n)$
vanishes relative to $\htau(\lambda)^2$. 
Nevertheless,
there may exist a sublinear number of coordinates $j$ for which $(\hat \theta_j - \theta_j^*)^2 = \Omega_\mathrm{p}(1)$
 \cite{bellec2019biasing}.
Note that this can occur even above the Donoho-Tanner phase transition or when restricted eigenvalue conditions are satisfied.
For such coordinates, the standard error $\widehat \tau$ will be too small.
The bounds $\max_j \| \mySigma^{-1}\be_j \|_1$ used by \cite{javanmard2018debiasing} prohibit the existence of such coordinates,
but need not hold under the Assumption \ref{assump:1}.

In Fig.~1 of \cite{Bellec2019SecondOP}, 
the authors demonstrate a case in which $\widehat \tau$ systematically underestimates the variance of
 $\widehat \theta_j^\mathrm{d}$.
For convenience, we also include a similar simulation here.
Let $\bv = (0, \mathbf{1}_s, \bzero_{p-s-1})/\sqrt{s}$.
That is, $\bv$ has unit $\ell_2$-norm, sparsity $s$, and is constant on its active set. 
We take $s = 100$, $n = 500$, $p = 1000$, $\rho^2 = 0.75$, $\sigma = 1$, $\lambda = \sqrt{2\sigma \log(p/s)}$, 
$\thetastar = 3\sqrt{s}\lambda \bv$, and $\mySigma = \Ind_p + \rho \be_1 \bv^\top + \rho \bv \be_1^\top$.
One can check that $\mySigma$ is positive definite.
For 5000 replications,
we generate data from the model \eqref{EqnLM},
fit the debiased Lasso estimate $\hat \theta_1^{\mathrm{d}}$,
compute the estimated standard error $\widehat\tau$,
and compute
\begin{equation}
\label{eq:sd-bz}
	\mathrm{sd}^2_{\mathrm{BZ}}
		:= 
		\frac{\| \by - \bX \bthetahat \|_2^2}{n(1-\| \bthetahat \|_0/n)^2} + \frac{(\widehat \theta_j - \theta^*_j)^2}{1 - \| \bthetahat \|_0/n}.
\end{equation}
In Figure \ref{FigFailure},
from left to right,
we plot histograms of $\sqrt{1-\rho^2}\sqrt{n}(\widehat\theta_1^\mathrm{d} - \theta_1^*)/\widehat \tau$, $\sqrt{n}(\widehat \theta_1^\mathrm{d} - \theta_1^*)/\mathrm{sd}_{\mathrm{BZ}}$, and $\hat \theta_1 - \theta_1^*$.
In the first two plots, we superimpose the standard normal density.
In the left plot, we see an overdispersion of $\sqrt{1-\rho^2}\sqrt{n}(\widehat\theta_1^\mathrm{d} - \theta_1^*)/\widehat \tau$ relative to the normal density which is no longer present when the errors are instead normalized by $\mathrm{sd}_{\mathrm{BZ}}$ in the second plot.
This validates that for the first coordinate, $\sqrt{1-\rho^2}\widehat \tau/\sqrt{n}$ underestimates the standard error.
The right-most histogram show that the error $\hat \theta_1 - \theta_1^*$ is of order $1$,
whence the second term in $\mathrm{sd}_{\mathrm{BZ}}$ is non-negligible.
(Precisely, the standard devision in this plot is about $2.2$).
We emphasize that $\mathrm{sd}_{\mathrm{BZ}}$ is not an empirical quantity.
Our purpose is simply to display evidence that the standard error $\Sigma_{1|-1}^{-1/2}\widehat\tau$ is incorrect for the first coordinate.
The paper \cite{bellec2019biasing} also provides an empirical standard error which agrees with $\mathrm{sd}_{\mathrm{BZ}}$ to first order.
\begin{figure}[h!]
\centerline{\includegraphics[width=.33\textwidth]{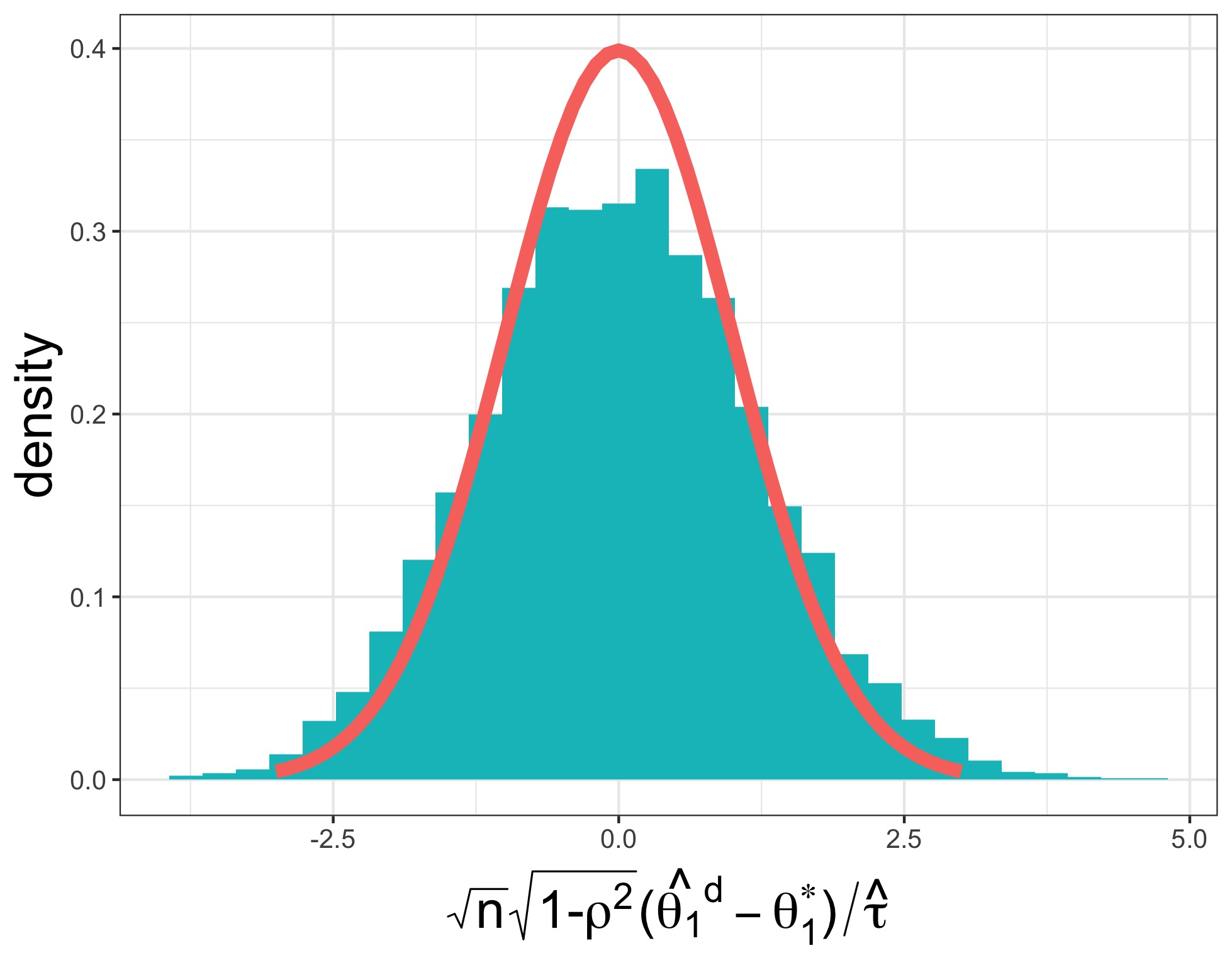}
\includegraphics[width=.33\textwidth]{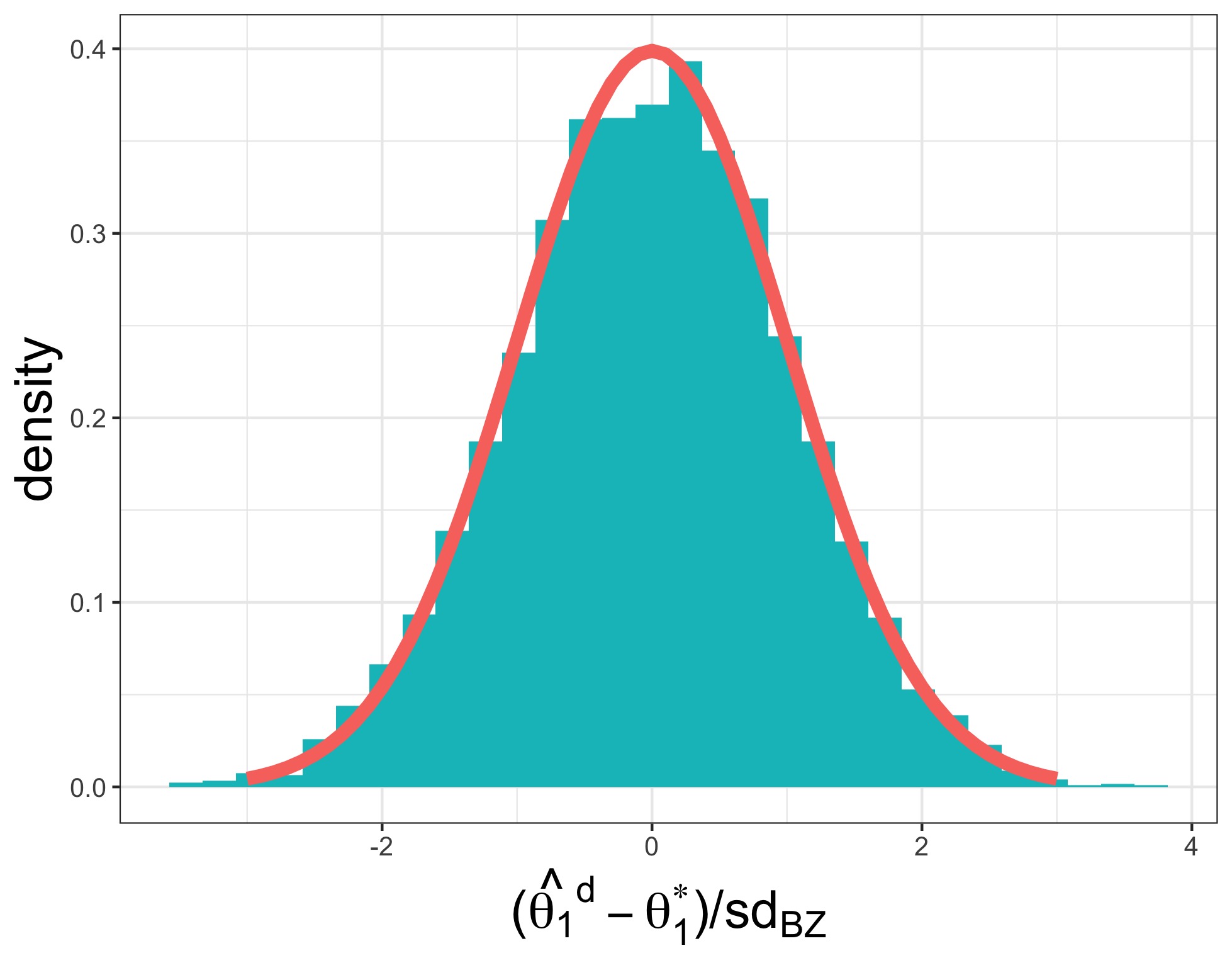}
\includegraphics[width=.33\textwidth]{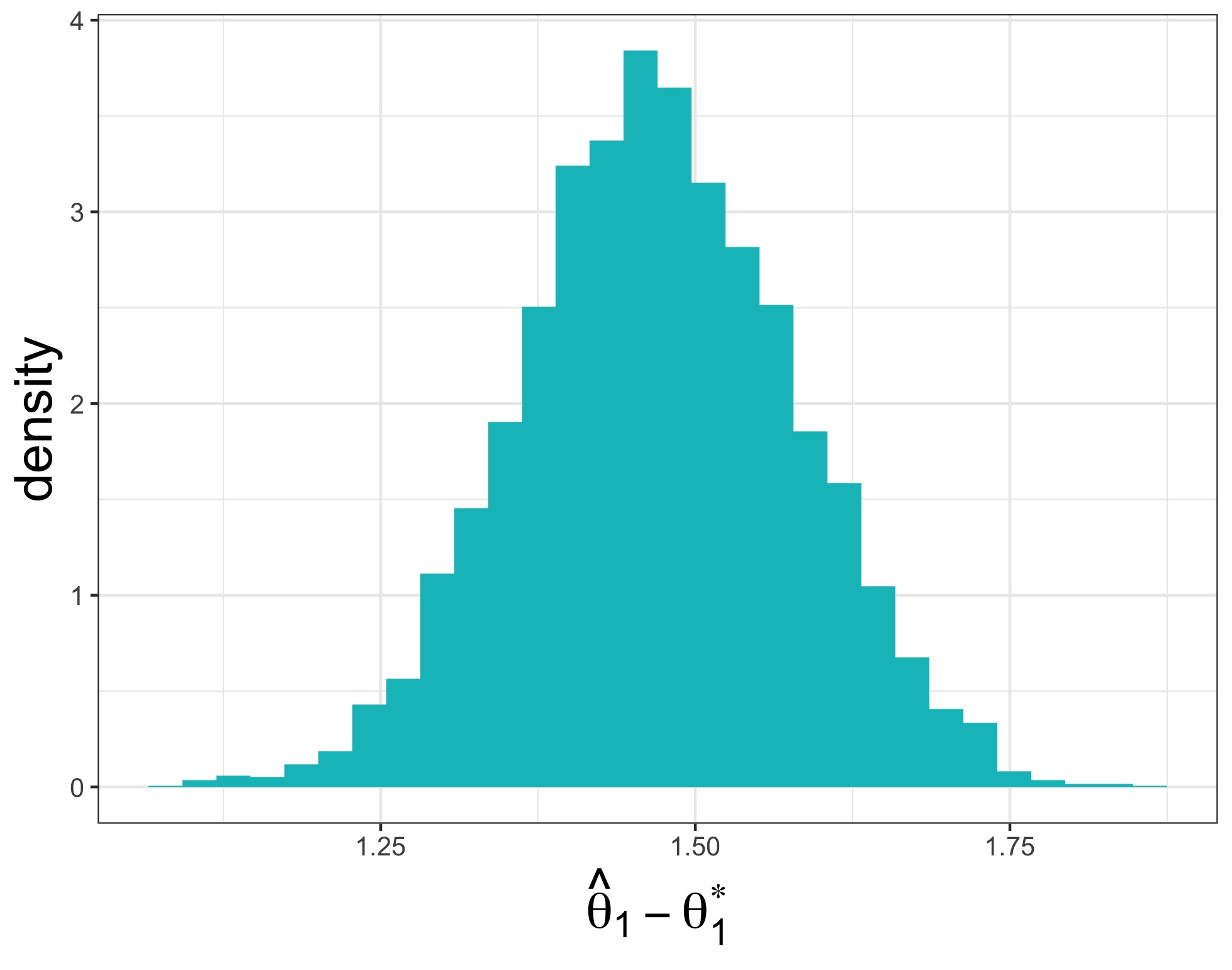}}
\caption{The debiased Lasso test statistic $\widehat \theta_1^{\de}$ for $p = 1000$, $n=500$, $s=100$, 
	$\rho^2 = .5$, $\sigma = 1$, $\lambda = \sqrt{2\sigma \log(p/s)/n} = .096$, 
	$\thetastar = 3\sqrt{s}\lambda \bv$, 
	where $\bv = (0, \mathbf{1}_s, \bzero_{p-s-1})/\sqrt{s}$ and $\mySigma = \Ind_p + \rho \be_1 \bv^\top + \rho \bv \be_1^\top$. 
	On the left, we plot a histogram of the debiased Lasso centered and normalized based on the effective nosie $\widehat \tau$ and the theory in this paper, and we superimpose the standard normal density.
	In the center plot, we normalize instead by the standard deviation derived in \cite{Bellec2019SecondOP} (see Eq.~\eqref{eq:sd-bz}). 
	On the right, we plot a histograms of Lasso error $\widehat \theta_1 - \theta_1^*$ without centering or standarization, demonstrating that the error of the Lasso in this coordinate is $O(1)$.}
\label{FigFailure}
\end{figure}

Figure \ref{FigFailure} suggests the conjecture that while  $\widehat \theta_1^\mathrm{d}$ may have standard error larger that $\tau^*$ in some coordinates, 
it is still approximately normally distributed and unbiased.
We do not establish this fact, and
as far as we know, establishing it remains open.
We expect that completing this theory will require different techniques than those in the current work.

\paragraph*{An alternative approach} 
In the current paper, we instead provide an alternative construction of confidence intervals for a single coordinate using a leave-one-out technique.
We are able to establish the coordinate-wise validity of these confidence intervals even in cases where the Lasso error $\widehat \theta_j - \theta_j^*$ is of order 1. 
We call these confidence intervals, defined below, the \emph{leave-one-out confidence intervals}, denoted by $\mathsf{CI}^{\mathrm{loo}}_j$.
According to simulation,
the leave-one-out confidence intervals often approximately agree with the debiased confidence intervals,
though for some coordinates they may have a larger or smaller width.

To facilitate the construction, let us write the observation vector $\by$ as 
\begin{align}
\label{EqnModelA}
  \by = 
  \begin{pmatrix}
    \cdots & \brevebx_j & \cdots
  \end{pmatrix}
  \begin{pmatrix}
    \vdots \\
    \theta^*_j\\
    \vdots
  \end{pmatrix}
  +
  \sigma \bz
  =
  \theta^*_j \brevebx_j + \bX_{-j} \thetastar_{-j} + \sigma \bz\,,
\end{align}
where $\bX_{-j}\in\real^{\numobs\times (\usedim-1)}$ denotes the original design matrix 
excluding the $j$-th column
and $\brevebx_j$ denotes the $j$-th column.
Define $\xper_j \defn \brevebx_j - \bX_{-j} \mySigma_{-j,-j}^{-1}\mySigma_{-j,j} \in \real^{\numobs}$
so that $\xper_j$ is independent of $\bX_{-j}$ (see Section~\ref{SecPropX}).
Let $\widehat \theta_{j,\mathrm{init}}$ be any deterministic real number that is chosen \emph{a priori}; for instance, $\widehat \theta_{j,\mathrm{init}}$ can be set as $0.$
According to decomposition~\eqref{EqnModelA},
\begin{align}
\label{EqnTransform}
  \by - \xper_j \widehat \theta_{j,\mathrm{init}} = \bX_{-j} \underbrace{(\mytheta^*_{-j}+(\theta^*_j-\widehat \theta_{j,\mathrm{init}}) \mySigma_{-j,-j}^{-1}\mySigma_{-j,j})}_{= : \thetastarloo} + \xper_j (\theta^*_j- \widehat \theta_{j,\mathrm{init}}) + \sigma \bz\,,
\end{align}
and 
\begin{align*}
  \xper_j (\theta^*_j - \widehat \theta_{j,\mathrm{init}}) +\sigma \bz \sim \normal(0, \sigma_{\mathrm{loo}}^2\Ind_{\numobs})\;\; \text{with}\;\;
  \sigma_{\mathrm{loo}}^2 \defn \sigma^2 + \Sigma_{j|-j}(\theta_j^*-\widehat \theta_{j,\mathrm{init}})^2\,.
\end{align*}
Expression~\eqref{EqnTransform} can be viewed as defining a linear-model with $p-1$ covariates, with true parameter $\thetastarloo$, noise variance $\sigma_{\mathrm{loo}}^2$, and outcome $\by - \xper_j \widehat \theta_{j,\mathrm{init}}$.
We call this the \emph{leave-one-out model},
 and call
\begin{equation*}
	\by_{\mathrm{init}} := \by - \xper_j \widehat \theta_{j,\mathrm{init}}
\end{equation*}
the pseudo-outcome.
Let $\tau^*_{\mathrm{loo}}$, $\zeta^*_{\mathrm{loo}}$ be the solution to the fixed point equations \eqref{EqnEqn1} and \eqref{EqnEqn2} in the leave-one-out model,
and $\thetahatloo$ be the Lasso fit on $\by_{\mathrm{init}}$ to $\bX_{-j}$. 

The leave-one-out confidence interval is then constructed based on the variable importance statistic
\begin{align}
\label{EqnXi}
  \xi_j \defn \widehat \theta_{j,\mathrm{init}} + \frac{(\xper_j)^\top (\by_{\mathrm{init}} - \bX_{-j} \thetahatloo) }{\Sigma_{j|-j}(n - \|\thetahatloo\|_0)}\,.
\end{align}
Note the statistic $\xi_j$ is a renormalized empirical correlation between residuals from two regressions: 
the population regression of feature $j$ on the other features (i.e., $\xper_j$), and a sample 
regression of the pseudo-outcome $\by_{\mathrm{init}}$ on the other features (i.e., $\by_{\mathrm{init}} - \bX_{-j} \thetahatloo$).
If $\widehat \theta_{j,\mathrm{init}} = \theta_j^*$,
these residuals will be independent.
Indeed, in this case $\xper_j$ is independent of $(\by_{\mathrm{init}},\bX_{-j})$,
and because $\thetahatloo$ is a function of $(\by_{\mathrm{init}},\bX_{-j})$,
$\xper_j$ is also independent of $\by_{\mathrm{init}} - \bX_{-j} \thetahatloo$.
In this case, the distribution of $\xi_j$ is easy to understand.
We will also quantify the distribution of the variable importance statistic $\xi_j$ when $\widehat \theta_{j,\mathrm{init}}$ is sufficiently close to $\theta^*_j$,
which will allow us to construct tests and confidence intervals.

Similarly to $\widehat\tau(\lambda)$ defined in Eq.~\eqref{eq:tau-hat}, we estimate the effective noise level in the leave-one-out model by
\begin{align*}
  \htauloo^j:=\frac{\|\by_{\mathrm{init}} - \bX_{-j} \thetahatloo \|_2 }{ \sqrt{n} (1 - \|\thetahatloo\|_0/n) }\,.
\end{align*}
The leave-one-out confidence interval is then defined as 
\begin{align}
\label{EqnSingleCoordCI}
    \mathsf{CI}^{\mathrm{loo}}_j :=
    \left[\xi_j - \Sigma_{j|-j}^{-1/2}\htauloo^j\, z_{1-\alpha/2} / \sqrt{n},~~ \xi_j + \Sigma_{j|-j}^{-1/2}\htauloo^j\, z_{1-\alpha/2} / \sqrt{n}\right]\,.
\end{align}
As asserted by the following result, this confidence interval $\mathsf{CI}^{\mathrm{loo}}_j$ achieves approximate coverage for fixed $j$ provided $\widehat \theta_{j,\mathrm{init}} - \theta_j^* = o(1)$.
We prove this result in Section~\ref{sec:thm12}.

\begin{theos}
\label{ThmCILengthAndPower}
  Assume $p\geq 2$ and that the leave-one-out model and Lasso estimators satisfy  \ref{assump:1}.
 Recall $\tau^*_{\mathrm{loo}}$, $\zeta^*_{\mathrm{loo}}$ are the solution to the fixed point equations \eqref{EqnEqn1} and \eqref{EqnEqn2} in the leave-one-out model.
    \begin{enumerate}[label=(\alph*)]
      
      \item 
      \emph{(Coverage and power of the leave-one-out confidence interval)}
      For any $\gamma > 0$, 
      there exist constants $C,c,c' > 0$ depending only on $\cuPmodel$ and $\gamma$ such that for all $\epsilon < c'$, $|\theta_j^* - \widehat \theta_{j,\mathrm{init}}| < c'$,
      and $\theta \in \reals$, we have 
      \begin{equation}
      \label{EqnCIPower}  
      \begin{aligned}
        &\Big|\mprob\left(
          \theta \not \in \mathsf{CI}^{\mathrm{loo}}_j
          \right) - \mprob\left(|\theta_j^* + \Sigma_{j|-j}^{-1/2}\tau^{*}_{\mathrm{loo}} G / \sqrt{n} - \theta| > \Sigma_{j|-j}^{-1/2}\tau^{*}_{\mathrm{loo}} z_{1-\alpha/2} / \sqrt{n}\right)\Big|
        \\
        &\qquad\qquad\qquad\qquad\qquad\qquad\leq 
          C\left(|\theta_j^* - \widehat \theta_{j,\mathrm{init}}|^{2/3} + n^{2/6 + \gamma}|\theta_j^* - \theta| + \frac1n\right)\,,
      \end{aligned}
      \end{equation}
      where $G \sim \normal(0,1)$. (See discussion following theorem for an interpretation of this bound).

      \item 
      \emph{(Length of the leave-one-out confidence interval).}
      There exist constants $C,c,c' > 0$ depending only on $\cuPmodel$, $M'$, and $\delta_{\mathrm{loo}}$ such that for all $\epsilon < c'$,
      \begin{equation}  
      \label{EqnCILengthBound}
        \mprob_{\theta^*_j}\left(
          \Bigg| \frac{\htauloo^j}{\tau^*_{\mathrm{loo}}} - 1 \Bigg| > \epsilon
          \right)
          \leq 
          \frac{C}{\epsilon^3}e^{-cn\epsilon^6}\,.
      \end{equation}

    \end{enumerate}
\end{theos}
%
Note that $\mprob\left(|\theta_j^* + \Sigma_{j|-j}^{-1/2}\tau^*_{\mathrm{loo}} G/\sqrt{n} - \theta| > 
\tau^*_{\mathrm{loo}} z_{1-\alpha/2}\Sigma_{j|-j}^{-1/2} / \sqrt{n}\right)$ is the power of the standard
 two-sided confidence interval under Gaussian observations $\Sigma_{j|-j}^{1/2}\theta_j^* + \tau^*_{\mathrm{loo}} G/\sqrt{n}$ 
 against alternative $\theta$.
This normal approximation holds provided $\theta_j^* - \widehat \theta_{j,\mathrm{init}} = o(1)$ and 
$\theta - \theta_j^* = o(n^{-2/6-\gamma}) $ for some $\gamma > 0$.
In particular, it holds for $\theta - \theta_j^*$ on the $n^{-1/2}$ scale.

It is convenient to consider a few special cases of Theorem \ref{ThmCILengthAndPower}:
\begin{enumerate}
\item $\widehat \theta_{j,\mathrm{init}}=0$ and   $\theta_j^*=0$. In this case, setting $\theta=0$
yields $|\mprob\left(  0 \not \in \mathsf{CI}^{\mathrm{loo}}_j
          \right) -\alpha|\le C/n$. In fact a moment of reflection shows that this bound can be improved to yield
      \begin{equation}  
\mprob\left(  0 \not \in \mathsf{CI}^{\mathrm{loo}}_j
          \right) = \alpha\, .
      \end{equation}  
    That is, we have exact control of type I errors.
\item  $\widehat \theta_{j,\mathrm{init}}=0$ and   $\theta_j^*=o(1)$. Setting again
$\theta=\theta^*_j$, we obtain 
      \begin{equation}  
        |\mprob\left(  \theta^*_j \not \in \mathsf{CI}^{\mathrm{loo}}_j
          \right) -\alpha| = o(1).  
    \end{equation}
    That is, we obtain asymptotic coverage for all non-zero coefficients that are small (note
    that if $\|\thetastar\|_2=O(1)$, this is the case for most non-zero coefficients). 
\item  Generally leave-one-out confidence intervals are successful provided $\widehat \theta_{j,\mathrm{init}}$ 
 is consistent for $\theta_j^*$. 
Note that we assume $\widehat \theta_{j,\mathrm{init}}$ is deterministic, 
which accommodates settings in which it is based on prior knowledge or is an estimate based on an independent data set.
Note that consistency is a rather weak requirement (indeed $\|\thetahat-\thetastar\|_2=O(1)$). We
also point at the next section for a construction of exact confidence intervals that do not require the initialization 
$\widehat \theta_{j,\mathrm{init}}$.
\end{enumerate}

\begin{remark}
\label{rmk:db-fail-loo-succeed}
Even when $\theta_j^*$ is 0, it is possible that $\widehat \theta_j$ as estimated by the Lasso is of order 1;
indeed, 
Figure \ref{FigFailure} presents a simulation of such a scenario.
In this case, the na\"ive standard error for the debiased Lasso is too small,
but our leave-one-out construction with $\widehat \theta_{j,\mathrm{init}} = 0$ achieves coverage. 
Moreover, in Section \ref{SecSimsCI}, we provide simulation evidence that in this scenario, the leave-one-out estimates $\xi_j$ have smaller variance than the debiased estimates $\widehat \theta_j^{\mathrm{\de}}$,
indicating that they permit more precise inference.
Characterizing in which scenarios the leave-one-out intervals are more or less precise than the debaised confidence intervals is a promising avenue for future work.
\end{remark}

In concurrent work, Bellec and Zhang \cite{Bellec2019SecondOP} consider debiasing with a arbitrary convex penalties, 
and establish success of the debiased confidence intervals when (among other assumptions) the initial estimate 
$\widehat \theta_j$ is consistent in coordinate $j$.
Our result is comparable with theirs (for a special choice of the penalty) but has the
advantage of holding down to the Donoho-Tanner phase transition and 
permitting that taking $\lambda$ be arbitrarily close to 0.
We also do not require that $\| \thetahat \|_0 / n \leq 1/2$ with high-probability.

The leave-one-out construction is a renormalized empirical correlation between the residuals of the regression of $\by_{\mathrm{init}}$ on $\bX_{-j}$ and of $\bx_j$ on $\bX_{-j}$.
It is thus similar to a method proposed by \cite{sun2012comment,ren2015asymptotic}, in which the partial correlation between two features in a Gaussian graphical model is estimated by regressing each of these features on the remaining features.
For each regression, \cite{sun2012comment,ren2015asymptotic} use the scaled Lasso and must assume $s = o(\sqrt{n})$ (up to logarithmic terms) to achieve normal inference.
In contrast, we assume that one of the regressions --- that of $\bx_j$ on $\bX_{-j}$ --- is known perfectly, 
whereas the second regression --- that of $\by_{\mathrm{init}}$ on $\bX_{-j}$ --- must be estimated and can have much less structure (possibly linear sparsity). 
For this reason, we require a degrees-of-freedom correction, which is not present in \cite{sun2012comment,ren2015asymptotic}.

\paragraph*{Relation to the conditional randomization test}
It is worth remarking that exact tests and confidence intervals for $\theta^*_j$ may be constructed in our setting.
In fact, when the feature distribution is known, one can perform an exact test of
\begin{equation}
\label{eq:conditional-independence-null}
  \by \indep \brevebx_j \mid \bX_{-j}\,,
\end{equation}
even without Gaussianity or any assumption on the conditional distribution of the outcome $\by$ given the features $\bX$ (see, e.g., \cite{candes2016panning,Katsevich2020ATT,Liu2020FastAP}).
The test which achieves this is called the \emph{conditional randomization test}
and is feasible to use for any arbitrary variable importance statistic $T(\by,\bX)$.
The key observation leading to the construction of the conditional randomization test is that under the null, 
the distribution of $T(\by,\bX) \mid \bX_{-j}$ is equal to the distribution of $T(\by, \bx_j' , \bX_{-j})\mid \bX_{-j}$ 
where $\bx_j'$ is drawn by the statistician from the distribution $\bx_j | \bX_{-j}$ without using $\by$.
Under the null, this distribution can be computed to arbitrary precision by Monte Carlo sampling.
We refer the reader to \cite{candes2016panning,Katsevich2020ATT,Liu2020FastAP} for more details about how these observations lead to the construction of an exact test.

When the linear model is well-specified, the null \eqref{eq:conditional-independence-null} corresponds to $\theta^*_j = 0$,
and our leave-one-out procedure with $\widehat \theta_{j,\mathrm{init}} = 0$ implements the conditional randomization test under this null, as we now explain.
The statistic $\xi_j$, defined in Eq.~\eqref{EqnXi} and used in the construction of the leave-one-out interval,
can also be used as the variable importance statistic in the conditional randomization test.
The Gaussian design assumption and the choice of statistic $\xi_j$ permit an explicit description of the null conditional distribution $\xi_j | \by,\bX_{-j}$. 
Indeed, because $\xper_j$ is independent of $(\by,\bX_{-j},\thetahatloo)$ under the null $\theta^*_j = 0$, one has 
\begin{equation*}
  \sqrt{n}\,\xi_j | \by,\bX_{-j} \sim \normal\Big(0,\Sigma_{j|-j}^{-1} (\htauloo^j)^2 \Big) \,.
\end{equation*}
In our setting, we can access the null conditional distribution through its analytic form rather than through Monte Carlo sampling.
The test which rejects when $0 \not\in \mathsf{CI}^{\mathrm{loo}}_j$ is exactly the conditional randomization test for the null \eqref{eq:conditional-independence-null} based on the variable importance statistic $|\xi_j|$.\footnote{This holds provided that the statistician computes $\xi_j \mid \by,\bX_{-j}$ exactly by taking an arbitrarily large Monte Carlo sample.}
As a consequence, the leave-one-out confidence intervals have exact finite sample coverage under the null $\theta_j^* = 0$ when $\widehat \theta_{j,\mathrm{init}} = 0$.
Moreover, Theorem \ref{ThmCILengthAndPower} provides more than what existing theory on the conditional randomization test can provide:
it gives confidence intervals which are valid under proportional asymptotics and a power analysis for the corresponding tests.

The linearity assumption in our setting allows us to push this rationale further.
When $\theta^*_j = \widehat \theta_{j,\mathrm{init}}$,
the $j^\text{th}$ residualized covariate $\xper_j$ is independent of the pseudo-outcome $\by_{\mathrm{init}}$ and $\bX_{-j}$. 
Thus, by the same logic as above,
the leave-one-out confidence interval achieve exact coverage when $\widehat \theta_{j,\mathrm{init}} = \theta_j^*$. 
In particular, we have an exactly valid test of $\theta_j^* = \widehat \theta_{j,\mathrm{init}}$ for all values of $\widehat \theta_{j,\mathrm{init}}$.
The inversion of this collection of tests, indexed by $\widehat \theta_{j,\mathrm{init}}$, produces an exact confidence interval. 
Details of this construction are provided in Appendix \ref{SecPfSingleCoordCI}.

We prefer the approximate interval $\mathsf{CI}_j^{\mathrm{loo}}$ to the exact interval outlined in the preceding paragraph for computational reasons. 
The construction of these exact confidence intervals requires recomputing the leave-one-out Lasso estimate using pseudo-outcome $\by - \xper_j \widehat \theta_{j,\mathrm{init}}$ for each value of $\widehat \theta_{j,\mathrm{init}}$.
In contrast, the leave-one-out confidence interval we provide requires only computing a single leave-one-out Lasso estimate.
It achieves only approximate coverage,
but our simulations in Section \ref{SecSimsCI} show that coverage is good already for $n,p,s$ on the order of 10s or 100s.
An additional benefit of Theorem \ref{ThmCILengthAndPower} is its quantification of the length of the leave-one-out confidence intervals and the power of the corresponding tests, 
which are not in general accessible for the conditional randomization test or confidence intervals based on it.
In fact, because the test $0 \not \in \mathsf{CI}^{\mathrm{loo}}_j$ is exactly the conditional randomization test,
Theorem \ref{ThmCILengthAndPower}(a) applied under $\theta^*_j$ provides an estimate of the power of the conditional randomization test under alternative $\theta^*_j = \omega$.



\subsection{Restricted eigenvalues and the Donoho-Tanner phase transition}
\label{SecCompare}

An important feature of our results is that they hold down to the Donoho-Tanner phase transition, which can be weaker than the requirement based on restricted eigenvalue conditions.

Specifically, the standard restricted eigenvalue on support $S \subset [p]$ of a matrix $\bX \in \reals^{n \times p}$ is defined as (see, for example, \cite{bickel2009simultaneous,bellec2018}) 
\begin{equation}
	\RE(S,c)
	:=
	\RE(S,c; \bX )
	:=
	\min_{\mytheta \in \cuC_{\sRE}(s,c)}
		\frac1{\sqrt{n}}\| \bX \mytheta \|_2 
		>0,
\end{equation}
where $\cuC_{\sRE}(S,c) := \{ \mytheta \in \reals^p : \| \mytheta_{S^c} \|_1 \leq c \| \mytheta_S \|_1,\, \| \mytheta\|_2 = 1\}$.
In order for bounds based on restricted eigenvalues to yield the correct estimation error rate, 
one typically needs $\RE(S,c)$ to be bounded away from zero for some $c$ strictly larger than $1.$

In the random design setting of the present paper, we illustrate by the following example that, $\RE(S,c) = 0$ with high-probability for some non-vanishing interval of sampling rates above the Donoho-Tanner phase transition.

\begin{proposition}
\label{prop:RE-and-DT}
	Consider a block diagonal matrix $\mySigma \in \reals^{p \times p}$ 
	whose first $s/2$ diagonal blocks are $\bK = \begin{pmatrix} 1 & \rho \\ \rho & 1 \end{pmatrix}$ for some constant $\rho>0$, 
	and whose lower right $(p-s)\times(p-s)$ diagonal block is $\Ind_{p-s}$.
	Let $S = \{1,2,\dots,s\}$ and $\bx^* = \mathbf{1}_S\in\reals^p$ be the indicator vector on $S$. 

	Consider the limit $s,p,n\rightarrow \infty$ with $s/p = \eps$ and $n/p = \delta$ fixed.
	In this setting, the Gaussian width $\cuG(\bx^*,\mySigma)= \overline{\cuG}(\eps,\delta,\rho)
	\in(0,\infty)$ only depends on $n$, $p$, $s$ through the ratios $\eps$, $\delta$.
	Further, there exists $\Delta(\varepsilon,\delta,\rho) > 0$ such that if 
	$\cuG(\bx^*,\mySigma)^2 <\delta < \cuG(\bx^*,\mySigma)^2+ \Delta(\varepsilon,\delta,\rho),$ 
	then with probability going to 1 as $p\to\infty$, $\RE(S,c;\bX) = 0$ for all $c \geq 1$.
\end{proposition}
\noindent We prove Proposition \ref{prop:RE-and-DT} in Appendix \ref{sec:RE-to-DT}.
	We remark that the set $\cuC_{\sRE}(S,1)$ is closely related to the cone $\cuK(\bx,\mySigma)$ used in defining the Gaussian width $\cuG(\bx,\mySigma)$:
	the former is based on the cone constraint $ \| \mytheta_{S^c} \|_1 \leq \| \mytheta_S \|_1$,
	whereas the latter is based on the cone constraint $\| \mytheta_{S^c} \|_1 \leq \< \sign(\bx) , \mytheta \>$, where $S = \supp(\bx)$.
	The right-hand side $\| \mytheta_S \|_1$ is the supremum of $\< \sign(\bx) , \mytheta \>$ over all sign vectors $\bx$ with support $S$.
	Existing proofs based on the restricted eigenvalue condition 
	\cite{bickel2009simultaneous,bellec2018} go through if $\| \mytheta_S \|_1$
	 were replaced by $\< \sign(\bx) , \mytheta \>$ in the definition of the restricted 
	 eigenvalue condition (indeed, in these proofs, this quantity serves only as a bound
	  on $\| \mytheta_S^* \|_1 - \| \mytheta_S \|_1$).
	Thus, Proposition \ref{prop:RE-and-DT} as demonstrates the importance of using 
	$\< \sign(\bx) , \mytheta \>$ instead of $\| \mytheta_S \|_1$ in definitions 
	of the Gaussian width or restricted eigenvalue rather than demonstrating a
	 fundamental limitation of prior analyses.
	A fundamental improvement of our analysis relative to prior analyses is that we can take 
	$c = 1$ rather than $c > 1$. For fixed $c>1$, even a modified restricted 
	 eigenvalue condition using   $\< \sign(\bx) , \mytheta \>$  results in a gap with respect to our condition 
	 $\cuG(\bx^*,\mySigma)^2 <\delta$.

A natural question is whether our results hold for sampling rates below the 
Donoho-Tanner phase transition. 
The following proposition gives a partial answer, in the negative direction.
\begin{proposition}
\label{prop:DT-necessary}
	Consider $\bx \in \{-1,0,1\}^p$ with $\| \bx \|_0 \geq 1$ and $\epsilon > 0$.
	If
	\begin{equation}
		\cuG(\bx,\mySigma) \geq \sqrt{\frac{n}{p}} + \epsilon,
	\end{equation}
	then, for any $r > 0$,
	there exists $\thetastar$ (depending on $r,\lambda,\sigma,\kappamin,\kappamax,n,p$, and 
	$\| \bx \|_0$) with $\sign(\thetastar) = \bx$ 
	such that
	if the data is generated according to \eqref{EqnLM}, 
	then
	\begin{equation}
		\mprob(\| \thetahat - \thetastar \|_2 \geq r) \geq 1 - Ce^{-cp\epsilon^2},
	\end{equation}
	where $C,c > 0$ depend only on $\kappamax$.
\end{proposition}
\noindent In particular, the Lasso has unbounded risk on sparse balls below the Donoho-Tanner phase transition, whence Theorem \ref{ThmControlLassoEst} cannot hold with bounded fixed-point parameters.
We prove Proposition \ref{prop:DT-necessary} in Section \ref{sec:DT-necessary}.
 	

\section{Numerical simulations}

This section contains numerical experiments which \emph{(i)} illustrate the success of the degrees-of-adjustment for 
$n,p,s$ in the 100s to 1000s,
\emph{(ii)}
compare the leave-one-out and debiased confidence intervals,
and \emph{(iii)} support the expectation that our results may hold for a more general class of
 feature distributions than Gaussian. 
We present here some representative simulations and refer to Appendix~\ref{SecAdditionalSims}
for further results.


\subsection{Debiasing with degrees-of-freedom adjustment}
\label{sec:DebiasNum}

We compare the degrees-of-freedom adjusted debiased Lasso of Eq.~\eqref{EqnDBlasso} with the
 unadjusted estimator of Eq.~\eqref{EqnNaiveDebiasing}.

Figure \ref{FigQQplots_and_histograms} reports results on the distribution of the two estimators. 
We set $p = 1000$, $n = 500$, and $s = 200$, and fix $ \thetastar \in \reals^p$ with $s/2$ coordinates equal to $10/\sqrt{s}$ and the rest equal to $-10/\sqrt{s}$ chosen uniformly at random. 
We repeat the following steps $N_\mathrm{sim} = 500$ times.
First, we generate data from the linear model \eqref{EqnLM} where $\bx_i \sim \normal(\bzero,\mySigma)$,
$\sigma = 1$ and $\mySigma$ comes from the
autoregressive model $\mathsf{AR}(0.5)$:
\begin{align}
\label{Eqnballon}
    \Sigma_{ij} = 0.5^{|i-j|}\,.
\end{align}
In each simulation, we keep the same $\thetastar$ vector but independent draws of $\bX,\bz$.
We compute for each $j\leq p$ the standardized values $\frac{\sqrt{n}\Sigma_{j|-j}^{1/2}(1-\|\bthetahat\|_0/n)(\thetahatd_j - \theta^*_j)}{\|\by - \bX \bthetahat\|_2/\sqrt{n}}$ and $\frac{\sqrt{n}\Sigma_{j|-j}^{1/2}(\thetahatd_{0j} - \theta^*_j)}{\|\by - \bX \bthetahat\|_2/\sqrt{n}}$ corresponding to the debiased Lasso with and without degrees-of-freedom adjustment respectively.
Aggregating over coordinates and simulations (giving $p \cdot N_{\mathrm{sim}} = 500,000$ observations of single coordinates),
we plot histograms and quantile plots for all coordinates corresponding to $\theta_j = -10/\sqrt{s}$, $0$, $10/\sqrt{s}$ separately. 
In the quantile plots, the empirical quantiles are compared with the theoretical quantiles of the standard normal distribution $\normal(0,1)$.

Without the degrees-of-freedom adjustment, visible deviations from normality occur.
For active coordinates, we observe bias and skew;
for inactive coordinates, we observe tails which are too fat.
The fattening of the tails occurs around and beyond the quantiles corresponding to two-sided confidence intervals constructed at the 0.05 level.
Thus, failure to implement degrees-of-freedom adjustments will lead to under-coverage in standard statistical practice even prior to corrections for multiple testing.
In contrast, with degrees-of-freedom adjustment, no obvious deviations from normality occur for either the inactive or active coordinates.
Normality is retained well into the normal tail.
Our simulations are well into the proportional regime.
In agreement with \cite{javanmard2014hypothesis,bellec2019biasing,Bellec2019SecondOP,miolane2018distribution}, 
these simulations confirm that the degrees-of-freedom adjustment suffices to recover normality.

The simulations presented Figure \ref{FigQQplots_and_histograms} are representative of simulations conducted at differing sample sizes, sparsity levels, and correlation parameters $\rho$. 
We present these simulations in Appendix~\ref{SecAdditionalSims}.
We also present simulations for equicorrelated designs.

\subsection{Confidence interval for a single coordinate}
\label{SecSimsCI} 

In this section, we consider the behavior of the debiased confidence interval $\mathsf{CI}^{\mathrm{d}}_j$ (defined in Eq.~\eqref{EqnDebiasedCI})
and leave-one-out confidence interval $\mathsf{CI}^{\mathrm{loo}}_j$ (defined in Eq.~\eqref{EqnSingleCoordCI}).

In Figure \ref{FigSingleCoord}, we examine the coverage of the confidence interval for both an active coordinate and an inactive coordinate.
We consider $p = 100$, $n = 25$, and $s = 20$, and fix $ \thetastar \in \reals^p$ with $s/2$ coordinates equal to $10/\sqrt{s}$ and the rest equal to $-10/\sqrt{s}$.
The locations of the active coordinates are chosen uniformly at random. 
We set the coordinate of interest to be $\theta_{50}$.
For each model specification, we perform the following $N_\mathrm{sim} = 1000$ times.
First, we generate data from the linear model \eqref{EqnLM} with $\sigma = 1$ and $\mySigma$ the $\mathsf{AR}(0.5)$ covariance
$\Sigma_{ij} = 0.5^{|i-j|}$.
We construct for $j = 50$ the $(1-\alpha)$-confidence intervals $\mathsf{CI}^{\mathrm{d}}_j$ and $\mathsf{CI}^{\mathrm{loo}}_j$ at level $\alpha = 0.05$.
We also construct the following interval based on the debiased Lasso without degrees-of-freedom adjustment given by Eq.~\eqref{EqnNaiveDebiasing}:
\begin{equation*}
    \mathsf{CI}_j^{\mathrm{d,noDOF}} := \left[\widehat{\theta}^{\mathrm{d}}_{0j} - \frac{\Sigma_{j|-j}^{-1/2} \|\by - \bX \thetahat\|_2}{n}z_{1-\alpha/2},~~\widehat{\theta}^{\mathrm{d}}_{0j} + \frac{\Sigma_{j|-j}^{-1/2} \|\by - \bX \thetahat\|_2}{n}z_{1-\alpha/2}\right]\,.
\end{equation*}

The confidence intervals from the first 40 of the 1000 simulations are plotted in Figure \ref{FigSingleCoord} for the cases 
$\theta^*_{50} = 0$ and $\theta^*_{50} = 10/\sqrt{s} \approx 2.24$. 
Both the debiased Lasso and the leave-one-out confidence intervals achieve coverage.
Although in some simulations  there appears to be a small difference between the intervals computed by the two methods, in most cases the confidence intervals almost exactly agree.
 In contrast, when $\theta^*_{50} = 10/\sqrt{s}$, the confidence interval without degrees-of-freedom adjustment is uncentered
 and too narrow, leading to large  under-coverage. 
 When $\theta_{50}^* = 0$, the empirical coverage (for 1000 simulations) is $95.5\%$ for the debiased Lasso with degrees-of-freedom
 adjustment, $95.2\%$ for leave-one-out confidence interval, and $90.05\%$ for the debiased Lasso without degrees-of-freedom adjustment.
When $\theta_{50}^* = 10/\sqrt{s}$, these coverages are $94.3\%$, $93.9\%$, and $36.8\%$, respectively.
Note that confidence interval with degrees of freedom adjustment is undefined when $\| \thetahat \|_0 = n$.
 Because we take $s/n$ very large, this occurs in some of our simulations.
 When this occurs, we count this as an non-coverage event, and omit to draw the confidence interval in our plots.

\begin{figure}[h!]
\centerline{\includegraphics[width=.8\textwidth]{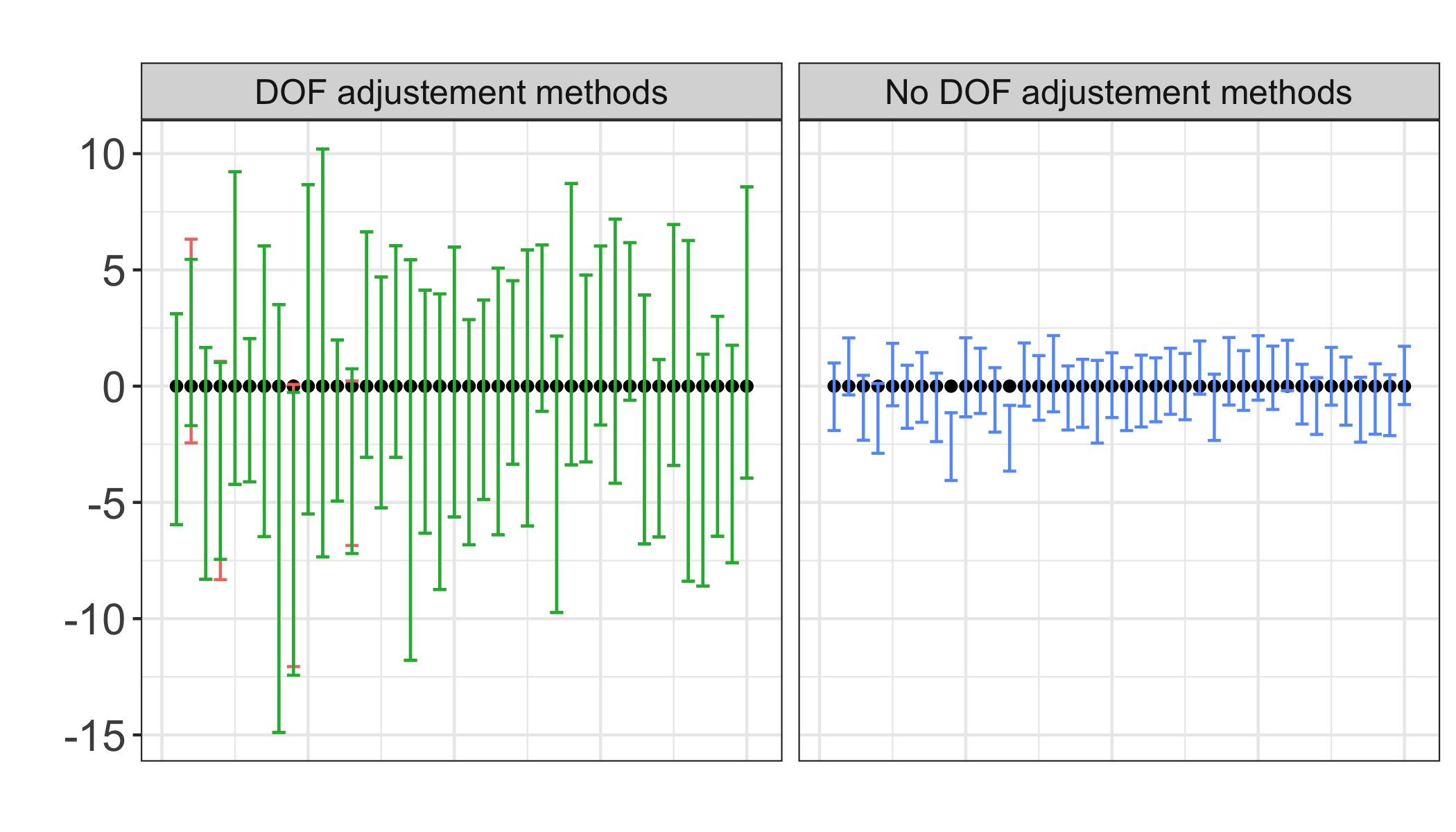}}
\centerline{\includegraphics[width=.8\textwidth]{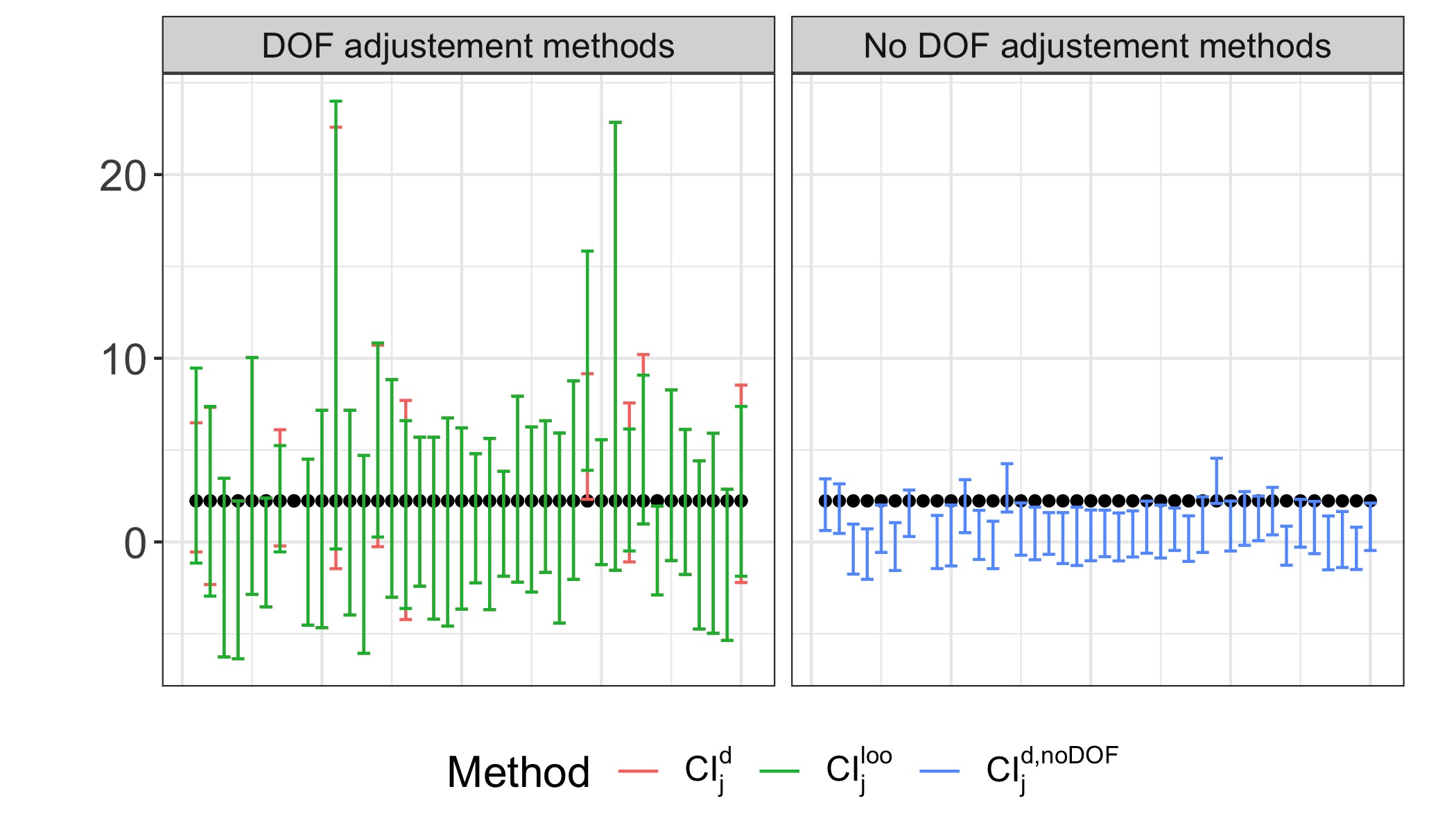}}
\caption{Confidence interval for a single coordinate $\theta^*_{50}$. Here $p = 100$, $n = 25$, $s = 20$, $\Sigma_{ij} = 0.5^{|i-j|}$, $\lambda = 4$, $\sigma = 1$. In the top plots, the truth is $\theta_{50}^* = 0$, and in the bottom plots the truth is $\theta_{50}^* = 10/\sqrt{s} \approx 2.24$.}
\label{FigSingleCoord}
\end{figure}

These simulations provide evidence that the leave-one-out confidence intervals $\mathsf{CI}^{\mathrm{loo}}_j$ are valid for fixed coordinate $j$, already
for moderate values of $n,p$.
In this case, the debiased confidence intervals  $\mathsf{CI}^{\mathrm{d}}_j$ appear to achieve
coverage per-coordinate and not only on average across coordiantes.
Moreover, in this case, the confidence intervals and $\mathsf{CI}^{\mathrm{d}}_j$ and $\mathsf{CI}^{\mathrm{loo}}_j$ appear to be nearly equivalent.

We also consider the simulation set-up in Figure \ref{FigFailure},
in which the effective noise $\widehat \tau$ for the debiased confidence intervals is too small. 
In the same simulations used to generate Figure \ref{FigFailure},
we display in the left plot of Figure \ref{FigCompare} 40 realizations of the debiased confidence interval with degrees of freedom adjustment and the leave-one-out confidence intervals. 
The empirical coverage across these 5000 replications was 89.78\% for the debiased confidence intervals and 94.78\% for the leave-one-out confidence intervals. 

As expected, the debiased confidence intervals with width computed based on $\widehat \tau$ are too narrow and undercover,
whereas the leave-one-out confidence intervals are correctly calibrated and achieve coverage.
Perhaps surprisingly, this occurs even though the debiased confidence intervals are wider than the leave-one-out confidence intervals.
Indeed,
across 5000 replications,
the average value of $\widehat \tau / \sqrt{1-\rho^2}$ was $2.97$ $(1.6\mathrm{e}{-3})$ and $\widehat \tau_{\mathrm{loo}}^1$ 
was $2.88$ $(1.5\mathrm{e}{-3})$, which gives a $p$-value for a non-zero difference in means of $2.2\mathrm{e}{-16}$ and a $95\%$ confidence interval for the difference in means of $[.080,.088]$.
This discrepancy is also visually apparent in the right plot of Figure \ref{FigCompare},
in which the debiased confidence intervals tend to be wider. 
If the correct standard error of \cite{Bellec2019SecondOP} were used for the debiased Lasso, so that the intervals would achieve coverage, these intervals would be wider still.
On the right plot of Figure \ref{FigCompare}, we display histograms of the test statistic $\widehat \theta_1^{\de}$ and $\xi_j$ across 5000 replications.
We see that the leave-one-out confidence interval's test statistic has smaller variance than the debaised Lasso test statistic, indicative of the fact that in this case we may achieve more precise inference with the leave-one-out construction than the debiased construction.

\begin{figure}[h!]
	\centerline{
	\includegraphics[width=.5\textwidth]{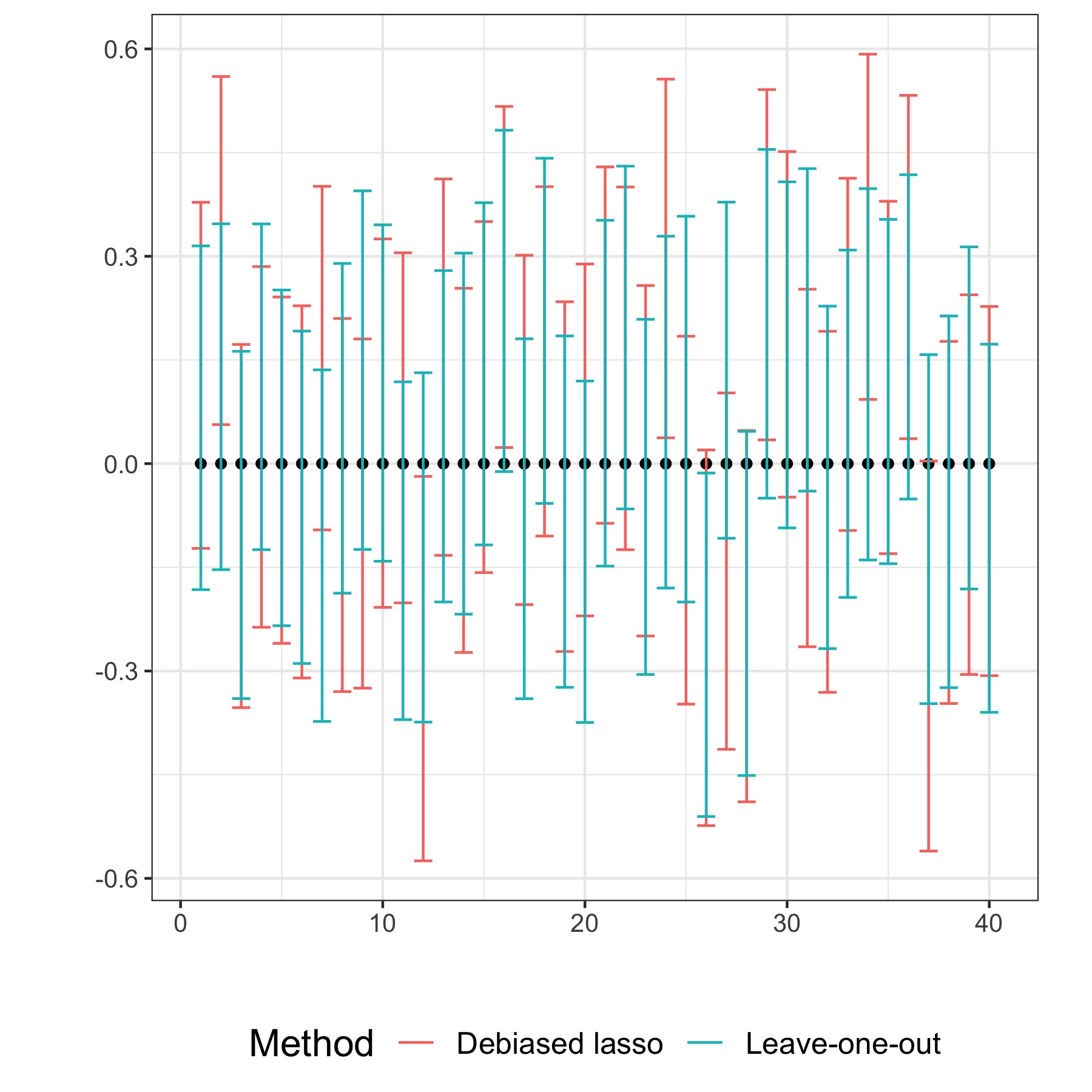}
	\includegraphics[width=.5\textwidth]{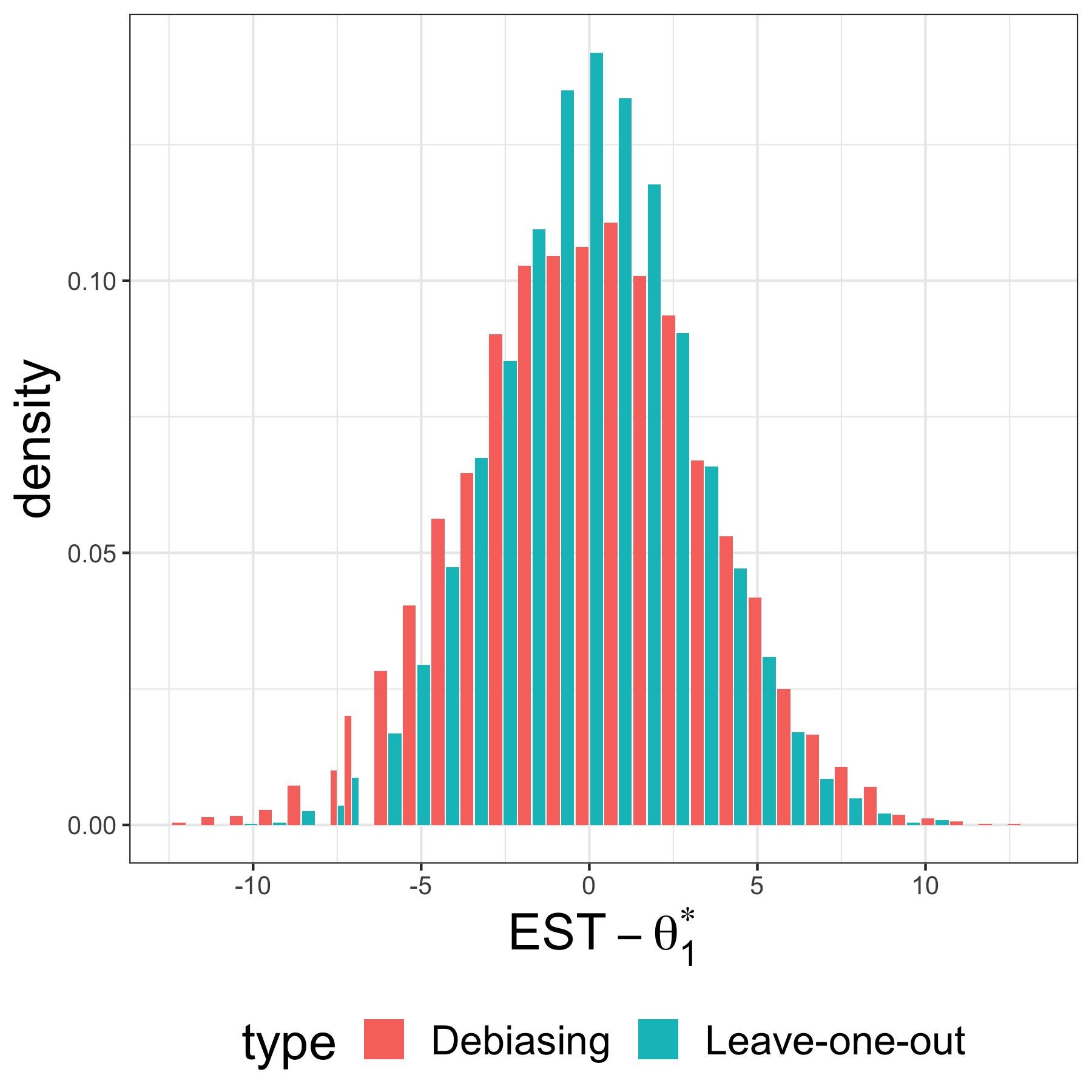}}
	\caption{The debiased Lasso and leave-one-out confidence intervals for $p = 1000$, $n=500$, $s=100$, 
	$\rho^2 = .5$, $\sigma = 1$, $\lambda = \sqrt{2\sigma \log(p/s)/n} = .096$, 
	$\thetastar = 3\sqrt{s}\lambda \bv$, 
	where $\bv = (0, \mathbf{1}_s, \bzero_{p-s-1})/\sqrt{s}$ and $\mySigma = \Ind_p + \rho \be_1 \bv^\top + \rho \bv \be_1^\top$. 
	On the left, we plot the debiased confidence interval $\mathsf{CI}_1^\de$ and leave-one-out confidence intervals $\mathsf{CI}_1^{\mathsf{loo}}$ for $\theta_1^*=0$. On the right, we plot histograms of $\widehat \theta_1^\de$ and $\xi_1$ across 5000 replications.}
	\label{FigCompare}
\end{figure}

\subsection{Non-Gaussian designs}

The results described in this work are proven under correlated Gaussian designs. 
When covariates are independent, numerical simulations and universality arguments in previous work suggest exact asymptotic characterizations
still hold for independent but possibly non-Gaussian covariates (see e.g.~\cite{bayati2015universality,oymak2018universality,montanari2017universality} for rigorous universality results).
Moreover, such universality phenomena are also expected to hold beyond the linear models: 
for instance, \cite{sur2019modern} (in Figure 9) present simulations for logistic regression with independent but non-Gaussian covariates whose behavior agrees with the corresponding  asymptotic predictions for independent Gaussian covariates. 

Here we provide some numerical evidence which suggests that our theory describes the behavior of the Lasso under
 some realistic data generating distributions (when the Gaussianity assumption breaks). 
We consider the design matrix with covariates generated according to a hidden Markov model.
Hidden Markov models are frequently used for modeling the covariates in genetics applications (see, e.g.~\cite{sesia2018}).
The specification of the hidden Markov model used in our simulation is described in details in Appendix~\ref{SecAdditionalSims}.
The model is such that covariates with indices differing by approximately 10 or less have non-negligible correlation.
The response is generated according to model~\eqref{EqnLM}, with $n = 1280,~p=2000,~s = 0.128 p,$ and $\sigma = 0.2$, and
 all active coordinates of $\mytheta^*$ are set to $1$. 
We run our debiasing procedure with degrees of freedom adjustment for $N_\mathrm{sim} = 10$ 
independent realizations of the data, with the knowledge of the underlying covariance matrix for the covariates.
We then aggregate the standardized and centered debiased Lasso estimates across coordinates and across simulations, 
separately for the inactive and active coordinates, and provide a qq-plot for each; the results
are presented in Figure \ref{FigQQplots_debiasing_hmm}.
It is worth noting that from the simulations, one can see the success of the debiasing procedure with degrees of 
freedom adjustment carries even into the tails of the distribution.  
This phenomenon cannot be justified using prior theory based on independent Gaussian covariates.
\begin{figure}[h!]
  \centerline{\includegraphics[width=.8\textwidth]{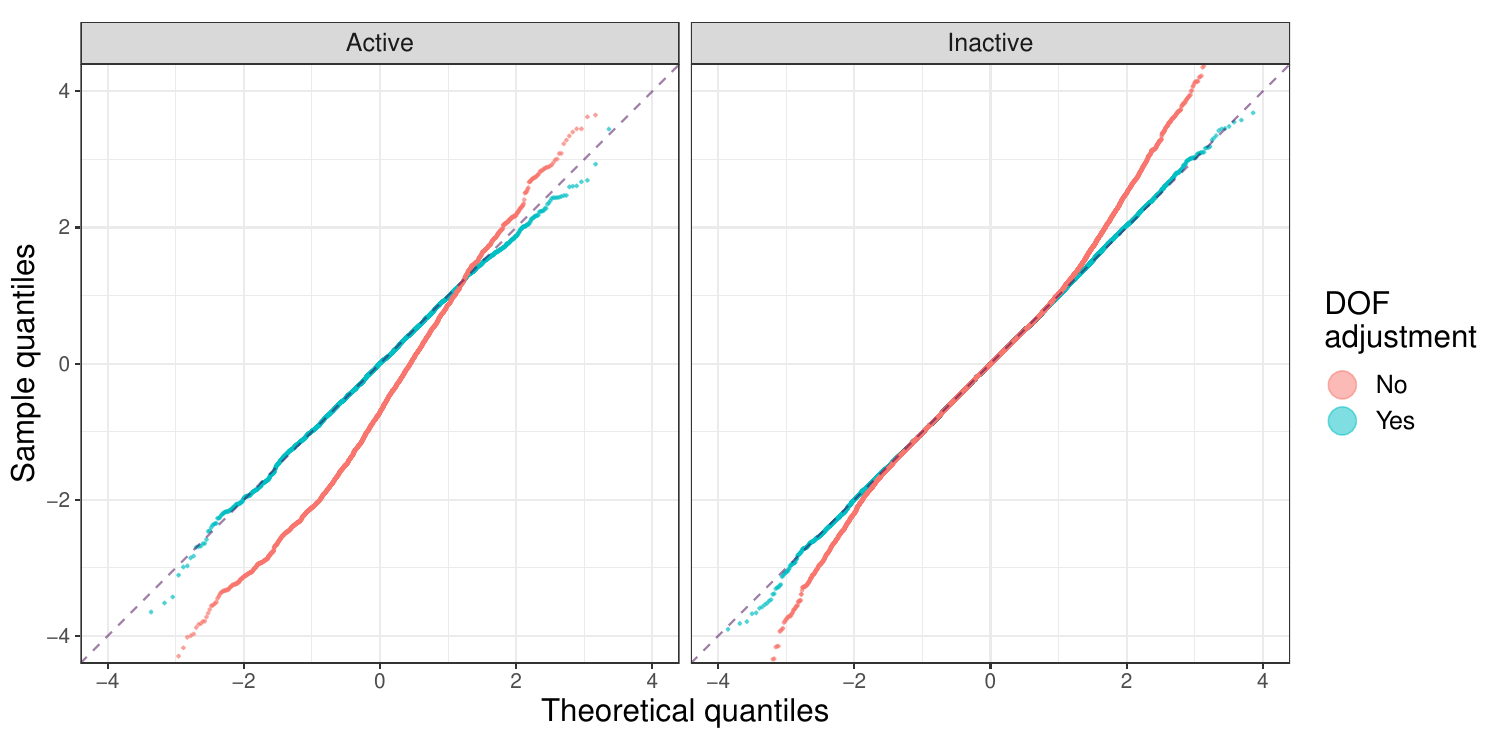}}
\caption{The debiased Lasso with and without degrees-of-freedom (DOF) adjustment for hidden Markov model features.
  Here $n = 1280,p=2000,s = .128 \cdot p,$ and $\sigma = .2$, and all active coordinates of $\mytheta^*$ equal to 1. Quantiles
  and densities are compared with the ones of the standard normal distribution. }
\label{FigQQplots_debiasing_hmm}
\end{figure}

\section{Main proof ingredients}
\label{sec:proof-ingredients}

Our proofs are built upon a tight version of Gordon's min-max theorem for convex functions.
Gordon's original theorem \cite{gordon1985some,Gordon1988} is a Gaussian comparison inequality for the minimization-maximization of two related Gaussian processes,
and has several applications in random matrix theory and convex optimization \cite{rudelson2010,raskutti10a}.
In a line of work initiated by \cite{stojnic2013framework} and formalized by \cite{thrampoulidis2015regularized}, the comparison inequality was shown to be tight when the underlying Gaussian process is convex-concave.
This observation has led to several works establishing exact asymptotics for high-dimensional convex procedures, 
including general penalized M-estimators in linear regression \cite{thrampoulidis2015regularized,thrampoulidis2018} and binary classification \cite{Deng2019AMO,Montanari2019,Liang2020APH}.
(We also refer to \cite{bayati2011lasso,amelunxen2014living,donoho2016high,el2018impact,reeves2016replica, barbier2019optimal} for alternative proof techniques to obtain
sharp results in high-dimensional regression 
models, in the proportional asymptotics.)

Earlier work has so far focused on the case of independent features 
or correlated features with unpenalized or ridge-penalized procedures.
Analyzing the Lasso estimator under general Gaussian designs, however,  requires overcoming several technical challenges, as the $\ell_{1}$-penalty breaks the isometry underlying the procedure. 
In this section, we summarize our proof strategy, 
emphasizing the technical innovations that are required in the context of general correlated designs. 
Our work builds on the approach of \cite{miolane2018distribution}, which  studied the Lasso and debiased Lasso estimators in the case $\mySigma = \Ind_p$.

\paragraph*{Control of the Lasso estimate} 
We find it useful to first rewrite the Lasso optimization objective as
\begin{equation}
\label{eq:cuC-def}
	\cuC(\bv):= \frac1{2n} \|\sigma \bz - \bX\mySigma^{-1/2}\bv\|_2^2 + \frac{\lambda}{\sqrt{n}}\Big( \| \thetastar + \mySigma^{-1/2} \bv \|_1 - \| \thetastar \|_1 \Big)\,.
\end{equation}
Here we introduce the \emph{prediction error} vector $\bv := \mySigma^{1/2} (\mytheta - \thetastar)$.
The variable $\bv$ is used to whiten the design matrix and isolate the dependence of the objective on it.
Indeed, $\bX \mySigma^{-1/2}$ has entries distributed i.i.d. from $\normal(0,1)$, and we have expanded $\by$ to reveal its dependence on $\bX$.
We denote by $\bvhat$ the minimizer of $\cuC(\bv)$, i.e., $\bvhat := \mySigma^{1/2}(\thetahat - \thetastar)$.
By a standard argument, Gordon's min-max theorem implies that the Lasso optimization behaves, in a certain sense, like the optimization of the simpler objective
\begin{equation}
\label{eq:def-cuL-main}
	\cuL(\bv):= \frac12 \left(\sqrt{\sigma^2 + \|\bv\|_2^2} - \frac{\langle \bg , \bv \rangle}{\sqrt{n}}\right)_+^2 + \frac\lambda{\sqrt{n}} \left(\|\thetastar + \mySigma^{-1/2}\bv \|_1 - \|\thetastar\|_1\right)\,,
\end{equation}
which we call \emph{Gordon's objective}.
The precise statement is as follows.
\begin{lemma}[Gordon's lemma]
\label{LemGordonMain}
	The following statements hold true. 

	\begin{enumerate}[label=(\alph*)]
		
		\item 
		Let $D \subset \reals^p$ be a closed set.
		For all $t \in \reals$,
		\begin{equation}
			\mprob\left(  \min_{\bv \in D} \cuC(\bv) \leq t \right)
			\leq 
			2\mprob\left( \min_{\bv \in D} \cuL(\bv) \leq t \right)\,.
		\end{equation}

		\item 
		Let $D \subset \reals^p$ be a closed, convex set.
		For all $t \in \reals$,
		\begin{equation}
			\mprob\left(  \min_{\bv \in D} \cuC(\bv) \geq t \right)
			\leq 
			2\mprob\left( \min_{\bv \in D} \cuL(\bv) \geq t \right)\,.
		\end{equation}

	\end{enumerate}
\end{lemma}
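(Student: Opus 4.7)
The plan is to apply the Convex Gaussian Min-Max Theorem (CGMT) of \cite{thrampoulidis2015regularized}, the convex-concave strengthening of Gordon's comparison inequality alluded to in the preceding exposition. First, I would dualize the quadratic loss via $\frac{1}{2n}\|\bm{w}\|_2^2 = \max_{\bu\in\reals^n}\bigl[\bu^\top \bm{w}/\sqrt n - \tfrac{1}{2}\|\bu\|_2^2\bigr]$ applied to $\bm{w} = \sigma \bz - \bX \mySigma^{-1/2}\bv$. Writing $\bX\mySigma^{-1/2} = \bm{G}/\sqrt n$ for $\bm{G}\in\reals^{n\times p}$ with i.i.d.\ $\normal(0,1)$ entries puts the randomness into a bilinear form in $(\bu,\bv)$ plus the deterministic-in-$\bm{G}$ noise piece $\sigma\bu^\top \bz/\sqrt n$. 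To fold the latter into a single bilinear Gaussian form---so that CGMT applies cleanly to one Gaussian matrix---I would augment $\tilde{\bv}:=(\bv;\sqrt n\sigma)\in\reals^{p+1}$ and $\tilde{\bm{G}}:=[-\bm{G},\bz]\in\reals^{n\times(p+1)}$, still with i.i.d.\ standard Gaussian entries, so that $\sigma\bz-\bm{G}\bv/\sqrt n = \tilde{\bm{G}}\tilde{\bv}/\sqrt n$ and $\|\tilde{\bv}\|_2 = \sqrt n\,\sqrt{\sigma^2+\|\bv\|_2^2/n}$.

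Next, to apply CGMT I would truncate $\bu$ to a ball $\{\|\bu\|_2\le R\}$ with $R$ large enough that the inner max is attained on its interior with probability $1-O(e^{-cn})$; this is justified by the closed-form maximizer $\bu^\star = (\sigma\bz-\bX\mySigma^{-1/2}\bv)/\sqrt n$ together with a preliminary coercivity restriction of the minimization in $\bv$. For part (b), $D$ must also be replaced by a compact convex subset, with a limiting argument at the end. CGMT then replaces $\bu^\top\tilde{\bm{G}}\tilde{\bv}$ by the Gordon auxiliary $\|\tilde{\bv}\|_2\bh^\top\bu + \|\bu\|_2\tilde{\bg}^\top\tilde{\bv}$ for independent standard Gaussians $\bh\in\reals^n$ and $\tilde{\bg}\in\reals^{p+1}$, yielding (a) directly; part (b) follows from the sharpened CGMT since the Lasso objective is convex in $\bv$ and its Lagrangian is concave (a concave quadratic) in $\bu$. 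The inner max over $\bu$ is then closed-form via $\max_{\bu}\bigl[\bu^\top\bm{c}+\|\bu\|_2\beta-\tfrac{1}{2}\|\bu\|_2^2\bigr]=\tfrac{1}{2}(\|\bm{c}\|_2+\beta)_+^2$, with $\bm{c}\propto\sqrt{\sigma^2+\|\bv\|_2^2/n}\,\bh/\sqrt n$ and $\beta$ a linear combination of $\bg^\top \bv/n$ and the last coordinate of $\tilde{\bg}$; this reduces the auxiliary to a scalar minimization in $\bv$ closely resembling $\cuL(\bv)$.

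The main obstacle I expect is the final reduction from this Gordon auxiliary to $\cuL(\bv)$ as stated: the auxiliary naturally features the random factor $\sqrt{\sigma^2+\|\bv\|_2^2/n}\cdot\|\bh\|_2/\sqrt n$ in place of the deterministic $\sqrt{\sigma^2+\|\bv\|_2^2/n}$, a spurious scalar Gaussian $\sigma\tilde g_{p+1}/\sqrt n$, and an additive $\tfrac{\lambda}{n}\|\thetastar\|_1$ normalization. By Gaussian concentration, $\|\bh\|_2/\sqrt n$ and $\tilde g_{p+1}/\sqrt n$ deviate from their typical values by only $O(n^{-1/2})$ with high probability, and I would absorb these into a small perturbation of the threshold $t$ together with Lipschitz control of $t\mapsto\mprob(\min_D\cuL\le t)$, or by enlarging the constant $2$ by a vanishing amount. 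The $\tfrac{\lambda}{n}\|\thetastar\|_1$ shift enters favorably for the left-tail bound (a) since $\min(\cuL+\tfrac{\lambda}{n}\|\thetastar\|_1)\ge\min\cuL$, but requires a symmetric treatment in (b)---either by a compensating tweak of the augmentation or by invoking a shifted variant of CGMT---and managing this reconciliation uniformly over $D$ is the delicate step that ties the proof together.
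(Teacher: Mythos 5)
Your plan — dualize the quadratic, truncate $\bv$ and $\bu$, apply a Gordon comparison, close-form the inner $\bu$-max, and take limits — is the same skeleton the paper uses. The key divergence is in how the noise $\sigma\bz$ is handled and what auxiliary objective you end up comparing against. The paper's proof (of the more general Lemma~\ref{LemGordon} in Appendix~B.1) does not absorb $\bz$ into an augmented Gaussian matrix: it invokes \cite[Corollary~G.1]{miolane2018distribution} directly on the min--max $\min_{\bv}\max_{\bu} C_\alpha(\bv,\bu)$ with $\bz$ appearing as a fixed term, and the auxiliary that results is $\cuL_\alpha$ of Eq.~\eqref{EqnGLoss}, which carries the \emph{random} factor $\|\bh\|_2/\sqrt n$ and has no extra scalar Gaussian. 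The main-text $\cuL$ of Eq.~\eqref{eq:def-cuL-main}, with the deterministic $\sqrt{\sigma^2+\|\bv\|_2^2/n}$, is a simplification; the rigorous statement is Lemma~\ref{LemGordon} with the $\|\bh\|_2/\sqrt n$ factor intact, and that factor is dealt with later (proof of Lemma~\ref{LemGordonProbBounds}) by restricting to the high-probability event $\{|\|\bh\|_2/\sqrt n-1|\le\epsilon\}$ and bookkeeping a controlled error there, \emph{not} by perturbing $t$ in the Gordon lemma itself.

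Your proposed fix — absorb $\|\bh\|_2/\sqrt n$ and the spurious $\sigma\tilde g_{p+1}/\sqrt n$ into a shifted threshold, ``together with Lipschitz control of $t\mapsto\mprob(\min_D\cuL\le t)$, or by enlarging the constant $2$ by a vanishing amount'' — has a genuine gap. No such Lipschitz control of the distribution function is available uniformly in $D$ and $t$, and the downstream proof (Theorem~\ref{ThmControlAlphaSmoothEstimate}, Eq.~\eqref{EqnMinimizationBound}) uses the constant $2$ verbatim, so an $o(1)$-enlarged constant is not good enough. The cleaner route is exactly what the paper does: state the Gordon comparison with the exact auxiliary $\cuL_\alpha$ (random $\|\bh\|_2/\sqrt n$ included), and defer the $\|\bh\|_2/\sqrt n\approx 1$ approximation to the analysis of the auxiliary problem, where it is controlled on an event rather than through a $t$-perturbation.

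The second, more structural issue is the spurious $\sigma\tilde g_{p+1}/\sqrt n$ that your augmentation $\tilde\bv=(\bv;\sqrt n\sigma)$, $\tilde\bG=[-\bG,\bz]$ introduces. That scalar is not present in the auxiliary $\cuL_\alpha$ the paper works with. Unlike the variance-matching $\gamma$ that appears on the \emph{primal} side of Gordon's original inequality and disappears by a sign-symmetry argument at the cost of the factor $2$, the extra coordinate of $\tilde\bg$ sits on the \emph{auxiliary} side, and the analogous conditioning on its sign produces an event containment that goes in the wrong direction — it does not let you drop the term exactly. So the reduction from the augmented auxiliary to $\cuL_\alpha$ is not a triviality, and your current outline does not close it. You should either invoke \cite[Corollary~G.1]{miolane2018distribution} as the paper does (it is stated for precisely this structure and avoids the augmentation), or carry out the augmentation reduction carefully, recognizing that the perturbation argument you sketch loses exactness.

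Finally, a small clarification: the $-\tfrac{\lambda}{n}\|\thetastar\|_1$ normalization asymmetry you worry about for part~(b) is an artifact of the main-text display. In the appendix both $\cuC_\alpha$ (Eq.~\eqref{EnqCalpha}) and $\cuL_\alpha$ (Eq.~\eqref{EqnGLoss}) carry the same $-\|\thetastar\|_1$ shift, so the comparison is between consistently normalized objectives and no compensating tweak is needed. The limiting/truncation step you sketch probabilistically is done deterministically in the paper (for each realization of $\bX,\bz$, a sufficiently large $R$ works), which is cleaner and avoids introducing yet another error term.
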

By studying  Gordon's objective, and comparing the value of $ \min_{\bv \in D} \cuL(\bv)$ for suitable choices of the set $D$, we can extract properties of $\bvhat$ and hence $\thetahat$.
In particular, in Theorem \ref{ThmControlLassoEst}, we compare the value taken for $D=\reals^p$ and 
	\begin{equation}
	\label{eq:suboptimality-set}
		D 
		= 
		\left\{\mytheta \in \reals^p \Bigm| \Big| 
		\phi\big(\thetastar + \mySigma^{-1/2}\bv\big) 
		- \E\Big[ \phi\big(\thetahat^f\big)\Big]\Big|
		 > \epsilon \right\}\,,
	\end{equation}
	where $\thetahat^f$ is defined by Eq.~\eqref{EqnSTH} with $\taustar,\zetastar$ the unique solution to Eqs.~\eqref{EqnEqn1} and \eqref{EqnEqn2}.       
The argument is carried out in detail in Appendix \ref{SecPfThmLassoL2}.

This discussion clarifies that we can control the behavior of the Lasso objective only insofar as we can control the behavior of Gordon's objective. 
The major technical challenge to apply this approach to general correlated designs
is in relating the minimizer of Gordon's objective to the fixed design estimator $\thetahat^f$.
In particular, this requires showing that the solution $(\taustar, \zetastar)$ of
Eqs.~\eqref{EqnEqn1} and \eqref{EqnEqn2} is unique and bounded in terms of simple model parameters (see Lemma \ref{LemTauZetaBounds}).

Although several parts of our argument are similar to the arguments of \cite{miolane2018distribution},
establishing existence, uniqueness, and boundedness of $\tau^*,\zeta^*$ requires entirely new techniques. 
Generalizing an idea introduced in \cite{Montanari2019}, 
we control the solutions Eqs.~\eqref{EqnEqn1} and \eqref{EqnEqn2} by 
showing that these equations are the  KKT conditions for a certain convex optimization problem on the infinite dimensional Hilbert space $L^2(\reals^p;\reals^p)$. 
To be more specific, the optimization problem is
\begin{align}\label{eq:Hilbert}
	\min_{\bv \in L^2} \cuE(\bv) := \min_{\bv \in L^2} \left\{\frac12 \Big(\sqrt{\|\bv\|_{L^2}^2 + \sigma^2} - \frac{\langle \bg , \bv \rangle_{L^2}}{\sqrt{n}}\Big)_+^2 + \frac\lambda{\sqrt{n}} \E\left[\|\thetastar + \mySigma^{-1/2}\bv(\bg)\|_1 - \|\thetastar\|_1\right]\right\}\,.
\end{align}
The objectives $\cuL$ and $\cuE$ are closely related, but their arguments belong to different spaces. 
The objective $\cuL$ takes vectorial arguments $\bv \in \reals^p$;
the objective $\cuE$ takes functional arguments $\bv: \reals^p \rightarrow \reals^p$.
Both objectives are convex.
In Appendix \ref{SecFixedPtSoln},
we show that $\bv \in L^2$ is a minimizer of $\cuE$ if and only if $\bv(\bg) = \eta(\mySigma^{1/2}\thetastar + \taustar \bg ; \zetastar)$ for $\taustar$, $\zetastar$ a solution to the fixed point equations. 
This follows from showing that Eqs.~\eqref{EqnEqn1} and \eqref{EqnEqn2} correspond to KKT conditions for the minimization of $\cuE$.
Further, we show that $\cuE$ diverges to infinity as $\|\bv\|_{L^2}\rightarrow \infty$ and is strictly convex in a neighborhood of any minimizer, whence a minimizer exists, and it is unique.
We are then able to conclude that the fixed point equations also have a unique solution. 
We defer the details of this argument to Appendix \ref{SecFixedPtSoln}.

Controlling the size of the fixed point parameters relies on bounding the norm of the minimizer of $\cuE$.
Again, our approach is geometric: rather than analyzing the fixed point equations directly, we study the growth of the objective $\cuE$ as $\|\bv\|_{L^2}$ diverges.
The functional Gaussian width \eqref{EqnGaussianWidth} controls this growth.
This explains the centrality of the Gaussian width $\cuG(\bx,\mySigma)$ in our analysis.
In fact, under only a sparsity constraint on $\thetastar$, we can control the growth $\cuE$ in $\|\bv\|_{L^2}$ in an $n$-independent way only when $\cuG(\bx,\mySigma) < \sqrt{\delta}$ where $\bx \in \partial \|\thetastar\|_1$ (see, also, the proof of Proposition \ref{prop:DT-necessary}).
The detailed argument bounding the fixed point parameters is in Appendix \ref{SecFixedPtSoln}.

The present approach is significantly more general both than the one of \cite{miolane2018distribution}, which studies the Lasso for $\mySigma = \Ind_p$, and of \cite{Montanari2019} which studies binary
classification under a ridge-type regularization.
When $\mySigma = \Ind_p$, the Lasso estimator in the fixed-design model is separable, and  Eqs.~\eqref{EqnEqn1} and \eqref{EqnEqn2} simplify because
\begin{equation}
\begin{gathered}
	\risk(\tau^2,\zeta) = \E_{\Theta,G}[ (\eta_{\mathrm{soft}}(\Theta^* + \tau G,\lambda/\zeta) - \Theta^*)^2]\,,\\
	\frac{\df(\tau^2,\zeta)}{p} = \mprob(\eta_{\mathrm{soft}}(\Theta^* + \tau G,\lambda/\zeta) \neq 0)\,,
\end{gathered}
\end{equation}
where $\Theta^*\sim \frac1p \sum_{j=1}^p \delta_{\sqrt{n}\theta^*_j}$ independent of $G \sim \normal(0,1)$,
and $\eta_{\mathrm{soft}}(y;\zeta) :=(|y|-\zeta)_+\sign(y)$.
Hence --- in that case --- existence and uniqueness of the solution of Eqs.~\eqref{EqnEqn1} and \eqref{EqnEqn2} can be proved by analyzing the explicit form of these equations.

Also, our approach is more general than the one of \cite{Montanari2019}, which constructs a
Hilbert-space optimization problem by taking the $n,p\to\infty$ limit of the Gordon's
problem. In the present case, since we intend to establish a non-asymptotic control, for finite $n,p$ there is no natural sequence of covariances in which to embed $\mySigma$.

\paragraph*{Control of the Lasso sparsity}
It is not feasible to directly control quantity $\|\thetahat\|_0/n$ using Theorem \ref{ThmControlLassoEst} with $\phi(\mytheta) = \|\mytheta\|_0/n$ because this function 
is not Lipschitz or even continuous.
Instead, we establish lower and upper bounds on the sparsity separately.

To explain the argument, 
define
\begin{equation}
	\widehat \bt = \frac1{\lambda \sqrt{n}} \bX^\top (\by - \bX \thetahat),
\end{equation}
and observe that by the KKT conditions for Eq.~\eqref{EqnOrgRisk},
$\widehat \bt \in \partial \| \thetahat \|_1$.
Define for any $\mytheta\in\reals^p$ the \emph{$\epsilon$-strongly active} coordinates of $\mytheta$ to be $\{j \in [p] \mid |\theta_j| > \epsilon / \sqrt{n}\}$.
Likewise, for any $\bt\in\reals^p$ define the \emph{$\epsilon$-strongly inactive} coordinates of $\bt$ to be $\{j \in [p] \mid |t_j| < 1 - \epsilon\}$ (this definition is motivated by the fact that if $\bt$ is the sub-gradient of the Lasso, if $|t_j| < 1-\epsilon$ then $\theta_j = 0$ and $t_j$ would have to change by at least $\epsilon$ for $\theta_j$ to become active).
Our argument relies on the following two facts (here $\thetahat$ is, as always, the Lasso estimate,
and $\bthat$ is the subgradient of Eq.~\eqref{EqnDefSG}): 
\begin{equation}
\label{eq:strongly-active-inf}
	\text{if}\;\; \frac{\|\thetahat\|_0}{n} \leq 1 - \zetastar - \epsilon, ~~
	\text{then}\; \inf_{\mytheta}\left\{ \|\thetahat - \mytheta\|_2 \Bigm| \frac{|\{j\mid |\theta_j| > \epsilon / \sqrt{n}\}|}{n} > 1 - \zetastar - \frac\epsilon2\right\} \geq \sqrt{\frac{\epsilon^3}{2}}\,,
\end{equation}
and
\begin{equation}
\label{eq:strongly-inactive-inf}
	\text{if}\;\; \frac{\|\thetahat\|_0}{n} \geq 1 - \zetastar + \epsilon\,,~~
	\text{then}\; \inf_{\bt} \left\{ \|\bthat - \bt\|_2 \Bigm| \frac{|\{j\mid |t_j| < 1 - \epsilon\}|}{n} > 1 - \zetastar - \frac\epsilon2\right\} \geq \sqrt{\frac{\epsilon^3}{2}}\,.
\end{equation}

The first implication holds because the vectors $\mytheta$ and $\thetahat$ differ by at least $\epsilon/\sqrt{n}$ in $n\epsilon/2$ coordinates; namely, in those coordinates in which $\mytheta$ is $\epsilon$-strongly active and $\thetahat$ is inactive.
The second implication holds similarly.
In words, vectors which are very sparse are separated in Euclidean distance from vectors with many $\epsilon$-active coordinates; similarly, subgradients with many active coordinates are separated in Euclidean distance from vectors with many $\epsilon$-inactive coordinates.

To proceed, we leverage the following fact: for any set $D \subset \reals^p$ which contains the fixed-design Lasso estimate $\thetahat^f$ with high-probability, the random design Lasso estimate $\thetahat$ is close to $D$ with high-probability.
Similarly, for any set $D \subset \reals^p$ which contains the fixed-design subgradient $\bthat^f$ with high-probability, 
the random-design subgradient $\bthat$ is close to $D$ with high-probability.
We provide control of the subgradient which is analogous to the control we provide of the Lasso estimate in Lemma \ref{LemSubgradient} of the appendices.
A similar statement holds for the Lasso estimate, and developed in the proof of Theorem \ref{ThmControlLassoEst}.
Taking $D$ to be the set over which the infimum in Eq.~\eqref{eq:strongly-active-inf} (resp.\ Eq.~\eqref{eq:strongly-inactive-inf}) is taken, 
we can conclude $\|\thetahat\|_0/n > 1 - \zetastar - \epsilon$ (resp.\ $\|\thetahat\|_0/n < 1 - \zetastar + \epsilon$) with high-probability
as soon as we can show $\thetahat^f \in D$ (resp.\ $\bthat^f \in D$) with high-probability.
The details of this argument are carried out in Appendix \ref{SecThmLassoSparsity}.

\paragraph*{Control of the debiased Lasso}
We may write the debiased Lasso as a function of the Lasso estimate $\thetahat$, 
the subgradient $\bthat$,
and the Lasso sparsity $\|\thetahat\|_0/n$:
\begin{align}
	\bthetahatd = \thetahat + \frac{\lambda \mySigma^{-1} \bthat / \sqrt{n} }{1 - \|\thetahat\|_0/n}\,.
\end{align}
Because $1-\|\thetahat\|_0/n$ concentrates on $\zetastar$ by Theorem \ref{ThmLassoSparsity},
the debiased Lasso is with high-probability close to 
\begin{align}
\label{eq:debiased-lasso-estimate-sg-form}
	\thetahat + \frac{\lambda\mySigma^{-1} \bthat/\sqrt{n}}{\zetastar} \,.
\end{align}
Our goal is to show that $\thetahat + \frac{\lambda\mySigma^{-1} \bthat/\sqrt{n}}{\zetastar} - \thetastar$ is approximately Gaussian noise with zero mean and covariance ${\taustar}^2\mySigma^{-1}$.
Heuristically, if we replace the Lasso estimate and subgradient by their fixed-design counterparts,
we get 
\begin{align}
	\thetahat^f + \frac{\lambda\mySigma^{-1} \bthat^f/\sqrt{n}}{\zetastar} - \thetastar
	= \thetahat^f - \thetastar + \mySigma^{-1} \mySigma^{1/2} (\by^f - \mySigma^{1/2} \thetahat^f) = \taustar \mySigma^{-1/2} \bg / \sqrt{n}\,,
\end{align}
where in the first inequality we have used that $\frac{\lambda}{\sqrt{n}\zetastar} \bthat^f = \mySigma^{1/2}(\by^f-\mySigma^{1/2}\thetahat)$ by the KKT conditions for the optimization \eqref{EqnSTH}.
Thus, we would like to justify the heuristic replacement of the random design quantities with their fixed-design counterparts.

It turns out that it is not straightforward to justify this heuristic,
and here again we require an entirely new arugment compared to that which appears in \cite{miolane2018distribution}.
The challenge is as follows: Theorem \ref{ThmControlLassoEst} and Lemma \ref{LemSubgradient} compare the distributions of $\thetahat$ and $\bthat$ to their fixed design counterparts individually, but does not say anything about their joint distribution.
The paper \cite{miolane2018distribution} addresses this challenge by providing a simple characterization of $\hat \mu = \frac1p \sum_{j=1}^p \delta_{\sqrt{n} \widehat \theta_j}$ 
and $\hat \mu' := \frac1p\sum_{j=1}^p \delta_{\widehat t_j}$,
and arguing that their is only one joint distribution $\frac1p \sum_{j=1}^p \delta_{\sqrt{n} \widehat \theta_j,\widehat t_j}$ which is consistent with the Lasso KKT conditions and is consistent with the marginal distributions.
Because we are unable to arrive at a simple characterization of the empirical distributions $\hat \mu = \frac1p \sum_{j=1}^p \delta_{\widehat \theta_j}$ 
and $\hat \mu' := \frac1p\sum_{j=1}^p \delta_{\widehat t_j}$ in the correlated design case, 
we were unable to follow a strategy similar to \cite{miolane2018distribution}.

Instead, we resort to a smoothing argument.
For penalized regression estimators with differentiable penalties, 
the subgradient $\bthat$ is a function of the estimate $\thetahat^f$.
Thus, for smooth procedures, the expression corresponding to Eq.~\eqref{eq:debiased-lasso-estimate-sg-form} is a deterministic\footnote{In particular, it has no dependence on the design $\bX$ except through $\thetahat$.} function only of the estimate.
Thus, the replacement of the quantities in \eqref{eq:debiased-lasso-estimate-sg-form} by their fixed-design counterparts can be justified via analysis of the distribution of the estimate $\thetahat$ individually.
The Lasso penalty $\| \mytheta\|_1$ is not smooth, so that $\widehat \bt$ is not a deterministc function of $\thetahat$.
To handle this, we introduce the $\alpha$-smoothed Lasso, in which we replace the $\ell_1$-penalty by a smooth approximation in the original Lasso objective \eqref{EqnOrgRisk}.
We prove a characterization of the $\alpha$-smoothed Lasso analogous to Theorem \ref{ThmControlLassoEst}, 
and use this to establish the success of the debiasing procedure corresponding to the smoothed estimator.
Finally, we argue that the debiased Lasso estimate is well-approximated by the debiased $\alpha$-smoothed Lasso estimate for small enough smoothing parameter, and show that Theorem \ref{ThmDBLasso} follows.
The details of this argument are provided in Appendix~\ref{SecPfControlDBLasso}.

\begin{supplement}
\sname{Supplement A}
itle{Supplement to `The Lasso with general Gaussian designs with applications to hypothesis testing.'}
\slink[doi]{COMPLETED BY THE TYPESETTER}
\sdescription{The supplement contains proofs and technical details that were omitted from the main text.}
\end{supplement}

\bibliographystyle{abbrv}
\bibliography{lasso}

\begin{thebibliography}{10}

\bibitem{adler2009random}
R.~Adler and J.~Taylor.
\newblock {\em Random Fields and Geometry}.
\newblock Springer Monographs in Mathematics. Springer New York, 2009.

\bibitem{amelunxen2014living}
D.~Amelunxen, M.~Lotz, M.~B. McCoy, and J.~A. Tropp.
\newblock Living on the edge: Phase transitions in convex programs with random
  data.
\newblock {\em Information and Inference: A Journal of the IMA}, 3(3):224--294,
  2014.

\bibitem{barbier2019optimal}
J.~Barbier, F.~Krzakala, N.~Macris, L.~Miolane, and L.~Zdeborov{\'a}.
\newblock Optimal errors and phase transitions in high-dimensional generalized
  linear models.
\newblock {\em Proceedings of the National Academy of Sciences},
  116(12):5451--5460, 2019.

\bibitem{bauschke2011convex}
H.~H. Bauschke and P.~L. Combettes.
\newblock {\em {Convex Analysis and Monotone Operator Theory in Hilbert
  Spaces}}.
\newblock Springer Publishing Company, Incorporated, 1st edition, 2011.

\bibitem{bayati2013estimating}
M.~Bayati, M.~A. Erdogdu, and A.~Montanari.
\newblock Estimating lasso risk and noise level.
\newblock In {\em Advances in Neural Information Processing Systems}, pages
  944--952, 2013.

\bibitem{bayati2015universality}
M.~Bayati, M.~Lelarge, and A.~Montanari.
\newblock Universality in polytope phase transitions and message passing
  algorithms.
\newblock {\em The Annals of Applied Probability}, 25(2):753--822, 2015.

\bibitem{bayati2011lasso}
M.~Bayati and A.~Montanari.
\newblock The lasso risk for gaussian matrices.
\newblock {\em IEEE Transactions on Information Theory}, 58(4):1997--2017,
  2011.

\bibitem{bellec2021outofsample}
P.~C. Bellec.
\newblock Out-of-sample error estimate for robust m-estimators with convex
  penalty, 2021.

\bibitem{bellec2018}
P.~C. Bellec, G.~Lecué, and A.~B. Tsybakov.
\newblock Slope meets lasso: Improved oracle bounds and optimality.
\newblock {\em Ann. Statist.}, 46(6B):3603--3642, 12 2018.

\bibitem{bellecShen2022}
P.~C. Bellec and Y.~Shen.
\newblock Derivatives and residual distribution of regularized m-estimators
  with application to adaptive tuning.
\newblock In P.-L. Loh and M.~Raginsky, editors, {\em Proceedings of Thirty
  Fifth Conference on Learning Theory}, volume 178 of {\em Proceedings of
  Machine Learning Research}, pages 1912--1947. PMLR, 02--05 Jul 2022.

\bibitem{Bellec2018SecondOS}
P.~C. Bellec and C.-H. Zhang.
\newblock Second order stein: Sure for sure and other applications in
  high-dimensional inference.
\newblock 2018.

\bibitem{Bellec2019SecondOP}
P.~C. Bellec and C.-H. Zhang.
\newblock {De-biasing convex regularized estimators and interval estimation in
  linear models}.
\newblock {\em arXiv:1912.11943}, 2019.

\bibitem{bellec2019biasing}
P.~C. Bellec and C.-H. Zhang.
\newblock De-biasing the lasso with degrees-of-freedom adjustment.
\newblock {\em arXiv:1902.08885}, 2019.

\bibitem{bickelLevina2008}
P.~J. Bickel and E.~Levina.
\newblock {Covariance regularization by thresholding}.
\newblock {\em The Annals of Statistics}, 36(6):2577 -- 2604, 2008.

\bibitem{bickel2009simultaneous}
P.~J. Bickel, Y.~Ritov, A.~B. Tsybakov, et~al.
\newblock Simultaneous analysis of lasso and dantzig selector.
\newblock {\em The Annals of Statistics}, 37(4):1705--1732, 2009.

\bibitem{blanchard2010}
J.~Blanchard, C.~Cartis, and J.~Tanner.
\newblock Compressed sensing: How sharp is the restricted isometry property?
\newblock {\em SIAM Review}, 53, 04 2010.

\bibitem{boucheron2013concentration}
S.~Boucheron, G.~Lugosi, and P.~Massart.
\newblock {\em Concentration Inequalities: A Nonasymptotic Theory of
  Independence}.
\newblock OUP Oxford, 2013.

\bibitem{buhlmann2011statistics}
P.~B{\"u}hlmann and S.~Van De~Geer.
\newblock {\em Statistics for high-dimensional data: methods, theory and
  applications}.
\newblock Springer Science \& Business Media, 2011.

\bibitem{caiZhang2010}
T.~T. Cai, C.-H. Zhang, and H.~H. Zhou.
\newblock {Optimal rates of convergence for covariance matrix estimation}.
\newblock {\em The Annals of Statistics}, 38(4):2118 -- 2144, 2010.

\bibitem{candes2016panning}
E.~Candes, Y.~Fan, L.~Janson, and J.~Lv.
\newblock Panning for gold: Model-x knockoffs for high-dimensional controlled
  variable selection.
\newblock {\em arXiv preprint arXiv:1610.02351}, 2016.

\bibitem{celentano2021}
M.~Celentano.
\newblock {Approximate separability of symmetrically penalized least squares in
  high dimensions: characterization and consequences}.
\newblock {\em Information and Inference: A Journal of the IMA},
  10(3):1105--1165, 01 2021.

\bibitem{celentano2021cad}
M.~Celentano and A.~Montanari.
\newblock Cad: Debiasing the lasso with inaccurate covariate model.
\newblock 2021.

\bibitem{chandrasekaran2010}
V.~Chandrasekaran, B.~Recht, P.~Parrilo, and A.~Willsky.
\newblock The convex geometry of linear inverse problems.
\newblock {\em Foundations of Computational Mathematics}, 12, 12 2010.

\bibitem{chen2019inference}
Y.~Chen, J.~Fan, C.~Ma, and Y.~Yan.
\newblock Inference and uncertainty quantification for noisy matrix completion.
\newblock {\em Proceedings of the National Academy of Sciences},
  116(46):22931--22937, 2019.

\bibitem{Chetverikov2016}
D.~Chetverikov, Z.~Liao, and V.~Chernozhukov.
\newblock On cross-validated lasso.
\newblock {\em arXiv:1605.02214}, 2016.

\bibitem{Deng2019AMO}
Z.~Deng, A.~Kammoun, and C.~Thrampoulidis.
\newblock A model of double descent for high-dimensional binary linear
  classification.
\newblock {\em arXiv:1911.05822}, 2019.

\bibitem{donoho2016high}
D.~Donoho and A.~Montanari.
\newblock High dimensional robust m-estimation: Asymptotic variance via
  approximate message passing.
\newblock {\em Probability Theory and Related Fields}, 166(3-4):935--969, 2016.

\bibitem{donoho2009counting}
D.~Donoho and J.~Tanner.
\newblock Counting faces of randomly projected polytopes when the projection
  radically lowers dimension.
\newblock {\em Journal of the American Mathematical Society}, 22(1):1--53,
  2009.

\bibitem{donohoTanner2009}
D.~Donoho and J.~Tanner.
\newblock Observed universality of phase transitions in high-dimensional
  geometry, with implications for modern data analysis and signal processing.
\newblock {\em Philosophical transactions. Series A, Mathematical, physical,
  and engineering sciences}, 367:4273--93, 11 2009.

\bibitem{donohoMalekiMontanari2011}
D.~L. Donoho, A.~Maleki, and A.~Montanari.
\newblock The noise-sensitivity phase transition in compressed sensing.
\newblock {\em IEEE Transactions on Information Theory}, 57(10):6920--6941,
  2011.

\bibitem{donoho2005neighborliness}
D.~L. Donoho and J.~Tanner.
\newblock Neighborliness of randomly projected simplices in high dimensions.
\newblock {\em Proceedings of the National Academy of Sciences},
  102(27):9452--9457, 2005.

\bibitem{efron2004least}
B.~Efron, T.~Hastie, I.~Johnstone, R.~Tibshirani, et~al.
\newblock Least angle regression.
\newblock {\em Annals of statistics}, 32(2):407--499, 2004.

\bibitem{efron1997improvements}
B.~Efron and R.~Tibshirani.
\newblock Improvements on cross-validation: the 632+ bootstrap method.
\newblock {\em Journal of the American Statistical Association},
  92(438):548--560, 1997.

\bibitem{el2018impact}
N.~El~Karoui.
\newblock On the impact of predictor geometry on the performance on
  high-dimensional ridge-regularized generalized robust regression estimators.
\newblock {\em Probability Theory and Related Fields}, 170(1-2):95--175, 2018.

\bibitem{el2018can}
N.~El~Karoui and E.~Purdom.
\newblock Can we trust the bootstrap in high-dimensions? the case of linear
  models.
\newblock {\em The Journal of Machine Learning Research}, 19(1):170--235, 2018.

\bibitem{fisher1922mathematical}
R.~A. Fisher.
\newblock On the mathematical foundations of theoretical statistics.
\newblock {\em Philosophical Transactions of the Royal Society of London.
  Series A, Containing Papers of a Mathematical or Physical Character},
  222(594-604):309--368, 1922.

\bibitem{geer2000empirical}
S.~A. Geer and S.~van~de Geer.
\newblock {\em Empirical Processes in M-estimation}, volume~6.
\newblock Cambridge university press, 2000.

\bibitem{gordon1985some}
Y.~Gordon.
\newblock Some inequalities for gaussian processes and applications.
\newblock {\em Israel Journal of Mathematics}, 50(4):265--289, 1985.

\bibitem{Gordon1988}
Y.~Gordon.
\newblock On milman's inequality and random subspaces which escape through a
  mesh in $r^n$.
\newblock In J.~Lindenstrauss and V.~D. Milman, editors, {\em Geometric Aspects
  of Functional Analysis}, pages 84--106, Berlin, Heidelberg, 1988. Springer
  Berlin Heidelberg.

\bibitem{han2022universality}
Q.~Han and Y.~Shen.
\newblock Universality of regularized regression estimators in high dimensions.
\newblock {\em arXiv preprint arXiv:2206.07936}, 2022.

\bibitem{hastie2017generalized}
T.~J. Hastie.
\newblock {\em Generalized additive models}.
\newblock Routledge, 2017.

\bibitem{hu2020universality}
H.~Hu and Y.~M. Lu.
\newblock Universality laws for high-dimensional learning with random features.
\newblock {\em arXiv preprint arXiv:2009.07669}, 2020.

\bibitem{javanmard2014confidence}
A.~Javanmard and A.~Montanari.
\newblock Confidence intervals and hypothesis testing for high-dimensional
  regression.
\newblock {\em The Journal of Machine Learning Research}, 15(1):2869--2909,
  2014.

\bibitem{javanmard2014hypothesis}
A.~Javanmard and A.~Montanari.
\newblock Hypothesis testing in high-dimensional regression under the gaussian
  random design model: Asymptotic theory.
\newblock {\em IEEE Transactions on Information Theory}, 60(10):6522--6554,
  2014.

\bibitem{javanmard2018debiasing}
A.~Javanmard, A.~Montanari, et~al.
\newblock Debiasing the lasso: Optimal sample size for gaussian designs.
\newblock {\em The Annals of Statistics}, 46(6A):2593--2622, 2018.

\bibitem{Kakade2009OnTD}
S.~M. Kakade and S.~Shalev-Shwartz.
\newblock On the duality of strong convexity and strong smoothness : Learning
  applications and matrix regularization.
\newblock 2009.

\bibitem{elKaroui2008}
N.~E. Karoui.
\newblock {Operator norm consistent estimation of large-dimensional sparse
  covariance matrices}.
\newblock {\em The Annals of Statistics}, 36(6):2717 -- 2756, 2008.

\bibitem{Katsevich2020ATT}
E.~Katsevich and A.~Ramdas.
\newblock A theoretical treatment of conditional independence testing under
  model-x.
\newblock {\em arXiv: Statistics Theory}, 2020.

\bibitem{le2012asymptotic}
L.~Le~Cam.
\newblock {\em Asymptotic methods in statistical decision theory}.
\newblock Springer Science \& Business Media, 2012.

\bibitem{li2023approximate}
G.~Li, W.~Fan, and Y.~Wei.
\newblock Approximate message passing from random initialization with
  applications to $z_2$ synchronization.
\newblock {\em Proceedings of the National Academy of Sciences}, 2023.

\bibitem{li2022non}
G.~Li and Y.~Wei.
\newblock A non-asymptotic framework for approximate message passing in spiked
  models.
\newblock {\em arXiv preprint arXiv:2208.03313}, 2022.

\bibitem{li2021minimum}
Y.~Li and Y.~Wei.
\newblock Minimum $\ell_1$-norm interpolators: Precise asymptotics and multiple
  descent.
\newblock {\em arXiv preprint arXiv:2110.09502}, 2021.

\bibitem{Liang2020APH}
T.~Liang and P.~Sur.
\newblock A precise high-dimensional asymptotic theory for boosting and
  min-l1-norm interpolated classifiers.
\newblock {\em arXiv:2002.01586}, 2020.

\bibitem{Liu2020FastAP}
M.~Liu, E.~Katsevich, L.~Janson, and A.~Ramdas.
\newblock Fast and powerful conditional randomization testing via distillation.
\newblock {\em arXiv: Methodology}, 2020.

\bibitem{milgrom2002}
P.~Milgrom and I.~Segal.
\newblock Envelope theorems for arbitrary choice sets.
\newblock {\em Econometrica}, 70(2):583--601, 2002.

\bibitem{miolane2018distribution}
L.~Miolane and A.~Montanari.
\newblock The distribution of the lasso: Uniform control over sparse balls and
  adaptive parameter tuning.
\newblock {\em arXiv:1811.01212}, 2018.

\bibitem{montanari2017universality}
A.~Montanari and P.-M. Nguyen.
\newblock Universality of the elastic net error.
\newblock In {\em 2017 IEEE International Symposium on Information Theory
  (ISIT)}, pages 2338--2342. IEEE, 2017.

\bibitem{Montanari2019}
A.~Montanari, F.~Ruan, Y.~Sohn, and J.~Yan.
\newblock The generalization error of max-margin linear classifiers:
  High-dimensional asymptotics in the overparametrized regime.
\newblock {\em arXiv:1911.01544}, 2020.

\bibitem{montanari2022universality}
A.~Montanari and B.~N. Saeed.
\newblock Universality of empirical risk minimization.
\newblock In {\em Conference on Learning Theory}, pages 4310--4312. PMLR, 2022.

\bibitem{negahban2012}
S.~N. Negahban, P.~Ravikumar, M.~J. Wainwright, and B.~Yu.
\newblock A unified framework for high-dimensional analysis of $m$-estimators
  with decomposable regularizers.
\newblock {\em Statist. Sci.}, 27(4):538--557, 11 2012.

\bibitem{oymak2018universality}
S.~Oymak and J.~A. Tropp.
\newblock Universality laws for randomized dimension reduction, with
  applications.
\newblock {\em Information and Inference: A Journal of the IMA}, 7(3):337--446,
  2018.

\bibitem{Parikh2013ProximalAlgorithms}
N.~Parikh and S.~Boyd.
\newblock {Proximal Algorithms}.
\newblock {\em Foundations and Trends in Optimization}, 1(3):123--231, 2013.

\bibitem{raskutti10a}
G.~Raskutti, M.~J. Wainwright, and B.~Yu.
\newblock Restricted eigenvalue properties for correlated gaussian designs.
\newblock {\em Journal of Machine Learning Research}, 11(78):2241--2259, 2010.

\bibitem{reeves2016replica}
G.~Reeves and H.~D. Pfister.
\newblock The replica-symmetric prediction for compressed sensing with gaussian
  matrices is exact.
\newblock In {\em Information Theory (ISIT), 2016 IEEE International Symposium
  on}, pages 665--669. IEEE, 2016.

\bibitem{ren2015asymptotic}
Z.~Ren, T.~Sun, C.-H. Zhang, H.~H. Zhou, et~al.
\newblock Asymptotic normality and optimalities in estimation of large gaussian
  graphical models.
\newblock {\em Annals of Statistics}, 43(3):991--1026, 2015.

\bibitem{rockafellar1970convex}
R.~T. Rockafellar.
\newblock {\em Convex Analysis}.
\newblock Princeton Landmarks in Mathematics and Physics. Princeton University
  Press, 1970.

\bibitem{rudelson2010}
M.~Rudelson and R.~Vershynin.
\newblock Non-asymptotic theory of random matrices: Extreme singular values.
\newblock {\em Proceedings of the International Congress of Mathematicians
  2010, ICM 2010}, 03 2010.

\bibitem{sesia2018}
M.~Sesia, C.~Sabatti, and E.~J. Candès.
\newblock {Gene hunting with hidden Markov model knockoffs}.
\newblock {\em Biometrika}, 106(1):1--18, 08 2018.

\bibitem{stojnic2013framework}
M.~Stojnic.
\newblock {A framework to characterize performance of LASSO algorithms}, 2013.

\bibitem{suCandes2017}
W.~Su, M.~Bogdan, and E.~Candès.
\newblock False discoveries occur early on the lasso path.
\newblock {\em The Annals of Statistics}, 45(5):2133--2150, 2017.

\bibitem{sun2012comment}
T.~Sun and C.~Zhang.
\newblock Comment: Minimax estimation of large covariance matrices under l1
  norm.
\newblock {\em Statist Sinica}, 22:1354--1358, 2012.

\bibitem{sur2019modern}
P.~Sur and E.~J. Cand{\`e}s.
\newblock A modern maximum-likelihood theory for high-dimensional logistic
  regression.
\newblock {\em Proceedings of the National Academy of Sciences},
  116(29):14516--14525, 2019.

\bibitem{thrampoulidis2018}
C.~{Thrampoulidis}, E.~{Abbasi}, and B.~{Hassibi}.
\newblock Precise error analysis of regularized $m$ -estimators in high
  dimensions.
\newblock {\em IEEE Transactions on Information Theory}, 64(8):5592--5628,
  2018.

\bibitem{thrampoulidis2015regularized}
C.~Thrampoulidis, S.~Oymak, and B.~Hassibi.
\newblock Regularized linear regression: A precise analysis of the estimation
  error.
\newblock In {\em Conference on Learning Theory}, pages 1683--1709, 2015.

\bibitem{tropp2015convex}
J.~A. Tropp.
\newblock Convex recovery of a structured signal from independent random linear
  measurements.
\newblock In {\em Sampling Theory, a Renaissance}, pages 67--101. Springer,
  2015.

\bibitem{van2014asymptotically}
S.~Van~de Geer, P.~B{\"u}hlmann, Y.~Ritov, R.~Dezeure, et~al.
\newblock On asymptotically optimal confidence regions and tests for
  high-dimensional models.
\newblock {\em The Annals of Statistics}, 42(3):1166--1202, 2014.

\bibitem{vershynin2010}
R.~Vershynin.
\newblock Introduction to the non-asymptotic analysis of random matrices.
\newblock {\em Compressed Sensing: Theory and Applications}, 11 2010.

\bibitem{wang2020}
H.~Wang, Y.~Yang, Z.~Bu, and W.~Su.
\newblock The complete lasso tradeoff diagram.
\newblock In H.~Larochelle, M.~Ranzato, R.~Hadsell, M.~F. Balcan, and H.~Lin,
  editors, {\em Advances in Neural Information Processing Systems}, volume~33,
  pages 20051--20060. Curran Associates, Inc., 2020.

\bibitem{zhang2014confidence}
C.-H. Zhang and S.~S. Zhang.
\newblock Confidence intervals for low dimensional parameters in high
  dimensional linear models.
\newblock {\em Journal of the Royal Statistical Society: Series B (Statistical
  Methodology)}, 76(1):217--242, 2014.

\bibitem{zou2007}
H.~Zou, T.~Hastie, and R.~Tibshirani.
\newblock On the “degrees of freedom” of the lasso.
\newblock {\em Ann. Statist.}, 35(5):2173--2192, 10 2007.

\end{thebibliography}

\newpage

\setcounter{page}{1}
\setcounter{section}{0}
\renewcommand\thesection{\Alph{section}}

\begin{center}
\large \textbf{ Supplement to `The Lasso with general Gaussian designs with applications to hypothesis testing.'}
\end{center}

\section{Preliminaries}
\label{SecPfandDetails}


\subsection{A Gaussian width tradeoff}

Define the descent cone
\begin{equation}
\label{EqnDescentCone}
	\cuD(\bx,\mySigma) := \left\{ \bv \in L^2 \Bigm| \E\big[F(\bv;\bx,\mySigma) \big] \leq 0\right\}\,.
\end{equation}
The Gaussian width in Eq.~\eqref{EqnGaussianWidth} can be seen as the maximal value of the correlation $\< \bv , \bg \>_{L^2} / \| \bv \|_{L^2} \| \bg \|_{L^2}$ subject to the constraint $\bv \in \cuD(\bx,\mySigma)$. 
In this section, 
we quantify the sensitivity of this maximal correlation to small relaxations of this constraint.
This is a central tool in establishing bounds on the solutions to Eqs.~\eqref{EqnEqn1} and \eqref{EqnEqn2} (see the proof of Theorem \ref{ThmControlLassoEst}(a) in Section \ref{SecFixedPtSoln}).

\begin{lem}
\label{LemGaussWidthTradeoff}
Fix $\mySigma \in \S_{\ge 0}^p$ with singular values bounded $0 < \kappamin \leq \kappa_j(\mySigma) \leq \kappamax < \infty$ for all
$j$. Define $\kappacond:=\kappamax/\kappamin$.
	Let $\bx \in \{-1,0,1\}^p$ with $\|\bx\|_0/p \geq \numin > 0$.
	Let $S = \supp(\bx)$.
	Then, for any $\bv \in L^2$ and any $\epsilon > 0$, we have either
	\begin{equation}\label{EqnWidthTradeoff1}
		 \frac1{\sqrt{p}} \langle \bv , \bg \rangle_{L^2} \leq \big(\cuG(\bx,\mySigma) + \epsilon\big) \|\bv\|_{L^2}\,,
	\end{equation}
	or 
	\begin{equation}\label{EqnWidthTradeoff2}
	    \frac1{\sqrt{p}} \E\Big[F(\bv;\bx,\mySigma)\Big] \geq \frac{\numin^{1/2}\epsilon}{ \kappamax^{1/2}(2+\kappacond)} \|\bv\|_{L^2}\,.
	\end{equation}
\end{lem}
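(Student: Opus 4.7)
The proof proceeds by a dichotomy argument based on the sign of the constraint functional
\[
G(\bv) \;:=\; \frac{1}{p}\,\E\Big[\sum_{j \in S} x_j w_j(\bg) + \|\bw(\bg)_{S^c}\|_1\Big], \qquad \bw := \mySigma^{-1/2}\bv.
\]
Note $G$ is positively homogeneous of degree $1$, and $G(\bv) \leq 0$ is exactly the constraint $\E[F(\bv;\bx,\mySigma)] \leq 0$ appearing in the definition of $\cuG(\bx,\mySigma)$. A preliminary observation is that $\cuG(\bx,\mySigma) \leq 1$: by Cauchy-Schwarz and $\|\bg\|_{L^2}^2 = \E\|\bg\|_2^2 = p$, for any $\bv$ with $\|\bv\|_{L^2} \leq \sqrt{p}$ we have $\frac{1}{p}\langle \bv,\bg\rangle_{L^2} \leq \frac{1}{p}\|\bv\|_{L^2}\|\bg\|_{L^2} \leq 1$.

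In the easy case $G(\bv) \leq 0$, I would rescale to $\tilde{\bv} := \sqrt{p}\,\bv/\|\bv\|_{L^2}$, which satisfies both $\|\tilde{\bv}\|_{L^2} = \sqrt{p}$ and (by homogeneity) $G(\tilde{\bv}) \leq 0$. The definition of $\cuG(\bx,\mySigma)$ then gives $\frac{1}{p}\langle \tilde{\bv},\bg\rangle_{L^2} \leq \cuG(\bx,\mySigma)$, and rescaling back yields \eqref{EqnWidthTradeoff1}, a fortiori with the $\epsilon$ slack.

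The interesting case is $G(\bv) > 0$. Here I would exploit the structure of $\bx \in \{-1,0,1\}^p$ by introducing the deterministic perturbation $\bu := c\,\mySigma^{1/2}\bx \in L^2$ (viewed as a constant function of $\bg$), with the scalar $c := G(\bv)\,p/|S|$. Since $(\mySigma^{-1/2}\bu) = c\bx$ is supported on $S$ with $x_j^2 = 1$ there, one computes directly
\[
G(\bv - \bu) \;=\; G(\bv) - \frac{c\,|S|}{p} \;=\; 0,
\]
so $\bv - \bu$ satisfies the constraint defining $\cuG$. Because $\bu$ does not depend on $\bg$ and $\E\bg = 0$, one also has the orthogonality $\langle \bu,\bg\rangle_{L^2} = 0$. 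Finally, using $\|\mySigma^{1/2}\bx\|_2^2 \leq \kappamax |S|$ and $|S|/p \geq \numin$:
\[
\|\bu\|_{L^2} \;\leq\; c\sqrt{\kappamax |S|} \;=\; G(\bv)\,\frac{p}{|S|}\sqrt{\kappamax|S|} \;\leq\; G(\bv)\sqrt{p\,\kappamax/\numin}.
\]

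Applying the easy-case reasoning to $\bv - \bu$ and then decomposing $\bv = (\bv-\bu) + \bu$ gives
\[
\frac{1}{p}\langle \bv,\bg\rangle_{L^2} \;=\; \frac{1}{p}\langle \bv-\bu,\bg\rangle_{L^2} \;\leq\; \frac{\|\bv-\bu\|_{L^2}}{\sqrt{p}}\cuG(\bx,\mySigma) \;\leq\; \frac{\cuG(\bx,\mySigma)}{\sqrt{p}}\|\bv\|_{L^2} + \cuG(\bx,\mySigma)\sqrt{\kappamax/\numin}\,G(\bv),
\]
where the first inequality used $\langle \bu,\bg\rangle_{L^2}=0$ and the second used the triangle inequality and the bound on $\|\bu\|_{L^2}$. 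Now if \eqref{EqnWidthTradeoff1} fails, subtracting $\frac{\cuG}{\sqrt{p}}\|\bv\|_{L^2}$ from both sides and using $\cuG(\bx,\mySigma) \leq 1$ yields
\[
G(\bv) \;\geq\; \frac{\epsilon\sqrt{\numin}\,\|\bv\|_{L^2}}{\sqrt{p\,\kappamax}},
\]
which is strictly stronger than \eqref{EqnWidthTradeoff2} and hence implies it.

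\textbf{Main obstacle.} The one delicate point is verifying that the perturbation $\bu = c\mySigma^{1/2}\bx$ is ``free'' in the Gaussian width sense: the crucial cancellation $\langle \bu,\bg\rangle_{L^2}=0$ relies precisely on $\bu$ being a constant function, which forces us to estimate $\|\bu\|_{L^2}$ by the Euclidean norm $\|\mySigma^{1/2}\bx\|_2$. The bounds $\|\mySigma^{1/2}\bx\|_2 \leq \sqrt{\kappamax|S|}$ together with $|S| \geq p\numin$ are what produce the $\sqrt{\kappamax/\numin}$ factor in the final estimate; the rest is bookkeeping. I do not anticipate any other technical difficulty.
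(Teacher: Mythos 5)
Your proof is correct, and it takes a genuinely different route from the paper's. The paper works through Lagrangian duality: it writes the defining optimization of $\cuG(\bx,\mySigma)$ as a max over $\bw$ subject to two constraints, bounds the dual-optimal multipliers ($\kappastar \leq 2+\kappacond$, $\xistar \leq \kappamax^{1/2}(2+\kappacond)/(\|\bx\|_0/p)^{1/2}$) by explicitly maximizing the Lagrangian in closed form (via Moreau envelopes), invokes strong duality, and then reads off the tradeoff from the Lagrangian inequality evaluated at the normalized $\bar\bw$. Your argument bypasses all of this: you directly exhibit a deterministic, $\bg$-independent element $\bu = c\,\mySigma^{1/2}\bx$ whose subtraction exactly cancels the constraint violation $G(\bv)$, and you exploit the two facts that constant functions have zero correlation with $\bg$ (so $\langle\bu,\bg\rangle_{L^2}=0$) and bounded $L^2$-norm ($\|\bu\|_{L^2} = c\|\mySigma^{1/2}\bx\|_2$). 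The paper's route explains where the odd-looking factor $2+\kappacond$ comes from (it is a bound on the dual multiplier $\kappastar$), whereas your route simply does not incur it: after using $\cuG(\bx,\mySigma)\le 1$ you obtain $G(\bv) \geq \numin^{1/2}\epsilon\|\bv\|_{L^2}/(\sqrt{p\,\kappamax})$, which is stronger than \eqref{EqnWidthTradeoff2} by the factor $2+\kappacond\ge 3$, so the lemma as stated follows a fortiori. (And in fact, if one stops one step earlier, your bound $G(\bv) \ge \epsilon\sqrt{\numin/\kappamax}\,\|\bv\|_{L^2}/(\sqrt{p}\,\cuG(\bx,\mySigma))$ improves further whenever $\cuG$ is small, which is the regime of interest.) The only points worth making explicit in a final write-up are the trivial edge cases $\|\bv\|_{L^2}=0$ and $\bv=\bu$, where \eqref{EqnWidthTradeoff1} holds vacuously because the inner product is zero and $\cuG(\bx,\mySigma)\ge 0$; you gesture at homogeneity but don't note where the rescaling would be undefined. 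Overall your argument is shorter, more self-contained (no proximal/Moreau-envelope computation or strong-duality appeal), and yields a cleaner constant.
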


\begin{proof}[Proof of Lemma \ref{LemGaussWidthTradeoff}]
	Under the change-of-variables $\bw = \sqrt{p}\,  \mySigma^{-1/2} \bv$,\footnote{Note that, for the purposes of this proof, we have normalized by $\sqrt{p}$, which is distinct from the usage of $\bw$ elsewhere.} 
	we may alternatively write \eqref{EqnGaussianWidth} as
	\begin{equation}\label{EqnGaussianWidthW}
		\cuG(\bx,\mySigma) 
		= 
		\sup_{ \substack{\bw \in \cuD(\bx,\Ind_p) \\ \|\mySigma^{1/2}\bw\|_{L^2}^2 \leq p} } \frac1p \E\Big[ \bw^\top \tilde \bg \Big]\,,
	\end{equation}
	where $\tilde \bg = \mySigma^{1/2}\bg$ and $\bg$ is interpreted as the identity function in $L^2$.
	The Lagrangian for this problem reads:
	\begin{equation}
		\cuL(\bw; \kappa,\xi ) := \frac1p\E\left[\bw^\top \tilde \bg\right] + \frac\kappa2 \left(1 -  \frac1p \E\left[\|\mySigma^{1/2}\bw\|_2^2\right] \right) - \frac{\xi}{p} \E \Big[ \sum_{j \in S} x_jw_j + \|\bw_{S^c}\|_1\Big]\,,
	\end{equation}
	where the Langrange multipliers $\kappa,\xi$ are restricted to be non-negative.
	First, we bound the dual optimal Lagrange multipliers.
	We bound
	\begin{align}
		\frac\kappa2 + \frac1p&\E\left[\bw^\top \tilde \bg - \frac{\kappa\kappamin}{2} \|\bw\|_2^2 - \xi \Big(\sum_{j \in S} x_j w_j + \|\bw_{S^c}\|_1\Big)\right]
			\geq \cuL(\bw;\kappa,\xi) 
			\nonumber\\ 
			&\;\;\;\;\;\;\;\;\;\;\;\;\geq 
			\frac\kappa2 + \frac1p\E\left[\bw^\top \tilde \bg - \frac{\kappa \kappamax}{2} \|\bw\|_2^2 - \xi \Big( \sum_{j \in S} x_j w_j + \|\bw_{S^c}\|_1 \Big)\right]\,.\label{EqnLagrBounds}
	\end{align}
	The expected value appearing in the upper bound is maximized by maximizing the integrand for each value of $\tilde \bg$.
	Because the integrand is separable across coordinates, we may do this explicitly.
	The maximal value of the integrand at fixed $\tilde \bg$ is 
	\begin{align}
		\frac\kappa2 + \frac1{2p\kappa\kappamin}\sum_{j \in S}(\tilde g_j - \xi x_j)^2 + \frac{1}{p\kappa \kappamin }\sum_{j \in S^c} \left( \frac{\tilde g_j^2}{2} - \xi \sfM_\xi(\tilde g_j) \right)\,,
	\end{align}
	where $\sfM_\xi(\tilde g_j)$ is the Moreau envelope of the $\ell_1$-norm
	\begin{align}
		\sfM_\xi(y)
		:=
		\inf_{x \in \reals} \left\{\frac1{2\xi}(y-x)^2 + |x|\right\}\,.
	\end{align}
	Because $\xi \sfM_\xi(\tilde g_j) \geq 0$, 
	we have $\E[\tilde g_j^2/2 - \xi\sfM_\xi(\tilde g_j)] \leq \E[\tilde g_j^2/2] \leq \E[(\tilde g_j - \xi x)^2/2] \leq (\kappamax + \xi^2)/2$ whenever $x =\pm1$.
	Thus,
	\begin{equation}
		\frac\kappa2 + \frac1{\kappa} \frac{\kappacond + \xi^2/\kappamin}{2} \geq \sup_{\bw \in L^2} \cuL(\bw;\kappa,\xi).
	\end{equation}
	This further implies that 
	\begin{equation}
	\label{eq:cuL-ub}
		\inf_{\kappa,\xi \geq 0} \sup_{\bw \in L^2} \cuL(\bw;\kappa,\xi) \leq \sup_{\bw \in L^2} \cuL(\bw;1,\kappamin^{1/2}) = 1 + \frac{\kappacond}2\,.
	\end{equation}
	Similarly, maximizing the right-hand side of Eq.~\eqref{EqnLagrBounds} explicitly and using $\tilde g_j^2/2 - \xi\sfM_\xi(\tilde g_j) \geq 0$,
	\begin{equation}
	\label{eq:cuL-lb}
		\sup_{\bw \in L^2} \cuL(\bw;\kappa,\xi) 
		\geq 
		\frac\kappa2 + \frac1{\kappa} \frac{|S|(1/\kappacond + \xi^2/\kappamax)}{2p}\,.
	\end{equation}
	If either $\kappa/2 > 1 + \kappacond/2$ or $\xi^2/\kappamax > 4(1 + \kappacond/2)^2/(|S|/p)$, then $ \sup_{\bw \in L^2} \cuL(\bw;\kappa,\xi) > 1 + \kappacond/2$.
	Combining the previous two displays,
	we conclude that $\inf_{\kappa,\xi \geq 0} \sup_{\bw \in L^2} \cuL(\bw;\kappa,\xi)$ is achieved at some 
	\begin{equation}
	\label{EqnLagrMultUB}
		\kappastar \leq 2 + \kappacond \;\;\;\;\;\; \text{and} \;\;\;\;\;\; \xistar \leq \frac{\kappamax^{1/2}(2 + \kappacond)}{(|S|/p)^{1/2}}\,.
	\end{equation}
	Since the constraints on $\bw$ are strictly feasible, strong duality holds:
	\begin{equation}
		\sup_{\bw \in L^2} \cuL(\bw;\kappastar,\xistar) = \cuG(\bx,\mySigma)\,.
	\end{equation}

	The dual optimal variable $\xi^*$ quantifies the tradeoff we seek to control, as we now show.
	For any function $\bw:\reals^p\rightarrow \reals^p$,
	let $\bar \bw : \reals^p \rightarrow \reals^p$ be defined by $\bar \bw(\bg) = \sqrt p \bw(\bg) / \E[\|\bw(\bg)\|_{\mySigma}^2]^{1/2}$,
	where $\|\bw\|_{\mySigma}^2 = \bw^\top \mySigma \bw$.
	Then
	\begin{equation}
	\begin{aligned}
		&\frac1p \langle \bar \bw , \tilde \bg \rangle_{L^2} 
			- \frac{\xistar}p \E\left[ \sum_{j \in S} x_j \bar w_j(\bg) 
				+ \|\bar \bw_{S^c}(\bg)\|_1\right] \\
		&\quad\quad\quad\quad =
		\frac1p \E[\bar \bw(\bg)^\top \tilde \bg  ] 
			+ \frac{\kappastar}2 \left(1 - \frac1p \E[\|\bar\bw\|_{\mySigma}^2] \right)
			- \frac{\xistar}p \E\left[ \sum_{j \in S} x_j \bar w_j(\bg) 
				+ \|\bar\bw_{S^c}(\bg)\|_1\right] \\
		&\quad\quad\quad\quad \leq 
		\sup_{\bw \in L^2} \cuL(\bw;\kappastar,\xistar) 
			= \cuG(\bx,\mySigma)\,,
	\end{aligned}
	\end{equation}
	where in the first equality we used that $\E[\|\bar \bw(\bg)\|_{\mySigma}^2]/p = 1$.
	We conclude that for any $\epsilon > 0$,
	\begin{equation}
		\text{either} 
		\;\;\;\;\;\; 
			\frac1p \langle \bar \bw , \tilde \bg \rangle_{L^2} 
			\leq 
			\cuG(\bx,\mySigma) + \epsilon 
		\;\;\;\;\;\; 
		\text{or} 
		\;\;\;\;\;\; 
			\frac1p \E\left[\sum_{j \in S} x_j \bar w_j(\tilde\bg) + \|\bar \bw(\bg)_{S^c}\|_1\right] 
			\geq 
			\frac{\epsilon}{\xistar}\,.
	\end{equation}
	Plugging in $\bw = \E[\|\bw(\bg)\|_{\mySigma}^2]^{1/2}{\bar\bw}/\sqrt p$ 
    and the upper bound on $\xistar$ in \eqref{EqnLagrMultUB},
	the lemma follows.
\end{proof}

\subsection{Some properties of the Gaussian width}
\label{sec:GW-prop}

\begin{proof}[Proof of Lemma \ref{lem:width-under-corr}]
	Notice that 
	\begin{align}
	\label{EqSigmaToIComparison}
		\sup_{\substack{
						\bv \in \cuD(\bx,\mySigma)\\ 
						\|\bv\|_{L^2}^2\leq 1}
						} \frac1{\sqrt{p}} \langle \bv , \bg \rangle_{L^2}
			= 
			\sup_{\substack{
						\bw \in \cuD(\bx,\Ind_p)\\ 
						\|\bw\|_{L^2}^2\leq 1}
						} \frac{\|\bw\|_{L^2}\langle \mySigma \bw , \bg \rangle_{L^2}}{\sqrt{p}\|\mySigma \bw\|_{L^2}} 
			\leq 
			\frac{\kappamax^{1/2}}{\kappamin^{1/2}}\sup_{\substack{
						\bw \in \cuD(\bx,\Ind_p)\\ 
						\|\bw\|_{L^2}^2\leq 1}
						} \frac{\langle \mySigma \bw , \bg \rangle_{L^2}}{\sqrt{p}\kappamax^{1/2}}\,,
	\end{align}
	where in the equality we have used that $\bw \leftrightarrow \|\bw\|_{L^2}\mySigma \bw /\|\mySigma \bw\|_{L^2}$ is a bijection between the sets over which the suprema are taken,
	and in the inequality we have used that the supremum is positive (because $\bw = \bzero$ is feasible)  and $\|\bw\|_{L^2}/\|\mySigma \bw\|_{L^2} \leq \kappamin^{-1/2}$.
	The Lagrangian for the maximization on the right-hand side is 
	\begin{equation}
	\begin{aligned}
		\cuL_{\mySigma}(\bw; \kappa,\xi ) 
			&:= 
			\frac1{\sqrt{p}\kappamax^{1/2}}\E\big[\bw^\top \mySigma^{1/2} \bg\big] 
			+ 
			\frac\kappa2 \left(1 - \E\left[\|\bw\|_2^2\right] \right) 
			- 
			\frac{\xi}{\sqrt{p}} \E \Big[ \sum_{j \in S} x_jw_j + \|\bw_{S^c}\|_1\Big]\\
			&=
			\frac\kappa2 + \E\left[\frac{\bw^\top \mySigma^{1/2} \bg}{\sqrt{p}\,\kappamax^{1/2}} - \frac\kappa2\|\bw\|_2^2 - \frac{\xi}{\sqrt{p}}\left(\sum_{j \in S} x_jw_j + \|\bw_{S^c}\|_1\right) \right].
	\end{aligned}
	\end{equation}
	The optimal $\bw \in L^2$ maximizes the integrand for almost every $\bg$,
	whence
	\begin{equation}
	\begin{aligned}
		\sup_{\bw \in L^2}\cuL_{\mySigma}(\bw; \kappa,\xi ) 
			=
			\frac\kappa2 + \E\left[\sup_{\bw \in \reals^p} \left\{\frac{\bw^\top \mySigma^{1/2} \bg}{\sqrt{p}\,\kappamax^{1/2}} - \frac\kappa2\|\bw\|_2^2 - \frac{\xi}{\sqrt{p}}\left(\sum_{j \in S} x_jw_j + \|\bw_{S^c}\|_1\right) \right\} \right].
	\end{aligned}
	\end{equation}
	We emphasize that the dummy variable $\bw$ is in $L^2$ on the left-hand side and $\reals^p$ on the right-hand side.
	We apply the Sudakov-Fernique inequality \cite[Theorem 2.2.3]{adler2009random} to upper bound the expectation in the preceding display.
	Indeed, for $\bw,\bw'\in\reals^p$, 
	we have $\E[\bw^\top \mySigma^{1/2}\bg/\kappamax^{1/2}] = \E[\bw^\top \bg] = 0$ and $\Var((\bw-\bw')^\top \mySigma^{1/2} \bg / \kappamax^{1/2}) \leq \Var((\bw - \bw')^\top \bg)$ because $\|\mySigma^{1/2}/\kappamax^{1/2}\|_{\mathrm{op}} \leq 1$.
	Thus, the Sudakov-Fernique inequality implies
	\begin{equation*}
	\begin{aligned}
		\sup_{\bw \in L^2}\cuL_{\mySigma}(\bw; \kappa,\xi ) 
			&\leq 
			\frac\kappa2 + \E\left[\sup_{\bw \in \reals^p} \left\{ \frac{\bw^\top  \bg}{\sqrt{p}} - \frac\kappa2\|\bw\|_2^2 - \frac{\xi}{\sqrt{p}}\left(\sum_{j \in S} x_jw_j + \|\bw_{S^c}\|_1\right) \right\} \right]
		\\
			&=
			\sup_{\bw \in L^2}\cuL_{\Ind_p}(\bw; \kappa,\xi ).
	\end{aligned}
	\end{equation*}
	For any $\kappa,\xi \geq 0$, $\sup_{\bw \in L^2}\cuL_{\mySigma}(\bw; \kappa,\xi ) \geq \sup_{\substack{
						\bw \in \cuD(\bx,\Ind_p)\\ 
						\|\bw\|_{L^2}^2\leq 1}
						} \frac{\langle \mySigma \bw , \bg \rangle_{L^2}}{\sqrt{p}\kappamax^{1/2}}$,
	whence, by Eq.~\eqref{EqSigmaToIComparison},
	\begin{equation}
		\cuG(\bx,\mySigma)
		\leq 
		\kappacond^{1/2} \sup_{\bw \in L^2} \cuL_{\Ind_p}(\bw;\kappa,\xi).
	\end{equation}
	Note that $\cuL_{\Ind_p}(\bw;\kappa,\xi)$ is the Lagrangian for the optimization Eq.~\eqref{EqnGaussianWidth} defining $\cuG(\bx,\Ind_p)$.
	Because the constraints on $\bw$ in this optimization are strictly feasible, strong duality holds.
	Thus, Eq.~\eqref{EqWidthSigmaToIBound} follows by taking the infimum over $\kappa,\xi \geq 0$ in the preceding display.
\end{proof}

\begin{proof}[Proof of Proposition \ref{prop:std-width}]
	By the Lagrangian calculations in the proof of Lemma \ref{LemGaussWidthTradeoff},
	\begin{equation}
		\cuG(\bx,\mySigma)
			:=
			\frac1p \< \bv^* , \bg \>_{L^2},
	\end{equation}
	where 
	\begin{equation}
		\bv^*(\bg) 
			=
			\argmin_{\bv \in \reals^p} 
			\Big\{
				\frac{\kappa^*}2 \| (\kappa^*)^{-1}\bg - \bv \|_2^2
				+ \xi^* F(\bv;\bx,\mySigma)
			\Big\},
	\end{equation}
	and $\bg$ is the identity function of $L_2$.
	We have
	\begin{equation}
		\kappa^*
			\geq 
			\frac{(|S|/p)/(2\kappacond)}{1 + \kappacond/2}
			\geq 
			\frac{c'|S|}{p},
	\end{equation}
	where we have used \eqref{eq:cuL-ub} and \eqref{eq:cuL-lb} in the first inequality. 
	The constant $c' > 0$ depends only on $\kappacond$, and may change values at each appearance.
	Because proximal operators are $1$-Lipschitz \cite{Parikh2013ProximalAlgorithms},
	we see that $\bv^*$ is $c' p /|S|$-Lipschitz in $\bg$.

	Let $\bw^* := \mySigma^{-1/2} \bv^*$, 
	and define $\bv^{(1)}:= \mySigma^{1/2} \bw^{(1)}$,
	where
	\begin{equation}
		\bw_S^{(1)}
			:=
			\bw_S^*
				- 
				\frac{F(\bv^*;\bx,\mySigma)_+}{|S|} \bx_S ,
		\qquad 
		\bw_{S^c}^{(1)}
			:=
			\bw_{S^c}^*,
	\end{equation}
	and $F(\bv^*;\bx,\mySigma)_+ = \max\{0,F(\bv^*;\bx,\mySigma)\}$ denotes the positive part of $F(\bv^*;\bx,\mySigma)_+$.
	Note that 
	\begin{equation}
	\begin{gathered}
		F(\bv^{(1)};\bx,\mySigma)
			= 
			F(\bv^*;\bx,\mySigma) - F(\bv^*;\bx,\mySigma)_+
			=
			F(\bv^*;\bx,\mySigma)_- \leq 0,
	\\
		\frac{\| \bv^{(1)} - \bv^* \|_2}{\sqrt{p}}
			\leq 
			\frac{\kappamax^{1/2}\| \bw^{(1)} - \bw^* \|_2}{\sqrt{p}}
			= 
			\frac{\kappamax^{1/2}F(\bv^*;\bx,\mySigma)_+}{\sqrt{p}|S|^{1/2}}.
	\end{gathered}
	\end{equation}
	Define $\bv^{(2)} = \sqrt{p} \bv^{(1)} / \| \bv^{(1)} \|_2$.
	Note 
	\begin{equation}
	\begin{gathered}
		\frac{\|\bv^{(2)}\|_2}{\sqrt{p}} = 1,
		\qquad 
		F(\bv^{(2)};\bx,\mySigma)
			= \frac{\sqrt{p}}{\|\bv^{(1)}\|_2} F(\bv^{(1)};\bx,\mySigma) \leq 0,
	\\
		\frac{\|\bv^{(2)} - \bv^{(1)}\|_2}{\sqrt{p}}
			= 
			\Big|
				1 - \frac{\| \bv^{(1)}\|_2}{\sqrt{p}}
			\Big|
			\leq 
			\Big|
				1 - \frac{\| \bv^*\|_2}{\sqrt{p}}
			\Big|
			+ 
			\frac{\kappamax^{1/2}F(\bv^*;\bx,\mySigma)_+}{\sqrt{p}|S|^{1/2}}.
	\end{gathered}
	\end{equation}
	The first line of the previous display implies that $\bv^{(2)}$ satisfies the constraints in Eq.~\eqref{Eqn:std-GW-0}.
	Moreover, using Cauchy-Schwartz
	\begin{equation}
	\begin{aligned}
		\frac1p \big| \< \bv^{(1)} , \bg \>_{L^2} - \< \bv^* , \bg \>_{L^2} \big|
			&\leq 
			\frac{\E[\| \bv^{(1)} - \bv^* \|_2^2]^{1/2} }{\sqrt{p}}
		\\
			&\leq 
			\E\Big[
				\Big(
					\Big|
						1 - \frac{\| \bv^*\|_2}{\sqrt{p}}
					\Big|
					+ 
					\frac{2\kappamax^{1/2}F(\bv^*;\bx,\mySigma)_+}{\sqrt{p}|S|^{1/2}}
				\Big)^2
			\Big]^{1/2}
	\end{aligned}
	\end{equation}
	Because $\E[\| \bv^*\|_2^2]/p = 1$, $\E[F(\bv^*;\bx,\mySigma)] \leq 0$, 
	the quantity $\| \bv^* \|_2 / \sqrt{p}$ is $1/\sqrt{p}$-Lipschitz in $\bv^*$, the quantity $F(\bv^*;\bx,\mySigma)_+/(\sqrt{p}|S|^{1/2})$ is $\kappamin^{-1/2}/\sqrt{p}$-Lipschitz in $\bv^*$, and $\bv^*$ is $c'p/|S|$-Lipschitz in $\bg$,
	we have by Guassian concentration of Lipschitz functions that the right-hand side is bound above by $c' \sqrt{p}\, / |S|$ for some $c'$ depending on $\kappacond$.
	Because $\< \bv^*,\bg\>_{L^2} = \cuG(\bx,\mySigma)$, we conclude that $\cuG_{d}(\bx,\mySigma) \geq \cuG(\bx,\mySigma) - c' \sqrt{p}/s$.
	The second bound comes from the fact that $\cuG(\bx,\mySigma) \leq c' \sqrt{s \log(p/s)/p}$ by Lemma \ref{lem:width-under-corr} and the fact that $\omega^*(\eps) \sim (1 + o(\eps))\sqrt{2\eps \log(1/\eps)}$, and the $\cuG_{d}(\bx,\mySigma) \geq 0$.
\end{proof}

\subsection{The $\alpha$-smoothed Lasso}
\label{SecSmoothedLasso}

Controlling the debiased Lasso (Theorem \ref{ThmDBLasso}) will require a smoothing argument in which we replace the $\ell_1$-penalty by a differentiable approximation.
We call the estimator using the differentiable approximation the $\alpha$-\emph{smoothed Lasso}.
Throughout the appendix, all theory is developed for non-smoothed and smoothed Lasso in a unified way.
Results about the Lasso estimate and residual will be instances of these general results.

For $\alpha > 0$, 
define the Moreau envelope of the $\ell_1$-norm
\begin{equation}
\label{EqnMoreau}
	\sfM_\alpha(\mytheta) 
	:= 
	\inf_{\bb \in \reals^p} \left\{\frac1{2\alpha}\|\mytheta - \bb\|_2^2 + \|\bb\|_1\right\}\,,
\end{equation}
and define $\sfM_0(\mytheta) = \|\mytheta\|_1$. Notice that this coincides with the H\"uber loss.
In particular, for all $\mytheta \in \reals^p$, 
\begin{equation}
\label{EqnMoreauRelation}
	\|\mytheta\|_1 - \frac{p\alpha}2\le \sfM_\alpha(\mytheta) \le \|\mytheta\|_1\,.
\end{equation}
For all $\alpha \geq 0$, 
define the \emph{$\alpha$-smoothed Lasso} in the random-design model
\begin{align}
\label{EqnAlphaSmoothRandomD}
	\thetahat_\alpha
		&:=
		\arg\min_{\mytheta\in\reals^p} \left\{
			\frac1{2n} \|\by - \bX\mytheta \|_2^2 + \frac\lambda{\sqrt{n}} (\sfM_\alpha(\mytheta) - \|\thetastar\|_1) 
		\right\}\\
		&=:
		\arg\min_{\mytheta\in\reals^p} \Risk_\alpha(\mytheta)\nonumber
	\,,
\end{align}
where the term $- \|\thetastar\|_1$ is added to the definition of $\Risk_\alpha(\mytheta)$ for future convenience.
Define the $\alpha$-smoothed Lasso in the fixed-design model 
\begin{gather}
\label{EqnAlphaSmoothLassoFixed}
	\thetahat^f_{\alpha} := \eta_\alpha(\by^f,\zeta) := \arg\min_{\mytheta \in \reals^p} \left\{\frac\zeta2 \| \by^f - \mySigma^{1/2} \mytheta\|_2^2 + \frac{\lambda}{\sqrt{n}} \sfM_\alpha(\mytheta)\right\}\,,\\
	\yhat_\alpha(\by^f,\zeta) := \mySigma^{1/2}\eta_\alpha(\by^f,\zeta)\,.\label{EqnAlphaSmoothedLassoPredFixed}	
\end{gather}
Denote the in-sample prediction risk and degrees-of-freedom of the $\alpha$-smoothed Lasso in the fixed-design model by
\begin{align}
	\label{EqnRiskAndDFAlpha1 }
		\risk_\alpha(\tau^2,\zeta)& \defn 
		\E\left[\ltwo{\yhat_\alpha(\mySigma^{1/2} \thetastar + \tau \bg / \sqrt{n}, \zeta) - \mySigma^{1/2}\thetastar}^2\right]\,,\\
		\label{EqnRiskAndDFAlpha2}	
		\df_\alpha(\tau^2,\zeta) &\defn 
		\frac{\sqrt{n}}{\tau} \E\left[\inprod{\yhat_\alpha(\mySigma^{1/2}\thetastar + \tau \bg / \sqrt{n},\zeta)}{ \bg} \right]\\
		&= \E[\div \yhat_\alpha(\mySigma^{1/2}\thetastar + \tau \bg / \sqrt{n} ) ]\,,\nonumber
	\end{align}
where the expectation is over $\bg \sim \normal(\bzero_p,\Ind_p)$.
Let $\taustar_\alpha, \zetastar_\alpha$ be solutions
to the system of equations
\begin{subequations}
	\begin{align}
	\label{EqnAlphaFixedPt1}
		\tau_\alpha^2 &= \sigma^2 + \risk_\alpha(\tau_\alpha^2, \zeta_\alpha)\,,\\
	\label{EqnAlphaFixedPt2}
		\zeta_\alpha &= 1 - \frac{\df_\alpha(\tau_\alpha^2, \zeta_\alpha)}{n}\,.
	\end{align}	
\end{subequations}
We refer to these equations as the {\it $\alpha$-smoothed fixed point equations}.
For $\alpha = 0$, 
these definitions agree with the corresponding definitions for the Lasso.
The solutions $\taustar_\alpha$, $\zetastar_\alpha$ are well-defined,
and can be bounded in terms of model parameters. 
\begin{lem}
\label{LemAlphaFixedPtSoln}
	The fixed-point equations \eqref{EqnAlphaFixedPt1} and \eqref{EqnAlphaFixedPt2} satisfy the following properties.

	\begin{enumerate}[label=(\alph*)]

		\item 
		\emph{(Existence and uniqueness of solutions)}
		For all $\alpha \geq 0$,
		if $\mySigma$ is invertible and $\sigma^2 > 0$, 
		then Eqs.~\eqref{EqnAlphaFixedPt1} and \eqref{EqnAlphaFixedPt2} have a unique solution.

		\item 
		\emph{(Boundedness of solutions)}
		Consider $\alphamax \geq 0$ and the $\alpha$-smoothed Lasso with $\alpha \leq \alphamax / \sqrt{n}$.
		Assume  \ref{assump:1}.
		For $\cuPmodel$-dependent constant $C$, 
		if $p > C$ then
		then there exist $0 < \taumax < \infty$
		and 
		$0 < \zetamin < 1$
		depending only on $\cuPmodel$ and $\alphamax$
		such that the unique solution 
		$\taustar_\alpha$, $\zetastar_\alpha$ to 
		Eqs.~\eqref{EqnAlphaFixedPt1} and \eqref{EqnAlphaFixedPt2} satisfies 
		$\sigma \leq \taustar_\alpha \leq \taumax$ and 
		$\zetamin \leq \zetastar_\alpha \leq 1$.

	\end{enumerate}

\end{lem}
\noindent 
Note that Lemma \ref{LemTauZetaBounds} is the $\alphamax = 0$ instance of Lemma \ref{LemAlphaFixedPtSoln}.
We prove Lemma \ref{LemAlphaFixedPtSoln} in the next section.


\subsection{Control on fixed-point parameters: proofs of Lemma \ref{LemTauZetaBounds} and Lemma \ref{LemAlphaFixedPtSoln}}
\label{SecFixedPtSoln}
	
Lemma \ref{LemTauZetaBounds} is the $\alphamax = 0$ instance of Lemma \ref{LemAlphaFixedPtSoln}.
Thus, we only prove Lemma \ref{LemAlphaFixedPtSoln}.
First, we prove Lemma \ref{LemAlphaFixedPtSoln}(a).

\begin{proof}[Proof of Lemma \ref{LemAlphaFixedPtSoln}(a)]
	Define functions $\cT,\cuZ:L^2(\reals^p;\reals^p) \rightarrow \reals$ by
	\begin{gather}
		\label{Eqntaustar}
	    \mathcal{T}(\bv)^2 := \sigma^2+\|\bv\|^2_{L^2}\, ,\\
	    \label{Eqnzetastar}
	    \cuZ(\bv) := \left(1-\frac{\<\bg,\bv\>_{L^2}}{\sqrt{n}\mathcal{T}(\bv)}\right)_+\,,
	\end{gather}
	where $\bg$ is interpreted as the identity function in $L^2$.
	Define $\cuE_\alpha:L^2(\reals^p;\reals^p)\to\reals$ by
	\begin{equation}
	\begin{aligned}
	\label{EqFunctionalObj}
		\cuE_\alpha(\bv) 
			&:= 
			\frac12\Big(
				\sqrt{\sigma^2 + \|\bv\|_{L^2}^2 }\,-\frac{\langle \bg , \bv \rangle_{L^2}}{\sqrt{n}}
			\Big)^2_+
			+
			\frac{\lambda}{\sqrt{n}}
			\E\Big\{
				\sfM_\alpha(\thetastar+\mySigma^{-1/2}\bv(\bg))-\|\thetastar\|_1
			\Big\}
		\\
		&=: 
			\cuF(\bv) 
			+ 
			\frac{\lambda}{\sqrt{n}}
			\E\Big\{
				\sfM_\alpha(\thetastar+\mySigma^{-1/2}\bv(\bg))-\|\thetastar\|_1
			\Big\}\,.
	\end{aligned}
	\end{equation}
	Let us emphasize the argument of $\cuE_\alpha$  is not a vector but a function $\bv:\reals^p\to\reals^p$.

	Each of the two terms in the definition of $\cuE_\alpha$ are convex and continuous. 
	Moreover, for all $\bg$ we have, by Eq.~\eqref{EqnMoreauRelation},
	\begin{align}
		\sfM_\alpha(\thetastar + \mySigma^{-1/2} \bv(\bg)) 
			&\geq 
			\|\thetastar + \mySigma^{-1/2}\bv(\bg)\|_1 - \frac{p\alpha}2 
		\\
		&\geq
			\|\mySigma^{-1/2} \bv(\bg)\|_1 - \|\thetastar\|_1  - \frac{p\alpha}2
			\geq 
			\kappamax^{-1/2} \|\bv(\bg)\|_2 - \|\thetastar\|_1  - \frac{p\alpha}2
		\,.
	\end{align}
	For any $M > 0$,
	\begin{align}
		|\langle \bv , \bg \rangle_{L^2}| &= |\E[\langle \bv(\bg) , \bg \indic{\|\bg\|_2 > M} \rangle ]
			+ \E[\langle \bv(\bg) , \bg \indic{\|\bg\|_2 \leq M }\rangle]| \nonumber \\
		&\le \|\bv\|_{L^2} \E[\|\bg\|_2^2\indic{\|\bg\|_2 > M}]^{1/2} + M \E[\|\bv(\bg)\|_2].
	\end{align}
	Take $M$ large enough that $\E[\|\bg\|^2 \indic{\|\bg\| > M}]^{1/2} < \sqrt{n}/2$.
	Then
	\begin{equation}
	\begin{aligned}
		\cuE_\alpha(\bv) 
		&\ge 
		\frac12\left(\frac{\|\bv\|_{L^2}}{2} - \frac{M}{\sqrt{n}} \E[\|\bv(\bg)\|_2] \right)_+^2 
			+ \frac{\lambda \kappamax^{-1/2}}{\sqrt{n}} \E[\|\bv(\bg)\|_2] 
			- \frac{2\lambda}{\sqrt{n}} \|\thetastar\|_1
			- \frac{p\lambda\alpha}{2\sqrt{n}}
		\\
		&\ge 
			\min\left\{
				\frac{\|\bv\|_{L^2}^2}{32}, 
				\frac{\lambda \kappamax^{-1/2}}{4M} \|\bv\|_{L^2}
			\right\} 
			- \frac{2\lambda}{\sqrt{n}} \|\thetastar\|_1 
			- \frac{p\lambda \alpha}{2\sqrt{n}}\,,
	\end{aligned}
	\end{equation}
	where the second inequality holds by considering the cases that $\|\bv\|_{L^2}/4$ is no smaller and no larger than $M\E[\|\bv(\bg)\|_2]/\sqrt{n}$, respectively.
	We see that $\cuE_\alpha(\bv) \rightarrow \infty$ as $\|\bv\|_{L^2} \rightarrow \infty$, whence by \cite[Theorem 11.9]{bauschke2011convex}, $\cuE_\alpha$ has a minimizer. 
	Let $\bv^*_\alpha$ be one such minimizer.

	Consider the following convex function in $L^2$ parameterized by $\tau,\zeta \ge 0$:
	\begin{align}
		\tcuE_\alpha(\bv;\zeta,\tau) 
			&:= 
			\frac{\zeta}{2}\big\|\bv-\frac{\tau}{\sqrt{n}}\bg\big\|_{L^2}^2
			+ \frac{\lambda}{n}\E\Big\{\sfM_\alpha(\thetastar+\mySigma^{-1/2}\bv)
			- \|\thetastar\|_1\Big\}
		\nonumber\\
		&= 
			\E\left\{\frac{\zeta}{2} \Big\|\bv - \frac{\tau}{\sqrt{n}} \bg\Big\|_2^2 
			+ \frac\lambda {\sqrt{n}} (\sfM_\alpha(\thetastar + \mySigma^{-1/2} \bv) 
			- \|\thetastar\|_1)\right\}
		\,.\label{Eqnvhat:eq}
	\end{align}
	For fixed $\zeta,\tau\ge 0$, the function $\bv^*_\alpha$ minimizes $\tcuE_\alpha$ if and only if $\bv^*_\alpha(\bg)$ minimizes the objective inside the expectation for almost every $\bg$.
	That is, if and only if 
	\begin{equation}\label{EqnTcuEtoProx}
		\bv^*_\alpha = \mySigma^{1/2}(\eta_\alpha(\mySigma^{1/2}\thetastar + \tau \bg / \sqrt{n} ; \zeta ) - \thetastar) \;\;\text{almost surely.}
	\end{equation}
	For any $\bv_0,\bv_1\in L^2$ fixed, 
	we have by differentiation of $\cuF$ with respect to $\eps \in
        \reals$ that
	\begin{equation}
	\begin{aligned}
		\tcuE_\alpha(\bv_0 + \eps \bv_1;\cuZ(\bv_0),\mathcal{T}(\bv_0)) - \cuE_\alpha(\bv_0 + \eps \bv_1) 
			&= 
			\frac{\cuZ(\bv_0)}{2}\Big\|\bv_0 + \eps \bv_1 - \frac{\mathcal{T}(\bv_0)}{\sqrt{n}} \bg\Big\|_{L^2}^2 - \cuF(\bv_0 + \eps \bv_1) \\
		&= \tcuE_\alpha(\bv_0 ; \cuZ(\bv_0);\cuT(\bv_0))-\cuE_\alpha(\bv_0 )+O(\epsilon^2)\,.
	\end{aligned}
	\end{equation}
	Thus, $\bv_1$ is a descent direction of $\bv \mapsto \tcuE_\alpha(\bv;\cuZ(\bv_0),\cT(\bv_0))$ at $\bv_0$ if and only if it is also a descent direction of $\bv \mapsto \cuE_\alpha(\bv)$ at $\bv_0$.
	In particular, $\bv_0$ minimizes $\cuE_\alpha$ if and only if it minimizes $\tcuE(\bv;\zeta,\tau)$ for $\zeta = \cuZ(\bv_0)$ and $\tau = \cuT(\bv_0)$.
	By \eqref{EqnTcuEtoProx}, we conclude that $\bv^*_\alpha$ is a minimizer of $\cuE_\alpha$ if and only if
	\begin{equation}\label{EqnGordonToProx}
		\bv^*_\alpha(\bg) = \mySigma^{1/2} ( \eta_\alpha(\mySigma^{1/2}\thetastar + \mathcal{T}(\bv^*_\alpha) \bg / \sqrt{n} ; \cuZ(\bv^*_\alpha) ) - \thetastar ) \;\; \text{almost surely.}
	\end{equation}
	That is, if and only if $\taustar_\alpha = \mathcal{T}(\bv^*_\alpha)$, $\zetastar_\alpha = \cuZ(\bv^*_\alpha)$ 
	is a solution to equations \eqref{EqnAlphaFixedPt1} and \eqref{EqnAlphaFixedPt2}.
	Because $\cuE_\alpha$ has minimizers, solutions to equations \eqref{EqnAlphaFixedPt1} and \eqref{EqnAlphaFixedPt2} exist.
      
	To complete the proof, we only need to show that the minimizer $\bv^*_\alpha$ of $\cuE_\alpha$ is unique.
	First, we claim $\cuZ(\bv^*_\alpha) > 0$ for all minimizers $\bv^*_\alpha$.
	Assume otherwise that $\cuZ(\bv^*_\alpha) = 0$ for some minimizer $\bv^*_\alpha$.
	Then, by property \eqref{EqnGordonToProx}, 
	\begin{align*}
		\bv^*_\alpha = \mySigma^{1/2}(\eta_\alpha(\mySigma^{1/2}\thetastar + \mathcal{T}(\bv^*_\alpha)\bg/\sqrt{n};0) - \thetastar) = - \mySigma^{1/2} \thetastar.
	\end{align*}
	Thus, we have $\cuZ(\bv^*_\alpha) = \left(1 - \frac1{\sqrt{n}\mathcal{T}(\bv^*_\alpha)}\langle \bg , -\mySigma^{1/2} \thetastar \rangle_{L^2}\right)_+ = 1$, a contradiction.
	We conclude $\cuZ(\bv^*_\alpha) > 0$ for all minimizers $\bv^*_\alpha$ of $\cuE_\alpha$.

	The function $\cuE_\alpha$ is strictly convex on $\cuZ(\bv) > 0$.	
	Indeed, for any $\bv \neq \bv'$,
	the function 
	\begin{align*}
	   t \mapsto \sqrt{\|(1-t)\bv + t\bv'\|_{L^2}^2 + \sigma^2} = \sqrt{\|\bv\|_{L^2}^2 - 2t \langle \bv , \bv - \bv' \rangle_{L^2} + t^2 \|\bv'\|_{L^2}^2 + \sigma^2} 
	\end{align*} is strictly convex by univariate calculus.
	Because $x \mapsto x_+^2$ is convex and strictly increasing on $x > 0$, strict convexity of $\cuE_\alpha$ on $\cuZ(\bv) > 0$ follows.
	Because all minimizers $\bv^*_\alpha$ satisfy $\cuZ(\bv^*_\alpha) > 0$, 
	strict convexity on $\bv^*_\alpha > 0$ implies the minimizer is unique.
\end{proof}

To prove Lemma \ref{LemAlphaFixedPtSoln}(b),
we require the next lemma, which states that the degrees-of-freedom of the estimator cannot be large without also the risk or noise-variance also being large.
\begin{lem}
\label{LemDFControl}
	For any $\tau,\zeta,\delta > 0$ and $\alphamax / \sqrt{n} \geq \alpha \geq 0$ and if the eigenvalues of $\mySigma$ are bounded as $0 < \kappamin \leq \kappa_j(\mySigma) \leq \kappamax < \infty$,
	then 
	\begin{equation}
		\kappamax
		\left(\tau \kappacond^{1/2} + \sqrt{\tau^2 \kappacond + \frac{n}{p} \risk_\alpha(\tau^2,\zeta)}\right)^2
		\geq
			\frac{n}{p}
			\frac{\lambda^2}{\zeta^2} \frac{\df_\alpha(\tau^2,\zeta)}{n}
			- 
			\frac{\alphamax \lambda \kappamax}{\zeta} \, .
	\end{equation}
\end{lem}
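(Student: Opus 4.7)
My plan combines the KKT conditions for the fixed-design $\alpha$-smoothed Lasso \eqref{EqnAlphaSmoothLassoFixed} with Stein's identity, the spectral bounds on $\mySigma$, and a Moreau-envelope estimate. Setting $\bt := \nabla\sfM_\alpha(\eta_\alpha(\by^f,\zeta))$, the KKT condition reads $\lambda\bt = \zeta\mySigma^{1/2}(\by^f-\yhat_\alpha)$, equivalently
\[
\frac{\lambda}{\zeta}\mySigma^{-1/2}\bt \;=\; \tau\bg - (\yhat_\alpha - \mySigma^{1/2}\thetastar).
\]
Squaring in $\ell_2$, taking expectation over $\bg\sim\normal(\bzero,\Ind_p)$, and invoking the Stein/divergence identity $n\tau\df_\alpha = \E[\langle\yhat_\alpha,\bg\rangle]$ from \eqref{EqnRiskAndDFAlpha2} yields the balance identity
\[
\frac{\lambda^2}{\zeta^2}\,\E\!\bigl[\|\mySigma^{-1/2}\bt\|_2^2\bigr] + 2n\tau^2\df_\alpha \;=\; \tau^2 p + n\,\risk_\alpha.
\]

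The second step is to lower bound $\E[\|\mySigma^{-1/2}\bt\|_2^2]$ by $n\df_\alpha/\kappamax$ modulo an $\alpha$-correction. The spectral bound gives $\|\mySigma^{-1/2}\bt\|_2^2 \ge \kappamax^{-1}\|\bt\|_2^2$, while the Moreau decomposition $\eta_\alpha = \alpha\bt + \softthreshold(\eta_\alpha;\alpha)$ shows that $|t_j|=1$ on the strongly-active set $\{j:|\eta_{\alpha,j}|>\alpha\}$, so $\|\bt\|_2^2\ge|\{|\eta_{\alpha,j}|>\alpha\}|$. Comparing this ``strongly-active count'' with $n\df_\alpha = \E[\trace(\mySigma^{1/2}J_{\eta_\alpha})]$ via the Jacobian formula $J_{\eta_\alpha}=(\mySigma+(\lambda/\zeta)\nabla^2\sfM_\alpha)^{-1}\mySigma^{1/2}$ (implicit function theorem on the KKT), the two differ by a contribution from the weakly-active block, where $\nabla^2\sfM_\alpha = \alpha^{-1}\Ind_{\{|\eta_\alpha|<\alpha\}}$, and this contribution is of size $O(\alpha\kappamax)$ per coordinate after using $\mySigma\succeq\kappamin\Ind$; summed and normalized it gives exactly the $\alpha\kappamax/\delta$ slack.

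Plugging the lower bound back into the balance identity and rearranging produces a quadratic inequality of the form $x^2 \le 2ax + b^2$, with $x := \tfrac{\lambda}{\zeta}\df_\alpha - \tfrac{\alpha\kappamax}{\delta}$, $a := \tau\kappamax/\sqrt{\delta\kappamin} = \tfrac{\kappamax^{1/2}}{\sqrt{\delta}}\tau\kappacond^{1/2}$, and $b^2 := \kappamax\risk_\alpha$. The conditioning factor $\kappacond^{1/2}$ in $a$ appears because the $\tau^2 p$ term must be transferred through the bound $\|\mySigma^{-1/2}\bt\|_2^2 \ge \kappamax^{-1}\|\bt\|_2^2$, which costs an extra spectral factor. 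Solving the quadratic via the standard identity $x \le a + \sqrt{a^2+b^2}$ (valid for $x\ge 0$, with the case $x<0$ trivial) then yields the claimed bound.

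Main obstacle: The delicate part is the Moreau accounting in the second step. On the weakly-active coordinates $\bt$ contributes $t_j^2\in[0,1)$ to $\|\bt\|_2^2$ but still contributes nontrivially to $\df_\alpha$ via the weakly-active block of $\mySigma^{1/2}(\mySigma+(\lambda/\zeta)\nabla^2\sfM_\alpha)^{-1}\mySigma^{1/2}$; obtaining the precise coefficient of the $\alpha\kappamax/\delta$ correction requires showing that $(\lambda/\zeta)\trace$ on that block is bounded by $\alpha\kappamax$ times the block size, which is where $\mySigma\succeq\kappamin\Ind$ enters in a non-trivial way.
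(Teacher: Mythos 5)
Your balance identity is correct, and the Jacobian analysis of the weakly-active block is the right idea, but the plan to assemble these into the lemma has a genuine gap. After lower-bounding $\E[\|\mySigma^{-1/2}\bt\|_2^2]\geq \kappamax^{-1}\E[\|\bt\|_2^2]$ and then $\E[\|\bt\|_2^2]/n \geq \df_\alpha - O(\alpha\kappamax/\delta)$ (your second step), substituting into the balance identity yields an inequality that is \emph{linear} in $\df_\alpha$:
\[
\Bigl(\frac{\lambda^2}{\zeta^2\kappamax}+2\tau^2\Bigr)\df_\alpha \;\le\; \frac{\tau^2}{\delta}+\risk_\alpha+\frac{\lambda\alpha}{\zeta\delta}\,.
\]
No $\df_\alpha^2$ term appears anywhere, so the quadratic $x^2\leq 2ax+b^2$ you posit does not materialize. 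Moreover, this linear bound does not imply the lemma: taking $\tau\to 0$, $\alpha=0$, $\kappamin=\kappamax=1$, it gives $\df_\alpha\leq(\zeta/\lambda)^2\risk_\alpha$, whereas the lemma asserts $\df_\alpha\leq(\zeta/\lambda)\sqrt{\risk_\alpha}$, which is strictly tighter whenever $\risk_\alpha>(\lambda/\zeta)^2$.

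The quadratic in the paper's argument arises from a step your approach deletes. The paper squares the KKT relation, drops the nonnegative $\tau^2\|\bg\|_2^2$ term, and bounds the cross term by \emph{Cauchy--Schwarz} rather than Stein's identity, via $|\bg^\top\mySigma^{-1/2}\nabla\sfM_\alpha|\leq \kappamin^{-1/2}\|\bg\|_2\|\nabla\sfM_\alpha\|_2$. After taking expectation this produces $\risk_\alpha\geq\frac{\lambda^2}{\zeta^2\kappamax}y^2-\frac{2\lambda\tau}{\zeta\kappamin^{1/2}\delta^{1/2}}y$ for $y:=\E[\|\nabla\sfM_\alpha\|_2^2]^{1/2}/\sqrt{n}$, which is genuinely quadratic in $y$ (not in $\df_\alpha$); the $\kappacond^{1/2}$ factor enters precisely through the $\kappamin^{-1/2}$ in Cauchy--Schwarz, not through the $\tau^2 p$ term as you suggest. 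The linear relation between $y$ and $\df_\alpha$ is then established separately by the Jacobian computation. By converting the cross term to $n\tau\df_\alpha$ via Stein, you eliminate the $\|\nabla\sfM_\alpha\|_2$ degree of freedom that makes the inequality quadratic; the exact identity is in this sense too precise in the wrong variable.

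A secondary slip: in the weakly-active block the spectral bound that matters is $\mySigma\preceq\kappamax\Ind_p$ --- equivalently, by interlacing, every eigenvalue of $(\mySigma^{-1})_{S^c,S^c}$ is at least $\kappamax^{-1}$ --- because one needs a \emph{lower} bound on the eigenvalues of $A:=\frac{\lambda}{\alpha\zeta}(\mySigma^{-1})_{S^c,S^c}$ to lower-bound $\trace[A(\Ind+A)^{-1}]$ and hence $|S^c|$-minus-the-divergence. It is not $\mySigma\succeq\kappamin\Ind_p$ that controls this trace.
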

\noindent We prove Lemma \ref{LemDFControl} in at the end of this section.

\begin{proof}[Proof of Lemma \ref{LemAlphaFixedPtSoln}(b)]
	By general properties of proximal operators \cite{bauschke2011convex}, the Jacobian matrix $\nabla \yhat_\alpha(\by^f,\zeta)$ of $\yhat_{\alpha}(\,\cdot\,,\zeta)$ is positive-semidefinite.
	Therefore, $\df_\alpha({\taustar_\alpha}^2,\zetastar_\alpha) \ge 0$. 
	Also, $\zetastar_\alpha \leq 1$ is immediate from
	Eq.~\eqref{EqnAlphaFixedPt2}.
	Further,  $\taustar_\alpha \geq \sigmamin$ is immediate from Eq.~\eqref{EqnAlphaFixedPt1}.

    \paragraph*{Establishing the bound $\taumax^2$}
    By Assumption \ref{assump:1}(d),
    there exists $\barthetastar\in\reals^p$ such that 
    $n/p \geq \cuG(\bx,\mySigma)^2 + \Deltamin$ for $\bx = \sign(\barthetastar)$ and $\|\barthetastar - \thetastar\|_1/p \leq M / \sqrt{n}$.
    Let $\bx'$ be such that $x_j' = x_j$ for $x_j \neq 0$ and 
    \begin{equation}
    	|\supp(\bx')| = \big\lfloor p (\omega^*)^{-1}\big(\sqrt{(n/p - \Deltamin/2)/\kappacond}\big) \big\rfloor \vee \| \barthetastar \|_0
    \end{equation} 
    and assume $p (\omega^*)^{-1}\big(\sqrt{\Deltamin/(2\kappacond)}\big) \geq 1 $.\footnote{This is where we require $p > C$}
    In particular, $\bx' \in \partial \| \barthetastar \|_1$, so that for any $\bw \in \reals^p$ we have $\| \barthetastar + \bw \|_1 - \| \barthetastar \|_1 \geq \< \bx' , \bw \>$.
    Denote $S' = \supp(\bx') \subset[p]$.
    Because $p (\omega^*)^{-1}\big(\sqrt{(n/p - \Deltamin/2)/\kappacond}\big) \geq p (\omega^*)^{-1}\big(\sqrt{\Deltamin/(2\kappacond)}\big) \geq 1$,
    we conclude that $|S'|/p \geq (\omega^*)^{-1}\big(\sqrt{\Deltamin/(2\kappacond)}\big)/2 =: \numin$.
    If $\big\lfloor p (\omega^*)^{-1}\big(\sqrt{(n/p - \Deltamin/2)/\kappacond}\big) \big\rfloor < \| \barthetastar \|_0$,
    then by assumption $n/p \geq \cuG(\bx',\mySigma)^2 + \Deltamin$. 
    Otherwise,
    by Lemma \ref{lem:width-under-corr},
    $\cuG(\bx',\mySigma) \leq \sqrt{\delta}/2$.
    Otherwise, by assumption we have $n/p \geq \cuG(\bx',\mySigma)^2 + \Deltamin / 2$.
    In summary, 
    we have
    \begin{equation}
    	\frac{n}{p} \geq \cuG(\bx',\mySigma)^2 + \frac{\Deltamin}{2}, 
    	\quad
    	\frac{\| \bx' \|_0}p \geq \frac{(\omega^*)^{-1}\big(\sqrt{\Deltamin/(2\kappacond)}\big)}{2},
    	\quad 
    	\text{and}
    	\quad 
    	\bx' \in \partial \| \barthetastar \|_1.
    \end{equation}

	We may equivalently write the objective in \eqref{EqFunctionalObj} as a function of $\bw := \mySigma^{-1/2}\bv$.
	Note  that
	\begin{equation}
	\begin{aligned}
	\label{EqnL1NormLB}
		\sfM_\alpha(\thetastar + \bw) - \|\thetastar\|_1
			&\geq 
			\|\barthetastar + \bw\|_1 - \|\barthetastar\|_1
			- 2\|\barthetastar - \thetastar\|_1 
			- \frac{p\alphamax}{2 \sqrt{n}}
		\\
		&\geq 
			\sum_{j \in S'} \Big(x_j' w_j + \|\bw_{S'^c}\|_1\Big) 
			- \frac{2Mp}{\sqrt{n}}
			- \frac{p\alphamax}{2 \sqrt{n}}\,,
	\end{aligned}
	\end{equation}
	where the first inequality uses the relation~\eqref{EqnMoreauRelation}.
	Let $\tilde \bg := \mySigma^{1/2} \bg$.
	If $n/p \leq 2$, let $\epsilon = \sqrt{n/p}\, \Deltamin / 16$, so that $\cuG(\bx',\mySigma) + \epsilon \leq \sqrt{n/p - \Deltamin/2} + \sqrt{n/p}\,\Deltamin/16 \leq \sqrt{n/p}(1-\Deltamin/16)$.
	Otherwise, let $\epsilon = (\sqrt{2}-1)/2$, so that $\cuG(\bx',\mySigma) + \epsilon \leq (\sqrt{2}+1)/2 \leq (3/4) \sqrt{n/p}$,
	where we have used that $\cuG(\bx',\mySigma) \leq 1$ in the first inequality and $n/p \geq 2$ in the second inequality.
	Using Lemma~\ref{LemGaussWidthTradeoff},  Eqs.~\eqref{EqnWidthTradeoff1} and \eqref{EqnWidthTradeoff2},
	we have that either 
	\begin{equation}
		 \frac1{\sqrt{n}} \langle \tilde \bg  , \bw \rangle_{L^2} \leq \Big(1 - \frac{\Deltamin}{16} \wedge \frac14 \Big)\;\;  \E[\|\bw\|_{\mySigma}^2]^{1/2}\,,
	\end{equation}
	or
	\begin{equation}
	    \frac1{\sqrt{n}} \E\left[\sum_{j \in S} x_j' w_j + \|\bw_{S^c}\|_1\right] \geq c \E[\|\bw\|_{\mySigma}^2]^{1/2}\,,
	\end{equation}
	where $c = \numin^{1/2} ( \Deltamin/16 \wedge (\sqrt{2}-1)/2 ) / ( \kappamax^{1/2}(2+\kappacond))$.
	Then, Eq.~\eqref{EqnL1NormLB} gives
	\begin{equation}
	\begin{aligned}
		&\cuE_\alpha(\bv) 
		= 
		\frac12 \left(\sqrt{\E[\|\bw\|_{\mySigma}^2] + \sigma^2} - \frac{\langle \tilde \bg , \bw \rangle_{L^2}}{\sqrt{n}}\right)_+^2 
			+ \frac\lambda{\sqrt{n}} \E\left\{\sfM_\alpha(\thetastar + \bw) - \|\thetastar\|_1\right\} \\
		&\geq 
		\min\left\{\frac12\Big(\frac{\Deltamin}{16} \wedge \frac14 \Big)^2 \E[\|\bw\|_{\mySigma}^2] ,\;\;  \lambda c \| \E[\| \bx \|_{\mySigma}^2]^{1/2}\right\}- \frac{2\lambda M p}{n} - \frac{p\alphamax\lambda}{2n}.
	\end{aligned}
	\end{equation}
	As in the proof of Lemma \ref{LemAlphaFixedPtSoln} (see Eq.~\eqref{EqnTcuEtoProx}), let $\bv_\alpha^*$ be the minimizer of $\cuE_\alpha$. 
	Because $\sfM_\alpha(\thetastar) \leq \|\thetastar\|_1$,
	we bound $\sigma^2/2 = \cuE_0(\bzero) \geq \cuE_\alpha(\bzero) \geq \cuE_\alpha(\bv^*_\alpha)$.
	Combining this bound with the previous display applied at $\bw = \mySigma^{-1/2}\bv_\alpha^*$, 
	some algebra yields
	\begin{align}
		&\frac{\|\bv^*_\alpha\|_{L^2}^2}{n} = \frac{\E[\|\bw\|_{\mySigma}^2]}{n} 
		\leq  
		\max\left\{\frac{(\frac{\sigma^2}2 + \frac{2\lambdamax M}{\Deltamin} + \frac{\alphamax\lambdamax}{2\Deltamin})}{\frac12(\frac{\Deltamin}{16} \wedge \frac14)^2}, \frac{(\frac{\sigma^2}{2\lambdamin} + \frac{2M}{\Deltamin} + \frac{\alphamax}{2\Deltamin} )^2}{c^2} \right\}.
	\end{align}
	Recalling the fixed point equation~\eqref{EqnAlphaFixedPt1}, we may set $\taumax^2$ to be the sum of $\sigma^2$ and the right-hand side above. 

	\paragraph*{Establishing the bound $\zetamin$}
	If $\df_\alpha({\taustar_\alpha}^2,\zetastar_\alpha)/n \leq 1/2$, 
	then by Eq.~\eqref{EqnRiskAndDFAlpha2}, $\zetastar_\alpha \geq 1 - \df_\alpha(\tau_\alpha^{*2},\zeta_\alpha^*)/n \geq 1/2$.
	Alternatively,
	if $\df_\alpha(\tau_\alpha^{*2},\zetastar_\alpha)/n \geq 1/2$, 
	then by Lemma \ref{LemDFControl}, it is guaranteed that 
	\begin{equation}
	\begin{aligned}
		\kappamax\taumax^2 
		\left(\kappacond^{1/2} + \sqrt{ \kappacond + n/p }\right)^2
			&\geq
			\frac{\lambdamin^2\Deltamin}{2\zeta_\alpha^{*2}}
			- 
			\frac{\alphamax \lambdamax \kappamax}{\zeta_\alpha^*} .
	\end{aligned}
	\end{equation}
	Note that because $\df_{\alpha}(\tau_\alpha^{*2},\zeta_\alpha^*) \leq p$,
	in this case we have $n/p \leq 2$,
	so that we may replace $n/p$ with $2$ on the left-hand side.
	Because the right-hand side of the preceding equation diverges to $+\infty$ and $\zeta_\alpha^* \downarrow 0$,
	the inequality implies a lower bound $\zetamin$ on $\zeta_\alpha^*$ which depends only on $\cuPmodel$.
	The proof is complete.
\end{proof}

\begin{proof}[Proof of Lemma \ref{LemDFControl}]
	The KKT conditions for the $\alpha$-smoothed Lasso in the fixed-design model \eqref{EqnAlphaSmoothLassoFixed} are
	\begin{equation}
	\label{EqnKKTfix}
		\yhat_\alpha(\by^f,\zeta) - \mySigma^{1/2}\thetastar = \frac{\tau}{\sqrt{n}} \bg - \frac\lambda{\sqrt{n}\,\zeta} \mySigma^{-1/2} \bdelta\,,
	\end{equation}
	where $\by^f = \mySigma^{1/2}\thetastar + \tau \bg / \sqrt{n}$ and $\bdelta \in \partial \sfM_\alpha(\eta_\alpha(\by^f,\zeta))$.
	Therefore,
	\begin{equation}
		\|\yhat_\alpha(\by^f,\zeta) - \mySigma^{1/2}\thetastar\|_2^2 
			\geq 
			\frac{\lambda^2}{\zeta^2\kappamax} \frac{\|\bdelta\|_2^2}{n} - \frac{2\lambda \tau}{\zeta \kappamin^{1/2}} \frac{\|\bg\|_2 \|\bdelta\|_2}{n}\,.
	\end{equation}
	Taking expectations and applying Cauchy-Schwartz yields
	\begin{equation}
		\risk_\alpha(\tau^2,\zeta) 
		\geq 
		\frac{\lambda^2}{\zeta^2\kappamax} \frac{\E[\|\bdelta\|_2^2]}{n} 
		- 
		\frac{2\lambda \tau}{\zeta \sqrt{n/p}\, \kappamin^{1/2}} \frac{\E[\|\bdelta\|_2^2]^{1/2}}{\sqrt{n}}\,,
	\end{equation}
	Solving the resulting quadratic equation for $\frac{\lambda\E[\|\bdelta\|_2^2]^{1/2}}{\zeta\sqrt{n}}$, 
	we conclude
	\begin{equation}
	\label{EqnGradToRiskBound}
		\frac{\E[\|\nabla \sfM_\alpha(\eta_\alpha(\by^f,\zeta)\|_2^2]^{1/2}}{\sqrt{n}} 
		\leq 
		\frac{\zeta \kappamax^{1/2}}{\lambda \sqrt{n/p}} 
		\left(\tau \kappacond^{1/2} + \sqrt{\tau^2 \kappacond + (n/p) \risk_\alpha(\tau^2,\zeta)}\right)\,.
	\end{equation}

	Now we divide the analysis into two cases. 
	First, consider the case $\alpha>0$.
	Then $\sfM_\alpha$ is differentiable and $\bdelta = \nabla\sfM_\alpha(\eta_\alpha(\by^f,\zeta))$. We compute
	\begin{gather}
		\nabla \sfM_\alpha(\mytheta) = (\mytheta - \softthreshold(\mytheta,\alpha))/\alpha\,,
		\;\;\;\;\;\;
		\nabla^2 \sfM_\alpha(\mytheta) = \diag( (\indic{|\theta_j| \leq \alpha})_j )/\alpha \,.
	\end{gather}
	Because $|\theta - \softthreshold(\theta,\alpha)|/\alpha = 1$ for $|\theta| \geq \alpha$,
	we bound
	\begin{equation}
		\|\nabla \sfM_\alpha(\mytheta)\|_2^2 \geq |\{j \in [p] \mid |\theta_j| \geq \alpha\}|\,.
	\end{equation}
	The KKT condition~\eqref{EqnKKTfix} are alternatively written
	\begin{equation}
		\zeta \mySigma^{1/2}(\by^f - \yhat_\alpha(\by^f,\zeta)) = \frac{\lambda}{\sqrt{n}} \nabla \sfM_\alpha( \eta_\alpha(\by^f,\zeta) )\,.
	\end{equation}
	Differentiating with respect to $\by^f$,
	\begin{equation}
		\zeta \mySigma^{1/2} - \zeta \mySigma^{1/2} \nabla \yhat_\alpha(\by^f;\zeta) = \frac{\lambda}{\sqrt{n}} \nabla^2 \sfM_\alpha( \eta_\alpha(\by^f,\zeta) ) \mySigma^{-1/2} \nabla \yhat_\alpha(\by^f;\zeta)\,.
	\end{equation}
        (More precisely, $\yhat_\alpha(\by^f,\zeta)$ and $\eta_\alpha(\by^f,\zeta)$ are continuous and piecewise linear in
        $\by^f$, and the above identity holds in the interior of each linear region.)
	We therefore get
	\begin{align}
		\nabla \yhat_\alpha(\by^f,\zeta)
		&=
			\left(\Ind_p + \frac{\lambda}{\sqrt{n}\,\zeta} \mySigma^{-1/2} \nabla^2 \sfM_\alpha( \eta_\alpha(\by^f,\zeta) ) \mySigma^{-1/2} \right)^{-1} 
		\\
		&= 
			\left(\Ind_p + \frac{\lambda}{\sqrt{n}\,\alpha\zeta} (\mySigma^{-1/2})_{\cdot,S^c} (\mySigma^{-1/2})_{S^c,\cdot} \right)^{-1}
		\\
		&= \Ind_p 
			- \frac{\lambda}{\sqrt{n}\, \alpha\zeta} 
			(\mySigma^{-1/2})_{\cdot,S^c} 
			\Big( \Ind_{|S^c|} + \frac{\lambda}{\sqrt{n}\,\alpha\zeta}(\mySigma^{-1/2})_{S^c,\cdot} (\mySigma^{-1/2})_{\cdot,S^c}\Big)^{-1} 
			(\mySigma^{-1/2})_{S^c,\cdot}\,,
	\end{align}
	where $S = \{j \in [p] \mid |\eta_\alpha(\by^f,\zeta)| \geq \alpha\}$.
	Thus,
	\begin{equation}
		\div \yhat_\alpha(\by^f,\zeta)
		=
		\trace( \nabla \yhat_\alpha(\by^f,\zeta) )
		\leq
		p - \frac{|S^c|}{1 + \sqrt{n}\,\alpha \zeta\kappamax/\lambda}
		\leq p - \frac{ p - \|\nabla \sfM_\alpha(\eta_\alpha(\by^f,\zeta))\|_2^2 }{1 + \alphamax\zeta\kappamax/\lambda}\,.
	\end{equation}
	Rearranging and taking expectations,
	\begin{equation}
	\label{eq:grad-g-df}
	\begin{aligned}
		\frac{\E[\|\bdelta\|_2^2]}{n} 
		&\geq 
		\frac{\df_\alpha(\tau^2,\zeta)}{n}
		-
		\frac{\alphamax \zeta \kappamax}{\lambda}
		\Big(
			\frac{p}{n}
			-
			\frac{\df_\alpha(\tau^2,\zeta)}n
		\Big)
	\\
		&\geq 
		\frac{\df_\alpha(\tau^2,\zeta)}{n}
		-
		\frac{\alphamax \zeta \kappamax}{\lambda}
		\frac{p}{n}.
	\end{aligned}
	\end{equation}
	Squaring Eq.~\eqref{EqnGradToRiskBound}, chaining it with the previous display, and multiplying by $\lambda^2(n/p)/\zeta^2$ 
	gives the result of the lemma in the case $\alpha > 0$.

	Now consider the case $\alpha = 0$.
	In this case, $\sfM_0(\mytheta) = \| \mytheta \|_1$, 
	Note that $\bdelta \in \partial \| \thetahat^f \|_1$ then implies that $\| \bdelta \|^2 \geq \| \thetahat^f \|_0^2$.
	Moreover,
	$\nabla \yhat_\alpha(\by^f,\zeta)$ exists almost surely with respect to $\bg$ by \cite[Proposition 5.3]{Bellec2019SecondOP},
	and is equal to (see, for example, \cite[Table 1]{Bellec2019SecondOP}),
	\begin{equation}
		\nabla \yhat_\alpha(\by^f,\zeta)
			= 
			(\mySigma^{1/2})_{\cdot,S}(\mySigma_{S,S})^{-1}(\mySigma^{1/2})_{\cdot,S},
	\end{equation}
	where $S = \{ j \in [p] \mid \widehat{\theta}_j^f \neq 0\}$.
	Thus, $ \div \yhat_\alpha(\by^f,\zeta) = \| \thetahat^f \|_0$ almost surely.
	Taking expectations,
	we get Eq.~\eqref{eq:grad-g-df},
	and the result follows as before.
\end{proof}

As a consequence, one arrives at the following result. 
\begin{corollary}
  Under Assumption \ref{assump:1} and if $\alpha \leq \alphamax / \sqrt{n}$, then  $\df_\alpha({\taustar_\alpha}^2,\zetastar_\alpha)/n$ is uniformly bounded away from one. Namely
  \begin{align}
    \frac{\df_\alpha({\taustar_\alpha}^2,\zetastar_\alpha)}n = 1 - \zetastar \leq 1 - \zetamin\, .
  \end{align}
  with $\zetamin$ depending uniquely on $\cuPmodel$ and $\alphamax$.
\end{corollary}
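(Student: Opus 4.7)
The plan is to observe that this corollary follows essentially immediately from combining the $\alpha$-smoothed fixed point equation \eqref{EqnAlphaFixedPt2} with the uniform lower bound on $\zetastar_\alpha$ already established in Lemma \ref{LemFixedPtBounds}. So this is not an independent new argument but rather a repackaging of what has already been proved.

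More concretely, I would proceed in two short steps. First, I would invoke Eq.~\eqref{EqnAlphaFixedPt2} evaluated at the unique fixed point $(\taustar_\alpha, \zetastar_\alpha)$ guaranteed by Lemma \ref{LemAlphaFixedPtSoln}; this yields the exact identity
\begin{equation*}
\df_\alpha({\taustar_\alpha}^2,\zetastar_\alpha) = 1 - \zetastar_\alpha.
\end{equation*}
Second, I would apply Lemma \ref{LemFixedPtBounds}, which under the stated hypotheses (assumption \textsf{A1}, $(s,\sqrt{\delta}(1-\Deltamin),M)$-approximate sparsity of $\thetastar$ with $s/p \geq \numin$, and $\alpha \leq \alphamax$) produces a constant $\zetamin > 0$ depending only on $\cuPmodel$, $\delta$, $\numin$, $\Deltamin$, and $\alphamax$ such that $\zetastar_\alpha \geq \zetamin$. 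Subtracting from one flips the inequality and gives the claimed bound $\df_\alpha({\taustar_\alpha}^2,\zetastar_\alpha) \leq 1 - \zetamin$.

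There is no real obstacle here; the work has been done inside Lemma \ref{LemFixedPtBounds}, where the lower bound $\zetastar_\alpha \geq \zetamin$ is extracted via Lemma \ref{LemDFControl} by splitting into the cases $\df_\alpha \leq 1/2$ (giving $\zetastar_\alpha \geq 1/2$) and $\df_\alpha \geq 1/2$ (where the bound on $\taustar_\alpha$ from the first part of that proof feeds into Lemma \ref{LemDFControl} to yield an explicit lower bound on $\zetastar_\alpha$). The corollary just records the useful consequence: the degrees of freedom of the $\alpha$-smoothed Lasso in the fixed-design model are uniformly bounded away from one in the relevant uniformity class, which is the fact that will actually be invoked downstream (e.g.\ to control the denominator $1 - \|\thetahat\|_0/n$ appearing in the degrees-of-freedom adjusted debiased Lasso and in $\widehat\tau(\lambda)$).
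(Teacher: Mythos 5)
Your proof is correct and is exactly the argument the paper intends: the paper states this corollary without proof (``As a consequence, one arrives at the following result''), and the intended reasoning is precisely the combination of the fixed-point identity $\df_\alpha(\taustar_\alpha{}^2,\zetastar_\alpha)=1-\zetastar_\alpha$ from Eq.~\eqref{EqnAlphaFixedPt2} with the bound $\zetastar_\alpha\ge\zetamin$ from Lemma~\ref{LemFixedPtBounds}. You have also correctly read through the minor typo in the corollary's display, where $\zetastar$ should be $\zetastar_\alpha$.
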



\subsection{Continuity of fixed point solutions in smoothing parameter}

\begin{lem}
\label{LemFixedPtContinuity}
	If Assumption \ref{assump:1} holds,
	then there exist constants $\alphamax$, $L_\tau$, and $L_\zeta$ depending only on $\cuPmodel$ such that 
	for $\alpha \leq \alphamax/\sqrt{n}$,
	\begin{equation}
		|\taustar_0 - \taustar_\alpha| \leq L_\tau \sqrt{\sqrt{n}\alpha}\,,
		\;\;\;\;\;\;\;\;
		|\zetastar_0 - \zetastar_\alpha| \leq L_\zeta \sqrt{\sqrt{n}\alpha}\,.
	\end{equation}
\end{lem}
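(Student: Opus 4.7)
The plan is to leverage the Hilbert-space variational characterization of the fixed-point solutions developed in Appendix~\ref{SecFixedPtSoln}: for each $\alpha \geq 0$, $(\taustar_\alpha, \zetastar_\alpha) = (\cuT(\bv^*_\alpha), \cuZ(\bv^*_\alpha))$, where $\bv^*_\alpha$ is the unique minimizer on $L^2$ of the convex functional $\cuE_\alpha$ in Eq.~\eqref{EqFunctionalObj}. First I will establish the $L^2$-closeness $\|\bv^*_\alpha - \bv^*_0\|_{L^2}^2 = O(n\alpha)$, and then transfer this bound to $(\taustar_\alpha, \zetastar_\alpha)$ via local Lipschitz continuity of $\cuT$ and $\cuZ$. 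The starting observation is the uniform pointwise bound
\[
  |\cuE_\alpha(\bv) - \cuE_0(\bv)| \le \frac{\lambda\alpha}{2\delta} \qquad \text{for all } \bv \in L^2,
\]
an immediate consequence of \eqref{EqnMoreauRelation} applied inside the expectation defining $\cuE_\alpha - \cuE_0$. Combined with the optimality of $\bv^*_0$ for $\cuE_0$ and of $\bv^*_\alpha$ for $\cuE_\alpha$, this yields the objective-value gap $\cuE_\alpha(\bv^*_0) - \cuE_\alpha(\bv^*_\alpha) \le \lambda\alpha/\delta$.

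The hard part is to complement this $O(\alpha)$ upper bound with a matching quadratic lower bound $\cuE_\alpha(\bv^*_0) - \cuE_\alpha(\bv^*_\alpha) \geq (\mu/(2n))\,\|\bv^*_0 - \bv^*_\alpha\|_{L^2}^2$ for some $\mu > 0$ depending only on $(\cuPmodel, \cuPfixpt, \delta)$. Since neither the data-fit term $\cuF$ nor the Moreau-envelope penalty is individually strongly convex, additional structure must be exploited. I would introduce the partial-dual functional
\begin{align*}
  G_\alpha(\bv) &:= \betastar_\alpha\bigl(\cuT(\bv) - \tfrac{1}{n}\langle \bg, \bv\rangle_{L^2}\bigr) - \tfrac{1}{2}{\betastar_\alpha}^2 \\
  &\qquad + \frac{\lambda}{n}\,\E\bigl[\sfM_\alpha(\thetastar + \mySigma^{-1/2}\bv(\bg)) - \|\thetastar\|_1\bigr], \qquad \betastar_\alpha := \zetastar_\alpha\taustar_\alpha,
\end{align*}
obtained by dualizing the outer $\tfrac12(\cdot)_+^2$ in $\cuF$ at the saddle value $\betastar_\alpha$. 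Elementary manipulations show $\cuE_\alpha(\bv) \geq G_\alpha(\bv)$ pointwise on $L^2$ with equality at $\bv = \bv^*_\alpha$, and the KKT identity for $\bv^*_\alpha$ already used in the proof of Lemma~\ref{LemAlphaFixedPtSoln} immediately yields $\nabla G_\alpha(\bv^*_\alpha) = 0$, so that $\bv^*_\alpha$ also minimizes the convex functional $G_\alpha$. An explicit computation gives $\nabla^2 \cuT(\bv) = \Ind/(n\cuT(\bv)) - \bv\bv^\top/(n^2\cuT(\bv)^3)$, and Cauchy--Schwarz together with $\|\bv\|_{L^2}^2 = n(\cuT(\bv)^2 - \sigma^2)$ yields $\nabla^2\cuT(\bv) \succeq (\sigmamin^2/(n\cuT(\bv)^3))\,\Ind$. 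Consequently $G_\alpha$ is uniformly strongly convex along the segment $[\bv^*_0,\bv^*_\alpha]$ with modulus at least $\betastar_\alpha\,\sigmamin^2/(n(\sigmamax^2+\taumax^2)^{3/2})$. Choosing $\alphamax$ small enough that Lemma~\ref{LemFixedPtBounds} uniformly extends Assumption~\textsf{A2}$_\alpha$ to $\alpha \leq \alphamax$, this modulus is bounded below by $\mu/n$ for a constant $\mu > 0$ depending only on $(\cuPmodel, \cuPfixpt, \delta)$.

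Combining the two bounds gives $\|\bv^*_\alpha - \bv^*_0\|_{L^2}^2 \leq 2n\lambda\alpha/(\mu\delta)$, that is, $\|\bv^*_\alpha - \bv^*_0\|_{L^2}/\sqrt n = O(\sqrt\alpha)$. Transferring to $(\taustar_\alpha,\zetastar_\alpha)$ is then routine: the $L^2$-gradient $\nabla\cuT(\bv) = \bv/(n\cuT(\bv))$ satisfies $\|\nabla\cuT(\bv)\|_{L^2} \leq \taumax/(\sqrt n\,\sigmamin)$ on the relevant region, directly giving $|\taustar_\alpha - \taustar_0| \leq L_\tau\sqrt\alpha$; an analogous quotient-rule estimate for $\cuZ(\bv) = 1 - \langle \bg, \bv\rangle_{L^2}/(n\cuT(\bv))$, using additionally $\cuT \geq \taumin$, yields $|\zetastar_\alpha - \zetastar_0| \leq L_\zeta\sqrt\alpha$. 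The main obstacle throughout is the construction of $G_\alpha$ together with the verification of the saddle-point identity $\cuE_\alpha(\bv^*_\alpha) = G_\alpha(\bv^*_\alpha)$; this partial-dual step is what converts the $O(\alpha)$ pointwise objective perturbation into the $O(\sqrt\alpha)$ argmin perturbation, by harnessing the strict concavity of the square root inside $\cuT$ rather than any (nonexistent) strong convexity of the penalty.
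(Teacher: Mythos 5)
Your partial-dual device is a genuinely nice idea: the affine minorant $\frac12 x_+^2 \ge \betastar_\alpha x - \frac12{\betastar_\alpha}^2$, applied with $x = \cuT(\bv) - \tfrac1n\langle\bg,\bv\rangle_{L^2}$, does produce a convex $G_\alpha \le \cuE_\alpha$ that touches $\cuE_\alpha$ at $\bv^*_\alpha$, and the Hessian bound $\nabla^2\cuT(\bv) \succeq (\sigma^2/(n\cuT(\bv)^3))\Ind$ is correct (this is the same ``Hessian of the square root'' calculation the paper uses, just entering through the term $f\nabla^2 f$ in $\nabla^2(\tfrac12 f_+^2)$ rather than through $\betastar_\alpha\nabla^2\cuT$). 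The logical skeleton of converting a pointwise $O(\alpha)$ perturbation of the objective into an $O(\sqrt\alpha)$ perturbation of the argmin via a strong-convexity modulus is exactly right, and it is what the paper does too.

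The gap is in the step where you claim the strong-convexity modulus of $G_\alpha$ along the segment $[\bv^*_0,\bv^*_\alpha]$ is bounded below by a constant $\mu/n$ with $\mu$ depending only on $(\cuPmodel,\cuPfixpt,\delta)$. That modulus is $\betastar_\alpha\sigma^2/(n\cuT(\bv)^3)$, and by convexity of the norm along the segment, $\cuT(\bv)^3$ is controlled by $\max({\taustar_0}^3,{\taustar_\alpha}^3)$. While ${\taustar_0}\le\taumax$ is granted by assumption \textsf{A2}, you have no a priori upper bound on $\taustar_\alpha$ and no a priori lower bound on $\betastar_\alpha = \taustar_\alpha\zetastar_\alpha$ --- these are precisely the conclusions the lemma is supposed to bootstrap. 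You invoke Lemma~\ref{LemFixedPtBounds} to get them, but that lemma requires $\thetastar$ to be $(s,\sqrt\delta(1-\Deltamin),M)$-approximately sparse, a hypothesis that is not available here: Lemma~\ref{LemFixedPtContinuity} assumes only \textsf{A1} and \textsf{A2}, and is in fact used downstream (in the proof of Theorem~\ref{ThmDBLasso}) precisely to propagate \textsf{A2} to $\textsf{A2}_\alpha$ without a sparsity hypothesis. So the invocation is circular under the lemma's stated hypotheses.

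The paper avoids this circularity by never needing control on $\bv^*_\alpha$ in advance. It instead establishes local strong convexity of $\cuE_0$ (whose minimizer $\bv^*_0$ has $\|\bv^*_0\|_{L^2}/\sqrt{n}\le\taumax$ by \textsf{A2}) on a ball $\|\bv-\bv^*_0\|_{L^2}/\sqrt n \le r$ with $r$ and the modulus $a/n$ chosen so that the strong convexity holds there, and then argues: for $\bv$ in this ball, $\cuE_\alpha(\bv) \geq \cuE_0(\bv^*_0) + a\|\bv-\bv^*_0\|^2_{L^2}/(2n) - \lambda\alpha/(2\delta)$, while $\cuE_\alpha(\bv^*_0)\le\cuE_0(\bv^*_0)$. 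For $\alpha$ small enough that $\sqrt{\lambdamax\alpha/(a\delta)} < r$, the sublevel set of $\cuE_\alpha$ below $\cuE_\alpha(\bv^*_0)$ lies strictly inside the ball, and convexity of $\cuE_\alpha$ then traps the global minimizer $\bv^*_\alpha$ inside, yielding $\|\bv^*_\alpha - \bv^*_0\|_{L^2}/\sqrt n \le \sqrt{\lambdamax\alpha/(a\delta)}$. The key methodological difference to note is: the paper needs strong convexity only of the reference objective $\cuE_0$ and only locally around its (controlled) minimizer; your version needs the modulus to hold out to wherever $\bv^*_\alpha$ sits, which is circular without the trap argument. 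If you want to keep your $G_\alpha$ construction, you would need to precede it with a paper-style localization step --- at which point $G_\alpha$ no longer buys you anything over working with $\cuE_0$ directly.
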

\begin{proof}[Proof of Lemma \ref{LemFixedPtContinuity}]
	The function 
	\begin{equation}
		f:L^2 \rightarrow \reals\,, 
		\;\;\;\;
		\bv \mapsto \sqrt{\|\bv\|_{L^2}^2 + \sigma^2} - \frac{\langle \bg, \bv\rangle_{L^2}}{\sqrt{n}}\,,
	\end{equation}
	is $(1 + \sqrt{p/n})\leq (1 + \Deltamin^{-1/2})$-Lipschitz.
	Evaluated at the minimizer $\bv^*_0$ of $\cuE_0$ defined in \eqref{EqFunctionalObj},
	$f(\bv^*_0)$ is equal to $\taustar_0\zetastar_0 \geq \sigmamin\zetamin$ by the proof of Lemma \ref{LemAlphaFixedPtSoln} in Section \ref{SecFixedPtSoln}.
	Thus, for $\|\bv - \bv^*_0\|_{L^2} \leq \sigmamin\zetamin/(2(1 + \Deltamin^{-1/2}))$, it is guaranteed that 
	\begin{equation}
		f(\bv)
		\geq 
		\frac{\sigmamin\zetamin}{2}\,.	
		\,
	\end{equation}
	Let $r:= \min\left\{\frac{\sigmamin\zetamin}{2(1 + \Deltamin^{-1/2})},\frac{\sigmamin}2\right\}$.
	By differentiation along affine paths, 
	the function 
	\begin{equation}
		\frac12f(\bv)_+^2
			\;\;\; 
			\text{is} 
			\;\;\;
			\frac{\sigmamin^2\inf_{\bv \in B}f(\bv)_+}{(\sup_{\bv \in B}\|\bv\|_{L^2}^2 + \sigmamin^2)^{3/2}} 
			\;\;\;
			\text{strongly convex on $\bv \in B$ for any $B \subset L^2$}.
	\end{equation}
	Thus, 
	$\cuE_0$ is $a:= \frac{\sigma^2 \sigmamin\zetamin/2}{(R^2 + \sigma^2)^{3/2}}$-strongly convex on 
	$\|\bv - \bv^*_0\|_{L^2} \leq r$, 
	where $R = \taumax + r$.

	By Eq.~\eqref{EqnMoreauRelation}, for any $\bv \in L^2$ and $\alpha \geq 0$,
	$\E[\sfM_0(\thetastar + \mySigma^{-1/2}\bv)] 
	\geq \E[\sfM_\alpha(\thetastar + \mySigma^{-1/2}\bv)] 
	\geq \E[\sfM_0(\thetastar + \mySigma^{-1/2}\bv)] - p\alpha/2$.
	Thus, $\cuE_\alpha(\bv_0^*) \leq \cuE_0(\bv_0^*)$ 
	and for $\|\bv - \bv_0^*\|_{L^2} \leq r$,
	$\cuE_\alpha(\bv) \geq \cuE_0(\bv) - p\lambda\alpha/(2\sqrt{n}) \geq \cuE_0(\bv_0^*) + a\|\bv - \bv_0^*\|_{L^2}^2/2 - p\lambda\alpha/(2\sqrt{n})$.
	Thus,
	if $\sqrt{\frac{p \lambdamax \alpha}{a\sqrt{n}}} \leq r$,
	we have
	$\|\bv^*_\alpha - \bv^*_0\|_{L^2} \leq \sqrt{\frac{p \lambdamax \alpha}{a\sqrt{n}}}$.
	Since, by the proof of Lemma \ref{LemAlphaFixedPtSoln},
        $\taustar_\alpha = \sqrt{\sigma^2 + \|\bv_\alpha^*\|_{L^2}^2}$ and $\zetastar_\alpha = (1 - \langle \bg, \bv_\alpha^*\rangle_{L^2}/\sqrt{n})$,
	we conclude
	\begin{equation}
		|\taustar_0 - \taustar_\alpha|
			\leq
			\sqrt{\frac{p\lambdamax\alpha}{a\sqrt{n}}} \,,
			\;\;\;\;
			|\zetastar_0 - \zetastar_\alpha|
			\leq
			\sqrt{\frac{p}{n}} \sqrt{\frac{p\lambdamax\alpha}{a\sqrt{n}}} 
			\;\;\;\;
			\text{for}
			\;\;\;\;
			\alpha \leq \frac{r^2a\sqrt{n}}{p\lambdamax}\,.
	\end{equation}
	Using that $n/p \geq \Deltamin$,
	we may set $\alphamax = r^2a\Deltamin/\lambdamax$, $L_\tau = \sqrt{\lambdamax/(a\Deltamin)}$, and $L_\zeta = \sqrt{\lambdamax/(a\Deltamin^2)}$.
\end{proof}


\subsection{The fixed point solutions as a saddle point}

A crucial role in our analysis is played by the max-min problem
\begin{gather}
\label{EqnMinimaxPsi}
	\max_{\beta>0} \min_{\tau\geq \sigma} \psi_\alpha(\tau, \beta)\,, \\
\nonumber	\psi_\alpha(\tau, \beta) \defn - \frac{1}{2}\beta^2 - \frac{p-n}{2n}\tau\beta + 	\frac{\sigma^2\beta}{2\tau} + 
	\E\min_{\mytheta \in \real^{\usedim}} \left\{\frac{\beta}{2\tau}  
	\Big\|\mytheta - \thetastar - \frac{\tau}{\sqrt{n}}\mySigma^{-1/2}\bm{g}\Big\|_{\mySigma}^2 + 
	\frac{\lambda}{\sqrt{n}} (\sfM_\alpha(\mytheta) - \lone{\thetastar}) \right\}\,,
\end{gather}
where the expectation is taken over $\bg \sim \NORMAL(0,\Ind_\usedim)$.
We establish that Eqs.~\eqref{EqnAlphaFixedPt1} and \eqref{EqnAlphaFixedPt2} are first-order conditions for the solution to this max-min problem,
and in the non-smoothed ($\alpha = 0$) case, 
Eqs.~\eqref{EqnEqn1} and \eqref{EqnEqn2} are first-order conditions for the solution to this max-min problem.

\begin{lem}
\label{LemMinmax}
	Let $\taustar_\alpha$, $\zetastar_\alpha$ be the unique solution to 
	Eqs.~\eqref{EqnAlphaFixedPt1} and \eqref{EqnAlphaFixedPt2}, 
	and let $\betastar_\alpha = \taustar_\alpha\zetastar_\alpha$.
	Then $(\taustar_\alpha, \betastar_\alpha)$ is a saddle point for the  max-min value in Eq.~\eqref{EqnMinimaxPsi}.
        Namely, for all $\beta>0$, $\tau\ge\sigma$,
        \begin{gather}
          \psi_\alpha(\taustar_\alpha,\beta) \leq \psi_\alpha(\taustar_\alpha,\betastar_\alpha) \leq \psi_\alpha(\tau,\betastar_\alpha)\,,\label{eq:SaddlePoint}\\
           \psi_\alpha(\taustar_\alpha,\betastar_\alpha)=	\max_{\beta>0} \min_{\tau\geq \sigma} \psi_\alpha(\tau, \beta) =\min_{\tau\geq \sigma} \max_{\beta>0} \psi_\alpha(\tau, \beta)\, .\label{eq:Exchange}
          \end{gather}
\end{lem}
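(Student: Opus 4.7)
\textbf{Proposed proof of Lemma~\ref{LemMinmax}.} My plan is to exhibit $(\taustar_\alpha,\betastar_\alpha)$ as an interior critical point of a function $\psi_\alpha$ that is convex in $\tau$ and concave in $\beta$, which forces it to be a saddle point on $[\sigma,\infty)\times(0,\infty)$ and yields the exchange of extrema. The first step is to establish the convex-concave structure. Concavity of $\beta\mapsto\psi_\alpha(\tau,\beta)$ for fixed $\tau>0$ is immediate: $-\beta^2/2$ is concave, the remaining explicit terms are linear in $\beta$, and the inner $\min_{\mytheta}$ in \eqref{EqnMinimaxPsi} is a pointwise minimum of functions affine in $\beta$, hence concave in $\beta$. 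Convexity in $\tau$ for fixed $\beta>0$ is less obvious and is the main technical obstacle; the key move is the affine change of variable $\bv = \mySigma^{1/2}(\mytheta-\thetastar)$, which rewrites the inner problem as
\begin{equation}
\min_{\bv\in\reals^p}\Big\{\frac{\beta\|\bv\|_2^2}{2\tau} - \beta\langle \bv,\bg\rangle + \frac{\beta\tau\|\bg\|_2^2}{2} + \lambda\bigl(\sfM_\alpha(\thetastar + \mySigma^{-1/2}\bv) - \|\thetastar\|_1\bigr)\Big\}.
\end{equation}
The integrand is jointly convex in $(\bv,\tau)$ on $\tau>0$ (the $\|\bv\|_2^2/\tau$ piece is the perspective of a convex function, the rest of the $\tau$-dependence is affine, and the $\sfM_\alpha$-term is convex in $\bv$ alone); partial minimization in $\bv$ and expectation over $\bg$ preserve convexity in $\tau$, and the remaining pieces $\sigma^2\beta/(2\tau)$ and $-\frac{1-\delta}{2\delta}\tau\beta$ of $\psi_\alpha$ are convex and linear in $\tau$, respectively.

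The second step is to verify that $(\taustar_\alpha,\betastar_\alpha)$ is a critical point. I would apply Danskin's envelope theorem, identifying the inner minimizer as $\eta_\alpha(\by^f,\beta/\tau)$ with $\by^f = \mySigma^{1/2}\thetastar + \tau\bg$ and setting $\mySigma^{1/2}\Delta := \yhat_\alpha(\by^f,\beta/\tau) - \by^f$. Using $\E[\|\bg\|_2^2]/n = 1/\delta$ together with $\E[\langle \yhat_\alpha - \mySigma^{1/2}\thetastar,\bg\rangle]/n = \tau\,\df_\alpha(\tau^2,\beta/\tau)$ to rewrite $\E\|\Delta\|_\mySigma^2$ in terms of $\risk_\alpha$ and $\df_\alpha$, a direct computation (with substantial cancellation) gives
\begin{align}
\partial_\tau \psi_\alpha(\tau,\beta) &= \frac{\beta}{2\tau^2}\bigl(\tau^2 - \sigma^2 - \risk_\alpha(\tau^2,\beta/\tau)\bigr),\\
2\tau\,\partial_\beta\psi_\alpha(\tau,\beta) &= -2\beta\tau + \tau^2 + \sigma^2 + \risk_\alpha(\tau^2,\beta/\tau) - 2\tau^2\,\df_\alpha(\tau^2,\beta/\tau).
\end{align}
Setting $\zeta = \beta/\tau$, the condition $\partial_\tau\psi_\alpha = 0$ is exactly \eqref{EqnAlphaFixedPt1}, and plugging $\sigma^2+\risk_\alpha=\tau^2$ back into $\partial_\beta\psi_\alpha = 0$ reduces it to $\zeta + \df_\alpha(\tau^2,\zeta) = 1$, which is \eqref{EqnAlphaFixedPt2}. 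Hence $(\taustar_\alpha,\betastar_\alpha) = (\taustar_\alpha,\taustar_\alpha\zetastar_\alpha)$ is an interior critical point of $\psi_\alpha$ on $(0,\infty)^2$; note $\taustar_\alpha\ge\sigma$ since $\risk_\alpha\ge 0$.

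Combining the two steps gives \eqref{eq:SaddlePoint}: since $\beta\mapsto \psi_\alpha(\taustar_\alpha,\beta)$ is concave on $\reals$ with derivative vanishing at $\betastar_\alpha$, it is globally maximized there; since $\tau\mapsto\psi_\alpha(\tau,\betastar_\alpha)$ is convex on $(0,\infty)$ with derivative vanishing at $\taustar_\alpha\ge\sigma$, it is globally minimized on $(0,\infty)$, and in particular on $[\sigma,\infty)$, at $\taustar_\alpha$. The identity \eqref{eq:Exchange} then follows immediately: these saddle-point inequalities give $\max_{\beta>0}\min_{\tau\ge\sigma}\psi_\alpha \ge \psi_\alpha(\taustar_\alpha,\betastar_\alpha) \ge \min_{\tau\ge\sigma}\max_{\beta>0}\psi_\alpha$, while the reverse direction $\max_\beta\min_\tau\le\min_\tau\max_\beta$ is the standard weak-duality inequality, so all three quantities coincide. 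I expect the principal difficulty to be justifying the convexity of $\psi_\alpha$ in $\tau$ in Step~1, since the original inner integrand is not jointly convex in $(\mytheta,\tau)$ and the perspective change of variable is the crucial maneuver; the envelope-theorem computation in Step~2 is conceptually routine but delicate, as $\tau$ enters both through the coefficient $\beta/(2\tau)$ and through the shift $\tau\mySigma^{-1/2}\bg$ in the quadratic, and the cancellations producing the clean fixed-point form only surface after expressing the raw second moments in terms of $\risk_\alpha$ and $\df_\alpha$.
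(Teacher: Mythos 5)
Your proof is correct and follows essentially the same route as the paper: joint convexity of the per-realization integrand in $(\mytheta,\tau)$ (revealed via the perspective-function structure), envelope-theorem differentiation to identify the critical point with the fixed point of Eqs.~\eqref{EqnAlphaFixedPt1}--\eqref{EqnAlphaFixedPt2}, and then the standard saddle-point-implies-minimax conclusion. One small correction: the original inner integrand \emph{is} already jointly convex in $(\mytheta,\tau)$ as written — $\tfrac{1}{\tau}\|\mytheta-\thetastar-\tau\mySigma^{-1/2}\bg\|_{\mySigma}^2$ is the perspective of the convex quadratic $\|\cdot\|_{\mySigma}^2$ composed with the affine map $(\mytheta,\tau)\mapsto(\mytheta-\thetastar-\tau\mySigma^{-1/2}\bg,\tau)$ — so the change of variable $\bv=\mySigma^{1/2}(\mytheta-\thetastar)$ in your Step~1 is a useful way to make this visible but does not create the convexity.
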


\begin{proof}[Proof of Lemma \ref{LemMinmax}]
	Let us define function 
	\begin{equation}
		\Xi_\alpha(\tau,\beta) 
			:=
			- \frac{1}{2}\beta^2 - \frac{p-n}{2n}\tau\beta + 	\frac{\sigma^2\beta}{2\tau} + 
			\min_{\mytheta \in \real^{\usedim}} \left\{
				\frac{\beta}{2\tau}  
				\Big\|\mytheta - \thetastar - \frac{\tau}{\sqrt{n}}\mySigma^{-1/2}\bm{g}\Big\|_{\mySigma}^2 + 
				\frac{\lambda}{\sqrt{n}} (\sfM_\alpha(\mytheta) - \lone{\thetastar})
			\right\} \, ,
	\end{equation}
        so that $\psi_\alpha(\tau, \beta)  = \E_{\bm{g}}\Xi_\alpha(\tau,\beta)$.
	It is easily seen that $\Xi_\alpha$ is convex-concave in $(\tau,\beta)$ for $\tau,\beta > 0$ because prior to the minimization over $\mytheta$ it is jointly convex in $(\tau,\mytheta)$ and concave in $\beta$.
	By the envelope theorem \cite[Theorem 1]{milgrom2002},
	\begin{gather}
        \label{EqnXiwrtBeta}
		\frac{\partial \Xi_\alpha}{\partial \beta} 
			= 
			-\beta
			- \frac{p-n}{2n}\tau
			+ \frac{\sigma^2}{2\tau} 
			+ \frac1{2\tau} 
			\Big\| \eta_\alpha ( \mySigma^{1/2}\thetastar + \tau \bg / \sqrt{n} , \beta/\tau ) - \thetastar - \frac{\tau}{\sqrt{n}} \mySigma^{-1/2}\bg \Big\|_{\mySigma}^2 \,,
		\\
		\label{EqnXiwrtTau}
		\frac{\partial \Xi_\alpha}{\partial \tau} 
			= 
			-\frac{p-n}{2n}\beta
			- \frac{\sigma^2\beta}{2\tau^2} 
          - \frac{\beta}{2\tau^2} 
			\| \eta_\alpha ( \mySigma^{1/2}\thetastar + \tau \bg/\sqrt{n} , \beta/\tau ) - \thetastar\|_{\mySigma}^2 
			+ \frac{\beta\|\bg\|_2^2}{2n}\,.
	\end{gather}
	Taking expectations with respect to $\bg$, exchanging expectations and derivatives by dominated convergence,
        and expanding the square in the first line, 
	we conclude
	\begin{align}
		\frac{\partial\psi_{\alpha}(\tau,\beta)}{\partial \beta} 
			&= 
			-\beta + \frac{\tau}2 
			+ \frac{\sigma^2}{2\tau} 
			+ \frac1{2\tau}\risk_\alpha(\tau^2,\beta/\tau) 
			- \tau\, \frac{\df_\alpha(\tau^2,\beta/\tau)}{n}
		\nonumber\\
		&= 
			\tau\left(
				-\frac{\beta}{\tau} 
				+ 1 - \frac{\df_\alpha(\tau^2,\beta/\tau)}{n}
			\right) 
			+ \frac1{2\tau}\left(
				-\tau^2 + \sigma^2 
				+ \risk_\alpha(\tau^2,\beta/\tau)
			\right)\,,
		\\
		\frac{\partial \psi_\alpha(\tau,\beta)}{\partial \tau} 
			&= 
			\frac{\beta}2 
			- \frac{\sigma^2 \beta}{2\tau^2} 
			- \frac{\beta}{2\tau^2} 
			\risk_\alpha(\tau^2,\beta/\tau) 
			= 
			\frac{\beta}{2\tau^2}
			(\tau^2 - \sigma^2 
			- \risk_\alpha(\tau^2,\beta/\tau))
		\,.
	\end{align}
	Thus, if $(\tau^*_\alpha,\zeta^*_\alpha) = (\taustar_\alpha,\betastar_\alpha/\taustar_\alpha)$ solves Eqs.~\eqref{EqnAlphaFixedPt1} and \eqref{EqnAlphaFixedPt2},
	the derivatives in the preceding display are 0. 
	Because $\psi_\alpha(\tau,\beta)$ is convex-concave in $(\tau,\beta)$, we conclude that, for any $\tau,\beta > 0$,
        Eq.~\eqref{eq:SaddlePoint} holds.
	Thus, $(\taustar_\alpha,\betastar_\alpha)$ is a saddle-point of $\psi_\alpha$ (see, e.g., \cite[pg.\ 380]{rockafellar1970convex}).
	By \cite[Lemma 36.2]{rockafellar1970convex}, the max-min value of \eqref{EqnMinimaxPsi} is achieved at $(\taustar_\alpha,\betastar_\alpha)$, and the maximization and minimization may be exchanged as in Eq.~\eqref{eq:Exchange}.
\end{proof}


\section{Proofs of main results}

\subsection{Control of $\alpha$-smoothed Lasso estimate and proof of Theorem~\ref{ThmControlLassoEst} for a fixed $\lambda$}
\label{SecPfThmLassoL2}

The following theorem controls the behavior of the $\alpha$-smoothed lasso.
\begin{theo}
\label{ThmControlAlphaSmoothEstimate}
	If Assumption \ref{assump:1} holds and $\alpha \leq \alphamax / \sqrt{n}$,
	then there exist constants $C,c,c',\gamma > 0$ depending only on $\cuPmodel$ and $\alphamax$
	such that for any $1$-Lipschitz function $\phi:\real^p \rightarrow \reals$,
	we have for all $\epsilon < c'$
	\begin{align}
		\mprob \left(
			\exists \mytheta \in \reals^{\usedim},\,
			\Big|\phi\big(\mytheta\big) 
			- 
			\E\Big[\phi\big(\thetahat_\alpha^f\big)\Big]\Big| > 
			\epsilon
			\text{ and } 
			\Risk_\alpha(\mytheta) \leq \min_{\mytheta\in \reals^\usedim} \Risk_\alpha(
			\mytheta) + \gamma \epsilon^2
			\right) 
			\leq
			\frac{C}{\epsilon^2} e^{-c\numobs \epsilon^4}.
	\end{align}
\end{theo}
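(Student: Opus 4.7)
The plan is to apply Gordon's comparison (Lemma~\ref{LemGordonMain}) after whitening the design via the change of variables $\bv = \mySigma^{1/2}(\mytheta - \thetastar)$. Introduce
\begin{equation*}
\cuC_\alpha(\bv) := \frac{1}{2n}\|\sigma\bz - \bX\mySigma^{-1/2}\bv\|_2^2 + \frac{\lambda}{n}\bigl(\sfM_\alpha(\thetastar + \mySigma^{-1/2}\bv) - \|\thetastar\|_1\bigr),
\end{equation*}
and let $\cuL_\alpha$ be obtained from $\cuL$ in Eq.~\eqref{eq:def-cuL-main} by replacing $\|\cdot\|_1$ with $\sfM_\alpha$; both are convex in $\bv$. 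Define the ``bad set''
\begin{equation*}
D := \Bigl\{\bv \in \reals^p :\ \bigl|\phi\bigl((\thetastar + \mySigma^{-1/2}\bv)/\sqrt p\bigr) - \E[\phi(\thetahat_\alpha^f/\sqrt p)]\bigr| > \epsilon\Bigr\},
\end{equation*}
which is closed. The event in the theorem is contained in $\{\min_{\bv \in D}\cuC_\alpha(\bv) \le \min_{\bv \in \reals^p}\cuC_\alpha(\bv) + \gamma\epsilon^2\}$. The strategy is to produce a deterministic value $\Lstar$ and a constant $\gamma>0$ such that, with probability at least $1 - C\epsilon^{-2}e^{-cn\epsilon^4}$,
\begin{equation*}
\min_{\bv \in \reals^p}\cuL_\alpha(\bv) < \Lstar + \tfrac{\gamma}{2}\epsilon^2 \quad \text{and} \quad \min_{\bv \in D}\cuL_\alpha(\bv) > \Lstar + \tfrac{3\gamma}{2}\epsilon^2.
\end{equation*}
Lemma~\ref{LemGordonMain}(b) (applied to the closed convex set $\reals^p$) and (a) (applied to $D$) then transfer both bounds from $\cuL_\alpha$ to $\cuC_\alpha$ at the cost of a factor of $2$ in probability, and together contradict the event in question.

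To identify $\Lstar$, I would dualize the squared positive part via $\tfrac12(a)_+^2 = \sup_{\beta\ge 0}(\beta a - \beta^2/2)$ and parametrize by $\tau = \|\bv\|_2/\sqrt n$; after completing the square in $\bv$ and taking expectations over $\bg$, the value $\E \min_{\bv}\cuL_\alpha(\bv)$ coincides with the max-min objective $\psi_\alpha(\tau,\beta)$ of Eq.~\eqref{EqnMinimaxPsi}, so Lemma~\ref{LemMinmax} gives $\Lstar := \psi_\alpha(\taustar_\alpha,\betastar_\alpha)$ under assumption $\textsf{A2}_\alpha$. The pointwise-in-$\bg$ inner minimizer is $\bv_\alpha^*(\bg) := \mySigma^{1/2}(\eta_\alpha(\mySigma^{1/2}\thetastar + \taustar_\alpha\bg,\zetastar_\alpha) - \thetastar)$, which maps to $\thetahat_\alpha^f$ under the inverse change of variables, so the centering in $D$ is the natural one. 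Gaussian concentration (Borell--TIS applied to $\bg$ and $\bz$), together with the $O(1/\sqrt n)$-Lipschitz constant of $\cuL_\alpha$ and of $\bv^*_\alpha(\bg)$ in the Gaussian inputs on bounded sublevel sets (coercivity as in Section~\ref{SecFixedPtBounds}), then yields $|\min_\bv \cuL_\alpha(\bv) - \Lstar| = O(\epsilon^2)$ with the claimed tail.

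The main obstacle is the lower-bound inequality for $\min_{\bv\in D}\cuL_\alpha(\bv)$, which reduces to a quantitative quadratic-growth estimate for $\cuL_\alpha$ around $\bv_\alpha^*(\bg)$. The ingredients I would use are: strong concavity of $\beta\mapsto\psi_\alpha(\taustar_\alpha,\beta)$ at $\betastar_\alpha$ (a univariate quadratic whose curvature is bounded below by $\textsf{A2}_\alpha$); and strong convexity of the inner $\bv$-minimization inherited from the $\frac{\beta}{2\tau}\|\cdot\|_{\mySigma}^2$ term, with parameter at least $\betastar_\alpha\kappamin/(2\taustar_\alpha)$, bounded below uniformly via Lemma~\ref{LemFixedPtBounds}. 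Since $\phi$ is $1$-Lipschitz, a deviation $|\phi(\cdot) - \E[\phi(\thetahat_\alpha^f/\sqrt p)]| > \epsilon$ forces $\|\bv - \bv_\alpha^*(\bg)\|_2 \gtrsim \sqrt{n}\,\epsilon$ up to the conditioning of $\mySigma$, and strong convexity upgrades this to an $\Omega(\epsilon^2)$ gap in $\cuL_\alpha$. The delicate point is that at $\alpha = 0$ the Moreau envelope contributes no strong convexity on inactive coordinates, so the estimate must rest entirely on the $\mySigma$-quadratic and the well-conditioning $\kappamin > 0$; preserving $n$-uniform constants depending only on $\cuPmodel$, $\cuPfixpt$, $\delta$ is the central technical challenge. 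Theorem~\ref{ThmLassoL2} is then recovered as the $\alpha = 0$ specialization, with the deterministic second argument $\thetastar/\sqrt p$ of $\phi$ absorbed trivially into the Lipschitz constant.
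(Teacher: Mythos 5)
Your proposal follows the same overall strategy as the paper's proof: whiten to the $\bv$ variable, apply Gordon's comparison in both directions (part (b) for the unconstrained minimum, part (a) for the constrained minimum over the bad set), identify the deterministic value $\Lstar_\alpha := \psi_\alpha(\taustar_\alpha,\betastar_\alpha)$ via the saddle-point structure of Lemma~\ref{LemMinmax}, and conclude from a quadratic-growth estimate for $\cuL_\alpha$ away from the fixed-design error vector $\hbv_\alpha^f = \mySigma^{1/2}(\thetahat_\alpha^f - \thetastar)$. This is exactly the content of Lemma~\ref{LemGordonProbBounds} in the paper, so the high-level route is the same.

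Two steps are glossed over and deserve to be made explicit, though they do not change the route. First, the inference ``$|\phi(\cdot)-\E[\phi(\thetahat_\alpha^f/\sqrt p)]|>\epsilon$ forces $\|\bv-\bv_\alpha^*(\bg)\|_2\gtrsim\sqrt n\,\epsilon$'' is not unconditional: it presupposes that the random quantity $\phi(\thetahat_\alpha^f/\sqrt p)$ itself lies within, say, $\epsilon/2$ of its expectation. This is a separate probabilistic estimate, and the paper handles it by noting that $\hbv_\alpha^f$ is a $\taustar_\alpha$-Lipschitz (proximal) map of $\taustar_\alpha\bg$ and invoking Gaussian concentration (Eq.~\eqref{EqnVLamConcentration}), contributing the extra term $2\mprob(\hbv_\alpha^f\notin D)$ in the chain Eq.~\eqref{EqnMinimizationBound}. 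Your mention of the Lipschitz constant of $\bv_\alpha^*(\bg)$ suggests you have this in mind, but the argument as written skips the conditioning. Second, the strong convexity claim via the ``$\frac{\beta}{2\tau}\|\cdot\|^2$ term'' is heuristic as stated: in $\cuL_\alpha$ the variables $\beta,\tau$ are optimized (not frozen at $\betastar_\alpha,\taustar_\alpha$), and the positive part $(\cdot)_+$ means the curvature vanishes entirely on the region where the argument is negative. The paper's Lemma~\ref{LemGordonProbBounds} confines the argument to the high-probability event $\cuA_1\cap\cuA_2$ where $\|\bh\|_2/\sqrt n$ and $\|\hbv_\alpha^f\|_2^2/n,\ \bg^\top\hbv_\alpha^f/n$ concentrate, on which $f(\bv)>0$ and $\|\bv\|_2/\sqrt n$ is bounded near $\hbv_\alpha^f$, and only then obtains a local strong-convexity modulus of order $a/n$ (citing \cite{miolane2018distribution}, Thm.~B.1). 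Also note a small slip: the $\kappamin$ factor in your modulus $\betastar_\alpha\kappamin/(2\taustar_\alpha)$ belongs to the $\mytheta$-variable form of Eq.~\eqref{EqnMinimaxPsi}; in the whitened $\bv$-variable the inner quadratic is the plain Euclidean $\frac{\beta}{2\tau n}\|\bv-\tau\bg\|_2^2$, and $\kappamin$ only re-enters when translating back to $\mytheta$ (this is why the paper defines $\tilde\phi$ with a $\kappamin^{1/2}$ prefactor).
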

\noindent Theorem \ref{ThmControlLassoEst} for a fixed $\lambda$ is an immediate corollary of Theorem \ref{ThmControlAlphaSmoothEstimate} (apply Theorem \ref{ThmControlLassoEst} with $\alpha = 0$).
Uniformity over $\lambda$ is achieved in the next section (Section \ref{SecUniform}), completing the proof of Theorem \ref{ThmControlLassoEst}.

Define the error vectors of the $\alpha$-smoothed Lasso in the random-design model,
\begin{equation}
\label{EqnLassoErrVec}
	\bwhat_\alpha \defn \thetahat_\alpha - \thetastar\,, \;\;\;\;\;\; \bvhat_\alpha \defn \mySigma^{1/2}(\thetahat_\alpha - \thetastar)\,,
\end{equation}
where $\thetahat_\alpha$ is defined by \eqref{EqnAlphaSmoothRandomD}.
The error vector $\bvhat_\alpha$ is the minimizer of the reparameterized objective
\begin{align}
	\cuC_\alpha
		&(\bv) 
		:= 
		\frac1{2n} \|\bX \mySigma^{-1/2} \bv - \sigma \bz\|_2^2 + 
		\frac{\lambda}{\sqrt{n}}(\sfM_\alpha(\thetastar + \mySigma^{-1/2}\bv) - \|\thetastar\|_1)
	\nonumber\\
	&=
		\max_{\bu \in \reals^n} \left\{ \frac{1}{n} \bu^\top( \bX \mySigma^{-1/2} \bv - \sigma \bz)  - \frac{1}{2n}\|\bu\|_2^2 + \frac{\lambda}{\sqrt{n}}(\sfM_\alpha(\thetastar+\mySigma^{-1/2}\bv) - \|\thetastar\|_1) \right\}
		=:
		\max_{\bu \in \reals^n} C_\alpha(\bv,\bu)\,. \label{EnqCalpha}
\end{align}
We also define the error vector of the $\alpha$-smoothed Lasso in the fixed-design model
\begin{equation}
\label{EqnVf}
	\hbv_\alpha^f 
	:= 
	\mySigma^{1/2}(\eta_\alpha(\mySigma^{1/2}\thetastar + \taustar_\alpha \bg,\zetastar_\alpha) - \thetastar)\,.
\end{equation}
We control the behavior of $\alpha$-smoothed Lasso error $\hbv_\alpha$ in the random-design model using Gordon's minimax theorem \cite{thrampoulidis2015regularized,miolane2018distribution}.
Define Gordon's objective by
\begin{equation}
\begin{aligned}
	\cuL_\alpha(\bv)
		&:= 
		\frac12 \left(\sqrt{\|\bv\|_2^2 + \sigma^2} \frac{\|\bh\|_2}{\sqrt n} - \frac{\bg^\top\bv}{\sqrt{n}} \right)_+^2 + \frac\lambda{\sqrt{n}}( \sfM_\alpha(\thetastar + \mySigma^{-1/2}\bv) - \|\thetastar\|_1)\\
	&=
		\max_{\bu\in\reals^n} 
			\left\{
				-\frac{\|\bu\|_2 \bg^\top \bv}{n} + \sqrt{\|\bv\|_2^2 + \sigma^2}\, \frac{\bh^\top \bu}{n} - \frac{\|\bu\|_2^2}{2n} 
				+ \frac\lambda{\sqrt{n}} (\sfM_\alpha(\thetastar + \mySigma^{-1/2}\bv) - \|\thetastar\|_1)
			\right\}\\
	&=: \max_{\bu \in \reals^n} L_\alpha(\bv,\bu)\,,\label{EqnGLoss}
\end{aligned}
\end{equation}
where $\bg \sim \normal(\bzero_p,\Ind_p)$ and $\bh \sim \normal(\bzero_n,\Ind_n)$ are 
all independent.  
Gordon's lemma compares the (possibly constrained) minimization of $\cuC_\alpha(\bv)$ 
with the corresponding minimization of $\cuL_\alpha(\bv)$.
\begin{lem}[Gordon's lemma]
\label{LemGordon}
	The following hold.
	\begin{enumerate}[label=(\alph*)]
		
		\item 
		Let $D \subset \reals^p$ be a closed set.
		For all $t \in \reals$,
		\begin{equation}
			\mprob\left(\min_{\bv \in D} \cuC_\alpha(\bv) \leq t\right)
			\leq 
			2\mprob\left( \min_{\bv \in D} \cuL_\alpha(\bv) \leq t \right)\,.
		\end{equation}

		\item 
		Let $D \subset \reals^p$ be a closed, convex set.
		For all $t \in \reals$,
		\begin{equation}
			\mprob\left(\min_{\bv \in D} \cuC_\alpha(\bv) \geq t\right)
			\leq 
			2\mprob\left( \min_{\bv \in D} \cuL_\alpha(\bv) \geq t \right)\,.
		\end{equation}

	\end{enumerate}
\end{lem}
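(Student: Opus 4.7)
The strategy is to derive both parts of the lemma from the Convex Gaussian Min-Max Theorem (CGMT) of \cite{thrampoulidis2015regularized}, which is a tight version of Gordon's Gaussian comparison inequality. Part (a) corresponds to the one-sided (anchored) Gordon inequality, which holds for any closed constraint set $D$; part (b) is the converse and requires convexity of $D$ together with the convex-concave structure of the inner min-max objective.

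First, I would put $\cuC_\alpha(\bv)$ into a form in which the Gaussian randomness enters bilinearly. In \eqref{EnqCalpha} the two Gaussian objects are $\bG := \sqrt n\,\bX\mySigma^{-1/2} \in \reals^{n\times p}$, which has i.i.d.\ $\normal(0,1)$ entries since $\bx_i\sim\normal(\bzero,\mySigma/n)$, and the independent noise $\bz\sim\normal(\bzero,\Ind_n)$. I would combine them into a single augmented Gaussian matrix $\bar\bG := [\bG,\,-\bz]\in\reals^{n\times(p+1)}$ with i.i.d.\ standard Gaussian entries, and extend $\bv$ to $\bar\bv := (\bv,\sigma\sqrt n)^\top\in\reals^{p+1}$. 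Then $\tfrac{1}{n^{3/2}}\bu^\top\bar\bG\bar\bv$ reproduces exactly the Gaussian contribution $\tfrac{1}{n^{3/2}}\bu^\top\bG\bv - \tfrac{\sigma}{n}\bu^\top\bz$ to $C_\alpha(\bv,\bu)$, while $\|\bar\bv\|_2/\sqrt n = \sqrt{\|\bv\|_2^2/n + \sigma^2}$ matches the coefficient appearing in $\cuL_\alpha$. The augmented constraint set $\bar D := \{\bar\bv : \bv\in D\}$ inherits closedness, and convexity when $D$ is convex, from $D$.

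Next, I would apply CGMT to the min-max over $\bar\bv\in\bar D$ and $\bu\in\reals^n$. Because CGMT is usually formulated with compact $\bu$-sets, I would first restrict to $\|\bu\|_2\le R$; the quadratic penalty $-\|\bu\|_2^2/(2n)$ guarantees that the $\bu$-maximizer is a.s.\ bounded, so for $R$ large enough the truncation is vacuous, and the bounds pass to the original problem by letting $R\to\infty$. CGMT replaces the bilinear form $\tfrac{1}{n^{3/2}}\bu^\top\bar\bG\bar\bv$ by the Gordon surrogate $\tfrac{1}{n^{3/2}}\bigl(\|\bar\bv\|_2\,\bh^\top\bu + \|\bu\|_2\,\bar\bg^\top\bar\bv\bigr)$ with $\bh\in\reals^n$ and $\bar\bg\in\reals^{p+1}$ independent standard Gaussians, and yields directly the two inequalities in (a) and (b). Maximizing explicitly in $\bu$ (the expression $\bu^\top\bm - \|\bu\|_2^2/(2n) + \ell\|\bu\|_2$ admits a closed-form maximum $\tfrac{n}{2}(\|\bm\|_2+\ell)_+^2$), the auxiliary value reduces to $\cuL_\alpha(\bv)$ as given in \eqref{EqnGLoss}, up to the sign flip $\bg\to-\bg$ permitted by Gaussian symmetry; any residual contribution from the $(p{+}1)$-st coordinate of $\bar\bg$ is absorbed by the same symmetry together with the distributional identity $\alpha\bh + \beta\bz \stackrel{d}{=} \sqrt{\alpha^2+\beta^2}\,\tilde\bh$ at each fixed $\bv$, as in the treatment of \cite{miolane2018distribution}.

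Finally, the two conclusions of the lemma follow from CGMT: part (a) is the classical one-sided Gordon inequality, requiring only closedness of $\bar D$ and concavity in $\bu$ of the inner problem (which holds, being strictly concave quadratic); part (b) is the tight converse, requiring in addition that $\bar D$ be convex and the objective convex in $\bv$ (which follows since $\sfM_\alpha$ and the quadratic-in-$\bv$ terms are convex). The main obstacle is the alignment in the second step between the Gordon surrogate that CGMT produces and the precise form of $\cuL_\alpha$ stated in the paper, in particular managing the extra $(p{+}1)$-st Gaussian coordinate introduced by the augmentation and matching sign conventions. A secondary technical point is the compactification of the $\bu$-domain, handled by the truncate-then-take-$R\to\infty$ argument described above.
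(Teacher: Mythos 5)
Your overall route mirrors the paper's: restrict to compact $\bu$-balls, apply the Gordon/CGMT comparison, and take $R\to\infty$ using the a.s.\ boundedness of the inner maximizer (the paper does exactly this truncation). The difference is that the paper offloads the noise term $-\sigma\bz$ to the pre-packaged \cite[Corollary G.1]{miolane2018distribution}, which is already stated for the Lasso-with-noise setting and produces $\cuL_\alpha$ directly, whereas you re-derive that corollary from the bare CGMT by augmenting the Gaussian matrix to $\bar\bG=[\bG,-\bz]$ and the variable to $\bar\bv=(\bv,\sigma\sqrt n)$. This is a legitimate and, in fact, the standard way to obtain that corollary, so you are essentially unrolling the paper's citation.

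The place where your argument has a genuine gap is the treatment of the auxiliary Gaussian introduced by the augmentation. CGMT applied to $\bar\bG\in\reals^{n\times(p+1)}$ produces an auxiliary vector $\bar\bg\in\reals^{p+1}$, and $\bar\bg^\top\bar\bv = \bg^\top\bv + \sigma\sqrt n\,\xi$ with $\xi:=\bar g_{p+1}$. After maximizing out $\bu$, the Gordon surrogate you actually obtain therefore has the form
\begin{equation}
\frac12\left(\sqrt{\tfrac{\|\bv\|_2^2}{n}+\sigma^2}\,\tfrac{\|\bh\|_2}{\sqrt n} - \tfrac{\bg^\top\bv}{n} - \tfrac{\sigma\xi}{\sqrt n}\right)_+^2 + \tfrac{\lambda}{n}\big(\sfM_\alpha(\thetastar+\mySigma^{-1/2}\bv)-\|\thetastar\|_1\big)\,,
\end{equation}
i.e.\ it carries an extra additive term $-\sigma\xi/\sqrt n$ inside the positive part, which is \emph{not} present in $\cuL_\alpha$ as stated in Eq.~\eqref{EqnGLoss}. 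Your justification for discarding it — ``the distributional identity $\alpha\bh+\beta\bz\stackrel{d}{=}\sqrt{\alpha^2+\beta^2}\tilde\bh$ at each fixed $\bv$'' — does not apply: that identity concerns combining $\bh$ and $\bz$ on the \emph{primary} side, not eliminating $\xi$ on the auxiliary side, and more fundamentally, arguments ``at each fixed $\bv$'' cannot be invoked under a $\min_{\bv\in D}$, because the optimizing $\bv$ depends on the realization of the randomness. Since $\xi$ is a single scalar Gaussian independent of $(\bg,\bh)$, the extra term is $O_P(n^{-1/2})$ and is harmless for the downstream asymptotics, but that is a concentration argument one would have to make, not a symmetry that cancels it exactly, and the lemma as stated is a non-asymptotic inequality. (You may take some comfort from the fact that the paper's own display of $\cuL_\alpha$ lists ``$\xi\sim\normal(0,1)$'' among the independent Gaussians yet never uses it in the formula — a strong hint that the intended auxiliary objective, and the one Corollary G.1 of \cite{miolane2018distribution} produces, really does carry the $\xi$ term. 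Your derivation is consistent with that, but your stated reason for dropping it is wrong; you should either keep $\xi$ in the Gordon objective and match the paper's later uses, or supply the missing argument for why the comparison inequalities survive its removal.)
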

\noindent We prove Lemma \ref{LemGordon} later in this section.
\begin{proof}[Proof of Theorem \ref{ThmControlAlphaSmoothEstimate}]
	For any set $D$, define $D_\epsilon := \{\bx \in \reals^p \mid \inf_{\bx' \in D}\|\bx-\bx'\|_2 \geq \epsilon\}$.
	Denote $\Lstar_\alpha \defn \psi_\alpha(\taustar_\alpha, \betastar_\alpha)$ where $\taustar_\alpha,\betastar_\alpha$ are as in Lemma \ref{LemMinmax}.
	To control $\hbv_\alpha$ using Gordon's lemma,
	we show
	that with high probability
	the minimal value of $\cuL_\alpha$ is close to $\Lstar_\alpha$, and that if $D$ contains $\hbv_\alpha^f$ with high probability, the objective $\cuL_\alpha$ is uniformly sub-optimal on $D_\epsilon$ with high probability.
	We need the following lemma.
	\begin{lem}\label{LemGordonProbBounds}
		There exist constants $C,c,c',\gamma > 0$, 
		depending only on $\cuPmodel$ and $\alphamax$,
		such that for $\epsilon \in (0,c')$, 
		we have
		\begin{equation}
		\label{EqnGordonObjControl}
			\min_{\bv \in \mathsf{B}_2^c(\hbv_\alpha^f;\epsilon/2)} \cuL_\alpha(\bv) > \Lstar_\alpha + 2\gamma\epsilon^2\,,
			\;\;\;\;\;\;\;\;\;\;\;\;
			|\min_{\bv \in \reals^p} \cuL_\alpha (\bv) - \Lstar_\alpha| \leq \gamma\epsilon^2 \,,
		\end{equation}
		with probability at least $1 - \frac{C}{\epsilon^2}\exp(-cn\epsilon^4)$.
	\end{lem}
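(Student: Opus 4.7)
The plan is to reduce $\min_{\bv}\cuL_\alpha(\bv)$ to a scalar saddle-point problem, show by Gaussian Lipschitz concentration that the scalarized value concentrates around $\Lstar_\alpha$, and then use strong convexity of the scalarized inner objective at the saddle $(\taustar_\alpha,\betastar_\alpha)$ to produce the lower bound away from $\hbv_\alpha^f$. Using the identities $\tfrac12(a)_+^2 = \max_{\beta \geq 0}\{\beta a - \beta^2/2\}$ on the outer quadratic and $\sqrt{x^2+\sigma^2} = \min_{\tau \geq \sigma}\{\tau/2 + (x^2+\sigma^2)/(2\tau)\}$ applied with $x = \|\bv\|_2/\sqrt n$, Gordon's objective rewrites as $\cuL_\alpha(\bv) = \max_{\beta \geq 0}\min_{\tau \geq \sigma} f_n(\tau,\beta,\bv)$, jointly convex in $(\tau,\bv)$ and concave in $\beta$. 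Restricting $(\tau,\beta)$ to a bounded box containing $(\taustar_\alpha,\betastar_\alpha)$ (justified by the same growth estimates used to prove Lemma \ref{LemFixedPtBounds}) and invoking Sion's theorem swaps the min over $\bv$ with the max over $\beta$, yielding $\min_\bv \cuL_\alpha(\bv) = \max_{\beta}\min_{\tau} g_n(\tau,\beta)$ where $g_n(\tau,\beta) := \min_\bv f_n(\tau,\beta,\bv)$ coincides with $\psi_\alpha(\tau,\beta)$ up to (i) substituting $\rho := \|\bh\|_2/\sqrt n$ for $1$, and (ii) removing the expectation over $\bg$ in the inner minimization.

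Next, I would prove uniform concentration $\sup_{(\tau,\beta)}|g_n(\tau,\beta) - \psi_\alpha(\tau,\beta)| \leq \gamma\epsilon^2/2$ on the box. For each fixed $(\tau,\beta)$ the inner minimization is $\beta\rho/\tau$-strongly convex in $\bv$, so on the event $\{\rho \geq 1/2\}$ (whose complement has Gaussian-small probability) the map $(\bg,\bh) \mapsto g_n(\tau,\beta)$ is $O(1/\sqrt n)$-Lipschitz; Gaussian Lipschitz concentration then gives pointwise deviations of size $\gamma\epsilon^2/2$ with probability at least $1 - Ce^{-cn\epsilon^4}$. A standard $\epsilon^2$-net argument in $(\tau,\beta)$ combined with deterministic Lipschitzness of both $g_n$ and $\psi_\alpha$ in $(\tau,\beta)$ on the box upgrades this to the uniform bound. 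Since Lemma \ref{LemMinmax} identifies $(\taustar_\alpha,\betastar_\alpha)$ as the unique interior saddle of $\psi_\alpha$ under assumption $\textsf{A2}_\alpha$, uniform concentration transfers directly to the max-min values, giving the second inequality in \eqref{EqnGordonObjControl}.

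For the first inequality, I would freeze $(\tau,\beta) = (\taustar_\alpha,\betastar_\alpha)$ and use $\cuL_\alpha(\bv) \geq f_n(\taustar_\alpha,\betastar_\alpha,\bv)$, whose right-hand side is $c_0$-strongly convex in $\bv$ (in the $\|\cdot\|_2^2/p$ normalization) for some $c_0 > 0$ depending only on $\cuPmodel$, $\cuPfixpt$, $\delta$ on the event $\{\rho \geq 1/2\}$. Denoting by $\bv_n^\star$ its minimizer, I would observe that at $\rho = 1$ the first-order conditions for $\bv_n^\star$ coincide, pointwise in $\bg$, with the KKT conditions identified in the proof of Lemma \ref{LemAlphaFixedPtSoln} for the functional problem $\tcuE_\alpha(\cdot;\zetastar_\alpha,\taustar_\alpha)$, so $\bv_n^\star|_{\rho=1} = \hbv_\alpha^f$; a standard sensitivity estimate for strongly convex minimizers then gives $\|\bv_n^\star - \hbv_\alpha^f\|_2/\sqrt p \leq C|\rho - 1|$, which is at most $\epsilon/4$ with probability at least $1 - Ce^{-cn\epsilon^2}$. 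Combining strong convexity with the bound $\|\bv - \bv_n^\star\|_2/\sqrt p \geq \epsilon/4$ for $\bv \in \mathsf{B}_2^c(\hbv_\alpha^f;\epsilon/2)$ and the concentration bound $g_n(\taustar_\alpha,\betastar_\alpha) \geq \Lstar_\alpha - \gamma\epsilon^2/2$ already established, one finds $\cuL_\alpha(\bv) \geq \Lstar_\alpha + c_0\epsilon^2/16 - \gamma\epsilon^2/2$, and taking $\gamma$ a small multiple of $c_0$ delivers the required bound.

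The main obstacle is ensuring that all constants are uniform over the instances allowed by assumptions \textsf{A1} and $\textsf{A2}_\alpha$. Specifically, the strong-convexity modulus $c_0$ and the sensitivity constant in the comparison $\|\bv_n^\star - \hbv_\alpha^f\|_2/\sqrt p \leq C|\rho - 1|$ both depend on having $\zetastar_\alpha$ bounded below and $\taustar_\alpha$ bounded above, which are precisely the content of assumption $\textsf{A2}_\alpha$; the Lipschitzness estimates needed for the net in Step 2 rely additionally on the boundedness of $\|\hbv_\alpha^f\|_2/\sqrt p$ and the non-expansiveness of the underlying proximal operator, which follow from the bounded-eigenvalue hypothesis in \textsf{A1} together with the structural results of Sections \ref{SecFixedPtSoln} and \ref{SecFixedPtBounds}.
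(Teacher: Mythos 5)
Your proposal is a genuinely different route (scalarize over both $\tau$ and $\beta$, concentrate uniformly over a $(\tau,\beta)$-box, then use strong convexity of a frozen-parameter scalarized objective), but the key step for the first inequality contains a real error.

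The claimed inequality $\cuL_\alpha(\bv) \geq f_n(\taustar_\alpha,\betastar_\alpha,\bv)$ is false. Since $\cuL_\alpha(\bv) = \max_{\beta}\min_{\tau} f_n(\tau,\beta,\bv)$, fixing $\beta = \betastar_\alpha$ does give a lower bound, $\cuL_\alpha(\bv) \geq \min_{\tau} f_n(\tau,\betastar_\alpha,\bv)$; but then freezing $\tau = \taustar_\alpha$ removes a minimum and therefore increases the value, so $f_n(\taustar_\alpha,\betastar_\alpha,\bv) \geq \min_\tau f_n(\tau,\betastar_\alpha,\bv)$, which is the wrong direction. Quantitatively, with $\tau^\star(\bv) := \sqrt{\|\bv\|_2^2/n + \sigma^2}$ one has
\begin{equation}
	\min_\tau f_n(\tau,\betastar_\alpha,\bv) = f_n(\taustar_\alpha,\betastar_\alpha,\bv) - \frac{\betastar_\alpha\rho}{2\taustar_\alpha}\bigl(\tau^\star(\bv) - \taustar_\alpha\bigr)^2\,,
\end{equation}
and for $\bv$ at distance $\epsilon\sqrt p$ from $\hbv_\alpha^f$ the subtracted term is of order $\epsilon^2$ — precisely the same order as the strong-convexity gain you seek. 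So the replacement of $\cuL_\alpha(\bv)$ by $f_n(\taustar_\alpha,\betastar_\alpha,\bv)$ does not produce a usable lower bound. The correct lower bound keeps the minimum over $\tau$: $\cuL_\alpha(\bv) \geq \ell_\alpha(\bv,\betastar_\alpha) := \min_\tau f_n(\tau,\betastar_\alpha,\bv)$. But $\ell_\alpha(\cdot,\betastar_\alpha)$ is only \emph{locally} strongly convex — the Hessian of $\bv \mapsto \sqrt{\|\bv\|_2^2/n+\sigma^2}$ has eigenvalue $\sigma^2/(n(\|\bv\|_2^2/n+\sigma^2)^{3/2})$ in the direction of $\bv$, which degrades as $\|\bv\|_2/\sqrt n$ grows — and its minimizer at $\rho=1$ satisfies the self-consistent first-order condition with $\tau = \tau^\star(\bv)$, so it is not exactly $\hbv_\alpha^f$. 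The paper's proof navigates exactly these two points: it establishes local $a/n$-strong convexity of $\cuL_\alpha$ itself near $\hbv_\alpha^f$ on the events $\cuA_1\cap\cuA_2$ (controlling $\|\bg\|_2$, $\|\bh\|_2$, $\|\hbv_\alpha^f\|_2^2/n$ and $\bg^\top\hbv_\alpha^f/n$) by importing the argument of \cite[Theorem B.1]{miolane2018distribution}, and then uses near-optimality of $\hbv_\alpha^f$ (events $\cuA_3,\cuA_4$) rather than any claim that $\hbv_\alpha^f$ exactly minimizes a fixed-$\tau$ surrogate.

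For the second inequality in \eqref{EqnGordonObjControl}, your Sion-plus-$(\tau,\beta)$-net strategy is a reasonable alternative to the paper's (which fixes $\beta=\betastar_\alpha$, replaces $\|\bh\|_2/\sqrt n$ by one via a deterministic bound on the event $\{|\rho-1|\leq\epsilon\}$, and runs a one-dimensional net over $\tau$ only). The main technical caveat there is that you claim Lipschitzness of $(\bg,\bh)\mapsto g_n$ only on the event $\{\rho \geq 1/2\}$, and Gaussian Lipschitz concentration does not apply directly to a map that is Lipschitz merely on a subset; you would need either a Lipschitz extension or, as the paper does, to isolate the dependence on $\bh$ into the one-dimensional statistic $\rho$ and treat it separately.
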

	\noindent We prove Lemma \ref{LemGordonProbBounds} at the end of this section.

	With $C,c,c',\gamma > 0$ as in Lemma \ref{LemGordonProbBounds},
	we have for $\epsilon < c'$
	\begin{align}
		&\mprob\left(
				\min_{\bv\in D_{\epsilon/2}} \cuC_\alpha(\bv) \leq \min_{\bv \in \reals^p} \cuC_\alpha(\bv) + \gamma\epsilon^2
			\right)
		\nonumber\\
		&\quad\leq 
			\mprob \left(
				\min_{\bv\in D_{\epsilon/2}} \cuC_\alpha(\bv) \leq \min_{\bv \in \reals^p} \cuC_\alpha(\bv) + \gamma\epsilon^2 
				~\text{ and }~ 
				\min_{\bv \in \reals^p} \cuC_\alpha(\bv) \leq \Lstar_\alpha + \gamma\epsilon^2
			\right)
			+
			\mprob\left( 
				\min_{\bv \in \reals^p} \cuC_\alpha(\bv) > \Lstar_\alpha + \gamma\epsilon^2 
			\right)
		\nonumber\\
		&\quad\leq\mprob\left( 
				\min_{\bv\in D_{\epsilon/2}} \cuC_\alpha(\bv) \leq \Lstar_\alpha + 2\gamma\epsilon^2
			\right)
			+
			\mprob\left(
				\min_{\bv \in \reals^p} \cuC_\alpha(\bv) > \Lstar_\alpha + \gamma\epsilon^2 
			\right)
		\nonumber\\
		&\quad\leq 2\mprob\left( 
				\min_{\bv\in D_{\epsilon/2}} \Loss_{\alpha}(\bv) \leq \Lstar_\alpha + 2\gamma\epsilon^2
			\right)
			+
			2\mprob\left( 
				\min_{\bv \in \reals^p} \Loss_{\alpha}(\bv) > \Lstar_\alpha + \gamma\epsilon^2 
			\right)
		\nonumber\\
		&\quad 
			\leq 2 \mprob \left(
				\hbv_\alpha^f \not \in D
			\right)
			+
			2\mprob\left( 
				\min_{\bv\in \mathsf{B}_2^c(\hbv_\alpha^f;\epsilon/2)} \Loss_{\alpha}(\bv) \leq \Lstar_\alpha + 2\gamma\epsilon^2
			\right)
			+
			2\mprob\left( 
				\min_{\bv \in \reals^p} \Loss_{\alpha}(\bv) > \Lstar_\alpha + \gamma\epsilon^2 
			\right)
		\nonumber\\
		&\quad \leq 2 \mprob \left(
				\hbv_\alpha^f \not \in D
			\right)
			+
			\frac{4C}{\epsilon^2} e^{-cn\epsilon^4}\,,
		\label{EqnMinimizationBound}
	\end{align}
	where the third-to-last inequality holds by Gordon's Lemma (Lemma~\ref{LemGordon});
	the second to last inequality holds because either $\hbv_\alpha^f \not \in D$ or $D_{\epsilon/2} \subset \mathsf{B}_2^c(\hbv_\alpha^f;\epsilon/2)$;
	and the last inequality holds by Lemma \ref{LemGordonProbBounds}.

	Define $\tilde\phi\left(\bv\right) := \kappamin^{1/2}\phi\left(\thetastar + \mySigma^{-1/2}\bv\right)$ (recall that $\thetastar$ is deterministic), with $\phi$ as in the statement of Theorem \ref{ThmControlAlphaSmoothEstimate}.
	Define the set
	\begin{align}
	\label{EqnDefnD}
		D \defn \left\{
			\bv \in \reals^p
			\Bigm| 
			\Big|
				\tilde\phi\big(\bv\big)
				-
				\E\Big[\tilde\phi\big(\hbv_\alpha^f\big)\Big]
			\Big| \leq \frac{\epsilon}{2}
		  \right\}\,.
	\end{align}
	By Eq.~\eqref{EqnVf} and recalling that $\betastar_{\alpha} = \zetastar_{\alpha}\taustar_{\alpha}$, we have
	\begin{equation}\label{EqnVLamProxForm}
		\hbv_\alpha^f = \arg\min_{\bv \in \reals^p} \left\{\frac{\betastar_\alpha}{2\taustar_\alpha}\Big\|\bv - \frac{\taustar_\alpha}{\sqrt{n}} \bg\Big\|_2^2 + \frac{\lambda}{\sqrt{n}} \|\thetastar + \mySigma^{-1/2} \bv\|_1\right\}.
	\end{equation}
	Thus, $\hbv_\alpha^f$ as a function of $\taustar_\alpha\bg/\sqrt{n}$ is a proximal operator, 
	whence $\hbv_\alpha^f$ is a $\tau_\alpha^*/\sqrt{n}$-Lipschitz function of $\bg$ \cite[pg.~131]{Parikh2013ProximalAlgorithms}.
	Gaussian concentration of Lipschitz functions \cite[Theorem 5.6]{boucheron2013concentration} guarantees that
	\begin{equation}
	\label{EqnVLamConcentration}
		\mprob\left(
				\hbv_\alpha^f \not \in D
			\right) 
			\leq 
			2\exp\left(
				-\frac{n\epsilon^2}{8{\taustar_\alpha}^2}
			\right) 
			\leq 
			2\exp\left(
				-\frac{n\epsilon^2}{8\taumax^2\delta}
			\right)
		\,.
	\end{equation}
	Combined with Eq.~\eqref{EqnMinimizationBound} and appropriately adjusting constants, 
	for $\epsilon < c'$
	\begin{equation}
		\mprob\left(
				\min_{\bv\in D_{\epsilon/2}} \cuC_\alpha(\bv) \leq \min_{\bv \in \reals^p} \cuC_\alpha(\bv) + \gamma\epsilon^2
			\right)
		\leq \frac{C}{\epsilon^2}e^{-cn\epsilon^4}\,.
	\end{equation}
	Because $\cuC_\alpha$ is a reparameterization of the $\alpha$-smoothed Lasso objective, 
	the preceding display is equivalent to
	\begin{equation}
		\mprob \left(
			\exists \mytheta \in \reals^{\usedim}, 
			\Big|\phi\big(\mytheta\big) 
			- \E\Big[\phi\big(\thetahat_\alpha^f\big)\Big]\Big| > 
			\kappamin^{-1/2}\epsilon
			\text{ and } 
			\Risk_\alpha(\mytheta) 
			\leq 
			\min_{\mytheta\in \reals^\usedim} \Risk_\alpha(
			\mytheta) + \gamma \epsilon^2
			\right) 
			\leq
			\frac{4C}{\epsilon^2} e^{-c\numobs \epsilon^4}.
	\end{equation}
	Theorem \ref{ThmControlAlphaSmoothEstimate} follows by a change of variables.	
\end{proof}

\begin{proof}[Proof of Lemma \ref{LemGordon}]
	Because $\sfM_\alpha(\thetastar + \mySigma^{-1/2}\bv) \rightarrow \infty$ as $\|\bv\|_2 \rightarrow \infty$,
	\begin{equation}
		\min_{\bv \in D} \cuC_\alpha(\bv) 
		= 
		\lim_{R\rightarrow \infty} \min_{ 
										\substack{ 	\bv \in D \\ 
													\|\bv\|_2 \leq R }
										} \cuC_\alpha(\bv)\,.
	\end{equation}
	Note that $\arg\max_{\bu \in \reals^n} C_\alpha(\bv,\bu) = \bX\mySigma^{-1/2}\bv - \sigma \bz$ has $\ell_2$-norm no larger than $\|\bX\mySigma^{-1/2}\|_{\mathrm{op}}\|\bv\|_2 + \sigma \|\bz\|_2$.
	In particular, for any realization of $\bX,\bz$,
	we have for $R$ sufficiently large that $\|\bv\|_2 \leq R$ implies $\|\arg\max_{\bu \in \reals^n} C_\alpha(\bv,\bu)\|_2 \leq R^2$.
	In particular, for any realization of $\bX,\bz$
	\begin{equation}
		\min_{ 
			\substack{ 	\bv \in D \\ 
						\|\bv\|_2 \leq R }
			} \cuC_\alpha(\bv)
		=
		\min_{ 
			\substack{ 	\bv \in D \\ 
						\|\bv\|_2 \leq R }
			} \max_{\|\bu\|_2\leq R^2} C_\alpha(\bv,\bu)\;\;\;\;
		\text{for $R$ sufficiently large,}
	\end{equation}
	where ``sufficiently large'' can depend on $\bX,\bz$.
	Thus, almost surely
	\begin{equation}
		\min_{ \bv \in D } \cuC_\alpha(\bv)
		=
		\lim_{R \rightarrow \infty} 
		\min_{ 
			\substack{ 	\bv \in D \\ 
						\|\bv\|_2 \leq R }
			} \max_{\|\bu\|_2\leq R^2} C_\alpha(\bv,\bu)\,.
	\end{equation}
	An equivalent argument shows that almost surely
	\begin{equation}
		\min_{ \bv \in D } \cuL_\alpha(\bv)
		=
		\lim_{R \rightarrow \infty} 
		\min_{ 
			\substack{ 	\bv \in D \\ 
						\|\bv\|_2 \leq R }
			} \max_{\|\bu\|_2\leq R^2} L_\alpha(\bv,\bu)\,.
	\end{equation}
	Because $\sqrt{n}\bX\mySigma^{-1/2}$ has iid standard Gaussian entries, 
	by Gordon's min-max lemma (see, e.g., \cite[Corollary G.1]{miolane2018distribution}),
	for any finite $R$ and closed $D$
	\begin{equation}
		\mprob\left(
			\min_{ 
			\substack{ 	\bv \in D \\ 
						\|\bv\|_2 \leq R }
			} \max_{\|\bu\|_2\leq R^2} C_\alpha(\bv,\bu)
			< t
		\right)
		\leq 
		2\mprob\left(
				\min_{ 
				\substack{ 	\bv \in D \\ 
							\|\bv\|_2 \leq R }
				} \max_{\|\bu\|_2\leq R^2} L_\alpha(\bv,\bu)
				< t
			\right)\,,
	\end{equation}
	and if $D$ is also convex
	\begin{equation}
		\mprob\left(
			\min_{ 
			\substack{ 	\bv \in D \\ 
						\|\bv\|_2 \leq R }
			} \max_{\|\bu\|_2\leq R^2} C_\alpha(\bv,\bu)
			> t
		\right)
		\leq 
		2\mprob\left(
				\min_{ 
				\substack{ 	\bv \in D \\ 
							\|\bv\|_2 \leq R }
				} \max_{\|\bu\|_2\leq R^2} L_\alpha(\bv,\bu)
				> t
			\right)\,.
	\end{equation}
	Although \cite[Corollary G.1]{miolane2018distribution}) states Gordon's lemma with weak inequalities inside the probabilities, 
	strict inequalities follow by applying \cite[Corollary G.1]{miolane2018distribution}) with $t' \uparrow t$and $t'\downarrow t$ in the previous two displays respectively.
	Taking $R \rightarrow \infty$, we conclude that the previous two displays hold without norm bounds on for $R$ sufficiently large $\bv$ and $\bu$.
	The strict inequalities can be made weak by applying the result with $t' \downarrow t$ and $t' \uparrow t$ respectively.
\end{proof}

\begin{proof}[Proof of Lemma \ref{LemGordonProbBounds}]
	The proof follows almost exactly the proof of Theorem B.1 in the supplementary material of \cite{miolane2018distribution}.
	For convenience and completeness, we walk the reader through the main steps, 
	but omit some steps which can be taken verbatim from \cite{miolane2018distribution}.

	Recall by Lemma \ref{LemMinmax} that the max-min value of \eqref{EqnMinimaxPsi}
	is achieved at $\taustar_\alpha$, $\betastar_\alpha$.
	We have $\betamin \leq \betastar_\alpha \leq \betamax$, 
	where $\betamin := \sigmamin\zetamin$ and $\betamax := \taumax$.
	Let $t = \min(\betamin/16,\sigmamin)$. 
	Define events
	\begin{equation}
	\begin{gathered}
          \Aevent_1
			\defn \left\{
				~\|\bg\|_2 \leq 2\sqrt{\usedim} + 2 \sqrt{n},
				~\left(1 - \frac{\betamin}{8\taumax}\right) \leq \frac{\|\bh\|_2}{\sqrt n} \leq 2
			\right\}\,,
		\\
		\Aevent_2 
			\defn \left\{
				\left|\ltwo{\hbv_\alpha^f}^2 - \Exs\left[\ltwo{\hbv_\alpha^f}^2\right]\right|\leq t^2,
				~\frac{\bg^\top \hbv_\alpha^f}{\sqrt{n}} \leq   \Exs\left[\frac{\bg^\top \hbv_\alpha^f}{\sqrt{n}}\right] + t\right\}
		\,.
	\end{gathered}
	\end{equation}
	There exist $r,a > 0$, 
	depending only on $\betamin,\betamax,\sigmamin,\taumax$ 
	such that on the event $\Aevent_1 \cap \Aevent_2$
	the objective $\cuL_\alpha$ is $a$-strongly convex on 
	$\mathsf{B}_2(\hbv_\alpha^f;r)$.
	This follows verbatim from the proof of Theorem B.1 in the supplementary material of \cite{miolane2018distribution} up to the fifth display on pg.~20,
	except for one small change:
	because we bound $\| \bg \|_2$ by $2\sqrt{p} + 2 \sqrt{n}$ rather than $2 \sqrt{p}$ as they do,
	the function $\bv \mapsto \sqrt{\| \bv \|_2^2 + \sigma^2} \| \bh \|_2 / \sqrt{n} - \bg^\top \bv / \sqrt{n}$ is $2 \sqrt{p/n} + 4$-Lipschitz rather than $2 \sqrt{p/n} + 2$-Lipschitz.
	This impacts the proof only by requiring adjusted constants.
	(We make this change to achieve a probability that decays exponentially in $n$ when $n/p \rightarrow \infty$. Note that \cite{miolane2018distribution} use a different but completely equivalent normalization to ours).

	Let $R = \taumax + r$, $\gamma = a/(96\Deltamin)$, $c' = \sqrt{ar^2/(24\gamma)}$, and $\epsilon \in (0, c')$.
	Define events 
	\begin{equation}
	\begin{gathered}
		\Aevent_3 \defn \left\{ \min_{\|\bv\|_2 \leq R} \cuL_\alpha(\bv) \geq \Lstar_\alpha - \gamma\epsilon^2 \right\}\,,\\
		\Aevent_4 \defn \left\{  \cuL_\alpha(\hbv_\alpha^f ) \leq \Lstar_\alpha + \gamma\epsilon^2 \right\}\,.
	\end{gathered}
	\end{equation}
	On event $\Aevent_3 \cap \Aevent_4$, 
	\begin{equation}
		\cuL_\alpha(\hbv_\alpha^f) \leq \min_{\|\bv\|_2 \leq R} \cuL_\alpha(\bv) + 2\gamma\epsilon^2 
			< 
			\min_{\|\bv\|_2 \leq R} \cuL_\alpha(\bv) + 3\gamma\epsilon^2\,.
	\end{equation}
	Because $3\gamma\epsilon^2 < a r^2 / 8$, 
	the previous display corresponds to (B.7) of the supplementary material of \cite{miolane2018distribution}.
	Thus, by Lemma B.1 of the supplementary material of \cite{miolane2018distribution},
	we have that on $\bigcap_{i=1}^4 \Aevent_i$,
	\begin{equation}
		\min_{\bv \in \mathsf{B}_2^c(\hbv_\alpha^f;\epsilon/2)} \cuL_\alpha(\bv) 
		= 
		\min_{\bv \in \mathsf{B}_2^c(\hbv_\alpha^f;\sqrt{24\gamma \epsilon^2/a})} \cuL_\alpha(\bv) \geq \min_{\bv \in \reals^p} \cuL_\alpha(\bv) + 3\gamma\epsilon^2\,.
	\end{equation}
	and
	\begin{equation}
		\min_{\bv \in \reals^p} \cuL_\alpha(\bv) = \min_{\|\bv\|_2 \leq R} \cuL_\alpha(\bv)\,.
	\end{equation}
	We conclude that on $\bigcap_{i=1}^4 \Aevent_i$,
	\begin{equation}
		\min_{\bv \in \mathsf{B}_2^c(\hbv_\alpha^f; \epsilon/2)} \cuL_\alpha(\bv) 
		\geq 
		\min_{\bv \in \reals^p} \cuL_\alpha(\bv) + 3\gamma\epsilon^2 
		= 
		\min_{\|\bv\|_2 \leq R} \cuL_\alpha(\bv) + 3\gamma\epsilon^2 
		\geq 
		\Lstar_\alpha + 2\gamma\epsilon^2\,,
	\end{equation}
	and
	\begin{equation}
		\Lstar_\alpha + \gamma\epsilon^2 
		\geq 
		\cuL_\alpha(\hbv_\alpha^f) 
		\geq 
		\min_{\bv \in \reals^p} \cuL_\alpha (\bv) 
		= 
		\min_{\|\bv\|_2 \leq \sqrt{n} R} \cuL_\alpha (\bv) 
		\geq 
		\Lstar_\alpha - \gamma\epsilon^2\,.
	\end{equation}
	Lemma \ref{LemGordonProbBounds} follows as soon as we show there exists $C,c,c' > 0$ depending only on $\cuPmodel$ and $\alphamax$ such that for $\epsilon < c'$ we have $\mprob(\cap_{i=1}^4 \Aevent_i) \geq 1 - \frac{C}{\epsilon^2}\exp(-cn\epsilon^4)$.

	Now to complete the proof of Lemma \ref{LemGordonProbBounds}, it is only left for us to control the probability of each $\Aevent_i$ respectively. 
	
	\subsubsection*{Event $\cuA_1$ occurs with high probability depending on $\betamin,\taumax,\delta$}
	Because $\bg \mapsto \|\bg\|_2$
	and $\bh \mapsto \|\bh\|_2$ are Lipschitz functions of standard Gaussian random vectors,
	there exist $C,c$ depending only on $\betamin,\taumax,\delta$ such that
	\begin{equation}\label{EqnA1Bound}
		\mprob(\Aevent_1) \geq 1 - C\exp(-cn)\,.
	\end{equation}

	\subsubsection*{Event $\cuA_2$ occurs with high probability depending on $\sigmamin,\betamin,\taumax$.}
	The function $\bg \mapsto \|\hbv_\alpha^f\|_2$ is $n^{-1/2}\taumax$-Lipschitz because $\hbv_\alpha^f$ is a proximal operator applied to $\taustar_\alpha \bg / \sqrt{n}$ by Eq.~\eqref{EqnVLamProxForm}
	\cite[pg.~131]{Parikh2013ProximalAlgorithms}.
	By Gaussian concentration of Lipschitz functions, $n^{-1/2} \|\hbv_\alpha^f\|_2$ is $\taumax^2/n$-sub-Gaussian.
	By the fixed point equations \eqref{EqnAlphaFixedPt1}, 
	we bound its expectation $\E[\|\hbv_\alpha^f\|_2] \leq \E[\|\hbv_\alpha^f\|^2]^{1/2} \leq \taumax $.
	Combining its sub-Gaussianity and bounded expectation, we conclude by Proposition G.5 of \cite{miolane2018distribution} that 
	\begin{equation}
		\text{$\|\hbv_\alpha^f\|_2^2$ is $(C/n,C/n)$-sub-Gamma for some $C$ depending only on $\taumax$.}
	\end{equation}

	Write 
	\begin{align*}
	    \taumax\bg^\top \hbv_\alpha^f/\sqrt{n} = (\|\hbv_\alpha^f - \taumax\bg / \sqrt{n}\|_2^2 - \|\hbv_\alpha^f\|_2^2 - \taumax^2\|\bg\|_2^2/n)/2\,. 
	\end{align*}
	Because $\bg \mapsto \hbv_\alpha^f - \taumax\bg/\sqrt{n}$ is $2\taumax/\sqrt{n}$-Lipschitz, 
	the first term is $(C/n,C/n)$-sub-Gamma for some $C$ depending only on $\taumax$.
	We conclude\footnote{We remark that the argument establishing that $\|\hbv_\alpha^f\|_2^2$ is sub-Gamma is exactly as it occurs in the proof of Lemma F.1 of the supplementary material of \cite{miolane2018distribution}.
	The argument establishing $\|\hbv_\alpha^f\|_2^2$ requires a slightly modified argument to that appearing in the proof of Lemma F.1 in \cite{miolane2018distribution} due to the presense of the matrix $\mySigma$.}
	\begin{equation}
		\text{$\taumax\bg^\top \hbv_\alpha^f/\sqrt{n}$ is $(C/n,C/n)$-sub-Gamma for some $C$ depending only on $\taumax$.}
	\end{equation}
	By standard bounds on the tails of sub-Gamma random variables,
	we deduce that there exist $C,c > 0$ depending only on $\taumax$, such that 
	\begin{equation}\label{EqnNormAndWidthBound}
		\begin{gathered}
                  \mprob\left(\Big|\|\hbv_\alpha^f\|_2^2 - \E\left[\|\hbv_\alpha^f\|_2^2\right] \Big|> \epsilon \right) \leq C
                  \exp\big(-cn(\epsilon^2\vee\epsilon)\big)\,,\\
			\mprob\left(\Big|\frac{\bg^\top\hbv_\alpha^f}{\sqrt{n}} - \E\left[\frac{\bg^\top\hbv_\alpha^f}{\sqrt{n}}\right]\Big| > \epsilon \right) \leq C\exp\big(-cn(\epsilon^2\vee\epsilon)\big)\,.
		\end{gathered}
	\end{equation}
	Because $t$ depends only on $\sigmamin,\betamin$,
	there exists $C,c > 0$ depending only on $\sigmamin,\betamin,\taumax$ such that
	\begin{equation}\label{EqnA2Bound}
		\mprob(\Aevent_2) \geq 1 - C\exp(-cn)\,.
	\end{equation}

	\subsubsection*{Event $\cuA_3$ occurs with high probability depending on $\cuPmodel$}
	Our control on the probability of $\cuA_3$ closely follows the proof of Proposition B.2 in the supplementary material of \cite{miolane2018distribution}.
	Consider for any $\epsilon > 0$ the event
	\begin{equation}
	\label{EqnGordonNoiseConc}
		\cuA_3^{(1)}
		:= 
		\left\{\left|\frac{\|\bh\|_2}{\sqrt n} - 1\right| \leq \epsilon\right\} \,.
	\end{equation}
	By Gaussian concentration of Lipschitz functions, 
	$\mprob(\cuA_3^{(1)}) \geq Ce^{-cn\epsilon^2}$ for all $\epsilon \geq 0$.
	
	By maximizing over over $\bu$ for which $\|\bu\|/\sqrt{n} = \beta$ in Eq.~\eqref{EqnGLoss}, 
	we compute
	\begin{align*}
		\Loss_\alpha(\bv) 
		&= 
			\max_{\beta \geq 0} 
			\left(\sqrt{\|\bv\|_2^2 + \sigma^2} \frac{\|\bh\|_2}{\sqrt n} - \frac{\bg^\top \bv}{\sqrt{n}} \right) \beta 
			- \frac12 \beta^2 
			+ \frac{\lambda}{\sqrt{n}}(\sfM_\alpha(\thetastar + \mySigma^{-1/2}\bv) - \lone{\thetastar})
		\\
		&=: 
			\max_{\beta \ge 0} \ell_\alpha(\bv,\beta)\,.
	\end{align*}
	Consider the slightly modified objective
	\begin{align*}
		\ell_\alpha^0(\bv,\beta) 
			:= 
			\left(\sqrt{\ltwo{\bv}^2 + \sigma^2} 
			- \frac{\bg^\top \bv}{\sqrt{n}} \right)\beta 
			- \frac{1}{2} \beta^2 
			+ \frac{\lambda}{\sqrt{n}}(\sfM_\alpha(\thetastar + \mySigma^{-1/2}\bv) - \lone{\thetastar})\,.
	\end{align*}
	On the event \eqref{EqnGordonNoiseConc}, for every $\|\bv\|_2 \leq  R$ and $\beta\in[0,\betamax]$,
	\begin{align*}
		|\ell_\alpha(\bv,\beta) - \ell_\alpha^0(\bv,\beta)| 
		\leq 
		\betamax (R^2 + \sigma^2)^{1/2} \epsilon\,.
	\end{align*}
	Thus, on the event \eqref{EqnGordonNoiseConc},
	\begin{align}
		\min_{\|\bv\|_2 \leq R} \Loss_\alpha(\bv) 
			&= 
			\min_{\|\bv\|_2 \leq R} \max_{\beta\geq 0} \ell_\alpha(\bv,\beta) 
			\geq 
			\min_{\|\bv\|_2 \leq R}\ell_\alpha(\bv,\betastar_\alpha) 
		\\
		&\geq 
			\min_{\|\bv\|_2 \leq R} \ell_\alpha^0(\bv,\betastar_\alpha) - \betamax(R^2 + \sigma^2)^{1/2}\epsilon\,.
		\label{EqEllToEll0}
	\end{align}
	For $\|\bv\|_2 \leq R$, 
	\begin{align*}
		\sqrt{\ltwo{\bv}^2 + \sigma^2} 
		= 
		\min_{\tau \in [\sigma,\sqrt{\sigma^2 + R^2}] }
		\left\{\frac{\|\bv\|_2^2 + \sigma^2}{2\tau} + \frac{\tau}{2}\right\}.
	\end{align*}
	Thus, we obtain that 
	\begin{align*}
		\ell_\alpha^0(\bv,\betastar_\alpha) 
			= \min_{\tau \in [\sigma,\sqrt{\sigma^2 + R^2}]} 
			\left\{ 
				\left(
					\frac{\ltwo{\bv}^2 + \sigma^2}{2\tau} + \frac{\tau}{2}
				\right)\betastar_\alpha
		 		- \frac{\bg^\top \bv}{\sqrt{n}} \betastar_\alpha 
		 		- \frac{1}{2} {\betastar_\alpha}^2 
		 		+ \frac\lambda{\sqrt{n}}(\sfM_\alpha(\thetastar + \mySigma^{-1/2}\bm{\bv}) - \lone{\thetastar})
		 	\right\}\,,
	\end{align*}
	which further implies that 
	\begin{align*}
		&\min_{\|\bv\|_2 \leq R} \ell_\alpha^0(\bv,\betastar_\alpha)
		\\
		&= 
		\min_{\tau \in [\sigma,\sqrt{\sigma^2 + R^2}]} 
			\left\{
				\frac{\betastar_\alpha}{2}
				\left(\frac{\sigma^2}{\tau} + \tau \right)
				- \frac12 {\betastar_\alpha}^2 
				+ \min_{\|\bv\|_2 \leq R}
				\left\{ 
					\frac{\betastar_\alpha}{2\tau}\ltwo{\bv}^2 
					- \betastar_\alpha \frac{\bg^\top \bv}{\sqrt{n}}
					+ \frac{\lambda}{\sqrt{n}}(\sfM_\alpha(\thetastar + \mySigma^{-1/2}\bm{\bv}) - \lone{\thetastar}) 
				\right\} 
			\right\}
		\\
	    &=:
	    	\min_{\tau \in [\sigma,\sqrt{\sigma^2 + R^2}]} F(\tau,\bg)\,.
	\end{align*}
	We claim that $F(\tau,\bg)$ concentrates around its expectation.
	In order to see this, first note that for every $\tau \in [\sigma,\sqrt{\sigma^2 + R^2}]$, the function 
	\begin{align*}
		\bg 
			\mapsto 
			\min_{\|\bv\|_2 \leq R}
			\left\{
				\frac{\betastar_\alpha}{2\tau}\ltwo{\bv}^2 
				- \betastar_\alpha \frac{\bg^\top \bv}{\sqrt{n}}
				+ \frac{\lambda}{\sqrt{n}}(\sfM_\alpha(\thetastar + \mySigma^{-1/2}\bv) - \lone{\thetastar}) 
			\right\}
	\end{align*}
	is $\betamax R/\sqrt{\numobs}$-Lipschitz,
	whence $\bg \mapsto F(\tau, \bg)$ is as well.
	By Gaussian concentration of Lipschitz functions \cite[Theorem 5.6]{boucheron2013concentration},
	\begin{align*}
		\mprob(|F(\tau,\bg) -  \E[F(\tau,\bg)]| > \epsilon) \leq 2e^{-c\numobs\epsilon^2},
	\end{align*}
	for $c = 1/(2{\betamax}^2R^2)$.
	Because $\tau \geq \sigmamin > 0$, 
	for all $\bg$ the function $\tau \mapsto F(\tau,\bg)$ is 
	$(\betamax + \betamax R^2/(2\sigmamin^2) )$-Lipschitz on $[\sigma,\sqrt{\sigma^2 + R^2}]$,
	so that by an $\epsilon$-net argument, 
	we conclude that for $C,c$ depending only on $R,\betamax,\sigmamin$ that
	\begin{align*}
		\mprob\left(\cuA_3^{(2)}\right) := \mprob\left(\sup_{\tau \in [\sigma,\sqrt{\sigma^2 + R^2}]} |F(\tau,\bg) -  \E[F(\tau,\bg)]| \leq \epsilon\right) 
			\geq 
			1 - \frac{C}{\epsilon}e^{-c\numobs\epsilon^2}\,.
	\end{align*}
	On $\cuA_3^{(2)}$, 
	\begin{align}
		\min_{\|\bv\|_2 \leq R}
			\ell_\alpha^0(\bv,\betamax)
			= 
			\min_{\tau \in [\sigma,\sqrt{\sigma^2 + R^2}]} F(\tau,\bg) 
			\geq 
			\min_{\tau \in [\sigma,\sqrt{\sigma^2 + R^2}]}
		 	\E[F(\tau,\bg)] - \epsilon\,.
		\label{EqEll0ToF}
	\end{align}

	We compute 
	\begin{align} 
		&F(\tau,\bg) 
			= 
			\frac{\betastar_\alpha}{2}\left( \frac{\sigma^2}{\tau} + \tau \right) 
			- \frac12 {\betastar_\alpha}^2 
			+ 
	    	\min_{\|\bv\|_2 \leq R}\left\{ 
	    		\frac{\betastar_\alpha}{2\tau}\ltwo{\bv}^2 
	    		- \betastar_\alpha \frac{\bg^\top \bv}{\sqrt{n}}
	    		+ \frac{\lambda}{\sqrt{n}}(\sfM_\alpha(\thetastar + \mySigma^{-1/2}\bv) - \lone{\thetastar}) 
	    	\right\} 
	    \nonumber
	    \\ 
		&=  
			\frac{\betastar_\alpha}{2}\left(\frac{\sigma^2}{\tau} + \tau \right) 
			- \frac12 {\betastar_\alpha}^2 
			- \frac{\betastar_\alpha\tau\|\bg\|_2^2}{2n} 
			+ 
			\min_{\|\bv\|_2 \leq R}\left\{ 
				\frac{\betastar_\alpha}{2\tau}\Big\|\bv - \frac{\tau}{\sqrt{n}} \bg\Big\|_2^2 
				+ \frac{\lambda}{\sqrt{n}}(\sfM_\alpha(\thetastar + \mySigma^{-1/2}\bv) - \lone{\thetastar}) 
			\right\} 
		\nonumber
		\\ 
		&\geq 
			\frac{\betastar_\alpha}{2}\left(\frac{\sigma^2}{\tau} + \tau \right) 
			- \frac12 {\betastar_\alpha}^2 
			- \frac{\betastar_\alpha\tau\|\bg\|_2^2}{2n} 
			+ 
			\min_{\bv \in \reals^p}\left\{ 
				\frac{\betastar_\alpha}{2\tau}\Big\|\bv - \frac{\tau}{\sqrt{n}} \bg\Big\|_2^2 
				+ \frac{\lambda}{\sqrt{n}}(\sfM_\alpha(\thetastar + \mySigma^{-1/2}\bv) - \lone{\thetastar}) 
			\right\}\,. 
	\end{align}
	Taking expectations, we have for any $\tau \geq 0$
	\begin{align} 
		\E[F(\tau, \bg)] 
			=
			\psi_\alpha(\tau, \betastar_\alpha)\,,
		\label{EqFToPsi}
	\end{align}
	where $\psi_\alpha$ is defined in \eqref{EqnMinimaxPsi}.

	Combining 
	Eqs.~\eqref{EqEllToEll0}, \eqref{EqEll0ToF}, 
	and \eqref{EqFToPsi}, 
	we conclude that on $\cuA_3^{(1)} \cap \cuA_3^{(2)}$
	\begin{equation}
		\min_{\|\bv\|_2 \leq R} \Loss_\alpha(\bv) 
			\geq 
			\psi_\alpha(\taustar_\alpha,\betastar_\alpha) - K \epsilon\,,
	\end{equation}
	with $K =\betamax \sqrt{R^2 + \sigma^2}+1$.
	By a change of variables 
	and applying the probability bounds on $\cuA_3^{(1)}$ and $\cuA_3^{(2)}$ establishes
	\begin{equation}
		\mprob(\cuA_3) 
			\geq 
			\mprob(\cuA_3^{(1)} \cap \cuA_3^{(2)}) 
			\geq
			1 - \frac{C}{\epsilon^2}\exp\left(-cn\epsilon^4\right)\,,
	\end{equation}
	for some $C,c$ depending only on $R$, $\sigmamax$, $\betamax$, $\sigmamin$, and $\gamma$,
	and hence only on $\cuPmodel$.
	
	\subsubsection*{Event $\cuA_4$ occurs with high probability depending on $\cuPmodel$}
	There exist $C,c>0$ depending only on $\kappamin,\Deltamin,\taumax$ such that for $\epsilon > 0$,
	\begin{equation}\label{EqnPenaltyAndGaussBounds}
		\begin{gathered}
			\mprob\left(
					\frac{\sfM_\alpha(\thetastar + \mySigma^{-1/2}\hbv_\alpha^f)}{\sqrt{n}} - \E\left[\frac{\sfM_\alpha(\thetastar + \mySigma^{-1/2}\hbv_\alpha^f)}{\sqrt{n}}\right] > \epsilon 
				\right) 
				\leq 
				C\exp(-cn\epsilon^2)\,,
			\\
		\end{gathered}
	\end{equation}
	because 
	$\bg \mapsto \sfM_\alpha(\thetastar + \mySigma^{-1/2}\hbv_\alpha^f)/\sqrt{n}$ 
	is $\kappa_{\mathrm{min}}^{-1/2}\sqrt{p/n}\taumax/\sqrt{n}$-Lipschitz. 
	For any $x_0 \geq 0$, note that $x \mapsto x_+^2$ is locally Lipschitz in any ball around $x_0$ 
	with Lipschitz constant and ball radius depending only on an upper bound on $|x_0|$.
	Thus, considering $x_0 = \betastar_\alpha$, there exists $L,c' > 0$ depending only on $\betamax $ such that for $\epsilon < c'$,
	if $\cuA_2$ and $\cuA_3^{(1)}$ occur, 
	then 
	\begin{equation}
		\left|\left(\sqrt{\|\bv\|_2^2 + \sigma^2} \frac{\|\bh\|_2}{\sqrt n} 
		- \frac{\bg^\top \bv}{\sqrt{n}} \right)_+^2 - {\betastar_\alpha}^2\right| \leq L\epsilon\,.
	\end{equation}
	Using the probability bounds on $\cuA_2$ and $\cuA_3^{(1)}$ and absorbing $L$ into constants,
	we may find $C,c,c' > 0$ depending only on $\gamma,\sigmamin,\kappamin,\Deltamin,\betamin,\taumax$
	such that for $\epsilon < c'$,
	\begin{equation}
		\mprob(\Aevent_4) \geq 1 - C\exp(-cn\epsilon^4)\,.
	\end{equation}

	Lemma \ref{LemGordonProbBounds} is established now follows by combining the probability bounds on $\cuA_i$ for $1\leq i\leq 4$.
\end{proof}


\subsection{Uniform control over $\lambda$: proof of Theorem~\ref{ThmControlLassoEst}}
\label{SecUniform}

In this section, 
we complete the proof of Theorem \ref{ThmControlLassoEst} by showing that Theorem \ref{ThmControlAlphaSmoothEstimate} holds uniformly over $\lambda$, at a very small cost in the rate of concentration.

To make the dependence of the Lasso objective on $\lambda$ explicit, we write $\Risk^\lambda(\mytheta)$ for Eq.~\eqref{EqnOrgRisk}.
As before, $\CLoss^\lambda(\bv)$ is a re-parametrization of $\Risk^\lambda(\mytheta)$, namely  $\CLoss^\lambda(\bv)\defn \Risk^\lambda(\thetastar+
\mySigma^{-1/2}\bv) - \lambda \| \thetastar \|_1 / \sqrt{n}$. 
We also write $\bthetahat^\lambda$ for the minimizer of $\Risk^\lambda(\mytheta)$ and $\hbv^\lambda$ for the minimizer of $\CLoss^\lambda(\bv)$ (in particular   $\bthetahat^\lambda=\thetastar+\mySigma^{-1/2}\hbv^\lambda$).  
Finally, in order to expose the full dependency 
of $\eta$ on the regularization parameter $\lambda$, we redefine
\begin{gather}
\label{EqnSTH-BIS}
	\eta(\by^f,\zeta/\lambda) \defn \argmin_{\mytheta \in \reals^{\usedim}}
	\left\{\frac{\zeta/\lambda}{2}\ltwo{\by^f - \mySigma^{1/2}\mytheta}^2 + \frac{1}{\sqrt{n}}\lone{\mytheta} \right\}\,.
\end{gather}
Throughout this section we will use this definition instead of Eq.~\eqref{EqnSTH}. 
We denote the Lasso error vector in the fixed-design model at regularization $\lambda$ by 
\begin{equation}
	\hbv^{f,\lambda} 
	:=
	\mySigma^{1/2}(\eta(\mySigma^{1/2}\thetastar + \taustar \bg / \sqrt{n},\zetastar/\lambda) - \thetastar)\,,
\end{equation}
where implicitly $\taustar,\zetastar$ depend on $\lambda$ via the fixed-point Eqs.~\eqref{EqnEqn1} and \eqref{EqnEqn2}.
For simplicity, we write $\tilde\phi\big(\bv\big)=\phi\big(\thetastar+\mySigma^{-1/2 }\bv\big)$.
For $\lambda\in [\lambda_{\min},\lambda_{\max}]$, let
\begin{align}
		D_\epsilon^\lambda \defn \left\{
			\bv \in \reals^p
			\Bigm| 
			\Big|
				\tilde\phi\big(\bv\big)
				-
				\E\big[\tilde\phi\big(\hbv^{f,\lambda}\big)\big]
			\Big| > \epsilon
		  \right\}\,.
\end{align}
Define $\cuE^\lambda: L^2(\reals^p;\reals^p) \rightarrow \reals$ as $\cuE(\bv) = \cuE_0(\bv)$, where $\cuE_0$ is as in the proof of Lemma \ref{SecFixedPtSoln}, and we make dependence on $\lambda$ explicit in the notation.
In particular,
\begin{equation}
	\cuE^\lambda(\bv) 
			:= 
			\frac12\Big(
				\sqrt{\|\bv\|_{L^2}^2+\sigma^2 }\,-\frac{\langle \bg , \bv \rangle_{L^2}}{\sqrt{n}}
			\Big)^2_+
			+
			\frac{\lambda}{\sqrt{n}}
			\E\Big\{
				\|\thetastar+\mySigma^{-1/2}\bv(\bg)\|_1-\|\thetastar\|_1
			\Big\}.
\end{equation}
We emphasize that the argument $\bv$ is not a vector but a function $\bv: \reals^p \rightarrow \reals^p$.
Recall, by the proof of Lemma \ref{LemAlphaFixedPtSoln}, that $\hbv^{f,\lambda}$, viewed as a function of $\bg$ and thus a member of $L^2(\reals^p;\reals^p)$, is the unique minimizer of $\cuE^\lambda$.

The proof of Theorem \ref{ThmControlLassoEst} relies on two lemmas.
The first quantifies the sensitivity of the Lasso problem \eqref{EqnOrgRisk} to the regularization parameter $\lambda$. 
The second quantifies the continuity of the minimizer of the objective function $\cuE^\lambda$ in the regularization parameter $\lambda$.
\begin{lem}
\label{LemCostComparisonLambda}
 Under Assumption \ref{assump:1}, there exist constants $K,C_0,c_0 > 0$ depending only on $\cuPmodel$ such that  
\begin{align*}
	\mprob\left( \forall \lambda, \lambda' \in [\lambdamin, \lambdamax],
	~\CLoss^{\lambda'} (\hbv^\lambda) \leq \min_{\bv \in \real^p}\CLoss^{\lambda'} (\bv)
	+ K  |\lambda - \lambda'|\right)
	\geq 
	1 - C_0e^{-c_0n}.
\end{align*}
\end{lem}
\begin{lem}
\label{LemPopCostComparisonLambda}
	Under Assumption \ref{assump:1}, there exists constants $K,c' > 0$ depending only on $\cuPmodel$ such that for all $\lambda,\lambda' \in [\lambdamin,\lambdamax]$ with $|\lambda-\lambda'| < c'$ we have
	\begin{equation}
		\text{for all $\lambda,\lambda' \in [\lambdamin,\lambdamax]$,}
		\;\;\;\;\;\;
		\|\hbv^{f,\lambda'} - \hbv^{f,\lambda}\|_{L^2}  
		\leq 
		K|\lambda'-\lambda|^{1/2}\,,
	\end{equation}
	where in the previous display we view $\hbv^{f,\lambda}$, $\hbv^{f,\lambda'}$ as functions of the same random vector $\bg$ and thus as members of $L^2(\reals^p;\reals^p)$.
\end{lem}
\noindent The characterization of the Lasso solution involves only the distribution of $\hbv^{f,\lambda}$.
The preceding lemma implicitly constructs a coupling between these distributions defined for different values of $\lambda$ by using the same source of randomness $\bg$ in defining $\hbv^{f,\lambda}$ and $\hbv^{f,\lambda'}$.
We prove Lemma \ref{LemCostComparisonLambda} and \ref{LemPopCostComparisonLambda} in Sections~\ref{sec:pfLemmaCostComparisonLambda} and \ref{sec:pfPopCostComparisonLambda} respectively. 

To achieve a uniform control over $\lambda \in [\lambdamin, \lambdamax]$, 
we invoke an $\epsilon$-net argument.
Consider $\epsilon < c'$, where $c'$ is as in Theorem \ref{ThmControlAlphaSmoothEstimate}.
Let $C_0,c_0$ be as in Lemma \ref{LemCostComparisonLambda} and Lemma 
and let $K_1,K_2$ be the $K$'s which appear in Lemma \ref{LemCostComparisonLambda} and Lemma \ref{LemPopCostComparisonLambda}, respectively.
Set $\epsilon' = \min\left\{\gamma\epsilon^2/K_1,\epsilon\kappamin^{1/2}/K_2\right\}$.
Define  $\lambda_{i} = \lambdamin + i \epsilon'$ for $i = 1,\ldots, k$, $k\defn \lfloor \frac{\lambdamin - \lambdamax}{\epsilon'} \rfloor$ and $\lambda_{k+1} = \lambdamax$. 

By a union bound over $\lambda_{i}$,
Theorem~\ref{ThmControlAlphaSmoothEstimate} implies that, 
for $C,c,c',\gamma > 0$ depending only on $\cuPmodel$,
with probability at least $1 - \frac{C(k+1)}{\epsilon^2}\exp(-cn\epsilon^4)$,
\begin{align}
\label{EqnChopin}
	\forall \bv \in \real^\usedim,~\forall \lambda_i, \quad
	\CLoss_{\lambda_i}(\bv) \leq \min_{\bv \in \real^p} \CLoss_{\lambda_i}(\bv) + \gamma \epsilon^2
	~\Rightarrow~ \bv \in (D_\epsilon^{\lambda_i})^c\,.
\end{align}
Further, Lemma \ref{LemCostComparisonLambda} implies that
with probability at least $1 - C_0 e^{-c_0n}$,
the following occurs:
for all $\lambda \in [\lambdamin,\lambdamax]$
\begin{align*}
	\CLoss_{\lambda_i} (\hbv^\lambda) \leq \min_{\bv \in \real^p}\CLoss_{\lambda_i} (\bv)
	+ K |\lambda - \lambda_i| 
	\leq 
	\min_{\bv \in \real^p}\CLoss_{\lambda_i} (\bv) + \gamma \epsilon^2,
\end{align*}
where $i = i(\lambda)$ is chosen such that $\lambda \in [\lambda_i, \lambda_{i+1}]$
and the inequality holds by the choice of $\epsilon'$.
Combining with inequality~\eqref{EqnChopin}, we conclude that 
\begin{equation}
	\text{for all $\lambda$,}\; \hbv^\lambda \in (D_{\epsilon}^{\lambda_i})^c \;\;\; \text{where $i=i(\lambda)$ is such that } \lambda \in [\lambda_i,\lambda_{i+1}]\,.
\end{equation}
Because $\phi$ is $1$-Lipschitz,
\begin{equation}
\begin{aligned}
	\left|\Exs \tilde\phi\big(\hbv^{f,\lambda}\big)
	-  \Exs \tilde\phi\big(\hbv^{f,\lambda_i}\big)\right| 
		&\leq 
		\Exs \left[\ltwo{\mySigma^{-1/2} (\hbv^{f,\lambda} - \hbv^f(\lambda_i)})\right]
		\leq  
		\kappamin^{-1/2} \| \hbv^{f,\lambda} - \hbv^{f,\lambda_i}\|_{L^2}
		\\
	&\leq K\kappamin^{-1/2}|\lambda_i - \lambda|^{1/2}
		\leq 
		\epsilon\,,
\end{aligned}
\end{equation}
where the third-to-last inequality holds by Jensen's inequality,
and the second-to-last inequality holds by Lemma \ref{LemPopCostComparisonLambda},
and the last inequality holds by the choice of $\epsilon'$.
Note we have compared the two expectations on the left-hand side by constructing a coupling between the distribution of $\hbv^{f,\lambda}$ defined for different values of $\lambda$; see comment following Lemma \ref{LemPopCostComparisonLambda}. 
By the triangle inequality, if $\hbv^\lambda \in (D_{\epsilon}^{\lambda_i})^c$,
then $\hbv^\lambda \in (D_{2 \epsilon}^\lambda)^c$.
Thus,
we conclude that with $C,c,c' > 0$ depending only on $\cuPmodel$
\begin{equation}
	\mprob\left(
		\exists \lambda \in [\lambdamin,\lambdamax],
		\;\;
		\hbv^\lambda \in D_{2\epsilon}^\lambda
	\right)
	\geq 1 - \frac{C(k+1)}{\epsilon^2} e^{-cn\epsilon^4}.
\end{equation}
For $\epsilon < c'$, 
we have $(k+1) \leq C/\epsilon^2$ for some $C$ depending only on $\cuPmodel$.
Absorbing constants appropriately,
the proof of Theorem~\ref{ThmControlLassoEst} is complete.

\subsection{Control of Lasso residual: proof of Theorem \ref{ThmLassoResidual}}
\label{SecPfThmLassoResidual}

Like the proof of Theorem~\ref{ThmControlLassoEst}, the proof of Theorem \ref{ThmLassoResidual} uses Gordon's lemma. 
Specifically, denote
\begin{align}
	\uhat \defn \bX \widehat{\bw} - \sigma \bz = \bX\thetahat - \by,
\end{align}
where $ \bwhat := \bwhat_0 = \thetahat - \thetastar$
 as defined in Eq.~\eqref{EqnLassoErrVec}.
Then $\uhat$ is the unique maximizer of 
\begin{align}
\bm{u} \mapsto	\min_{\bw \in \reals^{p}} 
	\left\{\frac{1}{\numobs} \inprod{\bX \bw - \sigma \bz}{\bm{u}} - \frac{1}{2\numobs}\ltwo{\bm{u}}^2 + \frac{\lambda}{\sqrt{n}}(\lone{\bw +\thetastar} - \lone{\thetastar}) \right\},
\end{align}
where the function on the right hand side (before minimizing over $\bw$) is defined as $C_{0}(\bv,\bu) =:C(\bv,\bu)$ in expression~\eqref{EnqCalpha} with re-parametrization $\bv \defn \mySigma^{1/2} \bw.$ Compared with the analysis in Theorem~\ref{ThmControlLassoEst} which focuses on the behavior of $\widehat{\bv}$, the focus of this section is the behavior of $\uhat$.

\paragraph*{Study of the corresponding Gordon's problem} 
Recall Gordon's optimization problem defined in expression \eqref{EqnGLoss} with $\alpha = 0$ and $\sfM_\alpha(\mytheta) = \lone{\mytheta}$. 
For every $(\bv,\bu)$, we have (defining $L(\bv,\bu) = L_0(\bv,\bu)$, cf. Eq~\eqref{EqnGLoss}):
\begin{align*}
L(\bv,\bu) \defn - \frac{\ltwo{\bm{u}} \bg^\top \bv}{n} + 
	\sqrt{\ltwo{\bv}^2 + \sigma^2} \frac{\bm{h}^\top \bm{u}}{n}
	- \frac{\ltwo{\bm{u}}^2}{2n}
	+ \frac{\lambda}{\sqrt{n}}(\lone{\mySigma^{-1/2}\bv +\thetastar} - \lone{\thetastar}).
\end{align*}
Denote $U(\bm{u}) = \min_{\bm{v}\in \reals^p} L(\bv,\bm{u})$
and $\widetilde{U}(\bm{u}) = L(\hbv^f,\bm{u})$ where $\hbv^f$ is defined in expression~\eqref{EqnVf} with $\alpha = 0$, namely 
\begin{align*}
	\hbv^f \defn \mySigma^{1/2} \left[\eta\left(\thetastar + \frac{\taustar}{\sqrt{n}}\mySigma^{-1/2}\bg, ~\frac{\betastar}{\taustar}\right) - \thetastar\right].
\end{align*}
By definition, $U(\bm{u}) \leq \widetilde{U}(\bm{u}).$ 
From direct calculations, the maximizer of $\widetilde{U}(\bm{u})$ is 
\[
	\frac{\bm{u}}{\sqrt{n}} = \left(\sqrt{\ltwo{\hbv^f}^2 + \sigma^2} \frac{\|\bh\|_2}{\sqrt{n}} - \frac{\bg^\top \hbv^f}{\sqrt{n}} \right)_+ \frac{\bh}{\|\bh\|_2}.
\]
Let us define quantity $\util \defn \taustar \zetastar \bh$.
By the concentration of $\hbv^f$ (given by inequality  \eqref{EqnNormAndWidthBound}) and the definition of the $(\taustar, \zetastar)$ in~\eqref{EqnEqn1} and \eqref{EqnEqn2},  $\util$ is $\epsilon$-close to the maximizer of $\widetilde{U}(\bm{u})$
(in the sense that $\|\util-\bu^*\|_2/\sqrt{n}\le \epsilon$).
In particular, 
Lemma D.1~\cite{miolane2018distribution} holds verbatim here.

Define the set
\begin{align}
	D_\epsilon \defn \left\{ \bu \in \real^p ~\mid~
	\Big|\phi\Big(\frac{\bu}{\sqrt n}\Big) - \E\Big[\phi\Big(\frac{\taustar\zetastar \bh}{\sqrt n}\Big)\Big]\Big| > \epsilon \right\}\,.
\end{align}
The probability $\mprob(\uhat \in D_\epsilon)$ can be controlled as 
\begin{align*}
    \mprob(\uhat \in D_\epsilon)
    &= \mprob(\max_{\bu \in D_\epsilon} \min_{\bw} C(\bv,\bu) \geq 
    \max_{\bu} \min_{\bw} C(\bv,\bu)) \\
    &\leq  \mprob(\max_{\bu \in D_\epsilon} \min_{\bv} C(\bv,\bu) \geq \Lstar - \epsilon^2
    ) + \mprob(\max_{\bu} \min_{\bv}C(\bv,\bu) \leq \Lstar - \epsilon) \\
   &\leq  2\mprob(\max_{\bu \in D_\epsilon} \min_{\bv} L(\bv,\bu) \geq \Lstar - \epsilon^2
    ) + 2\mprob(\max_{\bu} \min_{\bv}L(\bv,\bu) \leq \Lstar - \epsilon^2),
\end{align*}
where the last inequality follows by Gordon's lemma (Lemma~\ref{LemGordon}). 
The second term in the last expression is upper bounded $\frac{C}{\epsilon^2} e^{-cn\epsilon^4}$ using same argument as in Theorem~\ref{ThmControlLassoEst}, more concretely, in Lemma~\ref{LemGordonProbBounds}.
The first term is upper bounded as 
\begin{align*}
     2 \mprob(\max_{\bu \in D_\epsilon} \min_{\bv} L(\bv,\bu) \geq \Lstar - \epsilon^2) 
     = 2 \mprob(\max_{\bu \in D_\epsilon} U(\bu) \geq \Lstar - \epsilon^2) \le 2 \mprob(\max_{\bu \in D_\epsilon} \widetilde{U}(\bu) \geq \Lstar - \epsilon^2).
\end{align*}
We control the right-hand side following verbatim from the proof of Theorem D.1 and Lemma D.1 of \cite{miolane2018distribution}.
Putting the details above together yields 
$\mprob(\uhat \in D_\epsilon) \leq \frac{C}{\epsilon^2} \exp(-cn\epsilon^4).$


\subsection{Control of the subgradient}
\label{SecPfLemSubgrad}
The proof of Theorem \ref{ThmLassoSparsity} is based on controlling the vector
\begin{align}
\label{EqnDefSG}
	\subg = \frac{1}{\sqrt{n} \lambda}\bmx^\top(\by - \bmx \thetahat)\,, 
\end{align}
which is a subgradient of the $\ell_{1}$-norm at $\thetahat$. 
Since controlling this subgradient may be of independent interest, 
we state our result formally below.
Similarly, we prove that $\subg$ behaves approximately like the corresponding subgradient in the fixed-design model
\begin{align}
\label{EqnDefSGFixed}
	\bthat^f := \frac{\sqrt{n} \zetastar}{\lambda} \
	\mySigma^{1/2} (\by^f - \mySigma^{1/2} \thetahat^f )\,,
\end{align}
where $\by^f = \mySigma^{1/2}\thetastar + \frac{\taustar}{\sqrt{n}} \bg$, 
$\thetahat^f = \eta(\by^f,\zetastar)$, and $\bg \sim \normal(\bzero,\Ind_p)$.
The quality of the approximation is controlled uniformly over models and estimators satisfying Assumption \ref{assump:1}.

For any measurable set $D \subset \reals^p$, define its $\epsilon$-enlargement $D_\epsilon := \{\bx \in \reals^p \mid \inf_{\bx' \in D} \|\bx - \bx'\|_2 \geq \epsilon\}$. The following result makes the connection between $\subg$ and $\bthat^f$ precise. 

\begin{lems}
\label{LemSubgradient}
	Under Assumption \ref{assump:1}, 
	there exist constants $C,c,c' > 0$ depending only on $\cuPmodel$ such that for any measurable set $D \subset \reals^p$ and for all $\epsilon < c'$ 
	\begin{equation}
		\mprob\left(\bthat \in D_{\sqrt{n}\epsilon}\right) 
			\leq 
			2\mprob\left(\bthat^f \not \in D\right)
			+
			\frac{C}{\epsilon^2} e^{-cn\epsilon^4}\,.\label{eq:SubGradient1}
	\end{equation}
	Consequently, there exist (possibly new) constants $C,c,c' > 0$ depending only on $\cuPmodel$ such that for any $1$-Lipschitz function $\phi:\reals^p\rightarrow \reals$ and for $\epsilon < c'$
	\begin{align}
		\label{EqnSubgradient}
		\mprob \left(
			\Big|\phi\Big(\frac{\subg}{\sqrt{n}}\Big) - \E\Big[\phi\Big(\frac{\bthat^f}{\sqrt{n}}\Big) \Big]  \Big| \geq \epsilon
			\right) 
			\leq 
			\frac{C}{\epsilon^2} e^{-c\numobs \epsilon^4}.
	\end{align}
\end{lems}

The proof of Lemma \ref{LemSubgradient} relies on concentration results established in Lemma \ref{LemGordonProbBounds}.
To begin with, define for $\|\bt\|_\infty \leq 1$
\begin{equation}
	\cuV(\bt) 
	:=
	\min_{\bw \in \reals^p} \left\{\frac1{2n} \|\bX \bw - \sigma \bz \|_2^2 + \frac\lambda{\sqrt{n}} \bt^\top (\thetastar + \bw) - \frac\lambda{\sqrt{n}} \|\thetastar\|_1\right\} =: \min_{\bw \in \reals^p} V(\bw,\bt)\,.
\end{equation}
Define, for $\bg\sim\normal(0,\Ind_p)$,  $\bh\sim\normal(0,\Ind_n)$, 
\begin{align}
	\cuT(\bt)
	&:=
	\min_{\bv \in \reals^p} \left\{\frac12 \left(\sqrt{\|\bv\|_2^2 + \sigma^2 } \frac{\|\bh\|_2}{\sqrt n} - \frac{\bg^\top \bv}{\sqrt{n}} \right)_+^2 + \frac\lambda{\sqrt{n}} \bt^\top(\thetastar + \mySigma^{-1/2}\bv) - \frac\lambda{\sqrt{n}} \|\thetastar\|_1\right\}\,\\
	&=: \min_{\bv\in \reals^p} T(\bv,\bt)\,.
\end{align}
We may compare the maximization of $\cuV(\bt)$ with the maximization of $\cuT(\bt)$ using Gordon's lemma.
\begin{lem}
\label{LemGordonSG}
	Let $D \subset \{\bt \in \reals^p \mid \|\bt\|_\infty \leq 1\}$ be a closed set.
	\begin{enumerate}[label=(\alph*)]

		\item 
		For all $t \in \reals$,
		\begin{equation}\label{EqnGordonSGUB}
			\mprob\left(\max_{\bt \in D} \cuV(\bt) \geq t\right) \leq 2 \mprob\left(\max_{\bt \in D} \cuT(\bt) \geq t\right)\,.
		\end{equation}

		\item 
		If $D$ is also convex, 
		then for any $t \in \reals$,
		\begin{equation}\label{EqnGordonSGLB}
			\mprob\left(\max_{\bt \in D} \cuV(\bt) \leq t\right) \leq 2 \mprob\left(\max_{\bt \in D} \cuT(\bt) \leq t\right)\,.
		\end{equation}

	\end{enumerate}
\end{lem}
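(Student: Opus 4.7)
My approach will mirror that of Lemma~\ref{LemGordon}, with the roles of minimization and maximization swapped and an outer max over $\bt\in D$ added. First I dualize the quadratic and squared-positive-part terms so that both $\cuV$ and $\cuT$ become saddle values with identical Gaussian structure. Writing $\tfrac{1}{2n}\|\bm{x}\|_2^2 = \max_{\bu\in\reals^n}\{\bu^\top \bm{x}/n - \|\bu\|_2^2/(2n)\}$ yields $V(\bw,\bt) = \max_{\bu} \tilde V(\bw,\bu,\bt)$ and hence $\cuV(\bt) = \min_\bw\max_\bu\tilde V(\bw,\bu,\bt)$. Using $\tfrac12 a_+^2 = \max_{r\ge 0}\{ra - r^2/2\}$ and absorbing the optimal direction $\bu = r\bh/\|\bh\|_2$ into $\bu$, I analogously obtain $\cuT(\bt) = \min_\bv \max_\bu \tilde T(\bv,\bu,\bt)$ where
\[
\tilde T(\bv,\bu,\bt) = -\frac{\|\bu\|_2\,\bg^\top\bv}{n^{3/2}} + \frac{\sqrt{\|\bv\|_2^2/n + \sigma^2}\,\bh^\top\bu}{n} - \frac{\|\bu\|_2^2}{2n} + \frac{\lambda}{n}\bt^\top(\thetastar + \mySigma^{-1/2}\bv) - \frac{\lambda}{n}\|\thetastar\|_1.
\]
After the change of variables $\bv = \mySigma^{1/2}\bw$, the Gaussian bilinear parts of $\tilde V$ and $\tilde T$ coincide exactly with the $C_0,L_0$ processes compared in the proof of Lemma~\ref{LemGordon}; the $\bt$-dependence is a deterministic linear shift shared by both sides, so the covariance comparison needed for Gordon's inequality is already in hand.

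Next, I will merge the outer $\max_{\bt\in D}$ with the inner $\max_\bu$ into a joint max over $(\bt,\bu) \in D\times\reals^n$ and swap it with the inner minimization. The functions $V(\cdot,\bt)$ and $T(\cdot,\bt)$ are convex in their first argument and affine in $\bt$, and $D$ is compact (closed and bounded by $\|\bt\|_\infty\le 1$), so Sion's minimax theorem applies when $D$ is convex and gives
\[
\max_{\bt\in D}\cuV(\bt) = \min_\bw\max_{(\bt,\bu)\in D\times\reals^n}\tilde V(\bw,\bu,\bt), \quad \max_{\bt\in D}\cuT(\bt) = \min_\bv\max_{(\bt,\bu)\in D\times\reals^n}\tilde T(\bv,\bu,\bt).
\]
I will then invoke Gordon's min-max comparison (Corollary~G.1 of \cite{miolane2018distribution}), truncating $\bw,\bv$ to a ball of radius $R$ and $\bu$ to a ball of radius $R^2$ and passing $R\to\infty$ by coercivity, exactly as in the proof of Lemma~\ref{LemGordon}. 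This directly yields (b).

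For (a), where $D$ is only assumed closed, I will use the linearity of $V$ and $T$ in $\bt$: for every $\bw$ the extreme-point principle (applied to the compact set $D$) gives $\max_{\bt\in D}V(\bw,\bt) = \max_{\bt\in\conv D}V(\bw,\bt)$, and similarly for $T$. The minimax identities above therefore hold with the convex set $\conv D$ in place of $D$, and the convex-case Gordon bound applies on $\conv D$. The hard part will be closing the gap between the Gordon bound so obtained, which features $\max_{\bt\in\conv D}\cuT$ on the right-hand side, and the claimed bound involving $\max_{\bt\in D}\cuT$; since $\cuT$ is concave in $\bt$, these two maxima need not agree. I expect to handle this by applying Corollary~G.1 directly on the joint outer-max domain $D\times\reals^n$ (using that the Gaussian parts of $\tilde V,\tilde T$ are independent of $\bt$, so that no convexity on the $\bt$-factor is needed for the comparison of the covariance structures), and then descending via weak duality on the $\tilde T$ side to recover $\max_{\bt\in D}\cuT$ rather than $\max_{\bt\in\conv D}\cuT$.
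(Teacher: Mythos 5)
Your dualization and intent to invoke Gordon's comparison are correct, but the route you choose for the outer maximum over $\bt$ creates a gap in part (a) that the paper avoids. The paper never moves $\max_{\bt\in D}$ past the inner minimization. It exchanges only the inner $\min_{\bv}$ and $\max_{\bu}$ at fixed $\bt$, obtaining the form $\max_{\bt\in D}\max_{\|\bu\|\le R^2}\min_{\|\bw\|\le R}\breve V$ and likewise for $\breve T$, and then applies Gordon's comparison directly to the max-min $\max_{(\bt,\bu)}\min_{\bw}$. In that form the ``easy'' direction of the comparison (which yields \eqref{EqnGordonSGUB}) holds for arbitrary compact index sets, so no convexity of $D$ is needed; the reverse direction (yielding \eqref{EqnGordonSGLB}) is the one that needs $D$ convex. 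Your plan instead routes through Sion's theorem to get $\min_{\bw}\max_{(\bt,\bu)}$, which already presupposes $D$ convex. For (a) you then try $\conv D$, and as you yourself note, $\cuT$ is concave in $\bt$ so $\max_{\conv D}\cuT$ and $\max_D\cuT$ can genuinely differ — this is a dead end, not a technicality. Your closing sketch (``apply Corollary G.1 on the joint domain $D\times\reals^n$ \dots then descend via weak duality'') is gesturing at the paper's form, but ``descending'' should not be needed at all once you keep $\max_{\bt}$ outside from the start.

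There is a second, quieter gap you do not address: the inner exchange $\min_{\bv}\max_{\bu}\breve T=\max_{\bu}\min_{\bv}\breve T$ (at fixed $\bt$) is \emph{not} a Sion or convex-concave application. The term $-\|\bu\|_2\,\bg^\top\bv/n^{3/2}$ makes $\breve T$ neither jointly convex-concave nor separately concave in $\bu$ (the sign of $\bg^\top\bv$ can flip). The paper justifies this exchange by a dedicated argument: for fixed $\bt,\bu$ the function $\breve T$ depends on $\bv$ only through $(\bg^\top\bv,\,\bt^\top\mySigma^{-1/2}\bv,\,\|\bv\|_2)$, whose image over $\{\|\bv\|_2\le R\}$ is compact and convex, with $\breve T$ convex on that image; the analogous reduction in $\bu$ uses $(\bh^\top\bu,\|\bu\|_2)$. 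This low-dimensional convexification is what legitimizes the swap, and without it you cannot bring either side of the comparison into a Gordon-admissible max-min form while keeping $\max_{\bt\in D}$ outside. You should work out this step explicitly rather than asserting the saddle structure ``analogously.''
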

\noindent We prove Lemma \ref{LemGordonSG} at the end of this section.
The maximization of $\cuT(\bt)$ can be controlled because $\cuT(\bt)$ is strongly-concave with high probability.
We first establish this strong-concavity.

\begin{lem}
\label{LemGordonSBObjStronglyConcave}
	Under Assumption \ref{assump:1},
	the objective $\cuT(\bt)$ is $c_0/n$-strongly concave
	on the event
	\begin{equation}
		\left\{\frac{\|\bh\|_2^2}{n} \leq 2,\;
                  \frac{\|\bg\|_2^2}{p} \leq 2 \right\}\,,
	\end{equation}
	where $c_0 > 0$ is a constant depending only on $\cuPmodel$.
\end{lem}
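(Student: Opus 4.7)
The plan is to exploit Fenchel duality. Decomposing
\begin{align}
T(\bv,\bt)=T_0(\bv)+\tfrac{\lambda}{n}\bt^{\top}(\thetastar+\mySigma^{-1/2}\bv)-\tfrac{\lambda}{n}\|\thetastar\|_1,
\end{align}
with $T_0(\bv):=\tfrac12\bigl(F(\bv)\bigr)_{+}^{2}$ and $F(\bv):=\sqrt{\|\bv\|_{2}^{2}/n+\sigma^{2}}\,\|\bh\|_{2}/\sqrt{n}-\bg^{\top}\bv/n$ reveals that $T$ is linear in $\bt$ for each fixed $\bv$; taking the infimum over $\bv$ then yields
\begin{align}
\cuT(\bt) \;=\; -T_0^{*}\!\Bigl(-\tfrac{\lambda}{n}\mySigma^{-1/2}\bt\Bigr) \;+\; \tfrac{\lambda}{n}\bt^{\top}\thetastar \;-\; \tfrac{\lambda}{n}\|\thetastar\|_1 ,
\end{align}
where $T_0^{*}$ is the convex conjugate of $T_0$. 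Because the smallest singular value of $\mySigma^{-1/2}$ equals $1/\sqrt{\kappa_{\max}}$, the chain rule implies that $\cuT$ is $\mu\lambda^{2}/(n^{2}\kappa_{\max})$-strongly concave in $\bt$ whenever $T_0^{*}$ is $\mu$-strongly convex at the relevant dual point. Matching the target rate $c_{0}\lambda_{\min}^{2}/(n\kappa_{\max})$ therefore amounts to proving $(c_{0}n)$-strong convexity of $T_0^{*}$.

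By the classical Legendre duality between smoothness and strong convexity of convex conjugates, $T_0^{*}$ is $\mu$-strongly convex at $\by$ iff $T_0$ has $(1/\mu)$-Lipschitz gradient at the primal point $\bv^{*}=\nabla T_0^{*}(\by)$, so it suffices to bound the Hessian
\begin{align}
\nabla^{2} T_0(\bv) \;=\; \indic{F(\bv)>0}\,\nabla F(\bv)\,\nabla F(\bv)^{\top} \;+\; (F(\bv))_{+}\,\nabla^{2} F(\bv)
\end{align}
in operator norm. A direct computation gives $\|\nabla F(\bv)\|_{2}\le (\|\bh\|_{2}+\|\bg\|_{2})/n$ and $\|\nabla^{2} F(\bv)\|_{\mathrm{op}}\le \|\bh\|_{2}/(n^{3/2}\sigma)$. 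On the event $\{\|\bh\|_{2}^{2}/n\le 2,\,\|\bg\|_{2}^{2}/n\le 2/\delta\}$ and under assumption~\textsf{A1}, these read $\|\nabla F\|_{2}^{2}\le 4(1+1/\delta)/n$ and $\|\nabla^{2}F\|_{\mathrm{op}}\le \sqrt{2}/(n\sigma_{\min})$, so the rank-one summand of $\nabla^{2}T_0$ is uniformly of the desired order $O(1/n)$.

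The heart of the argument is to bound $(F(\bv^{*}))_{+}$ in the second summand. The inner KKT condition $(F(\bv^{*}))_{+}\nabla F(\bv^{*})=-\tfrac{\lambda}{n}\mySigma^{-1/2}\bt$ forces $\nabla F(\bv^{*})$ to be parallel to $-\mySigma^{-1/2}\bt$ and pins down the product $(F(\bv^{*}))_{+}\|\nabla F(\bv^{*})\|_{2}$; substituting the explicit expression $\nabla F(\bv)=(\|\bh\|_{2}/\sqrt n)\,\bv/(n q(\bv))-\bg/n$ with $q(\bv)=\sqrt{\|\bv\|_2^2/n+\sigma^2}$ allows one to parameterize $\bv^{*}$ as a scalar multiple of an explicit vector in the span of $\bg$ and $\mySigma^{-1/2}\bt$, and thus to control $(F(\bv^{*}))_{+}$ by a constant depending only on $\cuPmodel$, $\delta$, and the event parameters. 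The main subtlety is that $\|\nabla F(\bv)\|_{2}$ can a priori be small, so a short case analysis is required to rule out simultaneous vanishing of $\|\nabla F\|_{2}$ and blow-up of $F_{+}$ along the KKT locus. Assembling these bounds gives $\|\nabla^{2} T_0(\bv^{*})\|_{\mathrm{op}}\le C/n$ for a constant $C$ depending only on $\cuPmodel$ and $\delta$, so $T_0^{*}$ is $(n/C)$-strongly convex; combining with the first paragraph and $\lambda\ge\lambda_{\min}$ yields the claimed $c_{0}\lambda_{\min}^{2}/(n\kappa_{\max})$-strong concavity with $c_{0}=1/C$.
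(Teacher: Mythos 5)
Your high-level plan --- decompose $T(\bv,\bt)$ into a term $T_0(\bv)$ plus something linear in $\bt$, identify $\cuT$ with a shifted Fenchel conjugate of $T_0$, and then invoke the duality between strong smoothness and strong convexity --- is exactly the paper's strategy, and your bounds on $\|\nabla F\|_2$ and $\|\nabla^2 F\|_{\mathrm{op}}$ are correct. The proof breaks down, however, at the step you yourself flag as the ``heart of the argument,'' and the gap is real.

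First, the quantity $(F(\bv))_+$ is genuinely unbounded over $\bv$ (it grows like $\|\bv\|_2/\sqrt{n}$), so any route that tries to control it in isolation must localize to a special $\bv$, which is what your KKT argument attempts. But the stationarity condition only pins down the \emph{product} $(F(\bv^*))_+\|\nabla F(\bv^*)\|_2 = \tfrac{\lambda}{n}\|\mySigma^{-1/2}\bt\|_2$; to extract a bound on $(F(\bv^*))_+$ alone you would need a uniform \emph{lower} bound on $\|\nabla F(\bv^*)\|_2$ over all $\bt$ with $\|\bt\|_\infty\le 1$, and no such bound is supplied. The ``short case analysis'' you defer is precisely where the argument would have to do all the work, and it is not obvious it can close. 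Second, and more fundamentally, even a successful pointwise bound at $\bv^*=\bv^*(\bt)$ would not suffice: the Legendre duality you invoke (Kakade et al., as cited in the paper) is a \emph{global} equivalence --- $g^*$ is $\mu$-strongly convex iff $\nabla g$ is globally $(1/\mu)$-Lipschitz --- and the downstream use in the proof of Lemma~\ref{LemSubgradient} requires global strong concavity of $\cuT$ on the cube $\{\|\bt\|_\infty\le1\}$, not just curvature at the maximizer. Pointwise Hessian control at the primal optimum does not transfer to a global strong-convexity statement for the conjugate.

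The fix is to bound the product directly and uniformly in $\bv$, which is what the paper does. Writing $q(\bv)=\sqrt{\|\bv\|_2^2/n+\sigma^2}$, the factor $q(\bv)^{-1}$ in $\|\nabla^2F(\bv)\|_{\mathrm{op}}$ exactly cancels the $q(\bv)$ growth of $F(\bv)$, giving
\begin{equation}
  |F(\bv)|\,\|\nabla^2 F(\bv)\|_{\mathrm{op}}
  \;\le\;
  \frac{\|\bh\|_2}{n^{3/2}}\Big(\frac{\|\bh\|_2}{\sqrt n} + \frac{\|\bg\|_2\|\bv\|_2}{n\,q(\bv)}\Big)
  \;\le\;
  \frac{\|\bh\|_2^2 + \|\bh\|_2\|\bg\|_2}{n^2}\,,
\end{equation}
using $\|\bv\|_2/(\sqrt n\, q(\bv))\le 1$. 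Combined with your (already correct) bound on $\|\nabla F(\bv)\|_2^2$, this yields $\|\nabla^2\big(\tfrac12 F(\bv)_+^2\big)\|_{\mathrm{op}}\le 1/(nc_0)$ for \emph{all} $\bv$, on the stated event, after which the global duality and the affine reparameterization (contributing the factor $\lambda^2/\kappamax\ge\lambdamin^2/\kappamax$) give the claim. In short: keep your framework, but bound $(F(\bv))_+\|\nabla^2 F(\bv)\|_{\mathrm{op}}$ jointly and globally rather than attempting to bound $(F(\bv^*))_+$ on its own via KKT.
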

\noindent We prove Lemma \ref{LemGordonSBObjStronglyConcave} at the end of this section.
We are ready to prove Lemma \ref{LemSubgradient}.

	Consider $\alpha=0$, and let $\bv^* \in \reals^p$ be a minimizer of $\cuL(\bv) :=\cuL_{\alpha=0}(\bv)$ defined in \eqref{EqnGLoss}.
	Let
	\begin{equation}
	\begin{aligned}
		\bt^* 
			&:= 
			- \frac{\sqrt{n}}{\lambda} \mySigma^{1/2} \nabla \left(\bv \mapsto \frac12 \left( \sqrt{\ltwo{\bv}^2 + \sigma^2} \frac{\|\bh\|_2}{\sqrt n} 
	    	- \frac{\bg^\top \bv}{\sqrt{n}} \right)_{+}^2\right)\Bigg|_{\bv = \bv^*}\,\\
	    &= 
	    	-\frac{\sqrt{n}}\lambda  \mySigma^{1/2}\left( \sqrt{\ltwo{\bv^*}^2 + \sigma^2} \frac{\|\bh\|_2}{\sqrt n} 
	    	- \frac{\bg^\top \bv^*}{\sqrt{n}} \right)_+ 
	    	\left(\frac{\|\bh\|_2/\sqrt n}{ \sqrt{\|\bv^*\|_2^2 + \sigma^2}} \bv^* - \frac{\bg}{\sqrt{n}} \right)\,.\label{EqnTstarDef}
	\end{aligned}
	\end{equation}
	By the KKT conditions,
	$\frac{\lambda}{\sqrt{n}} \mySigma^{-1/2}\bt^* \in \frac{\lambda}{\sqrt{n}}\partial( \bv\mapsto \|\thetastar + \mySigma^{-1/2}\bv\|_1)$ at $\bv = \bv^*$.
	With this definition, $\bzero_p$ is in the subdifferential with respect to $\bv$ of $T(\bv,\bt)$ at $(\bv^*,\bt^*)$.
	Moreover, $t^*_j = 1$ whenever $(\thetastar + \mySigma^{-1/2}\bv^*)_j > 0$ and $t^*_j = -1$ whenever $(\thetastar + \mySigma^{-1/2}\bv^*)_j < 0$, 
	whence $\bt^* \in \argmax_{\|\bt\|_\infty \leq 1} T(\bv^*,\bt)$.
	Because $T$ is convex-concave, we have
	\begin{equation}
		T(\bv^*,\bt) \leq T(\bv^*,\bt^*) \leq T(\bv,\bt^*)\,,
	\end{equation}
	for all $\bv \in \reals^p$, $\|\bt\|_\infty \leq 1$.
	Thus, $(\bv^*,\bt^*)$ is a saddle-point and by \cite[pg.~380]{rockafellar1970convex}
	\begin{equation}
		\max_{\|\bt\|_\infty \leq 1}\min_{\bv \in \reals^p} T(\bv,\bt) = \min_{\bv \in \reals^p}\max_{\|\bt\|_\infty \leq 1} T(\bv,\bt)\,,
	\end{equation}
	and
	\begin{equation}
		\bt^* \in \arg\max_{\|\bt\|_\infty\leq 1} \cuT(\bt)\,.
	\end{equation}

	Fix $\epsilon > 0$.
	Define the events 
	\begin{gather}
		\cuA_1 
			:= 
			\left\{ \bthat^f \in D \right\}\,,
			\;\;\;\;\;\;\;\;
			\cuA_2 
			:= 
			\left\{ \frac{\|\bt^* - \bthat^f\|_2}{\sqrt n} \leq \frac{\epsilon}{2}\right\}\,,\\
		\cuA_3 
			:= 
			\left\{\frac{\|\bh\|_2^2}{n} \leq 2,\; \frac{\|\bg\|_2^2}{p} \leq 2\right\}\,,
			\;\;\;\;\;\;\;\;
			\cuA_4 
			:= 
			\left\{|\cuT(\bt^*) - \Lstar| \leq \frac{c_0}{16} \epsilon^2 \right\}\,.
	\end{gather}
	We claim that on the event $\bigcap_{a=1}^4 \cuA_a$, 
	\begin{equation}
	\label{cuT-small-away-from-maximizer}
		\max_{\bt \in D_\epsilon}\cuT(\bt) \leq \Lstar - \frac{c_0}{16} \epsilon^2\,.
	\end{equation}
	Indeed, because $\cuA_1$ occurs,
	$\bt \in D_{\sqrt{n}\epsilon}$ implies $\frac{\|\bt - \bthat^f\|_2}{\sqrt n} \geq \epsilon$.
	Because $\cuA_2$ occurs, 
	also $\frac{\|\bt - \bt^*\|_2}{\sqrt n} \geq \frac{\epsilon}{2}$.
	Because $\cuA_3$ occurs, $\cuT(\bt)$ is $\frac{c_0 }{n}$-strongly concave by Lemma \ref{LemGordonSBObjStronglyConcave}, 
	whence because $\bt^*$ maximizes $\cuT$
	\begin{equation}
		\cuT(\bt) \leq \cuT(\bt^*) - \frac12 \frac{c_0 \epsilon^2}{4} \,.
	\end{equation}
	Because $\cuA_4$ occurs, 
	we conclude Eq.~\eqref{cuT-small-away-from-maximizer}.

	By Gordon's lemma for the subgradient (Lemma \ref{LemGordonSG}) and because $D_{\sqrt{p}\epsilon}$ is closed,
	\begin{equation}
		\mprob\left(\max_{\bt \in D_{\sqrt{p}\epsilon}} \cuV(\bt) \geq \Lstar -  \frac{c_0}{16} \epsilon^2 \right) 
		\leq 
		2 \left(1 - \mprob\left(\bigcap_{a = 1}^4 \cuA_a\right)\right) \leq 
		2 \sum_{a=1}^4 \mprob(\cuA_a^c)\,.\label{eq:UnionBoundV}
	\end{equation}
	We control the probabilities in the sum one at a time.

	\subsubsection*{Event $\cuA_2$ occurs with high probability depending on $\cuPmodel$}

	By Lemma \ref{LemGordonProbBounds},
	there exists $C,c,c' > 0$ depending only on $\cuPmodel$ such that for $\epsilon \in (0,c')$ we have 
	\begin{equation} 
		\mprob\left(\|\bv^* - \hbv^f\|_2 > \frac\epsilon2\right) 
		\leq 
		\frac{C}{\epsilon^2}e^{-cn\epsilon^4}.
	\end{equation}
	Indeed, the event in the preceding display occurs when the two conditions in Eq.~\eqref{EqnGordonObjControl} are met.
	Also, $\|\hbv^f\|_2^2+\sigma^2$, $\bg^\top \hbv^f/\sqrt{n}$, and $\|\bh\|_2/\sqrt{n}$ concentrate on 
	${\taustar}^2$, $\taustar(1-\zetastar)$, and $1$ at sub-Gamma or sub-Gaussian rates depending only on $\taumax$ 
	(see, e.g., Eq.~\eqref{EqnNormAndWidthBound} in the proof of Lemma \ref{LemGordonProbBounds}). 
	Combining this with the previous display and updating constants appropriately, 
	we conclude there exists $C,c,c' > 0$ depending only on $\cuPmodel$ such that for $\epsilon \in (0,c')$ we have 
	\begin{equation}
		\mprob\left(
			\frac1{\sqrt{n}} \left\|\bt^* 
			- 
			\frac1\lambda \mySigma^{1/2} \left(
				\taustar - \taustar(1-\zetastar)
			\right)
			\left(\frac{\hbv^f}{\taustar} - \bg\right)\right\|_2
			>
			\frac{\epsilon}2
		\right)
		\leq 
		\frac{C}{\epsilon^2}e^{-cn\epsilon^4}.
	\end{equation}
	By the definition of $\bthat^f$ (Eq.~\eqref{EqnDefSGFixed}) and of $\hbv$ (Eq.~\eqref{EqnLassoErrVec}),
	the preceding display is equivalent to
	\begin{equation}
		\mprob\left(\cuA_2^c\right) \leq \frac{C}{\epsilon^2}e^{-cn\epsilon^4}\,.
	\end{equation}

	\subsubsection*{Event $\cuA_3$ occurs with high probability depending on $\delta$}
	By Gaussian concentration of Lipschitz functions, $\mprob(\cuA_3) \leq Ce^{-cn}$ for some $C,c$ depending only on $\delta$.
	
	\subsubsection*{Event $\cuA_4$ occurs with high probability depending on $\delta$}
	Observe that $\cuT(\bt^*) = \cuL(\bv^*)$.
	Then, by Lemma \ref{LemGordonProbBounds} there exist constants $C,c,c' > 0$, depending only on $\cuPmodel$ such that for $\epsilon \in (0,c')$, 
	\begin{equation}
	\label{eq:T-global-min-concentrates}
		\mprob(\cuA_4^c) = \mprob\left(\left|\cuT(\bt^*) - \Lstar\right| > \epsilon \right) = \mprob\left(\left|\max_{\|\bt\|_\infty \leq 1} \cuT(\bt) - \Lstar \right| > \epsilon \right) \leq \frac{C}{\epsilon^2}e^{-cn\epsilon^4}\,.
	\end{equation}

	Combining the established probability bounds on $\cuA_i$, $i=2,3,4$, Eq.~\eqref{eq:UnionBoundV} implies that for all $\epsilon < c'$,
	\begin{equation}
		\mprob\left(\max_{\bt \in D_{\sqrt{n}\epsilon}} \cuV(\bt) \geq \Lstar - \frac32\gamma\epsilon\right) 
			\leq 
			2\mprob\left(\bthat^f \not \in D\right) + \frac{C}{\epsilon^2}e^{-cn\epsilon^4} 
			\;\; 
			\text{and} 
			\;\; 
			\mprob\left(\max_{\|\bt\|_\infty\leq 1} \cuV(\bt) < \Lstar - \gamma\epsilon\right) \leq \frac{C}{\epsilon^2}e^{-cn\epsilon^4}\,,
	\end{equation}
	where the second probability bound holds by Eq.~\eqref{eq:T-global-min-concentrates}.
	Thus, $\mprob\left(\bthat \in D_{\sqrt{n}\epsilon}\right) \leq 2\mprob\left(\bthat^f \not \in D\right) + \frac{C}{\epsilon^2} e^{-cn\epsilon^4}$.
	Using the definition of $D_{\sqrt{n}\epsilon}$ and a change of variables (which absorbs certain constants into $c$),
	we conclude that Eq.~\eqref{eq:SubGradient1} holds.

	To complete the proof of Lemma \ref{LemSubgradient}, we prove Eq.~\eqref{EqnSubgradient}.
	Define
	\begin{equation}
		D = \left\{\bt \in \reals^p \Bigm| \Big| \phi\Big(\frac{\bt}{\sqrt n}\Big) 
		- 
		\E\Big[\phi\Big(\frac{\bthat^f}{\sqrt n}\Big)\Big] \Big| \leq \epsilon\right\}\,.
	\end{equation}
	By Eq.~\eqref{EqnDefSGFixed},
	$\bthat^f$ is $\frac{\taumax\zetamax\kappamax^{1/2}}{\lambdamin}$-Lipschitz in $\bg$, 
	whence
	\begin{equation}
		\mprob\left(\bthat^f \not \in D\right) \leq 2\exp\left(-\frac{3\gamma n}{c_0 \taumax \zetamax } \epsilon^2\right) \leq \frac{C}{\epsilon^2}e^{-cn\epsilon^4}\,,
	\end{equation}
	where the last inequality holds for $\epsilon < c'$ with $C,c,c'>0$ depending only on $\cuPmodel$.
	Eq.~\eqref{EqnSubgradient} is then a special case of Eq.~\eqref{eq:SubGradient1}.
	The proof of Lemma \ref{LemSubgradient} is complete. \hfill$\square$

\begin{proof}[Proof of Lemma \ref{LemGordonSG}]
	Fix $R > 0$.
	The function $\bt \mapsto \min_{\|\bw\|_2 \leq R} V(\bw,\bt)$ is concave and continuous and is defined on a compact set $D$.
	Moreover, $\min_{\|\bw\|_2 \leq R} V(\bw,\bt)$ is non-increasing in $R$.
	Because the maximum of a non-increasing limit of continuous functions defined on a compact set is equal to the limit of the maxima of these functions,
	\begin{equation}
		\max_{\bt \in D} \cuV(\bt) = \max_{\bt \in D} \lim_{R \rightarrow \infty} \min_{\|\bw\|_2 \leq R} V(\bw,\bt) = \lim_{R \rightarrow \infty} \max_{\bt \in D}  \min_{\|\bw\|_2 \leq R} V(\bw,\bt)\,.
	\end{equation}
	We may write 
	\begin{equation}
		V(\bw,\bt) 
		:=
		\max_{\|\bu\|_2 \leq R' } \breve{V}(\bw,\bt,\bu)\,,
	\end{equation}
	for any $R' > \|\bX\|_{\mathrm{op}} \|\bw\|_2 + \sigma \|\bz\|_2$,
	where 
	\begin{equation}
		\breve{V}(\bw,\bt,\bu) 
		=
		\frac1n \bu^\top (\bX \bw - \sigma \bz) - \frac1{2n} \|\bu\|_2^2 + \frac\lambda{\sqrt{n}} \bt^\top (\thetastar + \bw) - \frac\lambda{\sqrt{n}} \|\thetastar\|_1\,.
	\end{equation}	
	Because almost surely $R^2 > \|\bX\|_{\mathrm{op}} R + \sigma \|\bz\|_2$ for sufficiently large $R$,
	we conclude
	\begin{align}
		\max_{\bt \in D} \cuV(\bt) 
		&= 
		\lim_{R \rightarrow \infty} \max_{\bt \in D}  \min_{\|\bw\|_2 \leq R} \; \max_{\|\bu\|_2 \leq R^2 } \breve{V}(\bw,\bt,\bu) \\
		&= 
		\lim_{R \rightarrow \infty} \max_{\bt \in D} \max_{\|\bu\|_2 \leq R^2} \; \min_{\|\bw\|_2 \leq R}  \breve{V}(\bw,\bt,\bu)\,,\label{cuV-R-lim}
	\end{align}
	almost surely, 
	where we may exchange minimization and maximization because they are taken over compact sets and $\breve{V}$ is convex-concave and continuous.

	Similarly,
	\begin{equation}
		\max_{\bt \in D} \cuT(\bt) 
		= 
		\lim_{R \rightarrow \infty} \max_{\bt \in D} \min_{\|\bv\|_2 \leq R} T(\bv,\bt)\,.
	\end{equation}
	We may write
	\begin{equation}
		T(\bv,\bt) = \max_{\|\bu\|_2 \leq R'} \breve{T}(\bv,\bt,\bu)\,,
	\end{equation}
	for any $R' > \sqrt{n} \left(\sqrt{\|\bv\|_2^2 + \sigma^2} \frac{\|\bh\|_2}{\sqrt n} + \frac{\|\bg\|_2 \|\bv\|_2}{\sqrt{n}} \right)$, where
	\begin{equation}
		\breve{T}(\bv,\bt,\bu) 
		:=
		-  \frac{\ltwo{\bm{u}}}{\sqrt{n}} \frac{\bg^\top \bv}{\sqrt{n}} + 
			\sqrt{\ltwo{\bv}^2 + \sigma^2} \cdot \frac{\bm{h}^\top \bm{u}}{n}
			- \frac{\ltwo{\bm{u}}^2}{2\numobs} 
			+ \frac{\lambda}{\sqrt{\numobs}}\bt^\top(\thetastar + \mySigma^{-1/2}\bv) - \frac\lambda{\sqrt{n}}\lone{\thetastar}\,.
	\end{equation}
	Because almost surely $R^2 >  \sqrt{n} \left(\sqrt{\frac{R^2} + \sigma^2} \frac{\|\bh\|_2}{\sqrt n} + \frac{\|\bg\| R}{\sqrt{n}} \right)$ for sufficiently large $R$,
	we conclude
	\begin{align}
		\max_{\bt \in D} \cuT(\bt) 
			&= 
			\lim_{R \rightarrow \infty} \max_{\bt \in D} \min_{\|\bv\|_2 \leq R} \max_{\|\bu\|_2 \leq R^2} \breve{T}(\bv,\bt,\bu)\\
		&= 
			\lim_{R \rightarrow \infty} \max_{\bt \in D} \max_{\|\bu\|_2 \leq R^2} \min_{\|\bv\|_2 \leq R} \breve{T}(\bv,\bt,\bu)\,,\label{cuT-R-lim}
	\end{align}
	where the second equality holds by the following argument.\footnote{Note that $\breve{T}$ is not convex-concave in $(\bv,\bt,\bu)$, so that the exchange of the minimization and maximization requires a different justification to that in Eq.~\eqref{cuV-R-lim}.}
	For fixed $\bt,\bu$, 
	the function $\breve{T}(\bv,\bt,\bu)$ depends on $\bv$ only through $\bg^\top\bv$, $\bt^\top \mySigma^{-1/2} \bv$, and $\|\bv\|_2$.
	Moreover, $\breve{T}(\bv,\bt,\bu)$ is convex in the triple $(\bg^\top\bv,\bt^\top \mySigma^{-1/2} \bv,\|\bv\|_2)$ and $\{(\bg^\top\bv,\bt^\top \mySigma^{-1/2} \bv,\|\bv\|_2) \mid \|\bv\|_2 \leq R\}$ is a compact, convex set.
	Similarly, for fixed $\bt,\bv$,
	the function $\breve{T}(\bv,\bt,\bu)$ depends on $\bu$ only through $\bh^\top\bu$, $\|\bu\|_2$.
	Moreover, $\breve{T}(\bv,\bt,\bu)$ is convex in the pair $(\bh^\top\bu,\|\bu\|_2)$ and $\{(\bh^\top\bu,\|\bu\|_2) \mid \|\bu\|_2 \leq R^2\}$ is a compact, convex set.
	Thus, 
	the exchange of minimization and maximization in the preceding display is justified.

	By Gordon's Lemma (see \cite[Theorem 3]{thrampoulidis2015regularized}),
	for any finite $R > 0$ and any $t \in \reals$
	\begin{align}
		\mprob\left(\max_{\bt \in D} \max_{\|\bu\|_2 \leq R^2} \; \min_{\|\bw\|_2 \leq R}  \breve{V}(\bw,\bt,\bu) > t\right) \leq 2 \mprob\left(\max_{\bt \in D} \max_{\|\bu\|_2 \leq R^2} \; \min_{\|\bw\|_2 \leq R}  \breve{T}(\bw,\bt,\bu) > t\right)\,.
	\end{align}	
	Taking $R \rightarrow \infty$ and using Eqs.~\eqref{cuV-R-lim} and \eqref{cuT-R-lim},
	we conclude
	\begin{align}
		\mprob\left(\max_{\bt \in D} \cuV(\bt) > t\right) \leq 2 \mprob\left(\max_{\bt \in D}\cuT(\bt) > t\right)\,.
	\end{align}	
	The strict inequalities become weak by considering $t' > t$ in place of $t$ and taking $t' \rightarrow t$.
	We conclude Eq.~\eqref{EqnGordonSGUB}.
	Eq.~\eqref{EqnGordonSGLB} follows similarly.
\end{proof}

\begin{proof}[Proof of Lemma \ref{LemGordonSBObjStronglyConcave}]
	Define 
	\begin{equation}
		f(\bv) := \sqrt{\|\bv\|_2^2 + \sigma^2 } \frac{\|\bh\|_2}{\sqrt n} - \frac{\bg^\top \bv}{\sqrt{n}} \,.
	\end{equation}
	The gradient and Hessian of $f(\bv)$ are
	\begin{gather}
		\nabla f(\bv) = \left(\|\bv\|_2^2 + \sigma^2\right)^{-1/2} \frac{\|\bh\|_2}{\sqrt n}\bv - \frac{\bg}{\sqrt{n}}\,,\\
      \nabla^2 f(\bv) = \left(\|\bv\|_2^2 + \sigma^2\right)^{-1/2}\left(\Ind_p - \left(\|\bv\|_2^2 + \sigma^2\right)^{-1} \bv\bv^\top\right) \frac{\|\bh\|_2}{\sqrt{n}} 
          \preceq \left( \|\bv\|_2^2 + \sigma^2\right)^{-1/2}\frac{\|\bh\|_2}{\sqrt{n}}\Ind_p\,.
	\end{gather}
	We bound 
	\begin{gather}
		\|\nabla f(\bv)\|_2^2 \leq \frac{2\|\bh\|_2^2}{n} + \frac{2\|\bg\|_2^2}{n}\,,\\
		|f(\bv)|\, \|\nabla^2 f(\bv) \|_{\mathrm{op}}
		\leq 
		\frac{\|\bh\|_2}{\sqrt{n}}\left(\frac{\|\bh\|_2}{\sqrt n} + \frac{\|\bg\|_2 \|\bv\|_2}{\sqrt{n}}   \left(\|\bv\|_2^2 + \sigma^2\right)^{-1/2}\right) \,\\
		\leq \frac{\|\bh\|_2^2+\|\bh\|_2\|\bg\|_2}{n}\,.
	\end{gather}
	The Hessian of $\frac12 (f(\bx))_+^2$ is $[\nabla f(\bx) \nabla f(\bx)^\top + f(\bx) \nabla^2 f(\bx)]\,\indic{f(\bx)\ge0}$,
	whence on the event appearing in the statement of the lemma,
	\begin{equation}
      \|\nabla^2\cdot(f(\bv)_+^2/2) \|_{\mathrm{op}} 
      \leq \frac{2\|\bh\|_2^2}{n} + \frac{2\|\bg\|_2^2}{n} + \frac{\|\bh\|_2^2+\|\bh\|_2\|\bg\|_2}{n} 
      \leq \frac{1}{c_0}\,,
	\end{equation}
	where $c_0 = (4 + 4/\Deltamin + 2(1 + \Deltamin^{-1/2}))^{-1} $.
	That is, $\bv \mapsto \frac12 f(\bv)_+^2$ is $1/c_0$-strongly smooth.
	Note that 
	\begin{equation}
		\cuT(\bt) = - \tilde f^*(-\bt) - \frac{\lambda}{\sqrt{n}} \|\thetastar\|_1\,,
	\end{equation}
	where $\tilde f(\tilde \bv) := f( \mySigma^{1/2}(\sqrt{n}\tilde \bv/\lambda - \thetastar ) )^2/2$, 
	and $\tilde f^*$ is the Fenchel-Legendre conjugate of $\tilde f$.
	Because $f(\bv)^2_+/2$ is $1/c_0$-strongly smooth, $\tilde f$ is $\frac{n\kappamax}{c_0\lambdamin^2}$-strongly smooth.
	By the duality of strong smoothness and strong convexity \cite[Theorem 6]{Kakade2009OnTD}, we conclude that $\cuT(\bv)$ is $\frac{c_0\lambdamin^2}{n\kappamax}$-strongly concave.
\end{proof}


\subsection{Control of the Lasso sparsity: proof of Theorem \ref{ThmLassoSparsity}}
\label{SecThmLassoSparsity}

For notational convenience, let us write 
\begin{equation}
	 \bar\mySigma \defn \frac{1}{\kappamin}\mySigma\,,
	\;\;\;\;\;\;\;\;
	\bar\tau^* \defn \sqrt{\frac{1}{\kappamin}} \taustar\,,
	\;\;\;\;\;\;\;\;
	\bar\lambda \defn \frac{1}{\kappamin} \lambda\,,
\end{equation}
so that by Eqs.~\eqref{EqnSTH}
\begin{align}
	\thetahat^f 
	&= 
	\arg\min_{\mytheta \in \reals^p}\left\{\frac{\zetastar}{2} \Big\|\frac{\taustar}{\sqrt{n}} \bg + \mySigma^{1/2}(\thetastar - \mytheta)\Big\|_2^2 + \frac{\lambda}{\sqrt{n}} \|\mytheta\|_1\right\} \\
	&=
	\arg\min_{\mytheta \in \reals^p}\left\{\frac{\zetastar}{2} \Big\|\frac{\bar\tau^*}{\sqrt{n}} \bg + \bar\mySigma^{1/2}(\thetastar - \mytheta)\Big\|_2^2 + \frac{\bar\lambda}{\sqrt{n}} \|\mytheta\|_1\right\}\,. \label{eq:ThetaF}
\end{align}
The KKT conditions of this optimization problem are
\begin{equation}
	\bar\mySigma^{1/2}
		\left( \frac{\bar\tau^*}{\sqrt{n}} \bg + \bar\mySigma^{1/2}(\thetastar - \thetahat^f) \right) 
		\in 
		\frac{\bar\lambda}{\sqrt{n}\zetastar} \partial \|\thetahat^f\|_1\,,
\end{equation}
whence
\begin{equation}
\label{EqnThetaHatFBreveY}
	\thetahat^f 
		=
		\softthreshold\left(
			\thetahat^f + \bar\mySigma^{1/2}
			\left( \frac{\bar\tau^*}{\sqrt{n}} \bg + \bar\mySigma^{1/2}(\thetastar - \thetahat^f) \right)
			;
			\frac{\bar\lambda}{\sqrt{n}\zetastar}
		\right)
		=:
		\softthreshold\left(
			\breveby^f
			;
			\frac{\bar\lambda}{\sqrt{n}\zetastar}
		\right)\,,
\end{equation}
and by Eq.~\eqref{EqnDefSGFixed}
\begin{equation}
\label{EqnSGInBreveY}
	\bthat^f 
		=
		\frac{\sqrt{n}\zetastar}{\bar\lambda}
		\left(
			\breveby^f
			-
			\softthreshold\left(
				\breveby^f
				;
				\frac{\bar\lambda}{\sqrt{n}\zetastar}
			\right)
		\right)\,,
\end{equation}
where $\softthreshold(\cdot,\alpha)$ applies $x \mapsto \sign(x)(|x|-\alpha)_+$ coordinates-wise.
This representation is useful because the marginals of $\breveby^f$
have bounded density, which will allow us to control the expected number of coordinates of $\bthat^f$ which are close to 1.
\begin{lem}[Anti-concentration of $\breveby^f$]
\label{LemMargDensBound}
	For each $j$, 
	the coordinate $\brevey^f_j$ has marginal density with respect to Lebesgue measure bounded above by $\frac{\sqrt{n}\kappamin^{1/2}\kappacond}{\sqrt{2\pi}\sigmamin}$.
\end{lem}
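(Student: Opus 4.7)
The plan is to reduce the problem to a one-dimensional change-of-variables computation: after conditioning on an appropriate $(p-1)$-dimensional slice of $\bg$, I will show that along the remaining direction $\brevey^f_j$ is a strictly increasing, piecewise-linear function of a single standard Gaussian variable whose slope is uniformly bounded below.

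First I would recall from Eq.~\eqref{EqnThetaHatFBreveY} that
\[
\breveby^f = \thetahat^f + \kappamin^{-1}\mySigma^{1/2}(\by^f - \mySigma^{1/2}\thetahat^f),\qquad \by^f = \mySigma^{1/2}\thetastar + \taustar\bg,
\]
and decompose $\bg = \xi\, \bm{u}/\|\bm{u}\|_2 + \bg_\perp$, where $\bm{u} := \mySigma^{1/2}\bm{e}_j$ (so $\|\bm{u}\|_2 = \sqrt{\Sigma_{jj}}$). Then $\xi \sim \normal(0,1)$ independently of $\bg_\perp$, and $(\mySigma^{1/2}\bg)_j = \bm{u}^\top\bg = \xi\sqrt{\Sigma_{jj}}$. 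Conditional on $\bg_\perp$, the map $\xi \mapsto \by^f$ is affine, and because $\thetahat^f$ is the proximal operator of a scaled $\ell_1$-norm applied to $\by^f$, the map $\xi \mapsto \brevey^f_j$ is continuous and piecewise linear.

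The key step is to lower-bound the slope on each linear piece. On a region where the active set of $\thetahat^f$ equals some $A \subseteq [p]$, differentiating the KKT conditions for \eqref{eq:ThetaF} gives $\partial_\xi \thetahat^f_A = (\taustar/\sqrt{\Sigma_{jj}})\,\mySigma_{AA}^{-1}\mySigma_{A,j}$ and $\partial_\xi \thetahat^f_{A^c} = \bm{0}$. Substituting into the formula for $\brevey^f_j$ and simplifying yields
\[
\partial_\xi \brevey^f_j \;=\;
\begin{cases}
\taustar/\sqrt{\Sigma_{jj}}, & j \in A,\\[2pt]
\taustar\,\Sigma_{j\mid A}/(\kappamin\sqrt{\Sigma_{jj}}), & j \notin A,
\end{cases}
\]
where $\Sigma_{j\mid A} := \Sigma_{jj} - \mySigma_{j,A}\mySigma_{AA}^{-1}\mySigma_{A,j}$ is a Schur complement. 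Since $(\mySigma^{-1})_{jj} \leq \|\mySigma^{-1}\|_{\mathrm{op}} = 1/\kappamin$ and conditional variance is monotone in the conditioning set, $\Sigma_{j\mid A} \geq 1/(\mySigma^{-1})_{jj} \geq \kappamin$. Hence in both cases $\partial_\xi \brevey^f_j \geq \taustar/\sqrt{\Sigma_{jj}} \geq \taumin/\sqrt{\kappamax}$.

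Consequently, $\xi \mapsto \brevey^f_j$ is a piecewise affine bijection of $\reals$ with slope at least $\taumin/\sqrt{\kappamax}$, so its inverse is $\sqrt{\kappamax}/\taumin$-Lipschitz. The area formula gives the conditional density bound $f_{\brevey^f_j \mid \bg_\perp}(t) \leq \sqrt{\kappamax}/(\sqrt{2\pi}\taumin)$, and taking expectation over $\bg_\perp$ yields the same bound on the marginal density. Since $\sqrt{\kappamax} = \kappamin^{1/2}\sqrt{\kappacond} \leq \kappamin^{1/2}\kappacond$ (using $\kappacond \geq 1$), the claimed bound follows. I expect the main obstacle to be the slope identification through the KKT conditions and establishing the Schur-complement lower bound $\Sigma_{j\mid A} \geq \kappamin$ uniformly in $A$; the fact that the active set of $\thetahat^f$ changes on a $\xi$-measure-zero set is handled for free since $\brevey^f_j$ is continuous and the slope lower bound holds on each linear piece.
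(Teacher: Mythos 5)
Your proof is correct, and it takes a genuinely different route from the one in the paper. Both arguments reduce the problem to a one-dimensional change of variables along the same direction (you decompose $\bg$ along $\mySigma^{1/2}\bm{e}_j$; the paper does the same, calling it $\bar{\bm\sigma}_j$, the $j$-th row of $\bar\mySigma^{1/2}$), and both must lower-bound the slope of $\xi\mapsto\brevey^f_j$. The divergence is in how that lower bound is obtained. You differentiate the KKT system piece-by-piece, derive the explicit slope formulas $\taustar/\sqrt{\Sigma_{jj}}$ (for $j\in A$) and $\taustar\,\Sigma_{j\mid A}/(\kappamin\sqrt{\Sigma_{jj}})$ (for $j\notin A$), and then close with the Schur-complement bound $\Sigma_{j\mid A}\geq \Sigma_{j\mid -j}=1/(\mySigma^{-1})_{jj}\geq\kappamin$; all of this checks out, including the piecewise-linearity and finiteness of the active-set partition that makes the measure-zero kink set harmless. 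The paper instead avoids any differentiation of the proximal map: it writes $\breveby^f = \thetastar + \bar\mySigma^{1/2}\bigl(\bar\tau^*\bg + f(\bar\tau^*\bg)\bigr)$ with $f = (\Ind_p - \bar\mySigma^{-1})\bar\mySigma^{1/2}(\thetastar-\thetahat^f)$, observes that $f$ is globally $(1-\kappacond^{-1})$-Lipschitz because $\bar\mySigma^{1/2}\thetahat^f$ is a $1$-Lipschitz proximal map and the eigenvalues of $\Ind_p - \bar\mySigma^{-1}$ lie in $[0,1-\kappacond^{-1}]$, and concludes the slope bound from this global contraction, without ever opening up the KKT conditions or invoking piecewise linearity. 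The paper's argument is shorter and more robust (it would generalize unchanged to non-polyhedral penalties), whereas your explicit computation buys a slightly sharper constant: your bound is $\sqrt{\kappamax}/(\sqrt{2\pi}\taumin) = \kappamin^{1/2}\sqrt{\kappacond}/(\sqrt{2\pi}\taumin)$, which improves on the paper's $\kappamin^{1/2}\kappacond/(\sqrt{2\pi}\taumin)$ by a factor $\sqrt{\kappacond}$.
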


\begin{proof}[Proof of Lemma \ref{LemMargDensBound}]
	We compute
	\begin{equation}
	\label{eq:def-breveby}
		\breveby^f
		=
		\thetastar + \bar\mySigma^{1/2}\left(\frac{\bar\tau^*}{\sqrt{n}}\bg + (\Ind_p - \bar\mySigma^{-1})\bar\mySigma^{1/2}(\thetastar - \thetahat^f)\right)
		=:
		\thetastar + \bar\mySigma^{1/2}\left( \frac{\bar\tau^*}{\sqrt{n}} \bg + f(\bar\tau^*\bg/\sqrt{n}) \right)\,.
	\end{equation}
	By definition, all eigenvalues of $\bar\mySigma$ are bounded below and above by $1$ and $\kappacond$, respectively,
	so that all eigenvalues of $(\Ind_p - \bar\mySigma^{-1})$ are between $0$ and $1 - \kappacond^{-1}$.
	Because $\bar\mySigma^{1/2}\thetahat^f$ is $1$-Lipschitz in $\bar\tau^*\bg / \sqrt{n}$ (by Eq.~\eqref{eq:ThetaF}, using \cite[pg.~131]{Parikh2013ProximalAlgorithms}),
	the function $f$ is $(1-\kappacond^{-1})$-Lipschitz.

	Let $\bar\bsigma_i$ be the $i^\text{th}$ row of $\bar\mySigma^{1/2}$.
	Let $\mathsf{P}_i^\perp$ be the projection operator onto the orthogonal complement of the span of $\bar\bsigma_i$.
	Then 
	\begin{equation}
		\brevey_i^f 
			= 
			\theta^*_i 
			+ 
			\bar\tau^* \bar\bsigma_i^\top \bg / \sqrt{n}  
			+
			\bar\bsigma_i^\top f\Big( \bar\tau^*  (\bar\bsigma_i^\top \bg / \sqrt{n}) \bar\bsigma_i / \|\bar\bsigma_i\|_2^2 + \bar\tau^* \mathsf{P}_i^\perp \bg/\sqrt{n} \Big)\,.
	\end{equation}
        Consider the function
	\begin{equation}
          h(x) := \bar\tau^* x
			+
			\bar\bsigma_i^\top f\Big( \bar\tau^*  x \bar\bsigma_i / \|\bar\bsigma_i\|_2^2 + \bar\tau^* \mathsf{P}_i^\perp \bg /\sqrt{n} \Big) \,.
         	\end{equation}
	Since $f$ is $(1-\kappacond^{-1})$-Lipschitz,
	for any $x_1<x_2$, $x_1,x_2 \in \reals$, we have
	\begin{equation}
          h(x_2)-h(x_1)\ge  \bar\tau^*\kappacond^{-1}(x_2-x_1)\, .\label{eq:LowerBoundDerivative}
	\end{equation}
	Because $\bar\bsigma_i^\top\bg / \sqrt{n} \sim \normal(0,\|\bar\bsigma_i\|_2^2/n)$, its density is upper bounded by
        $\sqrt{n/(2\pi\|\bar\bsigma_i\|_2^2)}$. 
    Further, it
        is independent of $\mathsf{P}_i^\perp\bg$.
	Thus, the lower bound  \eqref{eq:LowerBoundDerivative} implies that $\brevey_i^f$ has density  $q(y)$ upper bounded by
        \begin{equation}
          \sup_y q(y)\le  \frac{\sqrt{n}}{\sqrt{2\pi}\|\bar\bsigma_i\|_2}\cdot \frac{1}{\inf_y h'(y)}\leq \frac{\sqrt{n}\kappamin^{1/2}\kappacond}{\sqrt{2\pi}\sigmamin}\, ,
        \end{equation}
	where we have used that $\|\bar\bsigma_i\|_2$ is no smaller than the minimal singular value of $\bar \mySigma^{1/2}$ which is no smaller than 1 by construction,
	and that $\bar \tau^* = \tau^*/\kappamin^{1/2} \geq \sigmamin/\kappamin^{1/2}$.
\end{proof}

We are now ready to complete the proof of Theorem \ref{ThmLassoSparsity}.
We prove high-probability upper and lower bounds on the sparsity separately.
The arguments are almost identical, but establishing the upper bound involves analyzing the subgradient $\bthat$ and establishing the lower bound involves analyzing $\thetahat$.

\medskip
\noindent \textbf{Upper bound on sparsity via the subgradient:}
The lasso sparsity is upper bounded in terms of the lasso subgradient:
\begin{equation}
\label{eq:sparsity-to-sg}
	\frac{\|\thetahat\|_0}n
	\leq 
	\frac{|\{ j \in [p] : |\widehat t_j| = 1\}|}{n}.
\end{equation}
We prove a high-probability upper bound on the right-hand side.
Given any $\Delta \leq 1$,  
define $T(\breveby,\Delta) := \{ j \in [p] \mid |\brevey_j| \geq \bar\lambda(1 + \Delta)/(\sqrt{n}\zetastar)\}$.
We will control quantity $T(\breveby^f,-\Delta)$ for $\Delta \leq 1$.
Consider the function 
\begin{align}
	\phi^{\mathsf{ub}}(\breveby,\Delta) := \frac1n\sum_{j=1}^p \phi^{\mathsf{ub}}_1(\brevey_j,\Delta) 
	\;\;\;\text{where}\;\;\; 
	\phi^{\mathsf{ub}}_1(\brevey,\Delta) := \min(1,\sqrt{n}\zetastar|\brevey|/(\bar\lambda\Delta) - 1/\Delta + 2)_+ \,.
\end{align}
The function $\phi^{\mathsf{ub}}_1$ regarding the first argument equals to 0 on $[-\bar \lambda(1-2\Delta)/(\sqrt{n}\zetastar),\bar \lambda(1-2\Delta)/(\sqrt{n}\zetastar)]$, 
1 on $[-\bar \lambda(1-\Delta)/(\sqrt{n}\zetastar),\bar \lambda(1-\Delta)/(\sqrt{n}\zetastar)]^c$, 
and linearly interpolates between the function values on these sets everywhere else.
Unlike $\breveby\mapsto T(\breveby,\Delta)$,
the function $\phi^{\mathsf{ub}}(\breveby,\Delta)$ 
is $\frac{\sqrt{p}\zetastar}{\bar\lambda\Delta\sqrt{n}} \leq \frac{\zetastar}{\bar\lambda\Delta\sqrt{\Deltamin}} $-Lipschitz in $\breveby$.
For all $\breveby$, by definition we have 
\begin{equation}
	\frac{|T(\breveby,-\Delta)|}{n} \leq \phi^{\mathsf{ub}}(\breveby,\Delta)\,.
\end{equation}
(The preceding display justifies the superscript $\mathsf{ub}$, which stands for ``upper bound''.)
Moreover, by Eq.~\eqref{EqnThetaHatFBreveY}
\begin{equation}
	\phi^{\mathsf{ub}}(\breveby^f,\Delta) \leq \frac{\|\thetahat^f\|_0}{n} + \frac{|\{j \in [p] \mid 1- \sqrt{n}\zetastar|\brevey^f_j|/\bar\lambda \in [0,2\Delta]\}|}{n}\,,
\end{equation}
whence
\begin{equation}
	\E[\phi^{\mathsf{ub}}(\breveby^f,\Delta)] 
	\leq 
	1-\zetastar + \frac{4\bar\lambda\Delta}{(n/p)\zetastar} \frac{\kappamin^{1/2}\kappacond}{\sqrt{2\pi}\sigmamin}
	\leq 
	1-\zetastar + \frac{4\lambdamax\kappacond\Delta}{\Deltamin \sigmamin\zetamin\sqrt{2\pi\kappamin}}\,,
\end{equation}
where we have applied Lemma \ref{LemMargDensBound}.
By the definition of $\breveby^f$ in Eq.~\eqref{eq:def-breveby} and recalling that $\mySigma^{1/2}\thetahat^f$ is $\tau^*/\sqrt{n}$-Lipschitz in $\bg$,
we have that
$\bg \mapsto \breveby^f$ is $\kappacond^{1/2}\bar\tau^* / \sqrt{n} + \kappacond^{1/2} \tau^* / (\sqrt{n}\kappamin^{1/2}) = 2 \kappacond^{1/2}\taumax/(\sqrt{n}\kappamin^{1/2})$-Lipschitz in $\bg$.
By Gaussian concentration of Lipschitz functions,
\begin{align}
	\mprob&\left(
			\frac{|T(\breveby^f,-\Delta)|}{n} 
			\geq 
			1-\zetastar + \frac{4\lambdamax\kappacond\Delta}{\Deltamin\sigmamin\zetamin\sqrt{2\pi\kappamin}}\, + \epsilon 
		\right)
		\leq
		\mprob\Big(
			\phi^{\mathsf{ub}}(\breveby^f,\Delta)
			\geq 
			\E[\phi^{\mathsf{ub}}(\breveby^f,\Delta)] + \epsilon 
		\Big)
	\nonumber\\
	&\qquad\qquad\qquad\qquad\qquad\leq 
		\exp\left(-\frac{n\Deltamin\bar\lambda^2}{2{\zetastar}^2}\Delta^2 \cdot \frac{\kappamin}{4\kappacond\taumax^2}\epsilon^2\right) \nonumber\\
	&\qquad\qquad\qquad\qquad\qquad\leq 
		\exp\left(-\frac{n\Deltamin\lambdamin^2}{8\kappamin\zetamax^2\kappacond\taumax^2}\Delta^2\epsilon^2\right)\,.
\end{align}
Plugging in $\epsilon=\Delta$ and absorbing constants appropriately,
we conclude there exists $c_1,c_2> 0$ depending only on $\cuPmodel$ such that for $\Delta \geq 0$
\begin{equation}
	\mprob\left(
			\frac{|T(\breveby^f,-\Delta)|}{n} 
			\geq 
			1 - \zetastar + c_1\Delta 
		\right) \leq \exp\left(-c_2n\Delta^4\right)\,.
\end{equation}
By Eq.~\eqref{EqnSGInBreveY}, if $\frac{|T(\breveby^f,-\Delta)|}{n} < 1 - \zetastar + c_1\Delta$, 
then for all $\bt \in \reals^p$ with $|\{j \in [p] \mid |t_j| \geq 1 \}|/n \geq 1 - \zetastar + 2c_1\Delta$, 
\begin{equation}
	\frac{\|\bthat^f - \bt\|_2^2}{n} \geq c_1\Delta^3\,, 
\end{equation}
because there are at least $c_1\Delta n$ coordinates where $\bthat^f$ and $\bt$ differ by at least $\Delta$.
Absorbing constants and taking $D = \{ \bt \in \reals^p \mid |\{ j \in [p] \mid 1 - |t_j| \leq \Delta \}|/n \leq 1-\zeta^*+c_1\Delta \}$ in Lemma \ref{LemSubgradient},
there exists $C,c,c' > 0$ depending only on $\cuPmodel$ and $\delta$ such that for $\Delta < c'$
\begin{equation}
	\mprob\left(
			\frac{|\{j \in [p] \mid |\widehat t_j| \geq 1\}|}{n} \geq 1-\zetastar + 2\Delta 
		\right) 
		\leq 
		2\exp\Big(-cn\Delta^4\Big) 
		+ 
		\frac{C}{\Delta^3}\exp\left(-cn\Delta^6\right)\,.
\end{equation}
We may absorb the first term into the second at the cost of changing the constants $C,c,c'$ because the bound applies only to $\Delta < c'$.
By Eq.~\eqref{eq:sparsity-to-sg},
$\mprob(\|\thetahat\|_0/n > 1 - \zetastar + \Delta) \leq \frac{C}{\Delta^3}e^{-cn\Delta^6}$.
A high probability upper bound on the sparsity of the lasso solution has been established.

\medskip
\noindent \textbf{Lower bound on sparsity via the lasso estimate:}
Define 
\begin{align}
	\phi^{\mathsf{lb}}(\breveby,\Delta) := \frac1n\sum_{j=1}^p \phi^{\mathsf{lb}}_1(\brevey_j,\Delta) 
	\;\;\;\text{where}\;\;\; 
	\phi^{\mathsf{lb}}_1(\brevey,\Delta) := \min(1,\sqrt{n}\zetastar|\brevey|/(\bar\lambda\Delta) - 1/\Delta - 1)_+ \, .
\end{align}
The function $\phi^{\mathsf{lb}}_1$ is 0 on $[-\bar \lambda(1+\Delta)/\zetastar,\bar \lambda(1+\Delta)/\zetastar]$, 
1 on 
$[-\bar \lambda(1+2\Delta)/(\sqrt{n}\zetastar),\bar \lambda(1+2\Delta)/(\sqrt{n}\zetastar)]^c$, 
and linearly interpolates between the function values on these sets everywhere else.
The function $\phi^{\mathsf{lb}}$ is a $\frac{\sqrt{p}\zetastar}{\bar\lambda\Delta\sqrt{n}} \leq \frac{\zetastar}{\bar\lambda\Delta\sqrt{\Deltamin}}$-Lipschitz lower bound for $|T(\breveby,\Delta)|/n$:
\begin{equation}
	\frac{|T(\breveby,\Delta)|}{n} \geq \phi^{\mathsf{lb}}(\breveby,\Delta)\,.
\end{equation}
Moreover, by Eq.~\eqref{EqnThetaHatFBreveY}
\begin{equation}
	\phi^{\mathsf{lb}}(\breveby^f,\Delta) \geq \frac{\|\thetahat^f\|_0}{n} - \frac{|\{j \in [p] \mid \sqrt{n} \zetastar|\brevey^f_j|/\bar\lambda - 1 \in [0,2\Delta]\}|}{n}\,,
\end{equation}
whence
\begin{equation}
	\E[\phi^{\mathsf{lb}}(\breveby^f,\Delta)] 
	\geq 
	1-\zetastar - \frac{4\bar\lambda\Delta}{(n/p)\zetastar} \frac{\kappamin^{1/2}\kappacond}{\sqrt{2\pi}\sigmamin}
	\leq 
	1-\zetastar - \frac{4\lambdamax\kappacond\Delta}{\Deltamin\sigmamin\zetamin\sqrt{2\pi\kappamin}}\,,
\end{equation}
where we have applied Lemma \ref{LemMargDensBound}.
Following the same argument used to establish the upper bound,
we conclude there exists $c_1,c_2> 0$ depending only on $\cuPmodel$ such that for $\Delta \geq 0$
\begin{equation}
\label{eq:Gordon-large-sg-bound}
	\mprob\left(
			\frac{|T(\breveby^f,\Delta)|}{n} 
			\leq 
			1 - \zetastar - c_1\Delta 
		\right) \leq \exp\left(-c_2n\Delta^4\right)\,.
\end{equation}
By Eq.~\eqref{EqnThetaHatFBreveY}, 
if $\frac{|T(\breveby^f,\Delta)|}{n} > 1 - \zetastar - c_1\Delta$, 
then $\frac{|\{j \in [p] \mid |\widehat \theta^f_j| \geq \bar \lambda \Delta / (\sqrt{n}\zeta^*)\}|}{n} > 1 - \zetastar - c_1\Delta$.
Then for all $\mytheta \in \reals^p$ with $\|\mytheta\|_0/n \leq 1 - \zetastar - 2c_1\Delta$, 
\begin{equation}
	\|\thetahat^f - \mytheta\|_2^2 \geq c_1\Delta^3\,, 
\end{equation}
because there are at least $c_1\Delta n$ coordinates where $\thetahat^f$ and $\mytheta$ differ by at least $\bar \lambda \Delta / (\sqrt{n}\zeta^*)$.
In particular,
taking $D := \{ \mytheta \in \reals^p \mid \frac{|\{j \in [p] \mid |\widehat \theta^f_j| \geq \bar \lambda \Delta / (\sqrt{n}\zeta^*)\}|}{n} > 1 - \zetastar - c_1\Delta\}$
and $D_\epsilon := \{\bx \in \reals^p \mid \inf_{\bx'\in D}\|\bx-\bx'\|_2 \geq \epsilon\}$,
we have that $\{\mytheta \in \reals^p \mid \|\mytheta\|_0/n \leq 1 - \zetastar - 2c_1\Delta\} \subset D_{\epsilon/2}$ for $\epsilon/2 = \sqrt{c_1\delta\Delta^3}$.
Equation~\eqref{eq:Gordon-large-sg-bound} says $\mprob(\widehat \mytheta^f \not \in D) \leq e^{-c_2n\Delta^4}$.
Thus, 
by the proof of Theorem \ref{ThmControlAlphaSmoothEstimate} in Appendix \ref{SecPfThmLassoL2} ---in particular, Eq.~\eqref{EqnMinimizationBound}---
we conclude
there exists $C,c,c' > 0$ depending only on $\cuPmodel$ such that for $\Delta < c'$
\begin{equation}
	\mprob\left(
			\frac{\|\thetahat\|_0}{n} \leq 1-\zetastar - \Delta 
		\right) 
		\leq 
		\exp\Big(-cn\Delta^4\Big) 
		+ 
		\frac{C}{\Delta^3}\exp\left(-cn\Delta^6\right)\,.
\end{equation}
We may absorb the first term into the second at the cost of changing the constants $C,c,c'$ because the bound applies only to $\Delta < c'$.
(In applying Eq.~\eqref{EqnMinimizationBound}, recall $\hbv^f = \mySigma^{1/2}(\thetahat^f-\thetastar)$, with the definition of $D$ modified according to this change of variables).
A high probability lower bound on the sparsity of the lasso solution has been established.

Theorem \ref{ThmLassoSparsity} follows by putting together the upper and lower bounds.

\subsection{Proof of Theorem \ref{ThmLassoTau}: estimate of effective noise level}

This result is an immediate consequence of the uniform and simultaneous and uniform concentration of $\widehat{\tau}(\lambda)$ and of $\| \thetahat - \thetastar \|_{\mySigma}$.
The simultaneous and uniform concentration of $\| \thetahat - \thetastar \|_{\mySigma}$ was established in Theorem~\ref{ThmControlLassoEst},
with a stronger rate than that which appears in Theorem \ref{ThmLassoTau}.
The uniform and simultaneous concentration of $\widehat{\tau}(\lambda)$ 
follows from the the uniform and simultaneous concentration of the Lasso residual and Lasso sparsity.
To establish these,
we use the same argument as used in Theorem~\ref{ThmControlLassoEst} to establish simultaneous concentration:
we use the point-wise concentration of the residual and sparsity,
as established in Theorems \ref{ThmLassoResidual} and \ref{ThmLassoSparsity},
as well as the fact that these quantities are Lipschitz in $\lambda$,
as we now explain.

Regarding the residual $\| \by - \bX \thetahat \|_2/\sqrt{n}$,
the required Lipschitz property is established using the same property for $\thetahat$, which is established in proving Theorem  combined with a high-probability operator norm bound 
$\mprob(\| \bX \|_2 / \sqrt{n} \geq C) \leq e^{-cp}$ for appropriately chosen and 
$\cuPmodel$-dependent $C,c > 0$.

Consider now the Lasso sparsity.
In the proof of Theorem \ref{SecThmLassoSparsity},
we showed that with the probability given in Theorem \ref{ThmLassoSparsity},
the subgradient $\bthat$ was (Euclidean) distance at least $\sqrt{n} \epsilon^{3/2} $ from the subgradient of any Lasso solution with sparsity $\| \thetahat \|_0/n > 1- \zeta^* + c'\epsilon$.
Moreover, because $\bthat = \frac{1}{\sqrt{n}\lambda}\bX^\top(\by - \bX \thetahat)$ and $\bthat^f = (\sqrt{n}\zeta^*/\lambda)\mySigma^{1/2}(\by^f - \mySigma^{1/2} \thetahat^f)$, 
we have that $\bthat / \sqrt{n}$ and $\bthat^f / \sqrt{n}$ satisfies the same continuity properties in $\lambda$ which are established for $\thetahat$ and $\thetahat^f$ in the proof of Theorem \ref{ThmControlLassoEst} (see, in particular, Lemmas \ref{LemCostComparisonLambda} and \ref{LemPopCostComparisonLambda} and the following paragraphs).
Because we must control the location of the subgradient to within a region of radius $\epsilon^{3/2}$, 
to get the upper bound on the Lasso sparsity uniformly and simultaneously over $[\lambdamin,\lambdamax]$, 
we can consider a mesh of size $C/(\epsilon^{3/2})^2$, 
which incurs an extra factor for $1/\epsilon^3$ in front of the probability bound in Theorem \ref{ThmLassoSparsity}.
An equivalent argument applies to the lower bound on the Lasso sparsity: 
in the proof of Theorem \ref{SecThmLassoSparsity},
we showed that with the probability given in the theorem,
the Lasso estimate $\thetahat$ was (Euclidean) distance at least $ \epsilon^{3/2} $ from any vector with sparsity $\| \thetahat \|_0/n < 1- \zeta^* - c'\epsilon$.
Thus, 
to get a lower bound on the Lasso sparsity uniformly and simultaneously over $[\lambdamin,\lambdamax]$, 
we incur the extra factor of $1/\epsilon^3$ in front of the probability bound in Theorem \ref{ThmLassoSparsity}.

Combining these results gives Theorem \ref{ThmLassoTau}.


\subsection{Control of the debiased Lasso: proofs of Theorem \ref{ThmDBLasso} and Corollary \ref{CorDBLassoCI}}
\label{SecPfControlDBLasso}

We control the debiased Lasso by approximating it with the debiased $\alpha$-smoothed Lasso, which turns out to be easier to study due to the Lipschitz differentiability of the $\sfM_\alpha$ (cf.~\eqref{EqnMoreau}).
Define the debiased $\alpha$-smoothed Lasso
\begin{equation}
	\thetahat_\alpha^{\mathrm{d}} := \thetahat_\alpha + \frac{\mySigma^{-1}\bX^\top(\by - \bX\thetahat_\alpha)}{n\zetastar_\alpha}\,.
\end{equation}
This definition is analogous to \eqref{EqnDBlasso} except that $1 - \|\thetahat\|_0/n$ is replaced by the constant $\zetastar_\alpha$.
It is not feasible to calculate $\zetastar_\alpha$ exactly without knowing $\thetastar$, whence $\thetahat_\alpha^{\mathrm{d}}$ cannot be computed either. 
Rather, $\thetahat_\alpha^{\mathrm{d}}$ is a theoretical tool.
Define 
\begin{equation}
	\thetahat_\alpha^{f,\de}
		:= 
		\thetahat_\alpha^f + \mySigma^{-1/2}(\by^f - \mySigma^{1/2} \thetahat_\alpha^f)
		=
		\thetastar + \frac{\tau_\alpha^*}{\sqrt{n}} \mySigma^{-1/2}\bg
\end{equation}

To establish Theorem \ref{ThmDBLasso},
we first characterize the behavior of $\thetahat_\alpha^{\mathrm{d}}$ and second show that $\thetahat^{\mathrm{d}}$ is close to $\thetahat_\alpha^{\mathrm{d}}$ with high-probability.
The next lemma characterizes $\thetahat_\alpha^{\mathrm{d}}$.
\begin{lem}[Characterization of the debiased $\alpha$-smoothed Lasso]
  \label{LemDBSmoothedLasso}
  Let $\alpha>0$.
	Under assumptions  \ref{assump:1} and $\textsf{A2}_\alpha$, 
	there exist constants $C,c,c' > 0$ depending only on $\cuPmodel$ such that for any $1$-Lipschitz $\phi: \real^p\rightarrow \reals$,
	we have for all $\epsilon < c'$
	\begin{align}
		\mprob \left(
			\Big|
				\phi\Big(\bthetahatd_\alpha\Big) 
				-
				\E\Big[\phi\Big(\thetahat_\alpha^{f,\de}\Big)\Big]
			\Big| 
			> 
			\Big(1+\frac{\lambdamax}{\kappamin^{1/2}\zetamin\sqrt{n}\alpha}\Big)
			\epsilon
		\right)
		\leq 	
		\frac{C}{\epsilon^2} e^{-cn\epsilon^4}\,.
	\end{align}
\end{lem}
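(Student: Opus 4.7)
The plan is to reduce Lemma \ref{LemDBSmoothedLasso} to Theorem \ref{ThmControlAlphaSmoothEstimate} by exploiting the fact that, for $\alpha>0$, the Moreau envelope $\sfM_\alpha$ is continuously differentiable. The KKT conditions of the random-design problem \eqref{EqnAlphaSmoothRandomD} then read $\bX^\top(\by-\bX\thetahat_\alpha)=\lambda\,\nabla\sfM_\alpha(\thetahat_\alpha)$, so the debiased estimator becomes a deterministic function of the $\alpha$-smoothed Lasso estimate:
\begin{equation*}
\bthetahatd_\alpha \;=\; F_\alpha(\thetahat_\alpha)\,,\qquad F_\alpha(\mytheta)\;:=\;\mytheta+\frac{\lambda}{\zetastar_\alpha}\,\mySigma^{-1}\nabla\sfM_\alpha(\mytheta)\,.
\end{equation*}
This is precisely the motivation for introducing the smoothing: the analogous statement fails at $\alpha=0$ because the Lasso subgradient is not determined by $\thetahat$ alone on inactive coordinates, so one cannot write $\thetahatd$ as a function of $\thetahat$ only.

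The first step is to check that the same map $F_\alpha$ applied to the fixed-design estimator produces exactly the Gaussian target. The KKT conditions of \eqref{EqnAlphaSmoothLassoFixed} with parameters $\taustar_\alpha,\zetastar_\alpha$ give $\lambda\,\nabla\sfM_\alpha(\thetahat_\alpha^f)=\zetastar_\alpha\,\mySigma^{1/2}(\by^f-\mySigma^{1/2}\thetahat_\alpha^f)$, whence direct substitution yields
\begin{equation*}
F_\alpha(\thetahat_\alpha^f) \;=\; \thetahat_\alpha^f+\mySigma^{-1/2}(\by^f-\mySigma^{1/2}\thetahat_\alpha^f) \;=\; \mySigma^{-1/2}\by^f \;=\; \thetastar+\taustar_\alpha\,\mySigma^{-1/2}\bg\,.
\end{equation*}
Thus the comparison target appearing in Theorem \ref{ThmControlAlphaSmoothEstimate}, once we use $\phi\circ F_\alpha$ as the test function, coincides with the target of Lemma \ref{LemDBSmoothedLasso}.

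The second step is the Lipschitz bound on $F_\alpha$. Its Jacobian is $\nabla F_\alpha(\mytheta)=\Ind_p+(\lambda/\zetastar_\alpha)\,\mySigma^{-1}\nabla^2\sfM_\alpha(\mytheta)$, and from the formula for $\nabla^2\sfM_\alpha$ recorded in the proof of Lemma \ref{LemDFControl}, this Hessian is a diagonal projection scaled by $1/\alpha$. Under assumptions \textsf{A1} and $\textsf{A2}_\alpha$, this gives a Lipschitz constant $L\le 1+\lambdamax/(\kappamin^{1/2}\zetamin\alpha)$; the square-root of $\kappamin$ (rather than $\kappamin^{-1}$) is obtained by bounding the symmetrized operator $\mySigma^{-1/2}\nabla^2\sfM_\alpha\mySigma^{-1/2}$ and using that $\nabla^2\sfM_\alpha$ is a coordinate projection. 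Given any $1$-Lipschitz $\phi:\reals^p\to\reals$, the rescaled functional $\widetilde\phi(\mytheta/\sqrt p):=\phi(F_\alpha(\mytheta)/\sqrt p)/L$ is $1$-Lipschitz in its argument, so applying Theorem \ref{ThmControlAlphaSmoothEstimate} to $\widetilde\phi$ and multiplying the deviation bound through by $L$ produces
\begin{equation*}
\mprob\Big(\big|\phi(\bthetahatd_\alpha/\sqrt p)-\E[\phi(F_\alpha(\thetahat_\alpha^f)/\sqrt p)]\big|>L\epsilon\Big)\;\le\;\frac{C}{\epsilon^2}\,e^{-cn\epsilon^4}\,.
\end{equation*}
Combining this with the identification of $F_\alpha(\thetahat_\alpha^f)$ gives the claim up to absorbing $L$ into the prefactor on $\epsilon$.

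I do not expect a substantive obstacle: once the KKT identity collapses $\bthetahatd_\alpha$ to a Lipschitz function of $\thetahat_\alpha$, the lemma is almost a direct corollary of Theorem \ref{ThmControlAlphaSmoothEstimate}. The only mild bookkeeping is matching the exact $\kappamin^{-1/2}$ dependence in the stated prefactor, which requires the symmetrized Hessian bound sketched above rather than the naive operator norm. It is worth noting that the Lipschitz constant diverges as $\alpha\downarrow 0$, which makes this lemma useless at $\alpha=0$ and explains why the eventual proof of Theorem \ref{ThmDBLasso} must further compare $\bthetahatd$ with $\bthetahatd_\alpha$ for a carefully chosen small $\alpha$ and combine this with Theorem \ref{ThmLassoSparsity} and Lemma \ref{LemFixedPtContinuity} to replace the unknown $\zetastar_\alpha$ by $1-\|\thetahat\|_0/n$.
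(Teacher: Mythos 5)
Your proposal takes essentially the same route as the paper: you write $\bthetahatd_\alpha=F_\alpha(\thetahat_\alpha)$ via the KKT identity $\bX^\top(\by-\bX\thetahat_\alpha)=\lambda\nabla\sfM_\alpha(\thetahat_\alpha)$, verify $F_\alpha(\thetahat_\alpha^f)=\thetastar+\taustar_\alpha\mySigma^{-1/2}\bg$ from the fixed-design KKT, bound the Lipschitz constant of $F_\alpha$, and invoke Theorem~\ref{ThmControlAlphaSmoothEstimate} on $\phi\circ F_\alpha/L$. This is the paper's argument and it is correct.

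One caveat on the bookkeeping: your claim that ``bounding the symmetrized operator $\mySigma^{-1/2}\nabla^2\sfM_\alpha\mySigma^{-1/2}$'' yields $\kappamin^{-1/2}$ does not actually work. With $\nabla^2\sfM_\alpha=\alpha^{-1}P$ for $P$ a $0$--$1$ diagonal projection, both $\mySigma^{-1}P$ and $\mySigma^{-1/2}P\mySigma^{-1/2}$ have operator norm $\kappamin^{-1}$ in the worst case (take $P$ to project onto the eigendirection of $\mySigma$ with eigenvalue $\kappamin$); conjugation is a similarity transform and preserves eigenvalues, not singular values, so it cannot improve the $\ell_2$-operator-norm bound. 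The honest Lipschitz constant is $L=1+\lambdamax/(\kappamin\zetamin\alpha)$. The paper's own proof states $\kappamin^{1/2}$ with no more justification than you give, so this appears to be a typo there as well. It is immaterial for the lemma: since $L/L'$ is bounded by a constant depending only on $\cuPmodel$, $\cuPfixpt$, $\delta$, $\alpha$, one can always rescale $\epsilon$ and absorb the ratio into $C,c,c'$, and the downstream use in the proof of Theorem~\ref{ThmDBLasso} only needs $L=O_\alpha(1)$.
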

\noindent We leave the proof of Lemma \ref{LemDBSmoothedLasso} at the end of this section.

The following lemma will allow us to show that $\bthetahatd$ and $\bthetahatd_\alpha$ are close with high probability.
\begin{lem}[Closeness of the Lasso and $\alpha$-smoothed Lasso]
\label{lem:lasso-near-alpha-smoothed-lasso}
	There exists $C_1,C,c,\alphamax > 0$ depending only on $\cuPmodel$ such that
	\begin{equation}
		\mprob\left(\|\thetahat_\alpha - \thetahat\|_2^2 \leq C_1\sqrt{n}\alpha,~ \text{ for all } \alpha \leq \alphamax / \sqrt{n} \right) \geq 1 - Ce^{-cn}\,.
	\end{equation}
\end{lem}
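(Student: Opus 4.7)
My plan is to combine a deterministic convexity-based energy inequality with a uniform restricted-eigenvalue (RE) bound for the Gaussian design. Set $\bdelta := \thetahat - \thetahat_\alpha$. The first-order optimality conditions give some $\bs \in \partial\|\thetahat\|_1$ with $\tfrac{1}{n}\bX^\top(\bX\thetahat - \by) + \tfrac{\lambda}{n}\bs = 0$, and the analogous identity for $\thetahat_\alpha$ with $\bs$ replaced by $\nabla\sfM_\alpha(\thetahat_\alpha)$. Combined with convexity of $\|\cdot\|_1$ and of $\sfM_\alpha$, a Taylor-style expansion around each minimizer produces the twin bounds
\begin{align*}
  \Risk(\thetahat_\alpha) - \Risk(\thetahat) \;\ge\; \tfrac{1}{2n}\|\bX\bdelta\|_2^2, \qquad \Risk_\alpha(\thetahat) - \Risk_\alpha(\thetahat_\alpha) \;\ge\; \tfrac{1}{2n}\|\bX\bdelta\|_2^2.
\end{align*}
Summing these and applying the Moreau sandwich $0 \le \|\mytheta\|_1 - \sfM_\alpha(\mytheta) \le p\alpha/2$ from Eq.~\eqref{EqnMoreauRelation} to $\Risk - \Risk_\alpha$ at the two points yields the deterministic energy inequality
\begin{align*}
  \tfrac{1}{n}\|\bX\bdelta\|_2^2 \;\le\; \tfrac{\lambda p\alpha}{2n} \;=\; \tfrac{\lambda\alpha}{2\delta},
\end{align*}
which holds for every $\alpha \ge 0$ with no probabilistic event required.

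Next, I would convert this into $\ell_2$ control of $\bdelta$. Because $\bx_i \stackrel{\mathrm{iid}}{\sim} \normal(\bzero, \mySigma/n)$ with $\kappamin \le \kappa_j(\mySigma) \le \kappamax$, a Gordon-width style RE estimate, analogous to those used elsewhere in the paper (cf.\ Lemma \ref{LemGaussWidthTradeoff} and Appendix \ref{SecPfThmLassoL2}), should produce an $\alpha$-independent event $\Omega$ with $\mprob(\Omega) \ge 1 - Ce^{-cn}$ on which $\tfrac{1}{n}\|\bX\bv\|_2^2 \ge c_{\mathrm{RE}}\,\|\bv\|_2^2/p$ holds for every $\bv$ in a suitable convex cone $\cuC \subset \reals^p$. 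Combined with the energy inequality, this yields $\|\bdelta\|_2/\sqrt p \le C_1\sqrt\alpha$ on $\Omega$, simultaneously for all $\alpha \in [0, \alphamax]$, provided $\bdelta \in \cuC$.

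The main obstacle is certifying containment $\bdelta \in \cuC$ uniformly in $\alpha \in [0,\alphamax]$ with a cone narrow enough that Gaussian-design RE holds with an $\alpha$-independent constant in the proportional regime $n/p = \delta = O(1)$. Naive proportional-regime RE inequalities carry an additive penalty proportional to $\|\bv\|_1^2/n$ that does not vanish as $\alpha \to 0$, so a generic RE statement is insufficient. A natural route to circumvent this is to exploit the KKT-subtraction identity
\begin{align*}
  \bX^\top\bX\,\bdelta \;=\; \lambda\bigl(\bs - \nabla\sfM_\alpha(\thetahat_\alpha)\bigr), \qquad \bigl\|\bs - \nabla\sfM_\alpha(\thetahat_\alpha)\bigr\|_\infty \le 2,
\end{align*}
together with uniform-in-$\alpha$ $\ell_1$ and $\ell_2$ control of $\thetahat$ and $\thetahat_\alpha$, derivable from Corollary \ref{CorControlLassoEst} and its $\alpha$-smoothed counterpart Theorem \ref{ThmControlAlphaSmoothEstimate} applied to suitable $1$-Lipschitz functionals. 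These structural constraints should confine $\bdelta$ to a cone on which the cleaner Gordon-width RE bound holds with constants depending only on $\cuPmodel$, $\cuPfixpt$, and $\delta$, completing the argument.
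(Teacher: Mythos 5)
Your deterministic energy inequality is correct: using the first-order optimality of $\thetahat$ and $\thetahat_\alpha$ and the Moreau sandwich \eqref{EqnMoreauRelation}, summing the two one-sided Bregman bounds gives $\tfrac{1}{n}\|\bX\bdelta\|_2^2 \le \tfrac{\lambda\alpha}{2\delta}$ for all $\alpha \ge 0$ with no probabilistic event. However, the symmetrization that makes this step deterministic is also what dooms the rest of the argument: you use only the \emph{generic} subgradient inequality $\|\thetahat_\alpha\|_1 \ge \|\thetahat\|_1 + \langle\bs,\thetahat_\alpha - \thetahat\rangle$, which throws away exactly the information needed to control the kernel component of $\bdelta$. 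In the proportional regime $\bX$ has a $(p-n)$-dimensional kernel, and the quantities you propose to control ($\ell_1$ and $\ell_2$ norms of $\thetahat$, $\thetahat_\alpha$ via Corollary~\ref{CorControlLassoEst} / Theorem~\ref{ThmControlAlphaSmoothEstimate}, plus the $\ell_\infty$ bound $\|\bX^\top\bX\bdelta\|_\infty\le 2\lambda$ from the KKT subtraction) provide \emph{no} constraint on the direction of $\bdelta$ within that kernel. In particular, $\bX^\top\bX\bdelta$ annihilates the kernel component of $\bdelta$, so the $\ell_\infty$ bound is vacuous there; and for $\alpha > 0$ the smoothed estimator $\thetahat_\alpha$ has no exact zeros, so Theorem~\ref{ThmLassoSparsity} (which controls only $\|\thetahat\|_0$) cannot deliver a sparsity constraint on $\bdelta$. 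The cone argument you gesture at is therefore not available from these ingredients.

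What the paper does instead, and what is genuinely missing from your proposal, is to establish a full-dimensional \emph{local strong convexity} of the Lasso objective $\Risk$ around $\thetahat$ (Lemma~\ref{LemLassoLocalStrongConvexity}). The key is to retain, rather than discard, the margin on the inactive set: writing $S(\Delta/2)=\{j:|\widehat t_j|\ge 1-\Delta/2\}$, one has the sharper bound
\begin{equation}
	\Risk(\mytheta)-\Risk(\thetahat) \ \ge\ \frac{\lambda\Delta}{2n}\,\|\mytheta_{S(\Delta/2)^c}\|_1 \ +\ \frac{1}{2n}\|\bX(\mytheta-\thetahat)\|_2^2\,,
\end{equation}
whose first term is \emph{linear} in kernel-like directions and therefore dominates any quadratic at small scale — this is why strong convexity holds on a full $\ell_2$-ball of radius $c'\sqrt p$ rather than only on a cone, circumventing the RE difficulty entirely. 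That margin is made effective by the subgradient anti-concentration bound (the event $|S(\Delta/2)|/n \le 1-\zeta^*/2$, from Lemma~\ref{lem:large-SG-bound}) together with a sparse minimum-singular-value bound $\kappa_-(\bX,n(1-\zetastar/4))\ge\kappamin'$; neither ingredient appears in your sketch. Once Lemma~\ref{LemLassoLocalStrongConvexity} is in hand, the Moreau sandwich combined with $\Risk_\alpha(\thetahat_\alpha)\le\Risk_\alpha(\thetahat)$ gives $\tfrac{c_1}{p}\|\bdelta\|_2^2\le\tfrac{\lambda\alpha}{2\delta}$ directly for $\alpha\le\alphamax$, with no cone certification needed. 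In short: the quantitative subgradient margin and the resulting local strong convexity are the crux, and your proposal lacks them.
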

\noindent We prove Lemma \ref{lem:lasso-near-alpha-smoothed-lasso} at the end of this section. 
Equipped with these two lemma, we are now ready to prove Theorem \ref{ThmDBLasso}.

\subsubsection{Proof of Theorem \ref{ThmDBLasso}: characterization of the debiased Lasso}
	For any $\alpha > 0$, direct calculations give (setting $\zeta^*=\zeta^*_0$)
	\begin{equation}
	\begin{aligned}
	\label{eq:debiased-close-to-alpha-debiased}
		\|\thetahat^{\mathrm{d}} - \thetahat_\alpha^{\mathrm{d}}\|_2
			&\leq 
			\Big\|
				\mySigma^{-1}\bX^\top(\by - \bX \thetahat)\Big(
					\frac{1}{n - \|\thetahat\|_0} - \frac{1}{n\zetastar_\alpha}
				\Big)
			\Big\|_2
			+
			\left\|
				(\Ind_p - \mySigma^{-1}\bX^\top\bX/(n\zetastar_\alpha))(\thetahat - \thetahat_\alpha)
			\right\|_2\,
		\\
		&\leq 
		\kappamin^{-1/2} \frac{\|\mySigma^{-1/2}\bX^\top\|_{\mathrm{op}}}{\sqrt{n}} \frac{\|\by - \bX \thetahat\|_2}{\sqrt n}
			\left(
				\left|\frac{1}{1 - \|\thetahat\|_0/n} - \frac1{\zetastar}\right|
				+
				\left|\frac1{\zetastar} - \frac1{\zetastar_\alpha}\right|
			\right)
		\\
		&\qquad+
			\left(1 + \frac{\kappamin^{-1/2}(\|\mySigma^{-1/2}\bX^\top\|_{\mathrm{op}}/\sqrt{n})(\|\bX\mySigma^{-1/2}\|_{\mathrm{op}}/\sqrt{n})\|\mySigma^{1/2}\|_{\mathrm{op}}}{\zetastar_\alpha}\right)\|\thetahat - \thetahat_\alpha\|_2\, \\
		&=: T_1 + T_2.
	\end{aligned}
	\end{equation}
	\paragraph*{Bounding $T_1$} We start with bounding the $T_{1}$ term in Eq.~\eqref{eq:debiased-close-to-alpha-debiased}. 
	By \cite[Corollary 5.35]{vershynin2010} and Theorem \ref{ThmLassoResidual}, 
	there exist $C_1,C,c > 0$ depending only on $\cuPmodel$ such that 
	with probability at least $1 - Ce^{-cn}$
	\begin{align}
	\label{eq:thetad-close-thetadalpha-1}
		\kappamin^{1/2} \frac{\|\mySigma^{-1/2}\bX^\top\|_{\mathrm{op}}}{\sqrt{n}} \frac{\|\by - \bX \thetahat\|_2}{\sqrt n} \leq C_1\,.
	\end{align}
	Let $L_\tau$ and $L_\zeta$ be as in Lemma \ref{LemFixedPtContinuity},
	and let $\alphamax$ be the minimum of the corresponding quantities in Lemma \ref{LemFixedPtContinuity} and Lemma \ref{lem:lasso-near-alpha-smoothed-lasso}.
	Let $\alphamax' = \min\{\alphamax,\sigmamin^2/(4L_\tau^2),\zetamin^2/(4L_\zeta^2)\}$.
	By Lemma \ref{LemFixedPtContinuity},
	for all $\alpha < \alphamax'$, one has 
	\begin{gather}
		\sigmamin/2 \leq \taustar_\alpha \leq \taumax + \sigmamin/2\,,
		\;\;\;\;\;\;\;\;\;\;\;\;
		\zetamin/2 \leq \zetastar_\alpha \leq 1\,.
	\end{gather}
	For $\alpha \leq \alphamax'$, by Lemma \ref{LemFixedPtContinuity} 
	\begin{align}
	\label{eq:thetad-close-thetadalpha-2}
		|1/\zetastar - 1/\zetastar_\alpha| \leq (4/\zetamin^2)L_\zeta \sqrt{\sqrt{n}\alpha}.
	\end{align}
	By Theorem \ref{ThmLassoSparsity},
	there exists $C,c,c' > 0$ depending only on $\cuPmodel$ such that for $\alpha < c'/\sqrt{n}$, 
	with probability $1-\frac{C}{(\sqrt{n}\alpha)^{3/2}} e^{-cn(\sqrt{n}\alpha)^3}$ 
	\begin{equation}
	\label{eq:thetad-close-thetadalpha-3}
		|1/(1 - \|\thetahat\|_0/n) - 1/\zetastar| \leq (4/\zetamin^2) \sqrt{\sqrt{n}\alpha}.
	\end{equation}
	Combining the Eqs.~\eqref{eq:thetad-close-thetadalpha-1}, \eqref{eq:thetad-close-thetadalpha-2}, and \eqref{eq:thetad-close-thetadalpha-3},
	we conclude there exists $C_1,C,c,c' > 0$ depending only on $\cuPmodel$ such that for $\alpha < c'$, 
	with probability $1-\frac{C}{(\sqrt{n}\alpha)^{3/2}} e^{-cn(\sqrt{n}\alpha)^3}$ 
	the first term on the right-hand side of Eq.~\eqref{eq:debiased-close-to-alpha-debiased} is bounded by $C_1 \sqrt{\sqrt{n}\alpha}$.

	\paragraph*{Bounding $T_2$} We now bound the $T_2$ term of Eq.~\eqref{eq:debiased-close-to-alpha-debiased}.
	Because $\zetastar_\alpha \geq \zetamin/2$, 
	by \cite[Corollary 5.35]{vershynin2010} there exist $C_2,C,c > 0$ depending only on $\cuPmodel$ such that
	with probability at least $1 - Ce^{-cn}$
	\begin{equation}
	\label{eq:thetad-close-thetadalpha-4}
		\left(1 + \frac{\kappamin^{-1/2}\|\mySigma^{-1/2}\bX^\top\|_{\mathrm{op}}\|\bX\mySigma^{-1/2}\|_{\mathrm{op}}\|\mySigma^{1/2}\|_{\mathrm{op}}}{\zetastar_\alpha}\right) \leq C_2\,.
	\end{equation}
	Combining this bound with Lemma \ref{lem:lasso-near-alpha-smoothed-lasso}, absorbing parameters into constants, and absorbing smaller terms into larger ones,
	we conclude there exists $C_1,C,c > 0$ depending only on $\cuPmodel$ such that for $\alpha < \alphamax'$, 
	with probability $1-Ce^{-cn}$ the second term on the right-hand side of Eq.~\eqref{eq:debiased-close-to-alpha-debiased} is bounded by $C_1\sqrt{\sqrt{n}\alpha}$.

	Combining the high-probability upper bounds on the terms on the right-hand side of Eq.~\eqref{eq:debiased-close-to-alpha-debiased},
	we conclude there exists $C_1,C,c,\alphamax > 0$ depending only on $\cuPmodel$ such that for $\alpha < \alphamax/\sqrt{n}$,
	\begin{equation}
	\label{eq:thetad-close-thetadalpha-5}
		\mprob\left(
				|\phi(\thetahat^\mathrm{d}) - \phi(\thetahat_\alpha^\mathrm{d})| > C_1\sqrt{\sqrt{n}\alpha}
			\right)
			\leq
			\mprob\left(
				\|\thetahat^\mathrm{d} - \thetahat_\alpha^\mathrm{d}\|_2 > C_1\sqrt{\sqrt{n}\alpha}
			\right)
			\leq 
			\frac{C}{(\sqrt{n}\alpha)^{3/2}}e^{-cn(\sqrt{n}\alpha)^3}\,.
	\end{equation}
	Further, for $\alpha < \alphamax/\sqrt{n}$, by Lemma \ref{LemFixedPtContinuity},
	\begin{equation}
	\label{eq:thetad-close-thetadalpha-6}
		\Big|
		\E\Big[\phi\Big( \thetastar + \frac{\taustar}{\sqrt{n}}\mySigma^{-1/2} \bg\Big)\Big]
		-
		\E\Big[\phi\Big( \thetastar + \frac{\taustar_\alpha}{\sqrt{n}}\mySigma^{-1/2} \bg\Big)\Big]
				\Big| \leq C_1\sqrt{\sqrt{n}\alpha}\,.
	\end{equation}
	Taking $\epsilon = (\sqrt{n}\alpha)^3$ in Lemma \ref{LemDBSmoothedLasso} (and assuming $(\sqrt{n}\alpha)^3 < c'$ for $c'$ in that lemma),
	\begin{align}
	\label{eq:thetad-close-thetadalpha-7}
		\mprob \left(
			\Big|
				\phi\Big(\bthetahatd_\alpha\Big) 
				-
				\E\Big[\phi\Big( \thetastar + \frac{\taustar_\alpha}{\sqrt{n}}\mySigma^{-1/2} \bg\Big)\Big]
			\Big| 
			> 
			C_1 (\sqrt{n}\alpha)^2
		\right)
		\leq 	
		\frac{C}{(\sqrt{n}\alpha)^6} e^{-cn(\sqrt{n}\alpha)^{12}}\,.
	\end{align}
	Combining Eqs.~\eqref{eq:thetad-close-thetadalpha-5}, \eqref{eq:thetad-close-thetadalpha-6}, and \eqref{eq:thetad-close-thetadalpha-7}
	and appropriately adjusting constants,
	we conclude there exists $C,C',c,c' > 0$ depending only on $\cuPmodel$ such 
	that for $\epsilon < c'$
	\begin{align}
		\mprob \left(
			\Big|
				\phi\Big(\bthetahatd\Big) 
				-
				\E\Big[\phi\Big( \thetastar + \frac{\taustar}{\sqrt{n}}\mySigma^{-1/2} \bg\Big)\Big]
			\Big| 
			> 
			C_1 \epsilon
		\right)
		\leq 	
		\frac{C}{\epsilon^3} e^{-cn\epsilon^6}\,.
	\end{align}
	We complete the proof of Theorem \ref{ThmDBLasso}.


\subsubsection{Proof of Lemma \ref{LemDBSmoothedLasso}: characterization of the debiased $\alpha$-smoothed Lasso}

	By the KKT conditions for the optimization defining the $\alpha$-smoothed Lasso (cf.~\eqref{EqnAlphaSmoothRandomD}), 
	$\thetahat_\alpha^\mathrm{d} = \thetahat_\alpha + \frac{\lambda\mySigma^{-1}\nabla \sfM_\alpha(\thetahat_\alpha)}{\sqrt{n}\zetastar_\alpha}$.
	Since $\mytheta \mapsto \nabla \sfM_\alpha(\mytheta)$ is $1/\alpha$-Lipschitz, 
	$\thetahat_\alpha^\mathrm{d}$ is a 
    $\left(1+\frac{\lambdamax}{\kappamin\zetamin\sqrt{n}\,\alpha}\right)$-Lipschitz function of $\thetahat_\alpha$.
	Moreover, 
	by the KKT conditions for the optimization defining the $\alpha$-smoothed Lasso in the fixed design model (Eq.~\eqref{EqnAlphaSmoothLassoFixed}),
	\begin{equation} 
		\thetahat_\alpha^f + \frac{\lambda\mySigma^{-1}\nabla \sfM_\alpha(\thetahat_\alpha^f)}{\sqrt{n}\zetastar_\alpha} = \thetahat_\alpha^f + \mySigma^{-1} \mySigma^{1/2}\Big(\frac{\taustar_\alpha}{\sqrt{n}} \bg - \mySigma^{1/2}(\thetahat_\alpha^f - \thetastar)\Big) = \thetahat_\alpha^{f,\de}\,.
	\end{equation}
	Because $\Risk_\alpha(\thetahat_\alpha) \leq \min_{\mytheta \in \reals^p} \Risk_\alpha(\mytheta) + \gamma\epsilon^2$ for any $\gamma,\epsilon > 0$,
	Theorem \ref{ThmControlAlphaSmoothEstimate} and the previous display implies the result.


\subsubsection{Proof of Lemma \ref{lem:lasso-near-alpha-smoothed-lasso}: closeness of the Lasso and the $\alpha$-smoothed Lasso}

	The proof of Lemma \ref{lem:lasso-near-alpha-smoothed-lasso} relies on showing that with high-probability, the Lasso objective is strongly convex locally around its minimizer. 
	We then show that
	because the value of the $\alpha$-smoothed Lasso objective is close to that of the Lasso objective pointwise,
	the minimizers of the two objectives must also be close.
	\begin{lem}[Local strong convexity of Lasso objective]
	\label{LemLassoLocalStrongConvexity}
		Assume $n\zetastar/8 \geq 1$.
		Then there exists $C,c,c',c_1>0$ depending only on $\cuPmodel$ such that with probability at least $1-Ce^{-cn}$ the following occurs:
		for all $\mytheta \in \reals^p$ with $\|\mytheta - \thetahat\|_2 \leq c'$,
		\begin{equation}
			\Risk(\mytheta) - \Risk(\thetahat)
			\geq 
			c_1\|\mytheta - \thetahat\|_2^2\,.
		\end{equation}
	\end{lem}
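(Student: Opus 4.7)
The approach combines a Bregman-type expansion of $\Risk$ around $\thetahat$ with the structural consequences of Theorem \ref{ThmLassoSparsity} and Lemma \ref{LemSubgradient}.

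Let $\bthat := \bX^\top(\by - \bX\thetahat)/\lambda \in \partial\|\thetahat\|_1$ denote the KKT subgradient at $\thetahat$, set $S := \supp(\thetahat)$, and $\bh := \mytheta - \thetahat$. The first-order optimality of $\thetahat$ yields the exact identity
\begin{align*}
  \Risk(\thetahat + \bh) - \Risk(\thetahat) = \frac{1}{2n}\|\bX\bh\|_2^2 + \frac{\lambda}{n} B_\bthat(\bh),
\end{align*}
where $B_\bthat(\bh) := \|\thetahat + \bh\|_1 - \|\thetahat\|_1 - \langle \bthat,\bh\rangle \ge \sum_{j \in S^c}(1 - |\bthat_j|)|h_j| \ge 0$. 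Both terms are nonnegative, and the goal is to combine them into a single lower bound of order $\|\bh\|_2^2/p$.

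I would then construct a single ``good event'' $\cuG$ of probability at least $1-Ce^{-cn}$ on which the following three facts hold simultaneously. (i) $|S|/n \le 1 - \zetastar/2$, from Theorem \ref{ThmLassoSparsity} applied at scale $\Delta \asymp \zetamin$. (ii) There exist constants $\epsilon_0, \eta_0 > 0$ depending only on $\cuPmodel$, $\cuPfixpt$, $\delta$, and a random set $U \subseteq S^c$ with $|U|/n \ge \eta_0$ on which $|\bthat_j| \le 1 - \epsilon_0$. To obtain this I would apply Lemma \ref{LemSubgradient} to a Lipschitz truncation of $\bt \mapsto |\{j : |t_j| < 1 - \epsilon_0\}|/n$ (in the same spirit as the surrogates $\phi^{\mathsf{ub}}, \phi^{\mathsf{lb}}$ in the proof of Theorem \ref{ThmLassoSparsity}), whose expectation under the fixed design is lower-bounded using the representation $\bthat^f = (\zetastar/\bar\lambda)(\breveby^f - \softthreshold(\breveby^f;\bar\lambda/\zetastar))$ together with the marginal density bound of Lemma \ref{LemMargDensBound}. (iii) Gaussian matrix bounds: $\|\bX\|_{\mathrm{op}}^2 \le C_3\kappamax$ globally, and $\sigma_{\min}(\bX_W)^2 \ge c_2\kappamin$ uniformly over all $W \subseteq [p]$ with $|W|/n \le 1 - \eta_0/2$. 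The uniform bound comes from Bai--Yin-type concentration for Gaussian submatrices combined with a union bound over such $W$, effective because $p/n = 1/\delta$ is bounded.

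On $\cuG$, I would set $W := U^c$ and decompose $\bh = \bh_W + \bh_U$. The Bregman lower bound gives $B_\bthat(\bh) \ge \epsilon_0\|\bh_U\|_1$, while $\|a+b\|_2^2 \ge \tfrac{1}{2}\|a\|_2^2 - \|b\|_2^2$ yields
\begin{align*}
  \frac{1}{2n}\|\bX\bh\|_2^2 \ge \frac{c_2\kappamin}{4n}\|\bh_W\|_2^2 - \frac{C_3\kappamax}{2n}\|\bh_U\|_2^2.
\end{align*}
A case split on the dominant direction then concludes: when $\|\bh_W\|_2 \gtrsim \|\bh_U\|_2$, the quadratic term alone delivers the desired $(c_1/p)\|\bh\|_2^2$ bound via $1/n = \delta/p$; when $\|\bh_U\|_2$ dominates, one must absorb the negative $\|\bh_U\|_2^2$ contribution using the linear-in-$\|\bh_U\|_1$ Bregman term.

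The main obstacle is this absorption step. The trivial estimate $\|\bh_U\|_2^2 \le \|\bh_U\|_\infty\|\bh_U\|_1 \le c'\sqrt{p}\,\|\bh_U\|_1$ would force $c' \lesssim p^{-1/2}$, which is unacceptable. To keep $c'$ independent of $p$, the proof must invoke a restricted null-space/Kashin-type property of the Gaussian design: on an event of probability $1-Ce^{-cn}$, every $\bh$ whose component in the near-null-space of $\bX$ is substantial satisfies $\|\bh_U\|_1 \ge c_K\sqrt{p-n}\,\|\bh_U\|_2$ for a constant $c_K$ depending only on $\delta$, $\kappacond$, $\eta_0$. Establishing this incoherence uniformly over the random, design-dependent set $U$ is the key technical ingredient; combined with the previous bound it converts the $\ell_1$-Bregman term into an $\|\bh_U\|_2^2$ bound of the correct order and closes the argument.
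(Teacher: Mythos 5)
There is a genuine gap, and you have correctly located it: the fixed decomposition $\bh = \bh_W + \bh_U$ with $W = U^c$ a random-design-dependent (but $\mytheta$-independent) set cannot close the argument, because there is no intrinsic relation between $\|\bh_U\|_1$ and $\|\bh_U\|_2^2$ on a fixed set. However, the ``Kashin-type'' incoherence you propose as the missing ingredient is not the right fix. As stated (every $\bh$ with substantial near-null-space component satisfies $\|\bh_U\|_1 \geq c_K\sqrt{p-n}\,\|\bh_U\|_2$) it is false: take $\bh_U$ supported on a single coordinate of $U$; then $\|\bh_U\|_1 = \|\bh_U\|_2$ regardless of whether $\bh_W$ (or even $\bh$) lies near the null space of $\bX$. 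Kashin/Garnaev--Gluskin splitting theorems provide near-Euclidean $\ell_1$-sections of subspaces, but they do not furnish an $\ell_1/\ell_2$ comparison \emph{for the $U$-restriction of arbitrary vectors} in a fixed cone of the design, which is what your absorption step would require.

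The paper instead uses the standard compressed-sensing ``peeling'' decomposition, which is adaptive in $\mytheta$ rather than fixed. Starting from $S(\Delta/2) := \{j : |\widehat t_j| \geq 1-\Delta/2\}$ (shown to have $|S(\Delta/2)| \leq n(1-\zetastar/2)$ with high probability, via the subgradient characterization of Lemma~\ref{LemSubgradient}), it partitions $[p]\setminus S(\Delta/2)$ into shells of size $\approx n\zetastar/8$ sorted by $|\theta_j|$, defines $S_+ := S(\Delta/2) \cup S_1$ (the first shell), and uses the interlacing bound $\|\mytheta_{S_+^c}\|_2^2 \leq \frac{8}{p\zetastar\delta}\|\mytheta_{S(\Delta/2)^c}\|_1^2$. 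This converts the Bregman term $\frac{\lambda\Delta}{2n}\|\mytheta_{S(\Delta/2)^c}\|_1$ into $\frac{c_1}{\sqrt p}\|\mytheta_{S_+^c}\|_2$, a \emph{linear} lower bound with a dimension-free constant, precisely avoiding the $c' \lesssim p^{-1/2}$ degradation you worried about. The two-case comparison you outline is then run with $S_+$ in place of $W$: the sparse-singular-value bound is needed only over sets of size $\leq n(1-\zetastar/4)$, a fixed fraction of $n$, so the required union bound is over $\binom{p}{\lfloor n(1-\zetastar/4)\rfloor}$ and works because $H(k/p)/\delta + \psi(k/n, t)$ stays negative. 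Since the final bound is linear rather than quadratic, the locality assumption $\|\mytheta-\thetahat\|_2/\sqrt p \leq c'$ is then used to upgrade to a quadratic bound with constants independent of $p$. You should replace your step (ii) and the Kashin claim with this adaptive shell decomposition; the rest of your architecture (Bregman identity, subgradient-based high-probability event, two-case comparison of design contributions) is close to the paper's.
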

	\noindent We prove Lemma \ref{LemLassoLocalStrongConvexity} below.
	By Eq.~\eqref{EqnMoreauRelation}, 
	$\Risk(\mytheta) \geq \Risk_\alpha(\mytheta) \geq \Risk(\mytheta) - \frac{\lambda p\alpha}{2\sqrt{n}}$ for all $\mytheta \in \reals^p$.
	On the event of Lemma \ref{LemLassoLocalStrongConvexity}, for $\|\mytheta - \thetahat\|_2 \leq c'$
	\begin{equation}
		\Risk_\alpha(\mytheta) 
			\geq 
			\Risk(\mytheta) - \frac{\lambda p \alpha}{2\sqrt{n}}
			\geq 
			\Risk(\thetahat) + c_1 \|\mytheta - \thetahat \|_2^2 - \frac{\lambda p\alpha}{2\sqrt{n}}
			\geq
			\Risk_\alpha(\thetahat) + \frac{c_1}{p} \|\mytheta - \thetahat \|_2^2 - \frac{\lambda p\alpha}{2\sqrt{n}}\,.
	\end{equation}
	Since $\thetahat_\alpha$ minimizes $\Risk_\alpha(\mytheta)$, we conclude that for $\alpha \leq \alphamax / \sqrt{n} $
	\begin{equation}
		\|\thetahat_\alpha - \thetahat\|_2
			\leq 
			\sqrt{\lambdamax \sqrt{n} \alpha / (2c_1\Deltamin)},
	\end{equation}
	provided that also $\alpha$ is small enough for the right-hand side to be less than $c'$.
	The proof of Lemma \ref{lem:lasso-near-alpha-smoothed-lasso} is complete.


\subsubsection{Proof of Lemma \ref{LemLassoLocalStrongConvexity}: local strong convexity of the Lasso objective}

We make the observation that with high probability, 
	the Lasso subgradient $\bthat = \frac1{\sqrt{n}\lambda}\bX^\top(\by - \bX \thetahat)$ (cf.~\eqref{EqnDefSG}), 
	cannot have too many coordinates with magnitude close to 1, even off of the Lasso support.
The next lemma makes this precise.
	\begin{lemma}
	\label{lem:large-SG-bound}
		There exists $C,c,\Delta > 0$ depending only on $\cuPmodel$ such that 
		\begin{equation}
		\label{eq:large-SG-bound}
			\mprob \left(
				\frac{|\{j \in [p] \mid |\widehat t_j| \geq 1 -\Delta/2\}|}{n} \geq 1 - \zetastar/2
			\right) 
			\leq 
			Ce^{-cn}\,.
		\end{equation}
	\end{lemma}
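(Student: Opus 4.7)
The plan is to mimic the ``upper bound on sparsity via the subgradient'' portion of the proof of Theorem~\ref{ThmLassoSparsity} (see Appendix~\ref{SecThmLassoSparsity}), but with the free parameter $\Delta$ fixed at a single value depending only on $\cuPmodel$, $\cuPfixpt$, and $\delta$. First I would work in the fixed-design model. By Eqs.~\eqref{EqnThetaHatFBreveY} and \eqref{EqnSGInBreveY}, $|\widehat t_j^f|\ge 1-\Delta$ if and only if $|\brevey_j^f|\ge \bar\lambda(1-\Delta)/\zetastar$, i.e., $j\in T(\breveby^f,-\Delta)$ in the notation of Appendix~\ref{SecThmLassoSparsity}. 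The anti-concentration estimate of Lemma~\ref{LemMargDensBound}, together with Gaussian concentration of the Lipschitz functional $\phi^{\mathsf{ub}}(\breveby^f,\Delta)$, already yields constants $c_1,c_2>0$ (depending only on $\cuPmodel$, $\cuPfixpt$, $\delta$) such that
\begin{equation*}
\mprob\left(\frac{|T(\breveby^f,-\Delta)|}{n}\ge 1-\zetastar + c_1\Delta\right)\le \exp(-c_2 n\Delta^4)\,.
\end{equation*}

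Next I would choose $\Delta>0$ (depending only on $\cuPmodel$, $\cuPfixpt$, $\delta$) small enough that $c_1\Delta\le \zetastar/4$ and define
\begin{equation*}
D\defn \left\{\bt\in\reals^p\;\Bigm|\;\frac{|\{j:|t_j|\ge 1-\Delta\}|}{n}\le 1-\tfrac{3\zetastar}{4}\right\}\,,
\end{equation*}
so that $\mprob(\bthat^f\notin D)\le\exp(-c_2 n\Delta^4)$. To transfer from $\bthat^f$ to $\bthat$, I would invoke Lemma~\ref{LemSubgradient}. The key geometric observation is that the ``bad'' set $B\defn\{\bt:|\{j:|t_j|\ge 1-\Delta/2\}|/n\ge 1-\zetastar/2\}$ is contained in $D_\epsilon$ for $\epsilon:=\Delta\sqrt{\zetastar/(16\delta)}$: if $\bt\in B$ and $\bt'\in D$, then there are at least $n\zetastar/4$ coordinates where $|t_j|\ge 1-\Delta/2$ but $|t'_j|<1-\Delta$, and in each such coordinate $|t_j-t'_j|\ge\bigl||t_j|-|t'_j|\bigr|>\Delta/2$, so $\|\bt-\bt'\|_2^2/p\ge \zetastar\Delta^2/(16\delta)$.

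Combining these two ingredients with Eq.~\eqref{eq:SubGradient1} of Lemma~\ref{LemSubgradient} gives
\begin{equation*}
\mprob(\bthat\in B)\;\le\;\mprob(\bthat\in D_\epsilon)\;\le\;2\mprob(\bthat^f\notin D)\,+\,\frac{C}{\epsilon^2}e^{-cn\epsilon^4}\,\le\,2e^{-c_2 n\Delta^4}+\frac{C}{\epsilon^2}e^{-cn\epsilon^4}\,,
\end{equation*}
provided $\epsilon<c'$ (the threshold in Lemma~\ref{LemSubgradient}), which I would ensure by possibly further shrinking $\Delta$. Since $\Delta$ and hence $\epsilon$ are now fixed constants depending only on $\cuPmodel$, $\cuPfixpt$, and $\delta$, both terms collapse to $Ce^{-cn}$ after absorbing $\Delta$ and $\epsilon$ into the constants $C,c$, completing the proof.

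I do not expect any serious obstacle: the delicate step is the density control of the marginals of $\breveby^f$, but this is precisely what Lemma~\ref{LemMargDensBound} provides, and the remainder of the argument is a direct specialization of the one already carried out in Appendix~\ref{SecThmLassoSparsity} for varying $\Delta$. The only bookkeeping issue is ensuring that the single fixed value of $\Delta$ is compatible with the smallness conditions $\Delta\le\zetastar/(4c_1)$ and $\epsilon<c'$ simultaneously, which is possible because all the relevant thresholds depend only on the uniformity class constants.
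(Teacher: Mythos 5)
Your argument follows the paper's proof of Lemma~\ref{lem:large-SG-bound} essentially verbatim: you specialize the ``upper bound on sparsity via the subgradient'' part of the proof of Theorem~\ref{ThmLassoSparsity} to a single fixed $\Delta$, use Lemma~\ref{LemMargDensBound} to control the anti-concentration of $\breveby^f$, establish a Euclidean separation of the bad set from a high-probability set $D$ for $\bthat^f$, and transfer to the random design via Lemma~\ref{LemSubgradient}. (In fact you also correctly use $T(\breveby^f,-\Delta)$ with the unnumbered concentration display from the upper-bound argument, rather than $T(\breveby^f,\Delta)$ with Eq.~\eqref{eq:Gordon-large-sg-bound}, which is the appropriate choice here.) The only slip is a normalization-bookkeeping one: since $\|\bt-\bt'\|_2^2\ge (n\zetastar/4)(\Delta/2)^2$, dividing by $p$ and using $\delta=n/p$ gives $\|\bt-\bt'\|_2^2/p\ge \delta\zetastar\Delta^2/16$, so the separation radius should be $\epsilon=\Delta\sqrt{\delta\zetastar}/4$ rather than your $\Delta\sqrt{\zetastar/(16\delta)}$; your expression overstates the separation when $\delta<1$. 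This does not affect the conclusion, since with the corrected expression $\epsilon$ is still a positive constant depending only on $\cuPmodel$, $\cuPfixpt$, and $\delta$, so both error terms still reduce to $Ce^{-cn}$ after absorbing the fixed $\Delta$ and $\epsilon$ into $C$ and $c$.
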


	\begin{proof}[Proof of Lemma \ref{lem:large-SG-bound}]
		The proof is as for Theorem \ref{ThmLassoSparsity} with the following minor changes.
		We apply Eq.~\eqref{eq:Gordon-large-sg-bound} with $\Delta = \zetastar / (4c_1)$ with $c_1$ as in in that equation.
		By Eq.~\eqref{EqnSGInBreveY} and with this choice of $\Delta$,
		if $\frac{|T(\breveby^f,\Delta)|}{n} < 1 - 3\zetastar/4$, then for all $\bt \in \reals^p$ with $|\{j \in [p] \mid |t_j| \geq 1 - \Delta/2\}|/n \geq 1 - \zetastar/2$, 
		\begin{equation}
			\frac{\|\widehat \bt^f - \bt\|_2^2}{n} \geq \frac{\Delta^2\zetastar}{16} = \frac{ {\zetastar}^4 }{256 c_1^2}\,,
		\end{equation}
		because there are at least $\zetastar n / 4$ coordinates where $\widehat \bt^f$ and $\bt$ differ by at least $\Delta/2$.
		Absorbing constants and taking 
		$D = \{\bt \in \reals^p \mid |\{ j \in [p] \mid 1 - |t_j| \leq \Delta\}|/n \leq 1 - 3\zetastar/4\}$ in 
		Lemma \ref{LemSubgradient}, there exists $C,c > 0$ depending only on $\cuPmodel$ such that Eq.~\eqref{eq:large-SG-bound} holds.
	\end{proof}

	We are now ready to prove Lemma \ref{LemLassoLocalStrongConvexity}.
	Define the minimum singular value of $\bX$ over a set $S \subset [p]$ by
	\begin{equation}
		\kappa_-(\bX,S) = \inf\left\{\|\bX\bw\|_2/\sqrt{n} \mid \supp(\bw) \subset S,\, \|\bw\|_2 = 1\right\}\,,
	\end{equation}
	and the $s$ sparse singular value by 
	\begin{equation}
		\kappa_-(\bX,s) = \min_{|S|\leq s} \kappa_-(\bX,S)\,.
	\end{equation}
	Consider the event 
	\begin{equation}
		\cuA
		:= 
		\Big\{
			\kappa_-\big(\bX,n(1-\zetastar/4)\big) \geq \kappamin'
		\Big\}
		\cap
		\Big\{
			\frac{\|\bX\|_{\mathrm{op}}}{\sqrt{n}} \leq C
		\Big\}
		\cap
		\Big\{
			\frac{|\{j \in [p] \mid |\widehat t_j| \geq 1 -\Delta/2\}|}{n} \leq 1 - \zetastar/2
		\Big\}\,.
	\end{equation}
	(Note that we need not assume that $(1 - \zetastar/2)n \leq p$ or $(1-\zetastar/4)n \leq p$ for these definitions or events to make sense).
	
	We aim to show that there exist $\kappamin',\Delta,C,c>0$ depending only on $\cuPmodel$ such that
	\begin{equation}
	\label{eq:high-prob-sparse-eigenvalue}
		\mprob(\cuA) \geq 1-Ce^{-cn}\,.
	\end{equation}
	The second event in the definition of $\cuA$ is controlled by \cite[Corollary 5.35]{vershynin2010} and the third event by Lemma \ref{lem:large-SG-bound}.
	Now it is sufficient to consider the first event in the definition of $\cuA$.
	
	\paragraph*{Case $n/p > 1/(1-\zetastar/8)$} Because $1/(1-\zetastar/8) \geq 1/(1-\zetamin/8) > 1$, we have $\mprob(\kappa_-\big(\bX,n(1-\zetastar/4)\big) \geq \kappamin')\geq1-Ce^{-cn}$ because $\kappa_-(\bX,n(1-\zetastar/4)) \geq \kappa_-(\bX,p)$ is the minimum singular value of $\bX/\sqrt{n}$, whence we invoke \cite[Corollary 5.35]{vershynin2010}.

	\paragraph*{Case $n/p \leq 1/(1-\zetastar/8)$}
	Let $k = \lfloor n(1-\zetastar/4) \rfloor$ and note that $k < p$ because $n/ \leq p/(1-\zetastar/8)$.
	Because $\kappa_-(\bX,S') \geq \kappa_-(\bX,S)$ when $S' \subset S$,
	we have that $\kappa_-(\bX,n(1-\zetastar/4)) = \min_{|S| = k} \kappa_-(\bx,S)$.
	By a union bound, for any $t > 0$
	\begin{equation}
	\label{eqn:union}
		\mprob\left(\kappa_-(\bX,n(1-\zetastar/4)) \leq t\right) \leq \sum_{|S| = k} \mprob( \kappa_-(\bX_S) \leq t)\,.
	\end{equation}
	The matrix $\bX_S = \widetilde \bX_S \mySigma_{S,S}^{1/2}$ where $\widetilde \bX_S$ has entries distributed i.i.d. $\normal(0,1)$.
	Thus, one has 
	\begin{equation*}
	 \kappa_-(\bX_S) \geq \kappa_-(\widetilde \bX_S) \kappa_-(\mySigma_{S,S}^{1/2}) \geq \kappa_-(\widetilde \bX_S) \kappamin^{1/2}.
	 \end{equation*}
	Invoking the fact that $\widetilde \bX_S$ has the same distribution for all $|S|=k$, expression~\eqref{eqn:union} implies
	\begin{equation}
		\mprob\left(\kappa_-(\bX,n(1-\zetastar/4)) \leq t\right)
		\leq 
		\binom{p}{k} \mprob(\kappa_-(\widetilde \bX_S) \leq t / \kappamin^{1/2})\,,
	\end{equation}
	where the $S$ appearing on the right-hand side can be any $S$ with cardinality $k$.
	By Lemma 2.9 of \cite{blanchard2010}, 
	\begin{equation}
	\begin{aligned}
		\mprob(\kappa_-(\widetilde \bX_S) \leq t / \kappamin^{1/2})
		\leq 
		C(n,t/\kappamin^{1/2}) \exp\left(n\psi(k/n,t/\kappamin^{1/2})\right)\,,
	\end{aligned}
	\end{equation}
	where $C(a,b)$ is a universal polynomial in $a,b$ and $\psi(a,b) := \frac12[(1-a)\log b + 1 - a + a \log a - b]$.
	(Lemma 2.9 of \cite{blanchard2010} states a bound on the density of $\kappa_-(\widetilde \bX_S)$, but a deviation bound incurs only a factor $t/\kappamin^{1/2}$ which we may absorb into the polynomial term). 
	Note also that $\binom{p}{k} \leq C'(p) \exp(nH(k/p)/(n/p))$, where $C'$ is a universal polynomial.
	We conclude that
	\begin{equation}
	\begin{aligned}
		\mprob\left(\kappa_-(\bX,n(1-\zetastar/4)) \leq t\right)
		\leq 
		C(n,p,t/\kappamin^{1/2}) \exp\left(n(H(k/p)/(n/p) + \psi(k/n,t/\kappamin^{1/2}))\right)\,.
	\end{aligned}
	\end{equation}
	Note that $\psi(a,b) \leq \frac{\zetastar}{8}\log b$ for all $a = k/n \leq 1 - \zetastar/4$ and $b \in (0,1)$.
	Thus, there exists $c,\kappamin' > 0$, depending only on $\Deltamin
	,\kappamin,\zetamin$, such that $H(k/p)/(n/p) + \psi(k/n,
	\kappamin'/\kappamin^{1/2}) < -2c$.
	Because $C(n,p,\kappamin'/\kappamin^{1/2})e^{-cn}$ is upper bounded by a constant $C$ depending only on $\Deltamin,\kappamin,c$,
	we conclude there exists $C,c,\kappamin' > 0$ depending only on $\Deltamin,\kappamin^{1/2},\zetamin$ such that
	\begin{equation}
		\mprob\left(\kappa_-(\bX,n(1-\zetastar/4)) \leq \kappamin'\right) 
		\leq 
		Ce^{-cn}\,.
	\end{equation}
	This conclude the proof of the high-probability bound Eq.~\eqref{eq:high-prob-sparse-eigenvalue}.

	The remainder of the argument takes place on the high-probability event $\cuA$.
	Consider any $\mytheta \in \reals^p$.
	Let $\{j \in [p] \mid |\widehat t_j| \geq 1 -\Delta/2\}$.
	We first construct $S_+ \supset S(\Delta/2)$ such that
	\begin{equation}
	\label{eq:support-decomposition}
		\text{\emph{(i)} } |S_+| \leq n(1-\zetastar/4)
		\;\;\;\;
		\text{and}
		\;\;\;\;
		\text{\emph{(ii)} } 
		\|\mytheta_{S_+^c}\|_2 \leq \frac{2\sqrt{2}}{\sqrt{n \zetastar}} \| \mytheta_{S(\Delta/2)^c}\|_1\,,
	\end{equation}
	where we adopt the convention that $\|\mytheta_\emptyset\|_1 = \|\mytheta_\emptyset\|_2 = 0$.
	We establish this by considering two cases.

	\paragraph*{Case 1: $p \leq n(1-\zetastar / 4)$}

	In this case, let $S_+ = [p]$.
	Then Eq.~\eqref{eq:support-decomposition} holds trivially.

	\paragraph*{Case 2: $p > n(1-\zetastar/4)$}

	Let $S_1,\ldots,S_k$ be a partition of $[p] \setminus S(\Delta/2)$ satisfying the following properties: first, $|S_i| \geq n \zetastar / 8$ for $i = 1,\ldots,k-1$; second, $|S_1| \geq \cdots \geq |S_k|$; third, $|S(\Delta/2) \cup S_1| \leq n(1-\zetastar/4)$; and fourth, $|\theta_j| \geq |\theta_{j'}|$ if $j \in S_i$ and $j' \in S_{i'}$ for $i \leq i'$.
	This is possible because $|S(\Delta/2)| \leq n(1-\zetastar/2)$ and, because $n\zetastar/8 \geq 1$, there exists an integer between $n(1-\zetastar/4)$ an $n(1 - \zetastar/8)$.
	In this case, let $S_+ = S(\Delta/2) \cup S_1$.
	Condition \emph{(i)} holds by construction.
	To verify condition \emph{(ii)}, 
	observe
	\begin{equation}
	\begin{aligned}	
		\|\mytheta_{S_+^c}\|_2^2
			= 
			\sum_{i=2}^k \|\mytheta_{S_i}\|_2^2 
			\leq 
			\sum_{i=2}^k |S_i| \left(\frac{\|\mytheta_{S_{i-1}}\|_1}{|S_{i-1}|}\right)^2
			&\leq 
			\frac1{\min_{i=1,\ldots,k-1}\{|S_{i-1}|\}} \sum_{i=1}^{k-1} \|\mytheta_{S_i}\|_1^2
			\\
		&\leq 
			\frac{8}{n\zetastar  } \|\mytheta_{S(\Delta/2)^c}\|_1^2\,,
	\end{aligned}
	\end{equation}
	where the first inequality holds because $|\theta_j| \leq \|\mytheta_{S_{i-1}}\|_1/|S_{i-1}|$ for $j \in S_i$,
	the second inequality holds because $|S_i| \leq |S_{i-1}|$,
	and the third inequality holds because $|S_i| \geq n\zetastar/8$ for $i \leq k-1$.
	Thus, Eq.~\eqref{eq:support-decomposition} holds in this case as well.

	We lower bound the growth of the Lasso objective by
	\begin{equation}
	\begin{aligned}
		\Risk(\mytheta) - \Risk(\thetahat)
			&=
			\frac1{2n}\|\bX(\mytheta - \thetahat)\|_2^2 + \frac1n\langle \bX^\top (\by - \bX \thetahat) , \thetahat - \mytheta \rangle + \frac\lambda{\sqrt{n}} \left(\|\mytheta\|_1 - \|\thetahat\|_1\right)
			\\
		& = 
			\frac1{2n}\|\bX(\mytheta - \thetahat)\|_2^2 + \frac\lambda{\sqrt{n}}\left(\langle \bthat, \thetahat - \mytheta \rangle + \|\mytheta\|_1 - \|\thetahat\|_1\right)\,.		
	\end{aligned}
	\end{equation}
	We first make the observation that 
	\begin{equation}
		\langle \bthat, \thetahat - \mytheta \rangle + \|\mytheta\|_1 - \|\thetahat\|_1
		\geq
		\frac{\Delta}2 \| \mytheta_{S(\Delta/2)^c} \|_1\,.
	\end{equation}
	Because $\bthat \in \partial \|\thetahat\|_1$ and $|t_j| \leq 1-\Delta/2$ on $S(\Delta/2)^c$ so that $t_j(\widehat \theta_j - \theta_j) + |\theta_j| - |\widehat \theta_j|\geq 0$ for all $j$, and is no smaller than $\Delta |\theta_j|/2$ for $j \in S(\Delta/2)^c$.
	Thus, it is guaranteed that 
	\begin{equation}
		\Risk(\mytheta) - \Risk(\thetahat)	
			\geq
			\frac{\lambda\Delta}{2\sqrt{n}} \| \mytheta_{S(\Delta/2)^c} \|_1 + \frac1{2n}\|\bX(\mytheta - \thetahat)\|_2^2\,.
	\end{equation}

	Now choose $S_+\subset[p]$ satisfying Eq.~\eqref{eq:support-decomposition}.
	Condition \emph{(ii)} of Eq.~\eqref{eq:support-decomposition} implies
	\begin{align}
	\label{eq:growth-on-S+}
		\Risk(\mytheta) - \Risk(\thetahat) 
		\geq 
		c_1 \| \mytheta_{S_+^c}\|_2\,,
	\end{align}
	where $c_1>0$ depends on $\cuPmodel$.
	Next we prove that there exists $c' > 0$ such that for $\|\mytheta_{S_+} - \thetahat_{S_+}\|_2 < c'$,
	\begin{align}
	\label{eqn:convex-with-splus}
		\Risk(\mytheta) - \Risk(\thetahat) 
		\geq 
		c' \| \mytheta_{S_+} - \thetahat_{S_+}\|_2^2\;\;\; \text{holds true on event $\cuA$}.
	\end{align}
	In order to see this, if $\|\bX_{S_+}(\mytheta_{S_+} - \thetahat_{S_+})\|_2/2 \geq \|\bX_{S_+^c}\mytheta_{S_+^c}\|_2$, then 
	\begin{equation}
		\Risk(\mytheta) - \Risk(\thetahat) 
			\geq 
			\frac1{8n} \|\bX_{S_+}(\mytheta_{S_+} - \thetahat_{S_+})\|_2^2
			\geq
			\frac{\kappamin'^2}{8} \|\mytheta_{S_+} - \thetahat_{S_+}\|_2^2\,,
	\end{equation}
	as a consequence of Eq.~\eqref{eq:high-prob-sparse-eigenvalue}.
	Otherwise, if $\|\bX_{S_+}(\mytheta_{S_+} - \thetahat_{S_+})\|_2/2 < \|\bX_{S_+^c}\mytheta_{S_+^c} \|_2$, 
	then $\|\mytheta_{S_+} - \thetahat_{S_+}\|_2 \leq \kappamin'^{-1/2}\|\bX_{S_+}(\mytheta_{S_+} - \thetahat_{S_+})\|_2 / \sqrt{n} \leq 2\kappamin'^{-1/2}\|\bX_{S_+^c}\mytheta_{S_+^c}\|_2/\sqrt{n} \leq C\|\mytheta_{S_+^c}\|_2$.
	Thus
	\begin{equation}
		\Risk(\mytheta) - \Risk(\thetahat) 
			\geq 
			c_1 \| \mytheta_{S_+^c}\|_2
			\geq 
			c_1 \| \mytheta_{S_+} - \thetahat_{S_+}\|_2\,,
	\end{equation}
	where the value of $c_1$ changes between the last inequalities. 
	Combining the previous two displays, we have established inequality~\eqref{eqn:convex-with-splus}, 
	where again the value of $c'$ has changed from the previous displays.

	Combined with Eq.~\eqref{eq:growth-on-S+},
	we conclude there exists $c_1,c' > 0$ depending only on $\cuPmodel$
	such that for $\|\mytheta - \thetahat\|_2\leq c'$,
	\begin{equation}
		\Risk(\mytheta) - \Risk(\thetahat) 
		\geq 
		c' \|\mytheta - \thetahat \|_2^2\,.
	\end{equation}
	The proof is completed.


\subsubsection{Proof of Corollary \ref{CorDBLassoCI}}
	To start, let us define 
	\begin{align}
		\phi_1(x,\Delta) := \min(1,x/\Delta - z_{1-q/2}/\Delta + 1)_+.
	\end{align}
	The function $\phi_1(x)$ equals to $0$ for $x \leq z_{1-q/2} - \Delta$ and $1$ for $x \geq z_{1-q/2}$, and linearly interpolates between these two regions elsewhere.
	Therefore, the false-coverage proportion $\FCP := \frac1p \sum_{j = 1}^p \indic{\theta^*_j \not \in \mathsf{CI}_j}\,$ can be controlled as 
	\begin{align}
		\FCP 
			&=
			\frac1p \sum_{j=1}^p \indic{}\left\{
				|\thetahatd_j - \theta^*_j| 
				> 
				\frac{\Sigma_{j|-j}^{-1/2}\|\by - \bX \thetahat \|_2 }{ n(1 - \|\thetahat\|_0/n) }z_{1-q/2}
			\right\} \nonumber \\
		&\leq 
			\frac1p \sum_{j=1}^p \phi_1\left(
				\frac{\Sigma_{j|-j}^{1/2}(1-\|\thetahat\|_0/n)\sqrt{n}|\thetahatd_j - \theta^*_j|}{\|\by - \bX \thetahat\|_2/\sqrt{n}},
				\Delta
			\right) \nonumber
		\\
		&\leq
			\frac1p \sum_{j=1}^p \phi_1\left(
				\Sigma_{j|-j}^{1/2}\sqrt{n}|\thetahatd_j - \theta^*_j|/\taustar,
				\Delta
			\right)
			+ 
			\frac1\Delta
			\left|
				\frac{1-\|\thetahat\|_0/n}{\|\by - \bX\thetahat\|_2/\sqrt{n}} 
				- 
				\frac1{\taustar}
			\right|	
			\left(
				\frac1p \sum_{j=1}^p \sqrt{n}\Sigma_{j|-j}^{1/2}|\thetahatd_j - \theta^*_j| 
			\right)\,.
	\end{align}
	We bound the terms on the right-hand side respectively.
	\begin{itemize}

		\item 
		By Theorems \ref{ThmLassoResidual} and Theorem \ref{ThmLassoSparsity},
		there exist $C,c,c' > 0$ depending only on $\cuPmodel$ such that for $\epsilon < c'$, 
		we have $\left|\frac{1-\|\thetahat\|_0/n}{\|\by - \bX\thetahat\|_2/\sqrt{n}} - \frac1{\taustar}\right| < \epsilon$ with probability at least $1 - \frac{C}{\epsilon^3}e^{-cn\epsilon^6}$.

		\item 
		Because $\Sigma_{j|-j}^{1/2} \leq \kappamax^{1/2}$ for all $j$,
		the quantity $\frac1p \sum_{j=1}^p \sqrt{n}\Sigma_{j|-j}^{1/2}|\thetahatd_j - \theta^*_j|$
		is $\sqrt{\kappamax n/p}$-Lipschitz in $\bthetahatd$.
		Moreover, when $\bthetahatd$ is replaced by $\thetastar + \taustar\mySigma^{-1/2}\bg / \sqrt{n}$,
		this equantity has expectation bounded by a constant depending only on $\cuPmodel$.
		By Theorem \ref{ThmDBLasso},
		there exist $C,C',c > 0$ depending only on $\cuPmodel$
		such that $\frac1p \sum_{j=1}^p \sqrt{n} \Sigma_{j|-j}^{1/2}|\thetahatd_j - \theta^*_j| < C' \sqrt{n/p}$ with probability
		at least $1-Ce^{-cn}$.

		\item 
		The quantity $\frac1p\sum_{j=1}^p \phi_1\left(\Sigma_{j|-j}^{1/2}\sqrt{n}|\thetahatd_j - \theta^*_j|/\taustar,\Delta\right)$ is $\frac{L\sqrt{n}}{\Delta\sqrt p}$-Lipschitz in $\bthetahatd$,
		where $L$ is a constant depending only on $\cuPmodel$.
		By Theorem \ref{ThmDBLasso},
		we conclude there exists $C,c,c' > 0$ depending only on $\cuPmodel$
		such that for $\epsilon < c'$, we have
		\begin{equation*}
		 	\frac1p\sum_{j=1}^p \phi_1\left(\Sigma_{j|-j}^{1/2}\sqrt{n}|\thetahatd_j - \theta^*_j|/\taustar,\Delta\right) < \frac1p\sum_{j=1}^p \E\left[\phi_1\left(\Sigma_{j|-j}^{1/2}|(\taustar \mySigma^{-1/2}\bg)_j|/\taustar,\Delta\right)\right] + \epsilon\sqrt{n/p}/\Delta,
		 \end{equation*} 
		with probability at least $1 - \frac{C}{\epsilon^3}e^{-cn\epsilon^6}$.

		\item 
		Using the fact that the standard Gaussian density is upper bounded by $(2\pi)^{-1/2}$, we obtain the bound $\E\left[\phi_1\left(\Sigma_{j|-j}^{1/2}|(\taustar \mySigma^{-1/2}\bg)_j|/\taustar,\Delta\right)\right] \leq q + \frac{2\Delta}{\sqrt{2\pi}}$.
	\end{itemize}

	Combining the previous bounds, we conclude there exist $C,C',c,c' > 0$ depending only on $\cuPmodel$ such that for all $\epsilon < c'$, 
	we have $\FCP \leq q + C'(\Delta + \epsilon\sqrt{n/p}/\Delta)$ with probability at least $1 - \frac{C}{\epsilon^3}e^{-cn\epsilon^6}$.
	Optimizing over $\Delta$, we conclude there exists $C,c,c' > 0$ depending only on $\cuPmodel$ and $\Deltamax $ such that for all $\epsilon < c'$, we have 
	$\FCP \leq q + \epsilon$ with probability at least $1 - \frac{C}{\epsilon^6}e^{-cn\epsilon^{12}}$.

	The lower bound holds similarly.


\subsection{More details on confidence interval for a single coordinate}
\label{SecPfSingleCoordCI}

Because they may be of independent interest, 
we first describe in detail the construction of the exact tests outlined in the discussion in Section \ref{SecSingleCoordCI}
and state some results about the quantities involved in the construction (Lemma \ref{LemLOO-DB-Dist} and Theorem \ref{ThmExactTestPower} below).
The proof of Theorem \ref{ThmCILengthAndPower} uses a special case of Lemma \ref{LemLOO-DB-Dist}, whereas Theorem \ref{ThmExactTestPower} is independent of any future development, 
and is stated only for general interest. 

\subsubsection{Description of exact test}

We begin with the following lemma.
\begin{lem}
\label{LemLOO-DB-Dist}
  We have the following.
    \begin{enumerate}[label=(\alph*)]

      \item 
      \emph{(Exact conditional normality of $\xi_j$ when $\widehat \theta_{j,\mathrm{init}} = \theta^*_j$).}
      If $\widehat \theta_{j,\mathrm{init}} = \theta^*_j$, then
      \begin{equation}\label{eq:exact-normality-under-null}
        \sqrt{n} (\xi_j - \theta_j^*) / \widehat \tau_{\mathrm{loo}}^j
        \sim \normal(0,\Sigma_{j|-j}^{-1})\,.
      \end{equation}

        \item 
        \emph{(Approximate normality of $\xi_j$ in general).}
        Assume $p\geq 2$. 
        Let $\delta_{\mathrm{loo}} = n / (p-1)$.
        Assume Assumption \ref{assump:1} is satisfied with $\mySigma_{-j,-j}$ in place of $\mySigma$ and $\mytheta_{-j}^*$ in place of $\thetastar$.

      Then there exist constants $C,c,c' > 0$ depending only on $\cuPmodel$ such that the following occurs.
      There exist random variables $r_j,R_j,Z_j$ such that
      \begin{equation}\label{eq:approx-normality-under-alt}
        \sqrt{n}(\xi_j - \theta_j^*)/\widehat\tau_{\mathrm{loo}}^j = r_jZ_j + R_j\,,
      \end{equation}
      and for all $|\theta_j^* - \widehat \theta_{j,\mathrm{init}}| < c'$ 
      \begin{align}\label{EqnCIError}
          Z_j &\sim \normal(0,\Sigma_{j|-j}^{-1})\,,\;\;\;\;\;\;\;\;
          \mprob\left(|r_j-1| > C|\theta_j^*|\right) \leq  e^{-cn}\,,\;\;\;\;\;\;\;\;
          \mprob\left(|R_j| > \epsilon\right) \leq \frac{C{\theta_j^*}^2}{\epsilon^2}.
      \end{align}

    \end{enumerate}
\end{lem}
\noindent 
Lemma \ref{LemLOO-DB-Dist}(a) implies that the test which rejects when $|\xi_j| \geq \Sigma_{j|-j}^{-1/2} \widehat \tau^j_{\mathrm{loo}} z_{1-\alpha/2}$ is an exact level-$\alpha$ test of the null $\theta^*_j = \widehat \theta_{j,\mathrm{init}}$.
Lemma \ref{LemLOO-DB-Dist}(b) states that under the alternative $\xi_j$ is approximately normal with mean $\theta^*_j - \widehat \theta_{j,\mathrm{init}}$ 
and standard deviation $\Sigma_{j|-j}^{-1/2}\widehat\tau_{\mathrm{loo}}^j / \sqrt{n}$.
(The latter quantity is random but concentrates).
Thus, Lemma \ref{LemLOO-DB-Dist}(b) permits a power analysis of the exact test.

The next theorem is included because it may be of independent interest. 
No future development depend upon this theorem, and it can safely be skipped.
\begin{theo}[Insensitivity of fixed point parameter to $\widehat\theta_{j,\mathrm{init}}$]
\label{ThmExactTestPower}
  Let $\tau_{\mathrm{loo}}^*(\widehat \theta_{j,\mathrm{init}},\zeta_{\mathrm{loo}}^*(\widehat \theta_{j,\mathrm{init}})$ be the solution to the fixed point equations \eqref{EqnEqn1} and \eqref{EqnEqn2} in the leave-one-out model for the Lasso at regularization $\lambda$.

    There exists $c',L > 0$ depending only on $\cuPmodel$ such that for $|\theta_j^* - \widehat \theta_{j,\mathrm{init}}| \leq c'$,
    we have $|\tau^*_{\mathrm{loo}}(\theta_j^*) - \tau^*_{\mathrm{loo}}(\widehat \theta_{j,\mathrm{init}})| \leq L\sqrt{|\theta^*_j - \widehat \theta_{j,\mathrm{init}}|}$.

\end{theo}
\noindent Theorem \ref{ThmExactTestPower} says that the noise variance is effectively constant for all $\widehat \theta_{j,\mathrm{init}} - \theta_j^* = o(1)$.

\subsubsection{Proof of Lemma \ref{LemLOO-DB-Dist}, Theorem \ref{ThmCILengthAndPower}, and Theorem \ref{ThmExactTestPower}}
\label{sec:thm12}

\subsubsection*{Proof of Lemma \ref{LemLOO-DB-Dist}(a)}

	When $\theta^*_j = \widehat \theta_{j,\mathrm{init}}$, 
	the data $(\by_{\mathrm{init}},\bX_{-j})$ is independent of $\xper_j$. 
	Because $\thetahatloo$ is $\sigma(\by_{\mathrm{init}},\bX_{-j})$-measurable,
	\begin{equation}
	\sqrt{n} (\xi_j - \theta_j^*) / \widehat \tau_{\mathrm{loo}}^j = \frac{(\xper_j)^\top(\by_{\mathrm{init}} - \bX_{-j} \thetahatloo)}{\Sigma_{j|-j}\|\by_{\mathrm{init}} - \bX_{-j}\thetahatloo\|_2}\,.
	\end{equation}
	Because $\xper_j \sim \normal(0,\Sigma_{j|-j}\Ind_p)$ and is independent of $\by_{\mathrm{init}} - \bX_{-j} \thetahatloo$,
	conditionally on $\by_{\mathrm{init}},\bX_{-j}$ the quantity is distributed $\normal(0,\Sigma_{j|-j}^{-1})$.
	Thus, it is distributed $\normal(0,\Sigma_{j|-j}^{-1})$ unconditionally as well. 
	We have established \eqref{eq:exact-normality-under-null}.

\subsubsection*{Proof of Lemma \ref{LemLOO-DB-Dist}(b)}
  We may without loss of generality consider the case $\widehat \theta_{j,\mathrm{init}} = 0$.
  Indeed, the joint distribution of $(\by_{\mathrm{init}},\bX_{-j},\xper_j,\thetahatloo)$ under $\theta^*_j$ is equal to the joint distribution of $(\by_{\mathrm{init}},\bX_{-j},\xper_j,\thetahatloo)$ if the $j^\text{th}$ coordinate of the original model were instead $\theta^*_j - \widehat \theta_{j,\mathrm{init}}$, and the leave-one-out model and Lasso are taken with $\widehat \theta_{j,\mathrm{init}} = 0$.
  Under this transformation, the conditions of the Theorem are still met, possibly with $M'$ replaced by $2M'$.

	Thus, consider the case $\widehat \theta_{j,\mathrm{init}} = 0$.
	In this case, $\by_{\mathrm{init}} = \by$, and we will write the latter in place of the former in what follows.
	Define the quantity
	\begin{align}
		\tilde \xi_j
			&:= 
			(\xper_j)^\top ( \by - \bX_{-j} \thetahatloo ) - \theta_j^*\Sigma_{j|-j}(n - \| \thetahatloo \|_0 ) \,.
		\end{align}
	Direct calculations give 
	\begin{align}
		\tilde \xi_j  &= 
			(\xper_j)^\top ( \sigma \bz + \xper_j \theta_j^* + \bX_{-j} \thetastarloo - \bX_{-j} \thetahatloo ) 
			- 
			\theta_j^*\Sigma_{j|-j}(n - \| \thetahatloo \|_0 ) 
		\nonumber\\
		&= 
			\underbrace{(\xper_j)^\top ( \sigma \bz + \bX_{-j} \thetastarloo - \bX_{-j} \thetahatloo')}_{\Delta_1}
			+
			\underbrace{(\xper_j)^\top( \xper_j \theta_j^* +  \bX_{-j}( \thetahatloo' - \thetahatloo ) ) - \theta_j^*\Sigma_{j|-j}(n - \| \thetahatloo \|_0 ) }_{\Delta_2}\,,
	\end{align}
	where
	\begin{equation}
		\thetahatloo' \defn \arg\min_{\mytheta \in \reals^p} \left\{\frac1{2n} \| \sigma \bz + \bX_{-j} \thetastarloo - \bX_{-j} \mytheta \|_2^2 + \frac\lambda{\sqrt{n}} \|\mytheta\|_1\right\}\,.
	\end{equation}
	In particular, $\thetahatloo'$ is $\sigma(\bz,\bX_{-j})$-measurable, so is independent of $\xper_j$,
	whence
	\begin{equation}\label{EqDelta1Dist}
		\Delta_1\, | \, \bz,\bX_{-j} \sim \normal\Big( 0 , \Sigma_{ j | - j} \|\sigma \bz + \bX_{-j} \thetastarloo - \bX_{-j} \thetahatloo'\|_2^2 \Big)\,.
	\end{equation}
	The estimate $\thetahatloo$ is a function of $\bz$, $\bX_{-j}$, and $\xper_j$.
	We make this explicit by writing $\thetahatloo(\bz,\bX_{-j},\xper_j)$. Following this notation, 
	$\thetahatloo'$ defined above is equal to $\thetahatloo(\bz,\bX_{-j},\bzero)$. 
	
	Next consider the term $\Delta_2$. First define 
	\begin{align}
		F(\bz,\bX_{-j},\xper_j) \defn  \xper_j \theta_j^* +  \bX_{-j}( \thetahatloo(\bz,\bX_{-j},\bzero) - \thetahatloo(\bz,\bX_{-j},\xper_j)) \,.
	\end{align}
	Use $\nabla_{\xper_j}$ to denote the Jacobian with respect to $\xper_j$. 
	Almost surely, $\nabla_{\xper_j}F(\bz,\bX_{-j},\xper_j) = \theta_j^*(\Ind_n- \mathsf{P}_{{\bX_{-j}}_{\widehat S}})$, 
	where $\mathsf{P}_{{\bX_{-j}}_{\widehat S}}$ is the projector onto the span of $\{\brevebx_k \mid k \in \widehat S\}$ and $\widehat S$ is the support of $\thetahatloo(\bz,\bX_{-j},\xper_j)$ \cite{zou2007}.
	The function $F$ is $\theta_j^*$-Lipschitz in $\xper_j$ for fixed $\bz,\bX_{-j}$.
	Therefore we conclude that
	$\Delta_2 = (\xper_j)^\top F(\bz,\bX_{-j},\xper_j) - \Sigma_{j|-j}\, \mathrm{div}_{\xper_j}\; F(\bz,\bX_{-j},\xper_j) $. 
	Applying Stein's formula and the second-order Stein's formula \cite[Eq.~(2.1) and Theorem 2.1]{Bellec2018SecondOS}, we get
	\begin{align}
          \Exs[ \Delta_2 | \bz,\bX_{-j} ] &= 0  \qquad \text{and} \\
         \Var(\Delta_2 | \bz,\bX_{-j} ) &= \Sigma_{j|-j} \left(\E\Big[ \|F(\bz,\bX_{-j},\xper_j)\|_2^2  +
            \Sigma_{j|-j} \trace( \nabla_{\xper_j}F(\bz,\bX_{-j},\xper_j)^2 ) \bigm| \bz,\bX_{-j}\Big]\right) \,.
	\end{align}
	Note that, almost surely $\trace(\nabla_{\xper_j}F(\bz,\bX_{-j},\xper_j)^2) = {\theta_j^*}^2(n-\|  \thetahatloo\|_0)$.
	Further, $\|F(\bz,\bX_{-j},\xper_j)\|_2^2 \leq {\theta_j^*}^2 \|\xper_j\|_2^2$
	because $F$ is $\theta_j^*$-Lipschitz and $F(\bz,\bX_{-j},\bzero) = \bzero$.
	Thus,
	\begin{equation}
		\Var(\Delta_2 | \bz,\bX_{-j} ) 
			\leq 
			\Sigma_{j|-j}\E\Big[{\theta_j^*}^2\Big(\|\xper_j\|_2^2 + \Sigma_{j|-j}\Big(n- \|\thetahatloo\|_0 \Big)\Big)\Big| \bz,\bX_{-j} \Big] 
			\leq 
			2n\Sigma_{j|-j}^2{\theta_j^*}^2\;\; \text{almost surely.}
	\end{equation}
	Because the $\Exs[\Delta_2|\bz,\bX_{-j}] = 0$ almost surely, we have $\Var(\Delta_2) \leq 2n\Sigma_{j|-j}^2{\theta_j^*}^2$ as well. 
	Next observe that $\frac{\tilde \xi_j}{\Sigma_{j|-j}(n - \|\thetahatloo\|_0)} = \xi_j - \theta_j^*$.
	Thus,
	\begin{align}
		\frac{(n-\|\thetahatloo\|_0)(\xi_j - \theta_j^*)}{\|\by - \bX_{-j} \thetahatloo\|_2} 
			&=
			\frac{\|\sigma \bz + \bX_{-j} \thetastarloo - \bX_{-j} \thetahatloo'\|_2}{\|\by - \bX_{-j} \thetahatloo\|_2} 
			\frac{\Delta_1}{\Sigma_{j|-j}\|\sigma \bz + \bX_{-j} \thetastarloo - \bX_{-j} \thetahatloo'\|_2} 
		\nonumber\\
		&\quad\quad\quad\quad\quad\quad\quad\quad\quad\quad +
			\frac{\Delta_2}{\Sigma_{j|-j}\|\by - \bX_{-j} \thetahatloo\|_2} 
		\nonumber\\
		&=: r_jZ_j + R_j\,,
	\end{align}
	where $Z_j \defn \frac{\Delta_1}{\Sigma_{j|-j}\|\sigma \bz + \bX_{-j} \thetastarloo - \bX_{-j} \thetahatloo'\|_2} \sim \normal(0,\Sigma_{j|-j}^{-1})$ (and normality follows by the proof of Eq.~\eqref{eq:exact-normality-under-null}).

	The singular values of $\mySigma_{-j,-j}$ are bounded between the minimal and maximal singular values of $\mySigma$.
	Thus, the matrix $\mySigma_{-j,-j}$ satisfies Assumption \ref{assump:1} because $\mySigma$ does.
	In particular, the triple $\lambda$, $\mySigma_{-j,-j}$, and $\sigma_{\mathrm{loo}}^2$ satisfy Assumption \ref{assump:1} provided $|\theta_j^* | \leq 1$ (or any constant).
	Because $\thetastar_{-j}$ satisfies Assumption \ref{assump:1}$(d)$ for matrix $\mySigma_{-j,-j}$,
	and because $\|\theta^*_j\Sigma_{-j,-j}^{-1} \Sigma_{-j,j}\|_1 \leq |\theta_j^*|\|\Sigma_{-j,-j}^{-1} \Sigma_{-j,j}\|_2 \leq \sqrt{p}|\theta_j^*|\kappamin^{-1/2} \kappamax $,
	Assumption \ref{assump:1}(d) for the leave-one-out model if $M$ is replaced by $M + |\theta_j^*|\kappamin^{-1/2} \kappamax / \sqrt{\Deltamin}$.
	In particular, we may apply all of our results to this model when $|\theta_j^*| \leq 1$.

	Because $F$ is $\theta_j^*$-Lipschitz in $\xper_j$, 
	we have $|\|\by - \bX_{-j} \thetahatloo\|_2 - \|\sigma \bz + \bX_{-j} \thetastarloo - \bX_{-j} \thetahatloo'\|_2| < \theta_j^* \|\xper_j\|_2$.
	By Theorem \ref{ThmLassoResidual} and since ${\theta^*_j}^2\|\xper_j\|_2^2 \sim \Sigma_{j|-j}{\theta^*_j}^2\chi_n^2$, 
	there exist $C,c,c' > 0$ depending only on $\cuPmodel$ and $M'$ such that for $\epsilon < c'$, it is guaranteed that 
	\begin{align*}
		\mprob\left(|r_j-1|>\epsilon\right) 
		&=  
		\mprob\left(\left|\frac{\|\sigma \bz + \bX_{-j} \thetastarloo - \bX_{-j} \thetahatloo'\|_2}{\|\by - \bX_{-j} \thetahatloo\|_2} -1 \right|>\epsilon\right) \\
		&= \mprob\left(\left|\frac{\|\sigma \bz + \bX_{-j} \thetastarloo - \bX_{-j} \thetahatloo'\|_2 - \|\by - \bX_{-j} \thetahatloo\|_2}{\|\by - \bX_{-j} \thetahatloo\|_2}  \right|>\epsilon\right) \\
		&\leq \mprob\left(\frac{|\theta_j^*|\|\xper_j\|_2}{\|\by - \bX_{-j} \thetahatloo'\|_2 - |\theta_j^*|\|\xper_j\|_2} > \epsilon\right).
	\end{align*}
	Recall we have used that $\|\by - \bX_{-j} \thetahatloo'\|_2/\sqrt{n}$ concentrates on a quantity for which we have a lower bound by Theorem \ref{ThmLassoResidual}.
	Second, we have used that $\mprob(|\theta_j^*|\|\xper_j\|_2/\sqrt{n} > 2\Sigma_{j|-j}^{1/2} |\theta_j^*|) \leq C\exp(-cn)$.
	Thus, choosing $C,C',c,c' > 0$ depending only on $\cuPmodel$,
	we have for $|\theta_j^*| < c'$ and $\epsilon = C|\theta_j^*|$ that the previous display is buounded above by $Ce^{-cn}$

	Similarly, combining the concentration of $\Sigma_{j|-j}\|\by - \bX_{-j} \thetahatloo'\|_2/\sqrt{n}$ on a quantity for which we have a lower bound, the high probability upper bound on $|\theta_j^* \|\xper_j\|_2/\sqrt{n}$, and Chebyshev's inequality applied to $\Delta_2$,
	there exists $C,c' > 0$ depending only on $\cuPmodel$ such that for $\epsilon < c'$
	\begin{equation}
		\mprob\left(|R_j| > \epsilon\right) < \frac{C{\theta_j^*}^2}{\epsilon^2}\,.
	\end{equation}
	The proof of the lemma is complete.

\subsubsection*{Proof of Theorem \ref{ThmCILengthAndPower}(a)}
  The event $\theta \not \in \mathsf{CI}^{\mathrm{loo}}_j$ is equivalent to 
  \begin{equation}
    \frac{\Sigma_{j|-j}^{1/2}\sqrt{n}(\xi_j - \theta)}{\widehat \tau^j_{\mathrm{loo}}} \not\in [-z_{1-\alpha/2},z_{1-\alpha/2}]\,.
  \end{equation}
  %
  With $r_j,R_j$ defined as in Theorem \ref{LemLOO-DB-Dist}, this is equivalent to 
  \begin{equation}
    A := \Sigma_{j|-j}^{1/2}(r_jZ_j + R_j) + \frac{\Sigma_{j|-j}^{1/2}\sqrt{n}(\theta_j^*-\theta)}{\widehat \tau^j_{\mathrm{loo}}} \not \in [-z_{1-\alpha/2},z_{1-\alpha/2}]\,.
  \end{equation}
  By Theorems \ref{ThmLassoResidual} and \ref{ThmLassoSparsity} on concentration of the Lasso residual and sparsity and Theorem \ref{LemLOO-DB-Dist} on the concentration of $r_j$ and $R_j$,  
  there exist $C,c,c' > 0$ depending only on $\cuPmodel$ such that for all $\epsilon_1,\epsilon_2 < c'$,
  \begin{equation}
    \mprob\left(\left|A - \Sigma_{j|-j}^{1/2}Z_j - \frac{\Sigma_{j|-j}^{1/2}\sqrt{n}(\theta_j^* - \theta)}{\tau^*_{\mathrm{loo}}}\right| > \epsilon_1  + \sqrt{n}|\theta_j^* - \theta| \epsilon_2 \right) 
    \leq 
    \frac{C}{\epsilon_2^3} e^{-cn\epsilon_2^6} + e^{-cn} + \frac{C(\theta_j^* - \widehat \theta_{j,\mathrm{init}})^2}{\epsilon_1^2}\,.
  \end{equation}
  Thus, by direct calculation (where $C$ may take different values between lines)
  \begin{equation}
  \begin{aligned}
    \mprob&\left(
        A \not \in [-z_{1-\alpha/2},z_{1+\alpha/2}]
      \right)
      \geq 
      \mprob\left(
        \Big|\Sigma_{j|-j}^{1/2}Z_j + \frac{\Sigma_{j|-j}^{1/2}\sqrt{n}(\theta_j^* - \theta)}{\tau^*_{\mathrm{loo}}}\Big| > z_{1-\alpha/2}+\epsilon_1+\sqrt{n}|\theta_j^* - \theta|\epsilon_2
      \right)
    \\
    &\quad\quad\quad\quad\quad\quad\quad\quad\quad\quad\quad-
      \mprob\left(
        \Big|A - \Sigma_{j|-j}^{1/2}Z_j - \frac{\Sigma_{j|-j}^{1/2}\sqrt{n}(\theta_j^* - \theta)}{\tau^*_{\mathrm{loo}}}\Big| > \epsilon_1+\sqrt{n}|\theta_j^* - \theta|\epsilon_2
      \right)
    \\
    &\quad\quad
    \geq \mprob\left(
        \Big|\Sigma_{j|-j}^{1/2}Z_j + \frac{\Sigma_{j|-j}^{1/2}\sqrt{n}(\theta_j^* - \theta)}{\tau^*_{\mathrm{loo}}}\Big| > z_{1-\alpha/2}+\epsilon_1+\sqrt{n}|\theta_j^* - \theta|\epsilon_2
      \right)
     \\
     &\quad\quad\quad\quad\quad\quad\quad\quad\quad\quad\quad\quad 
     - 
      C\left(\frac1{\epsilon_2^3}e^{-cn\epsilon_2^6} + e^{-cn} + \frac{(\theta_j^* - \widehat \theta_{j,\mathrm{init}})^2}{\epsilon_1^2}\right) 
    \\ 
    &\quad\quad\geq 
      \mprob\left(|\theta_j^* + \Sigma_{j|-j}^{-1/2}\tau^*_{\mathrm{loo}} G/\sqrt{n} - \theta| \geq \Sigma_{j|-j}^{-1/2}\tau^*_{\mathrm{loo}}  z_{1-\alpha/2}\right) 
    \\ 
    &\quad\quad\quad\quad\quad\quad\quad\quad 
      - C\left( \epsilon_1+ \sqrt{n} |\theta^*_j - \theta|\epsilon_2 + \frac1{\epsilon_2^3}e^{-cn\epsilon_2^6} + e^{-cn} + \frac{(\theta_j^*-\widehat \theta_{j,\mathrm{init}})^2}{\epsilon_1^2}\right)\,.
  \end{aligned}
  \end{equation}
  Taking $\epsilon_1 = |\theta_j^* - \widehat \theta_{j,\mathrm{init}}|^{2/3}$ and $\epsilon_2 = n^{-1/6 + \gamma}$,
  we get one side of the inequality.
  The reverse inequality is obtained similarly.

\subsubsection*{Proof of Theorem \ref{ThmCILengthAndPower}(b)}
  By definition, one has 
  \begin{equation}
    \frac{\htauloo^j}{\tau^*_{\mathrm{loo}}} 
    = 
    \frac{\|\by - \bX_{-j} \thetahatloo\|_2/\sqrt{n}}{(1 - \|\thetahatloo\|_0/n)\tau^*_{\mathrm{loo}}}\,.
  \end{equation}
  As argued in the proof of Theorem \ref{LemLOO-DB-Dist}(b),
  the leave-one-out model obeys Assumption \ref{assump:1} provided $\widehat \theta_j - \theta_j^* \leq 1$ (or some constant).
  Then Equation \eqref{EqnCILengthBound} follows from Theorems \ref{ThmLassoResidual} and \ref{ThmLassoSparsity} on the concentration results for the Lasso residual and the sparsity.


\subsubsection*{Proof of Theorem \ref{ThmExactTestPower}}
	As argued in the proof of Theorem \ref{LemLOO-DB-Dist}(b), 
	Assumption \ref{assump:1} is satisfied by the leave-one-out model provided $|\theta_j^* - \widehat \theta_{j,\mathrm{init}}| \leq 1$ (or any constant).

	To emphasize the dependence of $\tau^*_{\mathrm{loo}}$ on $\widehat \theta_{j,\mathrm{init}}$, 
	we write $\tau^*_{\mathrm{loo}}(\widehat \theta_{j,\mathrm{init}})$.
	Our goal is to bound $|\tau^*_{\mathrm{loo}}(\widehat \theta_{j,\mathrm{init}}) - \tau^*_{\mathrm{loo}}(\theta_j^*)|$
	To control the fixed point parameter $\tau^*_{\mathrm{loo}}$,
	we will study the functional objective $\cuE_0$ of Eq.~\eqref{EqFunctionalObj} for the leave-one-out model as we vary $\widehat \theta_{j,\mathrm{init}}$.
	For simplicity of notation, we will drop the subscript on $\cuE_0$.
	As we vary $\widehat \theta_{j,\mathrm{init}}$, 
  	the only parameters defining the leave-one-out model which change are the noise variance $\sigma_{\mathrm{loo}}^2 = \sigma_{\mathrm{loo}}^2(\widehat \theta_{j,\mathrm{init}})$ and $\thetastarloo = \thetastarloo(\widehat \theta_{j,\mathrm{init}})$.
  	We write
	\begin{equation}
	\begin{aligned}
		\cuE(\bv;\widehat \theta_{j,\mathrm{init}}) 
		&:=
			\frac12\Big(
			\sqrt{\|\bv\|_{L^2}^2+\sigma_{\mathrm{loo}}^2 }\,-\frac{\langle \bg , \bv \rangle_{L^2}}{\sqrt{n}}
			\Big)^2_+
			+
			\frac{\lambda}{\sqrt{n}}
			\E\Big\{
				\|\thetastarloo + (\mySigma_{-j,-j})^{-1/2}\bv\|_1-\|\thetastarloo\|_1\big)
			\Big\}\,.
	\end{aligned}
	\end{equation}
	%
	Denote the unique minimizer of $\cuE$ by $\bv^* = \bv^*(\widehat \theta_{j,\mathrm{init}})$. Existence and uniqueness is guaranteed by Lemma \ref{LemAlphaFixedPtSoln}.
	Also by the proof of Lemma \ref{LemAlphaFixedPtSoln}, 
	\begin{equation}
	\label{eq:taustar-theta-via-E}
		\tau^*_{\mathrm{loo}} = \sqrt{\sigma_{\mathrm{loo}}^2 + \|\bv^*\|_{L^2}^2}.
	\end{equation}

	Under Assumption \ref{assump:1}, the objective $\cuE$ is $L$-Lipschitz in $\sigma_{\mathrm{loo}}^2$ on $\sigma_{\mathrm{loo}}^2 > \sigmamin^2$ and is $L$-Lipschitz in $\thetastarloo$ for some $L$ depending only on $\cuPmodel$.
	Recall that  $\sigma_{\mathrm{loo}}^2 - \sigma^2 = \Sigma_{j|-j}(\theta^*_j-\widehat \theta_{j,\mathrm{init}})^2 \leq C|\theta_j^* - \widehat \theta_{j,\mathrm{init}}|$ for $|\theta_j^* - \widehat \theta_{j,\mathrm{init}}| < c'$ 
	and $\| \thetastarloo(\theta_j^*) - \thetastarloo(\widehat \theta_{j,\mathrm{init}})\|_2 \leq C |\theta_j^* - \widehat \theta_{j,\mathrm{init}}|$,
	for appropriately chosen $C,c'$.
	By the proof of Lemma \ref{LemFixedPtContinuity}, 
	there exists $r,a > 0$ depending only on $\cuPmodel$ such that $\cuE(\cdot;\theta_j^*)$ is $a$-strongly convex in $\bv$ on $\|\bv - \bv^*(\theta_j^*)\|_2 \leq r$.
	Thus, for $\|\bv - \bv^*(\theta_j^*)\|_{L^2} \leq r$,
	\begin{equation}
	\begin{aligned}
		\cuE(\bv;\widehat \theta_{j,\mathrm{init}}) 
			&\geq 
			\cuE(\bv;\theta_j^*) - L|\theta_j^*-\widehat \theta_{j,\mathrm{init}}|
		\\
			&\geq 
			\cuE(\bv^*(\theta_j^*);\theta_j^*) + a\|\bv - \bv^*(\theta_j^*)\|_{L^2}^2 - L|\theta_j^* - \widehat \theta_{j,\mathrm{init}}|
		\\
		&\geq 
			\cuE(\bv^*(\theta_j^*);\widehat \theta_{j,\mathrm{init}}) 
				+ a\|\bv - \bv^*(\theta_j^*)\|_{L^2}^2 
				- 2L|\theta_j^* - \widehat \theta_{j,\mathrm{init}}|.
	\end{aligned}
	\end{equation}
	We conclude that if $\sqrt{2L|\theta_j^* - \widehat \theta_{j,\mathrm{init}}|/a} \leq r$, 
  then $\|\bv^*(\widehat \theta_{j,\mathrm{init}}) - \bv^*(\theta_j^*)\|_{L^2} \leq \sqrt{2L|\theta_j^* - \widehat \theta_{j,\mathrm{init}}|/a}$.
	By Eq.~\eqref{eq:taustar-theta-via-E},
	\begin{equation}
	\begin{aligned}
		|\tau^*_{\mathrm{loo}}(\widehat \theta_{j,\mathrm{init}}) - \tau^*_{\mathrm{loo}}(\theta_j^*)|
		&=
		\left|\sqrt{\sigma_{\mathrm{loo}}^2(\widehat \theta_{j,\mathrm{init}}) + \|\bv^*(\widehat \theta_{j,\mathrm{init}})\|_{L^2}^2} - \sqrt{\sigma^2 + \|\bv^*(\theta_j^*)\|_{L^2}^2}\right|\\
		\\
		&\leq L\sqrt{|\theta_j^*-\omega|}\,,
	\end{aligned}
	\end{equation}
	where the $L$ in the final line differs from the one in the preceding line and depends only on $\cuPmodel$.

	The proof is complete.


\subsubsection{Proof of Lemma \ref{LemCostComparisonLambda}}
\label{sec:pfLemmaCostComparisonLambda}

\begin{lem}
\label{lemma:ControlL1Norm}
  Assume $n/p \geq \delta_{\mathrm{DT}}(\sign(\barthetastar),\mySigma) + \Deltamin$.
  Then there exist finite constants $a,c_0,c_1,C_0>0$ depending only on
  $\Deltamin,\kappamin,\kappamax$
  such that if $n \geq c_1$ the following happens with probability at least $1-C_0e^{-c_0n}$. 
  For any $\bw\in\reals^p$:
  \begin{align}
  \label{EqnMinSingular}
    \|\barthetastar+\bw\|_1-\|\barthetastar\|_1\le 0\;\; \Rightarrow \;\; \frac1{\sqrt{n}}\|\bX\bw\|_2\ge a\|\bw\|_2\, .
   \end{align}
\end{lem}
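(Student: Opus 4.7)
\textbf{Proof proposal for Lemma \ref{lemma:ControlL1Norm}.}

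The plan is to convert the $\ell_1$--descent hypothesis into cone membership, and then apply Gordon's ``escape through a mesh'' to the whitened design $\tilde\bX := \bX\mySigma^{-1/2}$, whose entries are i.i.d.\ $\normal(0,1/n)$. The key bridge is the inequality $\cuG_{\std}(\bx,\mySigma)\le \cuG(\bx,\mySigma)$ already noted in the paper, which allows us to feed the \emph{functional} width bound into a \emph{standard} Gaussian--width argument.

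First, I would observe that since $\bx=\sign(\barthetastar)$, the vector $\bx$ belongs to the subdifferential $\partial\|\barthetastar\|_1$, with $x_j=0$ for $j\in S^c:=[p]\setminus\supp(\bx)$. Convexity of the $\ell_1$--norm therefore gives, for every $\bw\in\reals^p$,
\begin{equation*}
\|\barthetastar+\bw\|_1-\|\barthetastar\|_1 \ge \sum_{j\in S}x_j w_j + \|\bw_{S^c}\|_1\,.
\end{equation*}
Thus the hypothesis $\|\barthetastar+\bw\|_1-\|\barthetastar\|_1\le 0$ forces $\sum_{j\in S}x_j w_j+\|\bw_{S^c}\|_1\le 0$. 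Setting $\bv:=\mySigma^{1/2}\bw$, this is exactly the condition $F(\bv;\bx,\mySigma)\le 0$ (cf.~\eqref{EqnConstraintDef}), i.e.\ $\bv\in\cuK(\bx,\mySigma)$. Moreover, $\|\bX\bw\|_2=\|\tilde\bX\bv\|_2$ and $\|\bv\|_2\ge\kappamin^{1/2}\|\bw\|_2$, so it suffices to produce a constant $a'>0$ such that, with the stated probability,
\begin{equation*}
\inf_{\bv\in\cuK(\bx,\mySigma),\ \|\bv\|_2=1}\|\tilde\bX\bv\|_2 \ge a'\,,
\end{equation*}
and then take $a=a'\kappamin^{1/2}$.

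Next, I would invoke Gordon's minimum--singular--value inequality for the cone $\cuK:=\cuK(\bx,\mySigma)$: writing $\sqrt{n}\tilde\bX$ for an $n\times p$ matrix with i.i.d.\ $\normal(0,1)$ entries,
\begin{equation*}
\E\!\!\inf_{\bv\in\cuK\cap\S^{p-1}}\|\sqrt{n}\tilde\bX\bv\|_2 \;\ge\; \E\|\bg_n\|_2 \;-\; \E\!\!\sup_{\bv\in\cuK\cap\S^{p-1}}\langle\bv,\bg_p\rangle \;=\; \E\|\bg_n\|_2 - \sqrt{p}\,\cuG_{\std}(\bx,\mySigma)\,.
\end{equation*}
Dividing by $\sqrt{n}$, using $\E\|\bg_n\|_2\ge\sqrt{n-1}$ and the bound $\cuG_{\std}(\bx,\mySigma)\le\cuG(\bx,\mySigma)\le(1-\Deltamin)\sqrt\delta$ from the hypotheses (and the inequality $\cuG_{\std}\le\cuG$ established just after \eqref{Eqn:std-GW}), yields
\begin{equation*}
\E\!\!\inf_{\bv\in\cuK\cap\S^{p-1}}\|\tilde\bX\bv\|_2 \;\ge\; \sqrt{1-1/n} - (1-\Deltamin) \;\ge\; \Deltamin - 1/n\,.
\end{equation*}
For $n\ge\sqrt{2}/\Deltamin$ this lower bound is at least $(1-1/\sqrt{2})\Deltamin$, and in particular strictly positive.

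Finally, I would upgrade the expectation bound to a high--probability bound by Gaussian concentration. The map $\tilde\bX\mapsto\inf_{\bv\in\cuK\cap\S^{p-1}}\|\tilde\bX\bv\|_2$ is $1$--Lipschitz with respect to the operator norm and hence the Frobenius norm of $\tilde\bX$, so it is $n^{-1/2}$--Lipschitz as a function of the i.i.d.\ standard Gaussian entries of $\sqrt{n}\tilde\bX$. By Gaussian Lipschitz concentration, the infimum falls below, say, $(1-1/\sqrt{2})\Deltamin/2$ with probability at most $2\exp(-c_0 n\Deltamin^2)$. On the complementary event we may take $a'=(1-1/\sqrt{2})\Deltamin/2$ and $a=a'\kappamin^{1/2}$; combined with the opening reduction this gives \eqref{EqnMinSingular}. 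The constants $c_0,C_0$ depend only on $\Deltamin$ (and implicitly $\kappamin,\kappamax,\delta$ once one tracks the change of variables), as required.

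The substantive content of the argument is the reduction via the cone $\cuK(\bx,\mySigma)$ and the use of $\cuG_{\std}\le\cuG$; once this is in place Gordon's inequality does the work. The one place that requires care, and which I would treat as the main (minor) obstacle, is the book--keeping at the concentration step to ensure the deviation is indeed $\exp(-cn)$ uniformly over the cone rather than merely pointwise, which is handled precisely by the $n^{-1/2}$ Lipschitz constant of $\tilde\bX\mapsto\inf_{\bv\in\cuK\cap\S^{p-1}}\|\tilde\bX\bv\|_2$ in the standardized entries.
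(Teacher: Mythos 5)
Your proof is correct and follows essentially the same route as the paper: reduce the $\ell_1$-descent hypothesis to membership in the cone $\cuK(\bx,\mySigma)$, apply Gordon's escape-through-a-mesh bound on the minimum conic singular value of the whitened design together with the inequality $\cuG_{\std}(\bx,\mySigma)\le\cuG(\bx,\mySigma)$, and finish with Gaussian Lipschitz concentration. The only cosmetic difference is that you whiten to $\tilde\bX=\bX\mySigma^{-1/2}$ and intersect $\cuK$ with the Euclidean sphere, whereas the paper keeps $\bX$ and intersects with the $\mySigma^{1/2}$-weighted sphere; the computations are equivalent.
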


\begin{proof}[Proof of Lemma \ref{lemma:ControlL1Norm}]
The Gaussian width $\cuG(\bx,\mySigma)$ is an upper bound on the standard notion of Gaussian width $\cuG_{d}(\bx,\mySigma)$ defined in Eq.~\eqref{Eqn:std-GW}.
Thus,
\begin{equation}
\label{eq:standard-to-functional-GW}
	\sqrt{n/p - \Deltamin}
	\geq 
	\cuG_{d}(\bx,\mySigma) 
	=
	\frac1{\sqrt{p}} \E\Big[ 
		\max_{ 
			\substack{	\bv \in \cuK(\bx,\mySigma) \\
						\|\bv\|_2^2/p \leq 1
						} 
			} 
			\langle \bv , \bg \rangle 
		\Big]\,.
\end{equation}
The result then follows from standard results; see, for example, Corollary 3.3 of \cite{chandrasekaran2010} and its proof.
We repeat the proof here for convenience.

For simplicity of notation, we will denote $\cuK = \cuK(\bx,\mySigma)$.
Let $\bx = \sign(\barthetastar)$.
Note that
\begin{align*}
\|\barthetastar+\bw\|_1-\|\barthetastar\|_1 & \, = 
\|\bw_{S^c}\|_1 + \|(\barthetastar+\bw)_{S}\|_1 - \|\barthetastar_{S}\|_1  \geq 
\|\bw_{S^c}\|_1 + \sum_{j \in \supp(\bx)} x_j \bw_j\,,
\end{align*}
whence $\|\barthetastar+\bw\|_1-\|\barthetastar\|_1 \leq 0$ implies $\mySigma^{1/2}\bw \in \cuK$.
Thus, it suffices to show that with probability at least $1 - C_0e^{-c_0n}$, one has 
\begin{equation}
	\mySigma^{1/2} \bw \in \cuK \;\; \Rightarrow \;\; \frac1{\sqrt{n}}\|\bX\bw\|_2\ge a\|\bw\|_2\, .
\end{equation}
Define the minimum singular value over $\cuK$ as 
\begin{align}
	\kappa_{-} (\bX, \cuK) \defn \inf\, \Big\{ \frac1{\sqrt{n}}\ltwo{\bX \bw} \mid \mySigma^{1/2}\bw \in \cuK, \ltwo{\bw} = 1 \Big\}\,,
\end{align}
and define $\tilde\kappa_{-} (\bX, \cuK) \defn \inf\, \Big\{ \frac1{\sqrt{n}} \ltwo{\bX \bw} \mid \mySigma^{1/2}\bw \in \cuK, \ltwo{\mySigma^{1/2}\bw} = 1 \Big\}.$
Then, because $\cuK$ is a cone (and so is scale invariant),
\begin{align*}
	\kappa_{-}(\bX,\cuK) 
		\geq 
		\tilde\kappa_{-}(\bX,\cuK) \cdot \min_{\ltwo{\bw}=1}\ltwo{\mySigma^{1/2}\bw} 
		\geq 
		\tilde\kappa_{-}(\bX,\cuK) \kappamin^{1/2}\,.
\end{align*}
Thus, it suffices to show there exists $a>0$ depending on $\Deltamin,\kappamin,\kappamax$ such that with high-probability $\kappa_{-}(\bX,\cuK) \geq a$.

By definition,
\begin{align*}
	- \Exs [ \tilde\kappa_{-} (\bX, \cuK)] 
	= \Exs \Big[ \max_{\substack{\mySigma^{1/2}\bw \in \cuK \\ \ltwo{\mySigma^{1/2}\bw} = 1}} - \frac1{\sqrt{n}}\ltwo{\bX \bw}\Big] = 
	\Exs \Big[\max_{\substack{\mySigma^{1/2}\bw \in \cuK \\ \ltwo{\mySigma^{1/2}\bw} = 1}}\min_{\|\bu\|_2 = 1} 
	\frac1{\sqrt{n}}\bu^\top \bX \bw\Big]\,.
\end{align*}
Recall that the rows of $\bX$ are distributed iid from $\normal(\bzero,\mySigma)$.
By Gordon's lemma (Corollary G.1 of \cite{miolane2018distribution})
\begin{align*}
	- \Exs [ \tilde\kappa_{-} (\bX, \cuK)] 
		& \, 
		\leq 
		\E\Big[\max_{\substack{\mySigma^{1/2}\bw \in \cuK \\ \ltwo{\mySigma^{1/2}\bw} = 1}}\min_{\|\bu\|_2 = 1} 
			\frac1{\sqrt{n}} \|\mySigma\bw\|_2 \inprod{\bh}{\bu}
			+
			\frac1{\sqrt{n}} \|\bu\|_2 \inprod{\mySigma^{1/2}\bw}{\bg}
			\Big]
		\\
	&= 
		\frac1{\sqrt{n}}\E\Big[
			\max_{\substack{\mySigma^{1/2} \bw \in \cuK \\ \ltwo{\mySigma^{1/2} \bw} = 1}}
			- \|\bh\|_2 + \inprod{\mySigma^{1/2}\bw}{\bg} 
			\Big] 
		\leq   
		-\sqrt{\frac{n}{n+1}}
		+ 
		\sqrt{\frac{p}{n}} \cuG(\bx,\mySigma)
		\\
	&\leq 
		-\sqrt{\frac{n}{n+1}} + \sqrt{\frac{p}{n}} \wedge \sqrt{1 - \Deltamin \frac{p}{n}}
	\leq 
		-\sqrt{\frac{n}{n+1}} + \frac12 \vee \Big(1 - \frac{\Deltamin}4\Big)\,,
\end{align*}
where the second-to-last equality uses $\E[\|\bh\|_2] \geq \frac{n}{\sqrt{n+1}}$ and the definition of $ \cuG(\bx,\mySigma)$,
and the last inequality uses the upper bound on the Gaussian width \eqref{eq:standard-to-functional-GW} and considering cases $p/n \leq 1/4$ and $p/n \geq 1/4$.
For all $n \geq 2$ we have $\sqrt{n/(n+1)} \geq \sqrt{(n-1)/n} \geq 1 - \frac1{\sqrt{2}n}$.
Thus, for $n \geq \frac{1}{(\sqrt{2}/4)\wedge (\sqrt{2}\Deltamin/8))}$,
$\Exs [ \tilde\kappa_{-} (\bX, \cuK)] \geq \frac14 \vee  \frac{\Deltamin}{8}$.

The quantity $\tilde\kappa_{-} (\bX, \cuK)$ as a function of $\bX\mySigma^{-1/2}$ is $\frac{1}{\sqrt{n}}$-Lipschitz with respect to the Frobenius norm. 
Thus 
\begin{align*}
	\mathbb{P}(\tilde\kappa_{-} (\bX, \cuK)
	\leq \Exs [ \tilde\kappa_{-} (\bX, \cuK)] - t)  
	\leq e^{-n t^2/2}.
\end{align*}
Taking $t = (1/8) \vee (\Deltamin/16)$ completes the proof.
\end{proof}

\begin{lem}
\label{lem:L1Norm}
   Under Assumption \ref{assump:1},
   there exist constants $C,C_0,c_0 > 0$ depending only on
   $\cuPmodel$ such that
   \begin{align}
   \label{eq:high-prob-penality-at-soln-bound}
     \mprob\left(\forall \lambda\in[\lambda_{\min},\lambda_{\max}]: \; \frac{1}{\sqrt{n}}\big|\|\thetastar+\mySigma^{-1/2}\hbv^\lambda\big\|_1-\big\|\thetastar\big\|_1\big|\le C\right)\ge 1-C_0e^{-c_0n}\, .
   \end{align}
\end{lem}
\begin{proof}[Proof of Lemma \ref{lem:L1Norm}]
  The proof follows almost exactly that for \cite[Proposition C.4]{miolane2018distribution}, using Lemma \ref{lemma:ControlL1Norm}.
  The primary difference is the approximation of $\thetastar$ by $\barthetastar$.
  
  Let $\barthetastar$ be as in Assumption \ref{assump:1}(d).
	Note that
  \begin{equation}
  \begin{aligned}
      \frac\lambda{\sqrt{n}} \left(\|\thetastar + \mySigma^{-1/2}\hbv^\lambda \|_1 - \|\thetastar\|_1\right) 
      &\geq 
        \frac\lambda{\sqrt{n}} \left(\|\barthetastar + \mySigma^{-1/2}\hbv^\lambda \|_1 - \|\barthetastar\|_1 - 2\|\barthetastar - \thetastar\|_1\right)\\
      &\geq 
        \frac\lambda{\sqrt{n}} \left(\|\barthetastar + \mySigma^{-1/2}\hbv^\lambda \|_1 - \|\barthetastar\|_1\right) - \frac{2\lambda M}{\Deltamin}\,.
    \end{aligned}
  \end{equation}
  We show that the high probability event \eqref{eq:high-prob-penality-at-soln-bound} is implied by the event
  \begin{equation}
		\cuA:= \left\{\|\barthetastar+\bw\|_1-\|\barthetastar\|_1\le 0\;\; \Rightarrow \;\; \frac1{\sqrt{n}}\|\bX\bw\|_2\ge a\|\bw\|_2\right\} \bigcap \left\{\|\bz\|_2 \leq 2 \sqrt{n}\right\},
  \end{equation}
  where $a$ is as in Lemma \ref{lemma:ControlL1Norm}.
  On this event, $\cuC(\hbv^\lambda) \leq \cuC_\lambda(\bzero) = \sigma^2\|\bz\|_2^2/(2n) \leq 2\sigma^2$, whence 
  \begin{equation}
      \frac1{\sqrt{n}} \left(\|\barthetastar + \mySigma^{-1/2}\hbv^\lambda \|_1 - \|\barthetastar\|_1\right) 
      \leq 
      2\sigma^2/\lambdamin + 2 M/\Deltamin\,,
  \end{equation}
  which further implies
  \begin{equation}
  \label{eq:penalty-at-soln-UB}
      \frac1{\sqrt{n}} \left(\|\thetastar + \mySigma^{-1/2}\hbv^\lambda \|_1 - \|\thetastar\|_1\right) 
      \leq 
      2\sigma^2/\lambdamin + 4M/\Deltamin\,.
  \end{equation}
  Let $\hbw^\lambda = \mySigma^{-1/2} \hbv^\lambda$.
  On the event $\cuA$, we also have
  \begin{equation}
  \begin{aligned}
      2\sigma^2 
      &\geq 
      \cuC(\hbv^\lambda)
      \geq 
      \frac1{2n}\|\sigma \bz - \bX \hbw^\lambda \|_2^2 - \frac\lambda{\sqrt{n}} \|\hbw^\lambda\|_1 - \frac{2\lambda M}{\Deltamin} \\
      &\geq 
      \frac1{4n}\|\bX \hbw^\lambda \|_2^2 - \frac{\sigma^2}{2n}\|\bz\|_2^2 - \frac{\lambda}{\sqrt{n/p}} \|\hbw^\lambda\|_2 - \frac{2\lambda M}{\Deltamin} \\
      &\geq 
      \frac{a}{4}\|\hbw^\lambda \|_2^2 - 4\sigma^2 - \frac{\lambda}{\sqrt{\Deltamin}} \|\hbw^\lambda\|_2 - \frac{2\lambda M}{\Deltamin}\,.
    \end{aligned}
  \end{equation}
  We conclude that $      \|\hbw^\lambda\|_2 
      \leq 
      C,$
  for $C$ depending only on $\cuPmodel$.
  Because $\frac1{\sqrt{n}}\big|\| \thetastar + \hbw^\lambda \|_1 - \| \thetastar \|_1 \big| \leq \| \hbw \|_1 / \sqrt{n} \leq \| \hbw \|_2 / \sqrt{\Deltamin}$, the result follows.
  \end{proof}
Lemma \ref{LemCostComparisonLambda} follows from Lemma \ref{lem:L1Norm} by exactly the same argument in the proof of \cite[Lemma C.5]{miolane2018distribution}.

\subsubsection{Proof of Lemma \ref{LemPopCostComparisonLambda}}
\label{sec:pfPopCostComparisonLambda}

Recall from the proof of Lemma \ref{LemAlphaFixedPtSoln} (in particular, Eq.~\eqref{EqnTcuEtoProx})
that $\hbv^{f,\lambda}$, where the latter is viewed as a function of $\bg$ and hence an element of $L^2(\reals^p;\reals^p)$, is the unique minimizer of $\cuE^\lambda$.
By optimality,
\begin{equation}
\label{eq:expected-l1-ub}
	\frac{\sigma^2}2
	=
	\cuE^\lambda(\bzero)
	\geq
	\cuE^\lambda(\hbv^{f,\lambda})
	\geq 
	\frac\lambda {\sqrt{n}} \E\Big\{
				\|\thetastar+\mySigma^{-1/2}\hbv^{f,\lambda}\|_1-\|\thetastar\|_1\,,
			\Big\}\,.
\end{equation}
Take $\barthetastar$ as in Assumption \ref{assump:1}.
Note that
\begin{equation}
\begin{aligned}
  \frac\lambda{\sqrt{n}} 
  \E\left\{\|\thetastar + \mySigma^{-1/2}\hbv^\lambda \|_1 - \|\thetastar\|_1\right\}
  &\geq 
    \frac\lambda{\sqrt{n}} 
    \E\left\{\|\barthetastar + \mySigma^{-1/2}\hbv^\lambda \|_1 - \|\barthetastar\|_1 - 2\|\barthetastar - \thetastar\|_1\right\}\\
  &\geq 
    \frac\lambda{\sqrt{n}} 
    \E\left\{\|\barthetastar + \mySigma^{-1/2}\hbv^\lambda \|_1 - \|\barthetastar\|_1\right\} - \frac{2\lambda M}{\sqrt{\Deltamin}}\,.
\end{aligned}
\end{equation}
By the definition of Gaussian width (see Eq.~\eqref{EqnGaussianWidth}),
either
\begin{equation}
	\frac\lambda{\sqrt{n}} \E\left\{\|\barthetastar + \mySigma^{-1/2}\hbv^\lambda \|_1 - \|\barthetastar\|_1\right\}
	\geq 
	0\,,
\end{equation}
or 
\begin{equation}
\begin{aligned}
	\frac{\sigma^2}{2} 
		= \cuE^\lambda(\bzero) 
		\geq \cuE^\lambda(\hbv^{f,\lambda})
		&\geq 
		\frac12 \left(\|\hbv^{f,\lambda}\|_{L^2} - \frac{\cuG(\bx,\mySigma)\|\hbv^{f,\lambda}\|_{L^2}}{\sqrt{n/p}}\right)_+^2 - 
		\frac\lambda{\sqrt{n}} 
		\E\left\{\|\mySigma^{-1/2}\hbv^{f,\lambda}\|_1\right\} - \frac{2\lambda M}{\sqrt{\Deltamin}}
		\\
	&\geq 
		\frac12\|\hbv^{f,\lambda}\|_{L^2}^2 \Big(1 - \sqrt{\frac{p}{n}} \wedge \sqrt{1 - \frac{p}{n} \Deltamin} \Big)^2 
	\\
	&\qquad\qquad\qquad\qquad\qquad\qquad-
		\frac\lambda{\sqrt{\Deltamin}} 
		\E\left\{\|\mySigma^{-1/2}\hbv^{f,\lambda}\|_2\right\} - \frac{2\lambda M}{\sqrt{\Deltamin}}
		\\
	&\geq 
		\frac12\|\hbv^{f,\lambda}\|_{L^2}^2 \Big(\frac1{4} \wedge \Big(1 - \frac{\Deltamin}{4}\Big)^2 \Big) - \frac{\lambda\kappamin^{-1/2}}{\sqrt{\Deltamin}} \|\hbv^{f,\lambda}\|_{L^2} - \frac{2\lambda M}{\sqrt{\Deltamin}}\,,
\end{aligned}
\end{equation}
where the second inequality uses that $\cuG(\bx,\mySigma) \leq 1 \vee \sqrt{n/p - \Deltamin}$, 
and the last inequality considers separately cases $p/n \leq 1/4$ and $p/n \geq 1/4$.
The previous display implies $\| \hbv^{f,\lambda} \|_{L^2} \leq C$ for some $C$ depending on $\cuPmodel$,
in which case
\begin{equation}
\begin{aligned}
\label{eq:expected-l1-lb}
	\frac1{\sqrt{n}} \E\left\{\|\thetastar + \mySigma^{-1/2}\hbv^{f,\lambda} \|_1 - \|\thetastar\|_1\right\}
		&\geq 
		- 
		\frac1{\sqrt{n}} \E\Big\{\| \mySigma^{-1/2} \hbv^{f,\lambda} \|_1\Big\}
		\geq 
		- 
		\frac{\kappamin^{-1/2}}{\sqrt{\Deltamin}} \| \hbv^{f,\lambda} \|_{L^2}
		\geq -C,
\end{aligned}
\end{equation}
for some possibly new value of $C$.
Combining Eqs.~\eqref{eq:expected-l1-ub} and \eqref{eq:expected-l1-lb},
there exists $C$ depending only on $\cuPmodel$
such that
\begin{equation}
	\left|\frac1{\sqrt{n}} \E\left\{\|\thetastar + \mySigma^{-1/2}\hbv^{f,\lambda} \|_1 - \|\thetastar\|_1\right\}\right|
	\leq 
	C\,.
\end{equation}

By Lemma \ref{LemTauZetaBounds},
the solutions to the fixed point equations \eqref{EqnEqn1} and \eqref{EqnEqn2} are bounded by $\cuPmodel$-dependent constants.
By the proof of Lemma \ref{LemFixedPtContinuity},
there exists $r,a > 0$ depending only on $\cuPmodel$ such that $\cuE^\lambda$ is $a$ strongly-convex in the neighborhood $\|\bv - \hbv^{f,\lambda}\|_{L^2} \leq r$ around its minimizer.
Thus, we conclude for any $\bv \in L^2$ we have
\begin{equation}
	\cuE^\lambda(\bv) \geq \cuE^\lambda(\hbv^{f,\lambda}) + h(\|\bv - \hbv^{f,\lambda}\|_{L^2})
\end{equation}
where $h(x) := \min\{ ax^2/2 , ar|x|/2 \}$.
It worth emphasizing that this bound holds for with the same $a,r$ for all $\lambda \in [\lambdamin,\lambdamax]$.
Then direct calculations further give 
\begin{equation}
\begin{aligned}
	\cuE^\lambda(\hbv^{f,\lambda'})
		&\geq
		\cuE^\lambda(\hbv^{f,\lambda}) + h(\|\hbv^{f,\lambda'} - \hbv^{f,\lambda}\|_{L^2}) 
		\geq 
		\cuE^{\lambda'}(\hbv^{f,\lambda}) + h(\|\hbv^{f,\lambda'} - \hbv^{f,\lambda}\|_{L^2})  - C|\lambda' - \lambda|
		\\
	&\geq 
		\cuE^{\lambda'}(\hbv^{f,\lambda'}) + 2h(\|\hbv^{f,\lambda'} - \hbv^{f,\lambda}\|_{L^2}) - C|\lambda' - \lambda| 
		\\
	&\geq 
		\cuE^{\lambda}(\hbv^{f,\lambda}) + 4h(\|\hbv^{f,\lambda'} - \hbv^{f,\lambda}\|_{L^2}) - 2C|\lambda' - \lambda| \,,
\end{aligned}
\end{equation}
where the last inequality holds by the same string of manipulations justifying the first three.
Take $c' = ar^2/C$.
If $|\lambda - \lambda'| < c'$, we have $h(\|\hbv^{f,\lambda'} - \hbv^{f,\lambda}\|_{L^2}) \leq ar^2/2$, whence in fact $h(\|\hbv^{f,\lambda'} - \hbv^{f,\lambda}\|_{L^2}) = \frac{a\|\hbv^{f,\lambda'} - \hbv^{f,\lambda}\|_{L^2}^2}{2}$.
We conclude $\|\hbv^{f,\lambda'} - \hbv^{f,\lambda}\|_{L^2} \leq K|\lambda - \lambda'|^{1/2}$ for appropriately chosen $K$.

\subsection{Control of the emprical distribution: proof of Corollary \ref{CorEmpDist}}\label{sec:proofOfCorEmpDist}

In the fixed design model, let $I$ be uniformly distributed on $[p]$ independently of $\bg$.
Let $\mu_*$ be the distribution of $(\theta_I^*,\widehat \theta^f_I)$.
For any $k$,
we have $\frac1p \sum_{i=1}^p \phi_k(\sqrt{n}\theta_i^*,\sqrt{n}\widehat \theta^f_i) $ is $\tau \kappamin^{-1/2}/\sqrt{p}$-Lipschitz in $\bg$,
so that by Gaussian concentration of Lipschitz functions,
\begin{equation*}
	\mprob\left(
			\Big|\frac1p \sum_{i=1}^p \phi_k(\sqrt{n}\theta_i^*,\sqrt{n}\widehat \theta^f_i) 
			- 
			\E\Big[\frac1p \sum_{i=1}^p \phi_k(\sqrt{n}\theta_i^*,\sqrt{n}\widehat \theta^f_i)\Big]\Big| > t
		\right) \leq 2 e^{-2p\kappamin t^2/\taumax^2},
\end{equation*}
whence $\E\left[\Big|\frac1p \sum_{i=1}^p \phi_k(\theta_i^*,\widehat \theta^f_i) - \E\Big[\frac1p \sum_{i=1}^p \phi_k(\theta_i^*,\widehat \theta^f_i)\Big]\Big|\right] \leq C / \sqrt{p}$, for some $C$ depending on $\cuPmodel$.
Summing the above inequality over $k=1,\ldots,\infty$, we obtain that 
$$
	\E\left[d_{w^*}\left(\frac1p \sum_{i=1}^p \delta_{\sqrt{n}\theta_i^*,\sqrt{n}\widehat \theta_i^f},\mu_*\right)\right] \leq C/\sqrt{p}.
$$
Note further that $d_{w^*}\left(\frac1p \sum_{i=1}^p \delta_{\sqrt{n}\theta_i^*,\sqrt{n}\widehat \theta_i^f},\mu_*\right)$ is $\taumax\kappamin^{-1/2}/\sqrt{p}$-Lipschitz in $\bg$.
Applying Gaussian Lipschitz concentration in the fixed design model, we conclude the second inequality in Corollary \ref{CorEmpDist}.
Because the function is $\taumax \kappamin^{-1/2} \sqrt{\Deltamax} $-Lipschitz in its argument, applying Theorem \ref{ThmControlLassoEst}, we conclude the first inequality in Corollary \ref{CorEmpDist}.


\section{Auxiliary results and proofs}

\subsection{Unbounded risk below the Donoho-Tanner phase transition: Proof of Proposition \ref{prop:DT-necessary}}
\label{sec:DT-necessary}

\begin{proof}[Proof of Proposition \ref{prop:DT-necessary}]
	By Proposition \ref{prop:std-width},
	we can prove the result with $\cuG_d$ in place of $\cuG$,
	at the cost of changing constants.
	Recall $\cuC(\cdot)$ and $\cuL(\cdot)$ as defined in Eqs.~\eqref{eq:cuC-def} and \eqref{eq:def-cuL-main}.
	First note that 
	\begin{equation}
		\cuC(\bv) 
			\geq 
			\cuC'(\bv)
			:= 
			\frac1{2n} \| \sigma \bz - \bX \mySigma^{-1/2} \bv \|_2^2 - \frac{\lambda}{\sqrt{n}} \| \mySigma^{-1/2} \bv \|_1.
	\end{equation}
	Because $\cuC'(\cdot)$ is continuous almost surely,
	for all $r > 0$, there exists $T > 0$, depending on $r$ but not on $\thetastar$, such that 
	\begin{equation}
		\mprob\Big(\inf_{\|\bv\|_2 \leq r} \cuC(\bv) \geq -T/2\Big) 
		\geq 
		\mprob\Big(\inf_{\| \bv \|_2 \leq r} \cuC'(\bv) \geq -T/2\Big) \geq 1 - Ce^{-cp\epsilon^2}.
	\end{equation}
	To complete the proof, it suffices to show that for appropriately chosen $\thetastar$ such that $\sign(\thetastar) = \sign(\bx)$,
	the minimum value of $\cuC(\bv)$ is smaller than $-T$ with high probability.

	By Gaussian concentration of Lipschitz functions, 
	with probability at least $1 - 2e^{-p\epsilon^2/8}$,
	\begin{equation}
		\sup_{\substack{\| \bv \|_2 \leq 1\\ F(\bv;\bx,\mySigma) \leq 0} } 
			\frac1{\sqrt{p}} \< \bv , \bg \> 
			>
			\sqrt{\frac{n}{p}} + \frac{\epsilon}{2}.
	\end{equation}
	Thus, let $\bv = \bv(\bg)$ be a random variable such that $\| \bv \|_2 = 1$ and $F(\bv;\bx,\mySigma) \leq 0$ always,
	and  $\< \bv , \bg \> / \sqrt{p} > \sqrt{n/p} + \epsilon/2$ with probability at least $1 - 2e^{-p\epsilon^2/8}$.
	For some $0 < \epsilon_1 \leq 1$, let
	\begin{equation}
	\label{eq:DT-necc-good-event}
		\bv_1 = \bv - \frac{\epsilon_1}{\sqrt{p}} \mySigma^{1/2} \bx.
	\end{equation}
	Note that $\| \bv_1 \|_2 \leq 1 + \kappamax^{1/2} \epsilon_1$,
	$F(\bv_1;\bx,\mySigma) = - \frac{\epsilon_1}{\sqrt{p}} \| \bx \|_2^2$, 
	and on the event that $\< \bv , \bg \> / \sqrt{p} > \sqrt{n/p} + \epsilon/2$ and $\< \mySigma^{1/2} \bx , \bg \> / p < \epsilon/4$,
	which has probability at least $1 - Ce^{-c\epsilon^2}$,
	we have
	\begin{equation}
	\label{}
		\frac1{\sqrt{p}} \< \bv_1 , \bg \> > \sqrt{\frac{n}{p}} + \frac{\epsilon}{4}.
	\end{equation}
	Note that for $\thetastar = t\kappamin^{-1/2}(1 + \kappamax^{1/2} \epsilon_1) \bx $ and $S = \supp(\bx)$, we have $\sign\big((\thetastar + t\mySigma^{-1/2} \bv_1  )_S\big) = \sign(\bx_S)$,
	whence $\| \thetastar + t \mySigma^{-1/2} \bv_1 \|_1 - \| \thetastar \|_1 = tF(\bv_1;\bx,\mySigma) \leq - t \epsilon_1 \| \bx \|_2^2 / \sqrt{p}$.
	Denoting $\cuL(\bv; \thetastar)$ the objective \eqref{eq:def-cuL-main} as a function also of $\thetastar$, 
	and choosing $\thetastar$ as above,
	we have
	\begin{equation}
		\cuL(t\bv_1; \thetastar)
			\leq 
			\frac12 
			\left(
				\sqrt{\sigma^2 + t^2(1 + \kappamax^{1/2}\epsilon_1)^2} 
				- 
				t\Big(
					1 + \sqrt{\frac{p}{n}} \frac{\epsilon}{4}
				\Big)
			\right)_+^2
			- \frac{t\epsilon_1\| \bx \|_2^2}{\sqrt{p}},
	\end{equation}
	on the event \eqref{eq:DT-necc-good-event}.
	Now we see that if we take $\epsilon_1$ small enough that $ 1 + \kappamax^{1/2}\epsilon_1 < 1 + \sqrt{p/n}\,\epsilon/4$, 
	then the right-hand side above goes to $-\infty$ as $t \rightarrow \infty$.
	Take $\epsilon_1$ this small,
	and take $t$ large enough so that the right-hand side above is smaller than $-T$. 
	Thus, by Gordon's lemma (Lemma \ref{LemGordonMain}), 
	and choosing $\thetastar$ as above for this value of $t$ and $\epsilon_1$,
	we conclude that
	\begin{equation}
		\mprob\Big(\inf_{\bv \in \reals} \cuC(\bv) \leq -T\Big) \geq Ce^{-cp\epsilon^2}.
	\end{equation}
	This completes the proof.	
\end{proof}

\subsection{Gaussian width under correlated designs: proof of Proposition  \ref{ClaimBallToApproxSparse}}
\label{Sec:PFclaimBall}
\begin{proof}[Proof of Proposition \ref{ClaimBallToApproxSparse}]
	Parts (a) and (b) of Proposition \ref{ClaimBallToApproxSparse} now follow from the following constructions.
	\begin{enumerate}[label=(\alph*)]

		\item 
		We may take $\barthetastar = \thetastar$. 
		This is then an immediate consequence of Lemma \ref{lem:width-under-corr}.

		\item 
		If $\|\thetastar\|_q^q / p \leq \nu^q$ for some $\nu > 0$ and $q > 0$,
		take $\barthetastar$ to be supported on the largest (in absolute value) $s$ coordinates of $\thetastar$, and take
		$\bar \theta^*_j = \theta^*_j$ for $j$ on this support.
		The Gaussian width bound for $\barthetastar$ of Assumption \ref{assump:1}(d) holds by part (a).
		Because $\| \thetastar \|_q \leq \nu$, the $s^\text{th}$-largest coordinate of $\thetastar$ in absolute value is no larger than $\nu/s^{1/q}$.
		Thus, $\| \thetastar - \barthetastar \|_1 \leq (p-s)\nu/s^{1/q}$, whence the $\ell_1$-approximation of  \ref{assump:1}(d) holds with $M = \sqrt{n}\nu(1-s/p)/p^{1/q}$.
		
		\item 
		As in part (b), take $\barthetastar$ to be supported on the largest (in absolute value) $s$ coordinates of $\thetastar$, and take
		$\bar \theta^*_j = \theta^*_j$ for $j$ on this support.
		Call this support $S \subset [p]$.
		As above, the Gaussian width bound for $\barthetastar$ of Assumption \ref{assump:1}(d) holds by part (a).
		Because $\sum_{j=1}^n \min(1,\sqrt{n}|\theta_j^*|/\lambda)\leq s$, 
		there are at most $s$ coordinates of $\thetastar$ with $\sqrt{n}|\theta_j^*|/\lambda \geq 1$.
		In particular, $\sqrt{n}|\theta_j^*|/\lambda < 1 $ for all $j \in S^c$.
		Thus, $\| \thetastar - \barthetastar \|_1 = \| \thetastar_{S^c} - \barthetastar_{S^c} \|_1 = \sum_{j=1}^p \min(1,\sqrt{n}|\theta_j^*|/\lambda) \leq s$.
		Thus, the $\ell_1$-approximation of  \ref{assump:1}(d) holds with $M = \lambda s /p$.

	\end{enumerate}
	The proof of Proposition \ref{ClaimBallToApproxSparse} is complete.
\end{proof}

\subsection{Properties of the design matrix}
\label{SecPropX}

Given every integer $ j \in \{1,\ldots, p\}$, each row of our design matrix is sampled independently from a multivariate Gaussian distribution, namely 
\begin{align*}
\text{for $i = 1,\ldots,n$ }
~~~(X_{i,j}, \bm{X}_{i.-j}) \sim \normal \left(0, \frac{1}{\numobs} \mySigma\right) \qquad
\mySigma = 
	\begin{pmatrix}
		\Sigma_{j,j} & \mySigma_{j, -j}\\
		\mySigma_{-j,j} & \mySigma_{-j,-j}
	\end{pmatrix},
\end{align*}
where the $1/\numobs$ factor is due to the normalization of the design matrix. 
Here $\bm X_{.,j}$ stands for the covariate corresponding to the $j$-th coordinate of $\mytheta$ 
and $\bm X_{.,-j} \in \reals^{(\usedim-1)}$ stands for covariates corresponding to 
rest of $\mytheta.$

Let us further define $X_j^\perp \defn X_j - \mySigma_{j,-j}\mySigma_{-j,-j}^{-1}\bm X_{-j}$ for every 
$j \in \{1,\ldots, p\}$ and the sampled version 
$\xper \defn \bm x_j - \bX_{-j} \mySigma_{-j,-j}^{-1}\mySigma_{-j,j} \in \reals^{\numobs}$.
Then the linear model can be written as 
\begin{align}
	\by = \xper \theta^*_j + \bX_{-j} (\mytheta^*_{-j}+\theta^*_j \mySigma_{-j,-j}^{-1}\mySigma_{-j,j}) + \sigma \bz.
\end{align}

In addition, we state without proof the following straightforward properties.
\begin{itemize}
	\item $X_j | \bm X_{-j} \sim \normal(\mySigma_{j,-j}\mySigma_{-j,-j}^{-1} \bm X_{-j},~
	\frac{1}{\numobs}(\Sigma_{j,j} - \mySigma_{j,-j}\mySigma_{-j,-j}^{-1}\mySigma_{-j,j}))$.

	\item $X_j^\perp | \bm X_{-j} \sim \normal(0,~
	\frac{1}{\numobs}(\Sigma_{j,j} - \mySigma_{j,-j}\mySigma_{-j,-j}^{-1}\mySigma_{-j,j}))$.

	\item $X_j^\perp \sim \normal(0,~
	\frac{1}{\numobs}(\Sigma_{j,j} - \mySigma_{j,-j}\mySigma_{-j,-j}^{-1}\mySigma_{-j,j}))$.

	\item The entries of $\xper$ are i.i.d with distribution 
	$\normal(0,~\frac{1}{\numobs}(\Sigma_{j,j} - \mySigma_{j,-j}\mySigma_{-j,-j}^{-1}\mySigma_{-j,j}))$.
\end{itemize}

\subsection{Proof of Propsition \ref{prop:RE-and-DT}: restricted eigenvalues and the DT phase transition}
\label{sec:RE-to-DT}

\begin{proof}[Proof of Proposition \ref{prop:RE-and-DT}]
	Because $\cuC_{\mathrm{RE}}$ is increasing in $c$, the restriced eigenvalue $\theta(S,c)$ is decreasing in $c$.
	Thus, it suffices to show the result for $c = 1$.
	Note that for any $\bx$ with $\bx_S \in \{-1,1\}^s$ and $\bx_{S^c} = 0$, $F(\bw;\bx,\Ind_p) = \<\bx,\bw\> + \| \bw_{S^c} \|_1 \leq 0$ implies $\| \bw_S \|_1 \geq \| \bw_{S^c} \|_1$,
	whence $\bw \in \cuC_{\mathrm{RE}}(S,1)$.
	Thus, for any such $\bx$,
	\begin{equation}
		\RE(S,1)
			= 
			\inf_{\substack{\mytheta : F(\mytheta; \bx,\Ind_p) \leq 0\\ \| \mytheta \|_2 = 1}}
			\frac1{\sqrt{n}}\| \bX \mytheta \|_2.
	\end{equation}
	Thus, our proof strategy is to find a $\bx$ such that $\inf_{\substack{\mytheta : F(\mytheta; \bx,\Ind_p) \leq 0\\ \| \mytheta \|_2 = 1}}\frac1{\sqrt{n}}\| \bX \mytheta \|_2$ is 0 on some finite interval of sampling rates above $\delta_{\mathrm{DT}}(\bx^*,\mySigma) := \cuG(\bx^*,\mySigma)^2$.


	We will simplify the optimization Eq.~\eqref{EqnGaussianWidth} defining the Gaussian width for any $\bx$ whose first $s$ coordinates consist of $s/2$ copies of some $\bx^{(1)} \in \{-1,1\}^2$ and whose remaining coordinates are 0.
	First consider any $\bv \in L^2$ (recall that this is a random variable in $\reals^p$).
	For $j = 1,\ldots,s/2$,
	let $\Pi_j$ be the marginal joint distribution of $(\bv_{[2j-1,2j]},\bg_{[2j-1,2j]})$, and let $\Pi^{(1)} = \frac1{s/2} \sum_{j=1}^{s/2} \Pi_j$ be the mixture of the joint distributions.
	For $j = s+1,\ldots,p$, let $\tilde \Pi_j$ be the joint distribution of $(v_j,g_j)$, and let $\Pi^{(2)} = \frac1{p-s} \sum_{j=s+1}^p \tilde \Pi_j$ be the mixture of these joint distributions.
	Construct a new random vector $\bv' \in L^2$ such that $(\bv',\bg)$ has joint distribution $(\Pi^{(1)})^{\otimes s/2} \otimes (\Pi^{(2)})^{\otimes (p-s)}$.
	The value of the objective and constraints in Eq.~\eqref{EqnGaussianWidth} are not changed by this replacement, so that we can compute the Gaussian width by optimizing over the distributions $\Pi^{(1)}$ and $\Pi^{(2)}$.
	Writing in this way, we have 
	\begin{equation}
	\begin{aligned}
		&\cuG(\bx,\mySigma) =
		\sup\Big\{
			\E\big[(\eps/2) \< \bg^{(1)}, \bv^{(1)} \> + (1-\eps)GV\big]
			:
		\\
		&\qquad\qquad
			\E\big[(\eps/2) \| \bv^{(1)} \|_2^2 + (1-\eps)V^2\big] \leq 1,
			\;\;
			\E\big[(\eps/2) \<\bK^{-1/2}\bx^{(1)},\bv^{(1)}\> + (1-\eps)|V|\big] \leq 0
		\Big\},
	\end{aligned}
	\end{equation}
	where $s/p=\epsilon$,  and $\bg^{(1)} \sim \normal(0,\Ind_2)$, $G \sim \normal(0,1)$, and $\bv^{(1)}$ and $V$ are random variables in $\reals^2$ and $\reals$, respectively.
	Because $\bx^*$ is of the form under consideration, we have shown that $\delta_{\mathrm{DT}}(\bx^*,\mySigma)$ does not depend on $p$.
	Let $\tilde \bg^{(1)}$ and $\tilde \bv^{(1)}$ be the vectors $\bv^{(1)}$ and $\bg^{(1)}$ written in a basis consisting of the unit vector $\bK^{-1/2}\bx^{(1)}/\|\bK^{-1/2}\bx^{(1)}\|_2$ and the unit vector orthogonal to it.
	In this basis, the optimization above is 
	\begin{equation}
	\begin{aligned}
		&\sup\Big\{
			\E\big[(\eps/2) \< \tilde \bg^{(1)}, \tilde \bv^{(1)} \> + (1-\eps)GV\big]
			:
		\\
		&\qquad\qquad
			\E\big[(\eps/2) \| \tilde \bv^{(1)} \|_2^2 + (1-\eps)V^2\big] \leq 1,
			\;\;
			\E\big[(\eps/2) \xi \tilde v^{(1)}_1 + (1-\eps)|V|\big] \leq 0
		\Big\},
	\end{aligned}
	\end{equation}
	where $\xi = \| \bK^{-1/2}\bx^{(1)} \|_2$.
	Because $\tilde \bg^{(1)} \sim \normal(0,\Ind_2)$,
	the value of the optimization problem depends on $\bK$ and $\bx^{(1)}$ only via $\xi$.

	Moreover,
	the value of the optimization is strictly increasing in $\xi$,
	as we now show.
	First, observe that $(\tilde \bv^{(1)},V) = (\bzero_2,0)$ is not a solution to the optimization problem. 
	Indeed, a better solution which satisfies the constraints is $(\tilde \bv^{(1)},V) = (\tilde \bg^{(1)},0)$.
	Moreover, the second constraint must bind at the solution.
	Indeed, the unique solution to the optimization when the second constraint is removed is $(\tilde \bv^{(1)},V) = (\tilde \bg^{(1)},G)$ almost surely, which violates the second constraint.
	Now, consider $\xi < \xi'$. 
	Because $(\tilde \bv^{(1)},V)$ is not identically 0, the second constraint implies that the solution $(\tilde \bv^{(1)},V)$ at $\xi$ must satisfy $\E[\tilde v^{(1)}_1] < 0$.
	The second constraint under $\xi'$ is strictly feasible when evaluated at this solution.
	Because the objective and the first constraint do not depend on $\xi$, 
	and the second constraint must bind at the solution,
	the value of the optimization at $\xi'$ must be strictly larger than at $\xi$,
	as claimed. 

	For $\rho > 0$ and taking $\bx^{*(1)} = (1,1)^\top$ and $\bx^{(1)} = (1,-1)^\top$  
	by the argument above, both $\delta_{\mathrm{DT}}(\bx^{*(1)},\bK) := \delta_{\mathrm{DT}}(\bx^*,\mySigma) = \cuG(\bx^*,\mySigma)^2$ and $\delta_{\mathrm{DT}}(\bx^{(1)},\bK) := \delta_{\mathrm{DT}}(\bx,\mySigma) = \cuG(\bx,\mySigma)^2$ do not depend on $p$.
	Direct evaluation yields $\| \bK^{-1/2} \bx^{*(1)} \|_2 < \| \bK^{-1/2} \bx^{(1)} \|_2$, which implies 
	$\delta_{\mathrm{DT}}(\bx^{*(1)},\bK) < \delta_{\mathrm{DT}}(\bx^{(1)},\bK). $
	By Proposition \ref{prop:std-width}, $\cuG_{\mathrm{std}}(\bx,\mySigma)^2 \rightarrow \delta_{\mathrm{DT}}(\bx^{(1)},\bK)$ as $p \rightarrow \infty$.
	By \cite[Proposition 2.4, Fact 2.8, and Theorem II]{amelunxen2014living} (which locate the Donoho-Tanner phase transition based on the standard Gaussian width $\cuG_{\mathrm{std}}(\bx,\mySigma)$),
	for $\delta < \cuG_{\mathrm{std}}(\bx^{(1)},\bK)$, 
	we have that $\inf_{\substack{\mytheta : F(\mytheta; \bx,\Ind_p) \leq 0\\ \| \mytheta \|_2 = 1}} \frac1{\sqrt{n}}\| \bX \mytheta \|_2 = 0$ with probability going to 1.
	Thus, the Proposition holds by taking $\Delta = \delta_{\mathrm{DT}}(\bx^{(1)},\bK) - \delta_{\mathrm{DT}}(\bx^{*(1)},\bK)$.
\end{proof}

\section{Additional Simulations}
\label{SecAdditionalSims}

\subsection{Hidden Markov model specification}

In the hidden Markov model, the covariates $x_{ij}$ are conditionally independent given latent states $s_{ij}$ which are generated according to a Markov chain. 
In particular, the distribution satisfies $\mprob(s_{i(j+1)} ~|~ \{s_{i\ell}\}_{\ell \leq j}) = \mprob(s_{i(j+1)} ~|~ s_{ij})$. The latent states (values for $s_{ij})$ and observed values (values of $x_{ij}$) in the hidden Markov model that we consider here, take values in $\{1,2,3,4,5\}$.
Both the transition and emission probabilities are given by a symmetric random walk with reflection at the boundary; that is, 
$$
\mprob(s_{i(j+1)} = a | s_{ij} = b)
	= 
	\mprob(x_{ij} = a | s_{ij} = b) 
	= 
	\begin{cases}
		1/2 \quad &  b \in \{2,3,4\} \text{ and } |a-b| = 1,\\
		1 \quad  & b \in\{1,2\} \text{ and } |a-b| = 1,\\
		0 \quad & \text{otherwise.}
	\end{cases}
$$
We initialize this Markov chain (i.e., $s_{i1}$) from its stationary distribution.
In this case, the covariance of $x_{ij}$ and $x_{ij'}$ is only a function of $|j-j'|$, as plotted below.
We see that covariates which are within approximately distance $10$ of each other have non-trivial correlation.
\medskip
\centerline{\includegraphics[width=.5\textwidth]{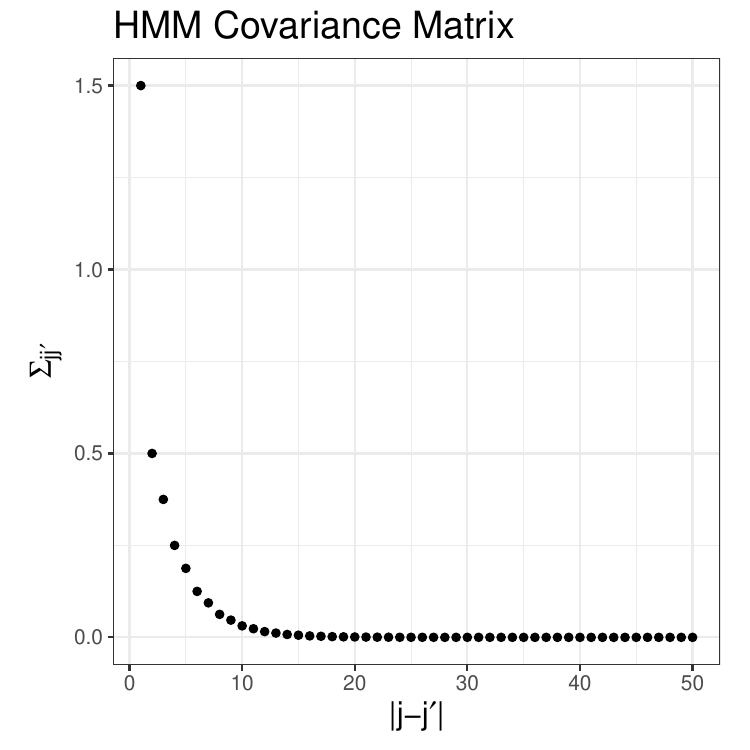}}

\subsection{Debiasing under Gaussian AR1 models}
\label{sec:sim-db-AR}
Here we collect simulations which repeat those in Figure \ref{FigQQplots_and_histograms} at different model parameters.
These simulations demonstrate the success of debiasing at many settings of the model parameters.
In particular, we run the simulations varying the correlation parameter $\rho = 0,.5,.8$, the sample size $n = 500, 750$, and the sparsity $s = 20,100,200$.
We show the legend for the first two plots. The legend for the remaining plots is the same.


\centerline{\includegraphics[width=.78\textwidth]{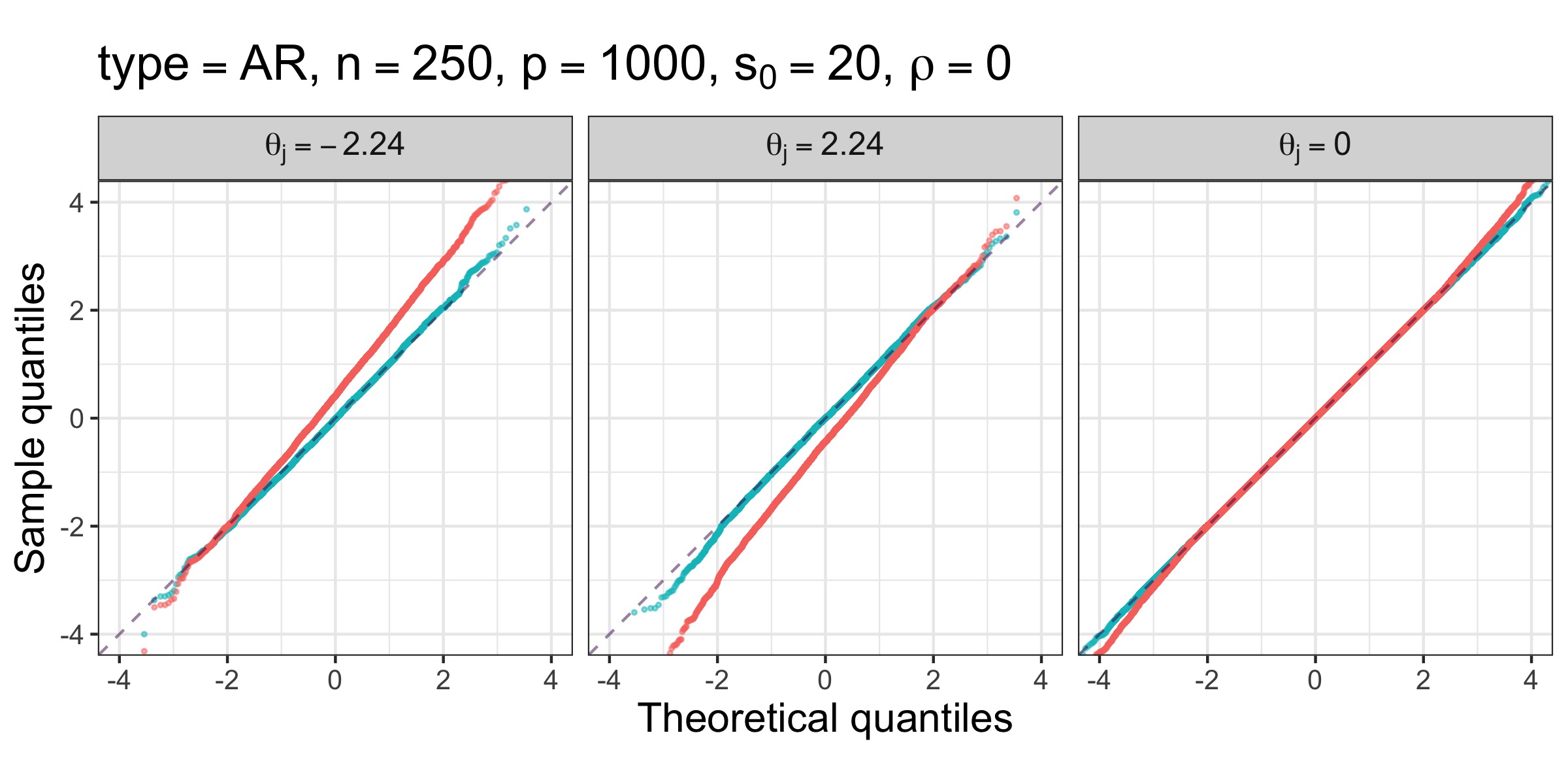}}
\centerline{\includegraphics[width=.8\textwidth]{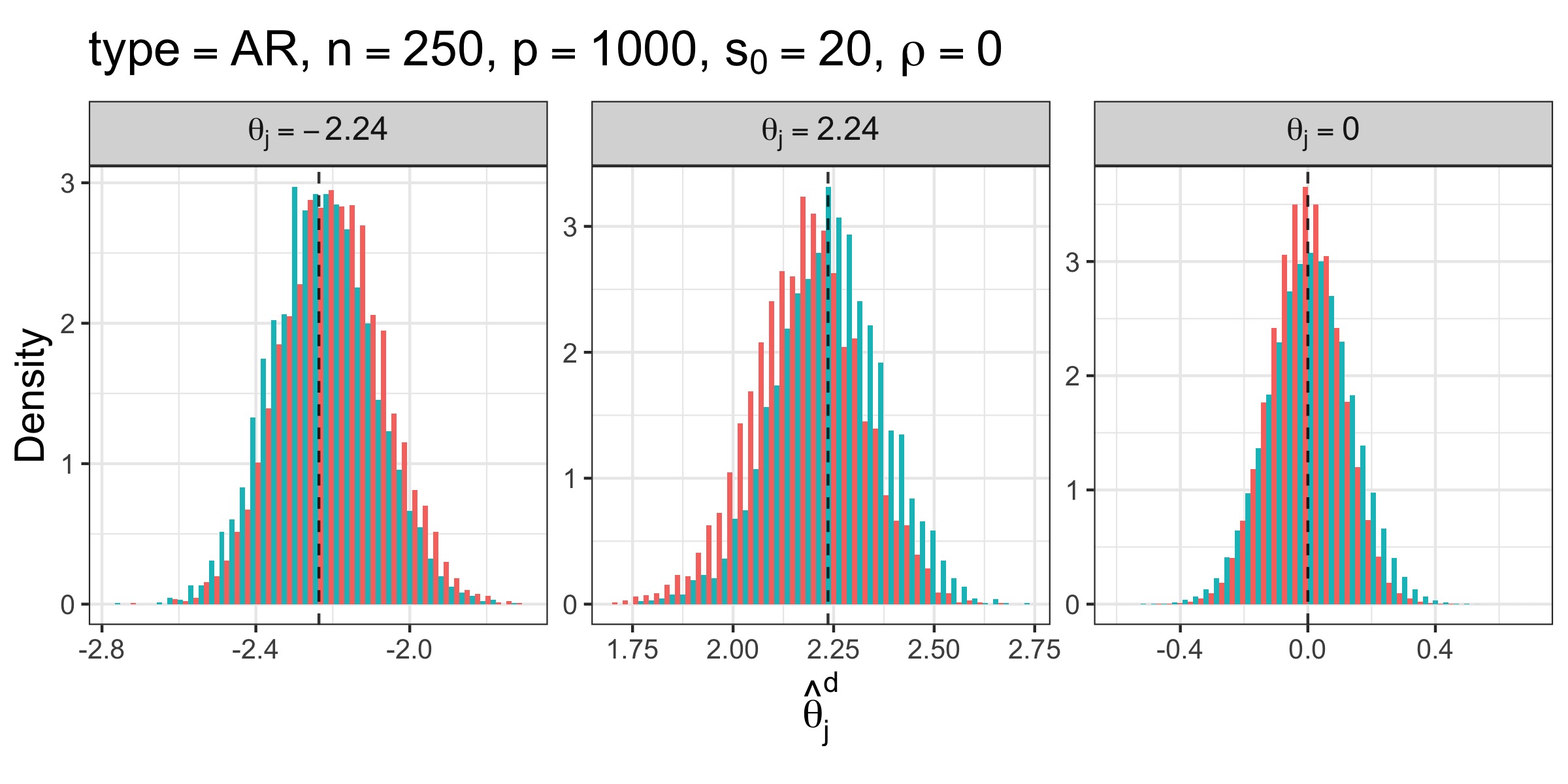}}

\centerline{\includegraphics[width=.78\textwidth]{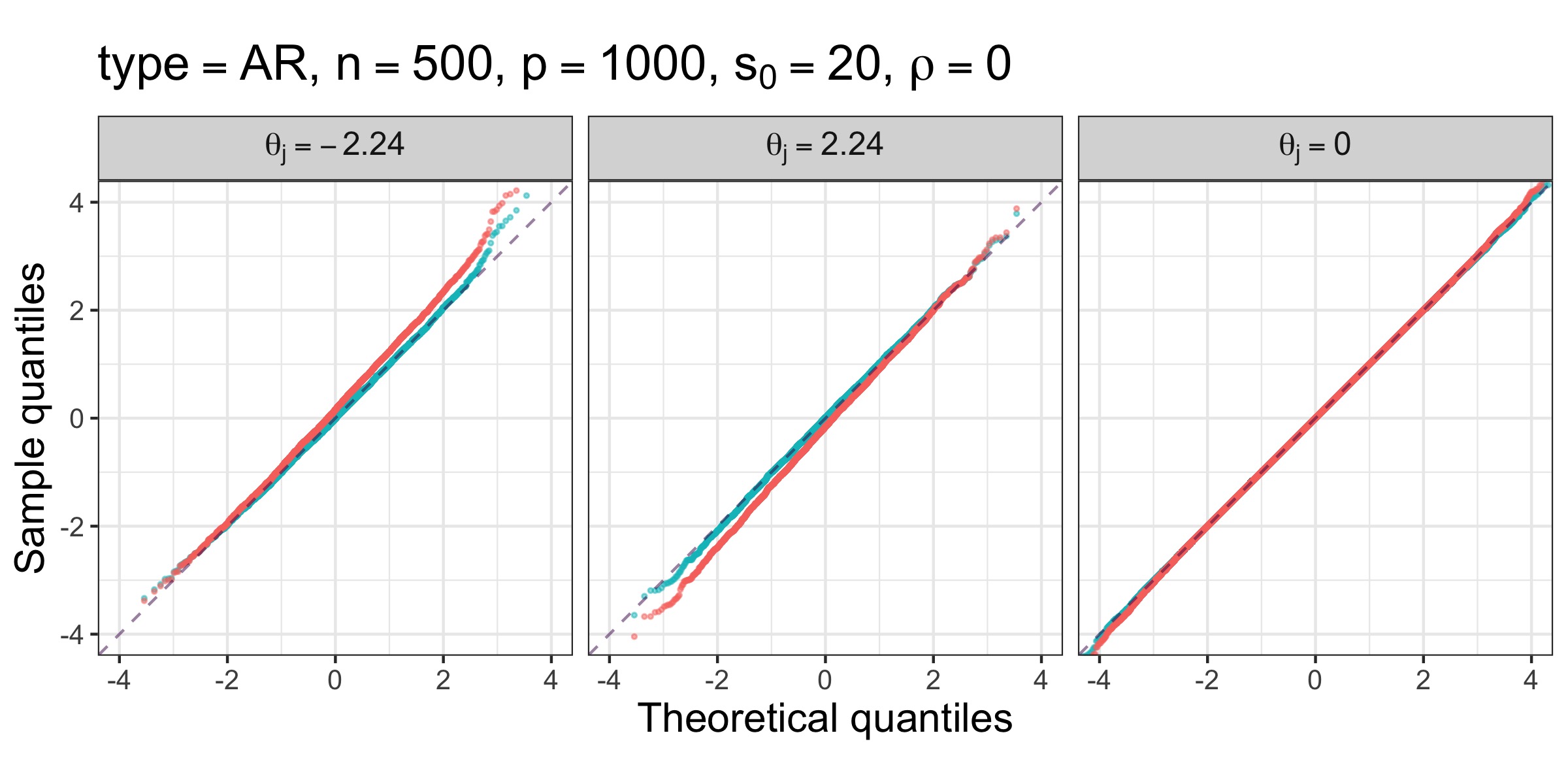}}
\centerline{\includegraphics[width=.78\textwidth]{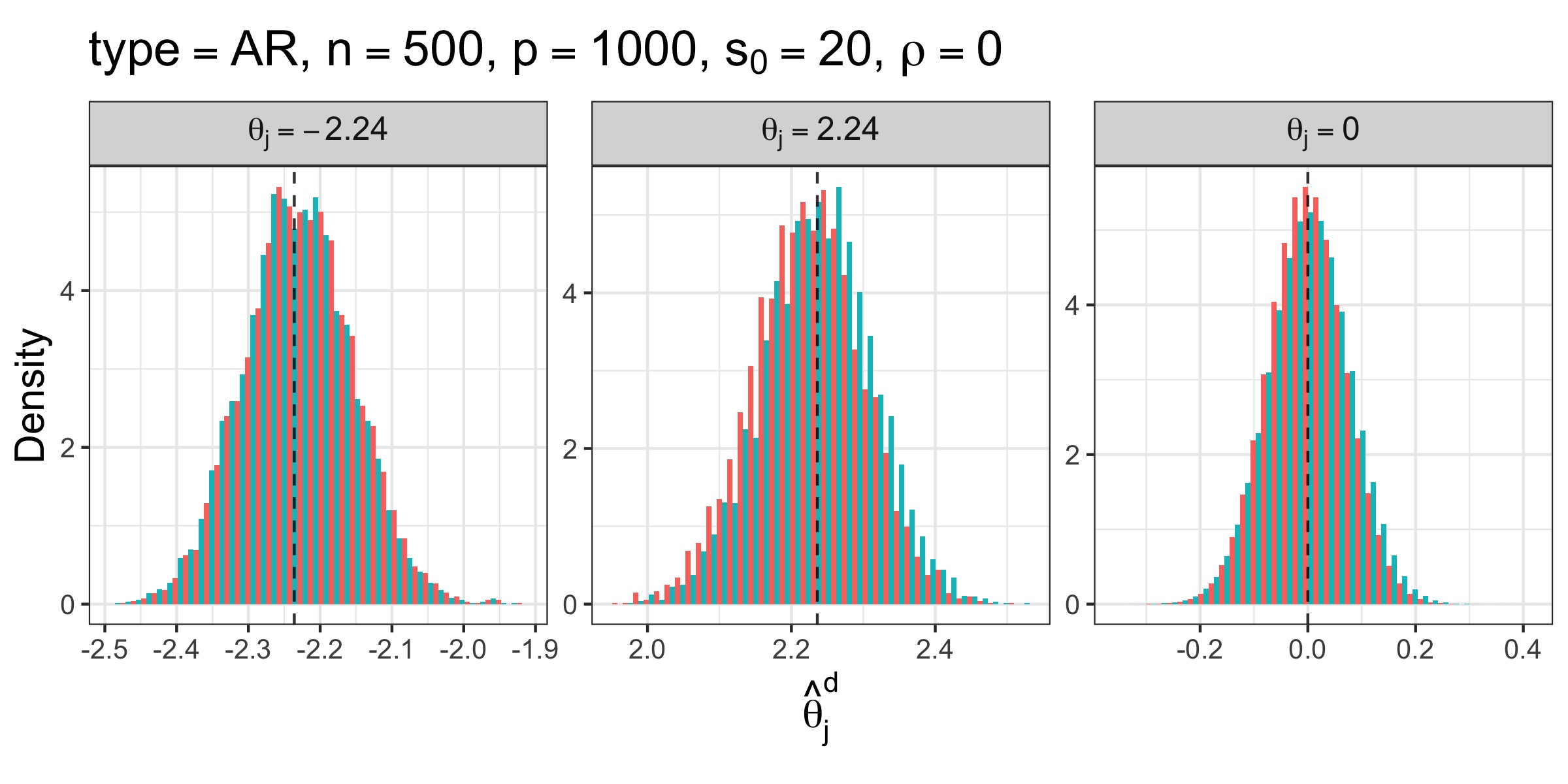}}

\centerline{\includegraphics[width=.78\textwidth]{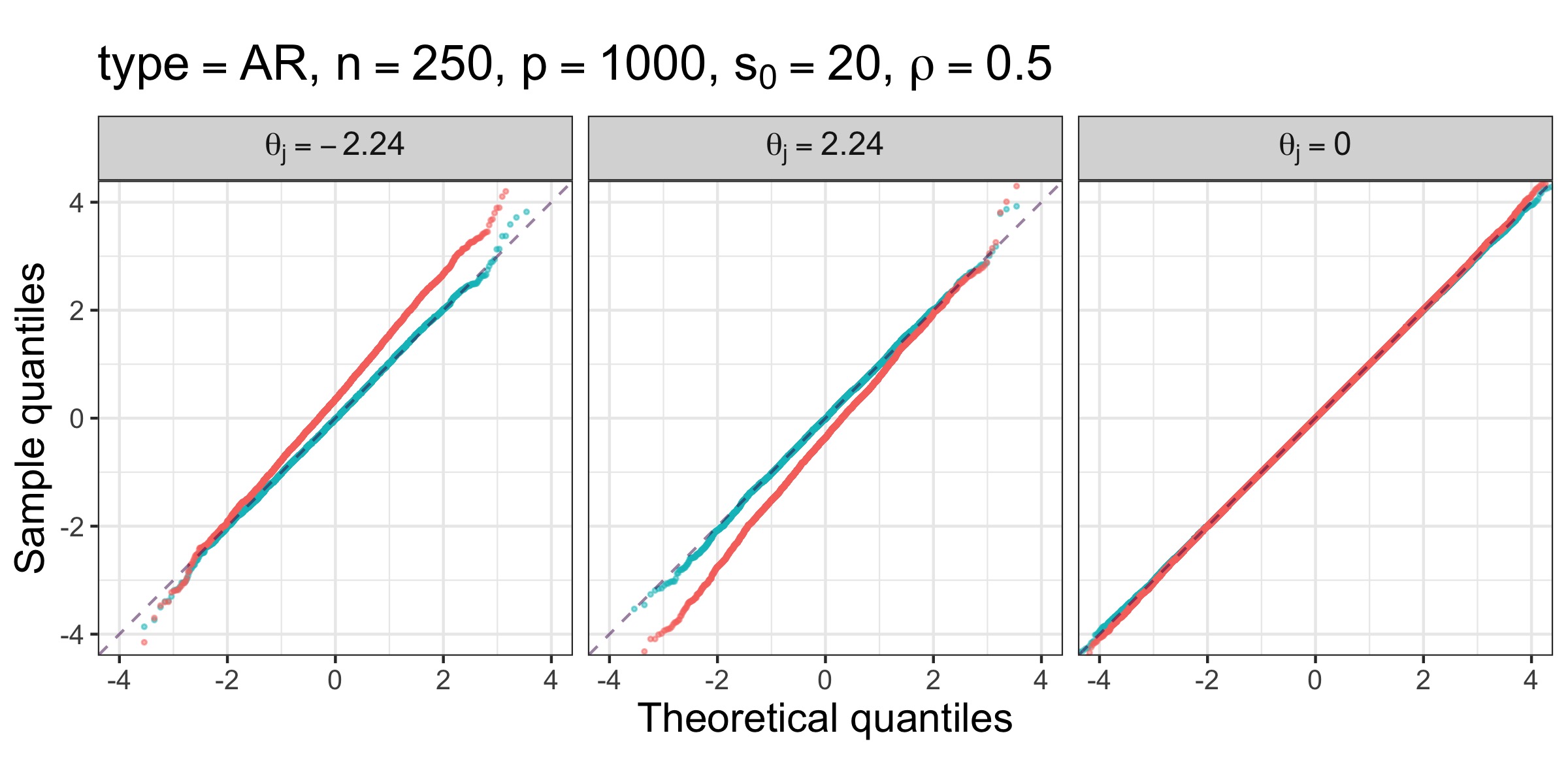}}
\centerline{\includegraphics[width=.78\textwidth]{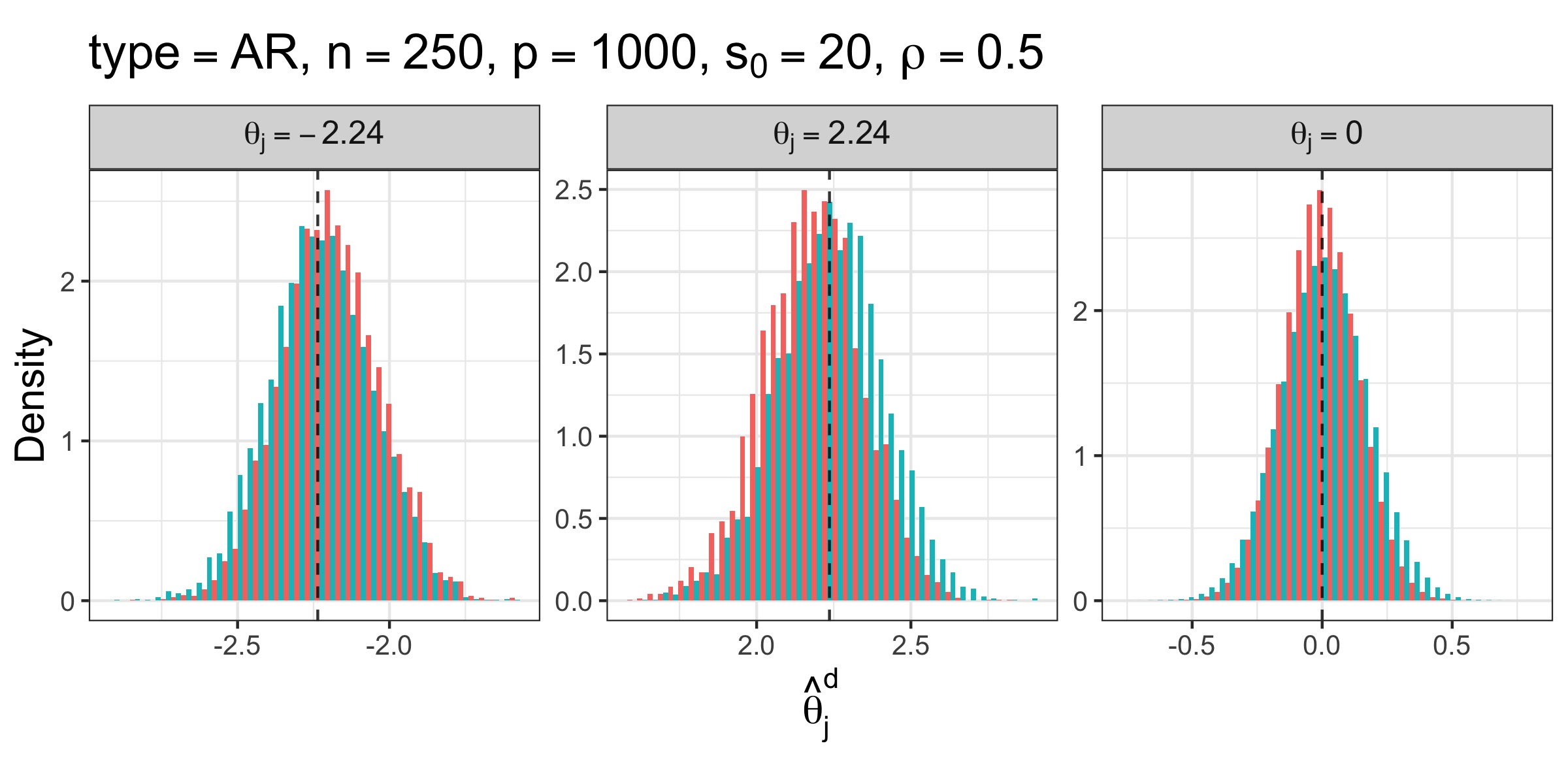}}

\centerline{\includegraphics[width=.78\textwidth]{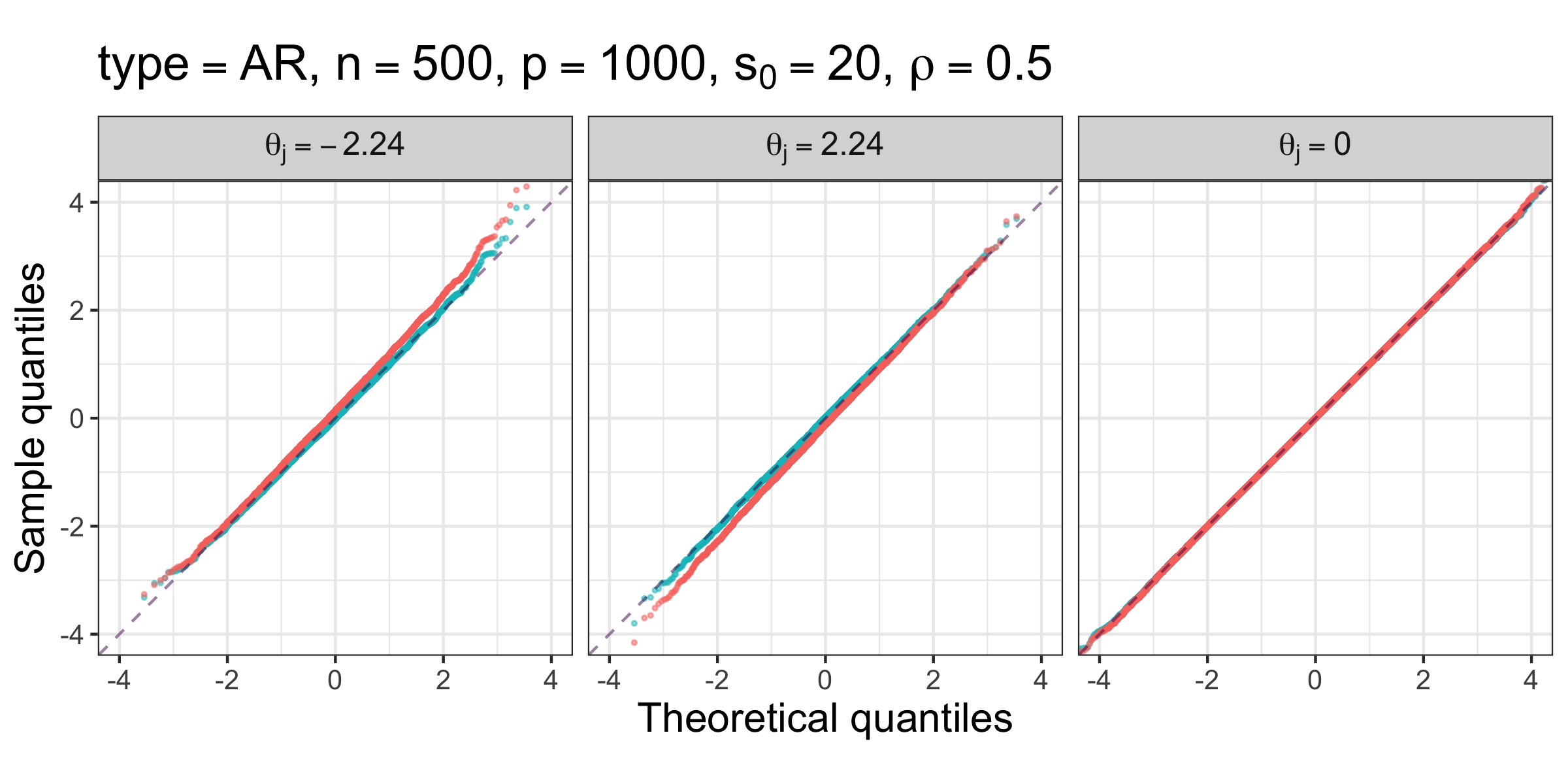}}
\centerline{\includegraphics[width=.78\textwidth]{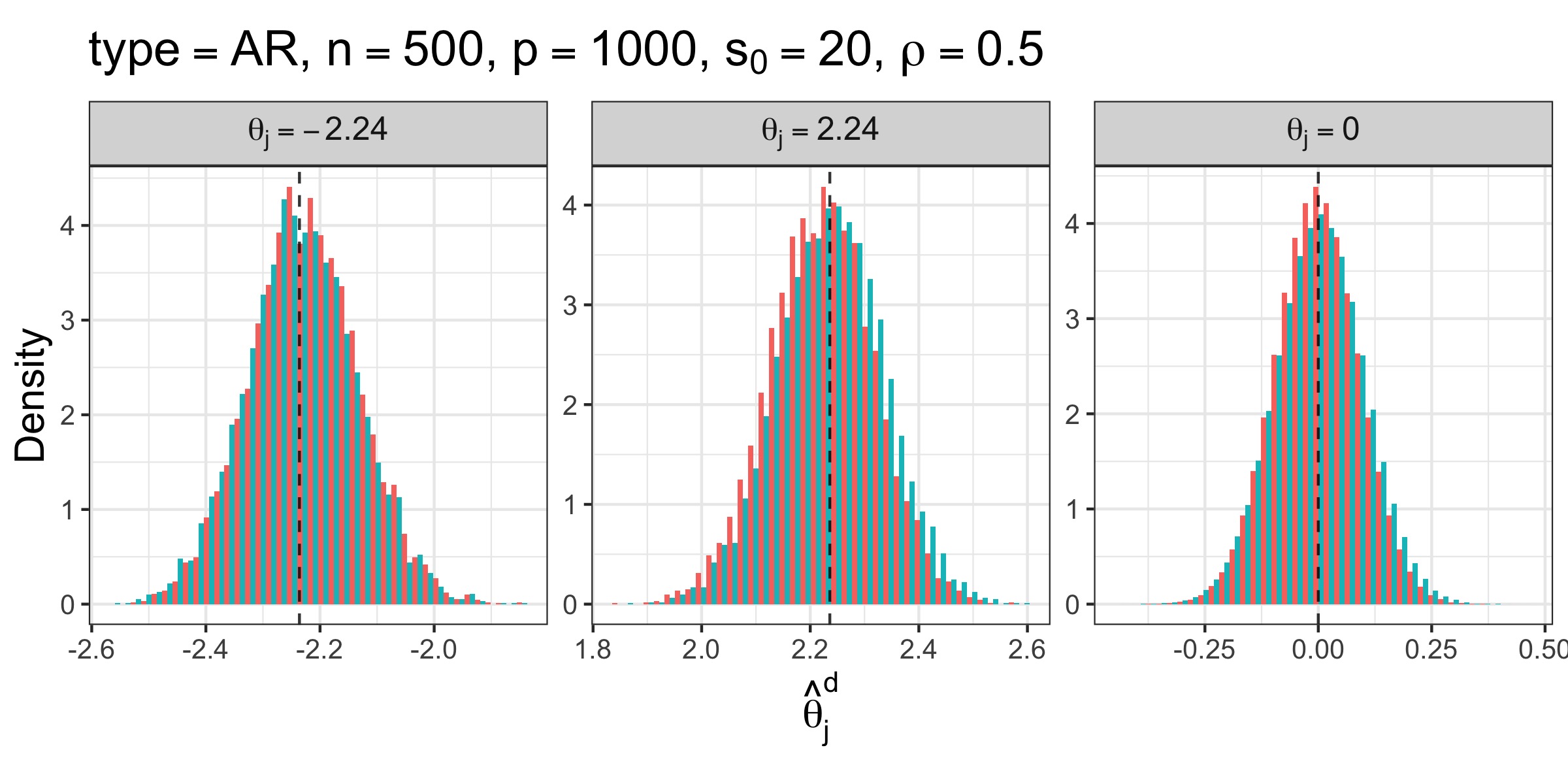}}

\centerline{\includegraphics[width=.78\textwidth]{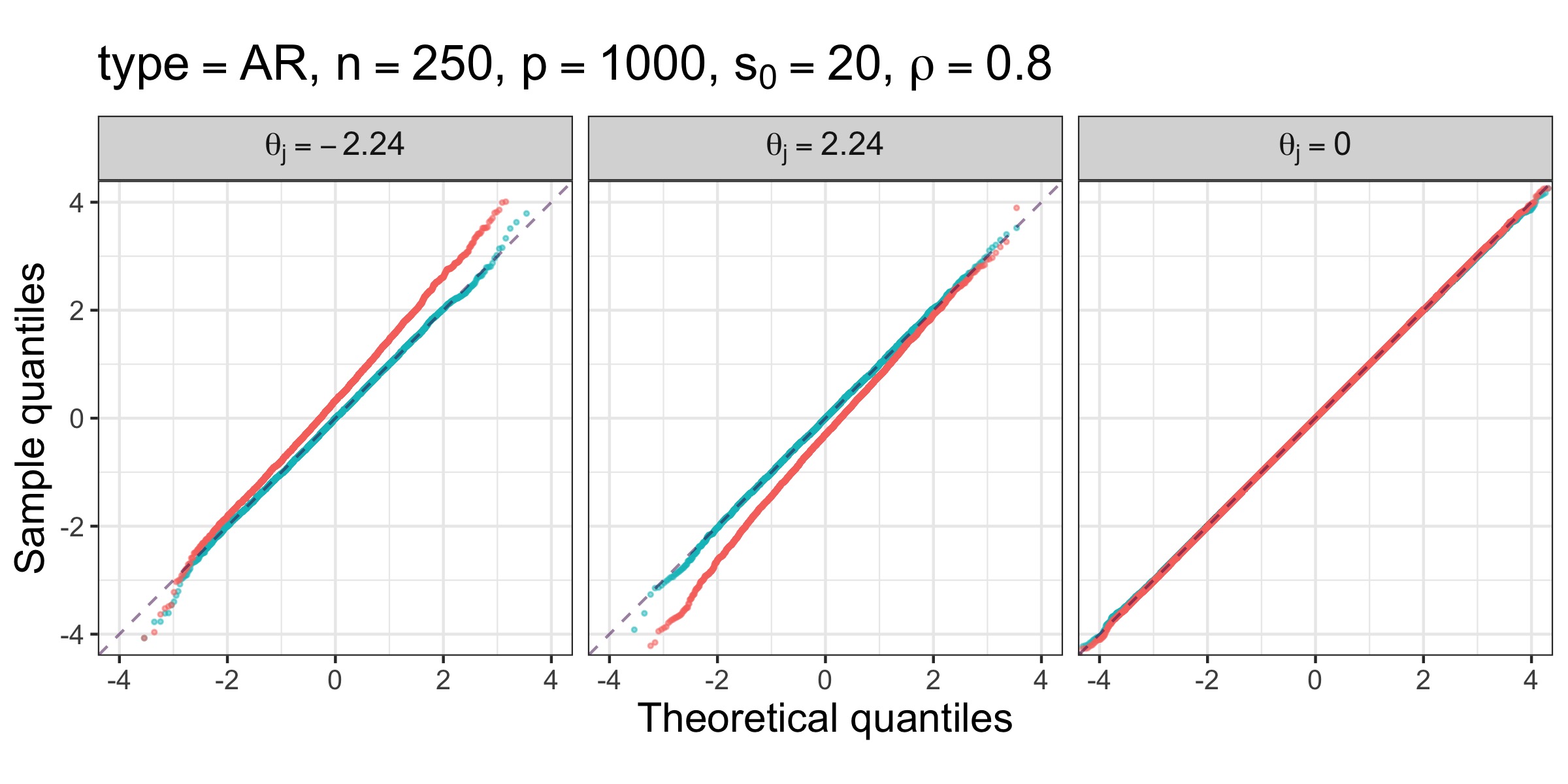}}
\centerline{\includegraphics[width=.78\textwidth]{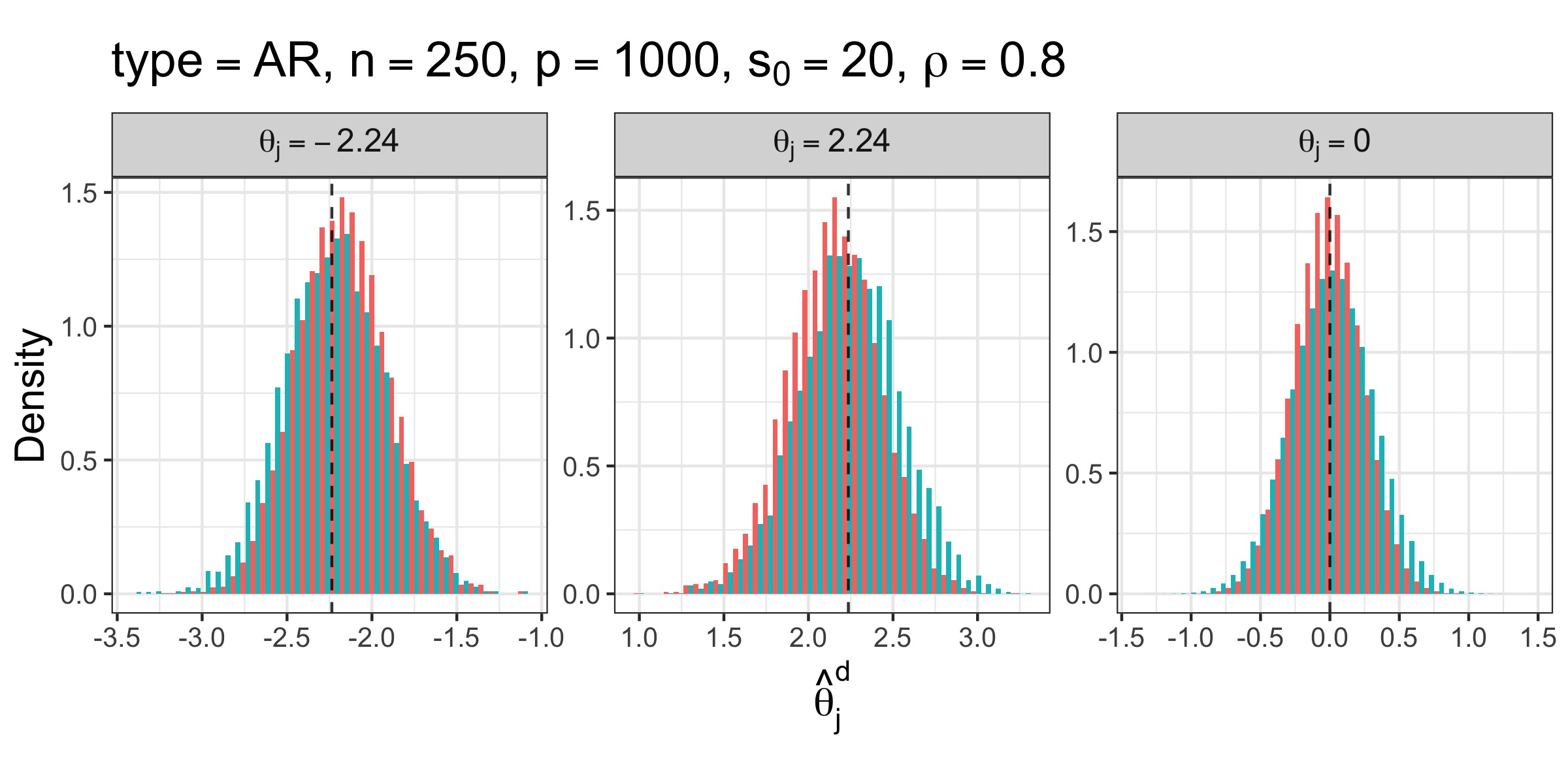}}

\centerline{\includegraphics[width=.78\textwidth]{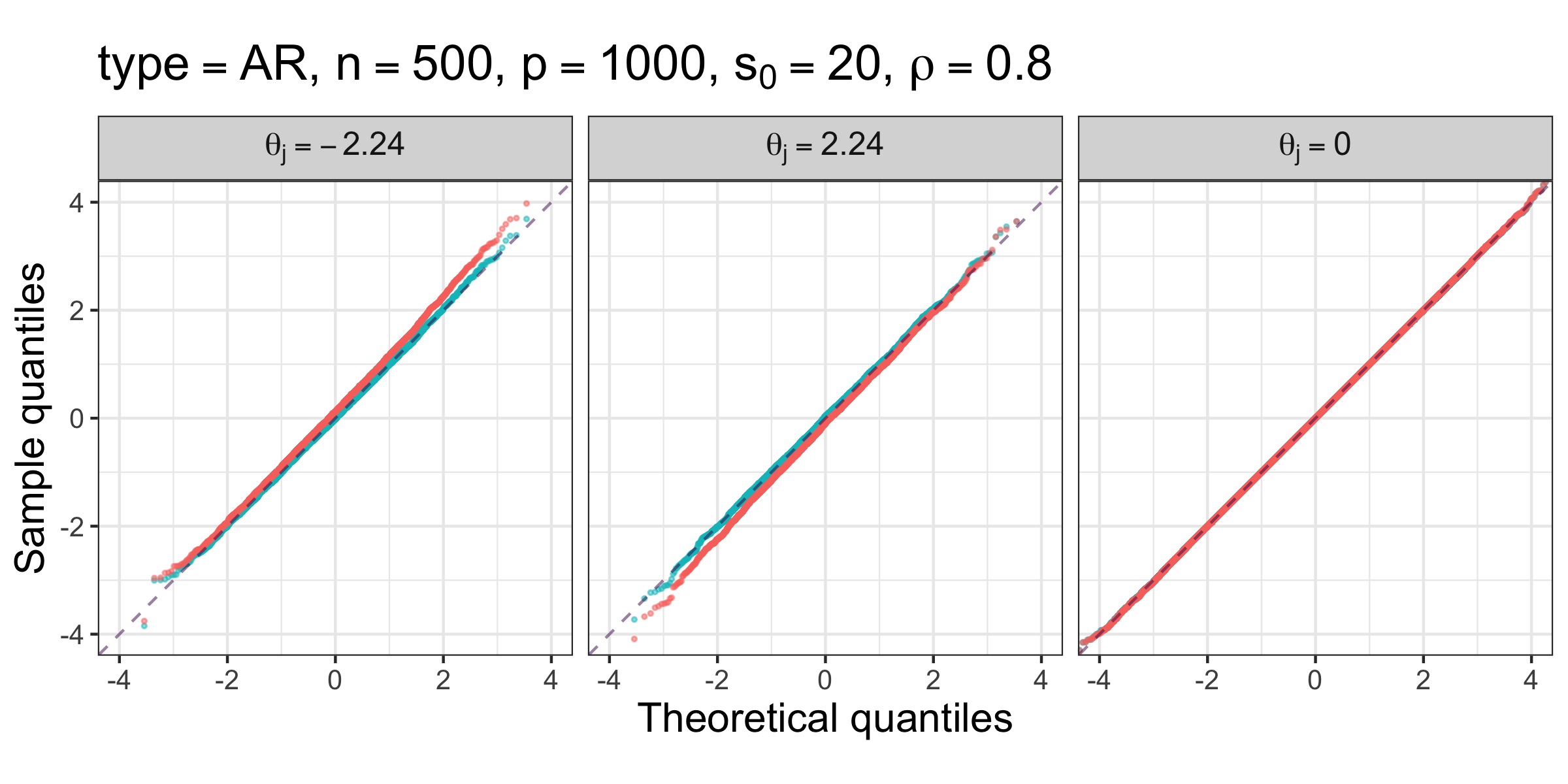}}
\centerline{\includegraphics[width=.78\textwidth]{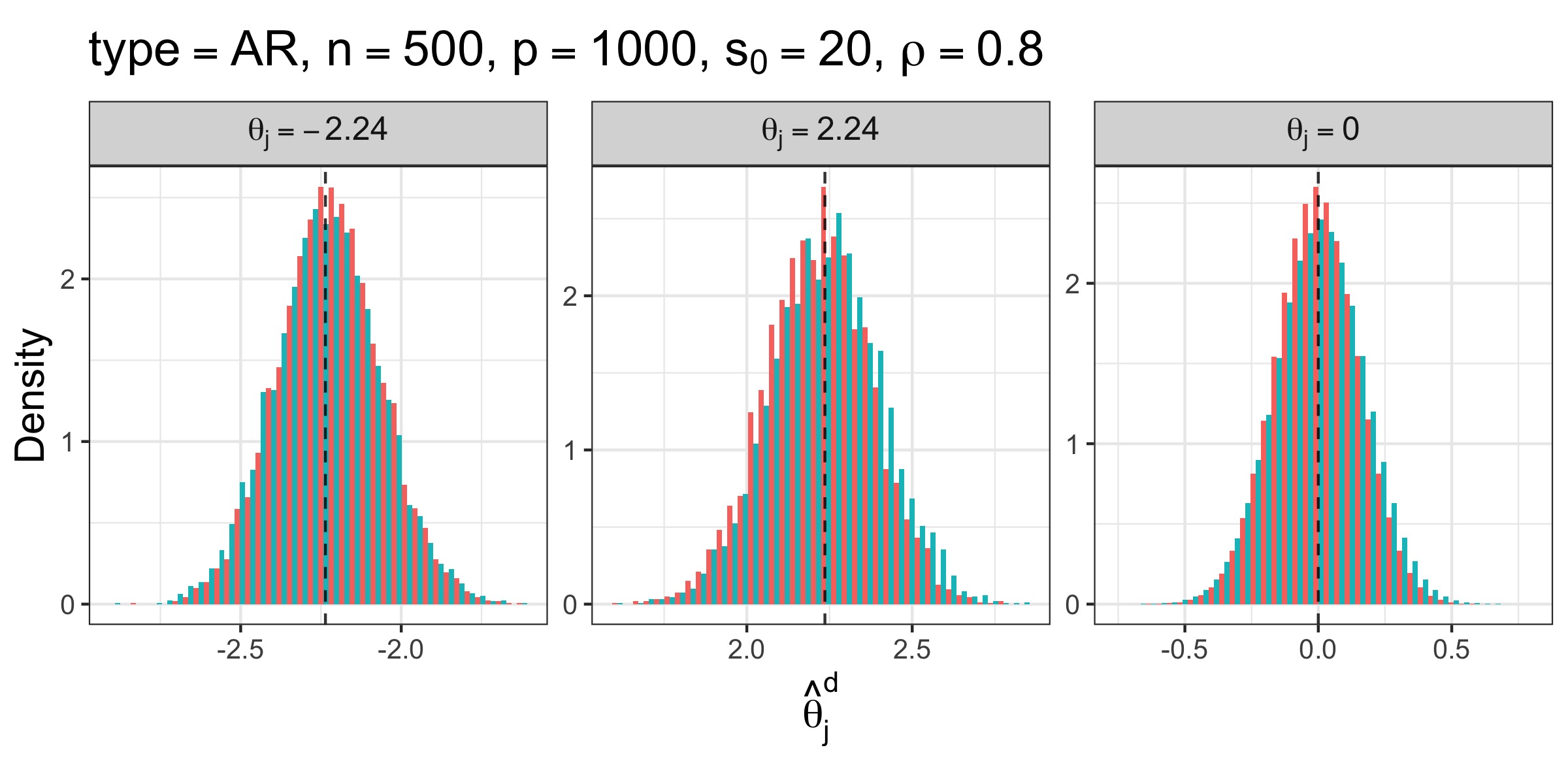}}


\centerline{\includegraphics[width=.78\textwidth]{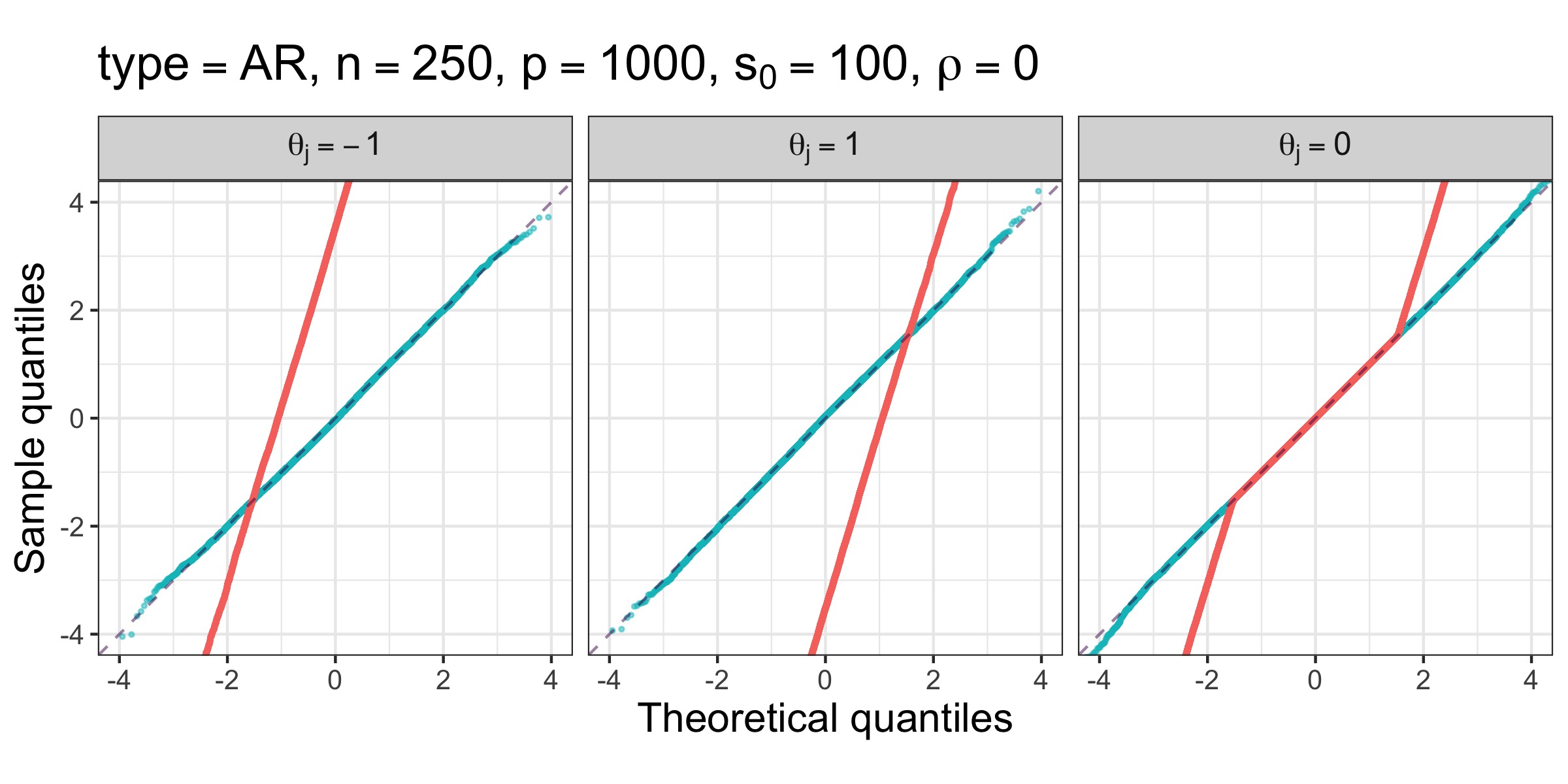}}
\centerline{\includegraphics[width=.78\textwidth]{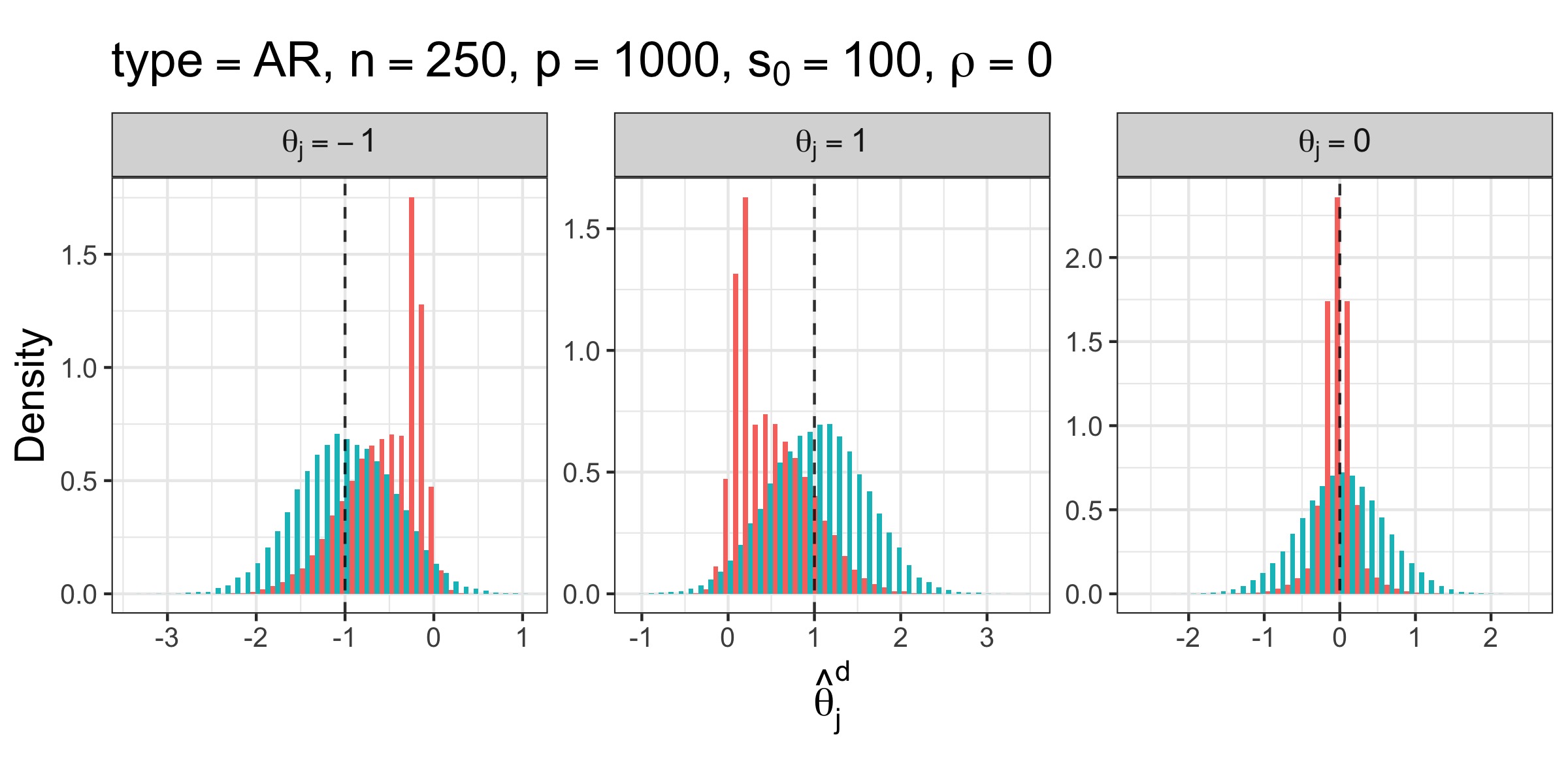}}

\centerline{\includegraphics[width=.78\textwidth]{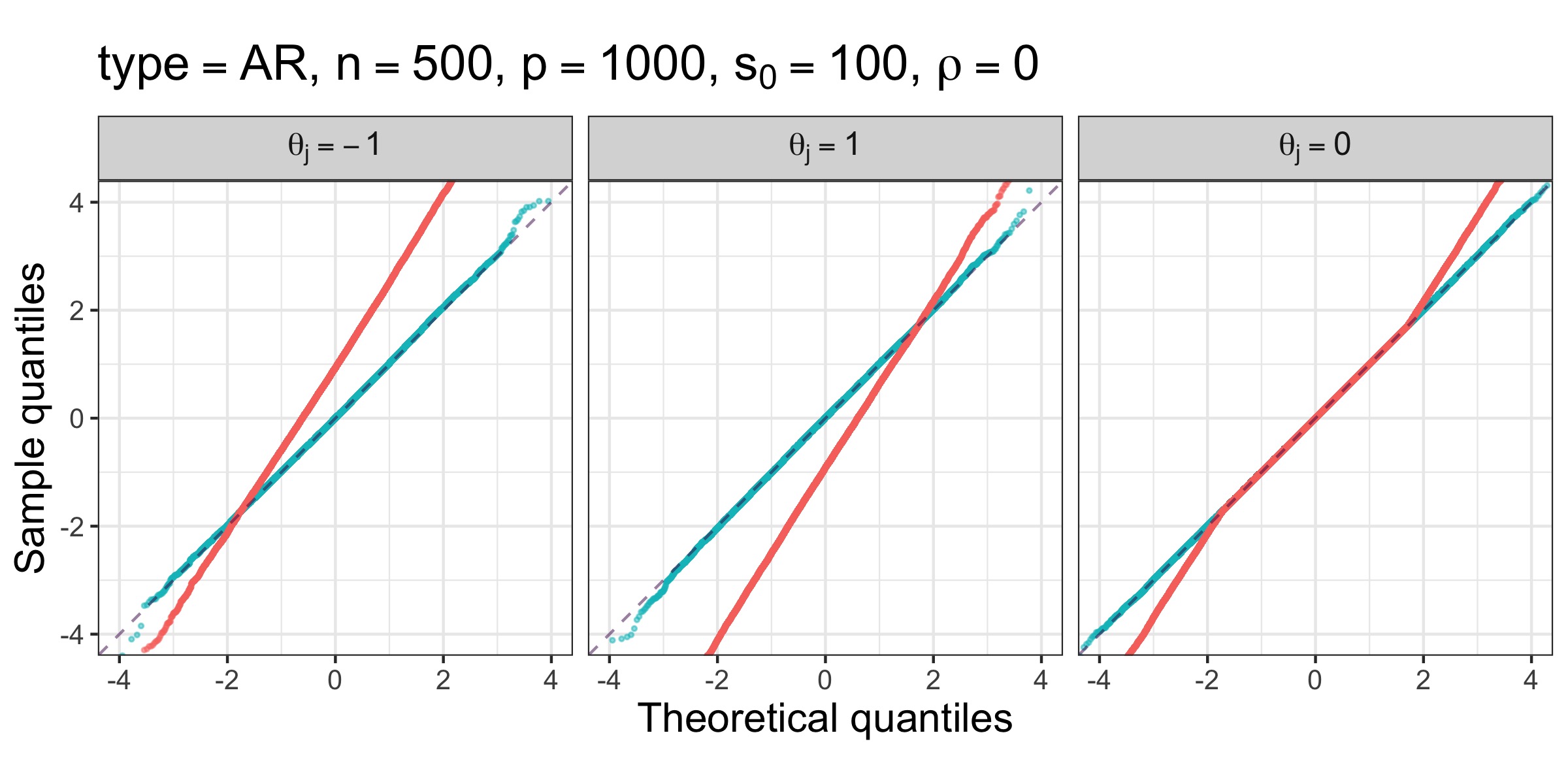}}
\centerline{\includegraphics[width=.78\textwidth]{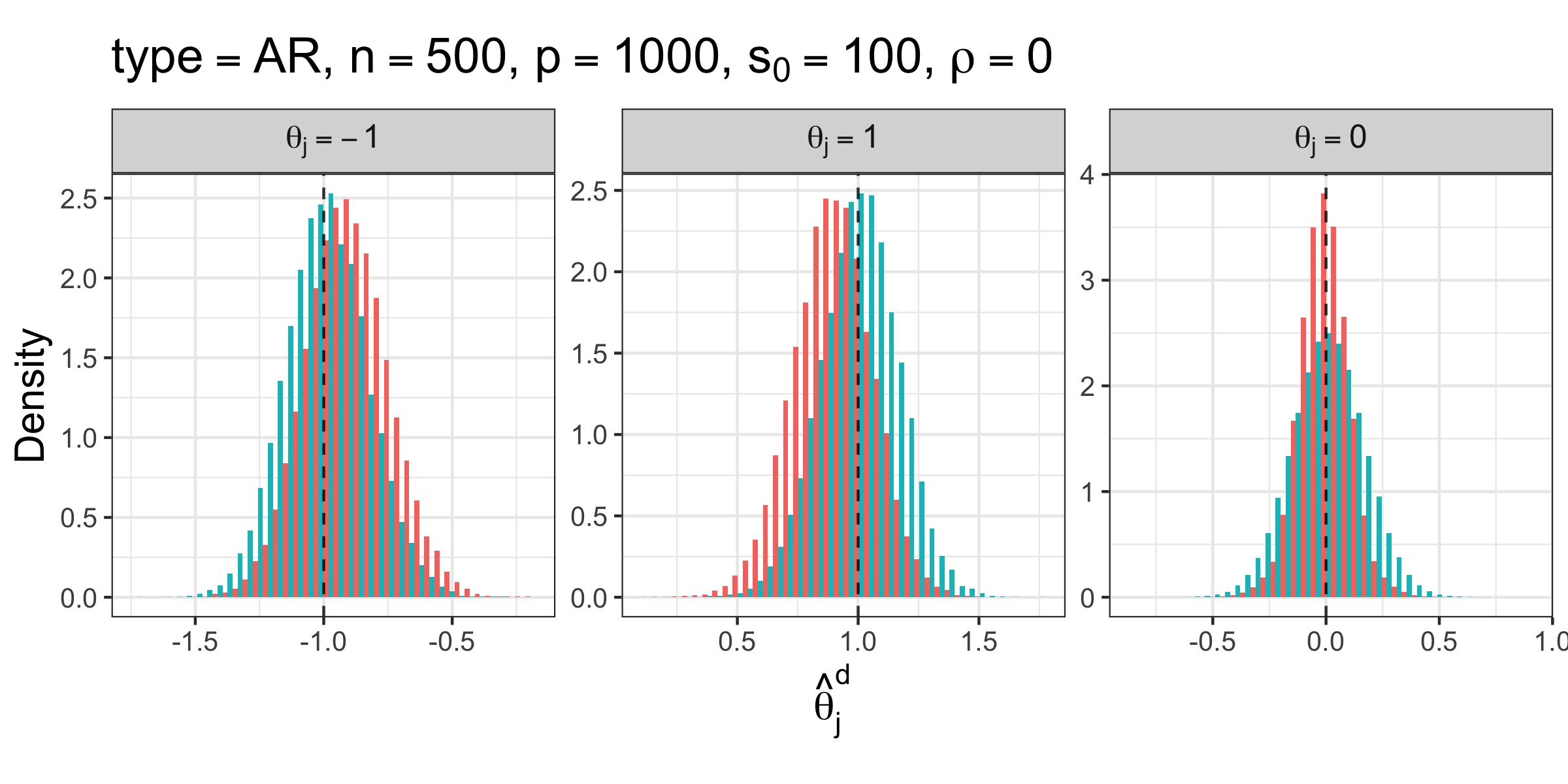}}

\centerline{\includegraphics[width=.78\textwidth]{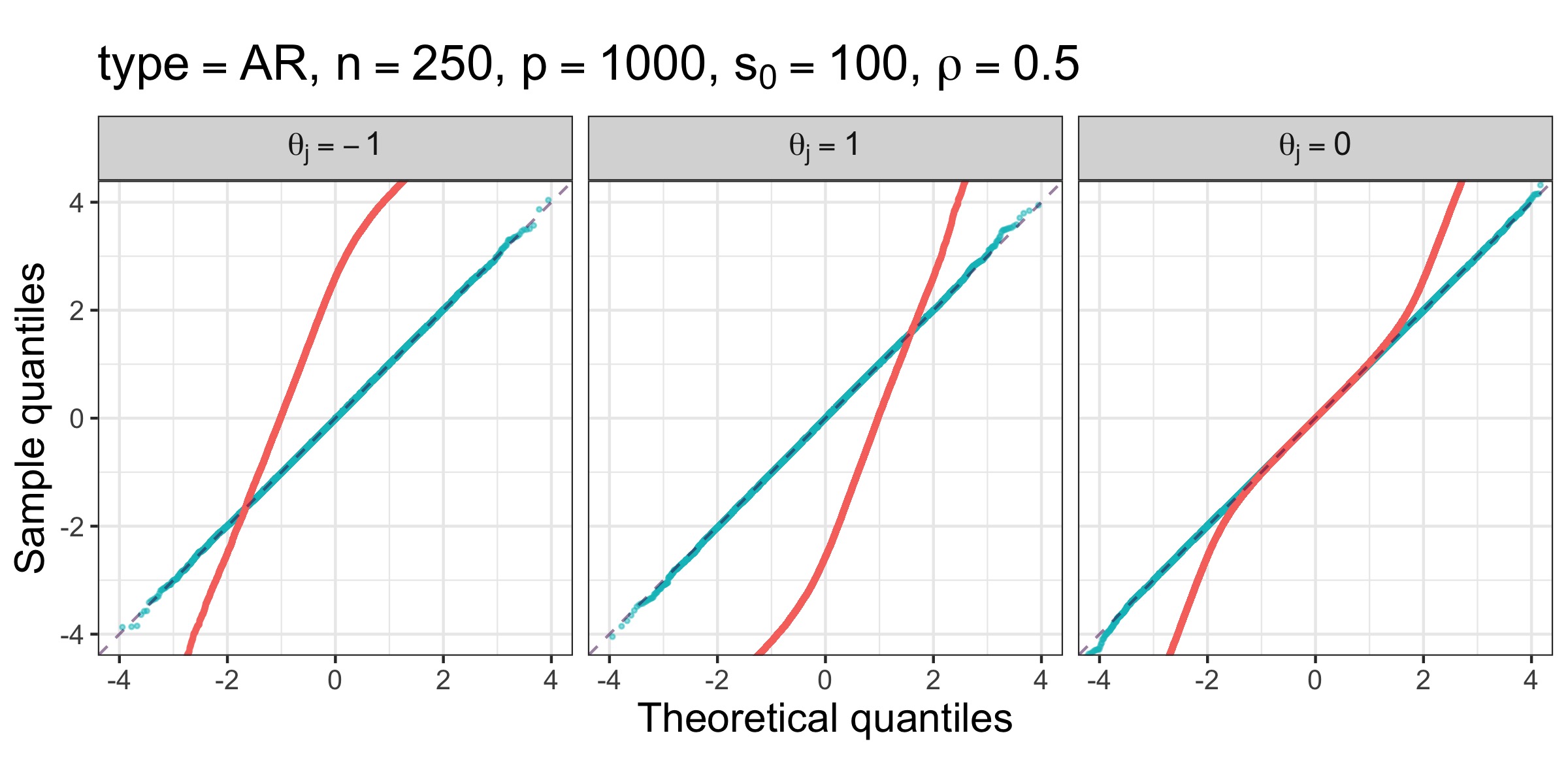}}
\centerline{\includegraphics[width=.78\textwidth]{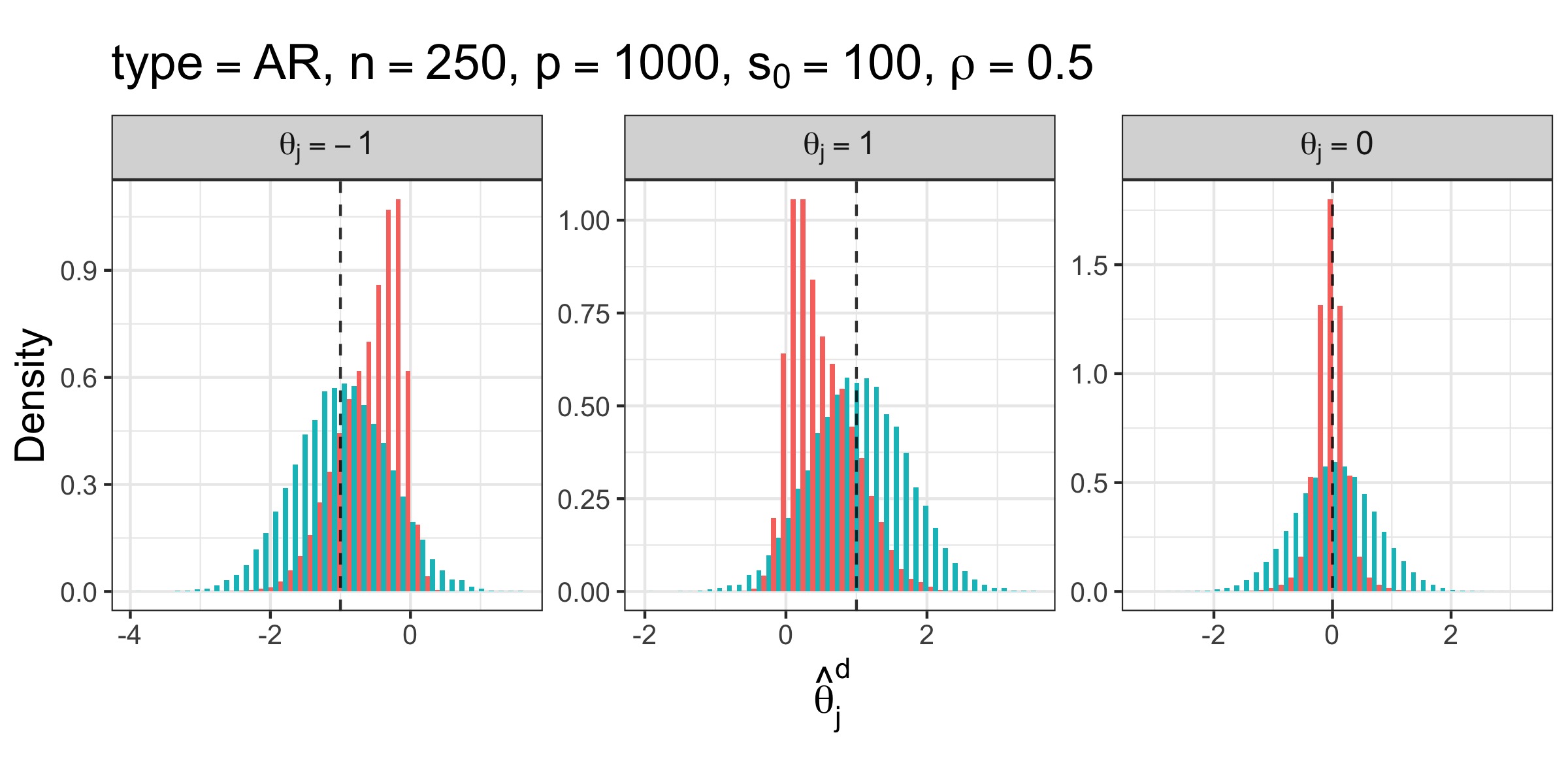}}

\centerline{\includegraphics[width=.78\textwidth]{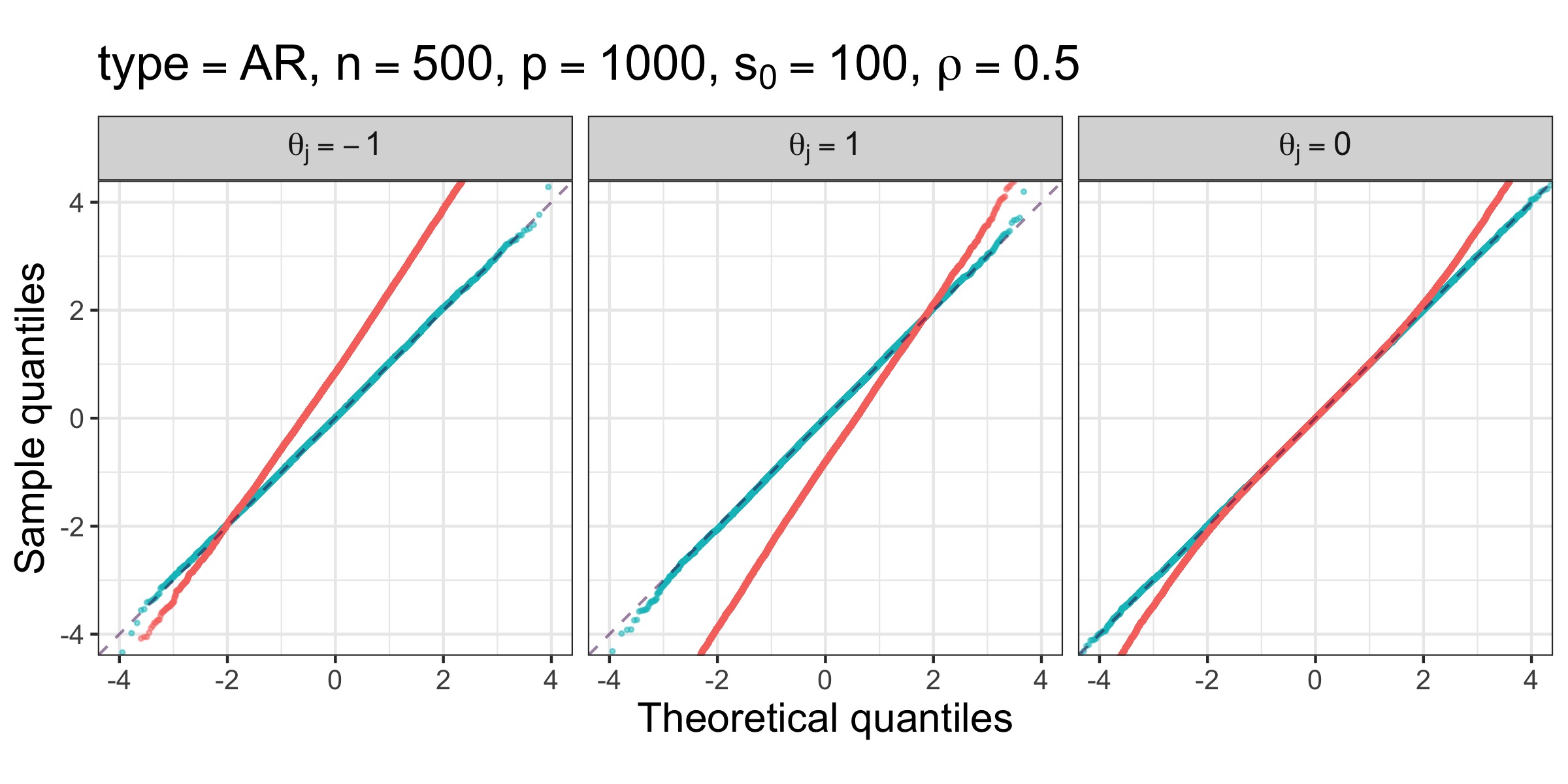}}
\centerline{\includegraphics[width=.78\textwidth]{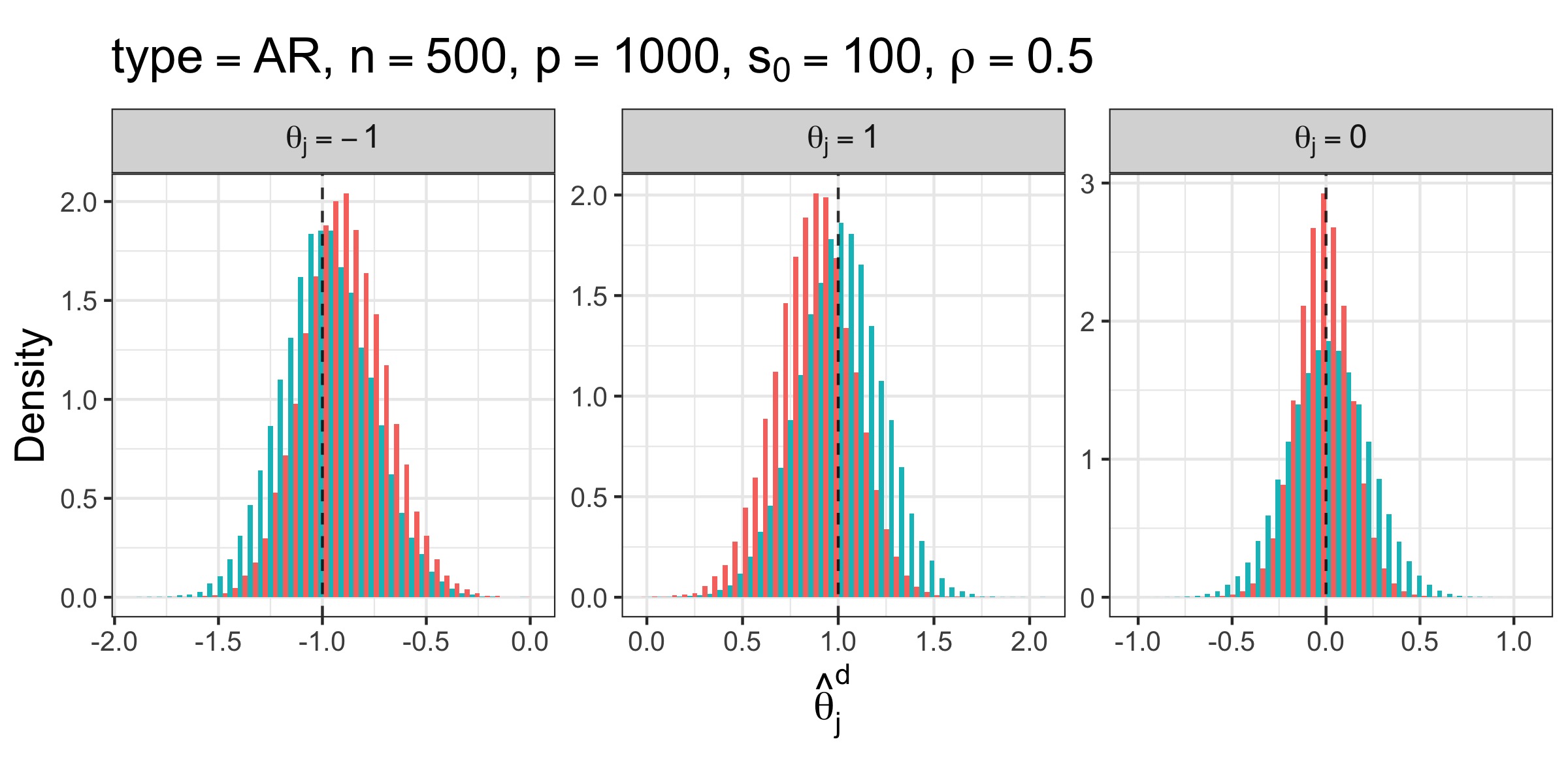}}

\centerline{\includegraphics[width=.78\textwidth]{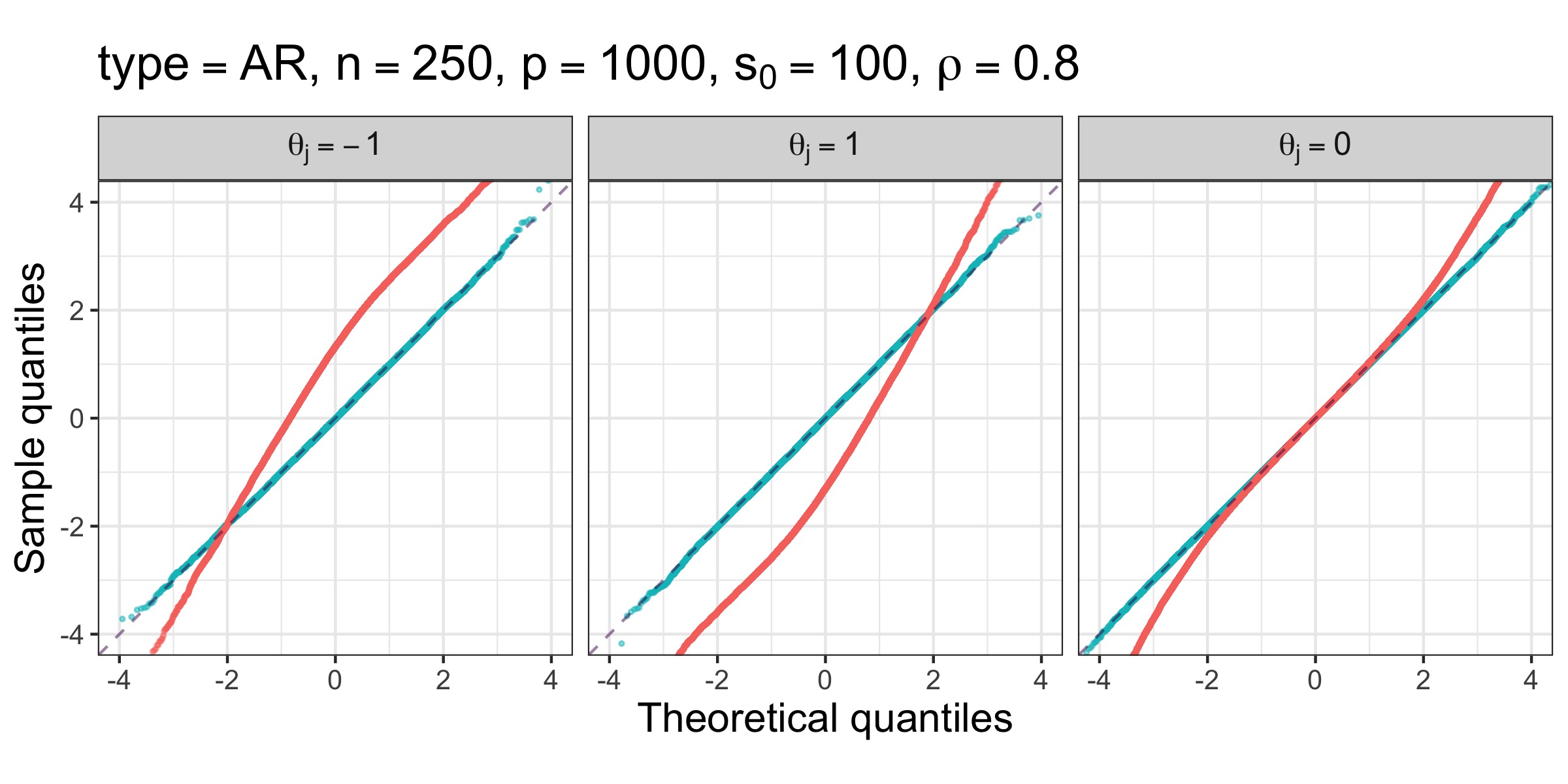}}
\centerline{\includegraphics[width=.78\textwidth]{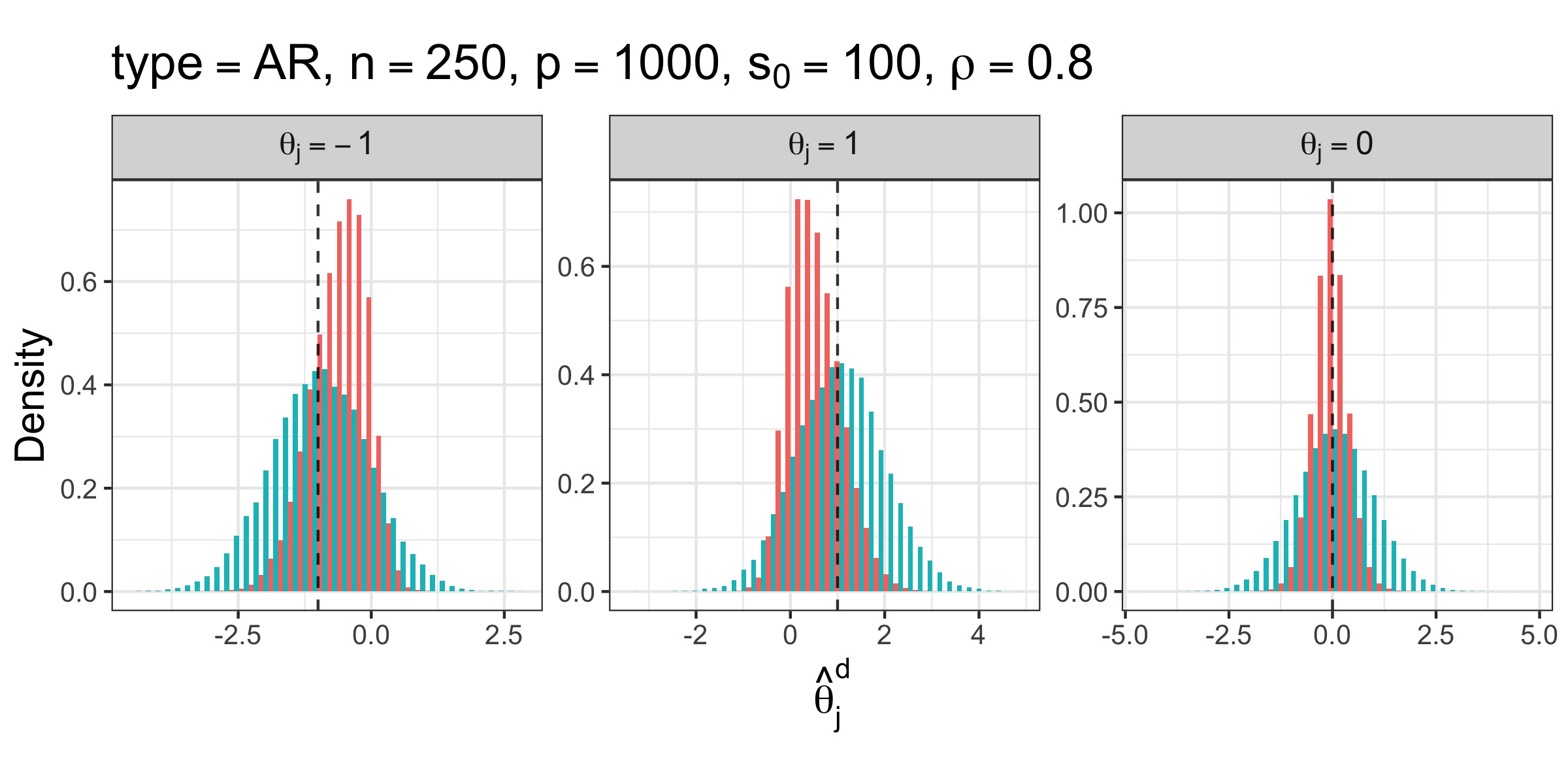}}

\centerline{\includegraphics[width=.78\textwidth]{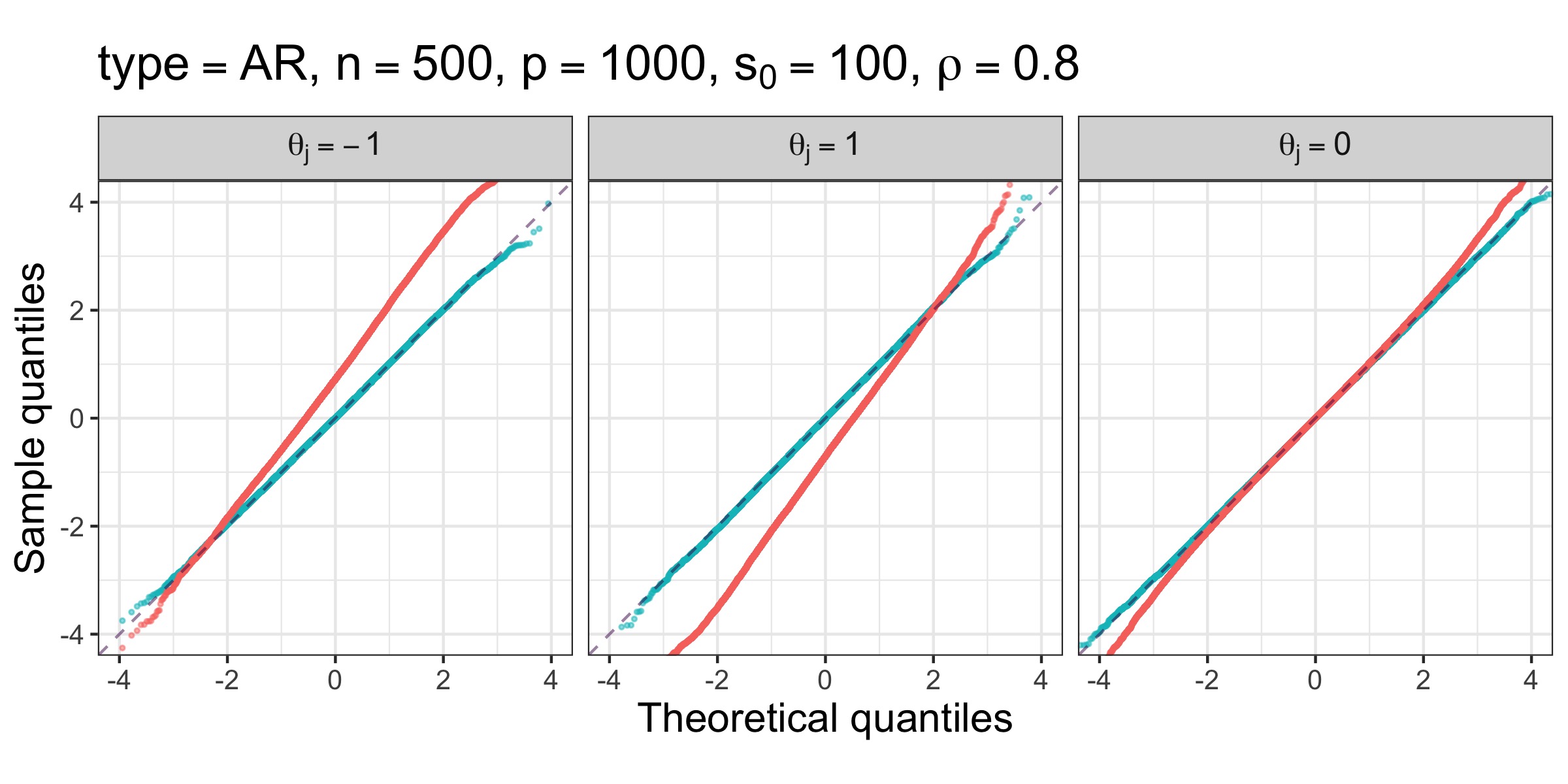}}
\centerline{\includegraphics[width=.78\textwidth]{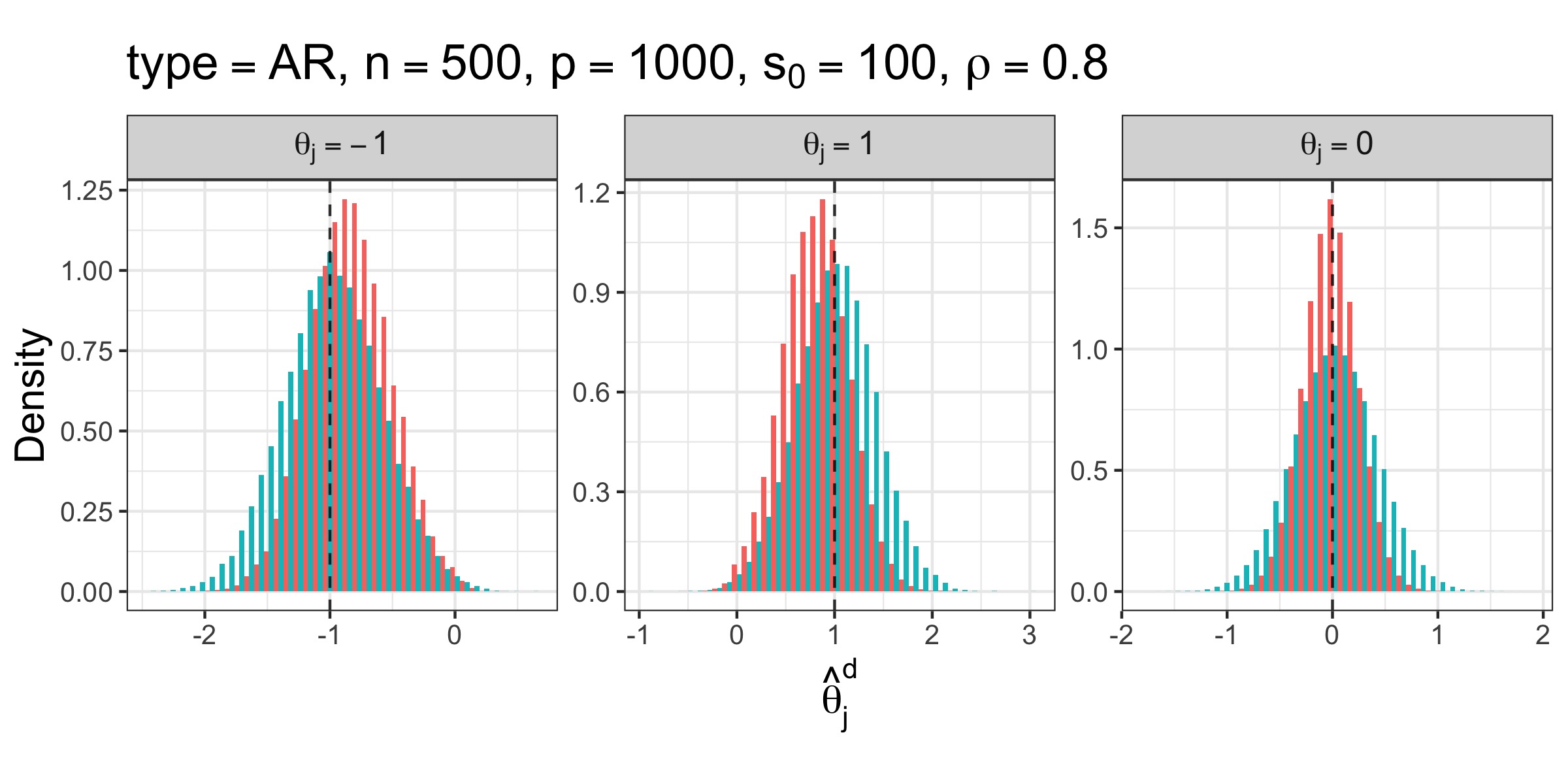}}


\centerline{\includegraphics[width=.78\textwidth]{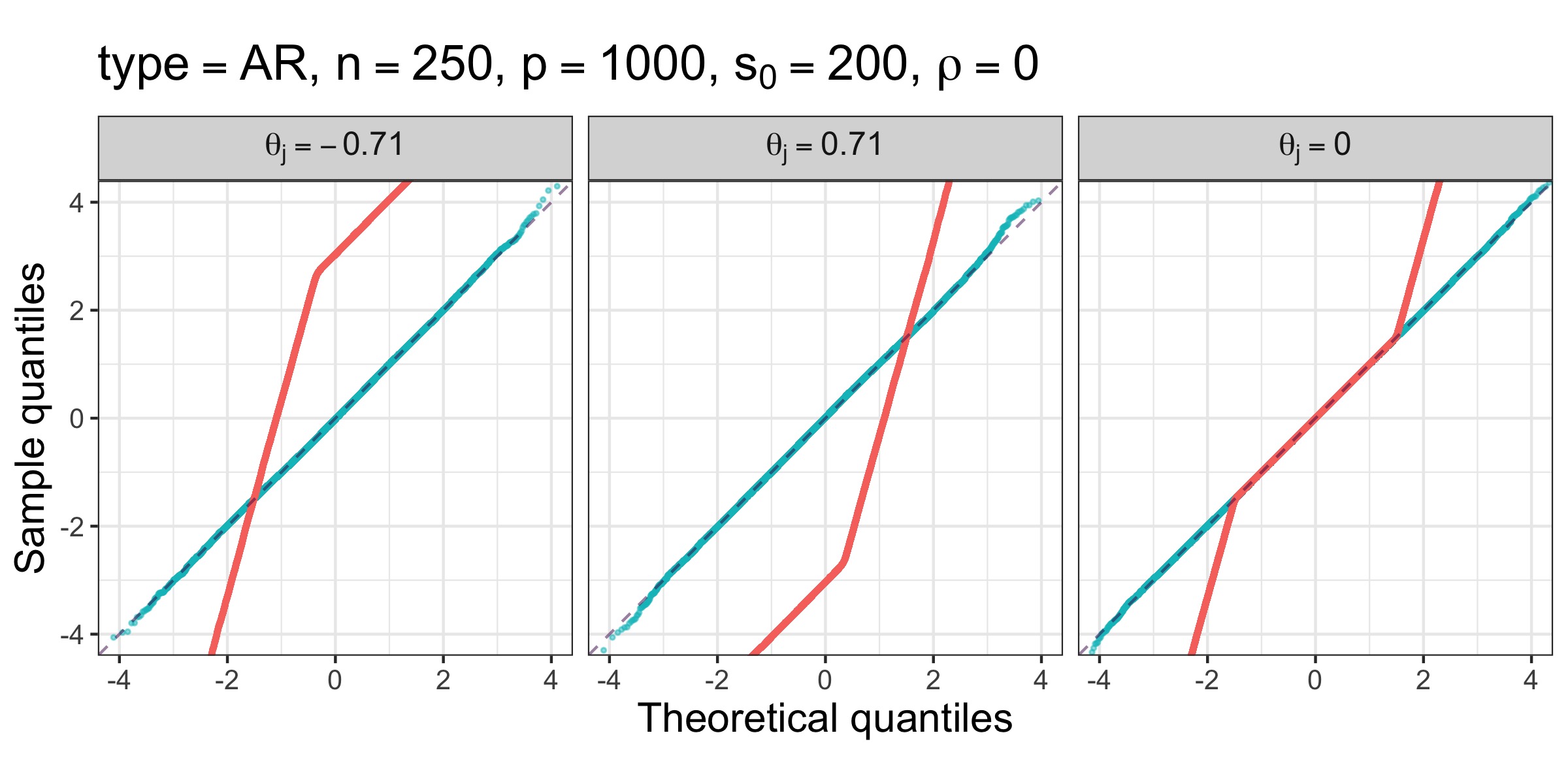}}
\centerline{\includegraphics[width=.78\textwidth]{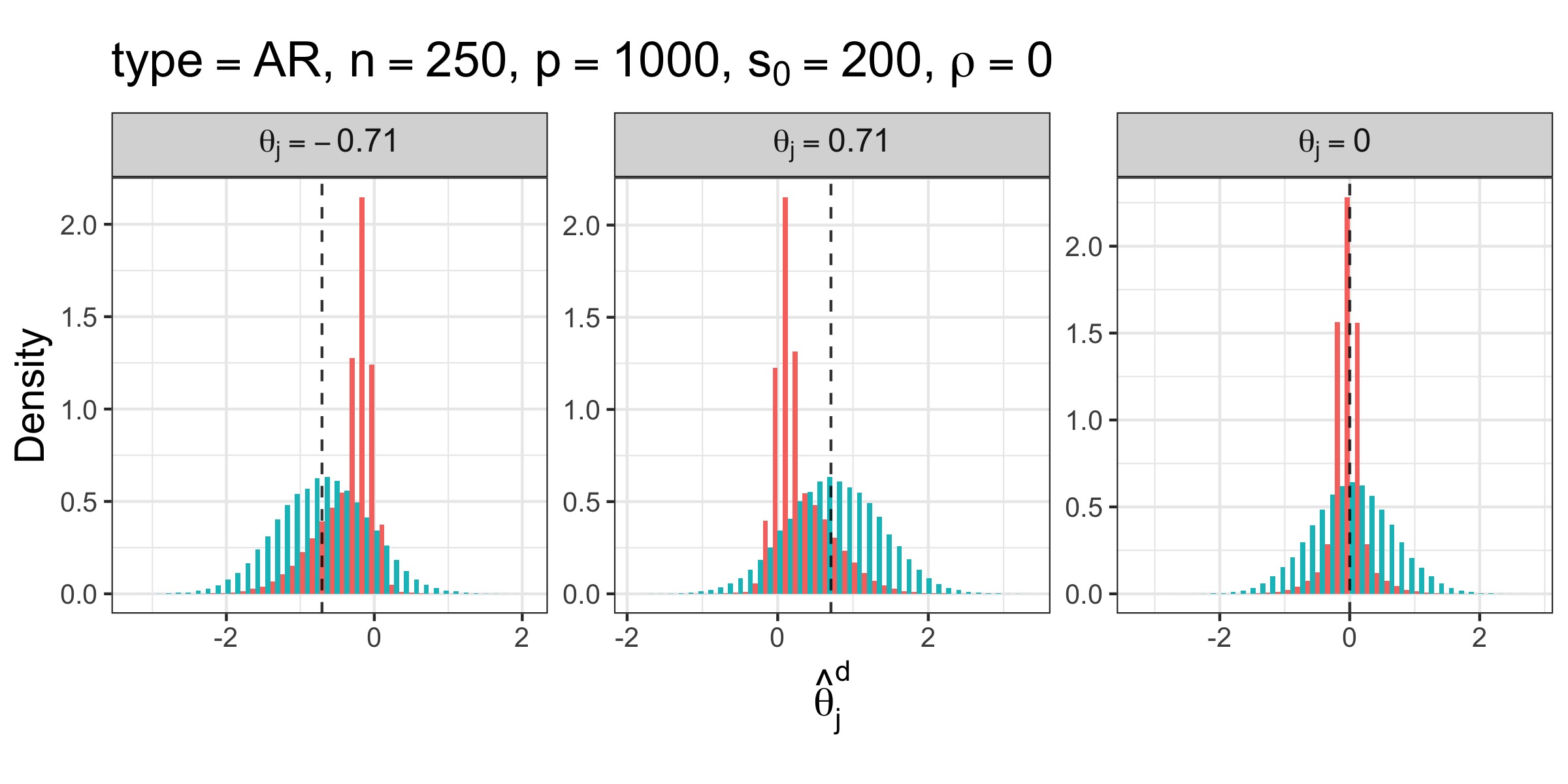}}

\centerline{\includegraphics[width=.78\textwidth]{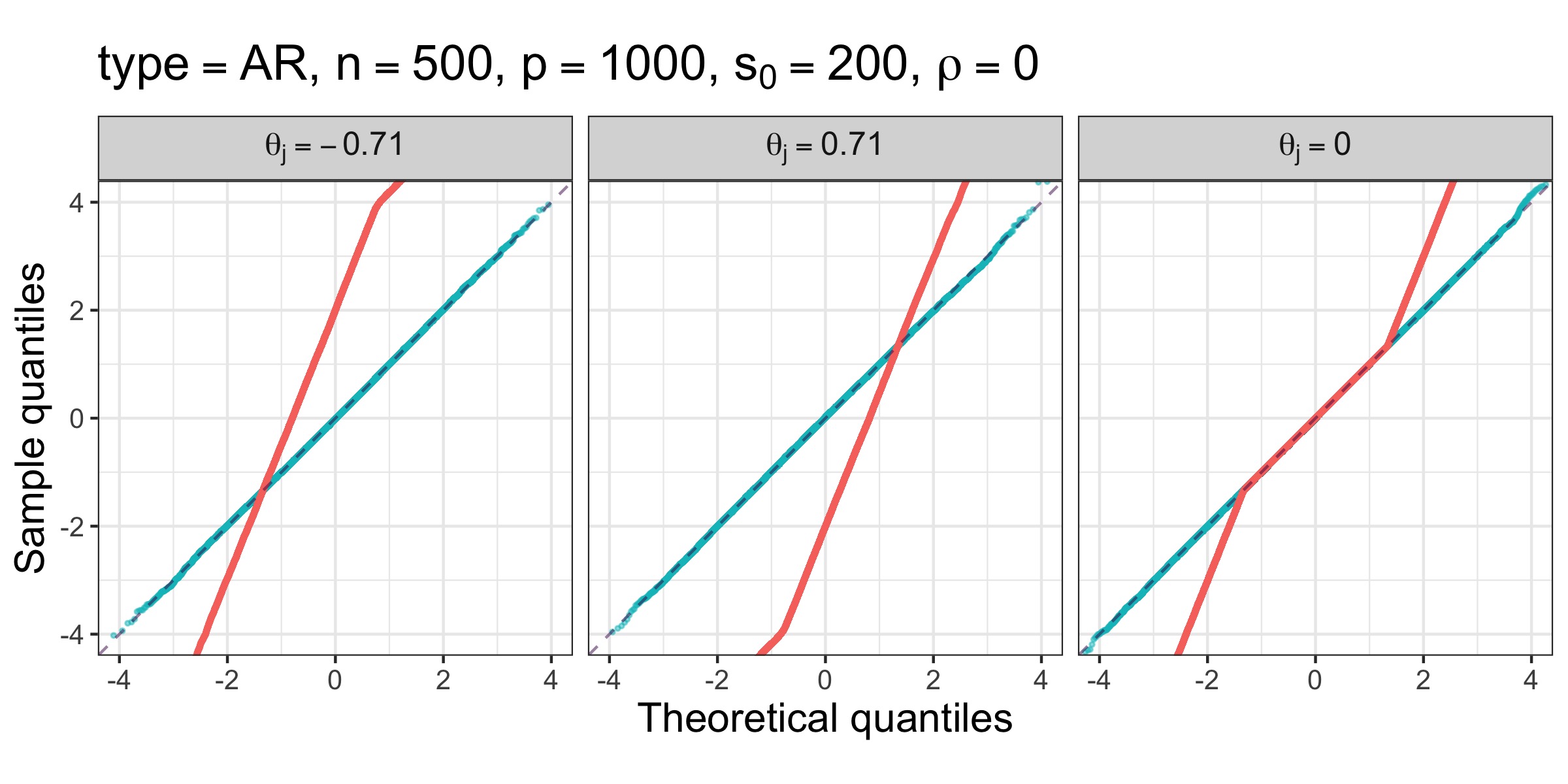}}
\centerline{\includegraphics[width=.78\textwidth]{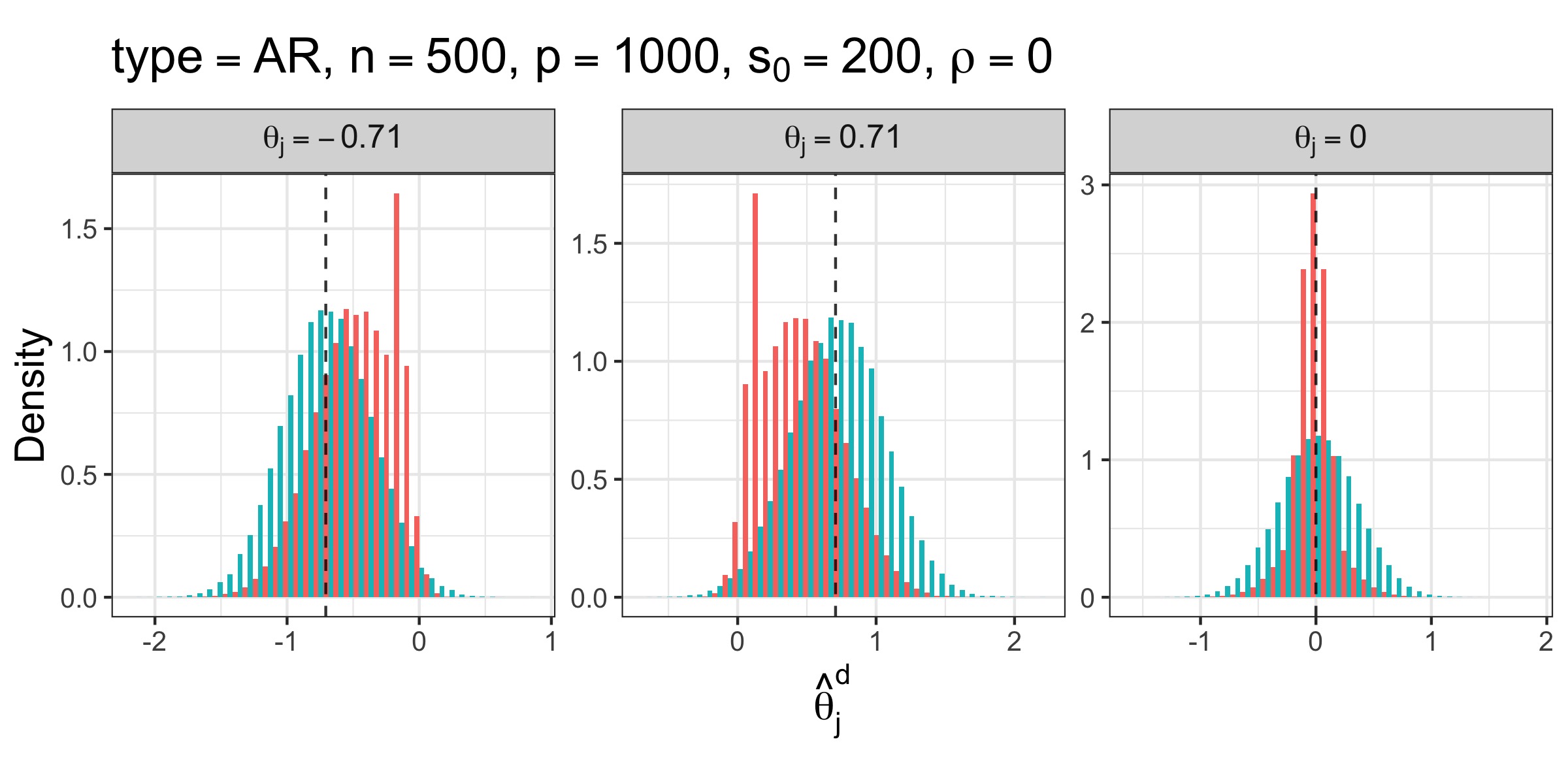}}

\centerline{\includegraphics[width=.78\textwidth]{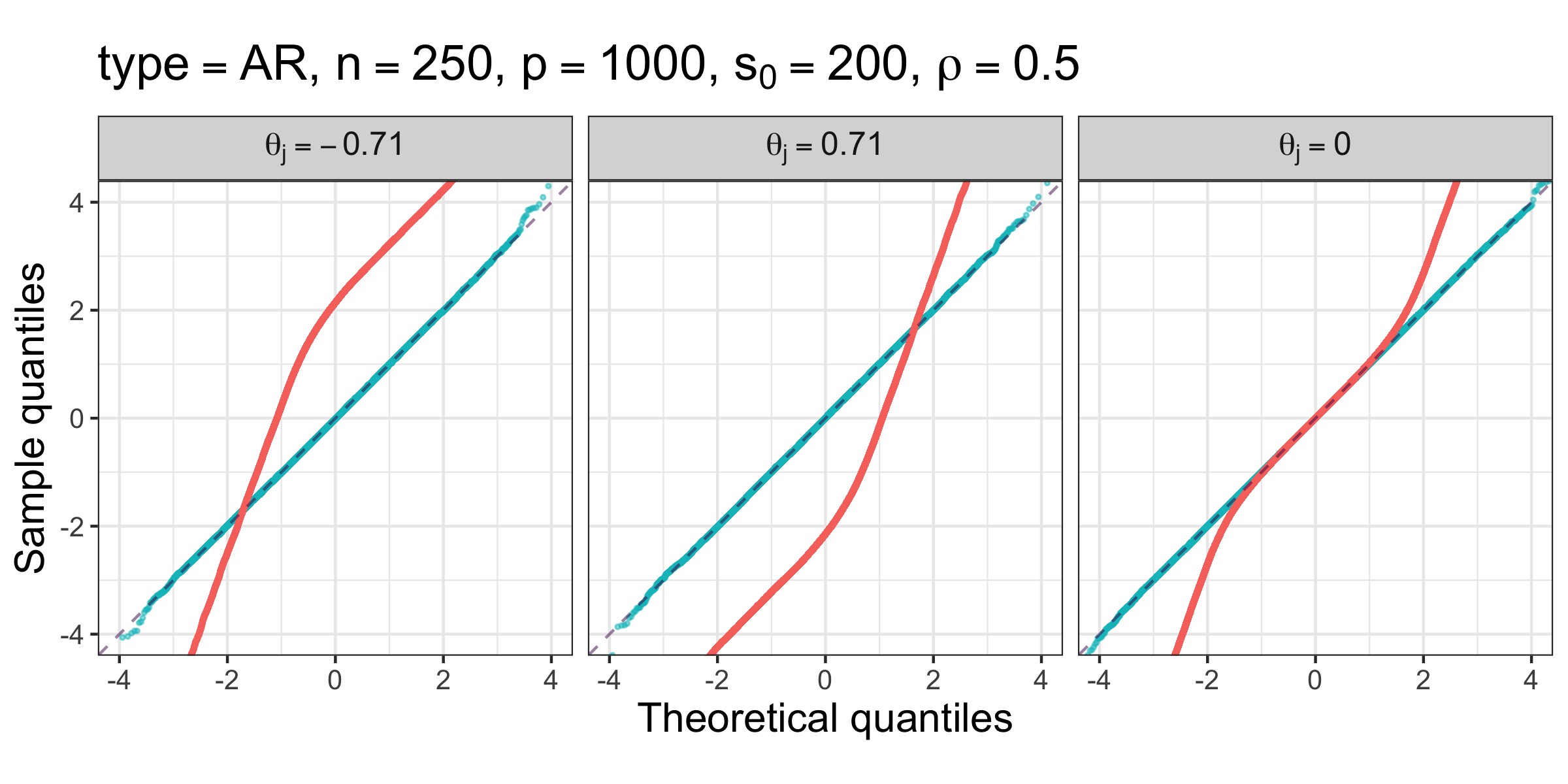}}
\centerline{\includegraphics[width=.78\textwidth]{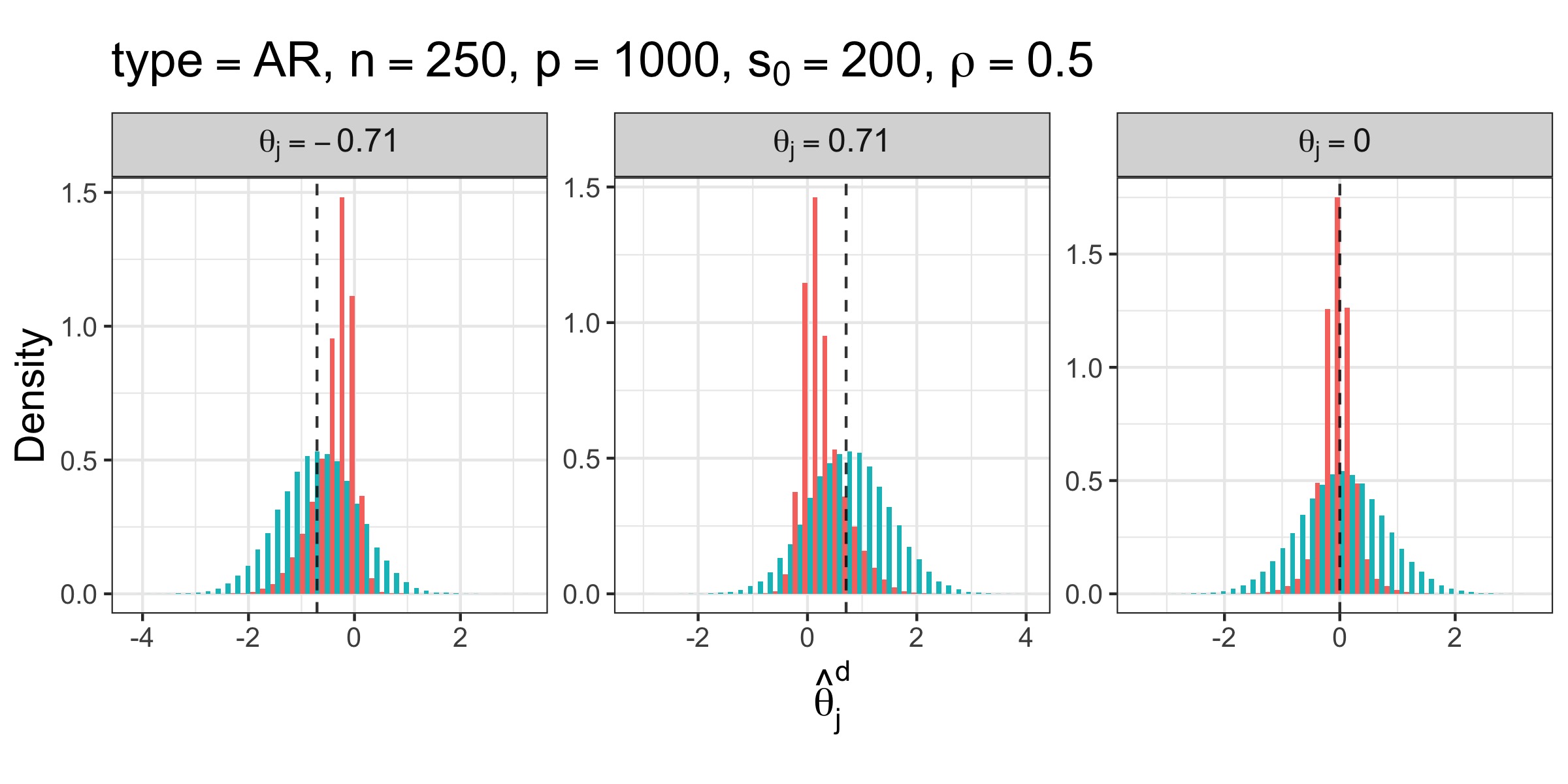}}

\centerline{\includegraphics[width=.78\textwidth]{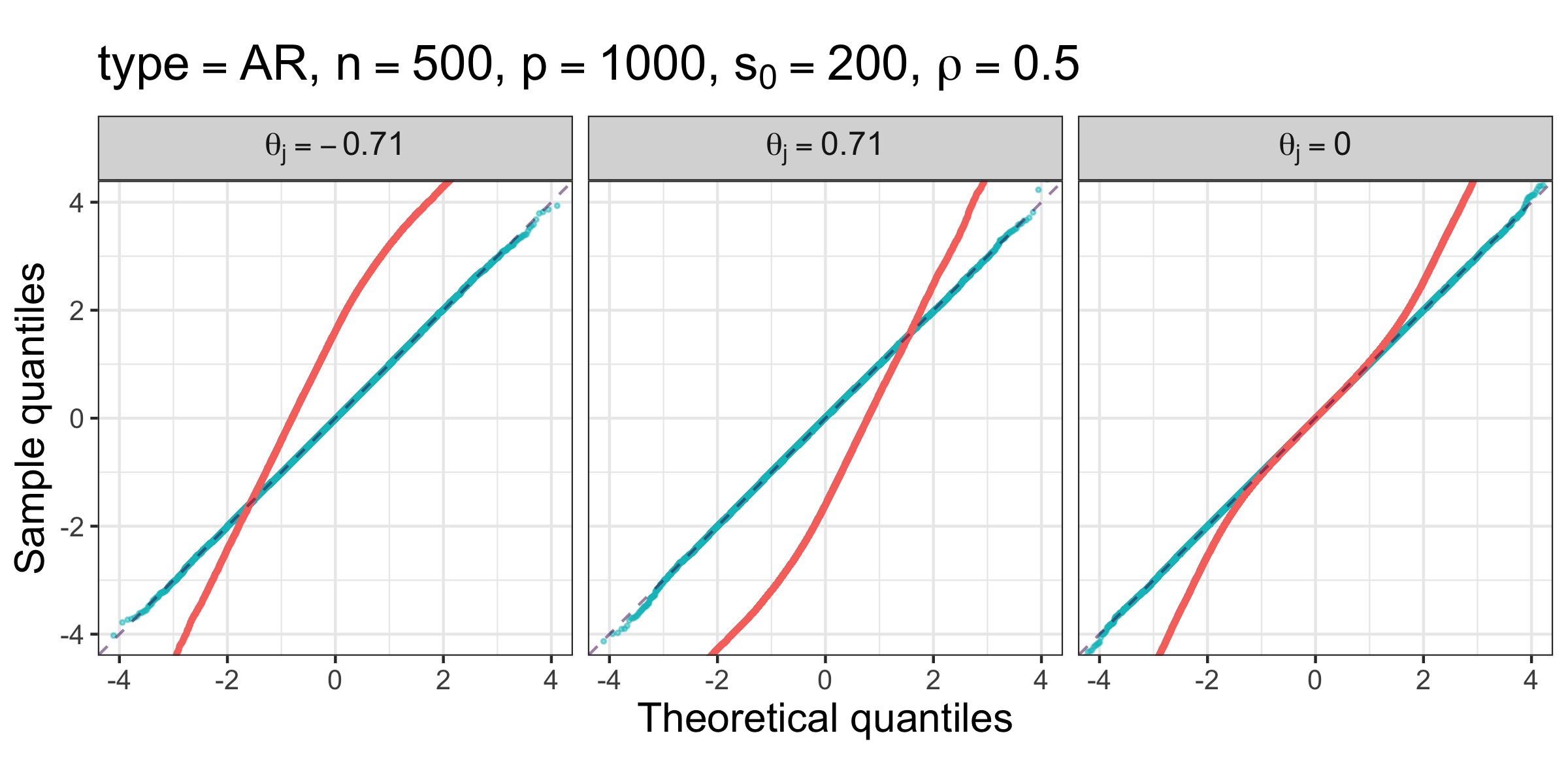}}
\centerline{\includegraphics[width=.78\textwidth]{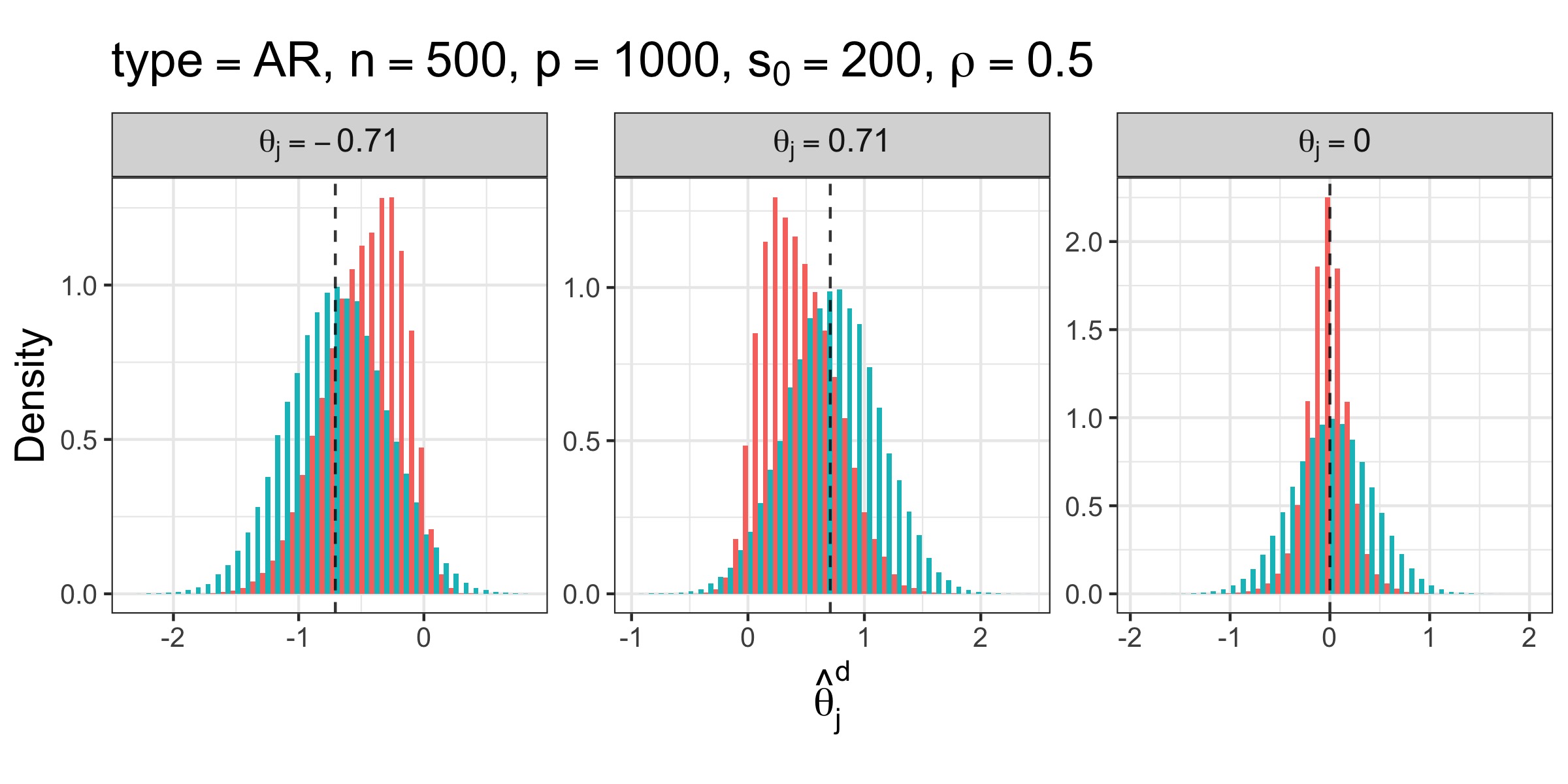}}

\centerline{\includegraphics[width=.78\textwidth]{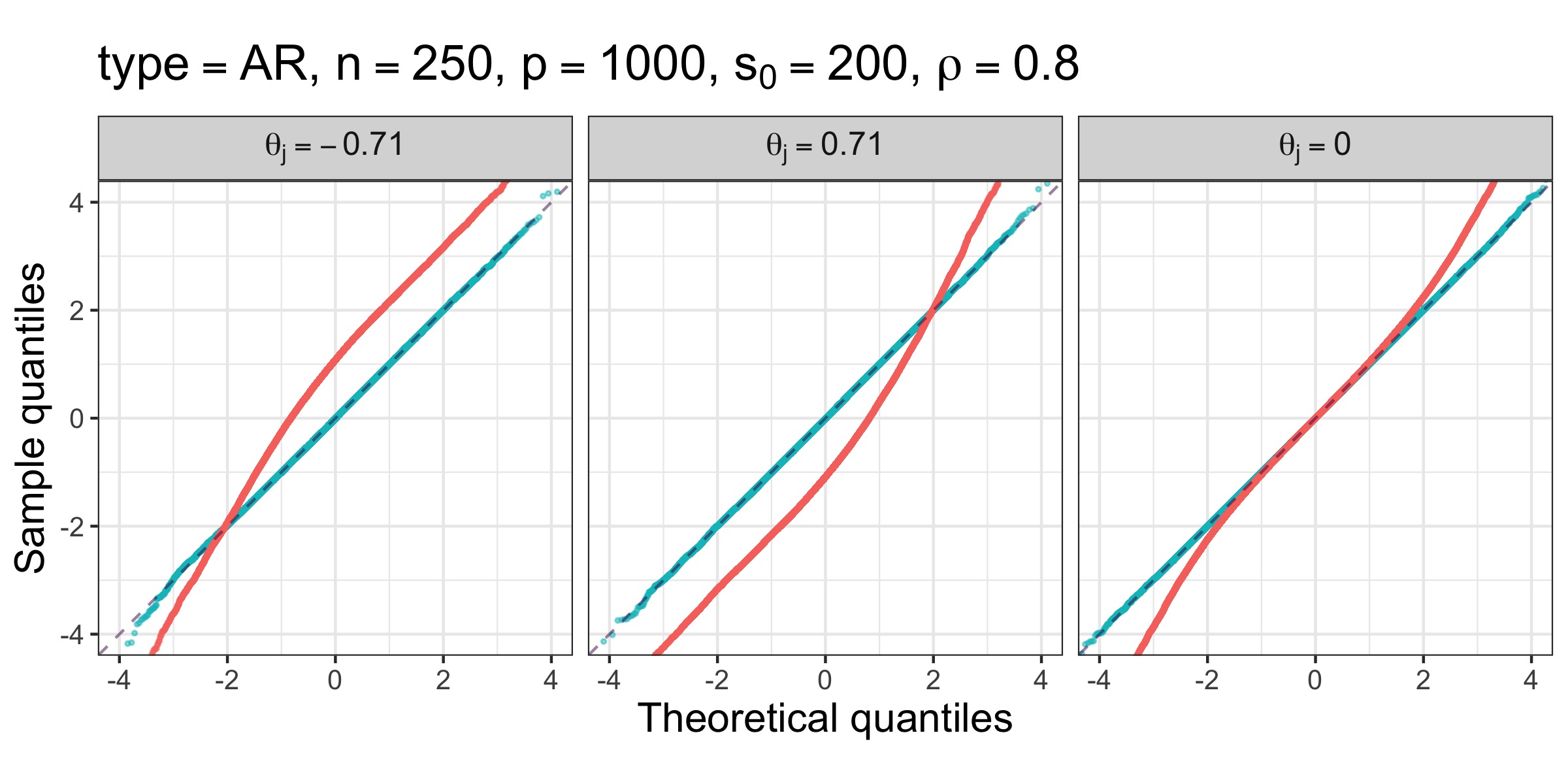}}
\centerline{\includegraphics[width=.78\textwidth]{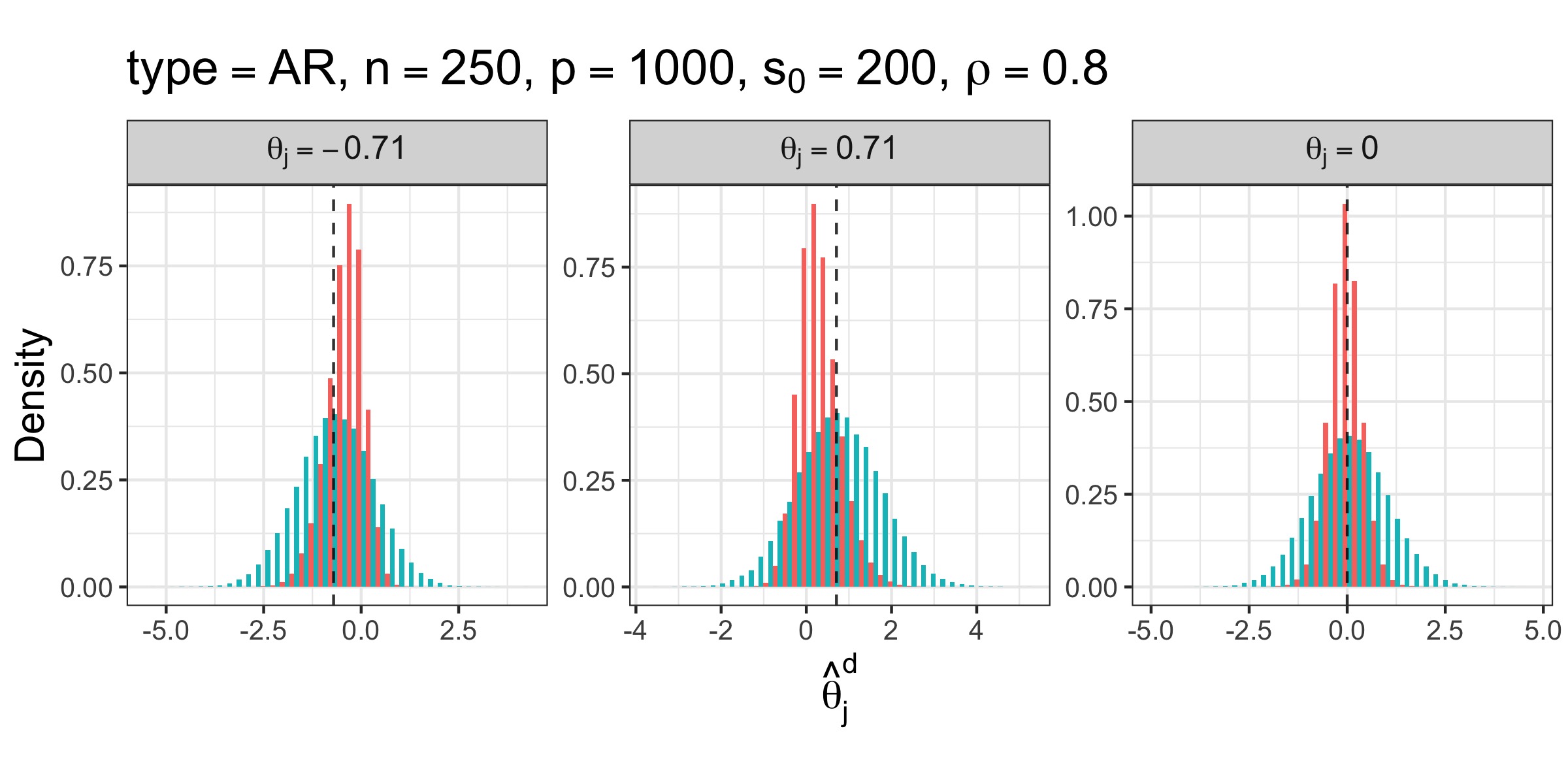}}

\centerline{\includegraphics[width=.78\textwidth]{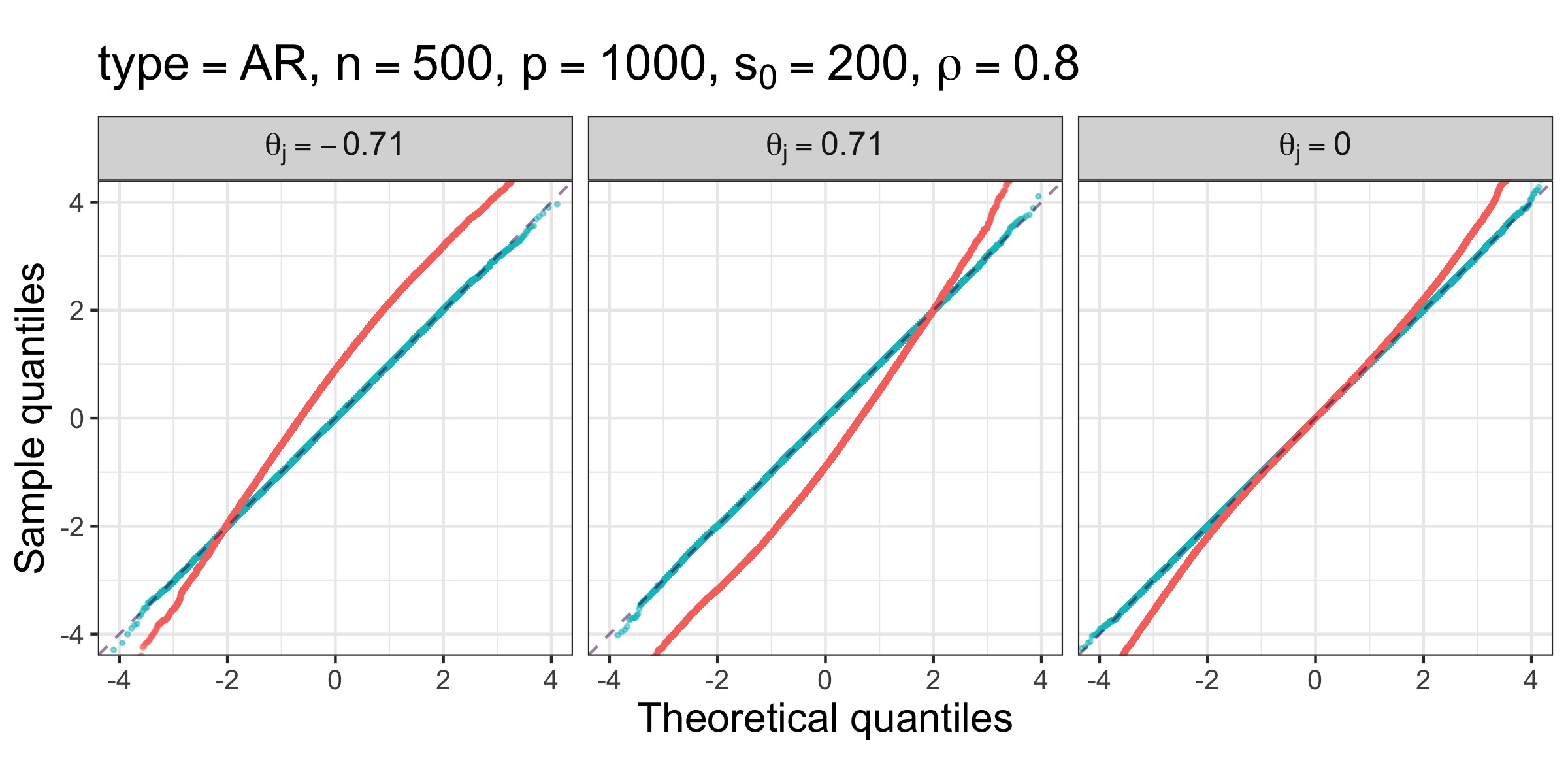}}
\centerline{\includegraphics[width=.78\textwidth]{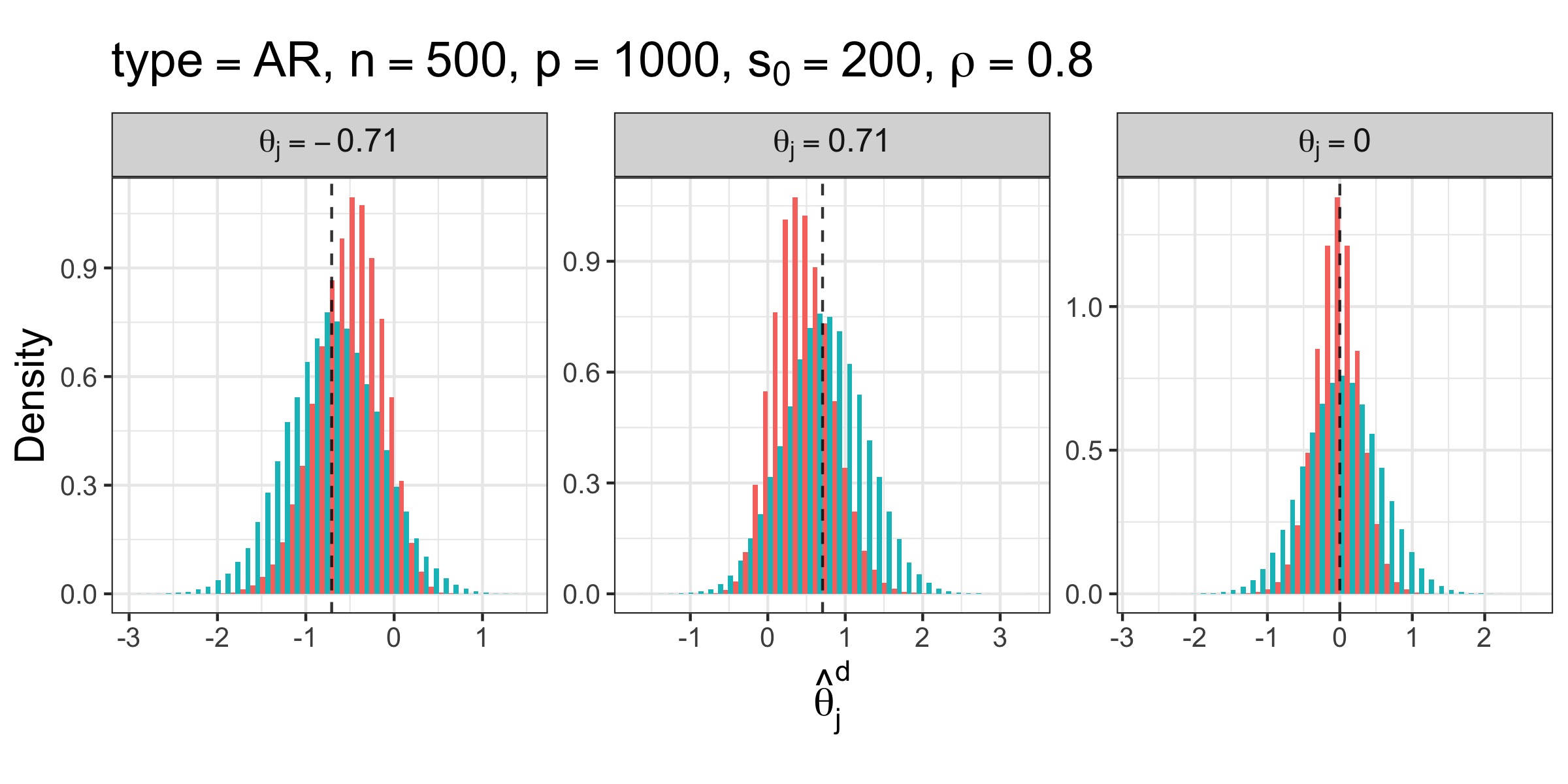}}

\subsection{Debiasing under Gaussian Equicorrelated models}
\label{sec:sim-db-equi}
Here we collect simulations which repeat those in Figure \ref{FigQQplots_and_histograms} but for equicorrelated models: $\Sigma_{ij} = \delta_{ij} + \rho(1-\delta_{ij})$.
We consider correlation parameter $\rho = .5$, the sample size $n = 250, 500$, and the sparsity $s = 20,100,200$.
The legends are the same as in the previous section, so are not shown.

\centerline{\includegraphics[width=.78\textwidth]{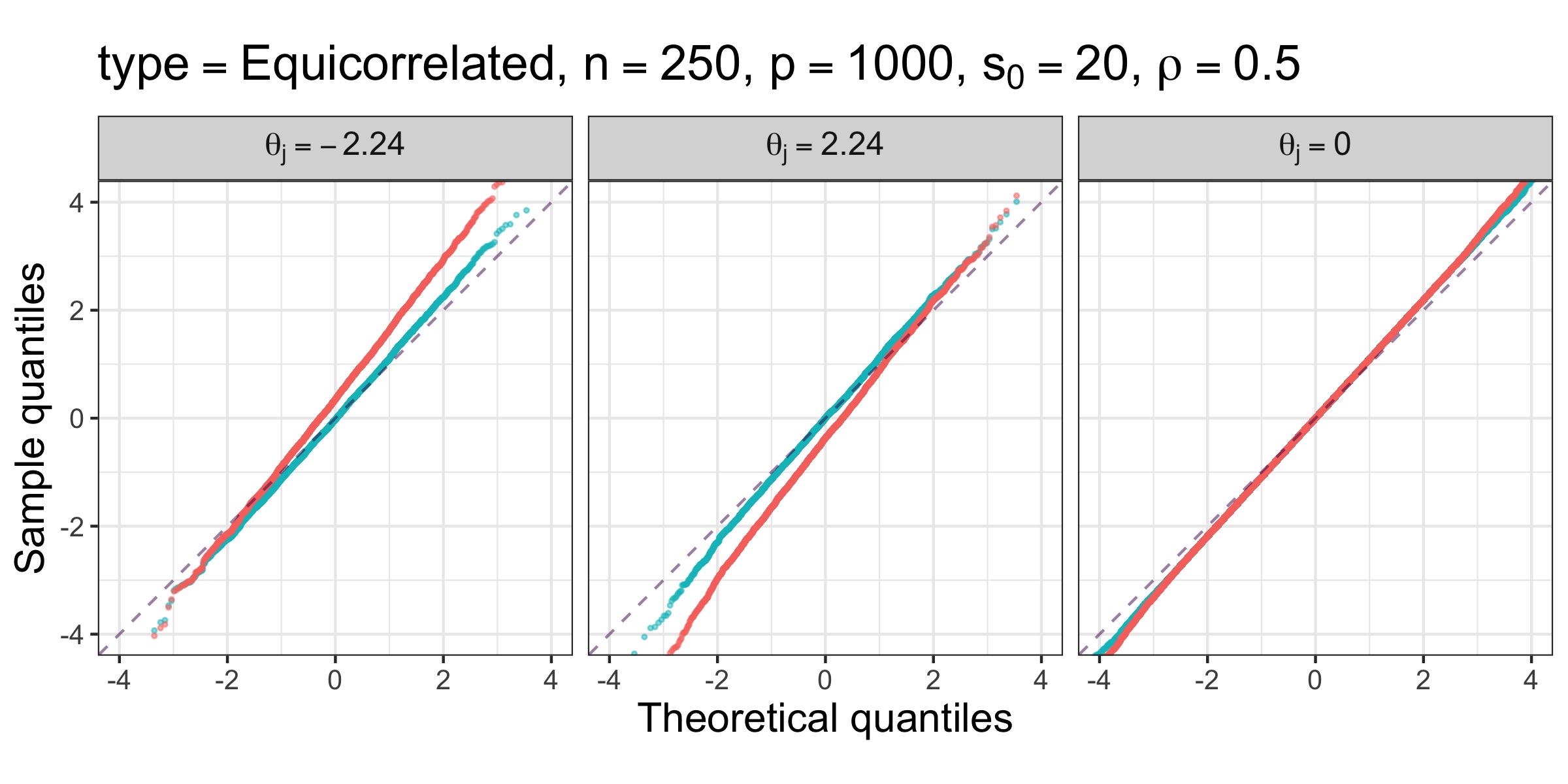}}
\centerline{\includegraphics[width=.78\textwidth]{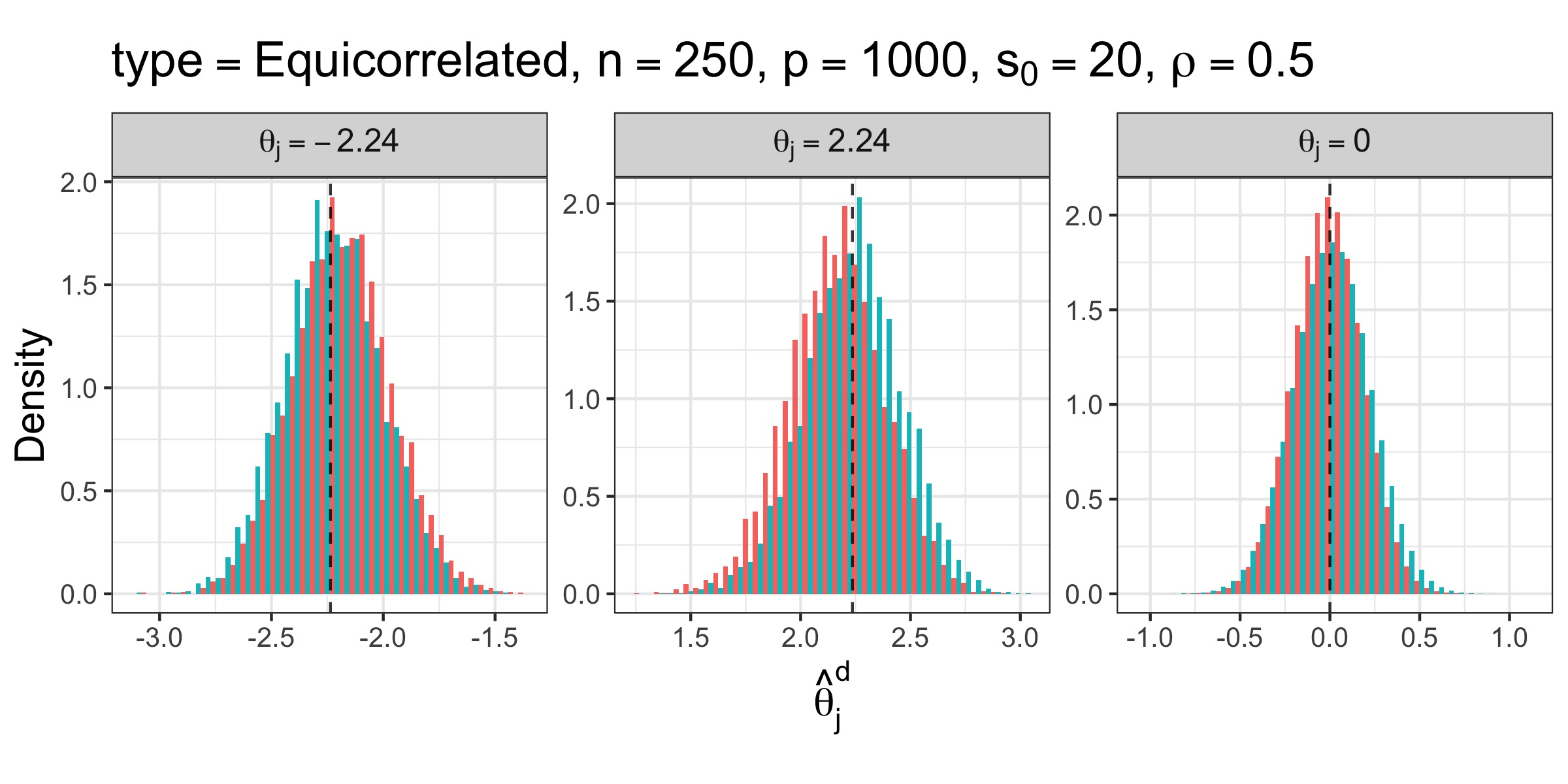}}

\centerline{\includegraphics[width=.78\textwidth]{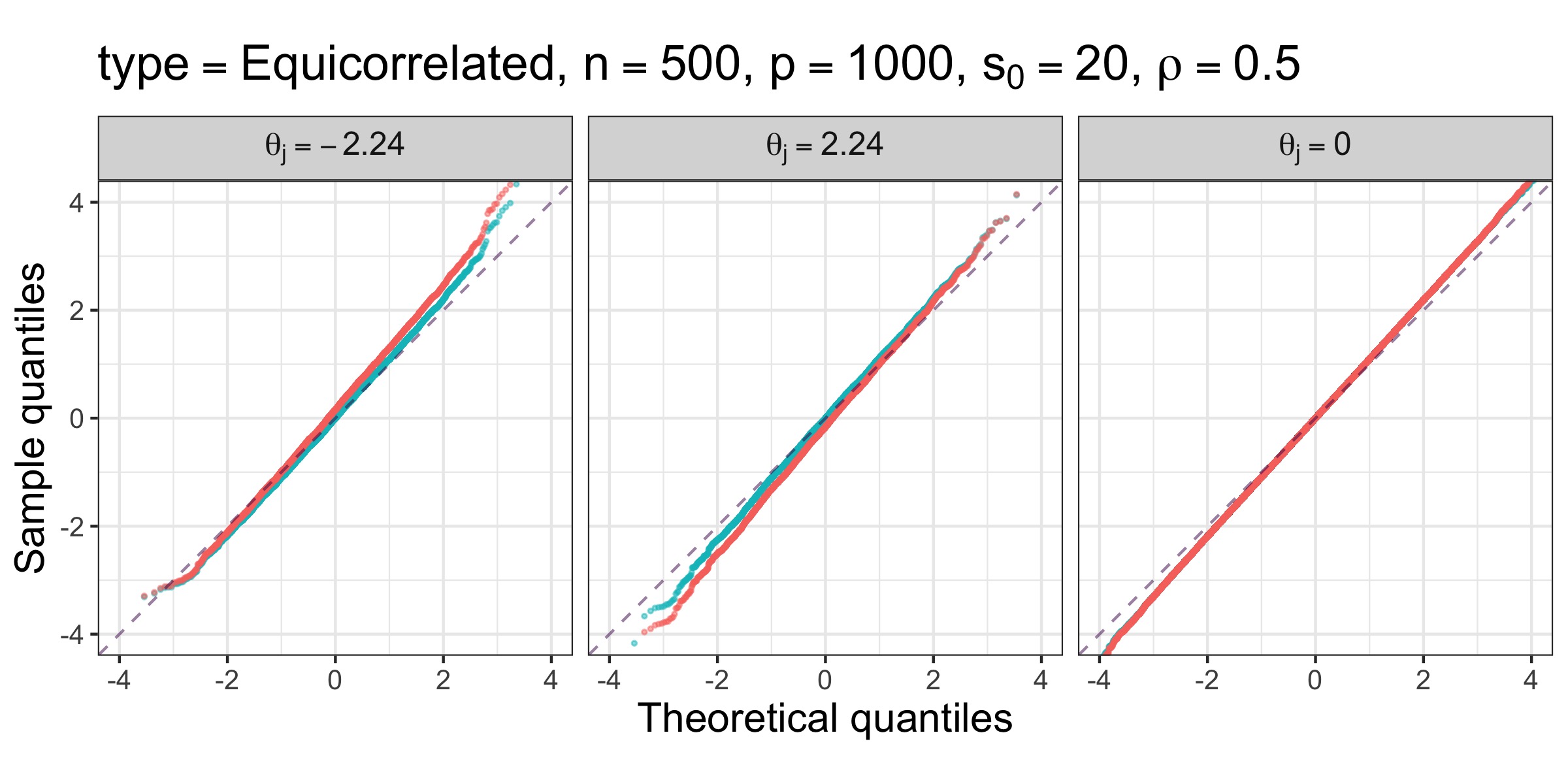}}
\centerline{\includegraphics[width=.78\textwidth]{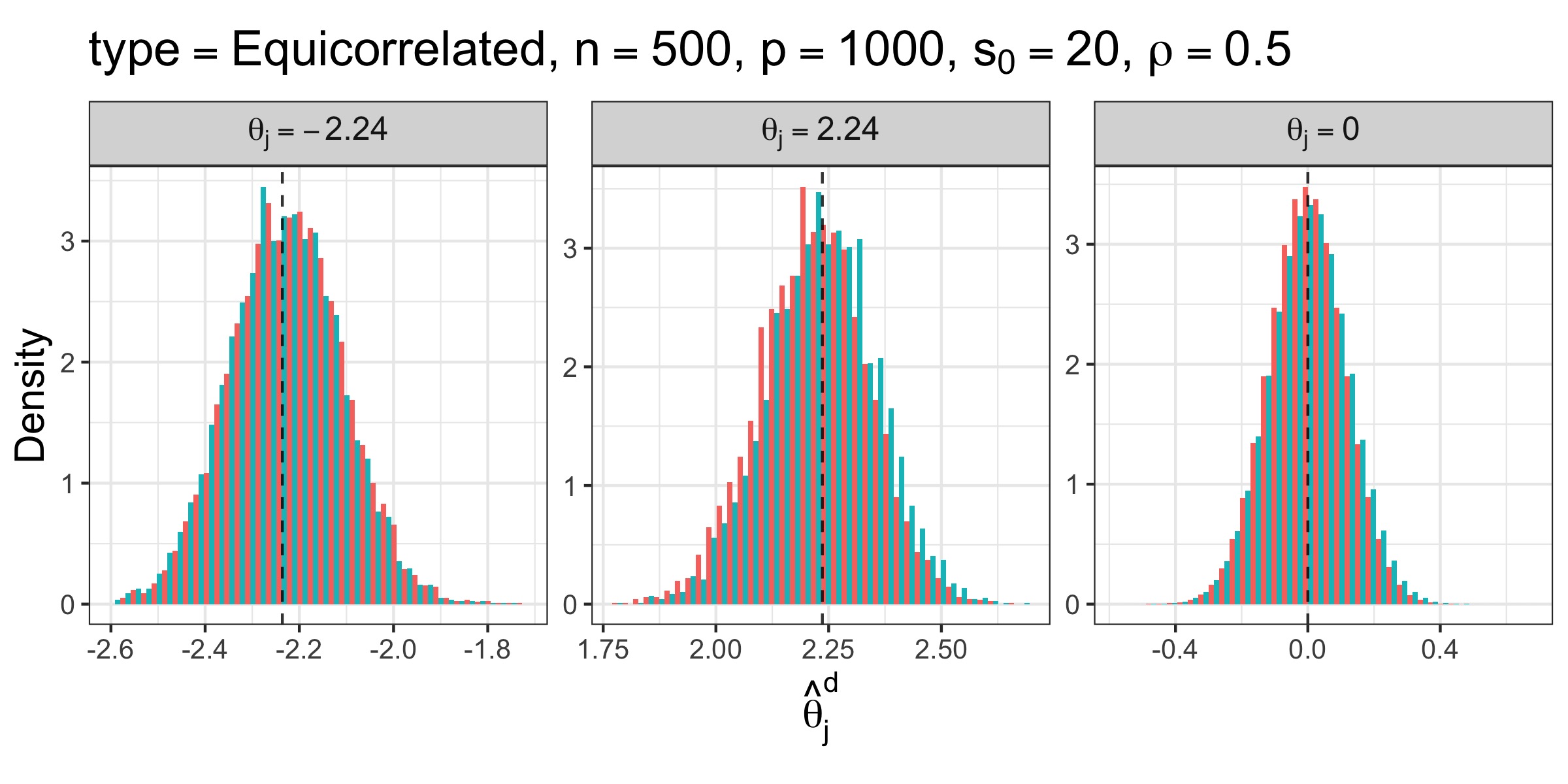}}

\centerline{\includegraphics[width=.78\textwidth]{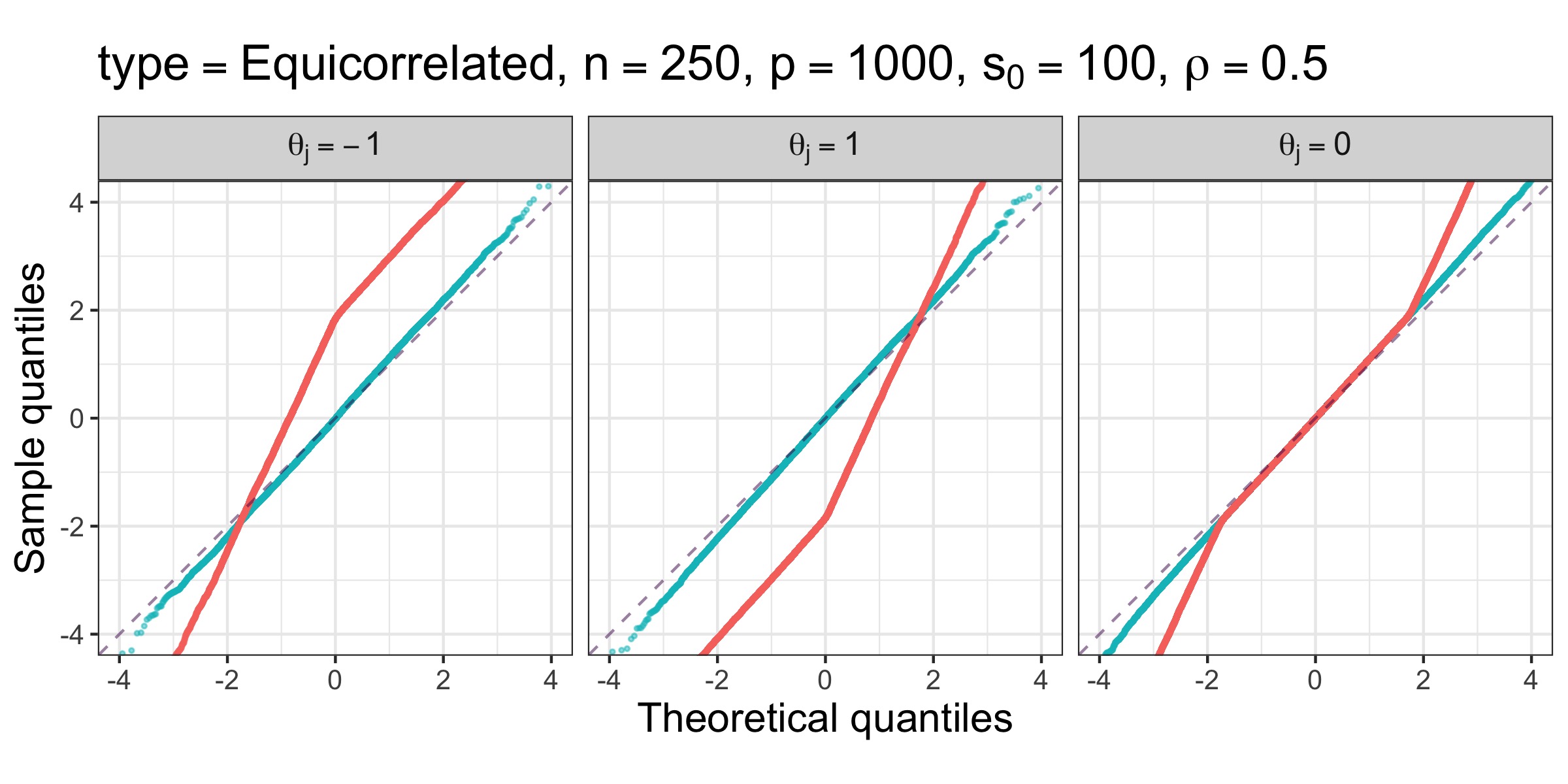}}
\centerline{\includegraphics[width=.78\textwidth]{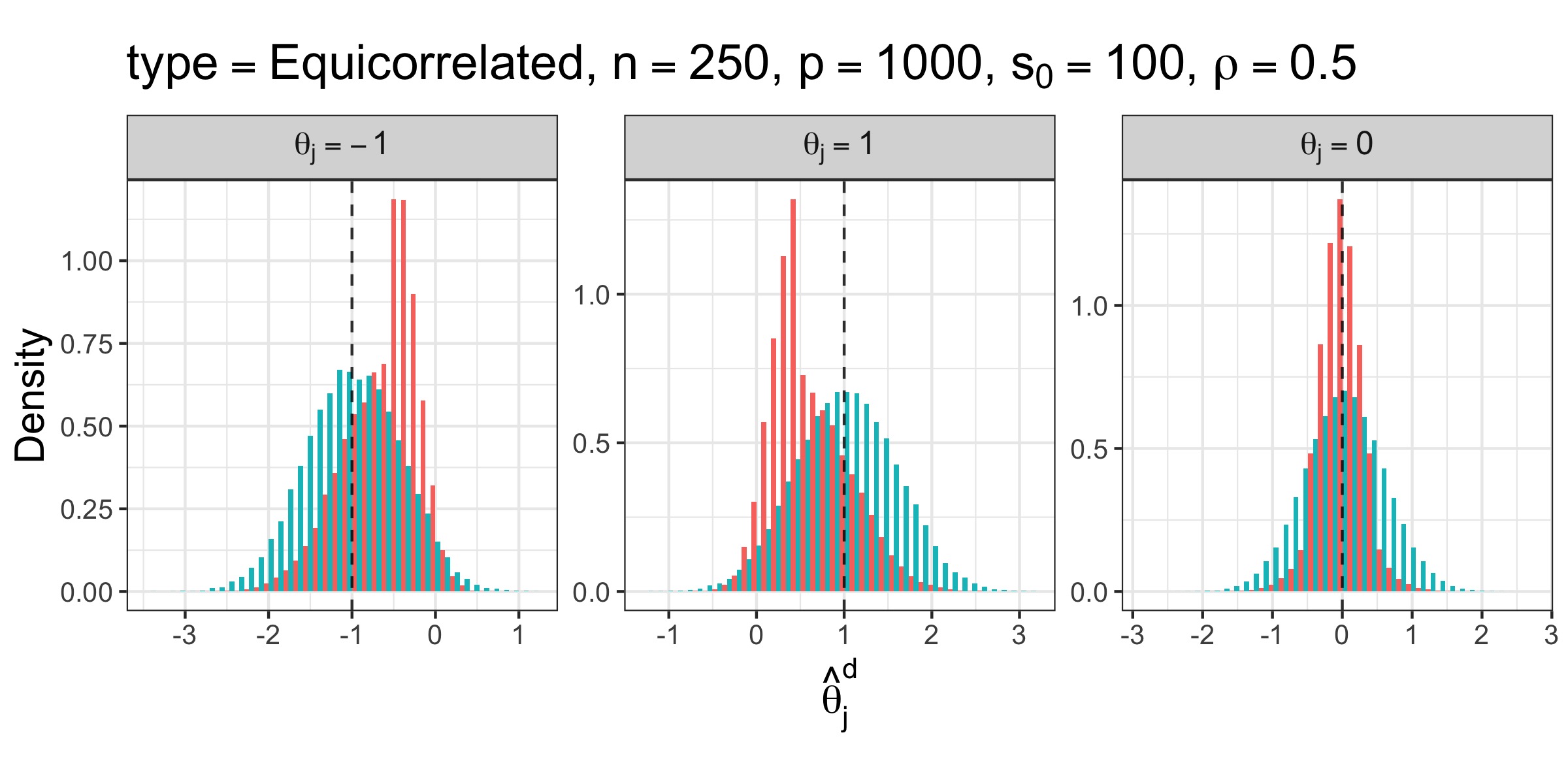}}

\centerline{\includegraphics[width=.78\textwidth]{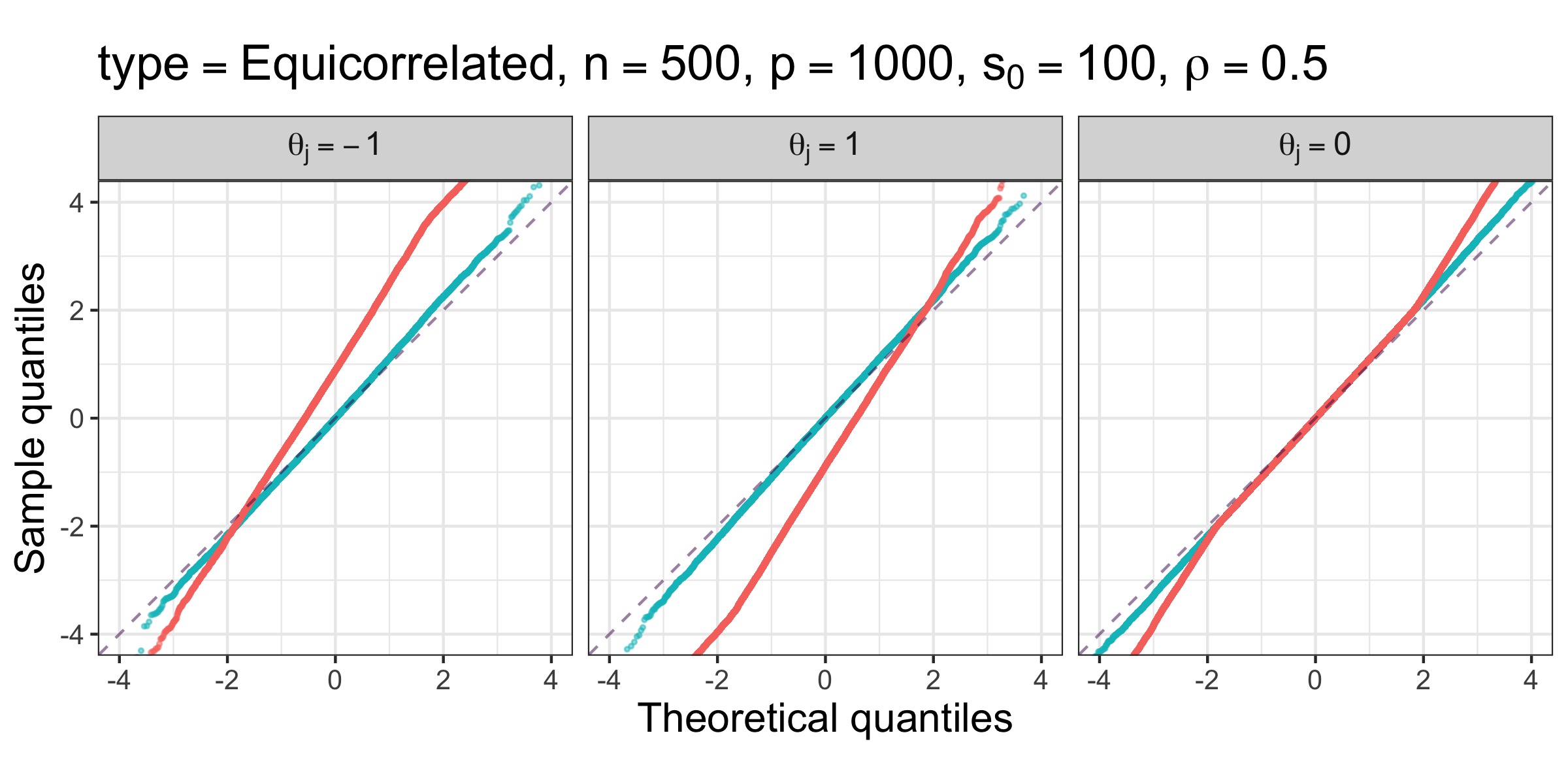}}
\centerline{\includegraphics[width=.78\textwidth]{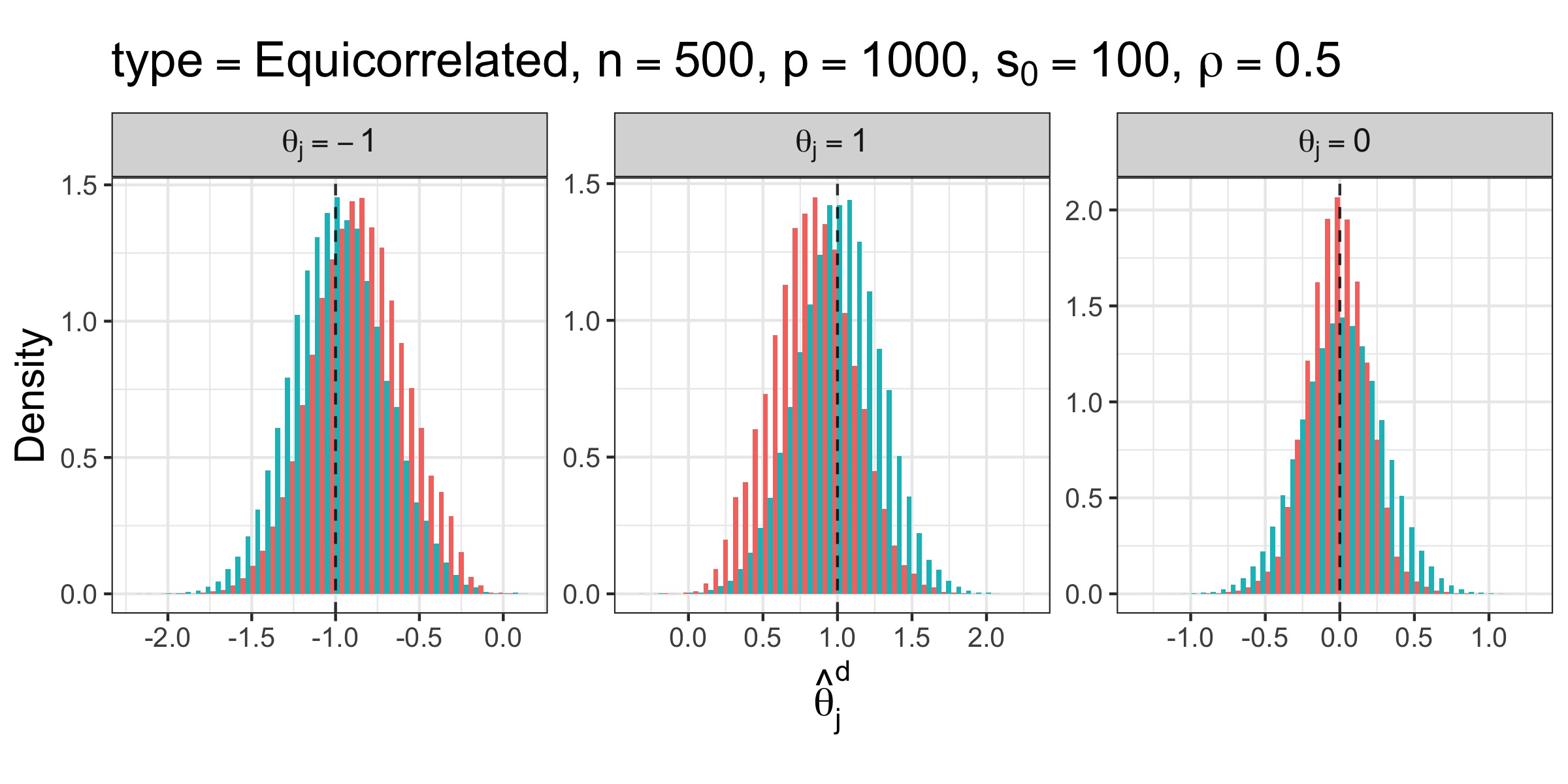}}

\centerline{\includegraphics[width=.78\textwidth]{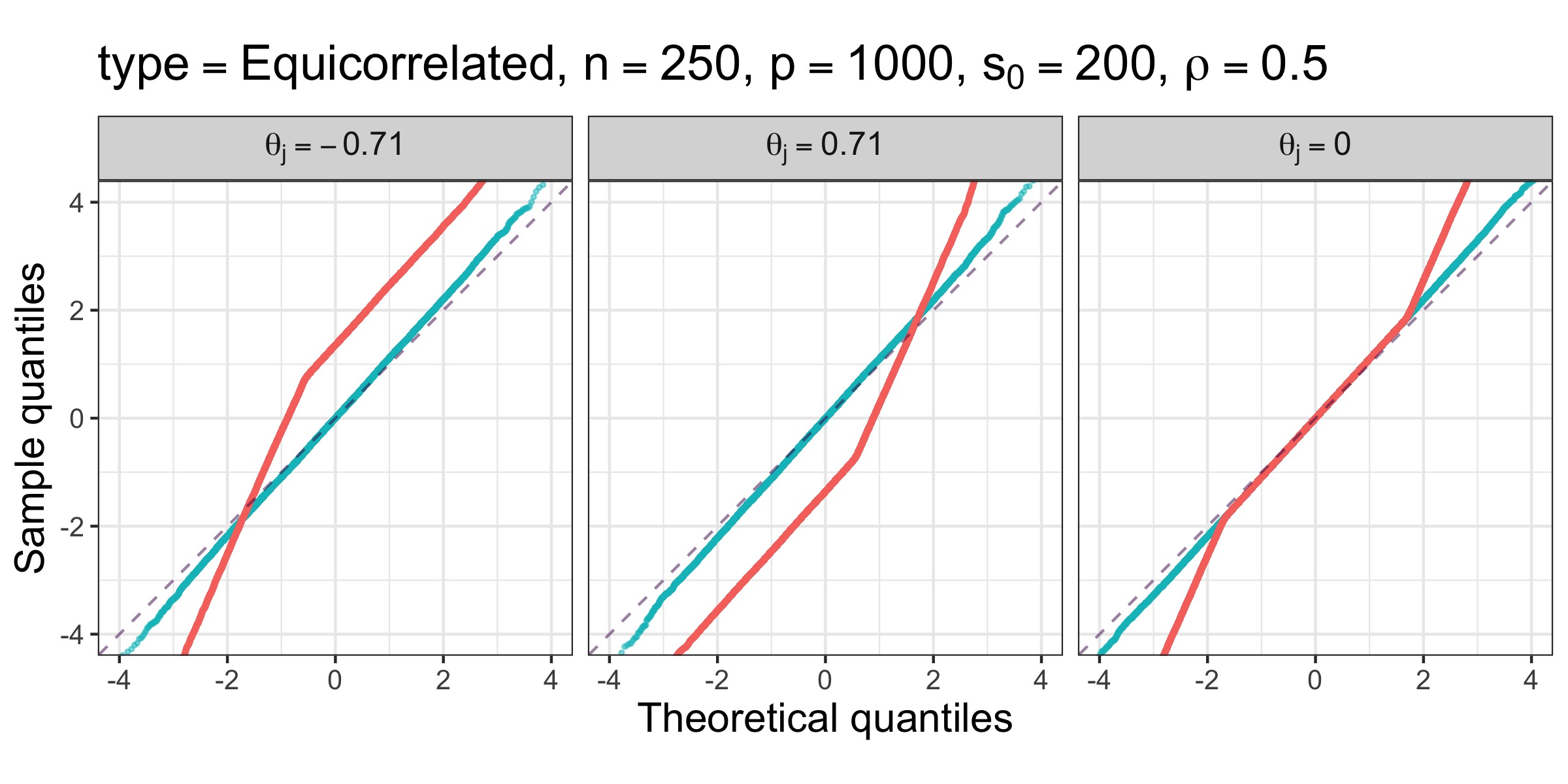}}
\centerline{\includegraphics[width=.78\textwidth]{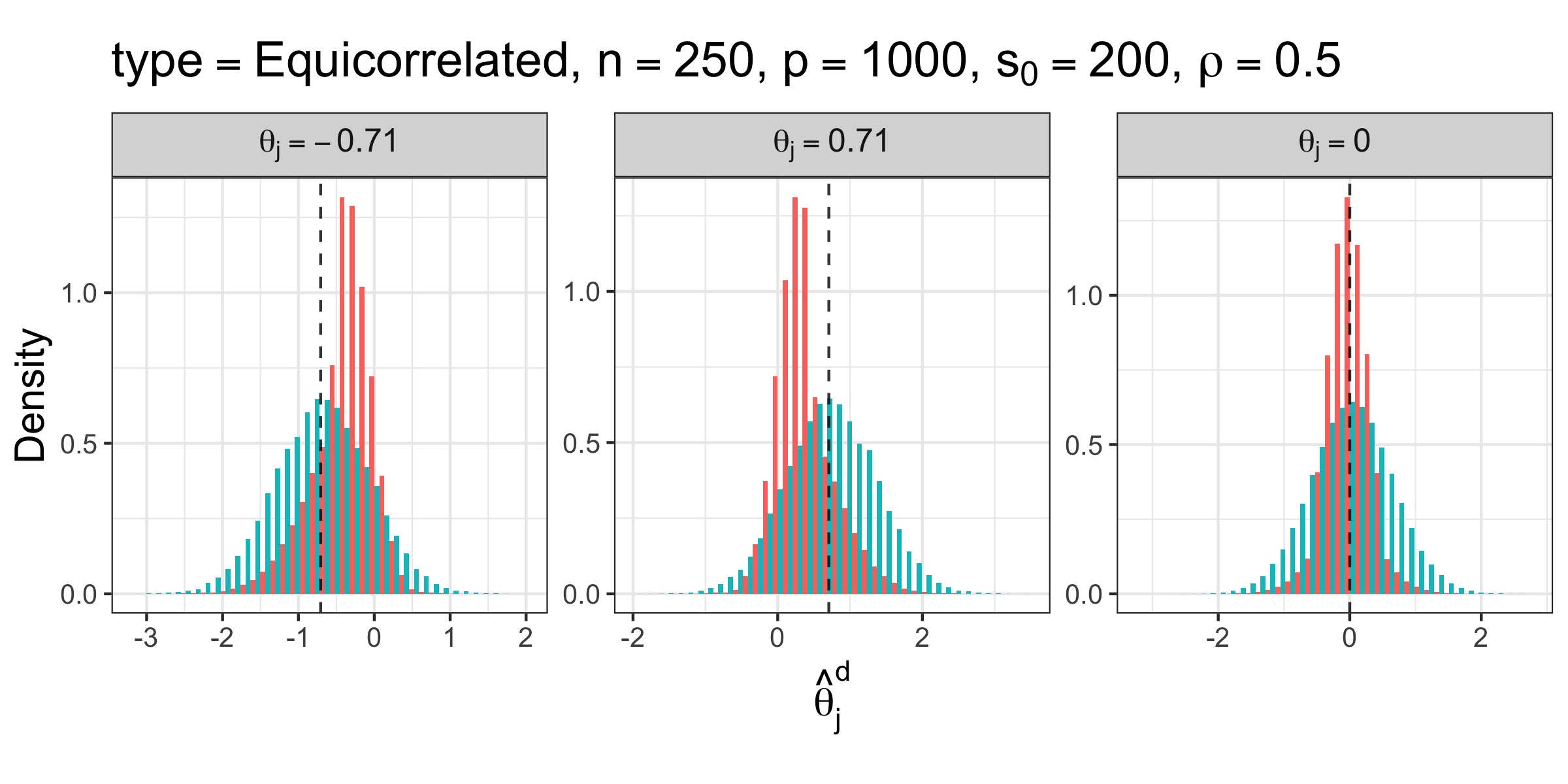}}

\centerline{\includegraphics[width=.78\textwidth]{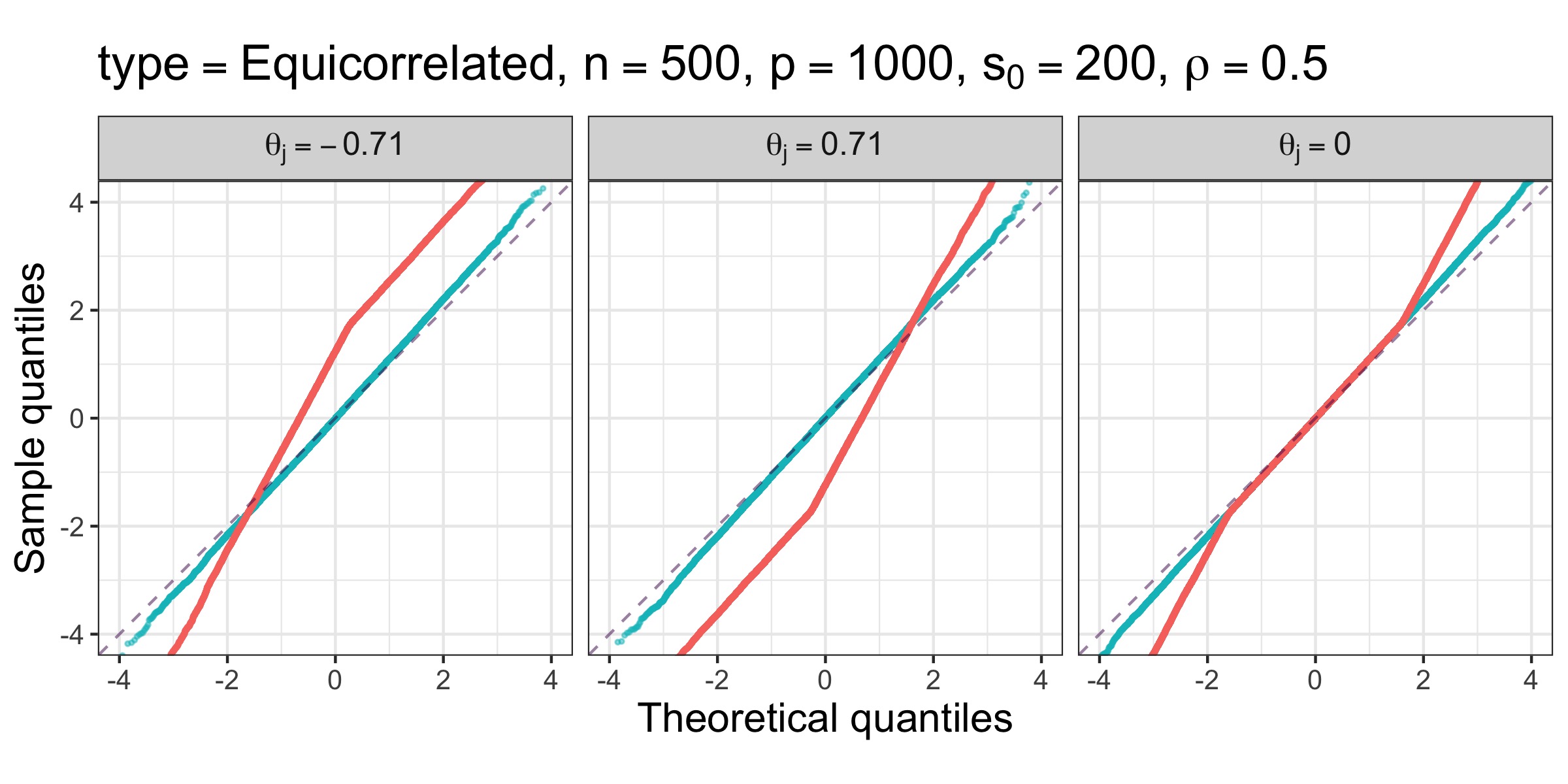}}
\centerline{\includegraphics[width=.78\textwidth]{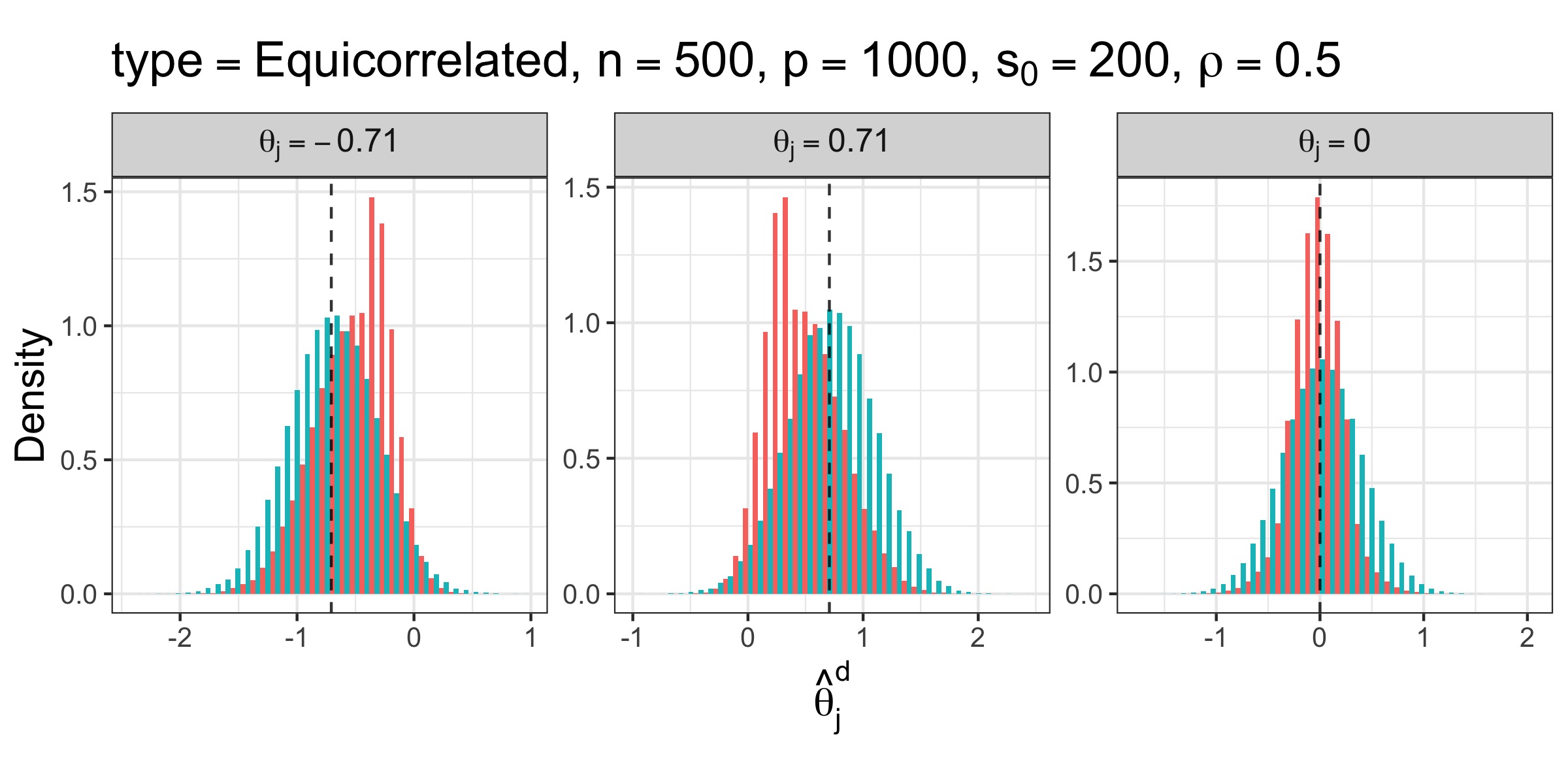}}

\end{document}